\documentclass[11pt]{amsart}

\usepackage{amssymb,amsfonts,amsthm,amsmath,mathrsfs,xspace,hyperref, bm}
\usepackage{graphicx}
\usepackage[francais,english]{babel}
\usepackage{inputenc}
\usepackage{tikz-cd}
\usepackage{tensor}
\usepackage[T1]{fontenc}

\usepackage[centering, margin=3cm]{geometry}

\usepackage{enumerate}
\usepackage{enumitem}

\makeatletter
\def\bign#1{\mathclose{\hbox{$\left#1\vbox to8.5\p@{}\right.\n@space$}}\mathopen{}}
\makeatother

  \newcommand{\R}{\ensuremath{\mathbb{R}}}%
  \newcommand{\Z}{\ensuremath{\mathbb{Z}}}%
  \newcommand{\N}{\ensuremath{\mathbb{N}}}%
\newcommand{\src}{\mathsf{s}}
\newcommand{\rg}{\mathsf{r}}

    \newcommand{\A}{\ensuremath{\mathcal{A}}}%
        \newcommand{\G}{\ensuremath{\mathcal{G}}}%
        \newcommand{\tG}{\ensuremath{\widetilde{\mathcal{G}}}}%
        \newcommand{\tH}{\ensuremath{\widetilde{\mathcal{H}}}}%
                \newcommand{\tI}{\ensuremath{\widetilde{\mathcal{I}}}}%
                                \newcommand{\tK}{\ensuremath{\widetilde{\mathcal{K}}}}%

                                \newcommand{\tP}{\ensuremath{\widetilde{\mathcal{P}}}}%
                                \newcommand{\fm}{\ensuremath{\operatorname{FM}}}

        \newcommand{\T}{\ensuremath{\mathcal{T}}}%

                \newcommand{\Q}{\ensuremath{{Q}}}%
                \newcommand{\Qc}{\ensuremath{\mathcal{Q}}}%

                \newcommand{\F}{\ensuremath{\vec{F}}}%
                \newcommand{\Tk}{\ensuremath{\vec{T}}}%

                                \newcommand{\gk}{\ensuremath{\vec{\gamma}}}%

\renewcommand{\fg}{\ensuremath{\operatorname{FG}}}

                \renewcommand{\P}{\ensuremath{\mathcal{P}}}%
                \newcommand{\Sc}{\ensuremath{\mathcal{S}}}%

        \newcommand{\Hcal}{\ensuremath{\mathcal{H}}}%
        
                \renewcommand{\H}{\ensuremath{\mathcal{H}}}%

                \newcommand{\K}{\ensuremath{\mathcal{K}}}%

    \newcommand{\sym}{\ensuremath{\operatorname{Sym}}}%

  \newcommand{\supp}{\ensuremath{\operatorname{Supp}}}%
				\newcommand{\orb}{\ensuremath{\Gamma}}%

		  \newcommand{\alt}{\ensuremath{\operatorname{Alt}}}%

    \newcommand{\acts}{\ensuremath{\curvearrowright}}%
  \newcommand{\sub}{\ensuremath{\operatorname{Sub}}}%
  \newcommand{\homeo}{\ensuremath{\operatorname{Homeo}}}%
        \newcommand{\asdim}{\ensuremath{\operatorname{asdim}}}%

    	\newcommand{\fix}{\ensuremath{\operatorname{Fix}}}%

    \newcommand{\st}{\ensuremath{\operatorname{St}}}%
    \newcommand{\rist}{\ensuremath{\operatorname{RiSt}}}%
    \renewcommand{\ker}{\ensuremath{\operatorname{ker}}}
    
    \newcommand{\Ff}{\ensuremath{\mathsf{F}}}
        \newcommand{\Af}{\ensuremath{\mathsf{A}}}
                \newcommand{\Df}{\ensuremath{\mathsf{D}}}

    \newcommand{\cay}{\ensuremath{\widetilde{\Gamma}}}%
    \newcommand{\sch}{\ensuremath{\Gamma}}%

		\newcommand{\iet}{\ensuremath{\operatorname{IET}}}

\theoremstyle{definition}
  \newtheorem{defin}{Definition}[section]

\newtheorem{step}{Step}
\newtheorem*{claim}{Claim}

\theoremstyle{plain}
  
  \newtheorem{cor}[defin]{Corollary}
    \newtheorem{main cor}{Corollary}
  
    \newtheorem*{main ex-cor}{Corollary}

  \newtheorem{thm}[defin]{Theorem}
  \newtheorem{main thm}[main cor]{Theorem}
  \newtheorem{prop}[defin]{Proposition}
    \newtheorem{prop-def}[defin]{Proposition-Definition}
      
  \newtheorem*{principle}{Main phenomenon}
    \newtheorem{stand-assump}[defin]{Standing Assumption}

  \newtheorem{lemma}[defin]{Lemma}

\theoremstyle{remark}
  \newtheorem{remark}[defin]{Remark}
    \newtheorem{remarks}[defin]{Remarks}
    \newtheorem{main remarks}[main cor]{Remarks}

  \newtheorem{example}[defin]{Example}
  \newtheorem{examples}[defin]{Examples}

\date{\today}	
\title[Rigidity of full groups]{Rigidity properties of full groups of pseudogroups over the Cantor set}\author{Nicol\'as Matte Bon
}

\setcounter{tocdepth}{3}
 
\let\oldtocsection=\tocsection
 
\let\oldtocsubsection=\tocsubsection
 
\let\oldtocsubsubsection=\tocsubsubsection
 
\renewcommand{\tocsection}[2]{\hspace{0em}\oldtocsection{#1}{#2}}
\renewcommand{\tocsubsection}[2]{\hspace{2em}\oldtocsubsection{#1}{#2}}
\renewcommand{\tocsubsubsection}[2]{\hspace{2em}\oldtocsubsubsection{#1}{#2}}

\begin{document}
\maketitle
\setcounter{tocdepth}{1}

%
%
%

%
\begin{abstract}
 We show that the (topological) full group of a minimal pseudogroup over the Cantor set satisfies various  rigidity phenomena of topological dynamical and combinatorial nature.
Our main result applies to its possible homomorphisms into other groups of homeomorphisms, and implies that arbitrary homomorphisms between the full groups of a vast class of pseudogroups must extend to continuous morphisms between pseudogroups (in particular giving rise to  equivariant maps at the level of spaces).   As applications, we obtain explicit  obstructions to the existence of embeddings between full groups in terms of  invariants of the underlying pseudogroups (the geometry of their orbital graphs,  the complexity function, dynamical homology), and provide a complete descriptions of  all  homomorphisms within various families of groups  including the  Higman-Thompson groups  (more generally full groups of one sided shifts of finite type),  full groups of  minimal $\Z$-actions on the Cantor set,   and  a class of groups of interval exchanges.

We next  consider a combinatorial rigidity property of groups, which formalises the inability of a group to act on any set  with Schreier graphs growing uniformly  subexponentially, or more generally not faster than a given function $f(n)$. For the exponential function this is a well-known consequence of property $(T)$.   We use full groups to provide a source of examples of groups which satisfy this property but satisfy a strong negation of property $(T)$.

A key tool  used in the proofs is the study of the dynamics of the conjugation action of  groups  on their space of subgroups, endowed with the Chabauty topology. In particular we classify the confined and the uniformly recurrent subgroups of full groups.

\end{abstract}

\tableofcontents

\section{Introduction}

\subsection{Extending homomorphisms from groups to pseudogroups} 
The first purpose of this paper is to introduce  a new method  to study homomorphisms between a class of countable groups of dynamical origin, which goes beyond  the classical reconstruction methods used to study isomorphisms between ``rich'' groups of transformation.   This method provides general tools to prove  non-embedding results between  groups, including cases in which  the subgroup structure of the target can be difficult to study. 

The natural framework to state our results is the theory of \emph{pseudogroups} and  the equivalent language of \emph{\'etale groupoids}.  All definitions on pseudogroups and groupoids can be found with full details in  \S \ref{s-pseudogroups}; we only sketch them in this introduction for the reader convenience.

Let $X$ be a compact space.  A \textbf{pseudogroup} of partial homeomorphisms of $X$ is a collection $\tG$ of homeomorphisms $F\colon U\to V$ between open subsets of $X$ which is closed under taking compositions, inverses,  unions  of  families  of arbitrary cardinality (which are compatible on the intersections of their domains), and which contains the identity restricted to every open subset of $X$ (see \S \ref{s-pseudogroups} for a more detailed and more general definition). 

  Given  a partial homeomorphism $F\colon U\to V$ of $X$ and a point $x\in U$, the \textbf{germ} $[F]_x$ is defined as the equivalence class of the pair $(F, x)$, where $(F_1, x_1)$ and $(F_2, x_2)$ are equivalent if $x_1=x_2$ and $F_1$ and $F_2$ coincide on a neighbourhood of $x_1$. A \textbf{groupoid of germs}  over $X$ is the set $\G$ of germs of some pseudogroup, and determines the pseudogroup uniquely (see \S \ref{s-groupoids} for preliminaries on groupoids).  We will denote $\tG$ the pseudogroup associated to  a groupoid of germs $\G$ (so that $\G$ the groupoid of germs associated to a pseudogroup $\tG$). 
  
 Given a groupoid of germs $\G$, the subset of the pseudogroup $\Ff(\G)\subset \tG$ consisting of elements whose domain and range is the whole space $X$ is a group, called the  \textbf{full group}\footnote{This group is usually called the \textbf{topological full group}, to distinguish it from a larger full group acting on $X$ by preserving each $\G$-orbit. In this paper we will only consider topological full groups, and thus  omit the word ``topological''. Note that the orbit full group  can be seen as the topological full group of a larger pseudogroup, thus many of our results can also be applied to it.}  of $\G$  (or of the pseudogroup $\tG$).  This notion is particularly relevant when  the space $X$ is homeomorphic to the  Cantor set (for short a \emph{Cantor space}), and  our main results are mostly concerned with this case.
As an example, let $G$ be a countable group acting on $X$ by homeomorphisms. The {groupoid of germs} of the action $\G:= \G_{G \acts X}$ is the groupoid of all germs that are represented by elements of $G$. Its full group will be denoted $\Ff(G, X)$ and  consists of all  homeomorphisms of $X$ that locally coincide with elements of $G$.    An important class of group actions on the Cantor set are \emph{subshifts}, i.e. translation action $G\acts X\subset A^G$ on a closed invariant subset of the shift over a finite alphabet. More sources of examples of groupoids  over the Cantor set are aperiodic tilings of Euclidean space,  one-sided shifts of finite type,  AF-groupoids associated to Bratteli diagrams, foliations with a Cantor invariant transversal, etc. 

The notion of full group was introduced  by Dye \cite{Dye} in the context of orbit equivalence of measure preserving group actions. In the topological dynamical setting it was first studied in detail by  Giordano-Putnam-Skau \cite{G-P-S-full} 
(for $\Z$-actions on the Cantor set) and  Matui \cite{Mat-simple, Mat-hom}. More recently the theory of groupoids of germs and their full groups  has attracted considerable attention in group theory, leading to important results in the study of amenability and growth of groups, see the papers  \cite{Ju-Mo, J-N-S, Nek-frag} and the  surveys  \cite{Cor-Bou, Mat-survey}. It  also provides a unifying framework to study several families of groups, including the Higman--Thompson groups and many of their generalisations, groups of intermediate growth, groups of interval exchanges, block-diagonal limits of finite alternating groups, etc.

We will often restrict the attention to a relevant normal subgroup  $\Af(\G)\unlhd \Ff(\G)$ introduced by Nekrashevych \cite{Nek-simple}, called the the \textbf{alternating full group}. 
The group $\Af(\G)$ can be defined as  the subgroup of $\Ff(\G)$ generated by 3-\emph{cycles}, i.e. elements of order 3 that act by permuting cyclically 3 disjoint clopen sets, and as the identity elsewhere (se \S \ref{s-alt} for a more detailed definition).  It is shown in \cite{Nek-simple} that if $\G$ is minimal (i.e.~all its orbits are dense) then $\Af(\G)$ is simple, and if $\G$ is \emph{expansive} (e.g.~the groupoid of germs of a subshifts) then $\Af(\G)$ is also finitely generated.

\medskip

The phenomenon highlighted by the main result of this paper can be informally described as follows.

\begin{principle}{ In  many ``nice'' situations,  a homomorphism from the group $\Af(\G)$ to another group of homeomorphism must {extend} to a {continuous morphism} of pseudogroups.}
\end{principle}

Precise statements will be given shortly. Let us first clarify the terminology. 
\begin{defin}
Given pseudogroups $\tG, \tH$  over spaces $X, Y$, a \textbf{continuous morphism}  is a map $\varphi\colon \tG\to \tH$   which preserves compositions, inverses, unions of compatible families, sends the identity over $X$ to the identity over $Y$, and the empty  homeomorphism  (or \emph{zero}) of $\tG$ to the zero of $\tH$.    
\end{defin}

 The word ``continuous'' does not refer to a topology on $\tG,\tH$, but to the fact that a continuous morphism $\varphi\colon \tG\to \tH$ automatically comes with an associated  continuous map $q\colon Y\to X$, called the \textbf{spatial component} of $\varphi$, which intertwines $\varphi$ and the actions of $\tG$ and $\tH$. The existence of this map follows from the definition by a simple ``Stone duality'' argument  (see \S \ref{s-morphisms} for details).
 
 \medskip
 
Given a continuous morphism $\varphi\colon \tG\to \tH$,   its restriction to the full group $\Ff(\G)\subset \tG$,   is  a group homomorphism taking values in $\Ff(\H)$. Conversely given any groups of homeomorphisms $G, H$ of spaces $X, Y$, it is meaningful and natural to ask whether a group homomorphism $\rho\colon G\to H$ extends to  a continuous morphism  $\tG\to \tH$ between pseudogroups that contain $G$ and $H$. As explained, the existence of such an extension implies the existence of a continuous map $q\colon Y\to X$ which intertwines $\rho$ and the actions. It is however a much more precise conclusion:  we shall show along the paper that it allows to use  a rich pool of invariants associated to groupoids (homology, geometry and growths of the leaves, dynamical invariants)  in order to study group homomorphisms and prove non-embedding results.

\medskip

To place our main theorem  into context, let us first state a   consequence of it  which is already well-known. 

\begin{cor}[Rubin--Matui's Isomorphism Theorem;  \cite{Rubin} \cite{Med-isom} \cite{Mat-SFT}]\label{t-intro-isom} \label{c-isom}
For $i=1,2$ let  $\G_i$ be minimal groupoid of germs over a Cantor space $X_i$, and let $G_i$ be a subgroup of $\Ff(\G_i)$ containing $\Af(\G_i)$. Then every group isomorphism $\rho\colon G_1\to G_2$ extends to a continuous isomorphism of pseudogroups $\tG_1\to \tG_2$. 

\end{cor}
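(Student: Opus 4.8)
The substance of this statement is carried entirely by the extension theorem that is the main result of the paper, of which this corollary is announced as a consequence; the plan is to organise the two one-sided extensions that theorem produces into a two-sided pseudogroup isomorphism, and then to promote it from the alternating full groups to $G_1$ and $G_2$.

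First I would record that $\rho$ restricts to an isomorphism $\Af(\G_1)\to\Af(\G_2)$. Since $\G_i$ is minimal, $\Af(\G_i)$ is a non-trivial simple group \cite{Nek-simple}, and its centraliser in $\homeo(X_i)$ — hence in $\Ff(\G_i)$ — is trivial, because $\Af(\G_i)$ contains elements with arbitrarily small clopen supports moving prescribed points, so a homeomorphism commuting with all of them fixes every point. A standard commutator argument then shows that $\Af(\G_i)$ is contained in every non-trivial normal subgroup of any $H$ with $\Af(\G_i)\le H\le\Ff(\G_i)$: if $1\ne N\unlhd H$ then $[N,\Af(\G_i)]\le N\cap\Af(\G_i)$, which is non-trivial (otherwise $N$ centralises $\Af(\G_i)$) and normal in the simple group $\Af(\G_i)$, hence equals $\Af(\G_i)$. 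Applying this to $N=\rho(\Af(\G_1))\unlhd G_2$ and to $N=\rho^{-1}(\Af(\G_2))\unlhd G_1$ gives $\rho(\Af(\G_1))=\Af(\G_2)$.

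Next I would apply the extension theorem to the injective homomorphism $\rho|_{\Af(\G_1)}\colon\Af(\G_1)\to\Ff(\G_2)$ — a homomorphism from the alternating full group of a minimal groupoid over a Cantor space into a group of homeomorphisms of a Cantor space, a situation covered by its hypotheses — obtaining a continuous morphism of pseudogroups $\Phi\colon\tG_1\to\tG_2$ extending $\rho|_{\Af(\G_1)}$, and symmetrically a continuous morphism $\Psi\colon\tG_2\to\tG_1$ extending $\rho^{-1}|_{\Af(\G_2)}$. Since $\rho(\Af(\G_1))=\Af(\G_2)$, the composition $\Psi\circ\Phi\colon\tG_1\to\tG_1$ is a continuous morphism restricting to the identity on $\Af(\G_1)$, and I claim any such morphism is $\mathrm{id}_{\tG_1}$. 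Its spatial component $q\colon X_1\to X_1$ is then $\Af(\G_1)$-equivariant, and the only continuous $\Af(\G_1)$-equivariant self-map of $X_1$ is the identity: using that $\Af(\G_1)\acts X_1$ is minimal and that each restricted action $\rist_{\Af(\G_1)}(U)\acts U$ on a clopen $U$ is minimal too, if $q$ had no fixed point one could choose a finite clopen partition $X_1=\bigsqcup U_i$ with $q(U_i)\cap U_i=\emptyset$, so $q$ would be constant on the dense $\rist(U_i)$-orbits inside each $U_i$, making $q(X_1)$ a finite $\Af(\G_1)$-invariant set, which is impossible; thus $q$ fixes a point, and its (closed, invariant) fixed set is all of $X_1$. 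Since moreover $\Af(\G_1)$ generates the pseudogroup $\tG_1$ (every germ of $\G_1$ is a product of germs carried by $3$-cycles) and, $q$ being the identity, $\Psi\Phi$ preserves all the defining pseudogroup operations, it follows that $\Psi\circ\Phi=\mathrm{id}_{\tG_1}$; the same argument gives $\Phi\circ\Psi=\mathrm{id}_{\tG_2}$, so $\Phi$ is a continuous isomorphism of pseudogroups, and in particular restricts to an isomorphism $\Ff(\G_1)\to\Ff(\G_2)$.

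Finally I would check that $\Phi$ extends $\rho$ itself and not merely $\rho|_{\Af(\G_1)}$. For $g\in G_1\subseteq\Ff(\G_1)$ and any $a\in\Af(\G_1)$, normality gives $gag^{-1}\in\Af(\G_1)$, whence $\Phi(g)\,\rho(a)\,\Phi(g)^{-1}=\Phi(gag^{-1})=\rho(gag^{-1})=\rho(g)\,\rho(a)\,\rho(g)^{-1}$; thus $\rho(g)^{-1}\Phi(g)$ centralises $\rho(\Af(\G_1))=\Af(\G_2)$, and by triviality of that centraliser $\Phi(g)=\rho(g)$, so $\Phi|_{G_1}=\rho$ and $\Phi(G_1)=G_2$. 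The only genuinely hard ingredient is the extension theorem invoked in the third paragraph; everything else is bookkeeping around two permanent facts about alternating full groups of minimal groupoids — simplicity and triviality of centralisers — together with the observation that data at the level of $\Af(\G_i)$ already determines the whole pseudogroup $\tG_i$.
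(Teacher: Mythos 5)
Your overall strategy is the paper's own: identify $\rho(\Af(\G_1))=\Af(\G_2)$ via simplicity, invoke the Extension Theorem in both directions, show the two extensions are mutually inverse, and promote from $\Af$ to $G_i$ by triviality of the centraliser of $\Af(\G_2)$. Those parts are correct. But there is a genuine gap in your second paragraph: you treat the Extension Theorem as if it unconditionally \emph{produces} an extension $\Phi\colon\tG_1\to\tG_2$. It does not. Its conclusion is a dichotomy: either there is a point $y$ in the support at which the germ map $g\mapsto[\rho(g)]_y$ is injective, or the extension exists --- and even then the extension is only to some symmetric power $\widetilde{\G_1^{[r]}}\to\widetilde{\G_2|Z}$ with $r\ge 1$ and $Z$ the (clopen) support of the image. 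Your proposal silently assumes the second alternative with $r=1$ and $Z=X_2$, which is exactly the content that has to be argued. The paper rules out the injective-germ case by noting that for every $y\in X_2$ there is a non-trivial $g\in\Af(\G_2)=\rho(\Af(\G_1))$ fixing a neighbourhood of $y$ (Lemma \ref{l-multigerm}), so $h=\rho^{-1}(g)\neq 1$ satisfies $[\rho(h)]_y=y$ and no germ map is injective; it then gets $Z=X_2$ and $r=1$ from minimality of the action of $\rho(G_1)\supseteq\Af(\G_2)$ on $X_2$ (if $r\ge 2$ the surjective spatial component $q\colon X_2\to X_1^{[r]}$ would pull back the proper closed invariant subset $X_1^{[1]}\subset X_1^{[r]}$ to a proper closed invariant subset of $X_2$). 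This is packaged in Corollary \ref{c-action-minimal}. Without these three verifications your $\Phi$ simply is not known to exist in the form you use it.

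Two smaller remarks. First, where you prove $\Psi\circ\Phi=\mathrm{id}_{\tG_1}$ by hand (spatial component is the identity by proximality, then generation of $\tG_1$ by $\Af(\G_1)$), the paper instead quotes the uniqueness clause of Theorem \ref{t-main}: $\Psi\circ\Phi$ extends the identity of $G_1$, and the extension of a given homomorphism is unique, so it is $\mathrm{id}_{\tG_1}$. Your route is workable (the fact that the only continuous $\Af(\G_1)$-equivariant self-map of $X_1$ is the identity is Lemma \ref{l-trivial-centraliser}, a consequence of proximality, and the generation statement is Lemma \ref{l-power-germ} combined with preservation of compatible unions), but the uniqueness clause makes this step immediate. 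Second, your final centraliser argument showing $\Phi|_{G_1}=\rho$ is exactly how the paper disposes of the correction term $\psi$ in case \ref{i-factor} of Theorem \ref{t-main}, so that part is fine as written.
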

The conclusion of this result is usually stated as the existence of a homeomorphisms $q\colon X_2\to X_1$ that realises the isomorphism $\rho$ by conjugacy (in this special case, this is equivalent to the extension to an isomorphisms $\tG_1\to \tG_2$). 
This result belongs to a broad  family of reconstructions results for isomorphisms between ``large'' groups of transformations. Its origin can be traced back to Dye's seminal work \cite{Dye} in the measure theoretical setting. Dye's method was imported by Giordano-Putnam-Skau \cite{G-P-S-full}  to the setting of full groups of $\Z$-actions on the Cantor set, and extended to larger classes of pseudogroups by  Medynets \cite{Med-isom} and  Matui, who  proved  the theorem above \cite[Th. 3.9]{Mat-SFT}.  More general reconstruction results for isomorphisms between groups of homeomorphisms had meanwhile been proven  by Rubin \cite{Rubin}, and imply  this result as a special case  \cite[Th. 0.2]{Rubin}.


\medskip

It is considerably less understood whether any reasonable results for \emph{homomorphisms} between groups of transformations, in the same spirit of this classical reconstruction theory,  hold true. A very general version of  this problem is raised by Rubin in \cite[p. 7]{Rubin} (for groups of homeomorphisms), and  little results in this direction have been obtained since. In the setting of  full groups no such result  was known so far, neither in the topological dynamical, nor in the measure theoretical setting. In the special case of full groups of minimal  $\Z$-actions on the Cantor set, this is asked in \cite[Question (2f)]{Cor-Bou}. A complete understanding of all  homomorphisms appears to be missing also in concrete special cases such as the   Higman-Thompson groups and their relatives. Here we introduce a new method that allows to state and prove such results. 

\medskip

\medskip 

We need some more terminology to state our main extension theorem.  Given $r\ge 1$, we let $X^{[r]}=X^{r}\bign/\sim$ be the compact space obtained from the cartesian product $X^r$ by identifying any  two $r$-tuples $(x_1,\ldots, x_r)$ and $(y_1,\ldots, y_r)$ if the sets $\{x_1,\ldots, x_r\}$ and $\{y_1,\ldots, y_r\}$ are equal.  Every  $F\in \tG$  can be seen as a partial homeomorphisms of $X^{[r]}$, whose domain consists of sets $\{x_1,\ldots, x_r\}$ contained in the domain of definition of $F$. 
The groupoid of germs over $X^{[r]}$ consisting of germs of \ these transformations is called the $r$th \textbf{symmetric power} of $\G$  and is denoted ${\G^{[r]}}$. The natural action of $\Ff(\G)$ on $X^{[r]}$ realises it as a subgroup of $\Ff(\G^{[r]})$. Thus we can view the groups $\Ff(\G)$ and $\Af(\G)$ as contained  in the pseudogroup $\widetilde{\G^{[r]}}$ for every $r\ge 1$.  

The \textbf{support} of a group $G\le \homeo(Y)$ is the closure of the set of points of $Y$ which are moved by at least one element of $G$.  The following is our main theorem in a slightly simplified form (see Theorem \ref{t-main}). \begin{thm}[Extension Theorem] \label{t-intro-main}
Let $\G$ be a minimal groupoid of germs over a Cantor space $X$, and  $\H$ be a groupoid of germs over any compact space $Y$. Let  $\rho\colon \Af(\G)\to \Ff(\H)$ be a non-trivial homomorphism. Denote by $Z\subset Y$ be the support of $\rho(\Af(\G))$. One of the following mutually exclusive cases holds. 
\begin{enumerate}[label=(\roman*)]
\item \label{i-intro-free} There exists $y\in Z$ such that the corresponding germ  map
\[\Af(\G)\to \H, \quad g\mapsto [\rho(g)]_y\]
is  injective.

\medskip
 
\item \label{i-intro-factors} \label{i-intro-factor}The set $Z$ is clopen, and  there exists $r\ge 1$ such that $\rho$ uniquely extends to a continuous morphism of pseudogroups 
\[\widetilde{\rho}\colon \widetilde{\G^{[r]}} \to \widetilde{\H|Z}.\] (In particular there exists a continuous map $q\colon Z\to X^{[r]}$ which intertwines   $\rho$ and the actions.)
\end{enumerate}

%
%
\end{thm}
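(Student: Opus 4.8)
The plan is to organise the proof around the subgroups recording the local kernel of $\rho$, and to establish the only nontrivial implication, ``not \ref{i-intro-free}''$\ \Rightarrow\ $\ref{i-intro-factors} (mutual exclusivity is checked at the end). First, since $\G$ is minimal, $\Af(\G)$ is simple, so the non-trivial homomorphism $\rho$ is injective; for $y\in Y$ I set
\[
N_y=\bigl\{\,g\in\Af(\G)\ :\ \rho(g)\ \text{is the identity on some neighbourhood of}\ y\,\bigr\},
\]
the kernel of the germ map $g\mapsto[\rho(g)]_y\in\H$. I would record: $(\alpha)$ case \ref{i-intro-free} holds exactly when $N_y=\{1\}$ for some $y\in Z$; $(\beta)$ translating the defining neighbourhood by $\rho(h)$ gives $hN_yh^{-1}=N_{\rho(h)\cdot y}$, so, $Z$ being a support and hence $\rho(\Af(\G))$-invariant, the map $y\mapsto N_y$ is $\rho(\Af(\G))$-equivariant from $Z$ to the Chabauty space $\sub(\Af(\G))$; $(\gamma)$ $N_y=\Af(\G)$ for $y\notin Z$, whereas $N_y\subsetneq\Af(\G)$ for $y\in Z$ — indeed, if $N_y=\Af(\G)$ then intersecting the relevant neighbourhoods over a finite generating set of $\Af(\G)$ (finite when $\G$ is expansive; otherwise exhaust by expansive subgroupoids) produces a neighbourhood of $y$ fixed pointwise by all of $\rho(\Af(\G))$, so $y\notin Z$. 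From here on I assume \ref{i-intro-free} fails, i.e.\ $\{1\}\neq N_y\subsetneq\Af(\G)$ for every $y\in Z$, and aim to produce the extension of \ref{i-intro-factors}.

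The heart of the argument — and the step I expect to be the genuine obstacle — is to prove that for each $y\in Z$ there is a unique finite nonempty set $S_y\subset X$ with
\[
N_y=\germ_{S_y}:=\bigl\{\,g\in\Af(\G)\ :\ g=\mathrm{id}\ \text{on a neighbourhood of each point of}\ S_y\,\bigr\},
\]
and that $r:=\sup_{y\in Z}|S_y|$ is finite. By $(\beta)$ the Chabauty-closure of the conjugacy class of $N_y$ is a closed $\Af(\G)$-invariant family of non-trivial subgroups, and in fact $N_y$ is a \emph{confined} subgroup of $\Af(\G)$; here one invokes the classification of confined and uniformly recurrent subgroups of full groups (the main dynamical output of the Chabauty-theoretic part of the paper, used as a black box in this argument), which shows in particular that every such subgroup contains the derived subgroup of $\rist(X\setminus F)$ and is contained in the germ-stabiliser $\germ_F$, for some finite $F\subset X$ whose size is bounded uniformly along each uniformly recurrent subgroup. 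To upgrade this sandwich to the equality $N_y=\germ_{S_y}$ I would exploit that $N_y$ is not an arbitrary subgroup but the local kernel of a homomorphism defined on \emph{all} of $\Af(\G)$: comparing with the rigid stabilisers $\rist(U)$ of clopen sets $U\subseteq X$ — disjoint $U$'s have $\rho$-commuting images, and $\rist(U)\leq N_y$ precisely when $\rho(\rist(U))$ is trivial near $y$ — one identifies $S_y$ as $X$ minus the union of all such $U$, forces $N_y=\germ_{S_y}$, and, with the uniform bound together with compactness of $Z$, gets a finite $r$. By $(\gamma)$ one has $S_y\neq\emptyset$, hence $S_y\in X^{[r]}$.

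Granting this, constructing the extension is largely formal. Define $q\colon Z\to X^{[r]}$ by $q(y)=S_y$; by $(\beta)$ and uniqueness $gS_y=S_{\rho(g)\cdot y}$, so $q$ intertwines $\rho$ and the two actions. The decisive computation is that for $y\in Z$ and $g,g'\in\Af(\G)$,
\[
[\rho(g)]_y=[\rho(g')]_y\ \Longleftrightarrow\ g'^{-1}g\in N_y=\germ_{S_y}\ \Longleftrightarrow\ [g]_{S_y}=[g']_{S_y}\ \text{in}\ \G^{[r]},
\]
so $[\rho(g)]_y$ depends only on the germ of $g$ at $S_y$ inside $\G^{[r]}$. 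Since every germ of $\widetilde{\G^{[r]}}$ at a point of $X^{[r]}$ is realised by an element of $\Af(\G)$ — by minimality together with the realisability of germs of $\tG$ by full-group elements, so that $\Af(\G)$ generates $\widetilde{\G^{[r]}}$ as a pseudogroup — one gets, for each $F\in\widetilde{\G^{[r]}}$, a partial transformation $\widetilde{\rho}(F)$ of $Z$ determined by $[\widetilde{\rho}(F)]_y=[\rho(g)]_y$ whenever $[F]_{q(y)}=[g]_{q(y)}$. Writing $F$ as a union of compatible restrictions of elements of $\Af(\G)$, one checks that $\widetilde{\rho}(F)$ is a well-defined element of $\widetilde{\H|Z}$ — the one delicate point being that its domain $q^{-1}(\operatorname{dom}F)$ is open, which again uses the description $N_y=\germ_{S_y}$ — and that $\widetilde{\rho}$ preserves compositions, inverses and unions of compatible families, hence is a continuous morphism; its spatial component is then the (automatically continuous) map $q$, and it restricts to $\rho$ on $\Af(\G)\leq\Ff(\G^{[r]})\subset\widetilde{\G^{[r]}}$ by construction. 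Uniqueness is automatic, since a continuous morphism is determined by its germ maps and these are pinned down on the $\Af(\G)$-part by $\rho$.

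It remains to see $Z$ is clopen and that the cases exclude each other. $Z$ is closed, being a support. For openness, fix $y\in Z$ and, by $(\gamma)$, some $g\in\Af(\G)\setminus N_y$. If $\rho(g)(y)\neq y$, then a small clopen neighbourhood $U$ of $y$ has $\rho(g)(U)\cap U=\emptyset$, whence $U\subseteq\supp\rho(g)\subseteq Z$ and $y$ is interior to $Z$. Otherwise $\rho(\Af(\G))$ fixes $y$, and then $g\mapsto[\rho(g)]_y$ is a \emph{group} homomorphism from $\Af(\G)$ to the isotropy group of $\H$ at $y$, non-trivial because $g\notin N_y$, hence injective by simplicity, forcing $N_y=\{1\}$ — contradicting that \ref{i-intro-free} fails. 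So $Z$ is clopen. Finally, if \ref{i-intro-factors} holds then for any $y\in Z$ every $g\in\Af(\G)$ supported away from $q(y)$ satisfies $[\rho(g)]_y=\mathrm{id}$, so $N_y\neq\{1\}$ and \ref{i-intro-free} fails; thus \ref{i-intro-free} and \ref{i-intro-factors} are mutually exclusive. In this scheme the only substantive difficulty is the middle step — pinning down each $N_y$ as a germ-stabiliser of a finite set of uniformly bounded cardinality — which is exactly where the dynamical classification of confined/uniformly recurrent subgroups of $\Af(\G)$ is needed; the rest is bookkeeping.
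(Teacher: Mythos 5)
Your overall architecture matches the paper's: the local kernels $N_y$ are exactly the subgroups $\st^{\rho}_{\Af(\G)}(y)$ of Section \ref{s-Extension Theorem}, the classification of confined subgroups (Theorem \ref{t-confined}) is indeed the key black box, and the passage from the equivariant assignment $y\mapsto S_y$ to the pseudogroup morphism via realisability of germs of $\G^{[r]}$ by elements of $\Af(\G)$ (Lemma \ref{l-power-germ}) is the same formal mechanism. However, there are two genuine problems. First, the step you single out as the heart of the argument --- the exact equality $N_y=\germ_{S_y}=\st^0_{\Af(\G)}(S_y)$ --- is both unjustified and stronger than what is true in general. The classification of confined subgroups only yields the sandwich $\st^0_{\Af(\G)}(S_y)\le N_y\le \st_{\Ff(\G)}(S_y)$, and the gap between the two bounds cannot be closed by your rigid-stabiliser comparison: elements of $\Af(\G)$ that stabilise $S_y$ setwise but not germ-wise, or that have nontrivial germ at a point of $S_y$ lying in the isotropy group (recall $\G$ is only effective, not principal), commute with every $\rist(U)$ for $U$ disjoint from $S_y$ and may perfectly well lie in $N_y$. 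Fortunately your construction of $\widetilde{\rho}$ only uses the implication $[g]_{S_y}=[g']_{S_y}\Rightarrow[\rho(g)]_y=[\rho(g')]_y$, i.e.\ the lower containment $\st^0_{\Af(\G)}(S_y)\le N_y$, so this error is recoverable --- but you should drop the claimed equality rather than rest the proof on it.

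The second problem is not recoverable as written: you give no argument that $y\mapsto S_y$ is continuous, dismissing the spatial component as ``automatically continuous'' and attributing the openness of $q^{-1}(\operatorname{dom}F)$ to the (false) equality $N_y=\germ_{S_y}$. This is precisely where the paper's proof does its real work (the second half of Proposition \ref{p-map}): one must show that if $y_i\to y$ and $S_{y_i}\to Q$ in $\mathcal{P}_r(X)$, then $Q=S_y$. The argument combines \emph{lower} semicontinuity of $y\mapsto N_y$ and of $Q\mapsto\st^0_{\Af(\G)}(Q)$ with \emph{upper} semicontinuity of $y\mapsto\st(y)$ and $Q\mapsto\st(Q)$ to get $\st^0_{\Af(\G)}(Q)\le\st(y)$ and $N_y\le\st(Q)$ in the limit, and then needs a nontrivial normalisation lemma (Lemma \ref{l-dense-normalizes}: a nontrivial subgroup normalised by $\Af(\G|U)$ for $U$ dense open contains $\Af(\G|U)$) to upgrade ``$\st^0_{\Af(\G)}(Q)$ normalises $N_y$'' to ``$\st^0_{\Af(\G)}(Q)\le N_y$'', after which uniqueness in the confined-subgroup classification forces $Q=S_y$. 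Without this, the partial transformations $\widetilde{\rho}(F)$ need not have open domains and $\widetilde{\rho}$ is not a morphism into $\widetilde{\H|Z}$; similarly the clopenness of $Z$, which in the paper is read off from $Z=q^{-1}(X^{[r]})$ and continuity of the extended map to $\mathcal{P}_r(X)$, is not secured by your separate argument alone. A smaller point in the same vein: confinedness of each $N_y$ with a \emph{single} confining set valid for all $y\in Z$ (which is what bounds $r$) also requires the lower-semicontinuity-plus-compactness argument of Lemma \ref{l-semicontinuous-closure}, which you assert rather than prove.
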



\begin{remarks} Let us make some comments on the statement.

\begin{enumerate}[label=(\alph*),leftmargin=*]

\item  The notation $\H|Z$ denotes the \textbf{restriction} of the groupoid $\H$ to $Z$, i.e. the set of germs in $\H$ with source and target in $Z$.

\medskip


\item While it is easier to see that case \ref{i-intro-free} is an obstruction to the existence of an extension a in \ref{i-intro-factor}, the main content of the theorem is that it is the only obstruction. 

\medskip

\item Obviously the theorem can be applied to any action $\Af(\G)\to \homeo(Y)$ of the group $\Af(\G)$ on a compact space, by letting $\H$ be the groupoid  of {all} germs of partial homeomorphisms of $Y$. The relevance of considering smaller groupoids will be apparent shortly. 

\medskip

\item If a homomorphism $\rho$ is defined on some overgroup $G$ of $\Af(\G)$ in $\Ff(\G)$, the theorem can of course still be used by considering the  restriction $\rho|_{\Af(\G)}$. In Theorem \ref{t-main} we will provide a more precise conclusion in this case.

\medskip

\item The natural  embedding $\Af(\G)\hookrightarrow \Ff(\G^{[r]})$ does \emph{not} extend to  a continuous pseudogroup morphism $\tG \to \widetilde{\G^{[r]}}$ as soon as $r\ge 2$. (See Proposition \ref{p-diagonal-does-not-extend}, or just notice that there is no ``canonical choice'' of a  continuous map $X^{[r]}\to X$). This shows that it is necessary to pass to a symmetric power $\G^{[r]}$ for the theorem to be true. 
\medskip


\medskip

\item  One example of homomorphism $\Af(\G)\to \homeo(Y)$ which {always} falls in case \ref{i-intro-free}  (an thus does not extend to $\widetilde{\G^{[r]}}$)   is provided by the Bernoulli shift action of  $\Af(\G)$ on $Y=\{0,1\}^{\Af(\G)}$. Taking the diagonal action on $Y=\{0,1\}^{\Af(\G)}\times X^{[r]}$, we obtain an action that does admit a continuous equivariant map to $X^{[r]}$, which however does   not come from a continuous morphism from $\widetilde{\G^{[r]}}$ (this action still falls in Case \ref{i-intro-free}).

\medskip
\end{enumerate}
\end{remarks}
 As mentioned, the Extension Theorem recovers   Rubin--Matui's Isomorphism Theorem \cite{Mat-SFT} (whose derivation is explained  in Corollary \ref{c-isom}). It is also formally a generalisation of  another property of the group $\Af(\G)$ that has already been mentioned: 
  \begin{cor}[ \cite{Nek-simple}]
 Under the same assumptions, the group $\Af(\G)$ is simple. 
 \end{cor}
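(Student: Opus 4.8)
The statement to prove is that $\Af(\G)$ is simple, derived as a corollary of the Extension Theorem. The plan is to deduce simplicity directly from Theorem \ref{t-intro-main} applied to a quotient map. Suppose $N \unlhd \Af(\G)$ is a proper non-trivial normal subgroup, and consider the quotient homomorphism $\rho\colon \Af(\G) \to Q := \Af(\G)/N$. To bring the Extension Theorem to bear we must realise $Q$ inside a group of homeomorphisms of some compact space $Y$; the cleanest choice is to let $Q$ act on $Y = \{0,1\}^{Q}$ by the Bernoulli shift (or on any faithful compact $Q$-space), so that $\rho$ becomes a homomorphism into $\Ff(\H)$ where $\H$ is the groupoid of all germs of homeomorphisms of $Y$. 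Since $N$ is proper, $\rho$ is non-trivial, so one of the two alternatives \ref{i-intro-free}, \ref{i-intro-factors} holds.

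Next I would rule out case \ref{i-intro-factors}. In that case $\rho$ extends to a continuous morphism of pseudogroups $\widetilde{\rho}\colon \widetilde{\G^{[r]}} \to \widetilde{\H|Z}$ for some $r \ge 1$, and in particular there is a continuous equivariant map $q\colon Z \to X^{[r]}$. The key point is that, because $\G$ is minimal, the action of $\Af(\G)$ on $X^{[r]}$ has a dense orbit and is highly transitive on configurations, so the germ map at a point of $Z$ essentially factors through the germ map on $X^{[r]}$, which is faithful on $\Af(\G)$ (as $\Af(\G)$ acts faithfully and with full support on $X$, hence on $X^{[r]}$). Concretely: a continuous morphism $\widetilde{\rho}$ whose spatial component is $q$ must satisfy that $\rho(g)$ acts on $q^{-1}(\{c\})$ trivially whenever $g$ fixes a neighbourhood of the configuration $c \in X^{[r]}$; combining this with minimality one shows $\ker \rho$ acts trivially everywhere on $X$, forcing $\ker\rho = \{1\}$, contradicting that $N = \ker\rho$ is non-trivial. (Alternatively, and more in the spirit of the paper, one observes that case \ref{i-intro-factors} forces $\rho$ to be injective on $\Af(\G)$ outright because the extended morphism together with minimality recovers enough of the $\G$-action; either way $N$ is trivial.)

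Finally I would handle case \ref{i-intro-free}: there is $y \in Z$ such that the germ map $g \mapsto [\rho(g)]_y$ is injective on $\Af(\G)$. But this germ map factors through $\rho$, i.e. through the quotient $\Af(\G) \to \Af(\G)/N$, so if it is injective then $N = \ker\rho$ must be trivial. This again contradicts the assumption that $N$ is a proper non-trivial normal subgroup. Since both alternatives lead to $N = \{1\}$, no proper non-trivial normal subgroup exists, and $\Af(\G)$ is simple.

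The main obstacle is the bookkeeping in case \ref{i-intro-factors}: one must be careful that the existence of the continuous extension $\widetilde{\rho}$ genuinely implies injectivity of $\rho$ on $\Af(\G)$, using minimality of $\G$ to propagate a local triviality statement (``$\rho(g)$ is trivial near $q^{-1}(c)$ for configurations $c$ avoided by $g$'') to a global one. This requires knowing that $\Af(\G)$ contains, for every clopen configuration set, elements supported there and acting non-trivially — which follows from the density of orbits together with the definition of $\Af(\G)$ via $3$-cycles on disjoint clopen sets. Once that local-to-global step is in place, both cases collapse to $\ker\rho = \{1\}$ and simplicity follows. The derivation of Rubin--Matui's Isomorphism Theorem in Corollary \ref{c-isom} proceeds along parallel lines, and indeed the same local-to-global mechanism is what makes the Extension Theorem ``formally a generalisation'' of simplicity.
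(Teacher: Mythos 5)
Your argument is correct in outline but takes a genuinely different route from the paper's. The paper (Example \ref{e-simple}) derives simplicity by taking the target $\H$ to be a countable group $H$ viewed as an \'etale groupoid over a \emph{one-point} space: there the factor case is instantly absurd (its spatial component would be a continuous surjection from a point onto the Cantor space $X^{[r]}$), the free case literally says $\rho$ is injective, and the remaining case says $\Af(\G)\le\ker\rho$ --- no dynamics is needed at all. You instead realise the quotient $Q=\Af(\G)/N$ inside $\homeo(\{0,1\}^{Q})$ via the Bernoulli shift; this is legitimate, but it forces you to do real work in the factor case, which the paper's choice of target makes vacuous. The free case you handle exactly as the paper does (the germ map kills $N$, so injectivity forces $N=\{1\}$).

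In the factor case your stated mechanism points the wrong way. The implication you invoke --- ``$g$ fixes a neighbourhood of $c$ $\Rightarrow$ $\rho(g)$ acts trivially on $q^{-1}(c)$'' (this is Lemma \ref{l-exclusive}) --- deduces triviality of $\rho(g)$ from local triviality of $g$, whereas to get $\ker\rho=\{1\}$ you need the converse direction $\rho(g)=1\Rightarrow g=1$. The correct and shorter route is the equivariance of the spatial component (Proposition \ref{p-spatial-equivariance}, Corollary \ref{c-continuous-morphism-full}): if $\rho(g)=\operatorname{Id}_Z$ then $g$ fixes $q(Z)$ pointwise; $q(Z)$ is a non-empty closed $\G^{[r]}$-invariant subset of $X^{[r]}$, hence contains $X=X^{[1]}$ by Lemma \ref{l-closed-power} (this is where minimality enters, not the ``high transitivity'' you appeal to); and since $\G$ is effective, an element of $\Ff(\G)$ acting trivially on $X$ is trivial. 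With that substitution your proof closes. One caveat applying equally to your derivation and the paper's: the proof of the Extension Theorem itself uses the simplicity of $\Af(\G)$ (Theorem \ref{t-simple} enters via Lemma \ref{l-dense-normalizes}), so neither derivation is an independent proof; the corollary only illustrates that the Extension Theorem formally generalises simplicity.
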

 (Its derivation  is explained in Example \ref{e-simple}).   In fact while the proof of the Extension Theorem uses a  quite different method from the previous proofs of  reconstruction results for isomorphisms, this method  is explicitly  based on  a generalisation of the simplicity of $\Af(\G)$ (explained in \S \ref{s-intro-confined} below). 
 

%
%
%
%
%

In addition, the Extension Theorem provides indeed a tool study arbitrary {embeddings} between many well-studied classes of full  groups.   The reason is that a homomorphism can \emph{never} fall in case \ref{i-intro-free} provided the  target groupoid $\H$ has a  ``sufficiently nice''  geometry.  In particular we have the following result, whose statement uses Gromov's notion of asymptotic dimension  \cite{Gro-asymptotic} (recalled in  \S \ref{s-coarse}).

\begin{cor}[Automatic Extension for targets with finite dimension; see Theorem \ref{t-asdim}] \label{c-intro-asdim}
Retain the same assumptions on $\G, \H$ and assume further that they are ``compactly generated'' and that every ``{leaf}''  of  $\H$ has finite asymptotic dimension.

Then for every non-trivial homomorphism $\rho\colon \Af(\G)\to \Ff(\H)$, the support $Z$ of $\rho(\Af(\G))$ is a clopen subset of $Y$, and there exists $r\ge 1$ such that $\rho$ uniquely extends to a continuous morphism $\widetilde{\rho}\colon \widetilde{\G^{[r]}}\to \widetilde{\H|Z}$.

Moreover, if every leaf of $\H$ has asymptotic dimension bounded above by $d\in\N$, then the same conclusion holds  for some $r\le d$. 
\end{cor}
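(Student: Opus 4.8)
The plan is to apply the Extension Theorem (Theorem~\ref{t-main}) to $\rho$ and to show that its first alternative (existence of an injective germ map at a point) is incompatible with the leaves of $\H$ having finite asymptotic dimension, while in its second alternative the integer $r$ is automatically $\le d$. Fix once and for all compact generating sets of bisections for $\G$, for $\H$, and for each symmetric power $\G^{[r]}$; then $\Af(\G)$ is finitely generated, and each leaf (source fibre) of $\G$, $\H$ and $\G^{[r]}$ is a connected, locally finite, bounded-valence graph, well-defined up to coarse equivalence.

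First I would rule out the ``injective germ'' alternative. Suppose $y\in Z$ is such that the germ map $\phi\colon g\mapsto [\rho(g)]_y$ is injective. Since every $\rho(g)$ is a compact bisection of $\H$, the length of its germs with respect to the chosen generating set of $\H$ is bounded uniformly in the base point, and this length is subadditive in $g$; together with finite generation of $\Af(\G)$ this makes $\phi$ a Lipschitz map from $\Af(\G)$, with its word metric, into the leaf $\H^{(y)}$ of $\H$ through $y$. As $\H^{(y)}$ is locally finite and $\phi$ is injective, preimages of balls are finite, so $\phi$ is uniformly proper, hence a coarse embedding; therefore $\asdim\Af(\G)\le \asdim\H^{(y)}\le d<\infty$. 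This contradicts the fact that $\Af(\G)$ has \emph{infinite} asymptotic dimension when $\G$ is minimal over an infinite Cantor set (it contains coarsely embedded copies of $\Z^n$ for every $n$, supported on $n$ disjoint clopen pieces). So we are in the second alternative: there is $r\ge 1$ and a continuous morphism $\widetilde\rho\colon \widetilde{\G^{[r]}}\to\widetilde{\H|Z}$ with spatial component $q\colon Z\to X^{[r]}$.

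Next I would bound $r$. The map $q$ is continuous, $\Af(\G)$-equivariant, surjective and — being the spatial component of the canonical extension — finite-to-one with uniformly bounded fibres; hence it restricts on each leaf to a bornologous, proper, uniformly finite-to-one surjection of a leaf of $\H|Z$ onto a leaf of $\G^{[r]}$, and such maps do not increase asymptotic dimension, so $\asdim(\text{leaf of }\G^{[r]})\le \asdim(\text{leaf of }\H)\le d$. On the other hand, choose an $r$-tuple whose points lie in $r$ pairwise distinct $\G$-orbits (possible since $\G$-orbits are countable while $X$ is not); its leaf in $\G^{[r]}$ is the $\ell^1$-product of the $r$ corresponding infinite leaves of $\G$, and each of those, being an infinite connected locally finite graph, contains an isometrically embedded copy of $\N$ — a geodesic ray, obtained as a subsequential limit of geodesic segments of growing length. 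Hence this leaf contains an isometric copy of $\N^r$, and so has asymptotic dimension $\ge r$. Comparing the two inequalities gives $r\le d$; since this $r$ already furnishes the desired extension, the proof is complete.

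\textbf{The main obstacle.} It lies in the two places where germ/spatial maps must be converted into coarse-geometric information. In excluding the ``injective germ'' case one must check that injectivity of $\phi$ at $y$ genuinely yields a coarse embedding of $\Af(\G)$ into an object whose asymptotic dimension is controlled by the leaves of $\H$ (handling any isotropy or holonomy of $\H$, and isolating the correct orbital graph), and one must import the non-trivial input that full groups of minimal pseudogroups over infinite Cantor sets have infinite asymptotic dimension. In bounding $r$, the corresponding point is that the spatial map $q$ is close enough to a finite covering — finite, uniformly bounded fibres — for the leafwise comparison of asymptotic dimensions to be valid. The remaining steps — subadditivity and boundedness of germ length, properness coming from local finiteness, and producing $\N^r$ inside a symmetric-power leaf — are routine.
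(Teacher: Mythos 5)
Your overall strategy is the one the paper uses for Theorem \ref{t-asdim}: apply the Extension Theorem, kill the injective-germ alternative by an asymptotic-dimension comparison against $\asdim(\Af(\G))=\infty$, and bound $r$ by comparing a leaf of $\G^{[r]}$ (which contains a coarse copy of $\N^r$) with a leaf of $\H$. However, two of your justifications do not hold as stated. First, in ruling out the injective-germ case you pass from ``injective, Lipschitz, with finite preimages of balls'' to ``coarse embedding''; finiteness of preimages gives properness but not uniform properness, so the map need not be a coarse embedding. The correct (and weaker) tool is that asymptotic dimension is monotone under \emph{injective coarse maps} between uniformly discrete spaces (Proposition \ref{p-sisto} in the paper), which is exactly what the germ map provides. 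Also, you should not invoke finite generation of $\Af(\G)$: that requires expansivity of $\G$, not mere compact generation; the argument goes through using the canonical coarse structure on $\Af(\G)$ as a discrete group, which is how the paper states Proposition \ref{p-asdim-alt}.

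The more serious gap is in your bound on $r$. The spatial component $q\colon Z\to X^{[r]}$ of the extension is just a continuous equivariant surjection; it is \emph{not} finite-to-one with uniformly bounded fibres in general (already for full groups of $\Z$-actions the analogous factor maps can have uncountable fibres), so the ``uniformly finite-to-one surjection of leaves does not increase $\asdim$'' step collapses. The inequality $\asdim(\G^{[r]}_Q)\le\asdim(\H_y)$ has to be obtained in the other direction: the translation action associated to $\widetilde\rho$ gives, for any $Q\in X^{[r]}$ with \emph{trivial isotropy} in $\G^{[r]}$ and any $y\in q^{-1}(Q)$, an injective coarse map $\G^{[r]}_Q\to\H_y$ (Proposition \ref{p-morphism-leaves}), and then Proposition \ref{p-sisto} applies. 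Note that choosing the $r$ points in distinct orbits is not enough for trivial isotropy of $Q$; one also needs each point to have trivial isotropy group in $\G$, which is available because a second-countable effective \'etale groupoid is essentially principal. Finally, a small caveat on your setup: $\G^{[r]}$ need not be compactly generated even when $\G$ is, so you cannot ``fix a compact generating set'' for it; its leaves nonetheless carry a well-defined coarse structure, and the lower bound $\asdim(\G^{[r]}_Q)\ge r$ is obtained (as you intend) from an injective coarse map of a product of $r$ infinite leaves into $\G^{[r]}_Q$, not from an identification of $\G^{[r]}_Q$ with that product.
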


Let us explain the terminology. The definition of \textbf{compactly generated} groupoid is due to Haefliger, and roughly means that there exists a suitable finite subset $\mathcal{T}\subset \tH$  which \emph{generates} the groupoid $\H$, in the sense that every germ of $\H$ coincides with the germ of a finite product of elements of $\mathcal{T}$ (the precise definition can be found in \S \ref{s-leaves}).  The \textbf{leaf} of the groupoid $\H$  based at a point $y\in Y$  is defined as the subset $\H_y\subset \H$ consisting of  germs of the form $[F]_y$ for $F\in \tH$ whose domain contains  $y$. For compactly generated groupoids every  leaf $\H_y$ can be endowed with a structure of \textbf{Cayley graph}, whose  quasi-isometry is independent on the choice of the generating set $\T$, and thus has a well-defined asymptotic dimension (detailed definitions can be found in \S \ref{s-coarse} and \S \ref{s-leaves}). 
 \begin{examples} \label{e-intro-asdim}
Corollary \ref{c-intro-asdim} can be applied to homomorphisms between the full groups of several pseudogroups studied in the literature:

\begin{enumerate}[leftmargin=*]
\item \label{i-Z} The groupoid of germs of actions of  $\Z$. The leaves of such groupoids have asymptotic dimension 1. In particular the last sentence of  Corollary \ref{c-intro-asdim} implies that homomorphisms between full groups of minimal $\Z$-actions must extend to continuous morphisms of pseudogroups, without need to pass to a symmetric power $\G^{[r]}$. 

\medskip

\item More generally, the groupoid of germs of any (topologically) free action of a finitely generated group with finite asymptotic dimension. In this case the leaves of the groupoid are quasi-isometric to the acting group, and thus have the same asymptotic dimension. Thus Corollary \ref{c-intro-asdim} applies to  homomorphisms between the full groups of topologically free actions of: finitely generated abelian and nilpotent groups \cite{Be-Dra-poly},   discrete subgroups of connected Lie groups \cite{Ca-Go-asdim, Ji-asdim},   hyperbolic groups \cite{Gro-asymptotic, Roe-hyp},  relatively hyperbolic groups whose parabolic  subgroups have finite dimension \cite{Os-asdim},  mapping class groups \cite{BBF-asdim},  cubulable groups \cite{Wr-asdim}. 

\medskip

\item Groupoids associated to \emph{one-sided shifts of finite type} and to products of them (see \S \ref{s-SFT}).  Their full groups were studied studied by Matui in \cite{Mat-SFT, Mat-prod}, and include as a special case the family of Higman-Thompson's groups $V_{n,k}$ over the Cantor set. The leaves of the corresponding groupoids have  dimension 1 (they are quasi-isometric to trees) thus  also in this case there is no need to pass to a symmetric power $\G^{[r]}$. 
\medskip

\item Groupoids associated to aperiodic tilings and quasicrystals in the euclidean space (such as the Penrose tiling). Such groupoids are well-studied in symbolic dynamics \cite{tiling1, tiling2, tiling3}. 
The study of their  full group is suggested in \cite[\S 6.3]{Nek-simple}. 

\medskip

\item The groupoid of germs of the fragmentation groups introduced by Nekrashevych, whose  full groups are torsion and  have intermediate growth \cite{Nek-frag}. Their leaves have asymptotic dimension 1 (see \cite{Nek-frag}).

\end{enumerate}
\end{examples}


\subsection{Some concrete examples} \label{s-intro-concrete}
Let us  illustrate how the Extension Theorem can be used in practice to study embeddings between groups, by  considering some concrete examples (we refer to \S \ref{s-concrete}-\ref{s-SFT} for a more detailed discussion). 

One of the most  studied classes of full groups are topological full groups of minimal $\Z$-actions, or \emph{Cantor minimal systems}. Given a Cantor minimal system $(X, u)$  (where $u$ is a minimal homeomorphism of a Cantor space) we denote by  $\Ff(X, u)$  its topological full group. See \cite{Cor-Bou} for a survey on full groups of Cantor minimal systems. 

A system $(X_2, u_2)$ \textbf{factors onto} a system $(X_1, u_1)$ if there exists a continuous map $q\colon X_2\to X_1$ such that $u_1\circ q=q\circ u_1$. If this happens, it is easy to see that the group $\Ff(X_1, u_1)$ embeds into $\Ff(X_2, u_2)$ (see \S \ref{s-Z}). The following result says that all embeddings arise from a a natural modification of this construction  
(this answers the question of Cornulier  \cite[Question 2f]{Cor-Bou} mentioned earlier).

\begin{thm}\label{t-intro-Z}
For $i=1,2$ let $(X_i, u_i)$ be a Cantor minimal system. The following are equivalent.
\begin{enumerate}[label=(\roman*)]
\item The group $\Ff(X_1, u_1)$ embeds into $\Ff(X_2, u_2)$.
\item There exist an element $v\in \Ff(X_2, u_2)$ such that  the system $(v, \supp(v))$ factors onto $(X_1, u_1)$. 
\end{enumerate}
\end{thm}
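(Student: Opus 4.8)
The plan is to deduce Theorem \ref{t-intro-Z} from the Extension Theorem (in the form of Corollary \ref{c-intro-asdim}) applied to the groupoids of germs of the two $\Z$-actions, using that these groupoids are compactly generated with leaves of asymptotic dimension $1$.

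First I would handle the easy implication (ii)$\Rightarrow$(i). Given $v\in\Ff(X_2,u_2)$ with $W:=\supp(v)$ clopen and $u_2$-like dynamics on $W$, the hypothesis gives a factor map $q\colon W\to X_1$ intertwining $v|_W$ and $u_1$. As recalled in \S\ref{s-Z}, a factor map between Cantor minimal systems induces an embedding $\Ff(X_1,u_1)\hookrightarrow \Ff(W,v|_W)$; since $W$ is a clopen $v$-invariant set inside $X_2$ and elements of $\Ff(W,v|_W)$ extend by the identity on $X_2\setminus W$ to elements of $\Ff(X_2,u_2)$ (using that $v\in\Ff(X_2,u_2)$, so locally $v$ agrees with $u_2^{\pm k}$ and products of such are again in the full group), we get the desired embedding. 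The one point needing a little care is that $\supp(v)$ is genuinely a subsystem of the form required, i.e. $v|_{\supp(v)}$ is a well-defined self-homeomorphism of a clopen set; this is standard for elements of the full group of a $\Z$-action, since supports of full group elements are clopen.

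Now the substantive direction (i)$\Rightarrow$(ii). Let $\rho\colon\Ff(X_1,u_1)\hookrightarrow\Ff(X_2,u_2)$ be an embedding, let $\G=\G_{\Z\acts X_1}$, $\H=\G_{\Z\acts X_2}$, and restrict $\rho$ to $\Af(\G)\le\Ff(X_1,u_1)$; this restriction is still injective, hence non-trivial. Since $\H$ has all leaves quasi-isometric to $\Z$ (asymptotic dimension $1$) and is compactly generated, Corollary \ref{c-intro-asdim} with $d=1$ applies, and moreover by Example \ref{e-intro-asdim}\ref{i-Z} there is \emph{no need to pass to a symmetric power}: we land in case \ref{i-intro-factor} with $r=1$. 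So $Z:=\supp(\rho(\Af(\G)))$ is clopen and $\rho|_{\Af(\G)}$ extends to a continuous morphism $\widetilde{\rho}\colon\widetilde{\G}\to\widetilde{\H|Z}$ with spatial component $q\colon Z\to X_1$ intertwining the action of $\G$ (hence of the generator $u_1$, via its germs inside $\tG$) with that of $\H|Z$. The image $\widetilde\rho(u_1^{\mathrm{local}})$ is a partial homeomorphism of $Z$ with full domain and range $Z$; because it commutes with $q$ and lies in $\widetilde{\H|Z}$, it comes from a single element $v\in\Ff(\H|Z)$, and $v$ locally agrees with powers of $u_2$, so $v$ extends (by the identity off $Z$) to an element of $\Ff(X_2,u_2)$ with $\supp(v)\subseteq Z$. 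Then $q\colon (Z,v)\to(X_1,u_1)$ is a factor map — it is surjective by minimality of $(X_1,u_1)$ together with the fact that the image of a continuous equivariant map into a minimal system is closed, invariant, and nonempty. Finally I must promote $\supp(v)=Z$: a priori $v$ could be the identity on part of $Z$. Here I would use that $\Af(\G)$ acts on $X_1$ minimally and $\rho$ is injective together with the extension property: if $v$ fixed a nonempty clopen $Z_0\subseteq Z$ pointwise, then $\widetilde\rho$ would factor through the restriction groupoid over $Z\setminus Z_0$ (or the support of $v$), and one replaces $Z$ by $\supp(v)$, which is still clopen, still $v$-invariant, still maps onto $X_1$ by minimality, and now carries $v$ with full support; the system $(\supp(v),v)$ factors onto $(X_1,u_1)$ as required.

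The main obstacle I anticipate is the last step — verifying that one can arrange $\supp(v)=Z$ and that $q$ restricted to this support remains a (surjective) factor map intertwining $v$ with $u_1$, rather than merely a semiconjugacy defined on a possibly-larger clopen set on which $v$ acts with smaller support. This requires carefully tracking how the spatial component $q$ of $\widetilde\rho$ behaves under restricting the target groupoid $\H|Z$ to $\H|\supp(v)$, and using minimality of $(X_1,u_1)$ to keep surjectivity; the uniqueness clause in Corollary \ref{c-intro-asdim} is what guarantees $v$ is canonically determined, so there is no ambiguity in its support. A secondary point is checking that the embedding $\Ff(X_1,u_1)\hookrightarrow\Ff(X_2,u_2)$ does indeed restrict to a genuine (injective) homomorphism on $\Af(\G)$ with image still having the claimed support $Z$ independent of whether we started from $\Af(\G)$ or the whole $\Ff(X_1,u_1)$ — this follows from the more precise version Theorem \ref{t-main} mentioned in the remarks, which handles overgroups of $\Af(\G)$ directly.
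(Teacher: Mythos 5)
Your proposal is correct and follows essentially the same route as the paper: restrict to $\Df(X_1,u_1)=\Af(\G_{u_1})$, invoke the automatic extension for targets whose leaves have asymptotic dimension $1$ to get a continuous morphism $\widetilde{\rho}\colon \tG_{u_1}\to\widetilde{\G_{u_2}|Z}$ with $r=1$, and set $v=\widetilde{\rho}(u_1)$ with the spatial component $q$ as the factor map. The final worry about arranging $\supp(v)=Z$ is moot: since $q\circ \tau(v)=u_1\circ q$ on $Z$ and the minimal homeomorphism $u_1$ has no fixed points, $v$ moves every point of $Z$, so $\supp(v)=Z$ automatically.
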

(See Theorem \ref{t-Z-concrete} for a more precise result including a description of the embeddings).

\medskip

In \S \ref{s-iet}, we consider the group $\iet$ of \textbf{interval exchange transformations}. This is defined as the group of  all piecewise continuous bijections of $\R/\Z$ with finitely many discontinuities, which are translations in restriction to each interval of continuity. This group has attracted attention recently, especially in relation to its subgroup structure, which remains however quite mysterious. For example, it is still not known whether the group $\iet$ contains non-abelian free subgroups.

Fix a finitely generated additive subgroup $\Lambda<\R/\Z$, that we assume to be dense (equivalently, to contain at least one irrational element). We let $\iet(\Lambda)$ be the countable subgroup of $\iet$ consisting of elements whose discontinuity points belong to $\Lambda$, and that in restriction to every interval of continuity coincide with a translation by an element of $\Lambda$. This group is isomorphic to the full group of a natural action  $\Lambda\acts X_\Lambda$ on a Cantor space obtained by ``blowing up'' an orbit of the rotation action $\Lambda \acts \R/\Z$ (see \S \ref{s-iet}). Its derived subgroup $\iet(\Lambda)'$ is simple and finitely generated \cite{Mat-SFT, Nek-simple}. Every finitely generated group which embeds into $\iet$ embeds into $\iet(\Lambda)$ for some $\Lambda$, thus it is natural to ask to what extent the subgroup structure of  $\iet(\Lambda)$ depends on $\Lambda$ and its  arithmetic properties.  Very few results in this direction are known \cite{ExtAmen}, and the dependency on $\Lambda$ is only on its rank as an abstract abelian group. The next result shows  a fine obstruction on the possible embeddings between  groups in this family in terms of $\Lambda$.

\begin{thm}
Let $\Lambda, \Delta< \R/\Z$ be finitely generated groups of rotations, and let $\widehat{\Lambda}, \widehat{\Delta}<\R$ be their preimages  under the quotient map   $\R\to \R/\Z$. The following are equivalent.
\begin{enumerate}[label=(\roman*)]
\item There exists a group embedding of   $\iet(\Lambda)$  into $\iet(\Delta)$.
\item There exists $c\in (0, 1]$ such that the homothety  $\R\to \R, x\mapsto cx$ maps $\widehat{\Lambda}$ into $\widehat{\Delta}$. 
\end{enumerate}
\end{thm}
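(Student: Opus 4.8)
The plan is to deduce this theorem from the Extension Theorem (Theorem~\ref{t-intro-main}) applied to the pseudogroups $\tG_\Lambda$ and $\tG_\Delta$ underlying $\iet(\Lambda)$ and $\iet(\Delta)$, recalling that $\iet(\Lambda)' = \Af(\G_\Lambda)$ and similarly for $\Delta$, and that $\iet(\Lambda)$ is an overgroup of $\Af(\G_\Lambda)$ inside $\Ff(\G_\Lambda)$. First I would establish the implication (ii)$\Rightarrow$(i): given a homothety $x\mapsto cx$ carrying $\widehat\Lambda$ into $\widehat\Delta$, this induces an injective morphism of the rotation actions $\Lambda\acts\R/\Z$ into $\Delta\acts\R/c\Z\cong\R/\Z$ (after rescaling), hence a factor-type construction on the blown-up Cantor systems $X_\Lambda, X_\Delta$; one checks directly that the clopen subset of $X_\Delta$ corresponding to the image is invariant under a sub-pseudogroup isomorphic to $\tG_\Lambda$, so $\iet(\Lambda)\hookrightarrow\iet(\Delta)$ by restricting partial homeomorphisms. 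This direction is essentially a concrete bookkeeping of Stone duality between $\R/\Z$-rotations and their Cantor blow-ups.

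The substance is (i)$\Rightarrow$(ii). Suppose $\iota\colon\iet(\Lambda)\hookrightarrow\iet(\Delta)$; restrict to $\Af(\G_\Lambda)$ and view the target inside $\Ff(\H)$ where $\H=\G_\Delta$. Since the leaves of $\G_\Delta$ are orbital graphs of a $\Z^k$-action (the rank-$k$ group $\Delta$), they have finite asymptotic dimension, so Corollary~\ref{c-intro-asdim} applies: case~\ref{i-intro-free} is impossible, $Z\subset X_\Delta$ is clopen, and $\iota$ extends to a continuous morphism $\widetilde\iota\colon\widetilde{\G_\Lambda^{[r]}}\to\widetilde{\G_\Delta|Z}$ for some $r$. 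I would then argue $r=1$: the germ structure of $\iet(\Lambda)$ (elements are locally translations, so every element's restriction to a small clopen set is "transitive-like" and the symmetric-power trick would force germs of $\G_\Lambda^{[r]}$ with $r\ge2$ to have the wrong local structure to embed into germs of an honest abelian-group action) — more precisely one uses that $\G_\Delta$, being a groupoid of germs of a free $\Delta$-action, has all isotropy trivial and its germ maps are injective, which is incompatible with the non-Hausdorff/branching germ behaviour of $\G_\Lambda^{[r]}$ for $r\ge 2$ unless $r=1$. (Alternatively invoke the sharper statement of Theorem~\ref{t-main} for overgroups.) So we get a continuous morphism $\tG_\Lambda\to\widetilde{\G_\Delta|Z}$ with spatial component $q\colon Z\to X_\Lambda$ intertwining the actions.

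Finally I would translate this groupoid morphism back into arithmetic. The spatial map $q\colon Z\to X_\Lambda$ is equivariant for a partial action of $\tG_\Delta$ factoring through $\tG_\Lambda$; passing through the identification of $X_\Lambda$ with the blow-up of $\R/\Z$ along the $\Lambda$-orbit and collapsing the blown-up intervals, $q$ descends to a continuous $\Lambda$-equivariant-up-to-the-morphism map $\overline Z\to\R/\Z$ on a suitable subinterval $\overline Z$ of $\R/\Z$, where $\tG_\Delta$ acts by local $\widehat\Delta$-translations. Such a map between blow-ups of rotations that intertwines local translations is forced to be affine with rational-over-the-data slope; writing the slope as $c$ and tracking where translation lengths go shows $c\cdot\widehat\Lambda\subseteq\widehat\Delta$ and $c\in(0,1]$ (the bound because $Z$ being a proper-or-full clopen subset of a probability-like Cantor system forces the rescaling not to expand). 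The main obstacle I expect is precisely this last step — extracting the \emph{affine} structure and the exact inclusion $c\widehat\Lambda\subseteq\widehat\Delta$ from the bare existence of a continuous intertwining map between the two blown-up rotation pseudogroups, since a priori $q$ need only be continuous, and one must use minimality plus the rigidity of how translation germs compose (a cocycle/Mather-invariant computation for the germ cocycle of $\tG_\Delta$ pulled back along $\widetilde\iota$) to upgrade continuity to affineness and pin down the constant.
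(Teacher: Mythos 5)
Your overall architecture is right — the easy direction by rescaling an arc, and the hard direction via the Extension Theorem plus finite asymptotic dimension of the leaves of $\G_\Delta$ (Theorem \ref{t-asdim}) to rule out the injective-germ case — and this matches the paper. But two of the three substantive steps in the hard direction have genuine gaps. First, your argument that $r=1$ does not work: there is no incompatibility between the "branching germ behaviour" of $\G_\Lambda^{[r]}$ and the target being a groupoid of germs of a free abelian action, because a continuous morphism need not send nontrivial isotropy of $\G_\Lambda^{[r]}$ (which exists at $r$-tuples lying in a single orbit) to nontrivial isotropy in the target — the image germ can connect two distinct points of the fibre of $q$. The paper's actual argument is dynamical and uses the specific geometry of rotations: pick an irrational $\lambda\in\Lambda$; by Lemma \ref{l-iet-minimal} the interval exchange $\widetilde{\rho}(\lambda)$ has finitely many points whose orbits are dense in its support, whereas for $r\ge 2$ the factor $((\R/\Z)^{[r]},\lambda)$ decomposes into \emph{uncountably} many nonempty closed invariant sets (level sets of the diameter, which rotations preserve), a contradiction. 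This idea is absent from your proposal and is not replaceable by a germ-structure observation.

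Second, the final arithmetic step is exactly where you stop: you propose to show $q$ is "affine" via a cocycle computation but flag it as the main obstacle without resolving it, and affineness of $q$ is not really the right mechanism (nor obviously true as stated). The paper instead uses the functoriality of groupoid homology (Proposition \ref{p-morphism-homology}) together with unique ergodicity of $\Lambda\acts X_\Lambda$: the pushforward $q_*(\mu_\Delta|_Z)$ must equal $\mu_\Delta(Z)\,\mu_\Lambda$, which makes the square relating $\iota_{\mu_\Lambda}$, $\iota_{\mu_\Delta}$ and $\widetilde{\rho}_*$ commute with the bottom map being multiplication by $c=\mu_\Delta(Z)$; since $\iota_{\mu_\Lambda}(H_0(\G_\Lambda))=\widehat{\Lambda}$ and $\iota_{\mu_\Delta}(H_0(\G_\Delta))=\widehat{\Delta}$, one gets $c\widehat{\Lambda}\subseteq\widehat{\Delta}$ directly, with no need to upgrade $q$ to an affine map. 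You should replace your last paragraph by this measure/homology argument, and your $r=1$ paragraph by the diameter argument.
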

(See Theorem \ref{t-sturmian-embedding} for a more precise result).

\medskip

In \S \ref{s-SFT}, we consider the full groups of groupoids arising from \emph{one sided shifts of finite type} \cite{Mat-SFT}. For simplicity we discuss here only a special case, namely the family of \emph{Higman--Thompson groups} $V_{n}$. For $n\ge 1$ let $X_n=\{1,\ldots , n\}^\N$ be the Cantor set of $n$-ary one sided sequences. Recall that the Higman-Thompson group $V_{n,1}$ (also denoted $V_{n}$) is the group of all self-homeomorphisms $g$ of $X_n$ with the property that there exists finite collections of finite binary words $\{v_i\}_{i=1}^\ell$ and $\{w_i\}_{i=1}^\ell$  such that every infinite binary sequence begins with one of the words $v_i$ as a prefix, and   $g(v_ix)=w_ix$. It is the full group of a natural groupoid, generated by the germs of the one sided shift $T\colon X_n\to X_n$. The group $V:=V_{2, 1}$ is known as Thompson's group $V$, and was the first example of a finitely presented infinite simple group (see \cite{C-F-P-th}). 

It was proven by Higman that the groups $V_{n,1}$ and $V_{m,1}$ are isomorphic if and only if $n=m$, and their automorphisms were studied in \cite{aut-V} using Rubin's theorem. On the other hand all these groups look quite similar to each other ``qualitatively''. It is well-known and easy to see that they can all be embedded into each other, and multiple  constructions of homomorphisms between them and of endomorphisms can be found in the literature (see e.g.  \cite{Bu-Cl-St-dist,  Mat-prod, Gen-V, Don-endV}). But no general classification of their homomorphisms appears to be known. In \S \ref{s-SFT}, we  use the Extension Theorem to provide a complete description of all homomorphisms within the family  $V_{n, 1}$ (more generally, within full groups of one-sided shifts of finite type).   More precisely, we describe an explicit construction of  homomorphisms between them (inspired by  Matui's work \cite{Mat-prod}) and show that all homomorphisms must arise from this construction. We refer to \S \ref{s-SFT} for details.

 Let us state here only a simple corollary of this description. It says that  although  the groups $V_{n, 1}$ all embed into each other, their embeddings verify a ``fixed point property'':
\begin{cor}[See Corollary \ref{c-V-support}]
Assume that $(n-1) \nmid  (m-1)$. Then for every homomorphism $\rho\colon V_{m,1}\to V_{n,1}$, the action of $\rho(V_{m,1}')$ on $X_n$ has a non-empty clopen set of global fixed points. 
\end{cor}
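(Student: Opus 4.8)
The plan is to apply the Extension Theorem (Theorem \ref{t-intro-main}) to the non-trivial homomorphism $\rho|_{V_{m,1}'}\colon \Af(\G_m)\to \Ff(\G_n)$, where $\G_m,\G_n$ are the (one-sided shift of finite type) groupoids with $\Ff(\G_m)=V_{m,1}$, $\Ff(\G_n)=V_{n,1}$, and $\Af(\G_m)=V_{m,1}'$ is the (simple) alternating/commutator subgroup. Since the groupoids $\G_n$ have tree-like leaves (asymptotic dimension $1$), Corollary \ref{c-intro-asdim} rules out case \ref{i-intro-free}: there is no injective germ map. Hence we are in case \ref{i-intro-factor}, with the bound $r\le 1$, so the support $Z$ of $\rho(V_{m,1}')$ is clopen and $\rho$ extends to a continuous morphism of pseudogroups $\widetilde\rho\colon \widetilde{\G_m}\to \widetilde{\G_n|Z}$, with an associated continuous spatial map $q\colon Z\to X_m$ intertwining the actions.

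\textbf{Reducing to a counting obstruction.} The claim to prove is that $Y:=X_n\setminus Z$ is non-empty (it is clopen since $Z$ is); equivalently, that $Z\neq X_n$. Suppose for contradiction that $Z=X_n$. Then $\widetilde\rho\colon\widetilde{\G_m}\to\widetilde{\G_n}$ is a continuous morphism of pseudogroups with spatial component $q\colon X_n\to X_m$. The key point is that such a morphism must respect a numerical invariant of these groupoids coming from their \emph{dynamical homology} / the structure of the shift: the groupoid $\G_n$ is the germ groupoid of the one-sided full shift $T_n$ on $n$ symbols, which is $n$-to-one, and the unit space carries a canonical ``$K_0$-type'' invariant — the group $\Z/(n-1)\Z$ — arising from the relation that a clopen set $U$ and its image/preimage under the shift differ multiplicatively by $n$ (concretely, the Matui--Thompson-type invariant: the class of $[X_n]$ in $K_0$ is torsion of order $n-1$). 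A continuous morphism $\widetilde{\G_m}\to\widetilde{\G_n}$ with surjective-up-to-the-spatial-map structure (in particular one whose spatial map $q\colon X_n\to X_m$ pushes the groupoid structure around) forces a compatibility between $\Z/(m-1)\Z$ and $\Z/(n-1)\Z$, namely that $(n-1)\mid (m-1)$. This is exactly the divisibility hypothesis we are assuming \emph{fails}, giving the contradiction.

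\textbf{Carrying out the homology argument.} Concretely I would proceed as follows. From the extension $\widetilde\rho$, pull back the partition of $X_n$ into the $n$ cylinders $[i]$, $i=1,\ldots,n$, which are permuted-up-to-shift by elements of $V_{n,1}$; the intertwining with $q$ and the compatibility of $\widetilde\rho$ with compositions and with the shift germs of $\G_m$ produce, at the level of the induced map on groupoid homology $H_0(\G_m)\to H_0(\G_n)$, a map $\Z/(m-1)\Z\to\Z/(n-1)\Z$ sending the canonical generator $[X_m]$ to $[Z]=[X_n]$, which is the canonical generator of $\Z/(n-1)\Z$. A surjection of cyclic groups $\Z/(m-1)\Z\twoheadrightarrow\Z/(n-1)\Z$ forces $(n-1)\mid(m-1)$. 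Since this is ruled out by hypothesis, we cannot have $Z=X_n$; thus $Y=X_n\setminus Z$ is a non-empty clopen set, and by construction every element of $\rho(V_{m,1}')$ (indeed the whole image group, since $Z$ is $\rho(V_{m,1})$-invariant because $V_{m,1}'$ is normal) fixes $Y$ pointwise. This is the asserted set of global fixed points.

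\textbf{Main obstacle.} The delicate step is the homological compatibility: I must make precise that the continuous morphism $\widetilde\rho$, which a priori only goes \emph{into} the restriction groupoid and is not assumed surjective, nonetheless induces the ``right'' map on $H_0$ and in particular sends the fundamental class to the fundamental class of $Z$ rather than to a proper submultiple. This requires unwinding the definition of the spatial component $q$ and the fact that $\widetilde\rho$ is compatible with unions of compatible families (so it behaves well under clopen decompositions), together with the known computation $H_0(\G_n)\cong \Z/(n-1)\Z$ with $[X_n]$ as generator for one-sided SFT groupoids (Matui). Once the map $H_0(\G_m)\to H_0(\G_n)$ is identified as a homomorphism of cyclic groups hitting the generator, the divisibility contradiction is immediate. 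I expect the bulk of the work (done in the body of \S\ref{s-SFT}, here only invoked) is precisely in setting up this functoriality of dynamical homology along continuous pseudogroup morphisms; the corollary itself is then a short deduction.
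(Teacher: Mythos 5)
Your proposal is correct and follows essentially the same route as the paper: rule out the injective-germ case via the tree structure (asymptotic dimension $1$) of the SFT leaves, obtain the clopen support $Z$ and the pseudogroup extension $\widetilde\rho\colon\widetilde{\G_m}\to\widetilde{\G_n|Z}$, and then use functoriality of $H_0$ together with $H_0(\G_{\Omega_k})\cong\Z/(k-1)\Z$ generated by $[X_k]$ to force $(n-1)\mid(m-1)$ when $Z=X_n$ — this is exactly the content of Theorem \ref{t-SFT-support}, from which the paper deduces the corollary. The step you flag as delicate (that $[X_m]\mapsto[Z]$ rather than a proper submultiple) is actually immediate, since a continuous morphism of pseudogroups sends the unit $X_m$ to the unit $Z$ of $\widetilde{\G_n|Z}$, so the induced map on $C_0$ sends $[X_m]$ to $[q^{-1}(X_m)]=[Z]$.
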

This can be seen as a ``dynamical'' strengthening of the fact that $V_{n,1}$ is isomorphic to $V_{m,1}$ if and only if $n=m$.


There are many generalisations of the Higman--Thompson groups in the literature, to which the Extension Theorem can also be applied. For example in \S \ref{s-product-SFT} we also consider the family of higher dimension Thompson's groups $nV$ defined by Brin \cite{Brin-nV} (not to be confused with $V_{n, 1}$). Using Rubin's theorem, Brin observed that the groups $2V$ and $V$ are not isomorphic \cite{Brin-nV}, and this was later generalised by Bleak and Lanoue \cite{Bl-La-nV} who showed that $nV$ is isomorphic to $mV$ if and only if $n=m$. 
 An application of Corollary  \ref{c-intro-asdim} shows that we have in fact the following.  
\begin{cor}[See Corollary \ref{c-SFT-prod}]
For every $n, m\in \N$, the higher dimensional Brin-Thompson group $nV$ embeds into $mV$ if and only if $n\le m$. 
\end{cor}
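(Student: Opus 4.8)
The plan is to apply Corollary~\ref{c-intro-asdim} (automatic extension for targets of finite dimension) to the groups $nV$ and $mV$, viewed as full groups of groupoids over Cantor spaces whose leaves are trees and hence have asymptotic dimension $1$. The "if" direction ($n\le m \Rightarrow nV \hookrightarrow mV$) is the easy, explicit one: $nV$ acts naturally on $X^{[1]}\times\cdots$; more concretely, one embeds $nV$ into $mV$ by letting the ``extra'' $m-n$ coordinates be acted on trivially (fixing a coordinate patch), exactly as in the classical embeddings $V\hookrightarrow 2V\hookrightarrow\cdots$. So the substance is the ``only if'' direction.

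For the ``only if'' direction, suppose $\rho\colon nV \to mV$ is an embedding, and restrict it to the simple finitely generated subgroup $\Af(\G)$ where $\G$ is the groupoid associated to $nV$ (i.e. $(nV)' = \Af(\G)$, which is simple and finitely generated since the groupoid is minimal and expansive). Since $\rho$ is injective it is in particular non-trivial on $\Af(\G)$. Now the target groupoid $\H$ (for $mV$) is compactly generated and each of its leaves has asymptotic dimension $1$ — this is the key input, since the leaves of the Brin--Thompson groupoid $mV$ are quasi-isometric to products of $m$ trees... wait, that would give asymptotic dimension $m$, not $1$. So the correct statement must be that the leaves of $\H$ have asymptotic dimension $m$ (or at any rate some finite bound $d=d(m)$), and Corollary~\ref{c-intro-asdim} then forces $\rho|_{\Af(\G)}$ to extend to a continuous morphism $\widetilde{\G^{[r]}}\to\widetilde{\H|Z}$ for some $r\le d$, with $Z$ clopen. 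The point is merely that the asymptotic dimension of the leaves of $mV$ is \emph{finite}, so case~\ref{i-intro-free} of the Extension Theorem is excluded and we land in case~\ref{i-intro-factor}.

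Given the extension $\widetilde{\rho}\colon \widetilde{\G^{[r]}}\to\widetilde{\H|Z}$ with spatial component $q\colon Z\to X_n^{[r]}$, one then extracts a numerical/dynamical invariant that must be respected. The natural candidate is a topological invariant of the leaves — their polynomial growth degree, or more robustly their asymptotic dimension, or the ``dimension'' recorded by the symmetric power. A leaf of $\G^{[r]}$ is coarsely $r$ copies of a leaf of $\G$, hence a leaf of $nV^{[r]}$ has asymptotic dimension $rn$ (an $r$-fold product of $n$-fold products of trees), while $q$ being a proper equivariant-type map between leaves forces $rn \le m$ (asymptotic dimension is monotone under coarse embeddings of leaves). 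Since $r\ge 1$ this already gives $n\le m$. So the core of the argument is: (a) identify the leaves of the $nV$-groupoid as coarsely $n$-fold products of trees, so that $\asdim = n$; (b) check that the spatial component $q$ of the extension induces a coarse embedding at the level of leaves (this uses that $q$ intertwines the actions, together with minimality and the fact that $\Af(\G)$ acts ``co-compactly'' so that orbits of $\Af(\G)$ in the leaf are coarsely the whole leaf); (c) apply monotonicity of asymptotic dimension.

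The main obstacle, I expect, is step~(b): showing that the spatial component $q\colon Z\to X_n^{[r]}$ of the extended morphism induces an honest \emph{coarse embedding} of a leaf of $\H|Z$ onto (a subset of) a leaf of $\G^{[r]}$, rather than just a set-theoretic equivariant map. One must control how $q$ interacts with the groupoid structure — i.e. verify that germs of $\H$ along a leaf are sent to germs of $\G^{[r]}$ along the image leaf in a way that is bi-Lipschitz for the respective Cayley-graph metrics up to additive error. This should follow formally from the definition of continuous morphism (it preserves compositions, so the image of a product of $k$ generators is a product of $\le Ck$ generators, giving one Lipschitz bound; the other bound comes from the fact that $\rho$ is injective, hence $\widetilde\rho$ is ``injective enough'' on germs to pull the metric back). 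Once this coarse-geometric dictionary is in place, the inequality $\asdim(\text{leaf of }\G^{[r]})\le \asdim(\text{leaf of }\H)$, i.e. $rn\le m$, is immediate, and combined with $r\ge 1$ yields the conclusion. I would also need to double-check the precise value $\asdim = n$ for the leaves of the Brin--Thompson groupoid $nV$ (rather than, say, $n$ times the dimension of a single tree, which is still $n$ since $\asdim(\text{tree})=1$ and $\asdim$ is subadditive and, for products of trees, additive), but this is a routine computation using the product formula for asymptotic dimension and the known tree structure of the $V$-groupoid leaves.
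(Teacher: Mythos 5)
This is essentially the paper's own argument: Corollary \ref{c-asdim-obstruction} (equivalently Theorem \ref{t-asdim} plus Lemma \ref{l-asdim-reduced-power}) applied with the observation, via Lemma \ref{l-tree}, that the leaves of the $nV$-groupoid are products of $n$ trees (asymptotic dimension $n$) and those of the $mV$-groupoid have asymptotic dimension $m$, so that a leaf of $\G^{[r]}$ has asymptotic dimension $rn$ and the extension forces $rn\le m$, whence $n\le m$; your self-correction about the dimension of the leaves lands on exactly the right computation. Your worry in step~(b) dissolves more cheaply than you expect: Proposition \ref{p-morphism-leaves} produces an \emph{injective coarse map} from the leaf of $\G^{[r]}$ \emph{into} the leaf of $\H$ (note the direction --- your prose has the map going the wrong way, though your final inequality is stated correctly), and Proposition \ref{p-asdim} shows asymptotic dimension is monotone under injective coarse maps between uniformly discrete spaces, so no bi-Lipschitz lower bound or genuine coarse embedding is ever needed.
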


\subsection{A rigidity property for  actions with subexponential  growth } \label{s-intro-fg}
We now leave temporary aside the above topological dynamical setting to discuss a rigidity property of groups of combinatorial nature, that we call property $\fg_{f(n)}$. It formalises the inability of a group to act on a set  with orbits growing uniformly  subexponentially (in cardinality).  This property has been considered by several authors,  but always as a consequence of property $(T)$ or of related analytic properties. We show that full groups provide a vast source of examples of groups with this property, but satisfy a strong negation of property $(T)$. 

  To give a formal definition, let $(\Gamma, d_\Gamma)$ be a discrete metric space. Its  \textbf{uniform growth}  function is the function $\overline{b}_\Gamma(n):=\sup_{v\in \Gamma}|B_\Gamma(v, n)|$, where $B_\Gamma(v, n)$ is the ball of radius $n$ around $v$. We assume that $\overline{b}_\Gamma(n) \le \infty $ (i.e. that $\Gamma$ has \textbf{bounded geometry}).
The  \textbf{wobbling group}  $W(\Gamma)$ is the group  of permutations $\sigma$ of  $\Gamma$  such that $\sup_{v} d_\Gamma(v, \sigma(v))<\infty$ \cite{Ce-Gri-Ha}. 

Given two functions $f, g\colon \N\to \N$ we write $f\preceq g$ if there exists a constant $C>0$ such that $f(n)\le Cg(Cn)$ for every $n$, and $f\sim g$ if $f\preceq g\preceq f$. 
\begin{defin} \label{d-fg}
 Let $f\colon \N\to \N$ be a function, with  $n\preceq f(n)\preceq \exp(n)$. A  group $G$ has property $\fg_{f(n)}$ if for every   discrete metric space $\Gamma$  such that   $\overline{b}_\Gamma(n)\nsucceq f(n)$,  every homomorphism $\rho\colon G\to W(\Gamma)$ has finite image.
 
 When $f(n)\sim \exp(n)$,  we will  omit the function $f(n)$ and write simply property $\fg$.\end{defin}
\begin{remarks} Let us comment on this definition.

  \begin{enumerate}[label=(\alph*),leftmargin=*]

 \item If $G$ is finitely generated, it has property $\fg_{f(n)}$ if and only if the Schreier graph $\Gamma=G/K$ of every infinite index subgroup  $K\le G$  satisfies $\overline{b}_\Gamma(n) \succeq f(n)$ (see  Proposition \ref{p-fg}). This can be taken as an alternative definition in this case. The  formulation in terms of wobbling groups has the two-fold advantage to make sense  for non-finitely generated groups, and  to highlight that  property $\fg_{f(n)}$ can be used in practice to study  homomorphisms $G\to H$ to other groups.  The reason is that whenever a finitely generated group $H$ is given by a  faithful transitive action on a set $Y$, it is a subgroup of $W(\Gamma)$ where $\Gamma$ is the  Schreier graph of the action  (with vertex set $Y$ and edges given by the action of generators). Actions  of groups of exponential growth with a subexponentially growing  Schreier graph are abundant in dynamical group theory (for instance among full groups and among groups acting on rooted trees, see \cite{Ba-Gr-Nek-fra, Ba-Gri-spec}). 
 
 \medskip 
 
 \item The notation $\fg$ stands for ``fixed point property for growth'' (this property can also be seen as a fixed point property, see Proposition \ref{p-fg})
  
  \medskip
   \item  It is natural to require a control on the uniform growth $\overline{b}_\Gamma(n)$ rather than  the  growth of balls $|B_\Gamma(n, v)|$ with respect to a fixed vertex  $v\in \Gamma$. 
       For instance, every residually finite group $G$ can be embedded in $W(\Gamma)$ for a connected graph such that $|B_\Gamma(n, v)| \sim n$ for every $v\in \Gamma$ (take $\Gamma$ to be the disjoint union of the Cayley graphs of all finite quotients of $G$, and make it connected by adding very long segments). But many residually finite groups have property $\fg$ as a consequence of property $(T)$ (see below).
       \end{enumerate}
  
  \end{remarks}


Property $\fg$ is a consequence of Kazhdan's property $(T)$. This fact is a variant of the  well-known  observation  that a group with property $(T)$ does not admit an infinite Schreier graph with subexponential growth (which is attributed to Kazhdan  in \cite[Remark 0.5.F]{Gro}).   With the formulation in terms of wobbling groups, an explicit proof of the implication $(T)\Rightarrow \fg$ is  given  in \cite{Ju-Sa-Wobbling} \cite[Theorem 4.1]{Ju-Sa-Wobbling}. This implication has also been used  to observe  the non-existence of infinite property  $(T)$ subgroups in certain groups, such as the group $\iet$ of interval exchanges \cite{D-F-G}.

The implication $(T)\Rightarrow \fg$  follows from a much stronger consequence of property $(T)$, namely the fact that if  $G$ has property $(T)$, then every $G$-action on a set which preserves an \emph{invariant mean} (i.e.~a finitely additive probability measure) must have a finite orbit. On its own right, this fixed point property is called \textbf{property  $\fm$}. It was formalised  and  studied by Cornulier  in  \cite{Cor-FM}, where it is observed explicitly that $\fm\Rightarrow \fg$ \cite[Th. 7.1]{Cor-FM}.  Property $\fm$ also appears earlier (implicitly) in \cite{Gla-Mo} (see also \cite{Dou, Gri-Nek-amen} for related results on the existence of actions with an invariant mean). 

Summing up, we have implications $(T)\Rightarrow \fm\Rightarrow \fg$. There are countable groups that are known to have property $\fm$ but  not property $(T)$, although such examples remain quite rare  (see \cite{Gla-Mo, Cor-FM}).  These can be obtained in two ways: groups constructed from property $(T)$ groups through \emph{ad-hoc} operations that preserve $\fm$ but not $(T)$, and certain groups constructed using (versions of) the small cancellation theory, such Olshanskii's   \emph{Tarski monsters} \cite{Ol-Tarski1, Ol-Tarski2} (it is still not know whether  all Tarski monsters have property $(T)$). 

On the other hand there is  no finitely generated group which is known to have property $\fg$ but not property $\fm$. A tightly related question of existence of such a group is asked by Cornulier \cite{Cor-FM} namely whether there exists a group without $\fm$ and with no   infinite Schreier graphs of subexponential growth.   More generally, most available tools to ensure that \emph{all} infinite Schreier graphs of a given group are ``large''  also seem to imply that they are (uniformly) non-amenable,  yielding property $\fm$. 
 As a consequence, even less is known about property $\fg_{f(n)}$ for functions $f(n)$ which grow subexponentially, apart from the trivial observation that every infinite group  has property $\fg_{f(n)}$ with $f(n)=n$ (and that  this is sharp for many groups).

\medskip 


The next result  is motivated by this discussion.
  Let $\G$ be a minimal  groupoid of germs over a Cantor space $X$. Assume that $\G$ is {compactly generated} by the set  $\T\subset \tG$. For every point $x\in X$, the \textbf{orbital graph} $\Gamma_x(\G, \T)$ is defined as the graph whose vertex set is the orbit of $x$, and for which edges are of the form $(y, T(y))$ for $T\in \T$.  The function $\beta_\G(n)=\overline{b}_{\Gamma_x(\G, \T)}(n)$ is called the \textbf{orbital growth} of $\G$. Its growth type is independent on $\T$ and on the point $x$ (see Lemma \ref{l-growth-graph}). Since   every orbit is dense, the action of $\Ff(\G)$ on each orbit is faithful, and realises $\Ff(\G)$ as a subgroup of $W(\Gamma_x(\G, \T))$. In particular $\Ff(\G)$ is a subgroup of $W(\Gamma)$ for a graph $\Gamma$ such that $\overline{b}_\Gamma(n)=\beta_\G(n)$. This turns out to be sharp:
    
    \begin{thm}[Property $\fg_{f(n)}$ for full groups]\label{t-intro-fg} \label{t-intro-wobbling}
Let $\G$ be a compactly generated minimal groupoid of germs $\G$ over a Cantor space. Then the group $\Af(\G)$ has property $\fg_{\beta_\G(n)}$ (and the function $\beta_\G$ is sharp).
\end{thm}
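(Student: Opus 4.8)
The plan is to combine the Extension Theorem (Theorem \ref{t-intro-main}) with an analysis of the orbital growth of symmetric powers. First I would set up the problem: suppose $\Af(\G)$ acts on a discrete bounded-geometry metric space $\Gamma$ with infinite image, i.e. we have a homomorphism $\rho\colon\Af(\G)\to W(\Gamma)$ with infinite image. Since the action of $W(\Gamma)$ permutes the connected components of $\Gamma$ (wobbling permutations move points a bounded distance, hence preserve components) and $\Af(\G)$ is simple by the Corollary to Theorem \ref{t-intro-main}, the image of $\rho$ cannot act with a finite orbit on the set of components; passing to a suitable component and noting $\Af(\G)$ is simple again, one reduces to the case where $\Gamma$ is connected and $\rho$ is \emph{injective}, with the action on the vertex set of $\Gamma$ being (or containing) a transitive $\Af(\G)$-action — more precisely one works with a single orbit. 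So the task is: show that any faithful transitive action of $\Af(\G)$ on a set, whose Schreier graph $\Gamma$ (built from a finite generating set of $\Af(\G)$) has bounded geometry, satisfies $\overline{b}_\Gamma(n)\succeq\beta_\G(n)$.

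The key step is to feed this action into the Extension Theorem. Let $Y$ be the Stone–Čech-type completion: realize the transitive action of $\Af(\G)$ on the vertex set $V(\Gamma)$ as an action on the Cantor set $Y = \beta V(\Gamma)$ or, better, on a minimal subsystem / on the closure of $V(\Gamma)$ inside an appropriate compactification; take $\H$ to be the groupoid of germs generated by the wobbling structure — this is exactly where bounded geometry is used, as it guarantees $\H$ is compactly generated with leaves quasi-isometric to $\Gamma$ (this is the Skandalis–Tu–Yu / coarse-groupoid picture, and the orbital growth of $\H$ equals $\overline{b}_\Gamma$ up to $\sim$). Now apply Theorem \ref{t-intro-main} to $\rho$. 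Case \ref{i-intro-free} would give an injective germ map $\Af(\G)\to\H$ at some point $y$; this forces the leaf $\H_y$ to "see" the whole group, and a direct argument (the orbit of $y$ under the germ-image must be as large as an orbit of $\Af(\G)$ acting on $X$, compare with the $r=1$ bound below) still yields $\overline{b}_\Gamma(n)\succeq\beta_\G(n)$. In Case \ref{i-intro-factor}, $\rho$ extends to a continuous morphism $\widetilde\rho\colon\widetilde{\G^{[r]}}\to\widetilde{\H|Z}$ with spatial component $q\colon Z\to X^{[r]}$; since $\rho$ is injective and $\Af(\G)$ acts on $X^{[r]}$ with a dense orbit (as $\G$ is minimal, $\G^{[r]}$ has a dense orbit — the orbit of a tuple of distinct points), the map $q$ is surjective onto the closure of that orbit, and the $\H$-leaves surject onto $\G^{[r]}$-leaves. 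Hence $\overline{b}_\Gamma(n) = \beta_\H(n) \succeq \beta_{\G^{[r]}}(n)$. Finally one checks the elementary combinatorial fact that $\beta_{\G^{[r]}}(n)\sim\beta_\G(n)$ for every fixed $r$: a ball of radius $n$ in a leaf of $\G^{[r]}$ around $\{x_1,\dots,x_r\}$ injects into the set of $r$-subsets of the ball of radius $n$ around the $x_i$'s in the $\G$-leaf, hence $\beta_{\G^{[r]}}(n)\le\binom{r\,\beta_\G(n)}{r}$, but this is the wrong direction — instead one uses $\beta_{\G^{[r]}}(n)\ge$ (size of a ball of radius $n$ in the $\G$-leaf through $x_1$, obtained by moving only the first coordinate along a geodesic while freezing the others at points far away), giving $\beta_{\G^{[r]}}(n)\succeq\beta_\G(n)$, which is all we need. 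Combining the cases gives property $\fg_{\beta_\G(n)}$.

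Sharpness — that $\beta_\G$ cannot be replaced by any essentially larger function — is the easy half, already explained in the paragraph preceding the theorem: the action of $\Af(\G)\le\Ff(\G)$ on a single $\G$-orbit realizes it inside $W(\Gamma_x(\G,\T))$, a wobbling group over a graph of uniform growth exactly $\beta_\G(n)$, and this action is faithful with infinite image. So no property $\fg_{f(n)}$ with $f\succ\beta_\G$ can hold.

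The main obstacle I expect is the reduction to a setting where the Extension Theorem applies cleanly: one must package an arbitrary homomorphism $\rho\colon\Af(\G)\to W(\Gamma)$ into a groupoid homomorphism $\Af(\G)\to\Ff(\H)$ for a \emph{compactly generated} groupoid $\H$ of germs over a \emph{compact} space, with control relating $\beta_\H$ to $\overline{b}_\Gamma$. The natural candidate is the coarse/wobbling groupoid of $\Gamma$ acting on a compactification of $V(\Gamma)$, but one has to be careful that the relevant support $Z$ is nonempty (using infiniteness of the image and simplicity of $\Af(\G)$ to rule out degenerate behavior) and that $\H$ restricted to $Z$ is still compactly generated with the same leaf growth. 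Handling Case \ref{i-intro-free} also requires an argument that an injective germ map still produces large orbital growth — this should follow because the germ map injectivity propagates to a faithful action of $\Af(\G)$ on the $\H$-orbit of $y$, which by minimality-type arguments contains a copy of a $\G$-orbit, but making this rigorous without circularity is the delicate point.
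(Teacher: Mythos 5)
Your proposal is essentially sound, but it follows a genuinely different route from the paper's own proof --- namely the alternative route that the paper itself only sketches in Remark \ref{r-wobbling-full} (interpret $W(\Gamma)$ as the topological full group of the Skandalis--Tu--Yu coarse groupoid over $\beta\Gamma$ and apply the Extension Theorem). The paper instead proves Theorem \ref{t-intro-fg} directly from the classification of confined subgroups (Theorem \ref{t-confined}), bypassing the Extension Theorem entirely: given $\rho\colon \Af(\G)\to W(\Delta)$, one fixes a finitely generated subgroup $K\le\Af(\G)$ whose orbital graphs on $X$ are uniformly bi-Lipschitz to the orbital graphs of $\G$ (Lemma \ref{l-qi-graph}) and examines the vertex stabilisers $\st_{\Af(\G)}(v)$ in the Chabauty space. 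Either some sequence of stabilisers converges to $\{1\}$, in which case the Schreier graphs converge to the Cayley graph of $K$ and $b_K\preceq\overline{b}_\Delta$ with $b_K\succeq\beta_\G$; or every stabiliser is confined, hence squeezed between $\st^0_{\Af(\G)}(Q)$ and $\st_{\Af(\G)}(Q)$ for a finite $Q\subset X$, so the Schreier graph at $v$ covers (up to finite index) an orbital graph of $K\acts X$, giving $\overline{b}_\Delta\succeq\beta_\G$. This is more elementary, avoids compactifications and symmetric powers altogether, and handles non-finitely-generated $\Af(\G)$ cleanly. Your route buys generality of method (it is the same machine used for all the other applications) at the cost of heavier infrastructure; both routes ultimately rest on the same confined-subgroup classification, since that is also the engine inside the Extension Theorem.

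A few points in your write-up would need repair if you carried it out. First, $\beta V(\Gamma)$ is not a Cantor space (it is not even metrisable); this is harmless because Theorem \ref{t-main} allows an arbitrary compact unit space for the target groupoid, but you should not call it a Cantor set. Second, in Case \ref{i-intro-factor} the correct statement is not that ``$\H$-leaves surject onto $\G^{[r]}$-leaves'' but that Proposition \ref{p-morphism-leaves} gives an \emph{injective coarse map} from a $\G^{[r]}$-leaf into an $\H$-leaf; the growth inequality you want then follows from injectivity plus the bornologous property, in the same way as in the proof of Theorem \ref{t-asdim}. Also note that $\G^{[r]}$ need not be compactly generated even when $\G$ is (the paper remarks on this after Theorem \ref{t-asdim}), so ``$\beta_{\G^{[r]}}$'' is not a priori defined; your coordinate-freezing trick sidesteps this correctly, provided you choose the points $x_1,\dots,x_r$ in distinct orbits with trivial isotropy so that the map $\gamma\mapsto\{\gamma,x_2,\dots,x_r\}$ is everywhere defined and injective on $\G_{x_1}$. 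Third, in Case \ref{i-intro-free} the clean argument is: the germ map is an injective bornologous map $\Af(\G)\to\H_y$, its restriction to the finitely generated subgroup $K$ of Lemma \ref{l-qi-graph} gives $b_K(n)\le\overline{b}_\Gamma(Cn)$, and $b_K\succeq\beta_\G$ because the Cayley graph of $K$ covers its orbital graphs on $X$; this removes the circularity you were worried about. Your sharpness argument is correct as stated.
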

This produces a source of groups with property $\fg_{f(n)}$ for a vast class of  functions $f$. Moreover, we use it to prove  the following.

\begin{cor}[See Corollary \ref{c-fg-fm}]
There exist finitely generated groups with property $\fg$ but not property $\fm$. 
\end{cor}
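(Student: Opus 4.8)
The plan is to produce such a group as the alternating full group $\Af(\G)$ of a carefully chosen minimal groupoid. I would start from a free minimal action $L\acts X$ on a Cantor space $X$ of a finitely generated amenable group $L$ of exponential growth, the lamplighter group $L=\Z\wr(\Z/2\Z)$ being the natural candidate (any finitely generated amenable group of exponential growth admitting a free minimal subshift would serve equally well). I take this action to be a subshift, so that its groupoid of germs $\G=\G_{L\acts X}$ is minimal, compactly generated and expansive; then by \cite{Nek-simple} the group $\Af(\G)$ is simple and finitely generated, which is the class of groups for which the two properties are to be checked. The mechanism is that property $\fg$ will be forced by the \emph{exponential} orbital growth of $\G$, while the failure of property $\fm$ will come from the \emph{amenability} of the orbital graph, and it is precisely a free action of $L$ that produces both features at once.

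For property $\fg$: since the action $L\acts X$ is free, for every $x\in X$ the orbital graph $\Gamma_x(\G,\T)$ is isomorphic, via the orbit map $g\mapsto g\cdot x$, to a Cayley graph of $L$ with respect to a finite generating set, and hence the orbital growth satisfies $\beta_\G(n)\sim\exp(n)$ because $L$ has exponential growth. Theorem \ref{t-intro-fg} then yields that $\Af(\G)$ has property $\fg_{\beta_\G(n)}$, that is, property $\fg$.

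For the failure of property $\fm$: write $V$ for the $\G$-orbit of $x$, i.e.\ the vertex set of $\Gamma:=\Gamma_x(\G,\T)$. As recalled above, $\Af(\G)$ acts on $V$ faithfully, and through this action it is a subgroup of the wobbling group $W(\Gamma)$. Since $L$ is amenable, the graph $\Gamma$, being a Cayley graph of $L$, is amenable and carries an $R$-F{\o}lner sequence $(F_k)$ for every $R$; for $\sigma\in W(\Gamma)$ of displacement at most $R$ one has $|\sigma(F_k)\,\triangle\, F_k|\le 2|\partial_R F_k| = o(|F_k|)$, so any ultralimit of the normalised counting measures on the $F_k$ is a finitely additive probability measure on $V$ invariant under all of $W(\Gamma)$, hence under $\Af(\G)$. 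Finally, the $\Af(\G)$-action on $V$ has no finite orbit: a finite orbit $O$ would give a homomorphism $\Af(\G)\to\mathrm{Sym}(O)$ with non-trivial kernel, as $\Af(\G)$ is infinite, hence with trivial image by simplicity of $\Af(\G)$, so $O$ would be fixed pointwise; but $\Af(\G)$ has no global fixed point in $X$ when $\G$ is minimal, since any given point can be displaced by a suitable $3$-cycle assembled from elements of $\tG$ with small domain, using density of the $\G$-orbits. Thus $\Af(\G)\acts V$ preserves an invariant mean and has no finite orbit, so $\Af(\G)$ fails property $\fm$; combined with the previous paragraph this gives the corollary, and (since $\fm\Rightarrow\fg$) a genuine separation of the two properties.

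The main obstacle I anticipate is securing an example that enjoys all the required features simultaneously: the acting group must be finitely generated, amenable and of exponential growth, \emph{and} admit a free minimal action on the Cantor set whose groupoid is expansive, so that $\Af(\G)$ is finitely generated --- this relies on the (known) existence of free minimal subshifts over the lamplighter group. Granting that input, the remaining delicate point is the ``no finite orbit'' verification, which genuinely uses both simplicity of $\Af(\G)$ and minimality of $\G$ (through the absence of global fixed points); the fact that the wobbling group of an amenable graph preserves a mean on its vertex set is a routine F{\o}lner estimate.
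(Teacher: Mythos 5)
Your proposal is correct and follows essentially the same route as the paper: take a free minimal expansive action of a finitely generated amenable group of exponential growth (the lamplighter group, whose existence of free minimal subshifts is supplied by the Gao--Jackson--Seward result the paper cites), form the groupoid of germs, get finite generation of $\Af(\G)$ from Nekrashevych's theorem, derive property $\fg$ from Theorem \ref{t-intro-fg} via the exponential orbital growth, and defeat property $\fm$ via the invariant mean on the amenable orbital graph. Your write-up is in fact slightly more detailed than the paper's (the F{\o}lner estimate for the wobbling group and the no-finite-orbit check via simplicity are left implicit there); only note that the lamplighter group is $(\Z/2\Z)\wr\Z$ in the paper's wreath-product convention, not $\Z\wr(\Z/2\Z)$, though your intent is clear.
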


Formally, Theorem \ref{t-intro-fg}  can be deduced from the Extension Theorem \ref{t-intro-main} (see Remark \ref{r-wobbling-full}). However, we will instead deduce it more directly from an intermediate and simpler result, namely the classification   in  \S \ref{s-confined} of the  \emph{confined subgroups} of the group $\Af(\G)$. 
\subsection{Quantitative invariants of groupoids as obstructions  to  homomorphisms}
A tantalising problem in the theory of  full groups consists in understanding how various quantitative invariants of  pseudogroups and  groupoids  are reflected in the behaviour of their full groups. Few general results in this direction are known.
The Extension Theorem implies that various explicit quantitative invariants  associated to two  groupoids $\G_1, \G_2$ produce obstructions to the existence of embeddings between the full groups. This  confirms the  natural intuition that a slow growth of this invariants  gives rise to more ``constrained'' full groups in the sense of group embeddings.

For instance,  Theorem \ref{t-intro-wobbling} readily implies  the following for the orbital  growth function.

\begin{cor}
For $i=1, 2$ let $\G_i$ be a compactly generated minimal groupoid of germs over a Cantor space. Assume that their orbital growth functions satisfy  $\beta_{\G_1}(n)\npreceq \beta_{\G_2}(n)$.  Then there is no non-trivial homomorphism $\Af(\G_1)\to  \Ff(\G_2)$.
\end{cor}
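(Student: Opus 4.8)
The plan is to derive this corollary directly from Theorem~\ref{t-intro-wobbling}, without invoking the Extension Theorem. Suppose, for contradiction, that $\rho\colon\Af(\G_1)\to\Ff(\G_2)$ is a non-trivial homomorphism. The first step is to upgrade non-triviality to injectivity: since $\G_1$ is minimal over a Cantor space, the group $\Af(\G_1)$ is simple (\cite{Nek-simple}, recalled above) and infinite, so $\ker\rho$, being a proper normal subgroup, is trivial; hence $\rho$ is injective and $\rho(\Af(\G_1))$ is an infinite subgroup of $\Ff(\G_2)$.

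Next I would realise $\Ff(\G_2)$ as a wobbling group. Fix a compact generating set $\T_2\subset\widetilde{\G_2}$ and a point $x_2\in X_2$, and let $\Gamma:=\Gamma_{x_2}(\G_2,\T_2)$ be the corresponding orbital graph, so that $\overline{b}_\Gamma(n)=\beta_{\G_2}(n)$ by the definition of the orbital growth. As recalled in the introduction, minimality of $\G_2$ makes every orbit dense, so the action of $\Ff(\G_2)$ on the orbit of $x_2$ is faithful and realises $\Ff(\G_2)$ as a subgroup of $W(\Gamma)$ (the displacements along $\Gamma$ being uniformly bounded, since every full-group element is locally a product of boundedly many elements of $\T_2\cup\T_2^{-1}$). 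Composing with $\rho$ yields an injective homomorphism $\Af(\G_1)\hookrightarrow W(\Gamma)$ with infinite image. Now the hypothesis $\beta_{\G_1}(n)\npreceq\beta_{\G_2}(n)$ says precisely that $\overline{b}_\Gamma(n)=\beta_{\G_2}(n)\nsucceq\beta_{\G_1}(n)$; since $\Af(\G_1)$ has property $\fg_{\beta_{\G_1}(n)}$ by Theorem~\ref{t-intro-wobbling} (see Definition~\ref{d-fg}), the composite homomorphism $\Af(\G_1)\to W(\Gamma)$ must have finite image. This contradicts its injectivity together with the infiniteness of $\Af(\G_1)$, and completes the proof.

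There is essentially no obstacle here: the whole content is packaged in Theorem~\ref{t-intro-wobbling}, and the corollary amounts to bookkeeping. The only points I would be careful to spell out are (i) that a non-trivial homomorphism out of the simple, infinite group $\Af(\G_1)$ is automatically injective with infinite image, and (ii) that the tautological action of $\Ff(\G_2)$ on a single (necessarily dense) $\G_2$-orbit embeds it into $W(\Gamma)$ with $\overline{b}_\Gamma(n)=\beta_{\G_2}(n)$ — i.e.\ that displacements along the orbital graph are uniformly bounded, which follows from compact generation by a routine compactness argument applied to the finite clopen partition defining a given full-group element. Once these are in place, the translation of the hypothesis $\npreceq$ into the growth condition required by property $\fg_{\beta_{\G_1}(n)}$ is immediate.
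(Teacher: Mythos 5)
Your proof is correct and follows essentially the same route as the paper's (Corollary \ref{c-growth-obstruction}): use simplicity of $\Af(\G_1)$ to get faithfulness, realise the image inside the wobbling group $W(\Gamma)$ of an orbital graph of $\G_2$ whose uniform growth is $\beta_{\G_2}$, and invoke property $\fg_{\beta_{\G_1}(n)}$ from Theorem \ref{t-intro-wobbling}. The only cosmetic difference is that the paper's (slightly more general, non-minimal) version locates an orbit on which the image acts faithfully via simplicity, whereas you use minimality of $\G_2$ to get faithfulness of all of $\Ff(\G_2)$ on a single dense orbit; under the stated hypotheses both are valid.
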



The orbital growth function  captures only part of the nature of a groupoid, namely the \emph{geometry} of its orbits. For this reason, in  \S \ref{s-complexity} we consider another invariant of \emph{dynamical} nature which is well-studied in symbolic dynamics, namely the \emph{complexity function}.  For simplicity we discuss here only the case of groupoid of germs of action of the group $\Z^d$ (see \S \ref{s-complexity} for generalisation to arbitrary groupoids).
Let $X\subset A^{\Z^d}$ be a subshift over a finite alphabet, i.e. a closed, translation invariant subset of $A^{\Z^d}$, where $A$ is a finite alphabet. Its \textbf{complexity function} is the function $p_X(n)$ that counts the number of functions $f \colon B_{\Z^d}(0, n)\to A$ which appear as restrictions of elements of $X$. Here $B_{\Z^d}(0, n)$ denotes the ball of radius $n$ in $\Z^d$ with respect to the standard basis. The complexity of a $\Z^d$-subshift is tightly related to its {topological entropy}, which is given by $h_{top}(\Z^d, X)=\lim_{n\to \infty} \frac{1}{|B_{\Z^d}(0, n)|} \log p_X(n)$. 

 It is natural to believe that $\Z^d$-subshift with slow complexity should produce a full group $\Ff(\Z^d, X)$ which is much more ``constrained'' compared to the full group of  a subshift with fast growing complexity (e.g. positive entropy). Some partial results in this direction can be found in \cite{MB-Liouville, Nek-frag} (see also \cite{Nek-complexity} for related results on groupoid algebras), and some evidence comes from the results in \cite{ExtAmen, E-M}. However in general it is not well-understood how the complexity function and the topological entropy are related to the behaviour of the group $\Ff(\Z^d, X)$, even for $d=1$. A special case of our results is the following.

\begin{thm}[See Corollary \ref{c-complexity}] \label{c-intro-complexity}
Let $\Z^d\acts X, \Z^\ell\acts Y$ be minimal subshifts, and assume that   the complexity functions satisfy $p_{X}\npreceq p_{Y}$. Then every homomorphism $\Ff(\Z^d, X)\to \Ff(\Z^\ell, Y)$ has abelian image.  \end{thm}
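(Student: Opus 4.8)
The plan is to reduce the statement to the Extension Theorem \ref{t-intro-main} applied to the restriction of a hypothetical homomorphism $\rho\colon \Ff(\Z^d,X)\to\Ff(\Z^\ell,Y)$ to the simple subgroup $\Af(\Z^d,X)$, and then to rule out the ``free germ'' alternative \ref{i-intro-free} by a growth-of-complexity argument. First I would observe that it suffices to prove the image is abelian; since $\Af(\Z^d,X)$ is simple and equals the derived subgroup of (the relevant overgroup inside) $\Ff(\Z^d,X)$ under minimality, an abelian image of $\Ff(\Z^d,X)$ forces $\rho$ to kill $\Af(\Z^d,X)$. So assume for contradiction that $\rho|_{\Af(\Z^d,X)}$ is non-trivial. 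Apply Theorem \ref{t-intro-main} with $\G=\G_{\Z^d\acts X}$ (minimal by hypothesis) and $\H=\G_{\Z^\ell\acts Y}$. We are in case \ref{i-intro-free} or case \ref{i-intro-factor}.

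In case \ref{i-intro-factor}, there is $r\ge 1$ and a continuous surjection-onto-a-factor type morphism $\widetilde\rho\colon\widetilde{\G^{[r]}}\to\widetilde{\H|Z}$ with spatial component $q\colon Z\to X^{[r]}$ intertwining the actions. The key point is then that the complexity of a factor (or of a symmetric power) is controlled: $p_{X^{[r]}}\preceq p_X$ up to the polynomial-in-$r$ overhead coming from the identification of $r$-tuples, which does not affect the $\preceq$-class, and the restriction $\H|Z$ of a $\Z^\ell$-subshift groupoid to a clopen set, together with the fact that $Z$ carries a factor of $X^{[r]}$, yields an inequality of the shape $p_{X}\preceq p_{X^{[r]}}\preceq p_{Y}$ (using that a continuous equivariant image has complexity bounded by that of the source, and that $\H|Z$ has orbital/complexity behaviour controlled by $Y$). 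This contradicts the hypothesis $p_X\npreceq p_Y$. I would make this precise by recalling from \S\ref{s-complexity} that the complexity function is a $\preceq$-invariant of the groupoid that is monotone under continuous morphisms and under passing to symmetric powers and clopen restrictions.

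In case \ref{i-intro-free}, there is $y\in Z$ such that the germ map $g\mapsto[\rho(g)]_y$ embeds $\Af(\Z^d,X)$ into the groupoid $\H$ — more precisely into the leaf-stabiliser structure at $y$, so that $\Af(\Z^d,X)$ embeds into a group of germs of a $\Z^\ell$-action, which is virtually $\Z^\ell$ (the germ group at a point of a free-enough $\Z^\ell$-action is trivial, and in general is a quotient of $\Z^\ell$). This is absurd because $\Af(\Z^d,X)$ is infinite and simple, hence not virtually abelian, whereas any subgroup of germs of a $\Z^\ell$-action at a point is abelian. Thus case \ref{i-intro-free} cannot occur here at all — this is where the minimality/structure of the \emph{target} being a $\Z^\ell$-action is used decisively, and it is the reason no symmetric-power correction is needed to exclude it. The main obstacle I anticipate is the bookkeeping in case \ref{i-intro-factor}: one must check carefully that the composite inequality $p_X\preceq p_Y$ genuinely follows, i.e.\ that neither passing to $X^{[r]}$, nor restricting $\H$ to the clopen piece $Z$, nor the fact that $q$ need not be surjective onto all of $X^{[r]}$, introduces a loss that breaks the argument — this requires the precise statement that $q$ is a semiconjugacy onto a \emph{subsystem} whose complexity still dominates $p_{X^{[r]}}$ because the $\Z^d$-action on $X$ (hence on $X^{[r]}$) is minimal, so every non-empty open subset already ``sees'' full complexity.
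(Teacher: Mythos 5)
Your treatment of the extension case (the analogue of case \ref{i-intro-factor}) is essentially the paper's argument (Theorem \ref{t-complexity-dichotomy}): restrict the partitions on $X^{[r]}$ to the diagonal copy of $X$ to get $p_X$ below the partition count of the symmetric power, then pull back through the surjective spatial component $q\colon Z\to X^{[r]}$ to land below $p_Y$. (Beware the one directional slip: what you need and ultimately write is $p_X\preceq p_{X^{[r]}}\preceq p_Y$, not $p_{X^{[r]}}\preceq p_X$.) The reduction to $\Af$ via $\Df(\G)=\Af(\G)$ for minimal subshifts is also the same implicit step the paper takes.

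The genuine gap is in your exclusion of case \ref{i-intro-free}. The germ map in that case is $g\mapsto[\rho(g)]_y$, taking values in the \emph{leaf} $\H_y=\src^{-1}(y)$, which is a set, not a group; it is not a group embedding into the isotropy group $\H_y^y$. By Lemma \ref{l-free}, injectivity of this map is equivalent to $\st^{\rho}_{\Af(\G)}(y)=\{1\}$, which only tells you that the point stabiliser $\st_{\Af(\G)}(y)$ injects into the (indeed abelian) isotropy group $\H^y_y$ of the $\Z^\ell$-action. A simple infinite group can perfectly well act faithfully on a countable set with abelian (even trivial) point stabilisers, so "any subgroup of germs of a $\Z^\ell$-action at a point is abelian" does not yield a contradiction. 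What the paper does instead (beginning of the proof of Theorem \ref{t-asdim}, which is what Corollary \ref{c-complexity} invokes) is coarse-geometric: the injective germ map $\Af(\G)\to\H_y$ is an injective \emph{coarse} map into the leaf equipped with its natural coarse structure; for a $\Z^\ell$-subshift the leaf has asymptotic dimension at most $\ell$, while $\asdim(\Af(\G))=\infty$ by Proposition \ref{p-asdim-alt}, so Proposition \ref{p-sisto} gives the contradiction. Note that a growth comparison of orbital graphs ($n^d$ versus $n^\ell$) would not suffice here, since no relation between $d$ and $\ell$ is assumed; the finiteness of $\asdim$ of the \emph{target's leaves} against the infiniteness of $\asdim$ of the \emph{acting group} is what makes the exclusion work in general.
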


To put this result into context, we observe that already for $d=\ell=1$ there was no previous result highlighting  a general qualitative asymmetry between the full groups of subshfits with positive topological entropy and those with vanishing topological entropy.

As a concrete example, let us consider again the group $\iet$ of interval exchange transformations  (see \S \ref{s-intro-concrete}).  The actions  on the Cantor set  $\Lambda \acts X_\Lambda$ arising from groups of rotations of the circle have low (polynomial) complexity. This  ``lack of room''  makes it more difficult to construct embeddings of given groups in $\iet$ (compare with \cite{E-M}), but has never been used to prove obstructions on its possible subgroups. Theorem \ref{c-intro-complexity} has the following corollary. 
\begin{cor}
Let $\Z^d\acts X$ be a minimal subshift whose complexity function $p_X$ is not bounded above by a polynomial (e.g. if $h_{top}(\Z^d, X)>0$).  Then every homomorphism $\Ff(\Z^d, X)\to \iet$ has abelian image. 
\end{cor}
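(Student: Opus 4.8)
The plan is to deduce this Corollary directly from Theorem \ref{c-intro-complexity} (the version for subshifts), so the only real work is to fit the group $\iet$ of interval exchange transformations into the framework of that theorem. First I would recall that every finitely generated subgroup of $\iet$ is contained in some $\iet(\Lambda)$ for a finitely generated dense group of rotations $\Lambda<\R/\Z$, and that $\iet(\Lambda)$ is the full group $\Ff(\Lambda\acts X_\Lambda)$ of the natural minimal action of $\Lambda$ on the Cantor set $X_\Lambda$ obtained by blowing up an orbit of the rotation action $\Lambda\acts\R/\Z$ (as recalled in \S\ref{s-iet}). The blown-up system $X_\Lambda$ is conjugate to a minimal subshift over a finite alphabet — indeed $\Lambda\acts X_\Lambda$ is expansive, being a minimal action of a finitely generated abelian group with a finite clopen generating partition coming from the blown-up points — and its complexity function is polynomial (this is the classical Sturmian/rotation-type estimate: blowing up the orbit of a rotation of $\R/\Z$ by $\Z^k$ gives complexity $\preceq n^k$, and in general the complexity of $X_\Lambda$ is bounded by a polynomial whose degree is the rank of $\Lambda$).

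The key step is the following reduction. Suppose $\rho\colon \Ff(\Z^d,X)\to\iet$ is a homomorphism with non-abelian image; I want a contradiction. Since $\Ff(\Z^d,X)$ is finitely generated (the subshift $X$ being expansive, $\Af(\Z^d,X)$ is finitely generated by Nekrashevych, and $\Ff(\Z^d,X)/\Af(\Z^d,X)$ is finitely generated abelian), its image $\rho(\Ff(\Z^d,X))$ is a finitely generated subgroup of $\iet$, hence contained in some $\iet(\Lambda)=\Ff(\Lambda\acts X_\Lambda)$. Now I apply Theorem \ref{c-intro-complexity} with the target subshift $\Lambda\acts X_\Lambda$: its complexity $p_{X_\Lambda}$ is polynomial, so $p_{X_\Lambda}\preceq n^k$ for some $k$. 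Since by hypothesis $p_X$ is not bounded above by any polynomial, $p_X\npreceq p_{X_\Lambda}$, and Theorem \ref{c-intro-complexity} forces $\rho$ to have abelian image — contradicting our assumption. For the parenthetical remark, recall that positive topological entropy, $h_{top}(\Z^d,X)=\lim_n \frac{1}{|B_{\Z^d}(0,n)|}\log p_X(n)>0$, forces $p_X(n)$ to grow like $\exp(|B_{\Z^d}(0,n)|)$, a fortiori faster than any polynomial.

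The main obstacle is the bookkeeping needed to invoke Theorem \ref{c-intro-complexity} legitimately, since that theorem is stated for homomorphisms $\Ff(\Z^d,X)\to\Ff(\Z^\ell,Y)$ between full groups of subshifts, whereas $\iet(\Lambda)$ is a priori the full group of the action $\Lambda\acts X_\Lambda$ on a Cantor space, not manifestly a subshift. So the crux is to verify that $\Lambda\acts X_\Lambda$ is (topologically conjugate to) a minimal subshift over a finite alphabet and to pin down the polynomial complexity bound — both are classical facts about blown-up circle rotations (one takes the clopen partition of $X_\Lambda$ induced by the two ``sides'' of each blown-up point, checks it is generating by minimality and that the rotation vectors generate $\Lambda$ densely, and computes that the number of $n$-blocks grows polynomially of degree equal to the rank of $\Lambda$, as in the Sturmian case $k=1$), and I would cite the relevant statements in \S\ref{s-iet} rather than redo them. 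The only other point to double-check is finite generation of $\Ff(\Z^d,X)$, which is immediate from expansiveness of the subshift and the discussion after Nekrashevych's theorem in the introduction. Once these are in place the Corollary follows in two lines from Theorem \ref{c-intro-complexity}.
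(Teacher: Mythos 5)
Your overall strategy --- push the image into a single $\iet(\Lambda)=\Ff(\Lambda\acts X_\Lambda)$, observe that $X_\Lambda$ has polynomial complexity, and invoke the complexity obstruction --- is the same as the paper's, which deduces the statement from Corollary \ref{c-complexity} together with the polynomial complexity of the actions $\Lambda\acts X_\Lambda$. However, one step of your reduction is genuinely wrong: you assert that $\Ff(\Z^d,X)$ is finitely generated because $\Ff(\Z^d,X)/\Af(\G)$ is finitely generated abelian (here $\G$ is the groupoid of germs of the subshift). Expansiveness does give finite generation of $\Af(\G)$ (Theorem \ref{t-fg}), but the quotient $\Ff(\G)/\Af(\G)$ is controlled by homological invariants of $\G$ which need not be finitely generated for a minimal subshift; for instance, when the subshift carries infinitely many ergodic invariant measures, $H_0(\G)$ already has infinite rank, and the quotient inherits this. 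So you cannot conclude that $\rho(\Ff(\Z^d,X))$ is finitely generated, and a non-finitely-generated subgroup of $\iet$ need not be contained in any $\iet(\Lambda)$; the step fails as written.

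The repair is exactly what the paper does: restrict $\rho$ to the finitely generated simple group $\Af(\G)$, whose image then lies in some $\iet(\Lambda)$, and apply Corollary \ref{c-complexity} to the expansive groupoid $\G_\Lambda$ (its leaves are quasi-isometric to the finitely generated abelian group $\Lambda$, hence have finite asymptotic dimension) to force $\rho|_{\Af(\G)}$ to be trivial; since $\Af(\G)$ coincides with the derived subgroup of $\Ff(\Z^d,X)$ for minimal $\Z^d$-subshifts, the image of $\rho$ is abelian. Working with Corollary \ref{c-complexity} rather than with Theorem \ref{c-intro-complexity} also spares you the bookkeeping you worry about at the end (whether $\Lambda\acts X_\Lambda$ is literally a $\Z^\ell$-subshift: $\Lambda$ may have torsion), since only expansiveness and finite asymptotic dimension of the leaves are needed. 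The remaining ingredients you cite --- the polynomial complexity of $X_\Lambda$ and the fact that every finitely generated subgroup of $\iet$ lies in some $\iet(\Lambda)$ --- are correct and are precisely the ones the paper uses.
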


\subsection{Confined subgroups and uniformly recurrent subgroups} \label{s-intro-confined} An important ingredient in the proof of the Extension Theorem is the classification   in  \S \ref{s-confined} of the  all subgroups of the  group $\Af(\G)$ satisfying a weak notion of normality, called  \textbf{confined subgroups}.
A subgroup $H\le G$ of a  group $G$ is said to be \textbf{confined} if there exists a finite subset $P\subset G\setminus \{1\}$ which intersects non-trivially all conjugates of $H$. Equivalently, if closure of the conjugacy class of $H$ in the  space of subgroups $\sub(G)$, endowed with the Chabauty topology,  does not contain the trivial subgroup $\{1\}$.   We show that the group $\Af(\G)$ has  very few confined subgroups: roughly speaking, the only ones are  the stabilisers of finite subsets of $X$ (see Theorem \ref{t-confined}). 

The notion of confined subgroup is related to the notion of \emph{uniformly recurrent subgroup} of Glasner and Weiss \cite{Glas-Wei}. Recall that a \textbf{uniformly recurrent subgroup} (URS) of a group $G$ is a closed, minimal conjugation  invariant subset of  $\sub(G)$. A non-trivial URS consists of confined subgroups (but a confined subgroup need not belong to a URS). From the classification of the confined subgroups of the group $\Af(\G)$ we also deduce that is has a unique non-trivial URS, arising from its action on $X$ (see Corollary \ref{c-URS}).

The first uniqueness results on URS's of groups of homeomorphisms were  obtained by  Le Boudec and the author \cite{LB-MB-subdyn}, motivated by applications to the $C^*$-simplicity. We showed in particular that the Thompson's groups $T$ and $V$  admit a unique non-trivial URS. Using this it was also shown there \cite{LB-MB-subdyn} that this implies a rigidity result for minimal actions: if the group $V$ acts minimally on a compact space $Y$ and every $y\in Y$ has a non-trivial stabiliser, then there exists a continuous equivariant map $q\colon Y\to X$.  (An analogous statement was shown for the Thompson group $T$).  For the group $V$, this is in fact a consequence  of the Extension Theorem  in the special case of minimal actions (see Corollary \ref{c-action-minimal}).  This paper  arose from the desire of understanding better this phenomenon.

Confined subgroups are named after Hartley and Zalesskii \cite{Har-Zal},  who observed the relevance of this notion  in the setting of simple locally finite groups, giving rise to an extensive literature in this special case \cite{Har-Zal, Lei-Pug-diag, Lei-Pug-finlin} (the  interpretation in terms of Chabauty topology was  pointed out recently by Thomas \cite{Thomas}). When $\G$ is an \emph{AF-groupoid} (see \S \ref{s-AF}),  our results provide a classification of the confined subgroup of a well-studied class of simple locally finite groups, namely  \emph{block-diagonal limits} of products of finite alternating groups (or \emph{LDA-groups}) \cite{Lav-Nek-LDA, Lei-Pu-LDA}, see \S \ref{s-AF}. Our classification of the confined subgroups of $\Af(\G)$ is new even in this case, and extends   to all LDA groups a previous classification of Leinen-Puglisi for a subclass of the LDA groups  \cite{Lei-Pug-diag}. In turn, in the case of the  LDA groups our classification of  URS's of the group $\Af(\G)$ recovers a recent result of Thomas \cite{Thomas}.

\subsection*{Some  related work} Very satisfactory reconstruction results for homomorphisms have been obtained in the setting of diffeomorphisms and homeomorphisms groups of manifolds by
Hurtado \cite{Hurt-diff}  and Mann \cite{Mann-hom}  (previous results in low dimension were obtained by   Mann \cite{Mann-diff} and Militon \cite{Mil-hom1, Mil-hom2}).  While the motivation has a similar flavour,  this setting quite different  from the one of this paper both  in terms of the formulation of the results and of the methods of   proof. In particular in \cite{Hurt-diff, Mann-hom} the main results are stated as an automatic continuity property of discrete homomorphisms with respect to  the natural topologies on homeomorphisms and diffeomorphisms groups, while our primary focus here is on countable groups. 


The formulation of Theorem \ref{t-intro-main} and the sprit in which it is applied  are reminiscent  of a theorem of Nekrashevych  \cite{Nek-free}, stating that if a free group acts faithfully on a rooted tree, there is a point $x$ in the boundary of the tree such that every group element is uniquely determined by its germ at $x$ (although there is no substantial connection  at the level of proofs).

In his recent paper \cite{LB-urs-lattice}, Adrien Le Boudec illustrates yet another use of  Chabauty methods, by proving results that relate uniformly recurrent subgroups of a discrete group with its lattice envelopes.

After the first version of this paper has appeared, Chaudkhari \cite{Cha-F} obtained  a classification of the confined subgroups of Thompson's group $F$ similar to our Theorem \ref{t-confined}, and deduced that $F$ cannot be embedded in the wobbling group of a graph of uniformly subexponential growth. 


\subsection*{Organisation of the paper}
\S \ref{s-preliminaries} contains  preliminaries on the terminology on group action and coarse geometry.

In \S \ref{s-prel-Chabauty} contains preliminaries on the Chabauty space and confined subgroups. We recall a method from  \cite{LB-MB-subdyn} to study confined subgroups of groups of homeomorphisms, and prove a refinement of one of the results there which will be used later.

\S \ref{s-morphism-pseudogroups} is devoted to the theory of pseudogroups and \'etale groupoids, in particular to their morphisms. It contains both  preliminaries   and some  new results that  may be of independent interest. Its main new contribution  consists in  clarifying the  functoriality of the correspondence between pseudogroups and groupoids, by setting  up a  notion of morphism between \'etale groupoids that turns this correspondence into an equivalence of categories (Theorem \ref{t-non-comm-stone}). This functorial equivalence is used throughout the paper, and  appears  to be different and more general than the previous results in this direction that we were aware of. 

\S \ref{s-prel-alt} collects preliminaries on the alternating full groups, following \cite{Nek-simple}, and collects some technical facts about them.

In \S \ref{s-confined} we establish our characterisation  of the confined subgroups of full groups. This will be a key tool used throughout the paper. 

In \S \ref{s-Extension Theorem} we state and prove the main Extension Theorem and discuss its first corollaries. 

In \S \ref{s-fg} we define and study property $\fg$ and prove Theorem \ref{t-intro-fg}. 

In \S \ref{s-sec-complexity} we prove the application of our results related to the complexity function of subshifts and of \'etale groupoids. 

In \S \ref{s-concrete}-\ref{s-SFT} we use our results to study homomorphisms between three classes of groups: full groups of Cantor minimal systems,  groups of interval exchanges, full groups of one sided SFT's.

\subsection*{Acknowledgements}
I am grateful to Alessandro Sisto for pointing out a property of asymptotic dimension (Proposition \ref{p-asdim}) which  allowed me to improve the statement of Corollary \ref{c-intro-asdim}. This project  started  as a natural continuation of my collaboration with Adrien Le Boudec \cite{LB-MB-subdyn}, and I thank him for many conversations on the topic of the Chabauty space.  I am grateful to  Volodia Nekrashevych for many conversations on the theory of \'etale groupoids from which this paper has benefited. I thank  Laurent Bartholdi, Matt Brin, Yves Cornulier, Adrien Le Boudec, Damien Gaboriau, Rostislav Grigorchuk, Thierry Giordano, Daniel Lenz, Kostya Medynets,  Volodia Nekrashevych for  remarks on preliminary versions of this paper or on the talks in which  these results have been presented, which allowed me to improve the exposition and the formulation of the results.

\section{General preliminaries}\label{s-preliminaries}

\subsection{Notations on stabilisers and graphs of group actions}\label{s:notations-stabilisers} \label{s-notations} 
Throughout the paper, we use the following notations.	If $X$ is a set and $A\subset X$ is a subset, we denote $A^c$ its complement.

Assume that $G$  is a  group acting on $X$.
	\begin{itemize}
	\item   the stabiliser of a point $x\in X$ will be denoted $\st_G(x)$, and the set-wise stabiliser of $A$ will be denoted $\st_G(A)$,
	\item the point-wise stabiliser of $A\subset X$ will be denoted $\fix_G(A)$,
	\item we  call the subgroup $\fix_G(A^c)$  the \textbf{rigid stabiliser} of $A$ and denote it  $\rist_G(A)$ (this terminology is well-established in the setting of Branch groups, see \cite{Gri-branch}).

	\item  If, moreover, $X$ has a topology we  denote $\st^0_G(x)$ and $\st^0_G(A)$   the subgroups consisting of elements that fix point-wise a neighbourhood of $x$ (respectively, $A$), and call it the \textbf{germ-stabiliser} of $x$   (respectively, $A$). Thus $ \st^0_G(A)\le \fix_G(A)\le \st_G(A)$.
	\end{itemize}
Note that $\st^0(x)\unlhd \st_G(x)$. Assume that $X$ is a compact space and that $G$ is countable. Then a simple Baire argument shows that the set of points $x\in X$ such that $\st^0(x)=\st_G(x)$ is a dense $G_\delta$-subset of $X$. A point $x\in X$ satisfying this condition will be said to be a \textbf{regular point}.

Let $G$ be a finitely generated group with finite symmetric generating set $S$. Assume that $G\acts X$ is an action of $G$ on a set. The \textbf{graph of the action} is the graph $\Gamma_X(G, S)$ with vertex set $X$ and edges of the form $(sx, s)$ for every $s\in S$, where every edge is labelled by the corresponding generator. For every point  $x\in X$, the \textbf{orbital graph} $\Gamma_x(G, S)$ is the connected component of  $x$ and has $x$ a distinguished vertex.   It coincides with the \textbf{Schreier graph} of the stabiliser $\st_G(x)$, where the {Schreier graph} of a subgroup $H\le G$ is the orbital graph  $\sch_H(G, S)$ for the action of $G$ on the coset space $G/H$.  If $X$ has a topology, the Schreier graph of the germ stabiliser $\st_G^0(x)$ is  called the \textbf{graph of germs} at $x$ and will be denoted $\widetilde{\Gamma}_x(G, S)$. We omit $G, S$ when they are clear from the context.

 \subsection{Coarse geometry and asymptotic dimension} \label{s-coarse}
When dealing with \'etale groupoids it will be  convenient to use the language of abstract coarse geometry. We refer to Roe's book \cite{Roe-coarse} for a preliminaries.  Let us recall the basic definitions.

\begin{defin} A \textbf{coarse space} is a space $\Gamma$ endowed with a collection  of subsets of $\Gamma\times \Gamma$, called the \textbf{controlled sets}, which contains the diagonal, is closed under taking subsets and finite unions, and has the following properties:
\begin{enumerate}
\item If $E\subset \Gamma \times \Gamma$ is controlled, then so is $E^{-1}:=\{(y,x)\colon (x, y)\in E\}$;
\item if $E, F\subset \Gamma \times \Gamma$ are controlled, then so is $E\circ F:=\{(x, z)\colon (x,y)\in E, (y, z)\in F$.
\end{enumerate}

\end{defin}

\begin{example} The two main examples are the following.

 \begin{enumerate}[label=(\roman*)]

\item \label{i-coarse-metric} Every metric space $(\Gamma, d_\Gamma)$ is a coarse space whose controlled sets are the sets $E\subset \Gamma \times \Gamma$ such that $\sup_{(x, y)\in E} d_\Gamma(x, y)<\infty$. 

\item \label{i-coarse-discrete} Every  group $G$ (not necessarily finitely generated) is a coarse space whose controlled sets are all subsets $E\subset G\times G$ with the property that $|\{gh^{-1}\colon (g, h)\in E\}|<\infty$. When $G$ is finitely generated, this coarse structure coincides with the one induced by any word metric on it.

\end{enumerate}
\end{example}

\begin{remark}
The reader can have in mind these two examples when reading the following definitions. In fact,  all coarse spaces that we will encounter in this paper will  be  graphs (as metric spaces), groups, or the natural generalisation of \ref{i-coarse-discrete} for {\'etale groupoids} (which will be clarified later).  In the setting of  groupoids this language allows to avoid the constant use of some rather heavy notations, which is why we employ it. 
\end{remark}


A subset $B\subset \Gamma$  of a coarse space is said to be \textbf{bounded} if $B\times B$ is controlled. A family $\{B_i, i\in I\}$ of subsets $B_i\subset \Gamma$ is said to be \textbf{uniformly bounded} if $\bigcup_{i\in I} B_i\times B_i$ is controlled. 
A coarse space is said to be \textbf{discrete} if all bounded subsets are finite, and \textbf{uniformly discrete} if  every family   of uniformly bounded subsets has uniformly bounded cardinality. Discrete groups and bounded degree graphs are uniformly discrete, and all coarse spaces that we will consider in this paper will be uniformly discrete.

Let $\Gamma_1, \Gamma_2$ be coarse spaces. A map  $f\colon \Gamma_1\to \Gamma_2$ is said to be \textbf{bornologous} if for every controlled set $E\colon \Gamma_1\times \Gamma_1$, the set $f_*(E):=\{(f(x), f(y))\colon (x, y)\in E\}$ is controlled. When $\Gamma_1, \Gamma_2$ are graphs endowed with the bounded coarse structure (e.g. Cayley graphs of finitely generated groups), this is equivalent to the map $f$ being Lipschitz. 
A map $f\colon \Gamma_1\to \Gamma_2$ is said to be \textbf{proper} if preimages of bounded sets are bounded, and it is said to be \textbf{a coarse  map} if it is proper and bornologous. Two maps $f, g\colon \Gamma_1\to \Gamma_2$ are said to be close if $\{(f(x), g(x))\colon x \in \Gamma_1\}$ is a controlled set in $\Gamma_2$. 

 Two coarse spaces $\Gamma_1, \Gamma_2$ are \textbf{coarsely equivalent} if there exists coarse maps $f\colon \Gamma_1\to \Gamma_2$ and $g\colon \Gamma_2\to \Gamma_1$ such that $f\circ g$ and $g\circ f$ are close to the identity (the maps $f, g$ are then called \textbf{coarse equivalences}). For graphs (and for finitely generated groups), the notion of coarse equivalence coincides with quasi-isometry.

Every subset $\Gamma_0\subset \Gamma$ is naturally a coarse space whose controlled sets are $E\cap (\Gamma_0\times \Gamma_0)$ for $E\subset \Gamma \times \Gamma$ controlled. 
A map $f\colon \Gamma_1\to \Gamma_2$  is said to be a \textbf{coarse embedding} if it  gives rise to a coarse equivalence with $f(\Gamma_1) \subset \Gamma_2$, endowed with the coarse structure  induced from $\Gamma_2$.  Being a coarse embedding is stronger than being an injective coarse map.

We recall the definition of \textbf{asymptotic dimension} $\asdim(\Gamma)$ of a coarse space $\Gamma$. Given a controlled subset $E\subset \Gamma \times \Gamma$, a family  $\mathcal{D}$ of subsets of $\Gamma$ is said to be $E$-\textbf{separated} if for every $A, B\in \mathcal{D}$  we have $\left(A\times B\right)\cap E=\varnothing$. 

\begin{defin}

A coarse space $\Gamma$ satisfies $\asdim(\Gamma)\le n$ if for every controlled set $E$, one can find $E$-separated families  $\mathcal{D}_0,\ldots, \mathcal{D}_n$ of uniformly bounded subsets of $\Gamma$ such that $\mathcal{D}_0\cup \cdots \cup \mathcal{D}_n$ is a cover of  $\Gamma$. The \textbf{asymptotic dimension} $\asdim(\Gamma)$ is defined as the smallest $n$ such that $\asdim(\Gamma)\leq n$ if such an $n$ exists, and to be $\infty$ otherwise.
\end{defin}

Asymptotic dimension is invariant under coarse equivalence and behaves monotonically under coarse \emph{embeddings}. For {uniformly discrete} spaces a  bit more is true: namely it behaves monotonically under  \emph{injective coarse maps} (not necessarily coarse embeddings): \begin{prop} \label{p-asdim} \label{p-sisto}
Let $\Gamma_1, \Gamma_2$ be uniformly discrete coarse spaces, and assume that there exists an injective coarse map $f\colon \Gamma_1\to \Gamma_2$. Then $\asdim(\Gamma_1)\le \asdim(\Gamma_2)$. 
\end{prop}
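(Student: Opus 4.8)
\textbf{Proof plan for Proposition \ref{p-asdim}.}

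The plan is to unwind the definition of asymptotic dimension directly, transporting a good cover of $\Gamma_2$ back along $f$ and using injectivity together with uniform discreteness to control the preimages. Fix $n=\asdim(\Gamma_2)$ (assumed finite, else there is nothing to prove) and let $E\subset\Gamma_1\times\Gamma_1$ be an arbitrary controlled set. Since $f$ is bornologous, $f_*(E)\subset\Gamma_2\times\Gamma_2$ is controlled, so by the definition of $\asdim(\Gamma_2)\le n$ there exist $f_*(E)$-separated families $\mathcal{D}_0,\ldots,\mathcal{D}_n$ of uniformly bounded subsets of $\Gamma_2$ whose union covers $\Gamma_2$. The natural candidate cover of $\Gamma_1$ is $\mathcal{D}_i':=\{f^{-1}(D)\colon D\in\mathcal{D}_i\}$ for $i=0,\ldots,n$; since $f$ is a map (everywhere defined), $\bigcup_i\mathcal{D}_i'$ covers $\Gamma_1$.

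First I would check the $E$-separation of each $\mathcal{D}_i'$. If $A=f^{-1}(D)$ and $B=f^{-1}(D')$ with $D,D'\in\mathcal{D}_i$ distinct, and $(x,y)\in(A\times B)\cap E$, then $(f(x),f(y))\in(D\times D')\cap f_*(E)=\varnothing$, a contradiction; so $\mathcal{D}_i'$ is $E$-separated. The only real point is uniform boundedness of $\mathcal{D}_i'$: this is where both injectivity and uniform discreteness of $\Gamma_1$ enter. Because $f$ is injective, $f$ restricts to a bijection from each $f^{-1}(D)$ onto $f(\Gamma_1)\cap D$, so the family $\{f^{-1}(D)\colon D\in\mathcal{D}_i\}$ has uniformly bounded cardinality (bounded by the uniformly bounded cardinality of the $D$'s, using that $\mathcal{D}_i$ is uniformly bounded and $\Gamma_2$ is uniformly discrete — note both $\Gamma_j$ are uniformly discrete by hypothesis). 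A family of subsets of a uniformly discrete space with uniformly bounded cardinality is automatically uniformly bounded, since $\bigcup_{D}f^{-1}(D)\times f^{-1}(D)$ is then a controlled set (in a uniformly discrete coarse space, any set of pairs lying in finitely-bounded-cardinality subsets is controlled — this is essentially the definition applied in reverse, together with the fact that $\Gamma_1$ is discrete so finite sets are bounded and a uniformly bounded-cardinality family of finite sets gives a controlled union of products).

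The main obstacle I anticipate is the last implication: that in a \emph{uniformly discrete} coarse space a family of subsets with uniformly bounded cardinality is uniformly bounded. This requires care because the coarse structure is abstract — one cannot simply invoke a metric. I would argue as follows: if $\{B_i\}$ has $|B_i|\le N$ for all $i$, write each $B_i=\{b_i^1,\ldots,b_i^{N}\}$ (repeating points if $|B_i|<N$); then $\bigcup_i B_i\times B_i=\bigcup_{k,l\le N}\{(b_i^k,b_i^l)\colon i\}$ is a finite union, so it suffices that each $E_{k,l}:=\{(b_i^k,b_i^l)\colon i\}$ is controlled. Here I would need to choose the enumerations coherently; the clean way is to note that $\{B_i\}$ uniformly bounded is by \emph{definition} $\bigcup B_i\times B_i$ controlled, and that uniform discreteness is precisely the converse statement that a family of subsets whose union of products is \emph{not} controlled cannot have uniformly bounded cardinality — so one should double-check the exact phrasing of ``uniformly discrete'' in \S\ref{s-coarse} and apply its contrapositive. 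Once that bookkeeping is settled, the families $\mathcal{D}_0',\ldots,\mathcal{D}_n'$ witness $\asdim(\Gamma_1)\le n$, completing the proof. I would also remark, as the text does, that this genuinely uses injectivity of $f$ (without it preimages can be arbitrarily large) and uniform discreteness (without it a bounded-cardinality family need not be uniformly bounded), matching the stated hypotheses.
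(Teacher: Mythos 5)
Your reduction to showing that the families $\mathcal{D}_i'=\{f^{-1}(D)\colon D\in\mathcal{D}_i\}$ are $E$-separated covers of $\Gamma_1$ is fine, and the cardinality bound $|f^{-1}(D)|\le N$ coming from injectivity of $f$ and uniform discreteness of $\Gamma_2$ is correct. The gap is your final claim that, in a uniformly discrete coarse space, a family of subsets of uniformly bounded \emph{cardinality} is automatically uniformly \emph{bounded}. That is the \emph{converse} of the definition of uniform discreteness, not its contrapositive, and it is false. Concretely, take $\Gamma_1=\Gamma_2=\Z$ with the metric coarse structure and $f(n)=2n$ for $n\ge 0$, $f(n)=-2n-1$ for $n<0$. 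This is an injective coarse map (it moves adjacent integers to points at distance at most $2$, hence is bornologous, and $|f(n)|\ge |n|$, hence it is proper), but it is not a coarse embedding: the sets $D_n=\{2n-1,2n\}=\{f(-n),f(n)\}$ form a uniformly bounded family in $\Gamma_2$, while $f^{-1}(D_n)=\{-n,n\}$ has cardinality $2$ for every $n$ and diameter $2n$, so $\bigcup_n f^{-1}(D_n)\times f^{-1}(D_n)$ is not controlled. Your proposed enumeration argument breaks at exactly this point: no choice of enumerations makes the sets $E_{k,l}$ controlled, because the statement being proved is simply not true.

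The missing idea, which is how the paper's proof proceeds, is a refinement step. After pulling back, partition each set $f^{-1}(D)$ into its $E$-coarsely connected components (two points of $B$ lie in the same component if they are joined by a chain inside $B$ whose consecutive pairs lie in $E=E\cup E^{-1}$). This refinement preserves $E$-separation (points in distinct components of the same $f^{-1}(D)$ are in particular not $E$-related), still covers $\Gamma_1$, and does not increase cardinalities; but now each piece $B$ is $E$-coarsely connected with $|B|\le N$, whence $B\times B\subset\bigcup_{j=0}^{N}E^{\circ j}$, which \emph{is} controlled. This is precisely where the hypothesis that $f$ is only an injective coarse map, rather than a coarse embedding, gets absorbed; without the connected-component refinement the argument cannot close.
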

This fact was pointed out to the author by A. Sisto, and will be used later. Since we could not locate this statement in the literature, we include a proof for completeness (see also a tightly related observation in \cite[\S 6]{Be-Sc-Ti-asdim}). 

\begin{proof}
If $\asdim(\Gamma_2)=\infty$, there is nothing to prove. Therefore we assume that $\asdim(\Gamma_2)=n<\infty$. Let $E\subset \Gamma_1\times \Gamma_1$ be a controlled set, and upon replacing $E$ with $E\cup E^{-1}$ we assume $E=E^{-1}$.  Let $F=f_*(E)$ and $\mathcal{D}_0,\cdots, \mathcal{D}_n$ be $F$-separated families of uniformly bounded subsets of $\Gamma_2$. By uniform discreteness, there exists $N\ge 1$ such that for every $i$ and $A\in \mathcal{D}_i$ we have $|A|\le N$. The families $f^{-1}(\mathcal{D}_i)=\{f^{-1}(A) \colon A\in \mathcal{D}_i\}$ are $E$-separated. Let $\mathcal{C}_i$ be the refinement of $f^{-1}(\mathcal{D}_i)$ obtained by partitioning every set $f^{-1}(A)$ with its $E$-coarsely connected components. The $E$-coarsely connected component of a subset $B\subset \Gamma_1$  are defined as follows: two points $v, w\in B$ are in the same $E$-coarsely connected component of $B$ if there exists points $w=x_0, x_i,\cdots, x_k=v$ of $B$ such that $(x_i, x_{i+1})\in E$ for $i=0,\ldots, k-1$.   This choice guarantees that the families $\mathcal{C}_i$ remain $E$-separated. By injectivity of the map $f$, every $B\in \mathcal{C}_i$ contains at most $N$-points. Since $B$ is $E$-coarsely connected, this implies that $\bigcup_{B\in \mathcal{C}_i} B\times B\subset \bigcup_{j=0}^N E^{\circ j}$. Thus, the families $\mathcal{C}_i$ consist of uniformly bounded subsets. This shows that $\asdim(\Gamma_1)\le n=\asdim(\Gamma_2)$. \qedhere \end{proof}

\section{The Chabauty space and confined subgroups}

\label{s-prel-Chabauty}
		
	\subsection{Definitions} Let $G$ be a discrete group. The set of subgroups $\sub(G)$ is a compact space endowed with the topology induced by the product topology on the set of all subsets $\{0,1\}^G$ of $G$, which is called the \textbf{Chabauty topology}. The conjugation action of $G$ on $\sub(G)$ is by homeomorphisms. 

Given a finite subset $P\subset G\setminus\{1\}$, we denote
\[\mathcal{U}_P=\{H\in \sub(G)\colon H\cap P=\varnothing\},\quad \mathcal{V}_P=\{H\in \sub(G)\colon P\subset H\}.\]
These sets are open in $\sub(G)$, and form a pre-basis of the topology as $P$ varies over finite subsets of $G\setminus \{1\}$. In particular, the sets of the form $\mathcal{U}_P$ form a fundamental system of neighbourhoods of the trivial subgroup $\{1\}$.

If $G$ is finitely generated, an equivalent description of the Chabauty topology can be given in terms of the space of \textbf{marked graphs}. Recall that given an integer $d\geq 0$ and a finite set $S$, the space of oriented  graphs $(\Gamma, v)$ with a distinguished base-point $v\in \Gamma$,  degree bounded by $d$ and edges  labelled by $S$   is naturally a compact metrisable space, where a sequence $(\Gamma_n, v_n)$ converges to $(\Gamma, v)$ if for every $R>0$  the ball $B_{\Gamma_n}(v_n, R)$ is eventually isomorphic to $B_\Gamma(v, R)$ as a rooted labelled graph. For every finitely generated group $G$, the correspondence $H\mapsto  \sch_H(G, S)$ embeds homeomorphically $\sub(G)$ into the space of marked graphs with degree bounded by $2|S|$ and edges labelled by $S$. 

\begin{defin}
A subgroup $H\le G$ is said to be \textbf{confined} if the closure of $\{gHg^{-1}\colon g\in G\}$ in $\sub(G)$ does not contain $\{1\}$. More generally a group $H$ is said to be \textbf{confined by} a subgroup $A\le G$ if the closure of $\{aHa^{-1}\colon a\in A\}$ does not contain $\{1\}$.
\end{defin}
 This property can be thought of as a weak notion of normality. Note that being confined is equivalent to the existence of a finite set $P\subset G\setminus \{1\}$ such that the conjugacy class of $H$ avoids $\mathcal{U}_P$, i.e.  $P\cap gHg^{-1}\neq \varnothing$ for every $g\in G$. Such a set $P$ will be called a \textbf{confining} set.

Confined subgroups are named after Hartley and Zalesskii  \cite{Har-Zal}, who introduced an equivalent property for simple locally finite groups  \footnote{In the original definition, a subgroup of $H<G$ of a locally finite group is said to be confined if there exists a non-trivial finite \emph{subgroup} $P< G$ such that $gHg^{-1}\cap P \neq \{1\}$. Since they assume $G$ locally finite, this is clearly equivalent to the above. This connection was recently pointed out by Thomas, see \cite{Thomas}}.

Another concept that will play an important role in this paper is the lower and upper semicontinuity of various maps taking values in $\sub(G)$.	
Given a compact space $X$, a map $u\colon X \to \sub(G)$ is said to be \textbf{upper} (respectively \textbf{lower}) \textbf{semicontinuous} if for every net $(x_\nu)$ in $X$ converging to $x$, every cluster point $K$ of $u(x_\nu)$ in $\sub(G)$  verifies $K\le u(x)$ (respectively $u(x)\le K$). A characterisation of these properties is given by the following lemma, which readily follows from the definition of the Chabauty topology.
\begin{lemma}\label{l-semicontinuous}
Let $u\colon X \to \sub(G)$ be a map from a compact space to $\sub(G)$.  
\begin{enumerate}[label=(\roman*)]
\item The map $u$ is upper semicontinuous if and only if for every $g\in G$, the set $\{x\in X\colon g\in u(x)\}$ is closed.
\item The map $u$ is lower semicontinuous if and only if for every $g\in G$, the set $\{x\in X\colon g\in u(x)\}$ is open.
\end{enumerate}
In particular, $u$ is continuous if and only if both conditions hold.
\end{lemma}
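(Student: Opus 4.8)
The statement to prove is Lemma \ref{l-semicontinuous}, characterising upper and lower semicontinuity of a map $u\colon X\to \sub(G)$ in terms of the sets $\{x\in X\colon g\in u(x)\}$ being closed or open. The plan is as follows.

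\medskip

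\textbf{Setup and strategy.} The key is to unwind the definition of the Chabauty topology on $\sub(G)$: it is the subspace topology inherited from the product topology on $\{0,1\}^G$, identifying a subgroup with its indicator function. Thus a net $H_\nu$ converges to $H$ in $\sub(G)$ if and only if for every $g\in G$ we eventually have $g\in H_\nu \iff g\in H$; equivalently, $H\in\sub(G)$ is a cluster point of $(H_\nu)$ precisely when for every finite $P\subset G$ and every finite $Q\subset G$ with $P\subset H$, $Q\cap H=\varnothing$, there are cofinally many $\nu$ with $P\subset H_\nu$ and $Q\cap H_\nu=\varnothing$. I would first record that since $X$ is compact, upper (resp. lower) semicontinuity as stated (every cluster point $K$ of $u(x_\nu)$ satisfies $K\le u(x)$, resp. $u(x)\le K$) can be checked on nets, and that it suffices to argue with a single element $g\in G$ at a time because membership is tested coordinatewise in $\{0,1\}^G$.

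\medskip

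\textbf{Proof of (i).} Suppose $u$ is upper semicontinuous. Fix $g\in G$ and let $(x_\nu)$ be a net in $\{x\colon g\in u(x)\}$ converging to some $x\in X$. By compactness of $\sub(G)$, after passing to a subnet we may assume $u(x_\nu)$ converges to some $K\in\sub(G)$; then $K$ is a cluster point of the original net $u(x_\nu)$, so upper semicontinuity gives $K\le u(x)$. Since $g\in u(x_\nu)$ for all $\nu$ and $u(x_\nu)\to K$ coordinatewise, we get $g\in K$, hence $g\in u(x)$. This shows $\{x\colon g\in u(x)\}$ is closed. Conversely, assume each $\{x\colon g\in u(x)\}$ is closed. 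Let $(x_\nu)\to x$ and let $K$ be a cluster point of $u(x_\nu)$; pass to a subnet with $u(x_\nu)\to K$. For any $g\in K$, coordinatewise convergence forces $g\in u(x_\nu)$ for all $\nu$ past some index, i.e. (a cofinal sub-subnet of) the $x_\nu$ lies in the closed set $\{x\colon g\in u(x)\}$, whose limit $x$ therefore also lies in it; so $g\in u(x)$. Hence $K\le u(x)$, and $u$ is upper semicontinuous.

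\medskip

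\textbf{Proof of (ii).} This is the exact dual, interchanging ``closed'' with ``open'' and ``$\le$'' with ``$\ge$''. Suppose $u$ is lower semicontinuous and fix $g\in G$; to see $\{x\colon g\in u(x)\}$ is open, take $x$ with $g\in u(x)$ and suppose for contradiction there is a net $x_\nu\to x$ with $g\notin u(x_\nu)$ for all $\nu$. Passing to a subnet with $u(x_\nu)\to K$, lower semicontinuity gives $u(x)\le K$, so $g\in K$; but coordinatewise convergence from $g\notin u(x_\nu)$ forces $g\notin K$, a contradiction. Conversely, if each $\{x\colon g\in u(x)\}$ is open, let $(x_\nu)\to x$, let $K$ be a cluster point of $u(x_\nu)$, and pick a subnet with $u(x_\nu)\to K$. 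For $g\in u(x)$, the point $x$ lies in the open set $\{x'\colon g\in u(x')\}$, so $x_\nu$ is eventually in it, i.e. $g\in u(x_\nu)$ eventually, hence $g\in K$. Thus $u(x)\le K$, giving lower semicontinuity. The final sentence, that $u$ is continuous iff both conditions hold, is then immediate: continuity into the product space $\{0,1\}^G$ is equivalent to each coordinate map $x\mapsto [g\in u(x)]$ being continuous as a map to $\{0,1\}$, i.e. to $\{x\colon g\in u(x)\}$ being clopen for every $g$, which is exactly the conjunction of (i) and (ii); alternatively, a map into a compact Hausdorff space is continuous iff it is both upper and lower semicontinuous in the above net sense.

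\medskip

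\textbf{Main obstacle.} There is no real obstacle here: the only care needed is bookkeeping with subnets (cluster points of a net are genuine limits of subnets, and coordinatewise/pointwise convergence in $\{0,1\}^G$ is what lets us transfer membership statements back and forth). One should be slightly careful that the definition of semicontinuity quantifies over \emph{all} cluster points, so in the forward directions one cannot simply pass to a convergent subnet of $u(x_\nu)$ and forget the rest — but since any cluster point is the limit of some subnet and the hypothesis on $(x_\nu)$ (namely $x_\nu\to x$) is inherited by every subnet, this causes no difficulty.
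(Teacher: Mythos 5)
Your proof is correct, and it is exactly the definition-unwinding argument the paper has in mind: the paper offers no written proof, stating only that the lemma "readily follows from the definition of the Chabauty topology," and your net/subnet bookkeeping with coordinatewise convergence in $\{0,1\}^G$ supplies precisely that omitted justification. The subtle points (cluster points as limits of subnets, and the fact that the quantification over all cluster points is handled by passing to subnets that still converge to $x$) are all treated correctly.
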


Two basic example of upper and lower semicontinuous maps are the following. 

\begin{example}\label{e:stab-semicontinuous}\label{e-stab-semicontinuous}
Let $G$ act by homeomorphisms on a  compact space $X$. Then it follows from the lemma above that:
\begin{itemize}
\item the map $X\to \sub(G), x\mapsto \st_G(x)$ is upper semicontinuous;
\item the map $X\to \sub(G), x\mapsto \st^0_G(x)$ is lower semicontinuous.

\end{itemize} 
\end{example}
We will encounter other examples of semicontinuous maps later. 


 We record for later use the following fact.
\begin{lemma}\label{l-semicontinuous-closure}
Assume that $G$ acts by homeomorphisms on a compact space $X$, let and $u\colon X\to \sub(G)$ be a lower semicontinuous equivariant map such that $u(x)\neq \{1\}$ for all $x\in X$. Then the closure of the image of $u$ does not contain $\{1\}$. In particular  $u(x)$ is confined for every $x\in X$.
\end{lemma}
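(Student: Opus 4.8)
The plan is to reduce the statement to the elementary characterization of the Chabauty topology recorded just above, namely that $\{1\}$ is the only subgroup in every neighbourhood of the form $\mathcal{U}_P$ with $P$ ranging over finite subsets of $G\setminus\{1\}$, together with Lemma \ref{l-semicontinuous}. So the first step is to argue by contradiction: suppose $\{1\}$ lies in the closure of $u(X)$. Then there is a net $x_\nu \in X$ with $u(x_\nu) \to \{1\}$ in $\sub(G)$. By compactness of $X$ we may pass to a subnet and assume $x_\nu \to x$ for some $x \in X$.

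The second step is to use lower semicontinuity of $u$ at this point $x$: by definition, for the net $x_\nu \to x$, the cluster point $\{1\}$ of $u(x_\nu)$ must satisfy $u(x) \le \{1\}$, i.e. $u(x) = \{1\}$. This directly contradicts the hypothesis that $u(y) \neq \{1\}$ for all $y \in X$, and in particular for $y = x$. Hence $\{1\}$ is not in the closure of $u(X)$.

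The third step is to deduce the ``in particular'' clause. Since $u$ is equivariant, for each $x \in X$ the conjugacy class $\{g\,u(x)\,g^{-1} : g \in G\} = \{u(gx) : g \in G\}$ is contained in $u(X)$, hence its closure is contained in the closure of $u(X)$, which by the previous step does not contain $\{1\}$. Therefore $u(x)$ is confined, for every $x \in X$. Alternatively one can spell this out using a confining set: since the closure of $u(X)$ avoids $\{1\}$ and the sets $\mathcal{U}_P$ form a fundamental system of neighbourhoods of $\{1\}$, compactness of $\sub(G)$ (or just of the closure of $u(X)$) gives a single finite $P \subset G \setminus \{1\}$ with $\overline{u(X)} \cap \mathcal{U}_P = \varnothing$, and this $P$ simultaneously confines every $u(x)$.

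I do not expect any real obstacle here: the only subtlety is being careful that ``cluster point'' in the definition of lower semicontinuity is applied to the correct net (the net $u(x_\nu)$ along the subnet where $x_\nu$ converges), and that passing to a subnet is legitimate because $X$ is compact. Everything else is a direct unwinding of definitions.
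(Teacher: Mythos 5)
Your proof is correct and follows essentially the same route as the paper's: extract a convergent subnet using compactness of $X$, apply lower semicontinuity to conclude $u(x)\le\{1\}$ (the paper phrases this directly for an arbitrary $H$ in the closure rather than by contradiction, but the argument is identical), and derive confinedness from equivariance. No issues.
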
	
\begin{proof}
Let $H$ be in the closure of the image of $u$, and let $(x_\nu)\subset X$ be a net such that $u(x_\nu)$ tends to $H$. After extracting a subnet, $x_\nu$ tends to a limit $x\in X$ and by lower semicontinuity we have $u(x)\le H$. It follows that $H\neq \{1\}$.  In particular, for every $x\in X$ the closure of $\{gu(x)g^{-1}\colon g\in G\}=\{u(gx)\colon g\in G\}$ does not contain $\{1\}$, and therefore $u(x)$ is confined.
\qedhere
\end{proof}	
	Recall also that a \textbf{uniformly recurrent subgroup}, or \textbf{URS}, is a closed minimal invariant subset $Z\subset\sub(G)$ \cite{Glas-Wei}. Whenever $G\acts X$ is a minimal action on a compact space, the closure of $\{\st_G(x) \colon x\in X\}$ contains a unique URS, called the \textbf{stabiliser URS} of the action, see \cite{Glas-Wei} (this notion will not play an essential role in this paper). 
%
%
%
%

\subsection{Confined subgroups via rigid stabilisers}
	\label{s-proximal}
	
		Let $G$ be a group acting faithfully by homeomorphisms on a Hausdorff space $X$.  In \cite[Sec. 3]{LB-MB-subdyn}, a method was developed to relate confined and uniformly recurrent subgroups of $G$ to the rigid stabilisers of open sets of such an action. In this subsection we prove a refinement of one of these results (Theorem \ref{t-proximal} below), that will be used later.  	
	
We use the commutator notation $[g,h]=ghg^{-1}h^{-1}$.	

	
We first recall a well-known lemma in the case of normal subgroups. \begin{lemma}\label{l-double-comm}
Let $G$ be a countable group acting faithfully by homeomorphisms on a Hausdorff space $X$. Let $A\le G$ and let $R\le G$ be a non-trivial subgroup normalised by $A$. Then there exists a non-empty open subset $U\subset X$ such that $R$ contains $[\rist_A(U), \rist_A(U)]$.
\end{lemma}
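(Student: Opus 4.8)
The statement is a version of the classical ``double commutator trick'' for normal subgroups, adapted to rigid stabilisers. The idea is that a non-trivial subgroup $R$ normalised by $A$ must contain a ``localised'' piece of $A$, and the right notion of localisation is the rigid stabiliser of a small open set. First I would pick a non-trivial element $r\in R$. Since $G$ acts faithfully, $r$ moves some point, so there is a non-empty open set $W$ with $rW\cap W=\varnothing$ (by Hausdorffness, shrink a neighbourhood of a point $x$ with $rx\neq x$ so that it is disjoint from its image). Now set $U:=W$ and consider the group $\rist_A(U)=\fix_A(U^c)$. The key computation is the following: for any $g,h\in\rist_A(U)$, I claim $[r,[g,h]^{-1}]$ — or more precisely a suitably arranged commutator — lies in $R$ (because $R\trianglelefteq A$ and is normalised by $A$, the element $[rgr^{-1},h]$ type expressions can be rewritten using $r g r^{-1}\cdot$ (conjugate of $R$)). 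The cleanest route: for $g\in\rist_A(U)$ we have $rgr^{-1}$ supported in $rU$, which is disjoint from $U$, hence $rgr^{-1}$ commutes with every element of $\rist_A(U)$; therefore for $g,h\in\rist_A(U)$,
\[
[g,h]=[g,h]\,(rg^{-1}r^{-1})(rgr^{-1})=\bigl[g,\,h\,(rg^{-1}r^{-1})\bigr]\cdot(\text{correction}),
\]
and more usefully $[g,h]=[\,g,\ h\,] = [\,g,\ h\cdot (r h^{-1} r^{-1})^{?}\,]$; I will organise this so that $[g,h]$ is expressed as a product of an element of $R$ and its inverse conjugate, forcing $[g,h]\in R$.

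More carefully, here is the computation I would actually run. Fix $g,h\in\rist_A(U)$. Then $g':=rg r^{-1}$ is supported in $rU$, disjoint from $U$, so $g'$ commutes with $h$ and with $g$. Consider
\[
R\ni r^{-1}\bigl(h\, r\, h^{-1}\bigr)r^{-1}\cdots
\]
— rather than guess the exact bracket, the clean statement is: since $R$ is normal in $A$ and $r\in R$, for every $a\in A$ the element $a r a^{-1} r^{-1}\in R$. Apply this with $a=[g,h]\in\rist_A(U)\le A$ (note $\rist_A(U)\le A$). Then
\[
[g,h]\,r\,[g,h]^{-1}\,r^{-1}\in R.
\]
Now $[g,h]$ is supported in $U$ and $r[g,h]^{-1}r^{-1}$ is supported in $rU$, and $U\cap rU=\varnothing$, so these two elements commute and their product equals the commutator written above; hence $[g,h]\cdot\bigl(r[g,h]r^{-1}\bigr)^{-1}\in R$. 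Conjugating the above relation instead by $r^{-1}$ (equivalently replacing $r$ by $r^{-1}$, whose support of $r^{-1}W'$ for $W'=r^{-1}U$ can be arranged disjoint from $U$ as well, or simply use that $U\cap rU=\varnothing\Rightarrow r^{-1}U\cap U=\varnothing$), one gets symmetrically $[g,h]\cdot\bigl(r^{-1}[g,h]r\bigr)^{-1}\in R$. Multiplying these two membership relations and using that the three elements $[g,h]$, $r[g,h]r^{-1}$, $r^{-1}[g,h]r$ are pairwise... — here I need them to commute, which holds because their supports $U$, $rU$, $r^{-1}U$ are pairwise disjoint (this is where I should choose $W$ small enough that $U$, $rU$, $r^{-1}U$ are pairwise disjoint; possible since $rx\neq x$ implies $r^2x\neq rx$ etc., and Hausdorffness lets me separate a short orbit segment). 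Then $[g,h]^{?}\in R$ with a controllable exponent — in fact the product telescopes to show $[g,h]^2$ or a conjugate of $[g,h]$ lies in $R$; combining with the individual relations gives $[g,h]\in R$ itself. Since $g,h\in\rist_A(U)$ were arbitrary, $[\rist_A(U),\rist_A(U)]\le R$, which is the claim.

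\textbf{Main obstacle.} The delicate point is purely bookkeeping: arranging the open set $U$ so that $U, rU, r^{-1}U$ (and whatever finite orbit segment of $r$ the commutator manipulation requires) are pairwise disjoint, and then choosing the exact sequence of conjugations so that the resulting relations multiply to give $[g,h]\in R$ rather than merely $[g,h]^2\in R$ or a conjugate. Faithfulness of the action and the Hausdorff hypothesis are exactly what make the disjointness arrangement possible (if $r$ has finite order $k$ one separates the points $x, rx,\dots,r^{k-1}x$; if infinite order one separates $x, rx, r^{-1}x$, which suffices). I expect no topological subtlety beyond this; the algebra is a standard commutator calculation once the supports are disjoint, since disjoint supports give commuting elements and normality of $R$ in $A$ does the rest. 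I would also remark that the countability hypothesis on $G$ is not needed for this particular lemma (it is stated for uniformity with later results), but I would keep it to match the paper's conventions.
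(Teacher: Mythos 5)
Your setup is sound and you do reach a correct key relation: with a non-trivial $r\in R$ and an open $U$ satisfying $U\cap rU=\varnothing$, normality of $R$ under $A$ gives, for every $a\in\rist_A(U)$, that $\phi(a):=a\cdot ra^{-1}r^{-1}\in R$, with the two factors supported in the disjoint sets $U$ and $rU$. (For reference, the paper does not prove this lemma directly but cites \cite[Lemma 3.1]{Nek-fp}; the standard argument there is the double-commutator identity below.) The gap is in your final step. Multiplying the relation for $r$ with the analogous one for $r^{-1}$ and using pairwise disjointness of supports yields
\[
[g,h]^2\,\bigl(r[g,h]r^{-1}\bigr)^{-1}\bigl(r^{-1}[g,h]r\bigr)^{-1}\in R,
\]
and nothing telescopes: the two conjugate factors do not cancel against anything, so no power of $[g,h]$ is isolated and the claim that this ``combines with the individual relations to give $[g,h]\in R$'' is unjustified. (Other natural combinations fare no better; e.g.\ multiplying the first relation by the $r$-conjugate of the second collapses to the identity.)

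Two standard ways to close the gap from exactly where you are. (1) The classical route: for $g,h\in\rist_A(U)$ write $[g,r]=g\cdot(rg^{-1}r^{-1})$; the second factor moves only points of $rU$, hence commutes with $h$ and $g$, and a one-line computation gives $[[g,r],h]=[g,h]$. On the other hand $[g,r]=(grg^{-1})r^{-1}\in R$ and $h[g,r]^{-1}h^{-1}\in R$, so $[[g,r],h]\in R$. This needs only $U\cap rU=\varnothing$ and avoids all your bookkeeping with $r^{-1}U$ and finite orbits. (2) Alternatively, keep your map $\phi$: writing $\bar a=ra^{-1}r^{-1}$ one computes $\phi(a)\phi(b)\phi(ab)^{-1}=[\bar a,\bar b]=r[a^{-1},b^{-1}]r^{-1}$, which lies in $R$; since $r\in R$ normalises $R$, this gives $[a^{-1},b^{-1}]\in R$ for all $a,b\in\rist_A(U)$, which is the conclusion. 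Your side remarks are correct: countability is not used, and faithfulness plus Hausdorffness are exactly what produce the separated set $U$.
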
	
\begin{proof} This is essentially \cite[Lemma 3.1]{Nek-fp}, apart for the minor difference that it is stated there for $A=G$. \end{proof}

%

The following proposition is a variant of \cite[Proposition 3.8]{LB-MB-subdyn}.
\begin{prop} \label{p:LB-MB-subdyn}
Let $G$ be a group of homeomorphisms of a Hausdorff space $X$, and $A\le G$ be a subgroup.  Let $H\in \sub(G)$ be a subgroup confined by $A$, with confining set $P=\{g_1,\ldots, g_r\}\subset G\setminus \{1\}$. Assume that $U_1,\ldots, U_r\subset X$ are open subsets such that $U_1,\ldots, U_r, g_1(U_1),\ldots, g_r(U_r)$ are pairwise disjoint. Then there exists  $\ell=1,\ldots , r$, a finite index subgroup $\Gamma< \rist_A(U_\ell)$ and a subgroup $K\le H$ such that the following hold:
\begin{enumerate}[label=(\roman*)]
\item the group $K$ leaves $U_\ell$ invariant;
\item \label{i:surjection} for every $\gamma \in \Gamma$, there exists $k\in K$ such that $k|_{U_\ell}=\gamma$;
\item \label{i:support} every $k\in K$ is supported in $U_1\cup\cdots \cup U_r \cup g_\ell^{-1}(U_1)\cup\cdots \cup g_\ell^{-1}(U_r)$. 
\end{enumerate}
\end{prop}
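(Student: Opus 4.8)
The plan is to extract, from the hypothesis that $H$ is confined by $A$, a non-trivial element coming from a rigid stabiliser and then ``push it around'' using conjugation. First I would take a regular point or simply work directly: since $H$ is confined by $A$ with confining set $P = \{g_1,\ldots,g_r\}$, every $A$-conjugate of $H$ contains some $g_i$. The idea is to apply this to suitably chosen conjugates. Fix the disjoint open sets $U_1,\ldots,U_r,g_1(U_1),\ldots,g_r(U_r)$. For any $a \in \rist_A(U_1)\times\cdots\times\rist_A(U_r)$ (i.e. $a = a_1\cdots a_r$ with $a_i \in \rist_A(U_i)$, these commuting since the $U_i$ are disjoint), consider $aHa^{-1}$; it contains some $g_{\ell}$ depending on $a$. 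So the map $a \mapsto \ell(a)$ partitions $\rist_A(U_1)\cdots\rist_A(U_r)$ into $r$ pieces, and I would like to find $\ell$ and a large (finite-index) subgroup landing in piece $\ell$.

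The key mechanism is the following: if $g_\ell \in aHa^{-1}$, then $a^{-1}g_\ell a \in H$. Now $a^{-1} g_\ell a = a^{-1} g_\ell a g_\ell^{-1} \cdot g_\ell = [a^{-1}, g_\ell^{-1}]^{-1}\cdot$ (something)$\cdot g_\ell$; more usefully, for $a$ supported in $U_1\cup\cdots\cup U_r$, the element $a^{-1}g_\ell a$ differs from $g_\ell$ only by the "commutator part" $a^{-1}\cdot(g_\ell a g_\ell^{-1})$, and since $a$ is supported in the $U_i$ and $g_\ell$ moves $U_\ell$ off itself, $g_\ell a_\ell g_\ell^{-1}$ is supported in $g_\ell(U_\ell)$, disjoint from the $U_i$. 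So $a^{-1}g_\ell a = (a^{-1})\,(g_\ell a g_\ell^{-1})\,g_\ell$ where $a^{-1}$ is supported in $\bigcup U_i$ and $g_\ell a g_\ell^{-1}$ is supported in $\bigcup g_\ell(U_i)$. The plan is to compare two such elements: for $a, b$ both mapping to the same index $\ell$, the quotient $(a^{-1}g_\ell a)(b^{-1}g_\ell b)^{-1} = a^{-1} g_\ell a b^{-1} g_\ell^{-1} b$ lies in $H$, and one computes this equals $a^{-1}b \cdot (b^{-1}$-conjugate adjustments$)$; the support bookkeeping shows its restriction to $U_\ell$ is exactly $(a_\ell^{-1}b_\ell)|_{U_\ell}$ while its support is contained in $U_1\cup\cdots\cup U_r\cup g_\ell^{-1}(U_1)\cup\cdots\cup g_\ell^{-1}(U_r)$ (the $g_\ell^{-1}(U_i)$ coming from transporting the $g_\ell(U_i)$-supported parts back by $g_\ell^{-1}$). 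This is precisely the form required in conclusions \ref{i:surjection} and \ref{i:support}.

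So the remaining task is purely combinatorial/set-theoretic: among the $r$ fibres of the map $a\mapsto \ell(a)$ defined on the abelian-ish "product" group $R:=\rist_A(U_1)\times\cdots\times\rist_A(U_r)$ (really the internal direct product inside $G$), some fibre $\mathcal{F}_\ell$ must be "large". Here I would use a pigeonhole/Neumann-type argument: $R$ is covered by $r$ sets $\mathcal{F}_1,\ldots,\mathcal{F}_r$, so one of them, after translating, contains a subset of the form $\Gamma\Gamma^{-1}\cap(\text{something})$ for a finite-index subgroup $\Gamma$ — more precisely, I expect to invoke that if a group is a finite union of subsets, one of them is "syndetic", and then pass to differences $ab^{-1}$ with $a,b$ in the same fibre to produce a genuine subgroup. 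Concretely: pick any $b$ with $\ell(b)=\ell$; the set $\mathcal{F}_\ell b^{-1}$ contains $1$ and we want $\{a b^{-1}: a,b' \in \mathcal{F}_\ell\}$... the cleanest route is: since the $r$ fibres cover $R$, by B.H. Neumann's lemma one fibre $\mathcal{F}_\ell$ has the property that finitely many translates of $\mathcal{F}_\ell\mathcal{F}_\ell^{-1}$ cover $R$; I'd rather argue the subgroup $\langle \mathcal{F}_\ell\mathcal{F}_\ell^{-1}\rangle$ has finite index, then take $\Gamma$ to be a finite-index subgroup of $\rist_A(U_\ell)$ contained in the projection to the $\ell$-th factor of $\langle\mathcal{F}_\ell\mathcal{F}_\ell^{-1}\rangle$, and set $K = \langle (a^{-1}g_\ell a)(b^{-1}g_\ell b)^{-1} : a,b\in\mathcal{F}_\ell\rangle\le H$, checking $K$ leaves $U_\ell$ invariant (each generator does, by the support computation) — that gives \ref{i:support} for $K$ as a subgroup since the supporting region is $g_\ell$-invariant-ish... actually I should check the generating region $U_1\cup\cdots\cup U_r\cup g_\ell^{-1}(U_1)\cup\cdots\cup g_\ell^{-1}(U_r)$ is preserved by $K$, or weaken to "every $k\in K$ is supported there" which follows if that region is $K$-invariant; $K$ preserves $U_\ell$ and permutes the rest appropriately. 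The main obstacle I anticipate is making the fibre-largeness argument produce an honest \emph{finite-index} subgroup $\Gamma$ of $\rist_A(U_\ell)$ rather than just a syndetic subset, and simultaneously arranging that the corresponding $k\in K$ has restriction to $U_\ell$ realising \emph{every} $\gamma\in\Gamma$ — this surjectivity onto $\Gamma|_{U_\ell}$ requires choosing $\Gamma$ inside the image of the difference set, which is where the Neumann-lemma / coset-counting input is essential, and keeping the support control intact through that choice. I'd model the bookkeeping on the proof of \cite[Proposition 3.8]{LB-MB-subdyn} and Lemma \ref{l-double-comm}, adapting the normal-subgroup commutator trick to the confined setting via these conjugation identities.
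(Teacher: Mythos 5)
Your proposal follows essentially the same route as the paper's proof: cover $L=\rist_A(U_1)\cdots\rist_A(U_r)$ by the $r$ fibres $Y_i=\{\gamma\in L\colon \gamma g_i\gamma^{-1}\in H\}$, observe that each $Y_i$ lies in a single coset of $L_i=\langle\gamma\delta^{-1}\colon\gamma,\delta\in Y_i\rangle$ so that B.H.\ Neumann's lemma forces some $L_\ell$ to have finite index, and generate $K$ by the elements $(\gamma g_\ell^{-1}\gamma^{-1})(\delta g_\ell\delta^{-1})=\gamma\,(g_\ell^{-1}\gamma^{-1}\delta g_\ell)\,\delta^{-1}\in H$, whose middle factor is supported in $g_\ell^{-1}(U_1)\cup\cdots\cup g_\ell^{-1}(U_r)$, giving exactly \ref{i:surjection} and \ref{i:support}. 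The one correction needed is the orientation of your conjugation: with your convention $a^{-1}g_\ell a\in H$ the middle factor of $(a^{-1}g_\ell a)(b^{-1}g_\ell b)^{-1}$ is supported in $g_\ell(U_1)\cup\cdots\cup g_\ell(U_r)$, and the hypothesis does \emph{not} guarantee that $g_\ell(U_i)$ for $i\neq\ell$ is disjoint from $U_\ell$ (so neither the restriction computation nor the stated support region comes out right); defining the fibres by $\gamma g_\ell\gamma^{-1}\in H$ instead places the supports in the sets $g_\ell^{-1}(U_i)$, whose disjointness from $U_\ell$ follows from the assumed disjointness of $U_i$ and $g_\ell(U_\ell)$.
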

\begin{proof}  The proof is identical to the proof of \cite[Proposition 3.8]{LB-MB-subdyn}, and we only sketch the argument to address the different formulation of the statement. We will use a lemma of B.H. Neumann, stating that an infinite group cannot be written as the union of finitely many cosets of infinite index subgroups \cite{Neumann}. Let $L\cong \rist_A(U_1)\times \cdots \times \rist_A({U_r})$  be the subgroup of $A$ generated by the (pairwise commuting) rigid stabilisers $\rist_A(U_1),\ldots \rist_A({U_r})$.  Since every element of $G$ conjugates at least an element of $P$ inside $H$, we can write $L=\bigcup_{i=1}^r Y_i$ where $Y_i=\{\gamma\in L\colon \gamma g_i\gamma^{-1}\in H\}$. For every $i=1,\ldots, r$ the set $Y_i$ is contained in a coset of the subgroup $L_i=\langle\gamma\delta^{-1}\colon \gamma,\delta\in Y_i\rangle$, and it follows that there exists $\ell=1,\ldots, r$ such that $L_\ell$ has finite index in $L$. Let $\Gamma<\rist_A({U_\ell})$ be the image of $L_\ell$ under the natural projection of $L$ onto   $\rist_A({U_\ell})$, and observe that it has finite index and that it is generated by the restriction to $U_\ell$ of all elements of the form $\gamma \delta^{-1}$ where $ \gamma, \delta \in Y_\ell$. For $\gamma, \delta\in Y_\ell$ let $a_{\gamma, \delta}=(\gamma g_\ell^{-1}\gamma^{-1})(\delta g_\ell \delta^{-1})\in H$ and let $K\le H$ be the subgroup $K=\langle a_{\gamma, \delta} \colon \gamma, \delta\in Y_\ell\rangle$. Writing $a_{\gamma, \delta}=\gamma (g_\ell^{-1}\gamma^{-1}\delta g_\ell) \delta^{-1}$ we see that each element $a_{\gamma, \delta}$ for $\gamma, \delta\in Y_\ell$ is supported in $U_1\cup\ldots \cup U_r\cup g_\ell^{-1}(U_1)\cup\cdots \cup g_\ell^{-1}(U_r)$, leaves $U_\ell$ invariant, and coincides with $\gamma \delta^{-1}$ in restriction to $U_\ell$. It follows that the group $K$ generated by them verifies the desired conclusions. 
\end{proof}

Recall the the action of $G$ on $X$ is said to be \textbf{proximal} if for every pair of points $x, y\in X$, there exists a net $(g_i)$ of elements of $G$ such that $(g_ix)$ and $(g_iy)$ both converge to the same limit.

\begin{thm}\label{t-proximal}
Let $G$ be a countable group acting faithfully by homeomorphisms of a Hausdorff space $X$, and assume $A\le G$ is a subgroup whose action on $X$ is minimal and proximal. Let $H\in \sub(G)$ be confined by $A$. Then there exists a non-empty open subset $U\subset X$  such that $H$ contains $[\rist_A(U), \rist_A(U)]$.
\end{thm}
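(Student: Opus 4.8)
The plan is to combine Proposition \ref{p:LB-MB-subdyn} with the minimality and proximality of the $A$-action in order to produce disjoint open sets of the required form, and then use proximality again to ``collapse'' the finite-index rigid stabiliser appearing in Proposition \ref{p:LB-MB-subdyn} down to the full rigid stabiliser of a (possibly smaller) open set, up to passing to the commutator subgroup. First I would fix a confining set $P=\{g_1,\dots,g_r\}\subset G\setminus\{1\}$ for $H$ under $A$. Since $X$ is Hausdorff and each $g_i\neq 1$, for each $i$ we can pick a point $x_i$ with $g_i(x_i)\neq x_i$, hence a small open set $V_i\ni x_i$ with $V_i\cap g_i(V_i)=\varnothing$. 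I then want to shrink and translate these sets by elements of $A$ so that the translated copies $U_1,\dots,U_r,g_1(U_1),\dots,g_r(U_r)$ are pairwise disjoint. This is where minimality and proximality of $A\acts X$ enter: minimality lets me move a nonempty open set inside any prescribed nonempty open set by an element of $A$, and proximality (together with minimality) is the standard tool that lets one find, for any finite list of points, a single element of $A$ contracting an open neighbourhood of all of them into an arbitrarily small set; conjugating the $V_i$'s appropriately, one arranges all $2r$ sets $U_i, g_i(U_i)$ to live in disjoint small clopen (or just open) pieces. (A cleaner route: first fix one small open set $W$ with $W\cap g_1(W)=\varnothing$ say, use minimality/proximality to find $a_2,\dots,a_r\in A$ with $a_i(W)$ very small and spread out so that all $a_i(W)$ and $g_i a_i(W)$ are disjoint — then replace $H$ by its conjugate and $g_i$ by $a_i^{-1}g_i a_i$; but since $H$ is confined by $A$, conjugating $H$ by an element of $A$ keeps the same confining data, so there is no loss.)

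Once the disjointness hypothesis of Proposition \ref{p:LB-MB-subdyn} is met, that proposition hands me an index $\ell$, a finite-index subgroup $\Gamma<\rist_A(U_\ell)$, and a subgroup $K\le H$ with $K$ surjecting onto $\Gamma$ under restriction to $U_\ell$, with every element of $K$ supported in $U_1\cup\cdots\cup U_r\cup g_\ell^{-1}(U_1)\cup\cdots\cup g_\ell^{-1}(U_r)$, and with $K$ preserving $U_\ell$. Now the key point is to remove the ``finite index'' defect. Here I would use proximality of $A\acts X$ once more, restricted to $U_\ell$: I claim there is a nonempty open $U\subset U_\ell$ such that $\rist_A(U)$ is contained in $\Gamma$. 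Indeed, since $\Gamma$ has finite index in $\rist_A(U_\ell)$, it contains a finite-index normal subgroup $N$ of $\rist_A(U_\ell)$; and a finite-index subgroup of a rigid stabiliser of an open set, for a group acting with enough ``local richness'' (which proximality + minimality provide — one can quote or reprove the relevant lemma, e.g. along the lines of \cite[Sec. 3]{LB-MB-subdyn}), contains the rigid stabiliser of some smaller nonempty open subset $U\subset U_\ell$. Intuitively: using proximality one finds $a\in A$ contracting $U_\ell$ into a set so small that $\rist_A(a(U_\ell))$ maps, after conjugating back, into any prescribed finite-index subgroup — the finitely many cosets cannot all be hit by the ``localised'' elements. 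Granting this, $[\rist_A(U),\rist_A(U)]\le[\Gamma,\Gamma]$, and since elements of $K$ restrict onto all of $\Gamma$ on $U_\ell\supset U$ while being supported in a set disjoint from $g_\ell^{-1}(\cdot)$ outside $U_\ell$ (so that commutators of elements of $K$ are \emph{determined} by their restrictions to $U_\ell$, because the supports on the two ``halves'' $U_1\cup\cdots\cup U_r$ and $g_\ell^{-1}(U_1)\cup\cdots\cup g_\ell^{-1}(U_r)$ can be separated — here one uses that $g_\ell^{-1}(U_\ell)$ is disjoint from $U_\ell$ and from the $U_i$, so the commutator of two elements of $K$ agreeing with $\gamma,\delta$ on $U_\ell$ agrees with $[\gamma,\delta]$ on $U_\ell$ and is trivial elsewhere), we get that $H\supseteq K\supseteq[\,\cdot\,,\cdot\,]$ contains a subgroup which restricts isomorphically to $[\Gamma,\Gamma]\supseteq[\rist_A(U),\rist_A(U)]$ on $U$ and is trivial off $U$ — i.e. $H$ contains $[\rist_A(U),\rist_A(U)]$ as a subgroup of $\homeo(X)$.

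The main obstacle I anticipate is the second step: passing from the finite-index subgroup $\Gamma<\rist_A(U_\ell)$ furnished by Proposition \ref{p:LB-MB-subdyn} to an honest rigid stabiliser $\rist_A(U)$ of a smaller open set. This requires knowing that the $A$-action is ``microscopically transitive'' enough that localising an element deep inside $U_\ell$ via proximality lands it in any prescribed finite-index subgroup — a Neumann-coset-covering argument combined with proximality, very much in the spirit of the proof of Proposition \ref{p:LB-MB-subdyn} itself. A secondary, more bookkeeping-type obstacle is making sure the support conditions \ref{i:support} and the invariance \ref{i:surjection}, together with disjointness of $U_\ell$ from $g_\ell^{-1}(U_\ell)$, genuinely force $[k_1,k_2]|_{U_\ell}=[k_1|_{U_\ell},k_2|_{U_\ell}]$ and $[k_1,k_2]|_{U_\ell^c}=\mathrm{id}$ — this is elementary but needs the disjointness arranged carefully in step one, which is exactly why proximality (and not just minimality) is invoked. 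Modulo these points, the argument is a faithful adaptation of \cite[Prop. 3.8 and Thm. in Sec. 3]{LB-MB-subdyn}.
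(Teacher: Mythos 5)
Your overall strategy (get the disjoint sets, apply Proposition \ref{p:LB-MB-subdyn}, then upgrade the finite-index subgroup $\Gamma<\rist_A(U_\ell)$ to an honest rigid stabiliser) starts out parallel to the paper's proof, but the two steps you flag as "obstacles" are in fact genuine gaps, and neither is filled the way you suggest. First, the claim that a finite-index subgroup $\Gamma$ of $\rist_A(U_\ell)$ must contain $\rist_A(U)$ for some smaller non-empty open $U\subset U_\ell$ is false in general: rigid stabilisers can admit surjections onto finite groups (e.g.\ sign/parity homomorphisms for weakly branch groups with non-perfect rigid stabilisers) that remain surjective on $\rist_A(V)$ for \emph{every} smaller $V$, so the kernel is a finite-index subgroup containing no full rigid stabiliser. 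Your proximality heuristic ("conjugate $\rist_A(U_\ell)$ deep inside itself, then the finitely many cosets cannot all be hit") does not produce an argument: $\rist_A(a(U_\ell))$ has no reason to land in a prescribed finite-index subgroup of $\rist_A(U_\ell)$. This is precisely why the statement of the theorem only asserts $[\rist_A(U),\rist_A(U)]\le H$, and why the paper never tries to capture a full rigid stabiliser: it passes to a finite-index \emph{normal} subgroup of $\rist_A(V)$, observes that its derived subgroup is then normal in $\rist_A(V)$, and applies the double-commutator Lemma \ref{l-double-comm} a second time to extract $[\rist_A(V'),\rist_A(V')]$ for a smaller $V'$.

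Second, your claim that commutators of elements of $K$ are trivial off $U_\ell$ does not follow from Proposition \ref{p:LB-MB-subdyn}. The elements $a_{\gamma,\delta}=(\gamma g_\ell^{-1}\gamma^{-1})(\delta g_\ell\delta^{-1})$ generating $K$ act nontrivially on the other pieces $U_i$ and on $g_\ell^{-1}(U_1)\cup\cdots\cup g_\ell^{-1}(U_r)$, they preserve only $U_\ell$ (not the other pieces individually), and nothing forces their commutators to vanish outside $U_\ell$. The paper's mechanism for localising inside $U_\ell$ is different and is where proximality is actually used: one finds $a\in A$ carrying a whole neighbourhood $W$ of the finite set $\mathcal{Q}=\{x_i\}\cup\bigcup_i g_i^{-1}(\{x_j\})$ into $U_\ell$, applies Proposition \ref{p:LB-MB-subdyn} a \emph{second} time to the conjugate $H'=a^{-1}Ha$ with shrunken sets $U_i'$, and thereby obtains a nontrivial element $b\in aK'a^{-1}\le H$ whose support lies inside $U_\ell$. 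Then $k_\gamma b k_\gamma^{-1}=\gamma b\gamma^{-1}$ for the lifts $k_\gamma\in K$ of $\gamma\in\Gamma$, so $R=\langle \gamma b\gamma^{-1}\colon\gamma\in\Gamma\rangle$ is a nontrivial subgroup of $H$ normalised by $\Gamma$, and Lemma \ref{l-double-comm} takes over. Without producing such a $b$ (or some substitute), your argument does not yield any nontrivial subgroup of $H$ supported in $U_\ell$, and the final containment $[\rist_A(U),\rist_A(U)]\le H$ is not established.
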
	
\begin{remark}
This is an improvement of \cite[Theorem 3.10]{LB-MB-subdyn}, where the same result was established under the stronger assumption that the action of $G$ on $X$ is \textbf{extremely proximal}, i.e. for every proper closed subset $C\subset X$ there exists a net of elements $(g_i)$ of $G$ and a point $z\in X$ such that $g_iC$ converges to $z$. It would be interesting to know if the conclusion remains true without any assumption on the action.
\end{remark}
\begin{proof}
We can and we shall assume that for every non-empty open set $U\subset X$ the group $\rist_A(U)$ is infinite, otherwise the statement is obvious. In particular, we can assume that $X$ has no isolated points, as these have a trivial rigid stabiliser. 

Assume that $H$ is confined by $A$ with confining set $P=\{g_1,\ldots, g_r\}\subset G\setminus \{1\}$. Since every $g_i, i=1,\ldots r$ is a non-trivial homeomorphisms of  $X$, and $X$ has no isolated point, it follows that every $g_i$ moves infinitely many points of $X$. In particular, we can find $x_1,\ldots , x_r$ such that $x_1,\ldots, x_r, g_1(x_1),\ldots , g_r(x_r)$ are pairwise distinct points. Let us denote $\mathcal{Q}=\{x_1,\ldots , x_r\}\cup \left(\bigcup_{i=}^r g_i^{-1}(\{x_1,\ldots , x_r\}\right)\subset X$.

For every $i=1,\ldots r,$, choose a neighbourhood $U_i$ of $x_i$, small enough so that the sets $U_1,\ldots, U_r,  g_1(U_1),\ldots, g_r(U_r)$ are pairwise disjoint. We can therefore apply Proposition \ref{p:LB-MB-subdyn}.  Let $\ell\in \{1,\ldots, r\}$, $\Gamma< A_{U_\ell}$ and $K<H$ be given by the lemma. 

Using minimality and proximality of the action of $A$ on $X$, we can find $a\in A$ such that $a(\mathcal{Q})\subset U_\ell$. For every $i=1,\ldots, r$ let $U_i'\subset U_i$ be a smaller neighbourhood of $x_i$ to be  determined shortly. Set $W= U'_1\cup\cdots \cup U'_r \cup\left(\bigcup_{i=1}^r g_i^{-1}(U'_1\cup\cdots \cup U'_r)\right)$, and observe that it is a neighbourhood of $\mathcal{Q}$ that shrinks to $\mathcal{Q}$ when all the $U'_i$'s shrink to $x_i$. We choose the sets $U'_i$ small enough so that $a(W)\subset U_\ell$. We now apply again Proposition \ref{p:LB-MB-subdyn}, this time to the group $H'=a^{-1} H a$ (note that $H'$ is still confined by $A$ with confining set $P$)  and to the  open sets $U'_1,\ldots, U'_r$ (that still verify the disjointness assumption in the proposition, since $U'_i\subset U_i$). Let $\ell'$, $\Gamma'<\rist_A({U'_{\ell'}})$ and $K'<H'$ be given by the proposition. Since we are assuming that all rigid stabilisers are infinite, the group $\Gamma'$ is non-trivial, and therefore so is $K'$ by part \ref{i:surjection} of the proposition. By part \ref{i:support}, $K'$ is supported in $W$. It follows that $aK'a^{-1}$ is a non-trivial subgroup of $H$ supported in $U_\ell$.

From this point on, the proof proceeds exactly as the last part of the proof of \cite[Theorem 3.10]{LB-MB-subdyn} (with the minor difference that the proof is written there for $A=G$). We repeat the argument for the convenience of the reader. Let $b\in aK'a^{-1}$ be a non-trivial element. For every $\gamma\in \Gamma$, choose $k_\gamma\in K$ which coincides with $\gamma$ in restriction to $U_\ell$. Observe that, since the support of $b$ is contained in $U_\ell$ we have $k_\gamma b k_\gamma^{-1}=\gamma b \gamma^{-1}\in H$. It follows that the subgroup $R=\langle k_\gamma b k_\gamma^{-1}\colon \gamma \in \Gamma\rangle \le H$ is normalised by $\Gamma$.  By Lemma \ref{l-double-comm} there exists a non-empty open subset $V\subset U_\ell$  such that $[\rist_\Gamma(V), \rist_\Gamma(V)]\le R\le H$. Now $\rist_\Gamma(V)=\Gamma \cap \rist_A(V)$ has finite index in $\rist_A(V)$, and upon replacing it with a smaller finite index subgroup we can assume that it is normal in $\rist_A(V)$. Thus its derived subgroup is also normal in $\rist_A(V)$. Applying Lemma \ref{l-double-comm} again, we deduce that there exists a non-empty open subset $V'\subset V$ such that $[\rist_A(V'), \rist_A(V')]\le [\rist_\Gamma(V), \rist_\Gamma(V)]\le H$, as desired. \qedhere
\end{proof}

\section{Groupoids, pseudogroups, and their morphisms}
This section contains all the necessary ingredients from the theory of \'etale groupoids and of pseudogroups that will be used in the paper, in particular about morphisms between them. While a large part of it consists in recalling definitions and well-known facts,  its main  result is Theorem \ref{t-non-comm-stone}, which provides a categorical formulation of the equivalence between pseudogroups and \'etale groupoids.

\label{s-morphism-pseudogroups}
\subsection{Basic definitions} \label{s-pseudogroups}

In this subsection we recall all definitions and terminology about pseudogroups and groupoids. We will need to work in a slightly more general setting than the definitions given in the introduction. 

\subsubsection{\'Etale groupoids} \label{s-groupoids}

A \textbf{groupoid} $\G$ over a space $X$ (also called the  \textbf{unit space} of the groupoid) is the set of isomorphism of a small category whose underlying set of objects is $X$. Every $\gamma\in \G$  is a morphism between elements of $X$ that we denote respectively by $\src(\gamma),\rg(\gamma)\in X$.  The maps $\mathsf{r}, \src\colon \G \to X$  are called the \textbf{source} and the \textbf{range} map.  The product of two elements $\gamma, \delta\in \G$ is defined if and only if $\src(\gamma)=\rg(\delta)$ and in this case $\src(\gamma\delta)=\src(\delta), \rg(\gamma\delta)=\rg(\gamma)$. We denote by $\gamma^{-1}$ the inverse of $\gamma\in \G$. Note that we have $\src(\gamma^{-1})=\rg(\gamma)$ and $\rg(\gamma^{-1})=\src(\gamma)$. 

Following a common use we identify the set of objects $X$   with a subset of $\G$ by identifying every object $x\in X$ with the  identity isomorphism of $x$.  With this convention, the source and range map are given by  $\src(\gamma)=\gamma^{-1}\gamma$ and $\rg(\gamma)=\gamma\gamma^{-1}$. 

For every $x\in X$ we denote by  $\G_x$ and $\G^x$ the sets $\src^{-1}(x)$ and $\rg^{-1}(x)$. These subsets are called the \textbf{leafs} (or \textbf{fibres}) of the groupoid. For $x, y\in X$ we let $\G_x^y=\G_x\cap \G^y$. Note that the set $\G_x^x$ is naturally a group, called the \textbf{isotropy group} at $x$. The set $r(\G_x)=\{r(\gamma)\colon  \gamma\in \G_x\}$ is called the \textbf{orbit} of $x$. 

Given a subset $A\subset X$ of the unit space, the \textbf{restriction} of $\G$ to $A$ is the subgroupoid $\G|A=\{\gamma \in \G \colon \src(\gamma), \rg(\gamma)\in A\}$.

A \textbf{topological groupoid} is a groupoid endowed with a topology such that the source and range maps, the composition $\mathcal{G}^{*2}\to \mathcal{G}$ and the inversion $\mathcal{G}\to \mathcal{G}$ are continuous. Here $\G^{*2}=\{ (\gamma, \delta) \colon \src(\gamma)=\rg(\delta)\}$ is the set of \textbf{composable pairs},  endowed with the topology induced from the product topology on $\G \times \G$.
The  space $X$ is endowed with the topology induced by the inclusion $X\subset \G$. A topological groupoid $\G$ is said to  be \textbf{minimal} if every orbit is dense in $X$. It  is said to be \textbf{principal} if the isotropy group $\G^x_x$ is trivial for every $x\in X$. It is said to be \textbf{essentially principal} if the set of points with trivial isotropy group is dense in $X$.

An \textbf{\'etale groupoid} is a topological groupoid $\G$ such that the source and range maps are open and are local homeomorphisms. We do not require the topology on $\G$ to be Hausdorff (as many interesting examples are not). However, we do require the unit space $X$ to be Hausdorff.

A  \textbf{bisection} of an \'etale groupoid is an open subset $T\subset \mathcal{G}$ such that $\src|_T\colon T \to \src(T)$ and $\rg|_T\colon  T \to \rg(T)$ are homeomorphisms onto their image. By definition of an \'etale groupoid, bisections form a basis of the topology. 

\begin{examples} \label{e-groupoids}
\begin{enumerate}[label=(\roman*), wide, labelwidth=!]
\item \label{e-i-group} Every  group $G$ can be seen as an \'etale groupoid over the one-point  space $X=\{1_G\}$ and with the discrete topology. 
\medskip

\item \label{e-i-action}  Let $G\acts X$ be a countable group acting on a compact space. The associated \textbf{action groupoid} is $G\times X$. Its unit space is $X\simeq \{1_G\}\times X$ (with the obvious identification), and source and range map are given by $\src((g, x))=x$ and $\rg((g, x))=gx$. The product and inversion are defined by the rules $(g_1, g_2x)(g_2, x)=(g_1g_2, x)$ and $(g, x)^{-1}=(g^{-1}, gx)$. It is an \'etale groupoid if   $G\times X$ is endowed with the product topology, where $G$ has the discrete topology. A groupoid of this form is always Hausdorff.

\medskip

\item \label{e-i-germs} \label{i-germs} Let again $G\acts X$ be a countable group action on a compact space. For every $(g, x)\in G\times X$ denote by $[g]_x$ the \textbf{germ} of $g$ at $x$, i.e. the equivalence class of the pair $(g, x)$ under the equivalence relation that identifies $(g_1, x_1)$ with $(g_2, x_2)$ if $x_1=x_2$ and $g_1, g_2$ coincide in restriction to a neighbourhood of $x_1$ Then the set of germs is naturally a groupoid $\G=\G(G\acts X)$, called the \textbf{groupoid of germs} of the action, with  unit space $X$ identified with the set of germs  $\{[1_G]_x \colon x\in X\}$. Source, range, composition and inversion are given exactly as in the case of the action groupoid by replacing $(g, x)$ with $[g]_x$. The groupoid $\G$ has a natural topology for which  a basis of open sets is given by sets of the form $\mathcal{U}_{g, U}=\{[g]_x\colon x\in U\}$ where $g\in G$ and $U\subset X$ is open, and is \'etale with this topology. Groupoids of germs of group actions are often non-Hausdorff.

\end{enumerate}

\end{examples}

 \subsubsection{Pseudogroups} Let $S$ be a  semigroup, that is a set endowed with an associative binary operation. An \textbf{inverse} of an element $F\in S$ is an element $T$ such that $FTF=F$ and $TFT=T$. An \textbf{inverse semigroup} is a semigroup  in which every element $F$  has a unique inverse, denoted $F^{-1}$. Element of the form $F^{-1}F$ and $FF^{-1}$, for $F\in S$ are precisely the \textbf{idempotents} of $S$, i.e. those elements $U$ such that $U^2=U$.  In an inverse semigroup, any two idempotents commute. 
An inverse semigroup is said to be an \textbf{inverse monoid} if it is a monoid, i.e. if it admits a neutral element (which is necessarily unique). A \textbf{zero} in an inverse semigroup is an element $0\in S$ such that $0F=0$ for every $F\in S$. See the books of Lawson \cite{Law-book} and Paterson \cite{Pat-book} as general references on inverse semigroups .

As main example, let $X$ be a Hausdorff space, that we further assume to be locally compact for simplicity.  We denote by $\tI(X)$ the inverse monoid of all \textbf{partial homeomorphisms} of $X$, that is, homeomorphism $F\colon U\to V$ between open subsets of $X$.  If $F_i\colon U_i\to V_i$ for $i=1,2$ are elements of $\tI(X)$, their product is defined as the restricted composition
\[F_2 F_1 \colon F_1^{-1}(V_1\cap U_2)\to F_2(V_1\cap U_2),\]
and the inverse of $F\colon U\to V$ is just the inverse homeomorphism $F^{-1}\colon V\to U$. The idempotents of $\tI(X)$ are precisely the set of all identity homeomorphisms of open subsets of $X$.
Note that the empty homeomorphism (between empty subsets of $X$) belongs to $\tI(X)$, and is a zero in $\tI(X)$.  

 It is convenient to simply identify the set of idempotents of $\tI(X)$ with the set of open subsets of $U$, by writing just $U$ instead of $\operatorname{Id}_U$, and we shall systematically do so. Thus, for example, for any two idempotents $U, V\in \tI(X)$ the product $UV$ is the intersection $U\cap V$ of open subsets, for every  $F\colon U\to V\in \tI(X)$ we have $F^{-1}F=U$ and $FF^{-1}=V$, and for every $W\subset U$ the element $FW$ is the restriction of $F$ to $W$.

A family $\{F_i\colon U_i\to V_i, i\in I\}\subset \tI(X)$ is said to be \textbf{compatible} if $F_i$ and $F_j$ agree on $U_i\cap U_j$ and $F_i^{-1}\cap F_j^{-1}$ agree on $V_i\cap V_j$ for every $i, j\in I$. In this case, the natural homeomorphism $\bigcup_{i\in I} F_i \colon \bigcup_{i\in I} U_i \to \bigcup_{i\in I} V_i$ is called the \textbf{union} (or the \textbf{join}) of the compatible family.

The notions of compatible family and join can be made sense in an arbitrary inverse semigroup $S$ as follows. Two elements $F, T$ of an inverse semi-group are \textbf{compatible} if both $FT^{-1}$ and $F^{-1}T$ are idempotent. A family $\{F_i, i\in I\}$ of elements of an inverse semigroup is said to be compatible if its elements are pairwise compatible.  An inverse semigroup is naturally endowed with a partial order, denoted $\subset$, where $F\subset T$ if and only if $F=TU$ for some idempotent $U$. A family $\{F_i, i\in I\}$ of elements of an inverse semigroup is said to admit a \textbf{union} (or \textbf{join})  if it admits a least common upper bound for this partial order, denoted $\bigcup_{i\in I} F_i$. If a family admits a union, then it is necessarily compatible. An inverse semigroup is said to be \textbf{complete} if every compatible family admits a union.

\begin{defin}

A \textbf{pseudogroup} over a space $X$ is a complete inverse monoid $\tG$ together with a semigroup homomorphisms $ \tau \colon \tG\to  \tI(X)$   which preserves inverses,  joins of compatible families, and identifies the idempotents of $\tG$ bijectively with  the set of open subsets of $X$.
It is said to be \textbf{effective} if the representation $\tau\colon \tG\to \tI(X)$ is injective. 
 \end{defin}
 
 When $\tG$ is effective, it can be simply identified with its image of $\tI(X)$, and we will always do so. This recovers the definition sketched in the introduction.

 \begin{remark}
 Many papers put effectiveness as part of definition of pseudogroup.  In this paper, our primary interest will also be towards effective pseudogroups, and adding this assumption to most of our results would result in  minor loss of generality. However non-effective pseudogroups will  be useful in the course of the proofs, and appear naturally in the theory that we develop. Thus, we do not put the effectiveness as part of the definition.
 \end{remark}


\subsubsection{From groupoids to pseudogroups and vice versa} Let us recall the well-known equivalence between \'etale groupoid and  pseudogroups.

Let $\G$ be an \'etale groupoid,  and let $\tG$ be the set of all bisections of $\G$. The set $\tG$ is naturally an inverse monoid  with  product and inversion
\begin{equation} \label{e:bisections} TS=\{\gamma\delta\colon \gamma\in T, \delta \in S, \src(\gamma)=\rg(\delta)\}, \quad T^{-1}=\{\gamma^{-1}\colon \gamma \in T\},  \end{equation}
 an with identity given by $X$. Note that the order $\subset $ and the union operation $\bigcup$ in the inverse semi-group $\tG$ coincide with the inclusion and union of  bisections seen as subsets of $\G$. In addition  every  bisection $T$ defines a homeomorphism between the open subset  $\src(T)$ and $\rg(T)$, given by  
\[\tau(T):= \rg|_T \circ \src|_T^{-1}\colon \src(T)\longrightarrow \rg(T).\] 
This provides a representation $\tau\colon \tG\to \tI(X)$. Thus $\tG$ is a pseudogroup, called the pseudogroup associated to $\G$.
The groupoid $\G$ is said to be \textbf{effective}, or a \textbf{groupoid of germs},  if $\tG$ is an effective pseudogroup, i.e. if every bisection is uniquely determined by the associated homeomorphism $\tau(T)$. In this case we will often omit $\tau$, and denote the homeomorphism simply by $x\mapsto T(x)$. 
 For every bisection $T\in \tG$ and every point $x\in \src (T)$, let us define the (abstract) \textbf{germ} of $[T]_x$ to be the unique $\gamma \in T$ such that $s(\gamma)=x$. When $\G$ is effective, it identifies with the germ of the homeomorphism $\tau(T)$. 
Note that germs of bisections satisfy the cocycle relation
\[[FT]_x=[T]_{\tau(F)(x)}[F]_x, \]
for every $F, T\in \tG$ and $x\in \src(F)$ such that $\tau(F)(x)\in \src(T)$.

\medskip

Conversely, every pseudogroup over $X$ is isomorphic to $\tG$ for some \'etale groupoid $\G$.  Let us briefly recall why; for a more complete exposition with full details we refer to the lecture notes of Resende \cite{Resende}.

Let $\tP$ be an pseudogroup  over $X$, with associated representation $\tau\colon \tP\to \tI(X)$. For $F\in \tP$ and  $x\in U= F^{-1}F$ , define the (abstract) \textbf{germ} of $F$ at $x$, denoted $[F]_x$, to be the equivalence class of the pair $(F, x)$, where two pairs $(F_1, x_1)$ and $(F_2, x_2)$  are equivalent if $x_1=x_2$ and  there exists an open subset $W\subset U_1\cap U_2$ such that $F_1W=F_2W$, where we have set $U_i=F_i^{-1}F_i$. When  $\tP$ is effective,  this simply means that the homeomorphisms $F_1$ and $F_2$ coincide on a neighbourhood of $x$, and thus $[F]_x$ is the germ of $F$ at $x$ in the usual sense.  
The set $\G$ of all germs of $\tP$ is naturally a groupoid with source and range map $\src([F]_x)=x, \rg([F]_x)= \tau(F)(x)$, and product 
\[[F_1]_{x_1}[F_2]_{x_2}=[F_1F_2]_{x_2}\]
which is defined if and only if $x_1=\tau(F_2)(x_2)$. Define a topology on $\G$ for which a basis  of open sets of the form $\{[F]_x\colon x\in U\}$, for $F\in \tP$ and $U=F^{-1}F$. This topology turns $\G$ into an \'etale groupoid. Note that sets of this form are precisely the bisections of $\G$. We therefore have a natural identification $\tG \simeq \tP$.

\medskip

\begin{stand-assump} {From now on, all pseudogroups will be assumed to be of the form $\tG$ for some \'etale groupoid $\G$. } \end{stand-assump}

\subsubsection{The topological full group}
\begin{defin}
Let $\G$ be an \'etale groupoid over $X$. Its \textbf{topological full group} (or just \textbf{full group}) is the subgroup $\Ff(\G)$ of the pseudogroup $\tG$ consisting of all elements $g\in \tG$ such that $\src(g)=\rg(g)=X$. 

\end{defin}

The group $\Ff(\G)$ has a natural action on $X$, via the homeomorphism associated to bisections. We will denote by $\Df(\G)$ the derived subgroup of $\Ff(\G)$. 

When $G$ is the groupoid of germs of a group action $G\acts X$ (as in Example \ref{e-groupoids}~\ref{e-i-germs}), these groups will also be denoted by $\Ff(G, X)$ and $\Df(G, X)$.

\subsection{Morphisms of pseudogroups}\label{s-morphisms}
In this subsection we clarify the notion of morphism between pseudogroups  that will be extensively use in the sequel. We first recall a well-known elementary lemma.

\begin{lemma}[Stone duality for Hausdorff spaces] \label{l-stone}
\label{l-Stone}
Let $X, Y$ be Hausdorff spaces, and let $\varphi$ be a map from the set of all open subsets of $X$ to the set of open subsets of $Y$ which preserves arbitrary unions and finite intersections and such that $\varphi(\varnothing)=\varnothing$ and $\varphi(X)=Y$. 

Then there exists a unique continuous map $q\colon Y\to X$ such that $\varphi(U)=q^{-1}(U)$ for every open subset $U\subset X$. 
\end{lemma}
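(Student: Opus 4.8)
\textbf{Proof plan for Lemma \ref{l-Stone} (Stone duality for Hausdorff spaces).}

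The plan is to define $q$ pointwise and then check the required properties. First I would note that a map $q\colon Y\to X$ satisfying $\varphi(U)=q^{-1}(U)$ is forced: for $y\in Y$, the point $q(y)$ must lie in every open $U\subset X$ with $y\in\varphi(U)$, and must avoid every open $U$ with $y\notin\varphi(U)$. So I would \emph{define}, for each $y\in Y$, the set $C_y=\{x\in X : \text{for every open }U\ni x,\ y\in\varphi(U)\}$, equivalently the intersection of $X\setminus U$ over all open $U$ with $y\notin\varphi(U)$; this is a closed subset of $X$. The key claim is that $C_y$ is a single point, and then $q(y)$ is defined to be that point.

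To show $C_y$ is nonempty I would argue by contradiction using that $\varphi$ preserves arbitrary unions: if $C_y=\varnothing$, then every $x\in X$ has an open neighbourhood $U_x$ with $y\notin\varphi(U_x)$; the union of the $U_x$ is $X$, so $\varphi(X)=\bigcup_x \varphi(U_x)$, and since $y\notin\varphi(U_x)$ for all $x$ we get $y\notin\varphi(X)=Y$, a contradiction. To show $C_y$ has at most one point I would use that $\varphi$ preserves finite intersections together with the Hausdorff hypothesis on $X$: if $x_1\neq x_2$ both lie in $C_y$, pick disjoint open $U_1\ni x_1$, $U_2\ni x_2$; then $y\in\varphi(U_1)\cap\varphi(U_2)=\varphi(U_1\cap U_2)=\varphi(\varnothing)=\varnothing$, again a contradiction. (Note: the Hausdorff hypothesis on $Y$ is not actually needed for existence and uniqueness of $q$; only $X$ Hausdorff is used. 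This matches the fact that the statement is really about the frame of opens of $X$.)

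Having defined $q\colon Y\to X$, I would verify $\varphi(U)=q^{-1}(U)$ for every open $U\subset X$. For the inclusion $q^{-1}(U)\subseteq\varphi(U)$: if $q(y)\in U$, then since $q(y)\in C_y$ and $U$ is an open neighbourhood of $q(y)$, the definition of $C_y$ gives $y\in\varphi(U)$. For the reverse inclusion $\varphi(U)\subseteq q^{-1}(U)$: suppose $y\in\varphi(U)$ but $q(y)\notin U$; then $U$ is an open set not containing $q(y)$, and the definition of $q(y)\in C_y$ as the unique point lying in $X\setminus U'$ for every open $U'$ with $y\notin\varphi(U')$ — applied with $U'=U$ — would require $y\notin\varphi(U)$, contradiction. (Equivalently: $q(y)\notin U$ means, by how $C_y$ was built as an intersection of complements, that $y\notin\varphi(U)$.) Continuity of $q$ is then immediate: preimages of opens are opens. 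Uniqueness follows since any two continuous maps inducing the same $\varphi$ must agree: if $q_1(y)\neq q_2(y)$, separate them by disjoint opens $U_1,U_2$ in $X$; then $y\in q_1^{-1}(U_1)\cap q_2^{-1}(U_2)=\varphi(U_1)\cap\varphi(U_2)=\varnothing$.

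The only real subtlety — the "main obstacle", such as it is — is getting the logic of the two inclusions straight and being careful that $C_y$ is well-defined as a point; once the nonemptiness (via union-preservation and $\varphi(X)=Y$) and the singleton property (via intersection-preservation, $\varphi(\varnothing)=\varnothing$, and Hausdorffness of $X$) are in place, everything else is bookkeeping. I would present it in exactly that order: forced definition, $C_y\neq\varnothing$, $|C_y|\le 1$, then the identity $\varphi(U)=q^{-1}(U)$, continuity, uniqueness.
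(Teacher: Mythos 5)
The paper states this lemma without proof (it is cited as ``well-known''), so the only question is whether your argument is sound. Your construction of $q$ is the standard one and most of it is correct: $C_y\neq\varnothing$ follows from preservation of arbitrary unions together with $\varphi(X)=Y$; $|C_y|\le 1$ follows from preservation of finite intersections, $\varphi(\varnothing)=\varnothing$, and Hausdorffness of $X$; the inclusion $q^{-1}(U)\subseteq\varphi(U)$, continuity, and uniqueness are all fine, as is your observation that Hausdorffness of $Y$ is never used.

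The gap is in the reverse inclusion $\varphi(U)\subseteq q^{-1}(U)$. You claim that $q(y)\notin U$ forces $y\notin\varphi(U)$ ``by how $C_y$ was built as an intersection of complements,'' but $C_y$ is the intersection of $X\setminus U'$ only over those opens $U'$ satisfying $y\notin\varphi(U')$. Knowing that this intersection equals $\{q(y)\}$ tells you that the union of those particular $U'$ is $X\setminus\{q(y)\}$; it does not tell you that every open set avoiding $q(y)$ belongs to that family. Your phrase ``applied with $U'=U$'' presupposes $y\notin\varphi(U)$, which is exactly what you are trying to derive, so the deduction is circular: what you need is the converse of the defining property of $C_y$, and it requires one more use of the hypotheses. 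The fix is short. If $q(y)\notin U$, then $U\subseteq X\setminus\{q(y)\}=\bigcup\{U'\ \text{open}: y\notin\varphi(U')\}$, hence $U=\bigcup (U\cap U')$ over that family. Applying $\varphi$, using preservation of arbitrary unions and monotonicity of $\varphi$ (which follows from preservation of unions, since $U\cap U'\subseteq U'$ gives $\varphi(U\cap U')\subseteq\varphi(U')$), one gets $\varphi(U)\subseteq\bigcup\varphi(U')$ with $y$ lying in none of the $\varphi(U')$, so $y\notin\varphi(U)$. With this step inserted the proof is complete.
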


 Throughout the subsection, let $\G, \H$ be \'etale groupoids over spaces $X, Y$.
\begin{defin}
A \textbf{continuous morphism} of pseudogroups $\varphi\colon \tG\to \tH$ is a semigroup homomorphism which preserves unions of compatible families, and preserves zero and unit (i.e. $\varphi(\varnothing)=\varnothing, \varphi(X)=\varphi(Y)$). 
\end{defin}

A continuous morphism $\varphi\colon \tG\to \tH$ maps idempotents of $\tG$ to idempotents of $\tH$. Thus $\varphi$ provides a map between open subsets of $X$ to open subsets of $Y$ which verifies the conditions of Lemma \ref{l-stone}. We deduce that there exists a unique continuous map $q\colon Y\to X$, such that $\varphi(U)=q^{-1}(U)$ for every open subset $U\subset X$. 

\begin{defin}
The map $q\colon Y\to X$ described above will be called the \textbf{spatial component} of the continuous morphism $\varphi\colon \tG\to \tH$. 
\end{defin}

The spatial component is equivariant with respect to the natural representations  $\tau\colon \tG\to \tI(X)$ and $\tau\colon \tH\to \tI(Y)$, in the following sense:

\begin{prop}[Equivariance of the spatial component] \label{p-spatial-equivariance} \label{p-spatial-equivariant}
Let $\varphi\colon \tG\to \tH$ be a continuous morphism, and $q\colon Y\to X$ be its spatial component. For every $F\in \tG$ with $U=\src(F)$ and $V=\rg(F)$ we have $\src(\varphi(F))= q^{-1}(U)$ and $\rg(\varphi(F))=q^{-1}(V)$, and the homeomorphisms $\tau(F), \tau(\varphi(F))$ verify $\tau(F)\circ q=q\circ(\tau(\varphi(F))$.  \[\begin{tikzcd}
q^{-1}(U)\arrow[r, "\tau(\varphi(F))", "\widetilde{}"'] \arrow[d, "q"] & q^{-1}(V) \arrow[d, "q"] \\
U\arrow[r, "\tau(F)", "\widetilde{}"'] & V.
\end{tikzcd}\] 
\end{prop}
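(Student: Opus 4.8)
The plan is to unwind the definitions of continuous morphism and spatial component, and to exploit the fact that in an étale groupoid the source and range maps are determined by the idempotents $F^{-1}F$ and $FF^{-1}$, which are precisely the identity bisections over the open sets $\src(F)$ and $\rg(F)$. First I would observe that since $\varphi$ is a semigroup homomorphism, it commutes with the operations $F\mapsto F^{-1}F$ and $F\mapsto FF^{-1}$; hence $\src(\varphi(F))=\varphi(F)^{-1}\varphi(F)=\varphi(F^{-1}F)=\varphi(\src(F))=\varphi(U)$, and by definition of the spatial component $\varphi(U)=q^{-1}(U)$. The same computation on the other side gives $\rg(\varphi(F))=\varphi(V)=q^{-1}(V)$. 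This already establishes the first two assertions with essentially no work beyond bookkeeping with the identifications of idempotents with open sets.

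The content is in the commutativity of the square, i.e. $\tau(F)\circ q = q\circ \tau(\varphi(F))$ as maps $q^{-1}(U)\to V$. The key trick is to test the identity on idempotents: for any open $W\subset V$, the element $F^{-1}W F$ is the idempotent corresponding to the open set $\tau(F)^{-1}(W)\subset U$ (this is just the restriction formula $\src(WF)=\tau(F)^{-1}(W)$ in $\tI(X)$, valid in any pseudogroup). Applying $\varphi$ and using that it is a homomorphism preserving idempotents, $\varphi(F^{-1}WF)=\varphi(F)^{-1}\varphi(W)\varphi(F)$, which is the idempotent corresponding to $\tau(\varphi(F))^{-1}(q^{-1}(W))\subset q^{-1}(U)$. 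On the other hand, $\varphi$ applied to the idempotent $F^{-1}WF = \tau(F)^{-1}(W)$ (viewed as an open subset of $X$) equals $q^{-1}(\tau(F)^{-1}(W))$ by definition of $q$. Equating the two expressions for the same open subset of $q^{-1}(U)$ yields
\[
\tau(\varphi(F))^{-1}(q^{-1}(W)) = q^{-1}(\tau(F)^{-1}(W))
\]
for every open $W\subset V$. Since $\tau(\varphi(F))$ and $\tau(F)$ are homeomorphisms, this says $q\circ\tau(\varphi(F))$ and $\tau(F)\circ q$ have the same preimages of opens, and two continuous maps into a Hausdorff space with this property are equal (one can also argue pointwise: for $y\in q^{-1}(U)$ and any open $W\ni \tau(F)(q(y))$, both sides land in $W$, and Hausdorffness of $V$ forces equality).

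The step I expect to require the most care is making the "test on idempotents" argument rigorous in the possibly non-effective or non-Hausdorff-groupoid setting: one must be careful that the identification of idempotents of $\tG$ (resp. $\tH$) with open subsets of $X$ (resp. $Y$) is the canonical bijection built into the definition of a pseudogroup, and that $q$ is characterised precisely by $\varphi(U)=q^{-1}(U)$ on idempotents via Lemma~\ref{l-stone}. Once that translation is set up cleanly, the computation with $F^{-1}WF$ is purely formal inverse-semigroup algebra together with the defining restriction identity in $\tI(X)$, and the final deduction of equality of maps is the Hausdorff separation argument. Alternatively, and perhaps more cleanly, one can phrase the whole thing at the level of germs: for $[F]_x$ with $x\in q^{-1}(U)\subset Y$, one checks that $\varphi$ sends the germ $[F]_{q(x)}$ (thought of along the fibre) appropriately, but I would favour the idempotent/Stone-duality route since Lemma~\ref{l-stone} is already in hand and avoids any germ-level subtleties.
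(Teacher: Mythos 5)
Your argument is correct and follows essentially the same route as the paper's proof: the source/range identities come from $\varphi$ commuting with $F\mapsto F^{-1}F$, and the commutativity of the square is obtained by computing $\varphi(F^{-1}WF)$ in two ways for each open $W\subset V$ and then invoking the uniqueness of a map with prescribed preimages of opens (the paper cites the uniqueness clause of Lemma~\ref{l-stone}, which is exactly your Hausdorff separation argument). No gaps.
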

\begin{proof}
 Let $F\in \tG$ and $U=\src(F)=F^{-1}F$. Since $U$ is idempotent, we have that $q^{-1}(U)=\varphi(U)=\varphi(F)^{-1}\varphi(F)=\src(\varphi(F))$, as desired. The same conclusion follows for $V=\rg(F)$  in the same way. Let us show that the the diagram commutes, i.e. that we have $\tau(F)\circ q=q\circ \tau({\varphi(F)})$. To see this let $W\subset V$ be an open subset. Note that  $\tau(F)^{-1}(W)$ is equal to the idempotent $F^{-1}WF$. It follows that $(\tau(F)\circ q)^{-1} (W)=q^{-1}(\tau(F)^{-1}(W))=q^{-1}(F^{-1}WF)=\varphi(F^{-1})\varphi(W)\varphi(F)$.  Arguing similarly we have that $(q\circ \tau({\varphi(F)}))^{-1}(W)=\tau({\varphi(F)})^{-1}(q^{-1}(W))=\tau({\varphi(F)})^{-1}(\varphi(W))=\varphi(F)^{-1}\varphi(W)\varphi(F)$. We conclude that for every open subset $W$ of $V$ we have $(\tau(F)\circ q)^{-1} (W)= (q\circ \tau({\varphi(F)}))^{-1}(W)$. By the uniqueness part of Lemma \ref{l-Stone} this implies that the two maps $q\circ \tau({\varphi(F)})$ and $\tau(F)\circ q$ are equal, as desired. \qedhere

\end{proof}
A direct consequence of the proposition is the following.
\begin{cor} \label{c-continuous-morphism-full}
Let $\varphi\colon \tG\to \tH$ be a continuous morphism, and $q\colon Y\to X$ be its spatial component. Then its restriction to $\Ff(\G)$ is a group homomorphism taking values in $\Ff(\H)$, and the action of $\Ff(\G)$ on $Y$ induced by $\varphi$ factors onto its natural action on $X$ via the map $q$.  
\end{cor}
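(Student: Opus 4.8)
The statement to prove is Corollary \ref{c-continuous-morphism-full}, which asserts that the restriction of a continuous morphism $\varphi\colon\tG\to\tH$ to $\Ff(\G)$ is a group homomorphism into $\Ff(\H)$, and that the induced action of $\Ff(\G)$ on $Y$ factors onto the natural action on $X$ via the spatial component $q$.

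\medskip

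The plan is to derive everything from Proposition \ref{p-spatial-equivariance}, which is already available. First I would observe that $\Ff(\G)$ is by definition the subgroup of the inverse monoid $\tG$ consisting of elements $g$ with $\src(g)=\rg(g)=X$; since $\varphi$ is a semigroup homomorphism preserving inverses and the unit, its restriction to any subgroup of $\tG$ is automatically a group homomorphism into $\tH$ (it preserves products, inverses, and sends $X$ to $Y$). So the only substantive point is that $\varphi(g)$ actually lands in $\Ff(\H)$, i.e. that $\src(\varphi(g))=\rg(\varphi(g))=Y$. This is immediate from Proposition \ref{p-spatial-equivariance}: taking $F=g$ with $U=\src(g)=X$ and $V=\rg(g)=X$, we get $\src(\varphi(g))=q^{-1}(X)=Y$ and $\rg(\varphi(g))=q^{-1}(X)=Y$, since $q\colon Y\to X$ is a map of sets into $X$. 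Hence $\varphi(g)\in\Ff(\H)$, and $\varphi|_{\Ff(\G)}\colon\Ff(\G)\to\Ff(\H)$ is a group homomorphism.

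\medskip

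Second, for the factoring statement, I would again quote the commuting square in Proposition \ref{p-spatial-equivariance}. The action of $\Ff(\G)$ on $Y$ induced by $\varphi$ is the one where $g\in\Ff(\G)$ acts by the homeomorphism $\tau(\varphi(g))$ of $Y=q^{-1}(X)$, and its natural action on $X$ is by $\tau(g)$. Proposition \ref{p-spatial-equivariance} gives exactly $\tau(g)\circ q = q\circ\tau(\varphi(g))$ for every $g\in\Ff(\G)$, which is precisely the statement that $q$ is equivariant, i.e. that the $\Ff(\G)$-action on $Y$ factors onto the action on $X$ via $q$. This finishes the proof.

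\medskip

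There is essentially no obstacle here: the corollary is a direct specialization of Proposition \ref{p-spatial-equivariance} to the case where $\src(F)=\rg(F)=X$, the only thing to check being that $q^{-1}(X)=Y$, which holds because $q$ is a map into $X$. I would write the proof in two short sentences, one for the homomorphism claim and one for the equivariance claim, each citing Proposition \ref{p-spatial-equivariance}.

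\begin{proof}
By definition $\Ff(\G)$ is a subgroup of the inverse monoid $\tG$, and $\varphi$ is a semigroup homomorphism preserving inverses and sending the unit $X$ of $\tG$ to the unit $Y$ of $\tH$; hence $\varphi|_{\Ff(\G)}$ is a group homomorphism into $\tH$. If $g\in\Ff(\G)$ then $\src(g)=\rg(g)=X$, so by Proposition \ref{p-spatial-equivariance} applied with $F=g$ we get $\src(\varphi(g))=q^{-1}(X)=Y$ and $\rg(\varphi(g))=q^{-1}(X)=Y$, so that $\varphi(g)\in\Ff(\H)$. Thus $\varphi|_{\Ff(\G)}$ takes values in $\Ff(\H)$. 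Finally, for $g\in \Ff(\G)$ the element $g$ acts on $X$ by $\tau(g)$ and on $Y$ (via $\varphi$) by $\tau(\varphi(g))$, and the commutative square of Proposition \ref{p-spatial-equivariance} reads $\tau(g)\circ q=q\circ\tau(\varphi(g))$, which is exactly the statement that $q\colon Y\to X$ is $\Ff(\G)$-equivariant, i.e. the action on $Y$ factors onto the action on $X$ via $q$.
\end{proof}
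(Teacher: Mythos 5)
Your proof is correct and follows exactly the route the paper intends: the paper gives no separate argument, simply declaring the corollary "a direct consequence" of Proposition \ref{p-spatial-equivariance}, and your write-up fills in precisely those details (the semigroup-homomorphism property for the group structure, $q^{-1}(X)=Y$ for landing in $\Ff(\H)$, and the commuting square for equivariance).
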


Injectivity of a continuous morphism $\varphi\colon \tG\to \tH$ is tightly related to the surjectivity of the spatial component $q\colon Y\to X$.

\begin{prop} \label{p-inj-surj}
Let $\varphi\colon \tG\to \tH$ be a continuous morphism. If $\varphi$ is injective, then its spatial component $q\colon Y\to X$ is surjective. The converse holds  if $\tG$ is effective.

\end{prop}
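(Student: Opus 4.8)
The plan is to prove the two implications separately, using the identification $\varphi(U)=q^{-1}(U)$ for open subsets $U\subset X$ provided by the spatial component.

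First I would handle the forward direction: suppose $\varphi$ is injective and show $q$ is surjective. Suppose not, so there is $x\in X\setminus q(Y)$; since $X$ is Hausdorff and $q(Y)$ need not be closed, I would instead argue with idempotents directly. Pick a nonempty open set $U\subset X$; I claim that if $q(Y)\cap U=\varnothing$ then $\varphi(U)=q^{-1}(U)=\varnothing=\varphi(\varnothing)$, contradicting injectivity since $U$ (as an idempotent of $\tG$, i.e. $\operatorname{Id}_U$) is distinct from $\varnothing$. So it suffices to produce a nonempty open $U$ with $q(Y)\cap U=\varnothing$ whenever $q$ is not surjective. If $x\in X\setminus q(Y)$, then $\{x\}\ne X$ but I cannot immediately separate; the clean move is: $q(Y)$ is the continuous image of a compact space, hence compact, hence closed (as $X$ is Hausdorff). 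If $q$ is not surjective, $X\setminus q(Y)$ is a nonempty open set, and I take $U$ to be it; then $\varphi(U)=q^{-1}(U)=\varnothing$ while $U$ is a nonzero idempotent of $\tG$, contradicting injectivity of $\varphi$.

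For the converse, assume $q$ is surjective and $\tG$ is effective, and show $\varphi$ is injective. Suppose $F,T\in\tG$ with $\varphi(F)=\varphi(T)$. First, comparing sources: $q^{-1}(\src F)=\src\varphi(F)=\src\varphi(T)=q^{-1}(\src T)$, and since $q$ is surjective this forces $\src F=\src T=:U$; similarly $\rg F=\rg T=:V$. So both $F$ and $T$ define homeomorphisms $U\to V$, and I must show they are equal. By Proposition~\ref{p-spatial-equivariance}, for every $x\in U$ we have $\tau(F)(q(y))=q(\tau(\varphi(F))(y))$ for $y\in q^{-1}(x)$, and likewise $\tau(T)(q(y))=q(\tau(\varphi(T))(y))$; since $\varphi(F)=\varphi(T)$ and $q$ is surjective onto $U$, this gives $\tau(F)(x)=\tau(T)(x)$ for all $x\in U$. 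Thus $\tau(F)=\tau(T)$ as partial homeomorphisms of $X$, and since $\tG$ is effective (so $\tau$ is injective on $\tG$), we conclude $F=T$.

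The only subtle point is the forward direction, where I rely on compactness of $Y$ to ensure $q(Y)$ is closed; I should make sure the excerpt's standing hypotheses guarantee $Y$ compact (the groupoids are over compact spaces in the main theorems, and the lemma is stated with $\tG,\tH$ over spaces $X,Y$ where Hausdorffness of unit spaces is assumed — I would state the compactness assumption explicitly if needed, or alternatively argue via the pre-basis of bisections without it). I expect this compactness/closedness step to be the main thing to get right; the rest is a direct unwinding of the definitions and of Proposition~\ref{p-spatial-equivariance}.
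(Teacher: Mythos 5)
Your converse direction is essentially the paper's proof verbatim: compare sources and ranges via $\varphi(U)=q^{-1}(U)$ and surjectivity of $q$, use Proposition~\ref{p-spatial-equivariance} to see that $\tau(F)=\tau(T)$, and conclude by effectiveness. No issues there.

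The forward direction is where you diverge, and the worry you flag at the end is real: the standing hypotheses of this subsection only require $X$ and $Y$ to be Hausdorff unit spaces of \'etale groupoids, not compact, so you cannot in general conclude that $q(Y)$ is closed. If $q(Y)$ is dense but not all of $X$, there is no nonempty open set disjoint from $q(Y)$ and your argument produces no contradiction, even though the statement is still true. The paper's argument sidesteps this entirely and is worth internalising: take $x\in X\setminus q(Y)$ and any open neighbourhood $U$ of $x$. Since $X$ is Hausdorff, $\{x\}$ is closed, so $U\setminus\{x\}$ is open, and $U\neq U\setminus\{x\}$ as idempotents of $\tG$. But $q^{-1}(U)=q^{-1}(U\setminus\{x\})$ because $x\notin q(Y)$, hence $\varphi(U)=\varphi(U\setminus\{x\})$, contradicting injectivity. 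This deletes a single point rather than trying to separate an open set from the whole image, needs only Hausdorffness, and is the "argue without compactness" route you correctly suspected should exist. Your version is fine whenever $Y$ is compact (which covers the paper's main applications), but as a proof of the proposition as stated it has a gap that the one-point trick closes.
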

\begin{proof}
Assume that $\varphi$ is injective. Towards a contradiction, let $x\in X$ be a point which does not belong to the image of $q$, and let $U$ be a neighbourhood of $x$. We have $\varphi(U)=q^{-1}(U)=q^{-1}(U\setminus\{x\})=\varphi(U\setminus\{x\})$ contradicting injectivity of $\varphi$. 
Conversely assume that $\tG$ is effective and that $q$ is  surjective. 
Let $F, T\in \tG$ be such that $\varphi(F)=\varphi(T)$. Proposition \ref{p-spatial-equivariance} together with the surjectivity of the map $q$ implies that $\src(F)=\src(T), \rg(F)=\rg(T)$ and that the homeomorphisms $\tau(F)$ and $\tau(T)$ coincide. Thus $F=T$. \qedhere

\end{proof}

\begin{example}\label{e-restriction-morphism}
Assume that $C\subset X$ is a  $\G$-invariant subset. We then have a natural continuous morphism $r_C\colon \tG\to \widetilde{\G|C}$, called the \textbf{restriction morphism}, given by $F\mapsto F|_C:= F\cap (\G|C)$. Clearly $F|_C$ is a bisection of $\G|C$ and this defines a continuous morphism, whose associated spatial component is  the inclusion $C\hookrightarrow X$.
\end{example}

The following simple proposition allows in many cases to reduce the study of continuous morphism to those with surjective spatial component.
\begin{prop} \label{p-morphism-restriction}\label{p-restriction-morphism}
Let $\varphi\colon \tG\to \tH$ be a continuous morphism, with spatial component $q\colon Y\to X$. Then $q(Y)$ is a $\G$-invariant subset of $X$, and there exists a unique continuous morphism $\psi\colon \widetilde{\G|q(Y)}\to \tH$ such that $\varphi$ splits as the composition of the restriction morphism $\tG\to \widetilde{\G|q(Y)}$ followed by $\psi$.  
\end{prop}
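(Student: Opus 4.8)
\textbf{Proof plan for Proposition \ref{p-restriction-morphism}.}

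The plan is to first identify the target of the factored morphism, then build $\psi$ by a tautological restriction of $\varphi$, and finally check uniqueness via Stone duality. First I would show that $C:=q(Y)$ is $\G$-invariant. Given $\gamma\in\G$ with $\src(\gamma)=x\in C$, pick a bisection $F\in\tG$ containing $\gamma$, so $x\in\src(F)=F^{-1}F$. Choose $y\in Y$ with $q(y)=x$; since $\src(\varphi(F))=q^{-1}(\src F)$ by Proposition \ref{p-spatial-equivariance}, we have $y\in\src(\varphi(F))$, hence $\tau(\varphi(F))(y)\in Y$, and the commuting square of Proposition \ref{p-spatial-equivariance} gives $q(\tau(\varphi(F))(y))=\tau(F)(q(y))=\tau(F)(x)=\rg(\gamma)$. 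Thus $\rg(\gamma)\in C$, so $C$ is invariant and $\G|C$ (hence $\widetilde{\G|C}$) is defined; the restriction morphism $r_C\colon\tG\to\widetilde{\G|C}$ of Example \ref{e-restriction-morphism} makes sense, with spatial component the inclusion $\iota\colon C\hookrightarrow X$.

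Next I would construct $\psi\colon\widetilde{\G|C}\to\tH$. The idea is that every element of $\widetilde{\G|C}$ is of the form $F|_C$ for some $F\in\tG$, and one sets $\psi(F|_C):=\varphi(F)$. The main point to verify is that this is well-defined, i.e. that $F|_C=T|_C$ forces $\varphi(F)=\varphi(T)$. Here one uses that $q$ takes values in $C$: by Proposition \ref{p-spatial-equivariance}, $\src(\varphi(F))=q^{-1}(\src F)=q^{-1}(\src F\cap C)=q^{-1}(\src(F|_C))$ (since $q(Y)\subseteq C$), and similarly for the range; moreover the homeomorphism $\tau(\varphi(F))$ is determined, via the commuting square and the fact that $q$ surjects onto $C$, by the homeomorphism $\tau(F)|_{\src F\cap C}=\tau(F|_C)$. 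So $\varphi(F)$ depends only on $F|_C$. (Formally this is the assertion that $\varphi$ factors through the quotient inverse monoid $\tG\to\widetilde{\G|C}$; one also checks that $r_C$ is surjective, which is immediate from the definition of bisections of $\G|C$ and the fact that every such bisection is of the form $(\text{bisection of }\G)\cap(\G|C)$ — take the same open subset of $\G$.) Granting well-definedness, $\psi$ is a semigroup homomorphism preserving inverses, joins of compatible families, zero and unit, because $r_C$ has all these properties and $\varphi=\psi\circ r_C$ by construction; so $\psi$ is a continuous morphism.

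Finally, uniqueness of $\psi$: since $r_C\colon\tG\to\widetilde{\G|C}$ is surjective, any $\psi'$ with $\psi'\circ r_C=\varphi$ agrees with $\psi$ on the image of $r_C$, which is everything. Equivalently, one can note that the spatial component of $\psi$ is forced: $\psi\circ r_C=\varphi$ has spatial component $\iota\circ(\text{sp. comp. of }\psi)=q$, so the spatial component of $\psi$ is the corestriction $Y\to C$ of $q$, and a continuous morphism out of an effective pseudogroup is determined by its values on bisections together with its spatial component (Proposition \ref{p-inj-surj}-type reasoning); but the surjectivity-of-$r_C$ argument is cleaner and needs no effectiveness hypothesis. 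The step I expect to require the most care is the well-definedness of $\psi$, i.e. genuinely exploiting that $q(Y)\subseteq C$ so that no information about $\varphi(F)$ is lost when passing from $F$ to $F|_C$ — this is where the hypothesis that $C$ is exactly $q(Y)$ (rather than some larger invariant set) is used.
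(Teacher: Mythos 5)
Your overall strategy (restrict $\varphi$ along $r_C$ and check well-definedness) is workable, but the step you yourself flag as the delicate one — that $F|_C=T|_C$ forces $\varphi(F)=\varphi(T)$ — is not actually established by your argument, for two reasons. First, the commuting square of Proposition \ref{p-spatial-equivariance} gives you $q\circ\tau(\varphi(F))=\tau(F)\circ q$; together with surjectivity of $q$ onto $C$ this pins down $q\circ\tau(\varphi(F))$, but it pins down $\tau(\varphi(F))$ itself only if $q$ is \emph{injective}, which it is not in general. Second, and more fundamentally, even if you knew $\src(\varphi(F))$, $\rg(\varphi(F))$ and $\tau(\varphi(F))$, this determines the element $\varphi(F)\in\tH$ only when $\tH$ is effective — and the paper deliberately does not assume effectiveness of the target (non-effective pseudogroups such as $\widetilde{\H|Z}$ built from action groupoids appear in the applications). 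A correct direct argument runs as follows: let $W=\src(F\cap T)$ be the open set of points where the germs of $F$ and $T$ agree; then $F|_C=T|_C$ gives $\src(F)\cap C\subset W$ and $FW=F\cap T=TW$, so $\varphi(F)q^{-1}(W)=\varphi(T)q^{-1}(W)$; finally $\src(\varphi(F))=q^{-1}(\src(F))=q^{-1}(\src(F)\cap C)\subset q^{-1}(W)$, so restricting to $q^{-1}(W)$ changes neither side, and $\varphi(F)=\varphi(T)$.

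A second gap is your assertion that $r_C$ is surjective "by taking the same open subset of $\G$": a bisection of $\G|C$ is an open subset of $\G|C$, hence the trace of an open subset of $\G$, but that open subset need not be (and in general cannot be chosen to be) a bisection of $\G$ — off $C$ there is no reason germs can be selected coherently. What is true, and suffices for uniqueness, is that every bisection of $\G|C$ is a join of compatible elements of the image of $r_C$; but then your definition $\psi(F|_C):=\varphi(F)$ only covers the image of $r_C$, and existence requires an extra step extending $\psi$ to joins and checking consistency. The paper avoids all of this: it invokes Theorem \ref{t-non-comm-stone} and observes that the translation action $\G\acts(\H,q\circ\rg)$ corresponding to $\varphi$ is \emph{tautologically} a translation action of $\G|C$, since the anchor $q\circ\rg$ takes values in $C$ and hence only elements of $\G|C$ ever act; both existence and uniqueness of $\psi$ then fall out of the categorical equivalence with no germ-chasing. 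Your invariance argument for $q(Y)$ is fine, but I would recommend either adopting the translation-action route or repairing the two steps above as indicated.
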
 

Although the proof is not difficult, we postpone it to the end of \S \ref{s-functor-equivalence}, as it will become tautological after introducing a suitable interpretation of continuous morphisms  in terms of the underlying groupoids. 

\subsection{Actions of groupoids} 
 \label{s-actions}\label{s-groupoid-actions}

Let $\G$ be an \'etale groupoid over a space $X$.   A ({continuous}) \textbf{left action}  of $ \G $ on $ Z$ over a continuous map $q\colon Z\to X$ is a  continuous map
\[\alpha\colon  \G\tensor[_s]{\times}{_q} Z\to Z, \quad (\gamma, z)\mapsto \gamma  \cdot z\]
defined on the {fibre product}  $ \G\tensor[_s]{\times}{_q} Z=\{(\gamma, z)\in \G\times Z\colon \src(\gamma)=q(z)\}$, 
which verifies  $q(\gamma \cdot z)=\rg(\gamma)$ and  $(\delta\gamma) \cdot z=\delta \cdot ( \gamma \cdot z)$ for every
$\delta, \gamma$ such that $\src(\delta)=\rg(\gamma)$ and $\src(\gamma)=q(z)$. We will   write $\alpha \colon \G\acts Z$ to indicate that $\alpha$ is a left action of $\G$ on $(Z, q)$ over some map $q\colon Z\to X$.

Similarly a  \textbf{right action} of $\alpha \colon Z\curvearrowleft \G$ over a continuous map $q\colon Z\to X$ is a map
\[\alpha\colon  Z\tensor[_q]{\times}{_r} \G\to Z, \quad (z, \gamma)\mapsto   z\cdot \gamma \]
such that $q(z\gamma)=\src(\gamma)$ and $(z\cdot \gamma)\cdot \delta=z\cdot (\gamma\delta)$ for every $\gamma, \delta\in \G$ with $\src(\gamma)=\rg(\delta)$.


Actions of $\G$ on $Z$ are essentially the same thing as continuous morphisms from $\tG$ to the pseudogroup $\tI(Z)$:
\begin{prop}\label{prop-morphism-actions}
Left actions of $\G$ on $Z$ are in natural one-to-one correspondence with continuous morphisms  $\varphi \colon \tG\to \tI(Z)$. Under this correspondence, a continuous morphism with spatial component $q\colon Z\to X$ gives rise to an action over the same map $q$, and vice versa. 
\end{prop}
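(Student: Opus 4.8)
The plan is to construct the bijection explicitly in both directions and check that the conditions defining a ``continuous morphism into $\tI(Z)$'' translate exactly into the axioms of a groupoid action, with matching spatial component/anchor map.

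First I would go from actions to morphisms. Suppose $\alpha\colon \G\tensor[_s]{\times}{_q} Z\to Z$ is a left action over $q\colon Z\to X$. Given a bisection $T\in\tG$, define $\varphi(T)$ to be the map $z\mapsto [T]_{q(z)}\cdot z$, with domain $q^{-1}(\src(T))$ and range $q^{-1}(\rg(T))$. I would check that this is a well-defined partial homeomorphism of $Z$: it is continuous because $\alpha$ is continuous and $z\mapsto([T]_{q(z)},z)$ is continuous on $q^{-1}(\src(T))$ (here one uses that $T$ is a bisection, so $x\mapsto[T]_x$ is a continuous section of $\src$ over $\src(T)$), and its inverse is $\varphi(T^{-1})$ by the action axiom $(\gamma^{-1}\gamma)\cdot z=z$. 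Then I would verify that $\varphi$ is a semigroup homomorphism using the cocycle relation $[FT]_x=[T]_{\tau(F)(x)}[F]_x$ recalled in \S\ref{s-groupoids} together with associativity $(\delta\gamma)\cdot z=\delta\cdot(\gamma\cdot z)$; that $\varphi$ preserves unions of compatible families (a family of bisections is compatible iff the germ sections agree where the sources overlap, and then the induced partial maps of $Z$ agree on the overlap of their domains); and that $\varphi(\varnothing)=\varnothing$, $\varphi(X)=\mathrm{Id}_Z$. Finally, since $\varphi$ sends the idempotent $U\subset X$ (identity bisection on $U$) to $\mathrm{Id}_{q^{-1}(U)}$, the spatial component of $\varphi$ in the sense of Lemma~\ref{l-stone} is exactly $q$.

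Conversely, given a continuous morphism $\varphi\colon\tG\to\tI(Z)$ with spatial component $q\colon Z\to X$, I would define the action by $\gamma\cdot z:=\varphi(T)(z)$ where $T$ is any bisection containing $\gamma$; this is independent of $T$ because two such bisections agree on a neighbourhood of $\src(\gamma)=q(z)$ and $\varphi$ is compatible with restriction to idempotents (so the germ of $\varphi(T)$ at $z$ depends only on $[T]_{q(z)}=\gamma$, using that the germs of $T$ are determined by $\gamma$ and Proposition~\ref{p-spatial-equivariance}, which gives $\src(\varphi(T))=q^{-1}(\src(T))$). Continuity of the resulting map on the fibre product follows by working locally: over a bisection $T$ the fibre product $T\tensor[_s]{\times}{_q}Z$ is homeomorphic to $q^{-1}(\src(T))$ and the action restricts to the continuous map $\varphi(T)$. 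The identities $q(\gamma\cdot z)=\rg(\gamma)$ and $(\delta\gamma)\cdot z=\delta\cdot(\gamma\cdot z)$ come from Proposition~\ref{p-spatial-equivariance} and from $\varphi$ being a homomorphism together with the product formula for bisections \eqref{e:bisections}. I would then observe that the two constructions are mutually inverse essentially by unwinding the definitions, and that the right-action case is obtained by the same argument replacing $\tG$ by its opposite inverse semigroup (or equivalently by composing with inversion).

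The main obstacle I expect is purely bookkeeping in the non-Hausdorff setting: one must be careful that ``the germ $[T]_{q(z)}$ determines $\varphi(T)$ near $z$'' genuinely holds when $\G$ is not Hausdorff, i.e.\ that distinct bisections through the same germ induce partial maps of $Z$ with the same germ at $z$. This is exactly the statement that $\varphi$ factors through germs, which follows from compatibility of $\varphi$ with multiplication by idempotents (shrinking the source), but it is the one place where I would write the argument out carefully rather than wave at it. Everything else is a routine translation between the two languages, and the assertion that spatial component and anchor map coincide is immediate once $\varphi(\mathrm{Id}_U)=\mathrm{Id}_{q^{-1}(U)}$ is established.
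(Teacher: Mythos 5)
Your proposal is correct and follows essentially the same route as the paper: both directions are constructed by the formulas $\varphi(T)(z)=[T]_{q(z)}\cdot z$ and $\gamma\cdot z=\varphi(T)(z)$ for any bisection $T\ni\gamma$, with well-definedness of the latter established exactly as in the paper by shrinking to a common sub-bisection $T\subset F\cap F'$ through $\gamma$ and using that $\varphi$ respects multiplication by idempotents. You correctly identify the one point (germ-independence in the possibly non-Hausdorff setting) that needs care, and resolve it the same way the paper does.
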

\begin{proof}
Let $\alpha\colon  \G\tensor[_s]{\times}{_q} Z\to Z, \quad (\gamma, z)\mapsto \gamma z$ be an action over a map $q$ and let us construct a continuous  morphism $\varphi\colon \tG\to \tI(Z)$. On idempotents $U\in \tG$ we define $\varphi(U)=q^{-1}(U)$. For every $F\in \tG$ with $\src(F)=U$ and $\rg(F)=V$, define $\varphi(F)\in \tI(Z)$ to be the homeomorphism $\varphi(F)\colon q^{-1}(U)\to q^{-1}(V)$ given by  $\varphi(F)(z)=[F]_{q(z)} \cdot  z$. (Recall $[F]_{q(z)}$ denotes the germ of $F$ at $q(z)$). This defines a continuous morphism $\varphi\colon \tG\to \tI(Z)$. 

Conversely let $\varphi\colon \tG\to \tI(Y)$ be a continuous morphism with spatial component  $q\colon Z\to X$, and let us define an action $\alpha \colon  \G\tensor[_s]{\times}{_q} Z\to Z$. Let $(\gamma, z)\in \G\tensor[_s]{\times}{_q} Z$. Choose any bisection $F$ of $\G$ such that $\gamma \in F$, and set $U=\src(F)$.  Since $q(z)=\src(\gamma)\in U$, we have  $z\in q^{-1}(U)$ which implies  that $z$ is contained in the domain of the partial homeomorphism $\varphi(F)$. Define $\gamma \cdot z=\varphi(F)(z)$. Let us check that this definition does not depend on the choice of $F$. Let $F'$ be another bisection containing $\gamma$, with $U'=\src(F')$.  Since $F\cap F'$ is a neighbourhood of $\gamma$ in $\G$ and bisections form a basis for the topology,  there exists a bisection $T$ such that $\gamma \in T\subset F\cap F'$. Setting $W=\src(T)$, we have $W\subset U\cap U'$. Moreover $T=FW=F'W$, and applying $\varphi$ we see that $\varphi(F)|_q^{-1}(W)=\varphi(FW)=\varphi(T)=\varphi(F'W)=\varphi(F')|_{q^{-1}(W)}$. Since $z\in q^{-1}(W)$ this implies that $\varphi(F)(z)=\varphi(F')(z)$ as desired. Thus the action $\alpha$ is well defined. The fact that it is continuous easily follows from the construction. \qedhere 
 \end{proof}
 
 \subsection{An equivalence of categories between pseudogroups and  groupoids} \label{s-functor-equivalence}
In this subsection we study the functoriality of the correspondence $\G \to \tG$ between \'etale groupoids and pseudogroups.

\begin{remark} \label{r-cocycles} The more straightforward notion of morphism between \'etale groupoids are \emph{cocycles}. A \textbf{cocycle} is a  a map $c\colon \G_1\to \G_2$ between groupoids which commutes with source and range maps and preserves  all operations. In general, a continuous cocycle $c\colon \G\to \H$ does not give rise to a continuous morphism of pseudogroup (neither covariantly, nor contravariantly), but it does under some assumptions on $c$. This has been studied by Lawson and Lenz \cite{Law-Le-Stone}, who show that if one restrict the attention to a class of cocycles (named \emph{covering functors} in \cite{Law-Le-Stone}) and to a special class of continuous morphisms of pseudogroups (named \emph{callitic} in \cite{Law-Le-Stone}), the correspondence $\G\to \tG$ becomes a contravariant equivalence of categories. However, we will need to work with arbitrary morphisms (which need not be callitic). We therefore  take a different point of view. \end{remark}

\medskip

Observe that if $G$ and $H$ are just countable groups, a group homomorphism $G\to H$ can equivalently be thought as a left {action}  of $G$ on $H$ which commutes with the natural right action of $H$ on itself. The following definition generalises this second point of view (rather than the first) to \'etale groupoids.   

\begin{defin}
Let $\G, \H$ be \'etale groupoids over spaces $X, Y$. 
A left action $\alpha \colon \G\acts (\H, p)$   is said to be a \textbf{translation action} if it commutes with the natural right action  $(\H,\src) \curvearrowleft \H$ given by the groupoid operation $(\gamma, \delta)\mapsto \gamma\delta$ (over the source map $\src\colon \H\to Y$). 

 Here a left action $\G\acts (Z, p_1)$ and a right action $(Z, p_2)\curvearrowleft \H$ are said to \textbf{commute} if for every $\gamma\in \G, z\in Z, \delta \in \H$ such that $\src(\gamma)=p_1(z)$ and $p_2(z)=\rg(\delta)$ (so that $\gamma\cdot z$ and $z\cdot \delta$ are both defined),  we have $p_1(z\cdot \delta)=p_1(z), p_2(\gamma \cdot z)=p_2(z)$ and  $(\gamma \cdot z)\cdot \delta=\gamma\cdot (z\cdot \delta)$. 
\end{defin}

\begin{remark}
It follows from the definition that for every translation action $\alpha \colon \G\acts (\H, p)$,  the map $p\colon \H\to X$ satisfies $p(\delta)=p(\delta\delta^{-1})=p(\rg(\delta))$. Thus,  it  is of the form $p=q\circ \rg$ for some map between the spaces $q\colon Y\to X$  (namely the restriction of $p$ to $Y$). We will freely use this observation in what follows. 
\end{remark}

Let $\G, \H, \K$ be \'etale groupoids over spaces $X, Y, Z$, and let $\alpha\colon \G \acts (\H, q\circ \rg)$ and $\beta \colon \H\acts (\H, p\circ \rg)$ be translation actions. Their \textbf{composition}
is the translation action $\xi \colon \G\acts (\K, q\circ p\circ \rg)$ which for every $\gamma\in \G$ and $\delta \in \K$ such that $\src(\gamma)=q\circ p\circ \rg(\delta)$ is given by the formula
\[\gamma\cdot_\xi \delta=\left(\gamma\cdot_\alpha p((\rg(\delta))\right)\cdot_\beta \delta.\]
\begin{lemma}
The composition of two translation actions is a well-defined translation action.
\end{lemma}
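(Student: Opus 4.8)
The plan is to verify directly that the formula
\[
\gamma\cdot_\xi \delta=\left(\gamma\cdot_\alpha p(\rg(\delta))\right)\cdot_\beta \delta
\]
makes sense whenever $\src(\gamma)=q\circ p\circ\rg(\delta)$, and then to check the three axioms of a left action (compatibility with $\rg$, the cocycle identity $(\gamma_1\gamma_2)\cdot_\xi\delta=\gamma_1\cdot_\xi(\gamma_2\cdot_\xi\delta)$, continuity) together with the commutation with the right $\K$-action. First I would unwind the well-definedness: the element $u:=p(\rg(\delta))$ is a unit of $\H$ (a point of $Y$), and $\gamma\cdot_\alpha u$ is defined precisely when $\src(\gamma)=q(\rg(u))=q(u)$; since $p(\rg(\delta))=u$ and $q\circ p\circ\rg(\delta)=q(u)$, the hypothesis $\src(\gamma)=q\circ p\circ\rg(\delta)$ is exactly what is needed. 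Next, $\gamma\cdot_\alpha u$ lies in $\H$, and by the remark preceding the lemma the base map of $\beta$ sends it to $p(\rg(\gamma\cdot_\alpha u))$; I would compute $\rg(\gamma\cdot_\alpha u)$ using that $\alpha$ commutes with the right $\H$-action and that $u$ is a unit, obtaining $p(\rg(\gamma\cdot_\alpha u))=p(\rg(u))\cdots$ — more precisely, $u$ being a unit forces $\rg(\gamma\cdot_\alpha u)$ to have the same $p$-image as $\rg(\delta)$, so $(\gamma\cdot_\alpha u)\cdot_\beta\delta$ is defined. This is the routine-but-necessary bookkeeping step.

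Then I would check the range condition: $q\circ p\circ\rg(\gamma\cdot_\xi\delta)$ should equal $\rg(\gamma)$. Since the right $\K$-action of $\beta$ preserves the $p\circ\rg$ fiber, $p(\rg((\gamma\cdot_\alpha u)\cdot_\beta\delta))=p(\rg(\gamma\cdot_\alpha u))$, and $\rg(\gamma\cdot_\alpha u)=\rg(\gamma)$ because $\alpha$ is a left action over $q\circ\rg$ applied to the unit $u$ — wait, more carefully, $\alpha$ being an action satisfies $q\circ\rg(\gamma\cdot_\alpha z)=\rg(\gamma)$, so with $z=u$ we get $q(\rg(\gamma\cdot_\alpha u))=\rg(\gamma)$, which after applying the remaining structure gives the claim. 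For the cocycle identity I would take $\gamma_1,\gamma_2\in\G$ composable with $\src(\gamma_2)=q\circ p\circ\rg(\delta)$, expand both sides using the formula, and reduce to the cocycle identities for $\alpha$ and $\beta$ separately, using that $p(\rg(\gamma_2\cdot_\xi\delta))=p(\rg(\delta))$ (from the fiber-preservation just established) so that the inner argument $u$ is the same on both sides. Commutation with the right $\K$-action follows because $\beta$ commutes with the right $\H$-action on $\K$ and the formula only touches $\delta$ through $\beta$ and through $\rg(\delta)$, which is unchanged under the right action.

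The continuity of $\xi$ I would deduce from continuity of $\alpha$, $\beta$, $p$, $\rg$ and the composability of these maps on the relevant fibre products; this is formal. Alternatively — and this is probably the cleanest route, though I would mention it only as a remark — one can invoke Proposition \ref{prop-morphism-actions}: translation actions correspond to continuous morphisms of pseudogroups (the commutation with the right action being precisely what cuts the morphism $\tG\to\tI(\H)$ down to a morphism $\tG\to\tH$), and composition of translation actions corresponds to composition of continuous morphisms, which is trivially well-defined; but since the paper has set things up to define composition directly by the displayed formula, I would give the hands-on verification and then note the conceptual reason.

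The main obstacle will be purely notational: keeping straight the several base maps ($q\circ\rg$ for $\alpha$, $p\circ\rg$ for $\beta$, $q\circ p\circ\rg$ for $\xi$), and correctly using the remark that the base map of a translation action factors through $\rg$, so that feeding in units of $\H$ (points of $Y$) behaves as expected. There is no real mathematical difficulty — the statement is essentially a definitional sanity check — so the "hard part" is just writing the chain of identities without sign errors in which groupoid each element lives in.
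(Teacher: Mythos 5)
Your overall plan---a direct verification in which the associativity identity is the only substantive step---is the same as the paper's, which in fact writes out only that computation and dismisses the remaining axioms as ``similar and even simpler''. However, your treatment of that one substantive step contains a genuine error. You propose to reduce $(\gamma_1\gamma_2)\cdot_\xi\delta=\gamma_1\cdot_\xi(\gamma_2\cdot_\xi\delta)$ to the cocycle identities of $\alpha$ and $\beta$ alone, ``using that $p(\rg(\gamma_2\cdot_\xi\delta))=p(\rg(\delta))$ \dots so that the inner argument $u$ is the same on both sides''. That identity is false. Setting $\eta=\gamma_2\cdot_\alpha p(\rg(\delta))$, the base-map property of $\beta$ gives $p(\rg(\gamma_2\cdot_\xi\delta))=p(\rg(\eta\cdot_\beta\delta))=\rg(\eta)$, which is a point of $Y$ lying over $\rg(\gamma_2)\in X$, whereas $p(\rg(\delta))$ lies over $\src(\gamma_2)$; these differ whenever $\gamma_2$ is not a unit. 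So the two sides of the associativity identity do \emph{not} feed the same unit of $\H$ into $\alpha$, and the reduction as you describe it breaks down.

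The correct computation, which is what the paper carries out, is: $(\gamma_1\gamma_2)\cdot_\xi\delta=(\gamma_1\cdot_\alpha\eta)\cdot_\beta\delta$ by the cocycle identity of $\alpha$; then one must rewrite $\gamma_1\cdot_\alpha\eta=(\gamma_1\cdot_\alpha\rg(\eta))\,\eta$, and this step uses precisely the fact that $\alpha$ is a \emph{translation} action, i.e.\ commutes with right multiplication by $\eta$ in $\H$, not merely that it is an action; finally $((\gamma_1\cdot_\alpha\rg(\eta))\eta)\cdot_\beta\delta=(\gamma_1\cdot_\alpha\rg(\eta))\cdot_\beta(\eta\cdot_\beta\delta)=\gamma_1\cdot_\xi(\gamma_2\cdot_\xi\delta)$, using the cocycle identity of $\beta$ together with $\rg(\eta)=p(\rg(\eta\cdot_\beta\delta))$. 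In your write-up the translation property of $\alpha$ is invoked only for the commutation with the right $\K$-action, but it is indispensable already for associativity. Two smaller remarks: your range computation has the same kind of slip ($p(\rg((\gamma\cdot_\alpha u)\cdot_\beta\delta))$ equals $\rg(\gamma\cdot_\alpha u)$ itself, not its image under $p$, by the base-map property of $\beta$), though you arrive at the right conclusion; and the ``cleaner categorical route'' you mention would be circular here, since this lemma is needed to define the category of groupoids with translation actions before the equivalence of Theorem \ref{t-non-comm-stone} can be stated and proved.
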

\begin{proof}
With the notations as above,  using that $\alpha$ is an action, we have
\[(\gamma_1\gamma_2)\cdot_\xi \delta=((\gamma_1\gamma_2)\cdot_\alpha p(\rg(\delta))\cdot_\beta \delta= (\gamma_1\cdot_\alpha (\gamma_2\cdot_\alpha p(\rg(\delta))) \cdot_\beta \delta.\]
Setting $\eta=\gamma_2\cdot_\alpha p(\rg(\delta))$ and using that $\alpha, \beta$ are translation actions, we obtain  
\begin{multline*}(\gamma_1\gamma_2)\cdot_\xi \delta=((\gamma_1\cdot_\alpha \eta\eta^{-1}) \eta )\cdot_\beta \delta= ((\gamma_1\cdot_\alpha \rg(\eta))\eta) \cdot_\beta \delta=(\gamma_1\cdot_\alpha \rg(p(\eta \cdot \beta \delta ))\eta) \cdot_\beta \delta=\\ (\gamma_1\cdot_\alpha \rg(p(\eta \cdot \beta \delta ))\cdot_\beta(\eta \cdot_\beta \delta)= \gamma_1\cdot_\xi(\eta \cdot_\beta \delta)=\gamma_1 \cdot_\xi(\gamma_2\cdot_\xi \delta)\end{multline*}
showing that $\xi$ is indeed an action. The verification that it is a translation action is similar and even simpler. \qedhere\end{proof}

 This allows to define a category whose objects are \'etale groupoids, and whose morphisms are translation actions. In this category, the identity morphism over $\G$ is given by the left action of $\G$ on itself $\G\acts (\G, r)$ by the groupoid operation. 
 

%
%
%
%
%
%
%
%
%
%
%
%
%
%
%
%
%

\begin{thm}[Functorial equivalence between pseudogroups and groupoids] \label{t-non-comm-stone} 
Let $\G$ and $\H$ be \'etale groupoids. There is a natural one-to-one correspondence between continuous morphisms of    pseudogroups $ \tG\to \tH$ and translation actions of $\G$ on $\H$.  Under this correspondence, a continuous morphism $\varphi\colon \tG\to \tH$ with spatial component $q\colon Y\to X$ corresponds to a translation $\alpha \colon \G\acts \H$  over the map $q\circ \rg\colon \H\to X$. 

Furthermore, this correspondence is functorial, and establishes  an equivalence between the category of pseudogroups with continuous morphisms, and the category of \'etale groupoids with translation actions as morphisms.  


\end{thm}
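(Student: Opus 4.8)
The plan is to prove Theorem \ref{t-non-comm-stone} by assembling the one-to-one correspondence in two stages: first exhibit the bijection between continuous morphisms $\tG\to\tH$ and translation actions $\G\acts\H$, then check that it respects composition and identities, so that it is a functor which is full, faithful and essentially surjective (indeed bijective on objects), hence an equivalence of categories.

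For the bijection itself, the key observation is that a continuous morphism $\varphi\colon\tG\to\tH$ is, in particular, a continuous morphism $\tG\to\tI(\H)$ once we forget the inverse-monoid structure of $\tH$ and only remember its underlying space $\H$ and the fact that $\tH\subset\tI(\H)$ acts on it by partial homeomorphisms (via left multiplication on bisections, which at the level of the groupoid $\H$ is just $\delta\mapsto\gamma\delta$). So I would first apply Proposition \ref{prop-morphism-actions} to get, from $\varphi$, a left action of $\G$ on the space $\H$ over the map $q\circ\rg\colon\H\to X$, where $q\colon Y\to X$ is the spatial component of $\varphi$ (the composite with $\rg$ appears because the spatial component of the composite morphism $\tG\to\tH\hookrightarrow\tI(\H)$ is $\rg$ followed by $q$; here one uses that the spatial component of the inclusion $\tH\hookrightarrow\tI(\H)$ induced by left translation is exactly $\rg\colon\H\to Y$). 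The content is then to verify that this action commutes with the right multiplication action $(\H,\src)\curvearrowleft\H$, i.e. is a translation action. This is where the hypothesis that $\varphi$ is a \emph{homomorphism of inverse monoids} (not merely a morphism to $\tI(\H)$) gets used: for $F\in\tG$, $\varphi(F)$ is left multiplication by the bisection $\varphi(F)\in\tH$, and $\big(\varphi(F)\delta\big)\delta'=\varphi(F)(\delta\delta')$ is just associativity in $\H$ together with the fact that $\varphi(F)$ is a genuine element of $\tH$; unwinding the germ formula $\varphi(F)(z)=[\varphi(F)]_{\rg(z)}\cdot z$ from Proposition \ref{prop-morphism-actions} gives precisely the translation-action axiom. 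Conversely, given a translation action $\alpha\colon\G\acts(\H,q\circ\rg)$, Proposition \ref{prop-morphism-actions} produces a continuous morphism $\tG\to\tI(\H)$; the job is to see that its image lies in $\tH$ (viewed inside $\tI(\H)$ as left translations). For a bisection $F$ of $\G$ I would argue that the partial homeomorphism $z\mapsto[F]_{q(\rg(z))}\cdot z$ of $\H$ is left translation by the subset $T_F:=\{[F]_{q(\rg(z))}\cdot z\ :\ z\in\rg^{-1}(\src(F))\text{ germ at a unit}\}$, which is a bisection of $\H$ because the $\G$-action commutes with right multiplication (this commutation is exactly what guarantees that the map is ``left multiplication by something'' rather than an arbitrary homeomorphism), and because $\src,\rg$ are local homeomorphisms the subset $T_F$ is open and $\src$-, $\rg$-injective. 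One then checks $F\mapsto T_F$ preserves products, inverses, the unit, and joins of compatible families, i.e. is a continuous morphism $\tG\to\tH$. The two constructions are visibly inverse to each other once one tracks the germ formulas.

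Next I would verify functoriality: that the bijection sends the composite of two continuous morphisms to the composite of the corresponding translation actions (as defined just before the theorem), and sends the identity morphism $\mathrm{id}_{\tG}$ to the identity translation action $\G\acts(\G,\rg)$ by the groupoid operation. The identity case is immediate from the formula $z\mapsto[F]_{q(\rg(z))}\cdot z$ with $q=\mathrm{id}$ and $\alpha$ left multiplication. For composites, one takes $\varphi\colon\tG\to\tH$, $\psi\colon\tH\to\tK$ with spatial components $q,p$ and associated translation actions $\alpha,\beta$, and checks on germs that the translation action attached to $\psi\circ\varphi$ is the composition $\xi$ of $\alpha$ and $\beta$ given in the displayed formula $\gamma\cdot_\xi\delta=(\gamma\cdot_\alpha p(\rg(\delta)))\cdot_\beta\delta$; this is a direct unwinding using that $\psi$ is a homomorphism, essentially the same computation as in the lemma preceding the theorem showing $\xi$ is well defined. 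Having a bijective-on-objects functor which is bijective on each hom-set is by definition an equivalence (in fact an isomorphism) of categories, which finishes the proof. As a coda, I would note that Proposition \ref{p-morphism-restriction} now becomes tautological: a translation action $\G\acts(\H,q\circ\rg)$ only ever uses germs $[F]_{x}$ for $x\in\operatorname{Im}(q)$, so it factors through the restriction $\widetilde{\G|q(Y)}$, and $\operatorname{Im}(q)$ is $\G$-invariant because the action moves $\rg$-fibres over $q(z)$ to $\rg$-fibres over $\rg(\gamma)$, forcing $\rg(\gamma)\in\operatorname{Im}(q)$.

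The main obstacle, I expect, is the careful bookkeeping in the converse direction — showing that the partial homeomorphism of $\H$ produced from a translation action is genuinely \emph{left translation by a bisection} of $\H$, with that bisection open and a local section of both $\src$ and $\rg$. One has to use the commutation with the right $\H$-action to recognize the image as a single bisection (this is the analogue of the elementary fact that a $G$-equivariant map $H\to H$ commuting with right translation \emph{is} left translation by $f(1)$), and one has to use that $\src,\rg$ are open local homeomorphisms on the \'etale groupoid $\H$ to get openness and the bisection property; there is also the mild nuisance that $\H$ need not be Hausdorff, so one should phrase everything in terms of bisections forming a basis rather than invoking separation. Everything else is formal manipulation of germ identities and the universal property from Lemma \ref{l-stone}.
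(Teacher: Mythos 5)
Your construction is, at bottom, the same as the paper's: the forward direction is the germ formula $\gamma\cdot\delta=[\varphi(F)]_{\rg(\delta)}\delta$, and the converse direction defines $\varphi(F)$ as the image of $F\tensor[_s]{\times}{_q}Y$ under the action map and then checks that it is an open bisection and that $F\mapsto\varphi(F)$ is a continuous morphism; the functoriality verification and the remark that Proposition \ref{p-morphism-restriction} becomes tautological are likewise exactly what the paper does. The one genuine repackaging is that you route the forward direction through Proposition \ref{prop-morphism-actions} applied to the space $Z=\H$ together with the left-translation embedding $\tH\hookrightarrow\tI(\H)$. This is an economical way to see why $q\circ\rg$ appears, but it has a scope problem: $\tI(Z)$ and Proposition \ref{prop-morphism-actions} are set up in the paper only for $Z$ Hausdorff (and locally compact), whereas $\H$ is explicitly allowed to be non-Hausdorff. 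This is more than the ``mild nuisance'' you mention, since it blocks the literal citation. It is repairable --- the construction of the spatial component via Lemma \ref{l-stone} only uses separation on the \emph{target}, which here is the Hausdorff unit space $X$, so $\tI(\H)$ and the proposition extend --- but you must say so, or else work directly with the germ formula as the paper does, which costs nothing.

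The second point needing more than an assertion is, as you anticipate, the converse direction: that $T_F$ is open and a bisection does not follow from ``$\src,\rg$ are local homeomorphisms'' alone. The paper isolates this as Lemma \ref{l-open-map}: the action map $\G\tensor[_s]{\times}{_q}Y\to\H$ is open and a local homeomorphism because, near a point $(\gamma,y)$, it factors as the open projection $\pi_2$ onto $Y$ followed by a local inverse $\src|_T^{-1}$ of the source map of $\H$, where $T$ is a bisection of $\H$ containing $\gamma\cdot y$; continuity of the action enters essentially to produce the neighbourhood mapped into $T$. Your identification of the resulting partial homeomorphism as left translation by $T_F$, via $\gamma\cdot\delta=(\gamma\cdot\rg(\delta))\delta$, is exactly the right use of the translation-action axiom, and the remaining semigroup-homomorphism and join computations are the routine ones carried out in the paper.
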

Explicitly, the correspondence claimed in the statement is as follows. Let $\varphi\colon \tG\to \tH$ be a continuous morphism of pseudogroups with spatial component $q\colon Y\to X$. Let us define a translation action  $\G\acts (\H, q\circ \rg)$. Let $\gamma\in \G, \delta\in \H$ be such that $q(\rg(\delta))=\src(\gamma)$. Choose a bisection $F\in \tG$ such that $\gamma\in F$. Define
 \begin{equation} \gamma\cdot \delta:= [\varphi(F)]_{\rg(\delta)} \delta.   \label{e-action-morphism} \end{equation}
  We claim that the formula \eqref{e-action-morphism} defines a translation action, and establishes the desired functorial one-to-one correspondence. The proof includes a series of elementary verifications, that we spell out for completeness:

\begin{proof}

  Let $\varphi \colon \tG\to \tH$ be a continuous morphism, and let us show that the formula \eqref{e-action-morphism} defines a translation action of $\G$ on $(\H, q\circ \rg)$. 
We first check that $\gamma \cdot \delta$ is well defined, i.e. does not depend on the choice of $F$. Assume that $T$ is another bisection containing $\gamma$. Then we can find a smaller bisection $S\subset T\cap F$ containing $\gamma$. This means that we have $S=TU$ and $S=FU$ for some neighbourhood $U=\src(S)$ of $y$. Applying $\varphi$ we deduce that $\varphi(S)=\varphi(T)q^{-1}(U)$ and $\varphi(S)= \varphi(T)q^{-1}(U)$. Since $\rg(\delta)=[q^{-1}(U)]_{\rg(\delta)}$  is a unit, we have $[\varphi(F)]_{\rg(\delta)}=[\varphi(T)]_{\rg(\delta)}=[\varphi(S)]_{\rg(\delta)}$. This shows that $\gamma \cdot \delta$ is a well-defined element of $\H$.  Moreover the map $(\gamma, \delta)\mapsto \gamma\cdot \delta$ is continuous: for $(\gamma', \delta')$ close enough to $(\gamma, \delta)$, we can choose the same bisection $F$ to define $\gamma'\cdot \delta'$, and  the map $\delta'\mapsto   [\varphi(F)]_{\rg(\delta')} \delta'$ is continuous. 
The fact that the formula \eqref{e-action-morphism} defines an action is a direct consequence of the fact that $\varphi$ is a semigroup morphism, and it is apparent from \eqref{e-action-morphism} that this action commutes with the right action of $\H$ on itself. Finally let us check that this construction is functorial. Assume that $\K$ is another \'etale groupoid over a space $Z$. Let  $\varphi\colon \tG\to \tH$ and $\psi\colon \tH\to \widetilde{\K}$ be continuous morphisms, and $q\colon Y\to X$ and $p\colon Z\to Y$ be their spatial component. The spatial component of $\psi\circ \varphi$ is obviously $q\circ p$. Let $\alpha\colon \G\acts (\H, q\circ \rg), \beta\colon \H\acts (\K, p\circ \rg)$ and $\xi \colon\G\acts (\K, q\circ p\circ \rg)$ be the translation actions associated to $\varphi, \psi$ and $\psi\circ\varphi$ respectively, and let us show that $\xi$ coincides with the composition of $\alpha $ and $\beta$.  Let $\gamma \in \G$ and $\delta\in \K$ be such that $\src(\gamma)=q(p(\rg(\delta))$, and let $F$ be a bisection containing $\gamma$.  Since $p(\rg(\delta))$ is a unit of $\H$, we have 
\[\gamma\cdot_\alpha p(\rg(\delta))=[\varphi(F)]_{p(\rg(\delta))}p((\rg(\delta))=[\varphi(F)]_{p(\rg(\delta))}\in \varphi(F)\]
thus, $\varphi(F)$ is a bisection that contains $(\gamma \cdot_\alpha p(\rg(\delta))$ and we can use it as a representative in the definition of the action $\beta$, obtaining 
\[(\gamma\cdot_\alpha p(\rg(\delta)))\cdot_\beta \delta= [\psi(\varphi(F))]_{\rg(\delta)}\delta=\gamma \cdot_\xi \delta\]
showing that $\xi$ coincides with the composition of $\alpha$ and $\beta$. 
We have shown that the formula \eqref{e-action-morphism} defines a functor from pseudogroups with continuous morphisms to \'etale groupoids with translation actions. It remains to show that this is in fact an equivalence of categories. 

To this end we first prove the following lemma.
\begin{lemma} \label{l-open-map}
If $\alpha\colon \G\acts (\H, q\circ \rg)$ is a translation action, the map \begin{align*}\G\tensor[_s]{\times}{_q} Y\to \H \\ (\gamma, y)\mapsto \gamma\cdot y \end{align*} 
is open and is a local homeomorphism.
\end{lemma}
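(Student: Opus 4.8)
The plan is to reduce the statement to a purely local one: I will show that every point of the fibre product $\G\tensor[_s]{\times}{_q}Y$ has an open neighbourhood on which the map $m\colon (\gamma,y)\mapsto \gamma\cdot y$ (the restriction of the action $\alpha$ to the unit space $Y\subseteq\H$, and hence continuous; note that for $y\in Y$ one has $q(\rg(y))=q(y)$, so this fibre product is the expected one) restricts to a homeomorphism onto an open subset of $\H$. Granting this, $m$ is a local homeomorphism by definition, and it is open because an arbitrary open subset of the fibre product is a union of such neighbourhoods, each with open image.

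The only input beyond continuity that I will need is a single identity: because $\alpha$ is a \emph{translation} action, it commutes with the right multiplication action $(\H,\src)\curvearrowleft\H$, which lives over the source map; taking $z=y$ a unit in the defining condition, this gives $\src(\gamma\cdot y)=\src(y)=y$ for every $(\gamma,y)\in\G\tensor[_s]{\times}{_q}Y$, i.e. $\src\circ m$ is the second projection to $Y$. Now I would fix $(\gamma_0,y_0)$, set $\eta_0=\gamma_0\cdot y_0$, and use that bisections form a basis of the topology of both $\G$ and $\H$ to choose a bisection $F\ni\gamma_0$ of $\G$ and a bisection $T\ni\eta_0$ of $\H$. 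I then consider the open neighbourhood $N=\{(\gamma,y)\in\G\tensor[_s]{\times}{_q}Y\colon \gamma\in F,\ \gamma\cdot y\in T\}$ of $(\gamma_0,y_0)$. On $N$ the element $\gamma$ is forced: since $F$ is a bisection and $\src(\gamma)=q(y)$, one has $\gamma=(\src|_F)^{-1}(q(y))$, so the second projection $\pi\colon N\to Y$ is injective, with continuous inverse $y\mapsto\bigl((\src|_F)^{-1}(q(y)),y\bigr)$, and it identifies $N$ homeomorphically with an open subset $W\subseteq q^{-1}(\src(F))$ of $Y$. Transported through this identification, $m|_N$ becomes $n\colon W\to\H$, $n(y)=(\src|_F)^{-1}(q(y))\cdot y$, which takes values in $T$; and since $\src\circ n=\operatorname{id}_W$, the map $n$ is a section of $\src|_T$, hence agrees with the restriction to $W$ of the homeomorphism $(\src|_T)^{-1}\colon\src(T)\to T$ (in particular $W\subseteq\src(T)$). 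Therefore $n$, and with it $m|_N=n\circ\pi$, is a homeomorphism onto the open subset $(\src|_T)^{-1}(W)$ of $\H$, which settles the local claim.

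I do not expect a genuine obstacle: the étale hypothesis (bisections form a basis; $\src,\rg$ are local homeomorphisms) together with the identity $\src(\gamma\cdot y)=y$ does essentially all the work, and the remaining content is routine verification of continuity. The one point needing a little care is the choice of $N$: it must simultaneously be cut out by a single bisection $F$ upstairs — so that $\gamma$ is determined by $y$ and $\pi$ becomes a homeomorphism — and be forced into a single bisection $T$ downstairs — so that $m|_N$ lands in a chart on which $\src$ is invertible. Arranging both constraints at once is exactly what turns both projections into honest homeomorphisms onto open sets.
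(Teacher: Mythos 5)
Your proof is correct and follows essentially the same route as the paper's: both factor the action map locally as the second projection (a homeomorphism once the first coordinate is confined to a single bisection $F$, since $\gamma=(\src|_F)^{-1}(q(y))$) followed by the local inverse $(\src|_T)^{-1}$ of the source map on a bisection $T$ containing the image point, using the identity $\src(\gamma\cdot y)=y$ coming from the translation-action axiom. The only cosmetic difference is that you build the neighbourhood $N$ by imposing both constraints $\gamma\in F$ and $\gamma\cdot y\in T$ at once, whereas the paper first proves the projection is a local homeomorphism and then shrinks by continuity; the content is identical.
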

\begin{proof}
First observe that the projection $\pi_2\colon \G\tensor[_s]{\times}{_q} Y \to Y$ is open and is a local homeomorphism. To see this, note that a basis for the topology of $ \G\tensor[_s]{\times}{_q} Y $ is given by sets of the form $F\tensor[_s]{\times}{_q} V$ for $F$ a bisection of $\G$ and $V\subset Y$ open. The restriction of $\pi_2$ to $F\tensor[_s]{\times}{_q} V$ is a homeomorphism with image  $\pi_2( F\tensor[_s]{\times}{_q} V)=\{y\in V\colon \exists \gamma\in F, \src(\gamma)=q(y)\}=V\cap q^{-1}(\src(F))$, which is therefore open, and inverse given by $y\mapsto ([F]_{q(y)},y)$. 

Now fix $(\gamma, y)\in  \G\tensor[_s]{\times}{_q} Y$. Let $T$ be a bisection of $\H$ containing $\gamma\cdot y$. By the continuity of $(\gamma, y)\mapsto \gamma\cdot y$ we can find a neighbourhood  $W$ of $(\gamma, y)$ which is mapped into $T$. Note that for every $(\gamma', y')\in W$ we have $\pi_2(\gamma', y')=y'$, and $\src(\gamma'\cdot y')=y'$. Thus on $W$, the map $(\gamma', y')\mapsto \gamma'\cdot y'$ coincides with the composition of $\pi_2\colon W\to Y$, followed by the local inverse $\src|_T^{-1}\colon \src(T)\to T$. Since both maps are open and local homeomorphisms, this proves the lemma. \qedhere

\end{proof}

Going back to the proof of the theorem, let $\alpha \colon \G\acts(\H,  q\circ \rg)$ be a translation action, and let us define a continuous morphism $\varphi\colon \tG\to \tH$. For every bisection $F\in \tG$ we set
\[\varphi(F)=\{\gamma \cdot y \medskip \colon \medskip \gamma \in F, y\in Y, \src(\gamma)=q(y)\}.\]
Note that $\varphi(F)$ is the image of $F\tensor[_s]{\times}{_q} Y$ under the map $(\gamma, y)\mapsto \gamma \cdot y$. Thus, Lemma \ref{l-open-map} implies that  $\varphi(F)$ is open.  To see that it is a bisection note that we have $\src(\varphi(F))=q^{-1}(\src(F))$, and the restriction of the source map to $\varphi(F)$ is invertible with inverse $y\mapsto [F]_{q(y)} \cdot y$. Similarly one checks that $\rg(\varphi(F))=q^{-1}(\rg(F))$ and that the restriction of $\rg$ to $\varphi(F)$ is invertible. Thus $\varphi(F)$ is a bisection. Let us check that $\varphi$ is a semigroup homomorphism.
Fix $F_1, F_2\in \tG$. Let us show that $\varphi(F_1F_2)\subset \varphi(F_1)\varphi(F_2)$. An element  of $\varphi(F_1F_2)$ has the form $(\gamma_1\gamma_2)\cdot y$ for some  $\gamma_1\in F_1, \gamma_2\in F_2$ and $y\in Y$  such that $ \src(\gamma_1)=\rg(\gamma_2)$, and $q(y)=\src(\gamma_1)$. Put $z=\rg(\gamma_2\cdot y)$. Since $z(\gamma_1\cdot y)=\gamma_1\cdot y$, using that the action is a translation action we obtain $(\gamma_1\gamma_2)\cdot y=\gamma_1\cdot (\gamma_2\cdot y)= \gamma_1\cdot (z(\gamma_2\cdot y))=(\gamma_1\cdot z)(\gamma_2\cdot y)\in \varphi(F_1)\varphi(F_2)$, proving that $\varphi(F_1F_2)\subset \varphi(F_1)\varphi(F_2)$. To see the converse, let $\gamma_1\cdot y_1\in \varphi(F_1)$ and $\gamma_2\cdot y_2\in \varphi(F_2)$ be such that $\rg(\gamma_2\cdot y_2)=\src(\gamma_1\cdot y_1)=y_1$. Then $(\gamma_1\cdot y_1)(\gamma_2\cdot y_2)=\gamma_1\cdot (y_1(\gamma_2\cdot y_2))=\gamma_1\cdot (\gamma_2\cdot y_2)=(\gamma_1\gamma_2)\cdot y_2\in \varphi(F_1F_2)$. This shows that $\varphi$ is a semi-group homomorphism. 

The fact that $\varphi$ preserves unions of compatible families, zero and identity, are clear from its  definition. Moreover it is not difficult to check that the action associated to $\varphi$ via the formula \eqref{e-action-morphism} is the action $\alpha$ we started with. 
 The proof is complete. \qedhere

\end{proof}

%
\begin{remark}\label{r-Lawson-Lenz}
Let us explain how this equivalence  includes the one described by Lawson and Lenz \cite{Law-Le-Stone}, mentioned in Remark \ref{r-cocycles}. In their terminology, a cocycle $c\colon \H\to \G$ is called a \emph{covering functor} if for every $y\in Y$ the restriction of $c$ to the leaf $\H_y$ is a bijection with the leaf $\G_{c(y)}$. A covering functor gives rise to a translation action $\G\acts (\H, c|_Y\circ \rg)$ as follows: for every $\gamma\in \G$ and $\delta\in \H$ such that $c(\rg(\delta))=\src(\gamma)$, define $\gamma \cdot \delta:= \eta\delta$, where $\eta\in \H_{\rg(\delta)}$ is the unique preimage of $\gamma$ under $c$. One can check that the continuous morphism $\varphi\colon \tG\to \tH$ associated to this translation action is given by $\varphi(F)=c^{-1}(F)$, and that the morphisms arising in this way are precisely those called \emph{callitic} in \cite{Law-Le-Stone}. 
(Note that what we call \emph{pseudogroup} is called \emph{spatial pseudogroup} in \cite{Law-Le-Stone}, the word \emph{pseudogroup} being used in a more general way in \cite{Law-Le-Stone}.)
\end{remark}

As a first application of this theorem, let us prove Proposition \ref{p-restriction-morphism}
\begin{proof}[Proof of Proposition \ref{p-restriction-morphism}]
Let $\varphi\colon \tG\to \tH$ be a continuous morphism with spatial component $q\colon Y\to X$, and set $C=q(Y)$.  We consider the associated translation action $ \G\acts (\H, q\circ \rg)$. It is tautologically the composition of the natural translation action $\G\acts (\G|C, \iota\circ \rg)$  by the groupoid operation (where $\iota \colon C\to X$ is the inclusion) and of a translation action $\G|C\acts (\H, q\circ \rg)$. \qedhere
\end{proof}

\subsection{Coarse structure on leaves and compactly generated groupoids} \label{s-cayley} \label{s-leaves}

In this subsection we assume for simplicity that $\G$ is an etale groupoid over a \emph{compact} space $X$. Preliminaries on coarse spaces where recalled in \S \ref{s-coarse}.

For every $x\in X$, the leaf $\G_x$ is naturally a coarse space, where $E\subset \G_x\times \G_x$ is declared to be a controlled set if there exists a compact subset $K\subset \G$ such that  $\{\gamma\delta^{-1} \colon (\gamma, \delta)  \in E\} \subset K$.  (A similar definition can be given for the leaf $\G^x$)

Note that  the coarse space $\G_x$ is uniformly discrete. To see this assume that $\{B_i\}_{i \in I}$ is a family of uniformly bounded subsets of $\G_x$, and let $K\subset \G$ be a compact set such that $\gamma\delta^{-1}\in K$ for every $\gamma, \delta \in B_i$ and every $i\in I$.  The set $K$ can be covered with finitely many open bisections $F_1,\ldots, F_n$. Thus if we fix some $\delta\in B_i$ we have that every  $\gamma\in B_i$ has the form $\gamma=(\gamma\delta^{-1})\delta=[F_i]_{\rg(\delta)\delta}$ for some $i$. Therefore $|B_i|\le n$ for all $i\in I$.    

In a similar fashion, for every $x\in X$ the orbit $\rg(\G_x)=\{\src(\gamma)\colon \gamma \in \G_x\}$ can be endowed with a structure of a (uniformly discrete) coarse space.

In a relevant special case these structure come from a structure of graph. 
Following Haefliger \cite{Hae} (see also \cite[Sec. 2.3]{Nek-hyp}), recall that  $\G$ is said to be \textbf{compactly generated} if there exists a compact subset $K\subset \G$ such that every element of $\G$ is a product of elements of $K \cup K^{-1}$ (note that the definition  for groupoids with non-compact unit space is more complicated, see \cite{Hae}).

\begin{example} 
If $G\acts X$ is a finitely generated group action on a compact space, its groupoid of germs is compactly generated, a compact generating set being given by the unions of the sets of germs of elements in a finite generating set of $G$.
\end{example}

In terms of the pseudogroup $\tG$, compact generation translates as  follows \cite{Hae}. 

\begin{defin} \label{def-Hae}

 A bisection $F\in \tG$  is said to be \textbf{extendable} if there exists $F'\in \tG$ such that $F\subset F'$ and the closure of $\src(F)$ in $Y$ is contained in $\src(F')$. Note that extendable bisections always form a basis of the topology. The   pseudogroup $\tG$ is said to be \textbf{compactly generated} if there exists a finite set $\mathcal{T}=\{T_1, \ldots, T_r\}\subset \tG$ of extendable bisections such that the set of products $\bigcup_{n\ge 0} (\mathcal{T}\cup \mathcal{T}^{-1})^n$ is a cover of the \'etale groupoid $\G$.   The set $\mathcal{T}$ is called a finite {generating set} of $\tG$. 
\end{defin}
The pseudogroup $\tG$ is compactly generated if and only if the \'etale groupoid $\G$ is so. Namely if $K\subset \G$ is a compact set generating $\G$, every finite cover of $K$ by  extendable bisections is a finite generating set of $\tG$. Conversely let $\T=\{T_1,\cdots , T_r\}$ be a finite generating set of $\tG$, and choose bisections $T'_i\supset T_i$ as in the definition of extendable bisections. Then $K=\bigcup \bar{T}_i$ is a compact generating set of $\G$, where $\bar{T}_i$ refers to the closure in $T'_i$.

If the space $X$ is totally disconnected space (e.g. a Cantor space) the requirement that the bisections ${T}_i$ are extendable  can be replaced by the requirement that they are compact and open. {In this case, we will always consider generating sets consisting of compact open bisections}.

Assume that $\G$ is a compactly generated \'etale groupoid, with finite symmetric generating set $\T=\{T_1,\ldots, T_r\}\subset \tG$. Following Nekrashevych \cite{Nek-hyp}, for every $x\in X$ we define the \textbf{Cayley graph}  $\cay_x(\G, \mathcal{T})$ as the graph whose vertex set is the leaf $\G_x$ and with edges labelled by $\T$, where $\gamma_1, \gamma_2$ are connected by an oriented edge labelled by $T_i\in \mathcal{T}$ if $\gamma_2=[T_i]_{\rg(\gamma_1)} \gamma_1$.

Similarly for every $x\in X$, we also define the  \textbf{orbital graph}  $\orb_y(\H, \mathcal{T})$  as the $\T$-labelled graph whose vertex set is the orbit  of $\rg(\G_x)$, and where $x_1, x_2$ are connected by an  oriented edge for every $i=1, \ldots, r$  such that $x_2=\tau(T_i)(x_1)$. 

%
%
%
%
%
%
%

\begin{example}
Assume that $\G$ is the groupoid of germs  of a continuous action $G\acts X$  of a finitely generated group. Then, for the natural choices of generating sets,  the orbital graphs and the Cayely graphs of $\G$ based at $x$ coincide with the orbital graph and the graph of germs of the action $G\acts X$ as defined in \S \ref{s-notations}. 
\end{example}
%

The following proposition implies that for minimal  groupoids, the quasi-isometry type of the Cayley and orbital graphs is invariant under restriction.
 For a proof we refer to \cite[Cor. 2.3.4]{Nek-hyp}.
\begin{prop}\label{p-compact-generation-stable} \label{p-graph-qi}
Assume that $\G$ is minimal. Let $U\subset X$ be a compact open subset. Then for every $x\in U$ the fibres $(\G|U)_x$ and $\G_x$ are coarsely equivalent spaces. Moreover  $\G|U$ is compactly generated if and only if $\G$ is so.  
\end{prop}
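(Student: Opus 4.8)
\textbf{Proof plan for Proposition \ref{p-compact-generation-stable}.}

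The plan is to reduce everything to a concrete comparison of the defining controlled sets for $\G_x$ and $(\G|U)_x$, using the minimality of $\G$ to ``push'' the orbit of $x$ into $U$ in a uniformly bounded way. First I would fix a compact generating set $K\subset \G$ for $\G$, covered by finitely many open bisections $F_1,\dots,F_n$, and set $V_i=\src(F_i)$. The key observation is that minimality is equivalent to the statement: for every nonempty open $W\subset X$, finitely many translates $\tau(S_1)(W),\dots,\tau(S_k)(W)$ by bisections $S_j\in\tG$ cover $X$ (by compactness of $X$ and density of every orbit applied to $W$; more precisely one uses that the saturation of $W$ is open and dense, hence all of $X$). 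Apply this with $W=U$: there are bisections $S_1,\dots,S_k\in \tG$, which we may take with $S_j\subset$ a product of the $F_i^{\pm 1}$ (hence contained in a fixed compact set $L\subset\G$), such that $\bigcup_j \tau(S_j)(\src(S_j)\cap U)=X$. This gives, for every point $z\in X$, a groupoid element $\eta_z\in L$ with $\src(\eta_z)=z$ and $\rg(\eta_z)\in U$; these ``retraction'' elements have uniformly bounded $\G$-length.

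Next I would define the candidate coarse equivalence $\phi\colon \G_x\to (\G|U)_x$. A germ $\gamma\in\G_x$ has $\rg(\gamma)\in\rg(\G_x)$, and I set $\phi(\gamma)=\eta_{\rg(\gamma)}\gamma$, where I have chosen once and for all one retraction element $\eta_z$ as above for each $z$ in the orbit of $x$ that needs moving (for $z\in U$ take $\eta_z=z$, the unit). Then $\src(\phi(\gamma))=x$ and $\rg(\phi(\gamma))\in U$, so $\phi(\gamma)\in(\G|U)_x$. The map $\phi$ is bornologous: if $(\gamma,\delta)$ is controlled in $\G_x$, i.e. $\gamma\delta^{-1}\in K'$ for a compact $K'$, then $\phi(\gamma)\phi(\delta)^{-1}=\eta_{\rg(\gamma)}(\gamma\delta^{-1})\eta_{\rg(\delta)}^{-1}\in L\cdot K'\cdot L^{-1}$, which is compact, so it is controlled in $(\G|U)_x$ (using that controlled sets of $(\G|U)_x$ are those controlled by compact subsets of $\G$, intersected with $U\times U$ on source/range, which is automatic here). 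It is proper because $\gamma$ is recovered from $\phi(\gamma)$ up to bounded ambiguity: $\gamma=\eta_{\rg(\gamma)}^{-1}\phi(\gamma)$, and $\eta_{\rg(\gamma)}$ ranges in the compact $L$, so preimages of bounded sets are bounded. In the reverse direction, the inclusion $\iota\colon(\G|U)_x\hookrightarrow\G_x$ is manifestly a coarse map (controlled sets only shrink under restriction), and $\iota\circ\phi$ and $\phi\circ\iota$ are both close to the respective identities, since $\gamma$ and $\phi(\gamma)$ differ by left multiplication by $\eta_{\rg(\gamma)}\in L$, a uniformly bounded perturbation. This establishes the coarse equivalence $\G_x\simeq(\G|U)_x$.

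For the second assertion, that $\G|U$ is compactly generated iff $\G$ is, in one direction any compact generating set $K$ of $\G$ restricts: the retraction elements $\eta_z$ together with $K$ and finitely many bisections witnessing $U$-to-$U$ transitions generate $\G|U$ (concretely, given $\xi\in\G|U$ write $\xi=\eta_{\rg(\xi)}^{-1}(\eta_{\rg(\xi)}\xi\eta_{\src(\xi)}^{-1})\eta_{\src(\xi)}$ and expand the middle term in $K^{\pm1}$; the conjugating factors lie in the fixed compact $L$). Conversely, if $\T$ is a finite generating set of $\G|U$, then since $X$ is covered by finitely many $\tau(S_j)(U)$ with $S_j$ bisections, adjoining the (compact closures of suitable extensions of the) $S_j$ and a cover of each $S_j^{-1}S_j$ to a cover of the $\T_i$'s yields a compact generating set of $\G$. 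Since this direction is already recorded in \cite[Cor. 2.3.4]{Nek-hyp} I would simply cite it rather than reproving it. I expect the main obstacle to be purely bookkeeping: making the ``uniformly bounded retraction'' precise in the non-Hausdorff groupoid setting (one must work with bisections, not points, to keep everything open and measurable in the coarse sense) and checking that the compact sets $L\cdot K'\cdot L^{-1}$ etc. are genuinely compact in the possibly non-Hausdorff $\G$ — which is fine because each is a finite union of images of compact sets under the continuous multiplication map restricted to composable pairs, but it deserves a careful sentence. Since the statement is attributed to \cite[Cor. 2.3.4]{Nek-hyp}, the honest writeup is short: recall the argument above in a few lines and refer to Nekrashevych for the details.
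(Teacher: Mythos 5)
The paper offers no proof of its own here: it simply cites \cite[Cor. 2.3.4]{Nek-hyp}, so your sketch is supplying an argument the paper omits. Your overall strategy --- build a compact family $L$ of retraction germs $\eta_z$ with $\src(\eta_z)=z$ and $\rg(\eta_z)\in U$ (units for $z\in U$), and use $\gamma\mapsto \eta_{\rg(\gamma)}\gamma$ as the coarse equivalence --- is the standard one, and the coarse-equivalence half, including properness and closeness to the identity, goes through as you describe. One adjustment: the fibre statement carries no compact generation hypothesis, yet you manufacture $L$ from products of a compact generating set. This is unnecessary. Minimality gives $X=\src(\rg^{-1}(U))$, this set is open, and since extendable (over a Cantor space, compact open) bisections form a basis, finitely many relatively compact bisections $S_1,\dots,S_k$ with $\rg(S_j)\subset U$ have sources covering $X$; take $L$ to be the union of their closures. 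With that change the first claim holds for any minimal \'etale groupoid over a compact space, as stated.

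The more serious issue is that your argument that compact generation of $\G$ passes to $\G|U$ is circular as written. For $\xi\in\G|U$ both $\src(\xi)$ and $\rg(\xi)$ lie in $U$, so $\eta_{\src(\xi)}$ and $\eta_{\rg(\xi)}$ are units and your identity $\xi=\eta_{\rg(\xi)}^{-1}\bigl(\eta_{\rg(\xi)}\xi\eta_{\src(\xi)}^{-1}\bigr)\eta_{\src(\xi)}$ collapses to $\xi=\xi$; expanding the middle term in $K^{\pm1}$ then produces a word in generators of $\G$ whose partial products may leave $U$, hence not a word in a generating set of $\G|U$. The repair is to insert a retraction at every intermediate point: write $\xi=\kappa_n\cdots\kappa_1$ with $\kappa_i\in K\cup K^{-1}$, set $z_i=\rg(\kappa_i\cdots\kappa_1)$ (so $\eta_{z_0}$ and $\eta_{z_n}$ are units), and note that $\xi=(\eta_{z_n}\kappa_n\eta_{z_{n-1}}^{-1})\cdots(\eta_{z_1}\kappa_1\eta_{z_0}^{-1})$ telescopes, each factor having source and range in $U$ and lying in $(LKL^{-1}\cup LK^{-1}L^{-1})\cap \G|U$. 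This set is compact because $\G|U$ is closed in $\G$ (as $U$ is closed in the Hausdorff space $X$) and products of compact sets are compact: the set of composable pairs is closed in $\G\times\G$, so the relevant restriction of the multiplication map has compact domain --- the same observation that justifies your sets $LK'L^{-1}$ in the non-Hausdorff setting. Your treatment of the converse direction, where the conjugating retractions are genuinely non-trivial, is correct.
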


%

It is an obvious but fundamental fact that group embeddings between discrete groups are coarse embeddings. 
The following proposition  will serve as a sufficiently good  replacement of this fact for  pseudogroups.
Its proof  is an application of Theorem \ref{t-non-comm-stone},
\begin{prop}\label{p-morphism-leaves}
Let $\G, \H$ be  \'etale groupoids over compact spaces $X, Y$, and let $\varphi\colon \tG\to \tH$ be a continuous morphism with spatial component $q\colon Y\to X$. Let $x\in q(Y)$, and assume the isotropy group $\G_x^x$ is trivial. Then for every $y\in q^{-1}(x)$, there exist an injective coarse map $\G_x\hookrightarrow \H_y$.
 \end{prop}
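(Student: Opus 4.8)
The plan is to use the translation action $\alpha\colon \G\acts (\H, q\circ\rg)$ attached to $\varphi$ by Theorem \ref{t-non-comm-stone}, together with Lemma \ref{l-open-map}. Fix $y\in q^{-1}(x)$ and define a map $\Phi\colon \G_x\to \H_y$ by $\Phi(\gamma)=\gamma\cdot y$; this makes sense because $\src(\gamma)=x=q(y)$, and $\rg(\Phi(\gamma))=\rg(\gamma\cdot y)$ lands in the leaf based at $y$ since $\Phi(x_{\mathrm{unit}})=x\cdot y = y$ and the action is compatible with composition. First I would check that $\Phi$ is injective: if $\gamma_1\cdot y=\gamma_2\cdot y$, then since $\alpha$ is a translation action one gets $(\gamma_2^{-1}\gamma_1)\cdot y = \gamma_2^{-1}\cdot(\gamma_1\cdot y) = \gamma_2^{-1}\cdot(\gamma_2\cdot y) = y$ (the unit $y\in\H$); choosing a bisection $F$ with $\gamma_2^{-1}\gamma_1\in F$, formula \eqref{e-action-morphism} gives $[\varphi(F)]_y y = y$, i.e. $[\varphi(F)]_y$ is a unit, which forces $\src(\gamma_2^{-1}\gamma_1)=\rg(\gamma_2^{-1}\gamma_1)$, so $\gamma_2^{-1}\gamma_1\in\G_x^x=\{x\}$ by the triviality hypothesis, hence $\gamma_1=\gamma_2$. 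This is the only place the isotropy assumption is used.

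Next I would verify that $\Phi$ is a coarse map, i.e. bornologous and proper, with respect to the coarse structures on $\G_x$ and $\H_y$ defined in \S\ref{s-leaves} (controlled sets are those whose ``difference set'' $\{\eta\zeta^{-1}\}$ lies in a compact subset of the ambient groupoid). For bornologous: if $E\subset\G_x\times\G_x$ is controlled, pick a compact $K\subset\G$ with $\gamma_1\gamma_2^{-1}\in K$ for all $(\gamma_1,\gamma_2)\in E$, cover $K$ by finitely many bisections $F_1,\dots,F_n\in\tG$, and observe $\Phi(\gamma_1)\Phi(\gamma_2)^{-1} = (\gamma_1\gamma_2^{-1})\cdot z$ where $z=\rg(\gamma_2\cdot y)$ — using again that $\alpha$ is a translation action as in the computation $(\gamma_1\gamma_2^{-1})\cdot z \cdot (\gamma_2\cdot y) = \gamma_1\cdot y$. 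Since $\gamma_1\gamma_2^{-1}\in F_i$ for some $i$, this element equals $[\varphi(F_i)]_z\in\varphi(F_i)$, so the difference set $\{\Phi(\gamma_1)\Phi(\gamma_2)^{-1}\}$ is contained in $\overline{\varphi(F_1)\cup\cdots\cup\varphi(F_n)}$, which is compact (closure of a relatively compact set; alternatively cover each $\varphi(F_i)$ by its closure in an extendable enlargement). For properness: a controlled-bounded subset $B\subset\H_y$ has $B$ finite (the leaves are uniformly discrete, as noted in \S\ref{s-leaves}), so $\Phi^{-1}(B)$ is finite by injectivity; hence preimages of bounded sets are bounded.

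The main obstacle I anticipate is the bookkeeping around compactness in the bornologous step — specifically confirming that the sets $\varphi(F_i)$ (or suitable closures thereof) can be arranged to be genuinely compact subsets of $\H$, so that their union witnesses a controlled set in $\H_y$. By Lemma \ref{l-open-map} each $\varphi(F_i)$ is open; since $\G$ is compactly generated and $X$ compact one can take the $F_i$ to be extendable (or compact open in the totally disconnected case), and then $\varphi(F_i)$ has compact closure because Proposition \ref{p-spatial-equivariance} identifies its source with $q^{-1}(\src(F_i))$, which is compact, and the source map restricted to $\varphi(F_i)$ is a homeomorphism onto this compact set. Once this is in hand, the difference-set computation above shows $\Phi_*(E)$ is controlled, completing the proof that $\Phi\colon\G_x\hookrightarrow\H_y$ is an injective coarse map. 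I would also remark that $\Phi$ need not be a coarse embedding in general, consistently with the phenomenon highlighted in case \ref{i-intro-free} of the Extension Theorem, which is precisely why Proposition \ref{p-asdim} (monotonicity of $\asdim$ under injective coarse maps) is invoked rather than coarse-embedding monotonicity.
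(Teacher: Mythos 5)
Your proof is correct and follows essentially the same route as the paper's: the orbit map $\gamma\mapsto\gamma\cdot y$ of the associated translation action, injectivity from triviality of $\G_x^x$, the identity $(\gamma_1\cdot y)(\gamma_2\cdot y)^{-1}=(\gamma_1\gamma_2^{-1})\cdot\rg(\gamma_2\cdot y)$ for the bornologous step, and properness for free from injectivity plus uniform discreteness. The compactness bookkeeping you flag as the main obstacle is sidestepped in the paper by noting that the difference set lies directly in $K\cdot Y=\{\delta\cdot z\colon \delta\in K,\ z\in Y,\ \src(\delta)=q(z)\}$, which is compact as the continuous image of the compact fibre product of $K$ and $Y$, so no cover by bisections or extendability argument is needed.
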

  Note that when $\varphi$ is injective, we have $q(Y)=X$  (Proposition \ref{p-inj-surj}), and thus this can be applied to every $x$ with trivial isotropy group.
  \begin{proof}
 By Theorem \ref{t-non-comm-stone}, we can consider a translation action $\G\acts(\H, q\circ \rg), (\gamma, \delta)\mapsto \gamma\cdot \delta$ which corresponds to $\varphi$. For $x, y$ as in the statement we have a map $\G_x\to \G_y$ given by $\gamma \mapsto \gamma \cdot y$. This map is  injective: if $\gamma\neq \gamma' \in \G_x$, the assumption that $\G_x^x$  is trivial implies that $\gamma$ and $\gamma'$ have different range, and thus $q(\rg(\gamma\cdot y))=\rg(\gamma)\neq \rg(\gamma')=q(\rg(\gamma'\cdot y))$, showing that $\gamma\cdot y\neq \gamma' \cdot y$. Let $K\subset \G$ be a compact subset. If $\gamma_1, \gamma_2\in \G_x$ are such that $\gamma_1=\delta\gamma_2$ for $\delta\in K$, then using that the action is a translation action we see that we must have $(\gamma_1\cdot y)=(\delta \rg(\gamma_2\cdot y))\cdot (\gamma_2 \cdot y)$, and thus $(\gamma_1\cdot y)(\gamma_2 \cdot y)^{-1}$ belongs to the set $K\cdot Y=\{\delta \cdot z\colon \delta \in K, z\in Y, \src((\delta)=q(z)\}$, which is a compact subset of  $\H$. This shows that the map is bornologous. Finally it is enough to note that an injective bornologous map between uniformly discrete coarse spaces is automatically a coarse map.  \qedhere
 

 \end{proof}

\subsection{Functoriality of homology} \label{s-homology}
Let us recall the definition of the homology groups of an \'etale groupoid with coefficient in an abelian group $A$. For simplicity, we assume  that $\G$ is an \'etale groupoid over a Cantor space $X$ (we will only use homology in this case).  

For $n\ge 1$, let  $\Delta_n(\tG)$ be the set of  consisting set of all  tuples $(F_1,\cdots , F_n)$ of compact open bisections $F_i\in \tG$ such that $\src(F_i)=\rg(F_{i+1})$ for every $ i=1,\ldots n-1$. For $n=0$ we let $\Delta_0(\tG)$ be the set of all clopen subsets of $X$. 
Define  $C_n(\tG)$ to be the abelian group with generating set $\Delta_n(\tG)$ and  relations of the form 
\begin{equation} \label{e-chain-relation}(F_1,\ldots, F_n)= (T_1,\ldots, T_n) +(S_1,\ldots, S_n)    \end{equation}
whenever $F_i=T_i\sqcup S_i$ for $i=1,\cdots, n$. We let $C_n(\tG, A)=A \otimes_\Z C_n(\tG)$ (where both $A$ and $C_n(\tG)$ are seen as $\Z$-modules).

For $n\ge 2$ and  $i=0,\ldots ,n$ define maps $d_i\colon \Delta_n(\tG)\to \Delta_{n-1}(\tG)$ by
\[d_i(F_1,\cdots F_n)=\left\{\begin{array}{lr} (F_2,\ldots, F_n) & \text{ if } i=0\\ (F_1,\ldots,F_iF_{i+1},\ldots, F_n) & \text{ if } 1\le i\le n-1\\ (F_1, \cdots, F_{n-1}) & \text{ if } i=n\end{array} \right. \]
 If $n=1$,  set $d_0(F)=\rg(F)$ and $ d_1(F)=\src(F)$. The maps $d_i$  preserve the relations \eqref{e-chain-relation}, and thus extend  to group homomorphisms 
 \[d_i\colon C_n(\tG)\to C_{n-1}(\tG).\] 
  Setting $\partial_n=\sum_{i=0}^n(-1)^i d_i$, we obtain a chain complex
  \[\begin{tikzcd}
0 & C_0(\tG, A) \arrow[l, "0"] & C_1(\tG, A) \arrow[l, "\operatorname{Id}_A\otimes \partial_1"] &  C_2(\tG, A) \arrow[l, "\operatorname{Id}_A\otimes \partial_2"] & \arrow[l, "\operatorname{Id}_A \otimes \partial_3"] \cdots 
\end{tikzcd}\]
  The \textbf{homology}  of $\tG$ with coefficient in $A$ is defined as the homology of this complex, namely $H_n(\tG, A)=\operatorname{Ker}(\operatorname{Id}_A\otimes \partial_n)/\operatorname{Im}(\operatorname{Id}_A\otimes \partial_{n+1})$.  If $A=\Z$, we simply write $H_n(\tG)$.

%

\begin{remark} This definition is equivalent to the definition given by Matui in \cite{Mat-hom} (extended in \cite{Nek-simple} to non-Hausdorff groupoids). To see this, observe that the group $C_n(\tG, A)$ admits  the following interpretation. Let 
\[\G^{*n}=\{(\gamma_1,\ldots, \gamma_n)\colon \rg(\gamma_i)=\src(\gamma_{i+1}), i=1,\ldots n\}\]
be the set of composable $n$-tuples of the groupoid $\G$. For $n=0$ we set $\G^{*0}=X$. For every $(F_1,\cdots, F_n)\in \Delta_n(\tG)$ and every $a\in A$, the  element $ a\otimes (F_1,\cdots, F_n) $  defines a function $\G^{\ast n} \to A$ which maps $(\gamma_1,\ldots, \gamma_n)$ to $a$ if $\gamma_i\in F_i$ for every $i=1,\ldots, n$, and to $0$ otherwise. The group $C_n(\tG, A)$ is isomorphic to the subgroup of the group of all functions from $\G^{*n}$ to $A$ generated by all functions  of this form. If $\G$ is Hausdorff, this is the same as the group of continuous, compactly supported functions $\G^{*n} \to A$. 
For $n=0$ and for any $\G$ (Hausdorff or not), the group $C_0(\tG, A)$ coincides with the group of continuous functions $C(X, A)$. Using these identifications, it is clear  that the definition of the groups $H_n(\G, A)$ coincides with the definition given in \cite{Mat-hom, Nek-simple}.
\end{remark}

Groupoid homology behaves functorially with respect to continuous morphisms of pseudogroups. Namely assume that $\G_1, \G_2$ are \'etale groupoids over Cantor spaces $X_1, X_2$, and that $\varphi \colon \tG_1\to \tG_2$ is a continuous morphisms. Then $\varphi$ induces a map, still denoted $\varphi\colon \Delta_n(\tG_1)\to \Delta_n(\tG_2)$, by $\varphi(F_1,\ldots, F_n)=(\varphi(F_1),\dots, \varphi(F_n))$ and thus induces a homomorphism \[\operatorname{Id}_A \otimes \varphi\colon C_n(\tG_1, A)\to C_n(\tG_2, A),\]
 which is clearly a morphism of chain complexes.  
 We deduce:

\begin{prop} \label{p-morphism-homology} Every continuous morphism of pseudogroups $\varphi\colon \tG_1\to \tG_2$ induces maps between the homology groups
\[\varphi_\ast \colon H_n(\G_1, A)\to H_n(\G_2, A).\]

\end{prop}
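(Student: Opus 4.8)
The plan is to follow the recipe already sketched just before the statement: construct a morphism of chain complexes $\operatorname{Id}_A\otimes\varphi\colon C_\bullet(\tG_1,A)\to C_\bullet(\tG_2,A)$ and then invoke the standard homological fact that a chain map descends to homology. Thus the content reduces to a handful of elementary verifications about how $\varphi$ interacts with the data defining the chain groups and the face maps.

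First I would check that $\varphi$ carries $\Delta_n(\tG_1)$ into $\Delta_n(\tG_2)$. For $n=0$ this says that $\varphi$ sends clopen subsets of $X_1$ to clopen subsets of $X_2$: identifying idempotents with open sets, $\varphi(U)=q^{-1}(U)$ where $q\colon Y\to X_1$ is the spatial component, and $q^{-1}$ preserves clopenness by continuity of $q$. For $n\ge 1$, given $(F_1,\dots,F_n)$ with $\src(F_i)=\rg(F_{i+1})$, each $\varphi(F_i)$ is a bisection of $\G_2$ by definition of a continuous morphism, and it is compact open because, by Proposition~\ref{p-spatial-equivariance}, $\src(\varphi(F_i))=q^{-1}(\src(F_i))$ is clopen and $\src|_{\varphi(F_i)}$ is a homeomorphism onto it; moreover $\src(\varphi(F_i))=q^{-1}(\src(F_i))=q^{-1}(\rg(F_{i+1}))=\rg(\varphi(F_{i+1}))$, so the composability conditions are preserved.

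Next I would verify that the induced map $\varphi\colon\Delta_n(\tG_1)\to\Delta_n(\tG_2)$ respects the relations \eqref{e-chain-relation}, so it extends to a homomorphism $C_n(\tG_1)\to C_n(\tG_2)$ and, after tensoring with $A$, to $\operatorname{Id}_A\otimes\varphi\colon C_n(\tG_1,A)\to C_n(\tG_2,A)$. If $F_i=T_i\sqcup S_i$, then $T_i,S_i$ are compatible with disjoint sources, so $\varphi(F_i)=\varphi(T_i)\cup\varphi(S_i)$ as $\varphi$ preserves unions of compatible families, and this union is again disjoint since $\src(\varphi(T_i))=q^{-1}(\src(T_i))$ and $\src(\varphi(S_i))=q^{-1}(\src(S_i))$ are disjoint; hence $\varphi(F_1,\dots,F_n)=\varphi(T_1,\dots,T_n)+\varphi(S_1,\dots,S_n)$ in $C_n(\tG_2)$. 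Finally I would check that $\operatorname{Id}_A\otimes\varphi$ commutes with the differentials. It suffices to check $d_i\circ\varphi=\varphi\circ d_i$ on $\Delta_n$ for each face map: this is immediate because $\varphi$ is a semigroup homomorphism (handling $F_iF_{i+1}\mapsto\varphi(F_i)\varphi(F_{i+1})$ for $1\le i\le n-1$) and because $\varphi(\rg(F))=q^{-1}(\rg(F))=\rg(\varphi(F))$ and $\varphi(\src(F))=\src(\varphi(F))$ (handling $d_0$, $d_n$, and the base case $n=1$, where $d_0(F)=\rg(F)$, $d_1(F)=\src(F)$). Taking alternating sums gives $\partial_n\circ(\operatorname{Id}_A\otimes\varphi)=(\operatorname{Id}_A\otimes\varphi)\circ\partial_n$. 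A chain map sends cycles to cycles and boundaries to boundaries, so $\operatorname{Id}_A\otimes\varphi$ descends to homomorphisms $\varphi_*\colon H_n(\G_1,A)\to H_n(\G_2,A)$ on the quotients; functoriality $(\psi\circ\varphi)_*=\psi_*\circ\varphi_*$ and $(\mathrm{id})_*=\mathrm{id}$ is then automatic since $\varphi\mapsto\operatorname{Id}_A\otimes\varphi$ is visibly functorial on $\Delta_\bullet$.

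There is no serious obstacle here; the only points requiring mild care are the book-keeping that $\varphi$ preserves the property of being a compact open bisection (which rests on continuity of the spatial component) and the observation that the union $\varphi(T_i)\cup\varphi(S_i)$ remains disjoint, both of which are handled by Proposition~\ref{p-spatial-equivariance}.
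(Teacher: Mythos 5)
Your proof is correct and follows exactly the route the paper takes: the paper defines $\varphi(F_1,\ldots,F_n)=(\varphi(F_1),\ldots,\varphi(F_n))$ on $\Delta_n$ and asserts that the induced map is "clearly" a morphism of chain complexes, leaving the verifications implicit. You have simply spelled out those verifications (preservation of compact open bisections and composability via the spatial component, preservation of the disjoint-union relations, and commutation with the face maps), all of which are the intended checks and are carried out correctly.
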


This will be used in our applications of the Extension Theorem, in particular to groups of interval exchanges (\S \ref{s-iet}) and to full groups of one sided shift of finite type (\S \ref{s-SFT}). 

\subsection{The symmetric power of an \'etale groupoid} \label{s-reduced}

Let $\G$ be an \'etale groupoid over a space $X$, and $r \ge 1$. Let us give a more formal definition of the symmetric power $\G^{[r]}$  (whose definition was sketched in the introduction for $\G$ effective). This will be needed  later in our main theorem in \S \ref{s-Extension Theorem}. We anticipate it  to this section since  it also provides  some instructive examples concerning pseudogroups morphisms.

Given a (Hausdorff) space $X$ and an integer $r\geq 1$, let $X^{[r]}$ be the  space of non-empty finite subsets of $X$ of cardinality at most $r$. It is endowed with the quotient of the product topology on  the cartesian power $X^r$ via the surjection $X^r\to X^{[r]}, (y_1,\ldots, y_r)\mapsto \{y_1,\ldots y_r\}$. 

\begin{defin}\label{d-reduced} \label{d-reduced-power} Let $\G$ be an \'etale groupoid over $X$, and $r\geq 1$. The \textbf{symmetric power}   $\G^{[r]}$  is the groupoid over $X^{[r]}$, consisting of all non-empty finite subsets $\Qc\subset \H$ of cardinality at most $r$ such that every two distinct elements $\gamma, \delta\in \Qc$ have different source and different range. 

\end{defin}

The groupoid structure on $\G^{[r]}$ is defined as follows. Source and range maps $\src, \mathsf{r}\colon \G^{[r]}\to X^{[r]}$ are given by the image of of the source and range maps of $\G$.    It follows that for every $\Qc_1, \Qc_2\in \G^{[r]}$ such that $\src(\Qc_1)=\rg(\Qc_2)$, and every $\gamma_2\in \Qc_2$, there exists a unique $\gamma_1\in \Qc_1$ such that $\src(\gamma_1)=\rg(\gamma_2)$. The product $\Qc_1\Qc_2$ is defined as the set-wise product in $\G$:
\[   \Qc_1\Qc_2=\{\gamma_1\gamma_2\colon \gamma_1\in \Qc_1, \gamma_2\in \Qc_2, \src(\gamma_1)=\rg(\gamma_2)].   \]

To define a topology on $\G^{[r]}$, let $\mathcal{E}\subset \G^r$ be the set of tuples $(\gamma_1,\cdots, \gamma_r)$ with the property that for every $i, j$ such that $\gamma_i\neq \gamma_j$, we have $\src(\gamma_i)\neq \src(\gamma_j)$  and $\rg(\gamma_i)\neq \rg(\gamma_j)$. Note that $\mathcal{E}$ is open in $\G^r$, and that we have a surjective map $\mathcal{E}\to \G^{[r]}, (\gamma_1,\cdots, \gamma_r)\mapsto \{\gamma_1,\cdots, \gamma_r\}$. We consider on $\G^{[r]}$ the quotient of the topology on $\mathcal{E}$, under this surjective map. 

With this topology, $\G^{[r]}$ is \'etale. Explicitly a basis of bisections of $\G^{[r]}$ can be described as follows.  Let $T_1,\cdots , T_s\in \tG$ be  compatible bisections, with $1\le s\le r$.  Then the set 
 \[\mathcal{U}_{T_1,\cdots, T_s}=\{\Qc\in \G^{[r]} \colon \Qc\subset T_1\cup\cdots \cup T_s, \Qc\cap T_i\neq \varnothing\forall i=1,\cdots, s\}\]
 is a bisection of $\G^{[r]}$, and sets of this form are a basis of the topology of $\G^{[r]}$.

The sets $X^{[r]}$ are naturally nested: 
  \[X=X^{[1]}\subset X^{[2]}\subset X^{[3]}\subset \cdots X^{[r-1]}\subset X^{[r]} \]
  If $\ell< r$, the set $X^{[\ell]}$ is a closed  $\G^{[r]}$-invariant subset of $X^{[r]}$. This shows in particular that $\G^{[r]}$ is never minimal if $r \ge 2$ (unlike the usual power $\G^r$, which is minimal if and only if $\G$ is). 
  
  For every $\ell\le r$ we have $\G^{[r]}|X^{[\ell]}=\G^{[\ell]}$. Thus, the restriction morphisms (Example \ref{e-restriction-morphism}) provide us with a sequence of continuous morphisms
  \[\widetilde{\G^{[r]}}\to \widetilde{\G^{[r-1]}}\to \cdots \to \widetilde{\G^{[2]}}\to \widetilde{\G}.\]

  \begin{lemma}\label{l-closed-power}
  Assume that $\G$ is minimal. Then for every $r\ge 1$, the only closed $\G^{[r]}$-invariant subsets of $X^{[r]}$ are the sets $X^{[\ell]}, \ell=1,\cdots, r$.
  
  In particular assume that $\H$ is an \'etale groupoid over a compact space $Y$, and let $\varphi\colon \tG^{[r]} \to \widetilde{\H}$ be a continuous morphism. Then there exists a unique $\ell \le r$ such that $\varphi$ splits as the composition of the restriction homomorphism $\widetilde{\G^{[r]}}\to \widetilde{\G^{[\ell]}}$ and a continuous morphism $\widetilde{\G^{[\ell]}}\to \tH$ whose spatial component $q\colon Y\to X^{[\ell]}$ is surjective.  
  \end{lemma}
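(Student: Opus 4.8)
The plan is to prove the first assertion by a direct orbit-analysis of $\G^{[r]}$, using minimality of $\G$, and then deduce the second assertion from it together with Proposition~\ref{p-restriction-morphism}. For the first assertion, fix a non-empty closed $\G^{[r]}$-invariant subset $C\subset X^{[r]}$. Let $\ell$ be the minimal cardinality of a set $\Qc\in C$ (this is well-defined, $1\le \ell\le r$). I would first show $X^{[\ell]}\subset C$: pick $\Qc_0=\{x_1,\dots,x_\ell\}\in C$ with $|\Qc_0|=\ell$; given an arbitrary $\{y_1,\dots,y_\ell\}\in X^{[\ell]}$ (with the $y_i$ distinct), use minimality of $\G$ to find, for each $i$, a germ $[F_i]_{x_i}$ sending $x_i$ close to $y_i$, assembled into a compatible bisection (possible after shrinking domains, since the $x_i$ are distinct so we can separate them by disjoint clopen sets, and likewise arrange the images near the distinct $y_i$ to be disjoint); applying the corresponding element of $\G^{[r]}$ to $\Qc_0$ and taking a limit shows $\{y_1,\dots,y_\ell\}\in C$ by closedness. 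I would spell out the approximation step carefully: minimality gives density of orbits, so for each $i$ and each neighbourhood $V_i\ni y_i$ there is $\gamma_i\in\G$ with $\src(\gamma_i)=x_i$ and $\rg(\gamma_i)\in V_i$; choosing the $V_i$ pairwise disjoint and small, and choosing bisections $F_i\ni\gamma_i$ with pairwise disjoint, sufficiently small domains and ranges, the set $\{\gamma_1,\dots,\gamma_\ell\}$ lies in $\G^{[r]}$ and carries $\Qc_0$ to a point of $C$ within $\prod V_i$; letting the $V_i$ shrink and using compactness of $X^{[\ell]}$ (hence of $C$) yields $\{y_1,\dots,y_\ell\}\in C$.

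Next I would show $C\subset X^{[\ell]}$, i.e. every element of $C$ has cardinality exactly $\ell$ (it is $\ge\ell$ by minimality of $\ell$, so the point is the upper bound). Suppose $\Qc=\{z_1,\dots,z_k\}\in C$ with $k>\ell$. Using minimality of $\G$ again, I can move $k-\ell+1$ of the points $z_i$ to be within a common small clopen set while keeping the remaining $\ell-1$ points in disjoint clopen sets away from it — more precisely, pick a clopen neighbourhood structure and an element of $\G^{[r]}$ realizing such a configuration as a limit; but a cleaner route is: since $k>\ell\ge 1$, apply the transformations constructed above to collapse the orbit structure and observe that the orbit closure of $\Qc$ under $\G^{[r]}$ must then meet $X^{[\ell-1]}$ (two of the $z_i$ can be sent arbitrarily close together, since all points of $X$ lie in a single $\G$-orbit closure, so a limit identifies them), contradicting minimality of $\ell$. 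I expect this direction — showing no set of cardinality $>\ell$ survives in a minimal invariant closed set — to be the main obstacle, since one must be careful that the "collapsing" limit genuinely lands in $C$ and genuinely drops the cardinality; the key input is that any two points of $X$ are proximal for the $\G^{[r]}$-action in the weak sense that some sequence of groupoid elements brings their images together, which follows from minimality of $\G$. Combining the two inclusions gives $C=X^{[\ell]}$, proving the first assertion.

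For the second assertion, let $\varphi\colon\widetilde{\G^{[r]}}\to\widetilde{\H}$ be a continuous morphism with spatial component $q\colon Y\to X^{[r]}$. By Proposition~\ref{p-spatial-equivariance} (equivariance of the spatial component), $q(Y)$ is a $\G^{[r]}$-invariant subset of $X^{[r]}$, and by continuity of $q$ and compactness of $Y$ its closure $\overline{q(Y)}$ is a closed $\G^{[r]}$-invariant subset. By the first assertion applied to $\overline{q(Y)}$, there is a unique $\ell\le r$ with $\overline{q(Y)}=X^{[\ell]}$; in particular $q(Y)\subset X^{[\ell]}$ and $q(Y)$ is dense in $X^{[\ell]}$. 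Now apply Proposition~\ref{p-restriction-morphism}: since $q(Y)$ is $\G^{[r]}$-invariant, $\varphi$ factors uniquely as the restriction morphism $\widetilde{\G^{[r]}}\to\widetilde{\G^{[r]}|q(Y)}$ followed by a continuous morphism with spatial component $q\colon Y\to q(Y)$ which is surjective onto $q(Y)$. It remains to replace $q(Y)$ by $X^{[\ell]}$: since $q(Y)$ is dense in $X^{[\ell]}=\overline{q(Y)}$ and is $\G^{[\ell]}$-invariant, and since $X^{[\ell]}$ has no proper non-empty closed invariant subset other than the $X^{[j]}$ with $j<\ell$ (none of which can contain the dense set $q(Y)$ unless $j=\ell$), combined with the fact — to be invoked or checked — that for minimal-type reasons a continuous morphism from $\widetilde{\G^{[r]}}$ to $\widetilde{\H}$ with image of spatial component dense in $X^{[\ell]}$ must in fact have spatial component landing in $X^{[\ell]}$ with the induced morphism from $\widetilde{\G^{[\ell]}}$ well-defined; concretely, one composes with the restriction morphism $\widetilde{\G^{[r]}}\to\widetilde{\G^{[\ell]}}$ already recorded before the lemma. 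Thus $\varphi$ splits as $\widetilde{\G^{[r]}}\to\widetilde{\G^{[\ell]}}$ followed by a continuous morphism $\widetilde{\G^{[\ell]}}\to\widetilde{\H}$ whose spatial component $q\colon Y\to X^{[\ell]}$ has dense, hence (since $X^{[\ell]}$ is the smallest closed invariant set containing $q(Y)$) all of $X^{[\ell]}$ as image — wait, density need not give surjectivity, so here one should argue: $q(Y)$ is compact (continuous image of compact $Y$), hence closed, hence equals its closure $X^{[\ell]}$; this gives surjectivity. Uniqueness of $\ell$ is immediate from $X^{[\ell]}=\overline{q(Y)}$ being determined by $\varphi$, and uniqueness of the factoring morphism follows from Proposition~\ref{p-restriction-morphism} together with surjectivity of $q$ onto $X^{[\ell]}$ (Proposition~\ref{p-inj-surj} gives the relevant rigidity). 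This completes the plan.
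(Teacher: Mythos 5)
There is a genuine gap in your proof of the first assertion: you take $\ell$ to be the \emph{minimal} cardinality of an element of $C$, but this invariant cannot distinguish the sets $X^{[j]}$ from one another. Since $X^{[1]}\subset X^{[2]}\subset\cdots\subset X^{[r]}$ are nested, every non-empty closed invariant set contains singletons — indeed your own "collapsing" construction proves this: given $\{z_1,\dots,z_k\}\in C$, minimality of $\G$ lets you send all $k$ points into an arbitrarily small neighbourhood of a fixed point $w$ while keeping the images pairwise distinct, and the limit in the quotient topology of $X^{[r]}$ is the singleton $\{w\}\in C$. Hence your $\ell$ always equals $1$, and the claimed inclusion $C\subset X^{[\ell]}=X^{[1]}$ is false whenever $C$ contains a non-singleton (e.g.\ $C=X^{[2]}$). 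Concretely, your contradiction argument in the second step does not close: collapsing produces elements of \emph{every} cardinality from $1$ up to $k$ in the orbit closure, so meeting $X^{[\ell-1]}$ only forces $\ell=1$; it never forces $k\le\ell$. The paper avoids this by taking $\ell$ to be the \emph{maximal} cardinality of an element of $C$. Then $C\subset X^{[\ell]}$ is immediate (every element of $C$ has at most $\ell$ points and $X^{[\ell]}$ is exactly the set of subsets of size at most $\ell$), and $X^{[\ell]}\subset C$ follows from your density argument applied to the maximal-cardinality element $Q$ — with the small extra observation that targets $P\in X^{[\ell]}$ of cardinality strictly less than $\ell$ are also reached, as limits of $\ell$-point images whose coordinates are allowed to converge to coinciding points (your argument as written only treats targets with $\ell$ distinct points).

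Your deduction of the "in particular" part from the first assertion is essentially the paper's: $q(Y)$ is a compact, hence closed, $\G^{[r]}$-invariant subset, so it equals some $X^{[\ell]}$, and Proposition~\ref{p-restriction-morphism} gives the factorisation. That part would stand once the first assertion is repaired; the detour through $\overline{q(Y)}$ and density is unnecessary since compactness already gives closedness.
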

  \begin{proof}
  Let $C\subset X^{[r]}$ be a closed $\G^{[r]}$-invariant subset, and let $Q:=\{x_1,\cdots, x_\ell\}\in C$ be such that $\ell:=|Q|$ is maximal. Let us show that $C=X^{[\ell]}$. We have $C\subset X^{[\ell]}$ by maximality of $\ell$. Let $P=\{y_1,\ldots, y_\ell\}\in X^{[\ell]}$, and let $U_1,\cdots U_\ell$ be neighbours of $y_1,\ldots, y_\ell$. By minimality we can find elements $\gamma_i\in \G$ such that $\src(\gamma_i)=x_i, \rg(\gamma_i)\in U_i$, and $\rg(\gamma_1),\ldots , \rg(\gamma_\ell)$ are pairwise distinct. This shows that $P$ is in the closure of the $\G^{[r]}$-orbit of $Q$, and thus $P\in C$. Since $P\in X^{[\ell]}$ was arbitrary we have $X^{[\ell]}\subset C$.
  
  For the second part, it is enough to observe that by compactness  $q(Y)$ is  closed, thus equal to $X^{[\ell]}$ for some $\ell\le r$, and to apply Proposition \ref{p-restriction-morphism}.  \qedhere  
  \end{proof}
  
  We now give an example of continuous morphism of pseudogroups. 
  
 \begin{example}  The symmetric power $\G^{[r]}$ is related to the usual (cartesian) power $\G^r$ by the existence of a  continuous morphism $\widetilde{\G^{[r]}}\to\widetilde{\G^r}$ whose  spatial component is the surjection $X^r\to X^{[r]}, (x_1,\ldots, x_r)\mapsto \{x_1,\ldots, x_r\}$. 
  
Too see this, let $q\colon X^r\to X^{[r]}$ be the  surjection. We define a translation action $\G^{[r]}\acts (\G, q\circ \rg)$ as follows. Let $\Qc\in \G^{[r]}$ and $\delta\in \G^r$ such that $q(\rg(\delta))=\src(\Qc)$. Write $\delta=(\delta_1,\ldots, \delta_r)$,  $\rg(\delta)=(x_1,\ldots, x_r)$, so that $ \src(\Qc)=\{x_1,\ldots, x_r\}$. For every $i$, there exists a unique $\gamma_i\in \Qc$ such that $s(\gamma_i)=x_i$. Define $\Qc\cdot \delta=(\gamma_1\delta_1,\ldots, \gamma_r\delta_r)$. It is straightforward to verify that this is a translation action. 
  
 \end{example}
 
 We now give a \emph{non}-example of continuous morphism of pseudogroups. 
 \begin{example}For every bisection $T\in \tG$,  the set $T^{[r]}=\{\Qc\in \G^{[r]}\colon \Qc\subset T\}$ is a bisection of $\G^{[r]}$, and thus an element of $\widetilde{\G^{[r]}}$. It is not difficult to check that the map $T\mapsto T^{[r]}$ is a semigroup homomorphism $\tG\to \widetilde{\G^{[r]}}$. 
 However as soon as $r\ge 2$, this homomorphism does not preserve unions of compatible families, and thus is \emph{not} a continuous morphism of pseudogroups. To see this, write $X=U\cup V$ as the union of two open, non-empty proper subsets. Not every subset $\{x_1,\ldots, x_r\}\subset X$ is contained either in $U$ or in $V$, and thus $U^{[r]}\cup V^{[r]}$ is strictly smaller than $X^{[r]}$.

An analogous thing happens with the cartesian power $\G^r$, namely we have a semigroup homomorphism $\tG\to \widetilde{\G^r}, T\mapsto T^r$, which is not a continuous morphism for the same reason.

 \end{example}

\section{Preliminaries  on the alternating full group} \label{s-prel-alt}

In this  section,  we recall the definition of the alternating full group $\Af(\G)$ as defined by Nekrashevych in \cite{Nek-simple}, and collect some technical facts on the formalism of \emph{multisections} developed there.

\medskip

\emph{Throughout the section, $\G$ is an \'etale groupoid over a space $X$ which is assumed to be either a Cantor space, or locally compact, non-compact Cantor space (i.e. a space  homeomorphic to any open, non-closed subset of a Cantor space).}

 \subsection{Expansive groupoids and the group $\Af(\G)$} \label{s-alt}
 \begin{defin}
A \textbf{multisection} of {degree} $d$ of $\G$ is a map $\vec{F}$ from $\{1,\cdots, d\}^2$ to the pseudogroup $\tG$ such that the following conditions are satisfied:
\begin{enumerate}[label=(\roman*)]
\item  for every $i=1\ldots, d$,  the set $\vec{F}(i,i)\subset X$ is a compact open subset of $X$ ;
\item we have $\F(i,i)\cap \F(j, j)=\varnothing$ if $i\neq j$;
\item  for every $i,j,\ell=1,\ldots d$ we have $\vec{F}(i, j)\F(j,\ell)=\F(i,\ell)$. \end{enumerate}

Note that it follows from this definition that $\src(\F(i, j))=\F(j, j)$ and $\rg(\F(i, j))=\F(j, j)$ and that $\F(i, j)$ is a compact open bisection of $\G$. 
The set $\bigcup_{i=1}^d \F(i,i)\subset X$ is called the \textbf{domain} of the multisection and the subsets $\F(i,i)$ are called the \textbf{components} of the domain. 

\end{defin}

A multisection $\F$ of degree $d$ induces an embedding from the symmetric group $\sym(d)$ into ${\Ff(\G)}$, still denoted $\F\colon \sym(d)\to {\Ff(\G)}$. For every $\sigma\in \sym(d)$,  the element $\F(\sigma)$ is given by the bisection
\[ {\F(\sigma)}=\left(\bigcup_{i=1}^d \F(\sigma(i), i)\right)\sqcup  (X\setminus \bigcup_{i=1}^d\F(i, i)).\]
The associated homeomorphism permutes the components $\F(i,i)$ in a way prescribed by  $\sigma$, and acts within each component according to the bisection $\F(\sigma(i),i)$.  We denote $\Af(\F)$ the image of the alternating group $\alt(d)$  under $\F$.
\begin{defin}\label{def-alt}
The  \textbf{alternating full group} of $\G$ is the subgroup ${\Af(\G)}$ of ${\Ff(\G)}$  generated by $\bigcup_{\F} \Af(\F)$, where $\F$ varies over multisections of  degree $d\geq 3$.
\end{defin}

\begin{remark} \label{r:direct-limit}
Note that since we required the component of the domain of a multisection to be compact, the group $\Af(\G)$ acts on $X$ by compactly supported homeomorphisms. In particular, when the unit space $X$ is  the locally compact, non-compact Cantor set, the group ${\Af(\G)}$ is the direct limit of the groups ${\Af({\G|{U}})}$, were $U\subset X$ varies among compact open subsets of $X$ (and hence $U$ is a  Cantor space).
\end{remark}

In \cite{Nek-simple}, Nekrashevych proves the following two theorems. See also \cite{MB-full} for an independent result equivalent to the first (but with a different definition of $\Af(\G)$). 
\begin{thm}[\cite{Nek-simple}]\label{t:simple}\label{t-simple}
 Assume that $\G$ is  a minimal effective \'etale groupoid over a (locally compact) Cantor space  $X$.
 Then every non-trivial subgroup of ${\Ff(\G)}$ normalised by ${\Af(\G)}$ contains ${\Af(\G)}$. In particular, ${\Af(\G)}$ is simple and contained in every non-trivial normal subgroup of ${\Ff(\G)}$.
\end{thm}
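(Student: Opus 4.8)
The plan is to prove that every non-trivial subgroup $N\le \Ff(\G)$ normalised by $\Af(\G)$ contains $\Af(\G)$, from which simplicity of $\Af(\G)$ (taking $N\le \Af(\G)$ non-trivial normal) and minimality of $\Af(\G)$ among normal subgroups of $\Ff(\G)$ follow immediately. The strategy is the classical ``commutator-pushing'' argument adapted to the groupoid setting, exploiting two features of minimal effective groupoids over the Cantor set: first, the action of $\Af(\G)$ on $X$ is minimal (indeed it has dense orbits since $\G$ is minimal and $\Af(\G)$ acts with the same orbits on a dense $G_\delta$), and second, there is an abundance of multisections supported on arbitrarily small clopen sets, so that rigid stabilisers $\rist_{\Af(\G)}(U)$ are large for every non-empty clopen $U$.

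First I would take a non-trivial element $n\in N$ and a point $x_0\in X$ with $n(x_0)\neq x_0$; by continuity and effectiveness pick a clopen neighbourhood $U$ of $x_0$ with $U\cap n(U)=\varnothing$. The key mechanism is the standard commutator trick: for $g,h$ supported in $U$, the element $[n,g]=n g n^{-1} g^{-1}$ lies in $N$ (since $N$ is normal under $\Af(\G)\supseteq\langle g\rangle$ when $g\in\Af(\G)$), and because $ngn^{-1}$ is supported in $n(U)$ which is disjoint from $U$, the commutator $[n,g]$ equals $g$ times a conjugate of $g^{-1}$ supported elsewhere — in particular $[n,[n,g]^{-1}]$-type manipulations let one produce, inside $N$, elements acting on $U$ exactly as prescribed 3-cycles do. Concretely: given any 3-cycle $c\in\Af(\G)$ supported by a multisection $\F$ whose domain lies inside a small enough clopen $V$ with $V, n(V)$ disjoint, one shows $[n^{-1},c]\in N$ and, using that $n^{-1}cn$ is supported off $V$, that this commutator restricted to $V$ agrees with $c$; a second commutator of two such elements (or a direct computation with the $\alt$-relations, exploiting that $\alt(5)$ is perfect and generated by products of $3$-cycles) yields $c$ itself inside $N$. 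This is essentially Lemma~\ref{l-double-comm} applied with $A=\Af(\G)$ and $R=N$: there is a non-empty open (hence, refining, clopen) $W\subset X$ with $[\rist_{\Af(\G)}(W),\rist_{\Af(\G)}(W)]\le N$.

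Next I would upgrade ``$N$ contains a commutator subgroup of one rigid stabiliser'' to ``$N\supseteq\Af(\G)$''. Two ingredients: (a) for a Cantor-space minimal effective groupoid, $\rist_{\Af(\G)}(W)$ contains $\Af(\G|W)$, which is simple and in particular perfect (by induction / the direct-limit structure of Remark~\ref{r:direct-limit}, or directly because $\Af(\G|W)$ is generated by $3$-cycles and every $3$-cycle is a commutator of $3$-cycles when there is room for a fourth and fifth clopen piece, which minimality provides inside $W$); hence $[\rist_{\Af(\G)}(W),\rist_{\Af(\G)}(W)]\supseteq\Af(\G|W)$ and so $\Af(\G|W)\le N$. (b) A conjugation/covering argument: by minimality of the $\Af(\G)$-action, finitely many $\Af(\G)$-translates of $W$ cover $X$, and any multisection of degree $\ge 3$ can be conjugated by elements of $\Af(\G)$ and decomposed (using that a $3$-cycle on three clopen sets can be written as a product of $3$-cycles each with domain inside a single translate $gW$) so that every generator of $\Af(\G)$ is a product of elements of $\bigcup_g g\,\Af(\G|W)\,g^{-1}\subseteq N$; since $N$ is normal under $\Af(\G)$ this gives $\Af(\G)\le N$.

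The main obstacle I expect is step two's passage from ``commutators of a single rigid stabiliser lie in $N$'' to ``$N$ contains all of $\Af(\G)$'' while only assuming $N$ is normalised by $\Af(\G)$ (not by all of $\Ff(\G)$): one must be careful that the conjugating elements used to spread $\Af(\G|W)$ around $X$ and to realise arbitrary $3$-cycles as products of ``small'' ones genuinely lie in $\Af(\G)$, not merely in $\Ff(\G)$ — this is where minimality of the $\Af(\G)$-action (rather than just of $\G$) and the flexibility of building alternating elements on small clopen sets are essential. A secondary technical point is handling the non-Hausdorff case and the locally compact non-compact case, but the latter reduces to the compact case by the direct-limit description in Remark~\ref{r:direct-limit}, and non-Hausdorffness does not affect these arguments since they only involve clopen bisections and the Hausdorff unit space $X$. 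I would structure the write-up as: (1) reduce to $X$ compact; (2) the double-commutator lemma giving $[\rist_{\Af(\G)}(W),\rist_{\Af(\G)}(W)]\le N$; (3) perfectness of $\rist_{\Af(\G)}(W)\supseteq\Af(\G|W)$; (4) the covering/conjugation step concluding $\Af(\G)\le N$; (5) deduce simplicity and minimality.
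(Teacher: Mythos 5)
The paper does not actually prove this statement: Theorem~\ref{t-simple} is imported verbatim from \cite{Nek-simple} (together with the remark that the locally compact case reduces to the compact one via Remark~\ref{r:direct-limit}), and the only ``derivation'' offered in the paper, Example~\ref{e-simple}, deduces it from the Extension Theorem, which is not an independent proof since the Extension Theorem itself rests on the confined-subgroup classification, which uses this simplicity machinery. So your proposal should be judged against the standard argument of \cite{Nek-simple}, and it reconstructs that argument correctly: reduce to $X$ compact; apply the double-commutator Lemma~\ref{l-double-comm} with $A=\Af(\G)$ and $R=N$ to get $[\rist_{\Af(\G)}(W),\rist_{\Af(\G)}(W)]\le N$; observe $\Af(\G|W)\le\rist_{\Af(\G)}(W)$ is perfect, so $\Af(\G|W)\le N$; and spread $\Af(\G|W)$ over $X$ by $\Af(\G)$-conjugation. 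This is exactly the route the paper's own Lemma~\ref{l-dense-normalizes} takes, except that there the final ``normal closure equals everything'' step is justified by citing Theorem~\ref{t-simple} itself, whereas you must (and do) supply it directly.

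Two small refinements to your write-up. First, your step (4) as phrased --- cover $X$ by finitely many translates $gW$ and decompose a $3$-cycle so its domain lies in a single translate --- is not quite how it goes: the three components of a multisection's domain need not fit inside one element of a fixed finite cover. The correct mechanism, which you essentially flag as the main obstacle, is pointwise: for each degree-$3$ multigerm $\gk$ use minimality and the multisection machinery (Lemma~\ref{l-multigerm}, as in Step~\ref{s-highly-minimal}) to find $g\in\Af(\G)$ sending the three base points $\gk(i,i)$ into $W$, shrink to a sub-multisection $\F_\gk\ni\gk$ with $g(\mathrm{dom}\,\F_\gk)\subset W$ so that $\Af(\F_\gk)\le g^{-1}\Af(\G|W)g\le N$, and then conclude by compactness and Proposition~\ref{p-nekr-cover}. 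Second, Lemma~\ref{l-double-comm} is stated for countable $G$, and $\Ff(\G)$ need not be countable; this is harmless (countability plays no role in the commutator computation, and one may restrict to the subgroup generated by $\Af(\G)$ and a single nontrivial $n\in N$), but it deserves a sentence.
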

\begin{remark}
In \cite{Nek-simple} this result  is stated there only  when  $X$ is a Cantor space, but the case of the locally compact Cantor set reduces to this by Remark \ref{r:direct-limit}. 
\end{remark}
\begin{thm}[\cite{Nek-simple}] \label{t-fg}
Assume that $\G$ is an \emph{expansive} \'etale groupoid over a  Cantor space $X$,  and that every $\G$-orbit contains at least 5 points. Then the alternating full group ${\Af(\G)}$ is finitely generated.
\end{thm}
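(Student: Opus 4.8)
The plan is to reduce the statement to a finite amount of combinatorial data extracted from the expansiveness hypothesis. Recall that a groupoid $\G$ over a Cantor space $X$ is \emph{expansive} when it admits a compact generating set $\T = \{T_1,\dots,T_r\}$ of compact open bisections such that the "cylinder" partitions obtained by iterating $\T$ separate points of $X$; equivalently, $\G$ is the groupoid of germs of a subshift-type action, and there is a decreasing sequence of finite clopen partitions $\mathcal{P}_n$ of $X$ (the $n$-step cylinders determined by $\T$) with $\bigcap_n \mathcal{P}_n$ generating the topology. First I would fix such a generating set and such partitions, and fix some $n_0$ large enough that each atom $C \in \mathcal{P}_{n_0}$ is "small" in a sense to be specified, in particular small enough that the hypothesis "every orbit has at least $5$ points" guarantees that for each atom $C$ there are translates $[T]_x C$, for suitable words $T$ in $\T$, that are pairwise disjoint and disjoint from $C$ in multiplicities up to $5$. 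This is the only place the cardinality hypothesis enters, and it is what allows $3$-cycles and $5$-cycles among atoms to be realised by elements supported on a controlled region.

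Next I would introduce the candidate finite generating set $S$ of $\Af(\G)$. It should consist of: (a) the images $\F(\sigma)$, for $\sigma$ a transposition-type element of $\alt(d)$ with $d$ bounded (say $d \le 5$), ranging over a finite list of multisections $\F$ whose components are atoms of $\mathcal{P}_{n_0}$ and whose connecting bisections $\F(i,j)$ are among the finitely many bisections of the form $[\T^{\le n_0}]$; together with (b) finitely many auxiliary elements of $\Af(\G)$ implementing the "shift refinement" — roughly, elements that conjugate the rigid stabiliser of an atom $C$ of $\mathcal{P}_{n}$ onto the rigid stabiliser of a sub-atom $C' \in \mathcal{P}_{n+1}$ with $C'\subset C$. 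The key structural point is that $\Af(\G)$ is generated by $\bigcup_\F \Af(\F)$ over all multisections of all degrees $d\ge 3$ (Definition \ref{def-alt}), and by the standard fact that $\alt(d)$ is generated by $3$-cycles, it is generated by the $3$-cycles $\F(\sigma)$ for $\F$ a multisection of degree exactly $3$. So it suffices to show every such $3$-cycle $\F(\sigma)$ lies in the subgroup $\langle S\rangle$.

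The heart of the argument is then a "telescoping" or "straightening" step: given an arbitrary degree-$3$ multisection $\F$ with components $\F(1,1),\F(2,2),\F(3,3)$ and connecting bisections $\F(i,j)\in\tG$, I would approximate each $\F(i,j)$ by a finite union of bisections drawn from the generating family $[\T^{\le n}]$ for $n$ large — possible because $\T$ generates $\G$ and compact open bisections decompose into such "basic" pieces — and likewise refine the components into atoms of $\mathcal{P}_n$. This expresses $\F(\sigma)$ as a product of finitely many $3$-cycles each supported on a region built from atoms of $\mathcal{P}_n$ and connected by basic bisections; a standard commutator/conjugation trick (realising a $3$-cycle on small clopen sets as a commutator of two $3$-cycles, à la the simplicity proof in \cite{Nek-simple}) then rewrites each of these in terms of the degree-$\le 5$ generators at level $n_0$, after using the level-$n$-to-level-$n_0$ conjugators from family (b) to move the support back up to level $n_0$. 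Carefully bookkeeping this recursion terminates because at each stage the "depth" $n$ decreases by one until one reaches $n_0$.

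I expect the main obstacle to be precisely this straightening step: controlling uniformly, independently of $\F$, that an \emph{arbitrary} compact open bisection $\F(i,j)$ decomposes into the finitely many basic bisections of a fixed level and that the resulting rewriting of a general $3$-cycle uses only boundedly many generators from $S$ (in number of generators, not in word length — the word length is allowed to depend on $\F$). Making the "realise a $3$-cycle supported on small sets as a commutator of two generators" lemma work requires, for each small clopen $C$, enough room in the orbit to find disjoint translates, and checking that expansiveness plus the $\ge 5$-points hypothesis provides this room uniformly is the delicate point; this is where I would invest most of the technical work, essentially reproving a quantitative refinement of the key lemma behind Theorem \ref{t-simple}.
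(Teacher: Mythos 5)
First, a point of order: the paper does not prove this theorem — it is imported verbatim from \cite{Nek-simple} (note the citation in the theorem header), so there is no in-paper argument to measure your proposal against; the relevant benchmark is Nekrashevych's proof in that reference. Your outline does follow the same broad strategy as that proof: use the expansive generating set to obtain a basis of ``basic'' bisections and cylinder partitions, reduce $\Af(\G)$ to $3$-cycles of multisections built from these basic pieces, and run a telescoping induction on the level $n$, with the hypothesis that every orbit has at least $5$ points entering through the existence of degree-$5$ multisections around any prescribed germ and through the perfectness of $\alt(5)$. (Two small slips: transpositions are odd, so there are no ``transposition-type elements of $\alt(d)$''; and a $3$-cycle is not a commutator inside $\alt(3)$, which is abelian — the commutator trick forces you to work in degree $5$, which is precisely why the hypothesis reads ``at least $5$ points''.)

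As written, however, the proposal has a genuine gap exactly at the step you flag as ``where I would invest most of the technical work'', and that step is the entire content of the theorem. Your family (b) is supposed to be a \emph{finite} list of auxiliary elements carrying level-$n$ data to level-$(n+1)$ data, and your recursion ``decreases the depth by one until one reaches $n_0$''. But the recursion must handle every level $n\ge n_0$, and there are infinitely many atoms and inter-level transitions; a literal finite list of conjugators cannot cover them all, and nothing in the proposal explains why finitely many suffice. What is actually needed — and what the argument in \cite{Nek-simple} supplies — is an inductive statement of the form: for every degree-$5$ multisection $\F$ at level $n+1$, the group $A(\F)$ lies in the subgroup generated by the groups $A(\F')$ for $\F'$ at level $n$ together with the fixed finite set at level $n_0$, where the conjugating elements used at stage $n+1$ are themselves produced by the induction hypothesis rather than listed in advance. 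Without an argument that the level-$(n+1)$-to-level-$n$ reduction only invokes group elements already known to lie in $\langle S\rangle$, the recursion is circular. So what you have is a correct plan of attack, consistent with the known proof in outline, but with the decisive lemma asserted rather than proved.
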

\begin{defin}[\cite{Nek-simple}] \label{d-expansive}
A compactly generated groupoid $\G$  over a Cantor space $X$ is said to be \textbf{expansive} if it admits a compact generating set $\mathcal{T}\subset \tG$  such that $\bigcup_{r=0}^\infty (\mathcal{T}\cup \mathcal{T}^{-1})^r$ is a basis of the topology of $\G$. Such a set is called an \textbf{expansive generating set} of $\tG$. 
\end{defin}
 The groupoid of germs of an action of a finitely generated group on a Cantor space is expansive if and only if the action is expansive, if and only if it is conjugate to a subshift, see \cite[Prop. 5.5]{Nek-simple}.

\subsection{General facts on multisections and the group $\Af(\G)$} 
We now list some useful elementary facts about multisections and the group $\Af(\G)$, that will be used in the sequel.  We use the following terminology.

\begin{defin}
A \textbf{multigerm} of degree $d$ of $\G$ is  an injective map $\gk\colon \{1,\ldots , d\}^2\to \G$ such that $\gk(i, i)\in X$ for every $i=1,\ldots, d$ and for every $i,j,\ell=1,\ldots d$ we have $\gk(i, j)\gk(j, \ell)=\gk(i, \ell)$.
\end{defin}
Note that it follows that $\src(\gk(i, j))=\gk(j, j)$ and $\rg(\gk(i, j))=\gk(i, i)$, and that $\gk(i, i), i=1\ldots , d$ all lie in the same orbit.

If $\gk$ is a multigerm and $\F$ a multisection of the same degree, we will write $\gk\in \F$ to mean that $\gk(i, j)\in \F(i, j)$ for every $i, j$.

\begin{lemma} \label{l-multigerm} 
Let $\gk$ be a degree $d$ multigerm.
\begin{enumerate}[label=(\roman*)]

\item  \label{i:multigerm1} Assume $x_{d+1},\ldots, x_{d'}$ for $d'>d$ are  units in the same orbit of $\gk(1,1)$ such that the points  $\gk(1,1),\ldots , \gk(d,d), x_{d+1}, \ldots , x_{d'}$ are pairwise distinct. Then there exists a degree $d'$ multigerm $\gk'$ such that $\gk'|_{\{1,\ldots, d\}^2}=\gk$  and $\gk'(i, i)=x_i$ for $i=d+1,\ldots d'$. 
\item \label{i:multigerm2}   Assume that $T_{i, j}\in \tG$ is a neighbourhood  of $\gk(i, j)$ for every $i, j=1\ldots d$. Then there exists a degree $d$ multisection $\F$ such that $\gk\in \F$ and $\F(i, j)\subset T_{i,j}$ for every $i,j$. \end{enumerate}
\end{lemma}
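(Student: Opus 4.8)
The plan is to prove both parts by a direct unwinding of the definitions, using the basic fact that in an \'etale groupoid the bisections form a basis of the topology and that source and range restrict to homeomorphisms on a bisection.

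\textbf{Part \ref{i:multigerm1}.} First I would fix, for each new index $i\in\{d+1,\ldots,d'\}$, the unique groupoid element connecting $\gk(1,1)$ to $x_i$. Concretely, since $x_i$ lies in the orbit of $\gk(1,1)$, there is some $\eta_i\in\G$ with $\src(\eta_i)=\gk(1,1)$ and $\rg(\eta_i)=x_i$; for notational convenience set $\eta_i=\gk(1,i)$ for $i\le d$ as well, and let $\eta_i=\gk(i,1)^{-1}$ so that $\src(\eta_i)=\gk(1,1)$, $\rg(\eta_i)=\gk(i,i)=x_i$ for all $i\le d'$ (here I am extending the notation with $x_i:=\gk(i,i)$ for $i\le d$). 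Then define $\gk'(i,j):=\eta_i\eta_j^{-1}$ for all $i,j\in\{1,\ldots,d'\}$. The cocycle identity $\gk'(i,j)\gk'(j,\ell)=\eta_i\eta_j^{-1}\eta_j\eta_\ell^{-1}=\eta_i\eta_\ell^{-1}=\gk'(i,\ell)$ is immediate, and $\gk'(i,i)=\eta_i\eta_i^{-1}=\rg(\eta_i)=x_i$. Injectivity of $\gk'$ follows because $\src(\gk'(i,j))=x_j$ and $\rg(\gk'(i,j))=x_i$, and the points $x_1,\ldots,x_{d'}$ are pairwise distinct by hypothesis, so distinct pairs $(i,j)$ give elements with distinct source or range. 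Finally, to check $\gk'|_{\{1,\ldots,d\}^2}=\gk$ one notes that for $i,j\le d$ the element $\eta_i\eta_j^{-1}$ has source $x_j=\gk(j,j)$ and range $x_i=\gk(i,i)$ and satisfies $\eta_i\eta_j^{-1}\cdot\gk(j,1)^{-1}$-type relations forcing $\eta_i\eta_j^{-1}=\gk(i,j)$; more directly, since $\eta_i=\gk(i,1)^{-1}=\gk(1,i)$ we get $\eta_i\eta_j^{-1}=\gk(1,i)\gk(1,j)^{-1}=\gk(1,i)\gk(j,1)=\gk(1,i)\gk(j,1)$, which equals $\gk(j,i)^{-1}$... so a cleaner choice is $\eta_i:=\gk(i,1)$ for $i\le d$, giving $\eta_i\eta_j^{-1}=\gk(i,1)\gk(1,j)=\gk(i,j)$ directly. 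I would settle the indexing bookkeeping carefully but it is entirely routine.

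\textbf{Part \ref{i:multigerm2}.} For each $i$, the element $\gk(i,i)\in X\subset\G$ is a point of the unit space, and the $T_{i,j}$ are open neighbourhoods of $\gk(i,j)$. The idea is to shrink the neighbourhoods so that all the required algebraic relations $\F(i,j)\F(j,\ell)=\F(i,\ell)$ hold. I would proceed as follows: first choose clopen neighbourhoods $A_i\subset T_{i,i}\cap X$ of the point $\gk(i,i)$, pairwise disjoint (possible since $X$ is Hausdorff and totally disconnected and the $\gk(i,i)$ are distinct). Next, using that $\gk(i,1)\in T_{i,1}$ and $\src|_{T_{i,1}}$, $\rg|_{T_{i,1}}$ are homeomorphisms onto open sets, I would replace $T_{i,1}$ by a smaller clopen bisection $F_i$ with $\gk(i,1)\in F_i\subset T_{i,1}$, $\src(F_i)\subset A_1$ and $\rg(F_i)\subset A_i$; by shrinking $A_1$ first and then the others to $\rg(F_i)$ (intersecting over the finitely many $i$) I can further arrange $\src(F_i)=A_1$ for all $i$ and $\rg(F_i)=A_i$, and take $F_1=A_1$ (the identity bisection on $A_1$). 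Now define $\F(i,j):=F_i F_j^{-1}$. One checks $\src(\F(i,j))=\src(F_j^{-1})=\rg(F_j)=A_j$, $\rg(\F(i,j))=A_i$, $\F(i,i)=F_iF_i^{-1}=A_i$ is compact open, the $A_i$ are pairwise disjoint, and $\F(i,j)\F(j,\ell)=F_iF_j^{-1}F_jF_\ell^{-1}=F_iF_\ell^{-1}=\F(i,\ell)$ since $F_j^{-1}F_j=\src(F_j)^{-1}$... actually $F_j^{-1}F_j=A_1$ and composing with $F_\ell$ whose source is $A_1$ works out, giving $\F(i,\ell)$. So $\F$ is a multisection. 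It remains to verify $\F(i,j)\subset T_{i,j}$ and $\gk\in\F$. The latter is clear: $\gk(i,1)\in F_i$ and $\gk(1,j)=\gk(j,1)^{-1}\in F_j^{-1}$ with matching source/range, so $\gk(i,j)=\gk(i,1)\gk(1,j)\in F_iF_j^{-1}=\F(i,j)$. For $\F(i,j)\subset T_{i,j}$: this is \emph{not} automatic from $F_i\subset T_{i,1}$, $F_j\subset T_{j,1}$, so here is where I must shrink once more. The composition map $\G^{*2}\to\G$ is continuous, so there is a neighbourhood of the composable pair $(\gk(i,1),\gk(1,j))$ mapping into $T_{i,j}$; intersecting the already-chosen $F_i$, $F_j$ with suitable smaller bisections achieves this. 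Doing this for all finitely many pairs $(i,j)$ simultaneously — re-shrinking the $F_i$ finitely many times, each time preserving that $\gk(i,1)\in F_i$ and that source/range are as prescribed (re-intersecting the $A_k$ as needed) — produces the final multisection.

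\textbf{Main obstacle.} The genuine subtlety is in part \ref{i:multigerm2}: the naive choice $\F(i,j)=F_iF_j^{-1}$ controls $\F(i,i)$ and the algebraic relations effortlessly, but guaranteeing $\F(i,j)\subset T_{i,j}$ for the \emph{off-diagonal} pairs requires using continuity of the groupoid multiplication and a simultaneous finite shrinking of all the $F_i$, with the bookkeeping that each shrink must keep the source/range sets coherent (all equal to the common $A_1$ on one side). This is the step I would write out most carefully; everything else is a mechanical application of the \'etale axioms and the cocycle identity.
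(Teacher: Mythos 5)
Your proof is correct and follows essentially the same route as the paper's: part \ref{i:multigerm1} is the identical construction (choose the elements joining $\gk(1,1)$ to the new units and define the remaining entries as products), and part \ref{i:multigerm2} likewise restricts the first-column bisections to a small common source neighbourhood of $\gk(1,1)$ and defines the off-diagonal entries as products, with your appeal to continuity of the groupoid multiplication playing the role of the paper's observation that these products form a neighbourhood basis of $\gk(i,j)$. The only cosmetic adjustment is to take the diagonal sets $A_i$ compact open rather than merely clopen, since the standing assumption of the section allows $X$ to be a locally compact, non-compact Cantor space.
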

Part \ref{i:multigerm2} allows as to talk about ``sufficiently small'' multisections containing $\gk$, and we will sometimes do so without mention.

\begin{proof}
To prove (i),  set set $\gk'(i, j)=\gk(i, j)$ for $1\le i, j\le d$. Choose arbitrarily for  every $d+1 \le j\le d'$ an element $\gk'(j, 1)\in \G$ such that $\src(\gk'(j, 1))=\gk(1, 1)$ and $\rg(\gk'(j, 1))=x_j$, and set $\gk'(1, j)=\gk'(j, 1)$. In the remaining cases, let $\gk'(i, j)=\gk'(i,  1)\gk'(1, j)$. 

Let us prove (ii).  First, for every $i, j$ we can assume that $T_{i, j}=T_{j, i}^{-1}$, upon  replacing all the $T_{i, j}$  by $T_{i, j}\cap T^{-1}_{j, i}$. Choose a decreasing basis $(W_n)$ of compact open neighbourhoods of $\gk(1, 1)$ in $X$. If $n$ is large enough, we can assume $W_n\subset T_{1, 1}$ , that  $W_n\subset \src(T_{j, 1})$ for every $j=2, \ldots, d$, and that $W_n, \tau(T_{2,1})(W_n), \ldots, \tau(T_{d, 1})( W_n)$ are pairwise disjoint (recall that $\tau(T)$ denotes the homeomorphism associated to a bisection).  Define a sequence of multisections $\F_n$ by setting $\F_n(j, 1)=T_{j, 1}W_n$ and $\F_n(1, j)=\F_n(j, 1)^{-1}$ for $j=1, \ldots d$ and $\F_n(i, j)= \F_n(i, 1)\F_n(1, j)$ in the remaining cases. Then for every $i, j$ the sequence of bisections $\F_n(i, j)$ is a basis of neighbourhoods of $\gk(i, j)$, and it follows that $\F_n(i, j)\subset T_{i, j}$ for all $i, j$ if $n$ is large enough.  \qedhere
\end{proof}

\begin{lemma}
Assume that every $\G$-orbit contains at least $d$ points for some $d\geq 2$. Then for every compact open  bisection $T\in \tG$ such that $\src(T)\cap \rg(T)=\varnothing$, one can find finitely many degree $d$ multisections $\F_1, \ldots ,\F_n$ such that $T=\cup_{i=1}^n \F_i(1, 2)$. 
\end{lemma}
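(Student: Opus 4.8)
The plan is to produce, for every point $y$ in the compact open set $\src(T)$, a single degree $d$ multisection $\F^{y}$ with $y\in\src(\F^{y}(1,2))$ and $\F^{y}(1,2)\subset T$, and then to cover $\src(T)$ by finitely many of the sets $\src(\F^{y}(1,2))$ using compactness.

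First I would note that, since $\src(T)\cap\rg(T)=\varnothing$, for each $y\in\src(T)$ the germ $\gamma_y:=[T]_y\in T$ has distinct source $y$ and range $\tau(T)(y)$, so it defines a degree $2$ multigerm with $(2,2)$-entry $y$, $(1,1)$-entry $\tau(T)(y)$ and $(1,2)$-entry $\gamma_y$. Using the hypothesis that the orbit of $y$ has at least $d$ points, I would then choose $d-2$ further points of that orbit, pairwise distinct and distinct from $y$ and from $\tau(T)(y)$, and invoke Lemma~\ref{l-multigerm}~\ref{i:multigerm1} to extend this to a degree $d$ multigerm $\gk$ (when $d=2$ the extension step is vacuous). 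Since $T$ is an open neighbourhood of $\gamma_y=\gk(1,2)$ in $\G$, Lemma~\ref{l-multigerm}~\ref{i:multigerm2}, applied with $T_{1,2}=T$ and arbitrarily small neighbourhoods of the remaining entries $\gk(i,j)$, then yields a degree $d$ multisection $\F^{y}$ with $\gk\in\F^{y}$ and $\F^{y}(1,2)\subset T$; in particular $\src(\F^{y}(1,2))=\F^{y}(2,2)$ is a clopen neighbourhood of $y$ contained in $\src(T)$.

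Next, the family $\{\src(\F^{y}(1,2))\}_{y\in\src(T)}$ is an open cover of the compact set $\src(T)$, so finitely many of them, say arising from points $y_1,\dots,y_n$, already cover $\src(T)$; I set $\F_i=\F^{y_i}$. Finally I would verify that $T=\bigcup_{i=1}^{n}\F_i(1,2)$: the inclusion $\bigcup_i\F_i(1,2)\subset T$ is immediate since each $\F_i(1,2)\subset T$, and for the reverse inclusion, given $\gamma\in T$ with $x:=\src(\gamma)\in\src(T)$, the point $x$ lies in some $\src(\F_i(1,2))$, so $\F_i(1,2)$ contains a germ over $x$, which lies in $T$ because $\F_i(1,2)\subset T$; since $T$ is a bisection it has a unique germ over $x$, namely $\gamma$, whence $\gamma\in\F_i(1,2)$.

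I do not expect a serious obstacle: the argument is essentially a localisation-plus-compactness routine built on Lemma~\ref{l-multigerm}. The only points needing a little care are that the auxiliary orbit points can actually be chosen — this is precisely where the ``at least $d$ points per orbit'' hypothesis is used, and where the degenerate case $d=2$ must be treated vacuously — and the final bisection bookkeeping, which ensures that covering $\src(T)$ by the sources of the $\F_i(1,2)$ suffices to recover $T$ itself exactly.
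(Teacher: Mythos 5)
Your proposal is correct and follows essentially the same route as the paper's (much terser) proof: build, for each germ of $T$, a degree-$d$ multigerm via Lemma~\ref{l-multigerm}, enclose it in a small multisection whose $(1,2)$-entry sits inside $T$, and conclude by compactness; your parametrisation by $y\in\src(T)$ rather than by $\gamma\in T$ is an immaterial difference since $T$ is a bisection. The extra care you take with the degenerate case $d=2$ and with the final verification that the union recovers $T$ exactly is sound and simply fills in details the paper leaves implicit.
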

\begin{proof}
Using Lemma \ref{l-multigerm} and the fact that every $\G$ orbit has at least $d$ points, or every $\gamma\in T$, we can find a degree $d$ multigerm $\gk$ such that $\gk(1, 2)=\gamma$, and a degree $d$ multisection $\F\ni \gk$ such that $\F(1, 2)\subset T$. The conclusion follows by compactness.
\end{proof}

The next proposition is \cite[Proposition 3.2]{Nek-simple}. If $\F, \vec{T}$ are multisections of the same degree, we write $\F\subset \vec{T}$ if  $\F(i, j)\subset \vec{T}(i,j)$ for every $i, j$.

\begin{prop}\label{p-nekr-cover}
Let $\F$ be a degree $d$ multisection. Let $\F_\ell\subset \F$ for  $\ell=1,\ldots n$ be multisections such that $\F(i, j)=\bigcup_{\ell=1}^n \F_\ell(i, j)$ for every $i, j\in \{1, \ldots, d\}$. Then $A(\F)$ is contained in the subgroup generated by $A(\F_1)\cup \cdots \cup A(\F_n)$. 
\end{prop}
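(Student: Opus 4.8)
The statement (Proposition~\ref{p-nekr-cover}, quoted from \cite[Proposition 3.2]{Nek-simple}) says: if a degree $d$ multisection $\F$ is covered componentwise by sub-multisections $\F_1,\dots,\F_n$, then $\Af(\F)\le\langle \Af(\F_1),\dots,\Af(\F_n)\rangle$. The plan is to reduce everything to two purely combinatorial facts: first, that $\Af(\F)$ is generated by $3$-cycles of the form $\F((i\,j\,k))$ with $i,j,k$ distinct indices in $\{1,\dots,d\}$; and second, that a $3$-cycle supported on a clopen set that is partitioned into finitely many pieces can be written as a product of $3$-cycles each supported on (a union of three translates of) one of those pieces. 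So the first step is to recall/record that $\alt(d)$ is generated by $3$-cycles, hence $\Af(\F)=\langle \F((i\,j\,k)) : \{i,j,k\}\subset\{1,\dots,d\}\rangle$; thus it suffices to show each such $\F((i\,j\,k))$ lies in the subgroup $H:=\langle \Af(\F_1)\cup\cdots\cup \Af(\F_n)\rangle$.

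Fix distinct $i,j,k$ and consider the $3$-cycle $g=\F((i\,j\,k))$, which permutes cyclically the three clopen sets $\F(i,i),\F(j,j),\F(k,k)$ via the bisections $\F(j,i),\F(k,j),\F(i,k)$ (and is the identity elsewhere). The idea is to refine the partition $\F(i,i)=\bigcup_\ell \F_\ell(i,i)$ into a common clopen partition $\F(i,i)=\bigsqcup_{m} D_m$ so that, transporting by the structure bisections, for each $m$ there is an index $\ell=\ell(m)$ with $D_m\subset \F_\ell(i,i)$, and correspondingly $\F(j,i)D_m\subset \F_\ell(j,j)$ and $\F(k,j)\F(j,i)D_m\subset \F_\ell(k,k)$ (here I use that each $\F_\ell$ is itself a multisection, so its components are matched up by its own structure bisections, and that the $\F_\ell(i,i)$ cover $\F(i,i)$; I may need to intersect the pullbacks of all the $\F_\ell(i,i)$, $\F_\ell(j,j)$, $\F_\ell(k,k)$ under the appropriate bisections of $\F$ to get a genuine common refinement — this is the one slightly fussy bookkeeping point). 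Then $g$ decomposes as the (commuting, disjointly supported) product $g=\prod_m g_m$, where $g_m$ is the $3$-cycle on $D_m, \F(j,i)D_m, \F(k,j)\F(j,i)D_m$. By construction $g_m$ is exactly the image of $(i\,j\,k)$ under the restricted multisection $\F_{\ell(m)}$ cut down to the component $D_m$ — formally, $\F_{\ell(m)}$ restricted to the domain $D_m\cup \F_{\ell(m)}(j,i)D_m\cup\F_{\ell(m)}(k,j)\F_{\ell(m)}(j,i)D_m$ is a degree $3$ sub-multisection of $\F_{\ell(m)}$, whose alternating group lies in $\Af(\F_{\ell(m)})$ by Definition~\ref{def-alt} (a sub-multisection of a multisection gives an element of the alternating full group). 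Hence each $g_m\in H$, so $g\in H$, completing the argument.

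The main obstacle I anticipate is purely notational rather than conceptual: one must set up the common refinement of the clopen sets carefully so that the three components of the domain of each piece $g_m$ really are the images of a single $D_m$ under the structure bisections of a \emph{single} $\F_\ell$, and check that cutting a multisection down to a clopen sub-domain again yields a multisection (so that Definition~\ref{def-alt} applies). This needs a small use of Lemma~\ref{l-multigerm}~\ref{i:multigerm2}-style reasoning, or more simply the observation that if $\F_\ell$ is a multisection and $D$ is a compact open subset of $\F_\ell(1,1)$ then $(i,j)\mapsto \F_\ell(i,1)D\F_\ell(1,j)$ is again a multisection. Once this is in place the decomposition $g=\prod_m g_m$ is immediate and the only thing to verify is disjointness of supports, which follows because the $D_m$ are disjoint and the bisections $\F(j,i),\F(k,j)$ are injective. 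I would also remark that one should first replace each $\F_\ell$ by $\F_\ell$ itself restricted to the parts of its domain lying over $\F(i,i)\cup\F(j,j)\cup\F(k,k)$ to avoid worrying about extra components; this is harmless since deleting components of a multisection only shrinks its alternating group.
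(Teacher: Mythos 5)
The reductions at the start of your argument are sound: $\Af(\F)$ is generated by the $3$-cycles $\F((i\,j\,k))$; a clopen partition $\F(i,i)=\bigsqcup_m D_m$ refining the cover $\{\F_\ell(i,i)\}_\ell$ exists by compactness; each $D_m\subset\F_{\ell(m)}(i,i)$ is automatically carried into $\F_{\ell(m)}(j,j)$ and $\F_{\ell(m)}(k,k)$ by the structure bisections (no extra pullback bookkeeping is needed, since $\F_{\ell}(j,i)\subset\F(j,i)$ forces $\F(j,i)D_m=\F_{\ell}(j,i)D_m$); and $\F((i\,j\,k))=\prod_m g_m$ with the $g_m$ disjointly supported. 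The fatal step is the last one. You assert that $g_m\in\Af(\F_{\ell(m)})$ because $g_m$ is the $3$-cycle of a sub-multisection of $\F_{\ell(m)}$, ``by Definition \ref{def-alt}''. But $\Af(\F_{\ell(m)})$ is, by definition, the \emph{finite} group $\F_{\ell(m)}(\alt(d))\cong\alt(d)$, whose non-trivial elements are supported on unions of entire components $\F_{\ell(m)}(i,i)$. The element $g_m$ is supported on the translates of $D_m$, in general a proper clopen subset of $\F_{\ell(m)}(i,i)$, so $g_m\notin\Af(\F_{\ell(m)})$. What Definition \ref{def-alt} actually gives is only $g_m\in\Af(\G)$; you have conflated the alternating full group of the groupoid with the copy of $\alt(d)$ attached to one multisection.

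This gap cannot be closed along the lines you propose, because for overlapping covers the asserted inclusion genuinely requires more than a disjoint decomposition. Take $d=3$, $n=2$, and write $\F(1,1)=A\sqcup B\sqcup C$ with $\F_1(1,1)=A\sqcup B$ and $\F_2(1,1)=B\sqcup C$, all three pieces non-empty and all bisections restrictions of those of $\F$. The sets $\bigcup_i\F(i,1)A$, $\bigcup_i\F(i,1)B$, $\bigcup_i\F(i,1)C$ are invariant under $\langle\Af(\F_1),\Af(\F_2)\rangle$, and restriction to them defines a homomorphism to $(\Z/3\Z)^3$ sending $\F_1((123))\mapsto(1,1,0)$ and $\F_2((123))\mapsto(0,1,1)$; the image is $\{(s,s+t,t)\}$, which misses $(1,1,1)$, the point where $\F((123))$ would have to land. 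So $\Af(\F)\not\le\langle\Af(\F_1),\Af(\F_2)\rangle$ in this configuration, and the same abelianization obstruction occurs for $d=4$. Any correct argument must therefore either work with a cover whose domains $\F_\ell(i,i)$ are pairwise disjoint from the outset (in which case $\F(\sigma)=\prod_\ell\F_\ell(\sigma)$ directly and no refinement of the individual $\F_\ell$ is needed), or, for genuinely overlapping covers, extract the missing elements from commutators between the various $\Af(\F_\ell)$, which uses perfectness and simplicity of $\alt(d)$ and hence $d\ge 5$. Your reduction to $3$-cycles steers the proof into exactly the regime where neither mechanism is available; you should either add the disjointness hypothesis (which is what your decomposition secretly uses) or restrict to $d\ge 5$ and run a commutator argument, and in either case the containment $\Af(\F'_m)\le\Af(\F_{\ell(m)})$ must be abandoned. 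Note that the paper offers no proof of its own here, only the citation to \cite{Nek-simple}, so the comparison can only be with the statement itself.
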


\begin{lemma}\label{l-proximal}
If $\G$ is minimal, the action of ${\Af(\G)}$ on $X$ is proximal.
\end{lemma}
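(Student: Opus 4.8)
If $x=y$ the statement is trivial (take the constant net at the identity), so assume $x\neq y$. Fix any point $z_0\in X$ and let $\mathcal{N}$ be the collection of clopen neighbourhoods of $z_0$, directed by reverse inclusion. The goal is to attach to every $W\in\mathcal{N}$ an element $g_W\in\Af(\G)$ with $g_W x\in W$ and $g_W y\in W$; the net $(g_W)_{W\in\mathcal{N}}$ then has $g_W x\to z_0$ and $g_W y\to z_0$, which is exactly proximality with common limit $z_0$.

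To build $g_W$ I would first use minimality. Since $\G$ is minimal and $X$ is a (locally compact) Cantor space, every orbit is infinite (a finite dense subset of a Hausdorff space forces the space to be finite) and $W$, being a non-empty clopen subset of $X$, is perfect, hence infinite. Because the orbits of $x$ and of $y$ are dense they meet $W$ in infinitely many points, so I can choose $z_1$ in the orbit of $x$ and $z_2$ in the orbit of $y$, both lying in $W$, together with auxiliary points $w$ in the orbit of $x$ and $w'$ in the orbit of $y$, arranged so that $x,y,z_1,z_2,w,w'$ are pairwise distinct. Pick $\gamma\in\G$ with $\src(\gamma)=x$, $\rg(\gamma)=z_1$ and $\delta\in\G$ with $\src(\delta)=y$, $\rg(\delta)=z_2$. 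The degree-$2$ multigerm determined by $\gamma$ (which is genuinely injective since $x\neq z_1$) extends, by part \ref{i:multigerm1} of Lemma~\ref{l-multigerm} applied with the third point $w$, to a degree-$3$ multigerm $\gk$ with $\gk(2,2)=x$, $\gk(1,1)=z_1$, $\gk(3,3)=w$ and $\gk(1,2)=\gamma$. By part \ref{i:multigerm2} of Lemma~\ref{l-multigerm} there is a degree-$3$ multisection $\F\ni\gk$ whose domain components $\F(1,1),\F(2,2),\F(3,3)$ lie inside prescribed small clopen neighbourhoods of $z_1,x,w$; I choose these small enough to avoid the finitely many points $y,z_2,w'$. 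Put $g_1:=\F((1\,3\,2))\in\Af(\F)\le\Af(\G)$. From the formula $\F(\sigma)=\bigl(\bigcup_i\F(\sigma(i),i)\bigr)\sqcup\bigl(X\setminus\bigcup_i\F(i,i)\bigr)$ one reads off that on $\F(2,2)\ni x$ the element $g_1$ acts through $\F(\sigma(2),2)=\F(1,2)\ni\gamma$, so $g_1 x=z_1$, while $g_1 y=y$ because $y$ lies outside the support of $g_1$. Symmetrically, using $\delta$, the third point $w'$, and domain components small clopen neighbourhoods of $z_2,y,w'$ avoiding $z_1,x,w$, build a degree-$3$ multisection $\F'$ and set $g_2:=\F'((1\,3\,2))\in\Af(\G)$, so that $g_2 y=z_2$ and $g_2 z_1=z_1$.

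Finally set $g_W:=g_2g_1\in\Af(\G)$. Then $g_W x=g_2(g_1 x)=g_2 z_1=z_1\in W$ and $g_W y=g_2(g_1 y)=g_2 y=z_2\in W$, which completes the construction and hence the proof. The only real work is the bookkeeping in the middle paragraph: keeping the six points pairwise distinct and the two multisection domains small enough that $g_1$ fixes $y$ and $g_2$ fixes $z_1$, and checking the orientation of the chosen $3$-cycle so that it acts by $\gamma$ (resp.\ $\delta$) on the correct component. The genuine input --- that an arbitrary germ with distinct source and range can be realised inside $\Af(\G)$ --- is supplied verbatim by minimality together with Lemma~\ref{l-multigerm}, so no deeper difficulty arises.
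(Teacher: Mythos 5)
Your proof is correct and follows essentially the same route as the paper: both arguments use minimality together with Lemma~\ref{l-multigerm} to produce two degree-$3$ multisections with pairwise disjoint domain components, one carrying $x$ and one carrying $y$ into the target neighbourhood, and then take the product of the corresponding $3$-cycles in $\Af(\G)$. Your version merely spells out the disjointness bookkeeping that the paper compresses into the phrase ``for any sufficiently small multisections''.
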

\begin{proof}
Let $x, y\in X$ and $U$ be an open subset. Using Lemma \ref{l-multigerm} we can find multigerms $\gk, \vec{\delta}$ of degree 3 such that $\gk(1, 1)=x, \vec{\delta}(1, 1)=y$ and $\gk(2, 2), \vec{\delta}(2, 2)\in U$ and the points $\gk(i, i),\vec{\delta}(j, j), i,j=1,\ldots 3$ are pairwise disjoint. For any sufficiently small multisections $\F\ni \gk$ and $\Tk\ni \vec{\delta}$ the element $g=\F((123))\Tk((123))\in {\Af(\G)}$ is such that $g(\{x, y\})\in U$. \qedhere \end{proof}
By a well-known consequence of proximality  \cite[Lemma 3.3]{Gla-book}, we deduce:
\begin{lemma}\label{l-trivial-centraliser}
If $\G$ is minimal, the only continuous $\Af(\G)$ equivariant map $X\to X$ is the identity. In particular $\Af(\G)$ has trivial centraliser in $\Ff(\G))$.
\end{lemma}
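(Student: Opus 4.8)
The plan is to derive this from minimality together with Lemma~\ref{l-proximal}, exploiting in the standard way the rigidity forced by proximality; this is essentially the content of \cite[Lemma~3.3]{Gla-book}.

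Let $q\colon X\to X$ be a continuous $\Af(\G)$-equivariant map. First I would show that its fixed-point set $\fix(q)=\{x\in X\colon q(x)=x\}$ is non-empty. Fix $x\in X$ and put $y=q(x)$. By Lemma~\ref{l-proximal} there are a net $(g_i)$ in $\Af(\G)$ and a point $z\in X$ with $g_ix\to z$ and $g_iy\to z$. Continuity of $q$ gives $q(g_ix)\to q(z)$, while equivariance gives $q(g_ix)=g_iq(x)=g_iy\to z$; since $X$ is Hausdorff this yields $q(z)=z$, so $z\in\fix(q)$.

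Next I would note that $\fix(q)$ is closed (it is the preimage of the diagonal $\Delta_X\subset X\times X$ under the continuous map $x\mapsto(x,q(x))$, and $\Delta_X$ is closed since $X$ is Hausdorff) and that it is $\Af(\G)$-invariant, because $q(x)=x$ implies $q(gx)=gq(x)=gx$ for every $g\in\Af(\G)$. On the other hand, the orbits of $\Af(\G)$ on $X$ are dense: they coincide with the $\G$-orbits, which are dense by minimality. Indeed, $X$ has no isolated points, so every $\G$-orbit is infinite, and given two points of one such orbit, a bisection carrying one to the other together with a third point of the orbit produces a $3$-cycle in $\Af(\G)$ realising the same motion. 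Hence a non-empty closed $\Af(\G)$-invariant subset of $X$ must be all of $X$, so $\fix(q)=X$, i.e.\ $q=\operatorname{id}_X$.

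For the last assertion: if $h\in\Ff(\G)$ centralises $\Af(\G)$, then as a homeomorphism of $X$ it satisfies $h\circ g=g\circ h$ for all $g\in\Af(\G)$, i.e.\ $h$ is an $\Af(\G)$-equivariant homeomorphism of $X$; by the first part $h$ acts as $\operatorname{id}_X$, and since $\G$ is a groupoid of germs (effective) this forces $h=1$ in $\Ff(\G)$. There is no genuine obstacle here once Lemma~\ref{l-proximal} is available; the only point deserving a line of justification is the density of $\Af(\G)$-orbits, which is what turns minimality of $\G$ into the statement that the only non-empty closed $\Af(\G)$-invariant subset of $X$ is $X$ itself.
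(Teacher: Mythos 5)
Your proof is correct and follows the same route as the paper: the paper simply invokes Lemma \ref{l-proximal} together with the cited result \cite[Lemma 3.3]{Gla-book}, and your argument is exactly the standard proof of that cited lemma (proximality gives a fixed point of the equivariant map, minimality of the $\Af(\G)$-action propagates it to all of $X$), followed by the same deduction for the centraliser. The only content you add beyond the paper is the explicit justification that $\Af(\G)$-orbits are dense, which is a correct and worthwhile line.
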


\begin{lemma}\label{l-qi-graph}
Assume that $\G$ is a compactly generated \'etale groupoid over a Cantor space $X$ with no orbit of size less than 3. Let $\T\subset \tG$ be a finite generating set of compact open bisections. 
Then there exists a finitely generated subgroup $G\le {\Af(\G)}$ such that for every $x\in X$  the orbital graph $\orb_x(\G, \T)$, is bi-Lipschitz equivalent to the the orbital graph  $\sch_x(G, S)$ of the action $G\acts X$, where $S$ is any finite symmetric generating set  of $G$. Moreover,   the constants in the bi-Lipschitz equivalence are independent of $x\in X$. 
\end{lemma}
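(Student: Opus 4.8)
\medskip

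The plan is to reduce the statement to the construction of one convenient finite generating set of $\G$. First I record the elementary observation that, for a compactly generated groupoid over a Cantor space, any two finite generating sets $\T,\T'$ of compact open bisections yield orbital graphs $\orb_x(\G,\T)$ and $\orb_x(\G,\T')$ that are bi-Lipschitz equivalent with constants independent of $x$: both have vertex set the $\G$-orbit of $x$; since products of elements of $\T\cup\T^{-1}$ cover $\G$ and each $T'\in\T'$ is compact, each $T'\in\T'$ is a finite union of (source-restrictions of) products of at most $N$ elements of $\T\cup\T^{-1}$, so every edge of $\orb_x(\G,\T')$ becomes a path of length $\le N$ in $\orb_x(\G,\T)$, and symmetrically. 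Hence it suffices to exhibit \emph{some} finite generating set of $\G$ of the form $\T'=\{\,\F_k(i,j)\;:\;1\le k\le m,\ i\neq j\in\{1,2,3\}\,\}$ built from finitely many degree-$3$ multisections $\F_1,\dots,\F_m$.

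Given such $\F_1,\dots,\F_m$, set $G=\langle\,\F_k((123)),\ \F_k((132))\;:\;1\le k\le m\,\rangle\le\Af(\G)$ and let $S$ be this (symmetric) set of $3$-cycles. For any point $z$, applying $\F_k((123))$ either fixes $z$ (when $z$ lies outside the domain $\bigcup_i\F_k(i,i)$) or moves it along one of the bisections $\F_k(2,1),\F_k(3,2),\F_k(1,3)$, i.e. performs exactly one step of $\orb_x(\G,\T')$; likewise $\F_k((132))$ realizes the three bisections $\F_k(1,2),\F_k(2,3),\F_k(3,1)$. As the components $\F_k(i,i)$ are pairwise disjoint, the non-loop edges of $\sch_x(G,S)$ out of $z$ coincide with the edges of $\orb_x(\G,\T')$ out of $z$; therefore $\sch_x(G,S)$ and $\orb_x(\G,\T')$ have the same edge sets up to loops, so the identity on the orbit of $x$ is an isometry between them, uniformly in $x$. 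Moreover the $G$-orbit of $x$ equals the $\langle\tau(\T')\rangle$-orbit of $x$, which equals the $\G$-orbit of $x$ since $\T'$ generates $\G$ (and it is contained in the $\G$-orbit since $G\le\Af(\G)$). Combining with the first paragraph gives the lemma.

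To build the $\F_k$: let $\T=\{T_1,\dots,T_r\}$ be the given generating set, so $K:=\bigcup_iT_i$ is compact. Since $\G$ is minimal over a Cantor space, every orbit is infinite. For each $\gamma\in K$ with $\src(\gamma)\neq\rg(\gamma)$, choose a point $w$ in the orbit of $\src(\gamma)$ different from $\src(\gamma),\rg(\gamma)$, extend the assignment $(2,1)\mapsto\gamma$ to a degree-$3$ multigerm using Lemma~\ref{l-multigerm}(i), and then to a small degree-$3$ multisection $\F$ with $\gamma\in\F(2,1)$ using Lemma~\ref{l-multigerm}(ii). For $\gamma\in K$ with $\src(\gamma)=\rg(\gamma)$, first factor $\gamma=\eta_1\eta_2$ with $\eta_1,\eta_2$ having distinct source and range (routing through a third orbit point) and treat each factor as above. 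The open sets arising as $\F(i,j)$ then cover a neighbourhood of each germ of each $T_i$; extracting a finite subcover and keeping the multisections appearing in it, we obtain $\F_1,\dots,\F_m$ whose off-diagonal bisections $\T'=\{\F_k(i,j)\}$ cover (hence, as their products form a basis, generate) the groupoid $\G$.

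I expect the main obstacle to be the compactness step in the previous paragraph: the set $\{\gamma\in T_i:\src(\gamma)\neq\rg(\gamma)\}$ is open in $T_i$ but in general not closed, since germs with distinct yet arbitrarily close source and range may accumulate onto a germ at a fixed point of $\tau(T_i)$, so a naive finite subcover need not exist. To handle this I will fix a finite clopen partition $\mathcal{P}=\{P_1,\dots,P_s\}$ of $X$ and argue by cases on the parts containing $\src(\gamma)$ and $\rg(\gamma)$: for each ordered pair of distinct parts, the set of germs of the $T_i$'s whose source lies in one part and range in the other is \emph{clopen} in $T_i$ (being cut out by preimages of clopen sets), hence compact, so finitely many multisections cover it; a germ of $T_i$ with source and range in the same part --- including an isotropy germ --- is then written as a product of two such ``cross-part'' germs by routing through a point of a third part, which exists because orbits are dense. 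This keeps $\{\F_k\}$ finite while guaranteeing that $\T'$ generates $\G$, and finishes the argument; the remaining verifications (continuity and the multisection axioms for the $\F$'s produced by Lemma~\ref{l-multigerm}, and the bookkeeping of the routing factorizations) are routine.
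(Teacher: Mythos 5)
Your overall strategy is the same as the paper's: manufacture a finite generating set of $\G$ out of the off-diagonal entries of finitely many degree-$3$ multisections, let $G$ be generated by the associated $3$-cycles, and compare the two graphs. Your first two paragraphs (the uniform bi-Lipschitz independence of the choice of generating set, and the edge-by-edge identification of $\sch_x(G,S)$ with $\orb_x(\G,\T')$ up to loops) are correct, and in fact slightly sharper than what the paper records. The gap is in the third and fourth paragraphs, where you construct the multisections: twice you invoke minimality of $\G$ (``Since $\G$ is minimal\ldots every orbit is infinite'', and ``routing through a point of a third part, which exists because orbits are dense''), but the lemma assumes only that no orbit has fewer than $3$ points, not that orbits are dense. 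The first occurrence is harmless (three orbit points suffice there), but the second is essential to your repair of the compactness problem, and without density it fails: the orbit of $\src(\gamma)$ need not meet any part of your fixed finite partition other than the one containing $\src(\gamma)$ (isotropy germs can accumulate at a point $x$ along points whose entire finite orbits converge to $x$, so every part of any prescribed finite partition eventually contains those orbits outright). A single global partition cannot be adapted to all orbits simultaneously; the repair has to be local. Moreover, even granting density, your routing factors are no longer germs of the original $T_i$'s, so the clopen ``cross-part'' covering argument does not apply to them as stated — that deferred ``bookkeeping'' is where the real work sits.

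The paper sidesteps all of this in one line by citing \cite[Lemma 5.1]{Nek-simple}: a compactly generated groupoid over a Cantor space with no orbit of size $<2$ admits a finite generating set of compact open bisections $T$ with $\src(T)\cap\rg(T)=\varnothing$, after which each $T$ is covered by sets $\F(1,2)$ of degree-$3$ multisections exactly as in your second paragraph. If you want to keep your self-contained route, replace the partition by the local argument underlying that lemma: for each $\gamma\in T$ with $\src(\gamma)=\rg(\gamma)=x$, use $|\G x|\ge 3$ to pick $w\neq x$ in the orbit and a small compact open bisection $R$ with $x\in\src(R)$, $\rg(R)$ a small neighbourhood of $w$ disjoint from $\src(R)$ and from $\tau(T)(\src(R))$, and factor $T$ near $x$ as $(TR^{-1})\cdot R$, both factors having disjoint source and range; the set of germs of $T$ with $\src=\rg$ is closed in the compact set $T$, so finitely many such local factorizations, together with a finite subcover of the remaining compact set of germs whose source and range are already separated, produce the required finite family of multisections. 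With that substitution your proof is complete.
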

Note that if $\G$ is expansive, one can simply choose $G=\Af(\G)$.
\begin{proof}
 We can assume that the  generating set $\T=\{T_1,\ldots, T_d\}$ is such that $\rg(T_i)\cap \src(T_i)=\varnothing$ for every $i=1\ldots d$, see \cite[Lemma 5.1]{Nek-simple}.   Using Lemma \ref{l-multigerm}, choose for every $i=1, \ldots, d$  degree 3 multisections $\F_{i, 1},\ldots \F_{i, {k_i}}$ such that the  sets $\F_{i,j}(1,2), j=1\ldots k_i$ cover $T_i$. Let $G$ be the group generated by $S= \bigcup_{i=1}^d\bigcup_{j=1}^{k_i} \Af(\F_{i, j})$. By construction every $y, z$ in the $\G$-orbit of $x$ that are neighbours in $\orb_x(\G, \T)$ are also neighbours in $\sch_x(G, S)$. Conversely it is easy to see that the identity map on vertices $\sch_x(G, S)\to \orb_x(\G, \T)$ is Lipschitz (for any finitely generated subgroup $G<{\Af(\G)}$ and any finite generating set $S$). 
 \qedhere \end{proof}
For later use we observe the following (the definition of asymptotic dimension was recalled in \S \ref{s-coarse}).

\begin{prop}\label{p-asdim-alt}
For every minimal compactly generated \'etale groupoid $\G$ over a Cantor space, the group $\Af(\G)$ has infinite asymptotic dimension.
\end{prop}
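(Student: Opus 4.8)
The plan is to show that $\Af(\G)$ contains a sequence of finite subgroups whose Cayley graphs (with respect to generating sets of bounded size) coarsely embed, with uniformly large "width", into the orbital graphs of the action on $X$, and then use these to manufacture a subspace of infinite asymptotic dimension. Concretely, I would exploit the fact that $\Af(\G)$ contains, for arbitrarily large $d$, copies of the alternating group $\alt(d)$ coming from multisections of degree $d$ (Lemma \ref{l-multigerm}(i) guarantees such multisections exist once an orbit has at least $d$ points, which holds for every $d$ since orbits of a minimal groupoid over the Cantor set are infinite). The key geometric point is that $\asdim$ is monotone under injective coarse maps between uniformly discrete coarse spaces (Proposition \ref{p-asdim}), so it suffices to exhibit inside $\Af(\G)$, as a subgroup, a group of infinite asymptotic dimension, or inside one of its orbital graphs a subspace of infinite asymptotic dimension.

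The cleanest route is: first observe that $\Af(\G)$ contains an isomorphic copy of a direct sum $\bigoplus_{k\ge 1} \alt(5)$, or more efficiently, contains for each $d$ a copy of the full alternating group $\alt(d)$ acting on $d$ disjoint clopen pieces; second, build an increasing chain. Actually the most efficient observation is that $\Af(\G)$ contains a copy of the group $\bigoplus_{\N}\Z/3\Z$ acting with "independent" supports, and more to the point it contains for every $n$ a copy of the wreath-type or permutation group on $3^n$ letters whose natural permutation action on the corresponding clopen partition is faithful and whose Cayley graph has asymptotic dimension $\ge n$. Since the group $\Af(\G)$ acts faithfully on each orbit (minimality, so orbits are dense, so the action on each orbit is faithful by the standard Baire-category/support argument), it embeds as a subgroup of the wobbling group $W(\Gamma_x(\G,\T))$ of any orbital graph. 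Then one invokes: a finitely generated subgroup $H\le \Af(\G)$ of asymptotic dimension $\ge n$ embeds coarsely (indeed as a subgroup) and hence $\asdim W(\Gamma_x) = \infty$; but more directly, $\asdim \Af(\G)\ge \asdim H = \infty$ for each $n$, using that asymptotic dimension is monotone under passing to subgroups of a group (with a word metric, if $\Af(\G)$ is finitely generated) — and when $\Af(\G)$ is not finitely generated, one uses the coarse structure of Example \ref{i-coarse-discrete}, under which every subgroup is coarsely embedded, so again $\asdim\Af(\G)\ge\asdim H$.

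So the heart of the matter reduces to producing, inside $\Af(\G)$, finitely generated (or merely finite) subgroups of unbounded asymptotic dimension whose inclusion is a coarse embedding. The natural candidates are iterated permutational wreath products of $\alt$, e.g. $A_n := \alt(k)\wr \alt(k) \wr \cdots$ ($n$ factors), which can be realized acting on a clopen partition of $X$ into $k^n$ pieces: one fixes a clopen subset $U_0\subset X$ small enough and, using Lemma \ref{l-multigerm}(i) repeatedly together with minimality, finds $k^n$ pairwise disjoint clopen subsets of $X$ in a single $\G$-orbit's "neighbourhood pattern" on which a copy of $A_n$ inside $\Af(\G)$ acts by the obvious multisection permutations, all supported in a fixed clopen set. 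It is classical that such iterated wreath products of a fixed finite group have asymptotic dimension tending to infinity with $n$ (this is where I expect to need a short cited or standard computation — e.g. $\asdim(G\wr H)$ grows, or simply that $\Z\wr\Z$-type iterations have $\asdim\to\infty$; one can instead use $\Z^n \le \Af(\G)$? no — $\Af(\G)$ need not contain $\Z^n$, so the wreath product route is the robust one). The main obstacle is precisely this last step: verifying cleanly that the relevant finite (or f.g. torsion) subgroups of $\Af(\G)$ have asymptotic dimension $\ge n$, and that their embedding into $\Af(\G)$ (or into an orbital graph) is coarse — for finite groups $\asdim=0$, so one genuinely needs the \emph{infinite} locally-finite subgroup $\bigcup_n A_n$, whose asymptotic dimension is the supremum of $\asdim A_n$ computed \emph{as subspaces} of the ambient word or coarse metric, not the intrinsic $0$; establishing that this supremum is infinite requires showing the $A_n$ sit "coarsely spread out" in $\Af(\G)$, which is where the disjoint-support construction and Proposition \ref{p-asdim} do the real work.

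Putting it together, the proof I would write is: (1) fix the compactly generated minimal $\G$, pick $x_0\in X$ regular and a compact open bisection generating set $\T$; (2) construct for each $n$ a copy $\iota_n\colon L_n\hookrightarrow \Af(\G)$ of a locally-finite group $L_n=\bigcup_k A_k^{(n)}$ whose Cayley/coarse graph has $\asdim\ge n$, via iterated multisection constructions with disjoint clopen supports, using Lemma \ref{l-multigerm}; (3) note the action of $\Af(\G)$ on the orbit of $x_0$ is faithful, so $\Af(\G)\le W(\Gamma_{x_0}(\G,\T))$, and the inclusion $L_n\hookrightarrow \Af(\G)$ is an injective coarse map for the natural coarse structures; (4) apply Proposition \ref{p-asdim} to conclude $\asdim\Af(\G)\ge \asdim L_n\ge n$; (5) let $n\to\infty$ to get $\asdim\Af(\G)=\infty$. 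The one genuine input external to the excerpt is the fact that infinitely-iterated wreath products of nontrivial finite groups (realized with spread-out supports) have infinite asymptotic dimension, which I would either cite or replace by the simplest available witness of infinite $\asdim$ inside $\Af(\G)$.
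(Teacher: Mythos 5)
Your proposal has a genuine gap at its core, and it is precisely the step you flag as ``the one genuine input external to the excerpt''. The witnesses you propose --- copies of $\alt(d)$, iterated permutational wreath products of finite alternating groups, and their locally finite unions $\bigcup_n A_n$ --- all have asymptotic dimension $0$. For a group that is not finitely generated (and $\Af(\G)$ need not be finitely generated here; it is so only when $\G$ is expansive), the relevant coarse structure is the one of Example \ref{e-i-action}\ref{i-coarse-discrete}, under which every subgroup carries the induced coarse structure equal to its own group coarse structure; in particular every finite subgroup is a bounded set, and a countable locally finite subgroup has asymptotic dimension $0$ \emph{as a subspace of} $\Af(\G)$, not just intrinsically. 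So the supremum you hope is infinite is in fact $0$, and no amount of ``spreading out supports'' of finite pieces can fix this. A concrete sanity check: your construction uses only minimality (via Lemma \ref{l-multigerm}) and never compact generation, so if it worked it would apply to minimal AF-groupoids, whose alternating full groups are locally finite and hence have asymptotic dimension $0$ --- but those groupoids are exactly the ones excluded by the compact generation hypothesis.

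Moreover, the route you explicitly dismiss (``one can instead use $\Z^n\le \Af(\G)$? no'') is essentially the paper's proof, and your reason for dismissing it is based on a misreading of Proposition \ref{p-asdim}: one does not need $\Z^n$ as a \emph{subgroup}, only an \emph{injective coarse map} $\Z^n\to \Af(\G)$. The paper gets this as follows: take $n$ disjoint clopen sets $U_1,\ldots,U_n$; each $\G|U_i$ is minimal and compactly generated (Proposition \ref{p-compact-generation-stable}), so Lemma \ref{l-qi-graph} --- this is where compact generation enters --- produces an \emph{infinite finitely generated} subgroup $K_i\le \Af(\G|U_i)$; the Cayley graph of any infinite finitely generated group contains a coarsely embedded copy of $\Z$, so the subgroup $K_1\times\cdots\times K_n\le\Af(\G)$ (direct product because the supports are disjoint) receives an injective coarse map from $\Z^n$; since $\asdim(\Z^n)=n$, Proposition \ref{p-asdim} gives $\asdim(\Af(\G))\ge n$ for every $n$. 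To repair your write-up you should replace the wreath-product witnesses by infinite finitely generated subgroups obtained from Lemma \ref{l-qi-graph}; without some such use of compact generation the statement you are proving is false.
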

\begin{proof}
Let $U_1,\ldots, U_n \subset X$ be disjoint clopen subsets. The groupoid $\G|{U_i}$ is also compactly generated (see Proposition \ref{p-compact-generation-stable}) and minimal.  For every $i=1,\ldots, n$   find a finitely generated subgroup $K_i<A(\G|{U_i})$ as in Lemma \ref{l-qi-graph}. Since $K_i$ admits Schreier graphs, in particular it is itself infinite.  Let $K=K_1\times \cdots \times K_n\le \Af(\G)$. Since every $K_i$ is infinite, its Cayley graph contains copy of  $\Z$, and therefore there is an injective coarse map $\Z^n\to K$.  It is well-known that $\asdim(\Z^n)=n$ \cite[\S 9.2]{Roe-coarse}.  Since $n$ is arbitrary, Proposition \ref{p-asdim} implies the conclusion.
\end{proof}

\section{Confined subgroups of full groups}
\label{s-confined}\label{s-Chabauty}
The following result characterises the confined subgroups of $\Ff(\G)$ and $\Af(\G)$ when $\G$ is a minimal effective groupoid over a Cantor space.    This will be a key tool used throughout the paper. 

All relevant notations on the various stabilisers for group actions ($\st^0_G(x), \st_G(x)$, etc..) can be found in  \S \ref{s-preliminaries}.

\begin{thm}[Classification of the confined subgroups]\label{t-Chabauty}\label{t-confined}
 Assume that $\mathcal{G}$ is a minimal effective \'etale groupoid over a space  $X$ which is either a Cantor space or a locally compact Cantor space. The following statements on  a subgroup $H\in \sub(\Ff(\G))$ are equivalent:
\begin{enumerate}[label=(\roman*)]
\item \label{i:conjugacy}  the subgroup $H$ is confined;
\item \label{i:AG-conjugacy} the subgroup $H$ is confined by $\Af(\G)$; 
 \item \label{i:alternating} there exists a unique finite subset ${Q}\subset X$ (possibly empty) such that we have \[\st^0_{\Af(\G)}(Q)\le H \le \st_{\Ff(\G)}({Q}).\]
 \end{enumerate}
 Moreover let $H$ satisfy one of these equivalent conditions, and let $P\subset \Ff(\G)\setminus \{1\}$ be a confining set. Then the set $Q$ in \ref{i:alternating} verifies $|Q|\le |P|-1$. 
\end{thm}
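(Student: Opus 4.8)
The plan is to prove the equivalence of the three conditions together with the numerical bound $|Q|\le |P|-1$, proceeding cyclically: \ref{i:alternating}$\Rightarrow$\ref{i:AG-conjugacy}$\Rightarrow$\ref{i:conjugacy}$\Rightarrow$\ref{i:alternating}, and extracting the cardinality estimate from the last implication. The implication \ref{i:conjugacy}$\Rightarrow$\ref{i:AG-conjugacy} is trivial since $\Af(\G)\le\Ff(\G)$, so really there are two substantive steps, plus uniqueness of $Q$.

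First I would deal with \ref{i:alternating}$\Rightarrow$\ref{i:AG-conjugacy} (equivalently $\Rightarrow$\ref{i:conjugacy}), which is the ``easy direction''. Given $\st^0_{\Af(\G)}(Q)\le H\le \st_{\Ff(\G)}(Q)$, I need to produce a finite set $P\subset\Ff(\G)\setminus\{1\}$ meeting every $\Af(\G)$-conjugate of $H$. The idea: a conjugate $gHg^{-1}$ (for $g\in\Af(\G)$, compactly supported) satisfies $\st^0_{\Af(\G)}(gQ)\le gHg^{-1}$, and $gQ$ is again a set of size $|Q|$. Since $X$ is a Cantor space and $\G$ minimal, the rigid stabiliser $\st^0_{\Af(\G)}(gQ)$ is nontrivial (indeed large) whenever $|Q|$ is fixed: one can find a clopen set $U$ disjoint from $gQ$ supporting a $3$-cycle, but more to the point one wants a \emph{uniform} choice. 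Here I would use that $\Af(\G)$ acts on $X$ and, by minimality plus the multisection machinery of \S\ref{s-prel-alt}, produce finitely many $3$-cycles $c_1,\dots,c_N$ such that for every subset $Q'\subset X$ with $|Q'|\le|Q|$ at least one $c_i$ is supported off $Q'$ — this is possible provided every $\G$-orbit has more than $|Q|$ points (in the relevant case orbits are infinite since $\G$ is minimal on a Cantor space without isolated points; the finite-orbit case of $\Af(\G)$ being trivial is handled separately). Then $P=\{c_1,\dots,c_N\}$ is a confining set, giving \ref{i:AG-conjugacy}. Actually, to then also read off the bound I would instead take the reverse route and only verify \ref{i:alternating}$\Rightarrow$\ref{i:conjugacy} qualitatively here.

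The main work is \ref{i:conjugacy}$\Rightarrow$\ref{i:alternating} with the sharp bound. Suppose $H$ is confined with confining set $P=\{g_1,\dots,g_r\}$. By Lemma \ref{l-proximal} the action of $\Af(\G)$ on $X$ is minimal and proximal, so Theorem \ref{t-proximal} applies with $A=\Af(\G)$: there is a nonempty open $U\subset X$ with $[\rist_{\Af(\G)}(U),\rist_{\Af(\G)}(U)]\le H$. By minimality of $\G$ and the simplicity/structure results (Theorem \ref{t-simple} and the multisection formalism), the group $\Af(\G)=\Af(\G|U)$-type argument shows $[\rist_{\Af(\G)}(U),\rist_{\Af(\G)}(U)]=\rist_{\Af(\G)}(U)=\Af(\G|U)$ (a minimal full alternating group over a Cantor space is perfect), so $\Af(\G|U)\le H$ for some clopen $U$. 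Now let $Q$ be the complement (in $X$) of the union of all clopen sets $V$ with $\Af(\G|V)\le H$; one shows this union is clopen with finite complement $Q$, and that $Q$ is the minimal finite set with $\st^0_{\Af(\G)}(Q)=\Af(\G|Q^c)\le H$. For the upper containment $H\le\st_{\Ff(\G)}(Q)$: if some $h\in H$ moved a point $q\in Q$, one would find a small clopen $W\ni q$ with $h(W)\cap W=\varnothing$ and $h(W)\cap Q'=\varnothing$ for a slightly larger finite set; conjugating elements of $\Af(\G|W)\le H$ by $h$ (they lie in $H$, normalized appropriately) would exhibit alternating elements supported near $q$ inside $H$, contradicting minimality of $Q$. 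This uses the standard ping-pong/commutator arguments with multisections. For the bound $|Q|\le r-1=|P|-1$: run Proposition \ref{p:LB-MB-subdyn}/Theorem \ref{t-proximal} not just once but iteratively — at each of the $r$ ``candidate'' points picked as $x_1,\dots,x_r$ in the proof of Theorem \ref{t-proximal}, the argument produces a nontrivial subgroup of $H$ supported near \emph{one} of them; B.H. Neumann's lemma forces one index $\ell$ to work, and crucially the point $x_\ell$ can then be chosen \emph{outside} $Q$. Pushing this: if $|Q|\ge r$ one could choose $x_1,\dots,x_r$ all inside $Q$ with the required disjointness (since orbits are infinite and $Q$ meets a dense orbit... here one must be careful, $Q$ is finite — the correct statement is to choose the $x_i$ so that they, together with $Q\setminus\{x_i\}$, witness enough room), and derive $\Af(\G|W)\le H$ for $W$ a neighbourhood of some $x_\ell\in Q$, contradicting $x_\ell\in Q$. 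Making this counting precise — that $r$ confining elements can ``clear'' at most $r-1$ points — is the delicate combinatorial heart and I expect it to be the main obstacle; it mirrors the fact that one needs one ``spare'' disjoint translate $g_i(U_i)$ beyond the $r$ sets $U_i$, and will require bookkeeping of which points of $Q$ can simultaneously be avoided.

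Finally, uniqueness of $Q$ in \ref{i:alternating} is immediate: if $\st^0_{\Af(\G)}(Q_1)\le H\le\st_{\Ff(\G)}(Q_2)$ and symmetrically, then for $q\in Q_1\setminus Q_2$ one has (by minimality and the existence of alternating elements in $\rist_{\Af(\G)}(V)$ for small clopen $V\ni q$, $V\cap(Q_1\setminus\{q\})=\varnothing$) elements of $H$ moving $q$, contradicting $H\le\st_{\Ff(\G)}(Q_2)$; hence $Q_1\subseteq Q_2$ and by symmetry $Q_1=Q_2$. I would assemble the final theorem by stating these implications as a short chain and referring to the detailed lemmas of \S\ref{s-prel-alt} and \S\ref{s-prel-Chabauty} for the ping-pong computations rather than reproducing them.
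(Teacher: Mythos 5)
Your easy direction (\ref{i:alternating} implies confined, via $|Q|+1$ three-cycles with pairwise disjoint supports) and your use of Theorem \ref{t-proximal} to obtain one clopen $U$ with $\Af(\G|U)\le H$ both match the paper. The gap is in everything that follows: the finiteness of $Q$, the bound $|Q|\le |P|-1$, and the containment $\Af(\G|Q^c)\le H$. The ingredient you are missing is a transitivity property of $H$ \emph{itself}, which the paper isolates as the first step of its proof: writing $P=\{g_1,\dots,g_r\}$, for any pairwise disjoint finite sets $D_1,\dots,D_r\subset X$ and any nonempty open $V$ there exist $h\in H$ and $j$ with $h(D_j)\subset V$. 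This is proved by building a single $a\in\Af(\G)$ (via multigerms) with $ag_ia^{-1}(D_i)\subset V$ for \emph{every} $i$ simultaneously, and then using $aPa^{-1}\cap H\neq\varnothing$. The bound is then immediate: a proper closed $H$-invariant subset cannot contain $r$ points (apply the property to singletons and $V$ equal to the complement), so the maximal such set $Q$ satisfies $|Q|\le r-1$. Your proposed substitute --- choosing the points $x_1,\dots,x_r$ of Theorem \ref{t-proximal} inside $Q$ and hoping to force $\Af(\G|W)\le H$ for $W$ a neighbourhood of some $x_\ell\in Q$ --- does not work: that proof only produces $[\rist_{\Af(\G)}(V'),\rist_{\Af(\G)}(V')]\le H$ for \emph{some} open $V'\subset U_\ell$, and since $Q$ is finite while $U_\ell$ is a Cantor neighbourhood, $V'$ can perfectly well miss $Q$, so no contradiction with $x_\ell\in Q$ arises. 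The proximality mechanism moves configurations with $\Af(\G)$, not with $H$, and hence gives no control relative to the $H$-invariant set $Q$; only the $H$-transitivity statement above does.

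The same issue reappears when you pass from ``each $x\notin Q$ has a clopen $V_x$ with $\Af(\G|V_x)\le H$'' to ``$\Af(\G|Q^c)\le H$'': a $3$-cycle whose three domain components lie in different $V_x$'s belongs to none of the local groups, so the union of good clopen sets is not obviously good, and your definition of $Q$ does not by itself yield the lower containment. The paper resolves this by using the $H$-transitivity and the density of $H$-orbits off $Q$ to enlarge any degree-$d$ multigerm of $\G|Q^c$ to a degree-$3d$ one whose pieces are separately conjugated into $U$ by elements of $H$, then reassembling via Lemma \ref{l-alt-elementary}, Proposition \ref{p-nekr-cover}, compactness, and Lemma \ref{l-st-finite}. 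Finally, a logical point: your cycle never establishes \ref{i:AG-conjugacy}$\Rightarrow$\ref{i:alternating}, since you run the main argument from \ref{i:conjugacy}. All conjugations in the correct argument are by elements of $\Af(\G)$, so it does go through under the weaker hypothesis \ref{i:AG-conjugacy} --- and this is exactly what Corollary \ref{c-Chabauty-G} needs --- but your write-up must say so explicitly.
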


Let us first prove prove some lemmas.

\begin{lemma}\label{l-st-finite}
For every finite subset $Q\subset X$, we have $\Af(\G|Q^c)=\st^0_{\Af(\G)}(Q)$. 
\end{lemma}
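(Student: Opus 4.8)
The statement to prove is the identity $\Af(\G|Q^c)=\st^0_{\Af(\G)}(Q)$ for a finite subset $Q\subset X$. My approach is to prove the two inclusions separately, using the structure of the generators of $\Af(\G)$ (multisections of degree $\ge 3$) and the fact that the components of the domain of a multisection are, by definition, \emph{compact open} subsets of $X$.

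First I would prove $\Af(\G|Q^c)\subseteq \st^0_{\Af(\G)}(Q)$. An element of $\Af(\G|Q^c)$ is a product of elements of the form $\F(\sigma)$ where $\F$ is a multisection of $\G|Q^c$ of degree $d\ge 3$ and $\sigma\in\alt(d)$. The domain of such an $\F$ is a compact open subset $D$ of $Q^c$; since $Q$ is finite (hence closed) and $X$ is Hausdorff, $D$ is in fact contained in an open set disjoint from $Q$, so $\F(\sigma)$ restricts to the identity on a neighbourhood of each point of $Q$. Hence $\F(\sigma)\in\st^0_{\Af(\G)}(Q)$, and since germ-stabilisers are subgroups, the same holds for arbitrary products. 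One small point to address: one must check that $\F(\sigma)$, a priori an element of $\Ff(\G|Q^c)\le\Ff(\G)$, really does lie in $\Af(\G)$ — but this is immediate since a multisection of $\G|Q^c$ is also a multisection of $\G$ (its bisections are bisections of $\G$ supported off $Q$).

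Next, the reverse inclusion $\st^0_{\Af(\G)}(Q)\subseteq\Af(\G|Q^c)$, which I expect to be the main obstacle. Take $g\in\Af(\G)$ fixing pointwise a neighbourhood $V$ of $Q$; we must write $g$ as a product of alternating elements of multisections whose domains avoid $Q$. Write $g$ as a product $g=\F_1(\sigma_1)\cdots\F_n(\sigma_n)$ of generators, where each $\F_k$ has degree $\ge 3$. The domains of these $\F_k$ need not avoid $Q$, so the strategy is to refine: since $g$ is supported in the clopen set $W:=X\setminus V'$ for some clopen $V'$ with $Q\subset V'\subset V$, and $X$ is a (locally compact) Cantor space, I would use Proposition \ref{p-nekr-cover} to decompose each multisection $\F_k$ into finitely many sub-multisections whose domains are each either contained in $W$ or contained in $V'$; the alternating elements coming from pieces inside $V'$ can be taken trivial after possibly absorbing them, because on $V'$ the net effect is the identity. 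More precisely, one argues by induction on $n$ or uses the standard ``support-refinement'' technique for full groups: after passing to a fine enough clopen partition of $X$ compatible with all the $\F_k$ and with the partition $X=V'\sqcup W$, the element $g$ is a product of $3$-cycles each supported in $W\subseteq Q^c$, and each such $3$-cycle is $\Af(\F)$ for a degree-$3$ multisection $\F$ of $\G|W\le\G|Q^c$. This last reduction — that an element of $\Af(\G)$ supported in a clopen set $W$ lies in $\Af(\G|W)$ — is essentially \cite[\S 4]{Nek-simple} and I would cite it; combined with the above it gives $g\in\Af(\G|Q^c)$.

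The one delicate point worth spelling out is why a generator $\F(\sigma)$ with $\sigma\in\alt(d)$ and $d\ge3$, when restricted to a clopen piece $W$ on which it is supported, can be re-expressed using a multisection of $\G|W$ of degree still $\ge 3$: this needs that $W$ meets each orbit in at least $3$ points, which follows from minimality of $\G$ together with $W$ being non-empty clopen, so that Theorem \ref{t-simple}/the finite-generation machinery applies; alternatively one invokes directly that $\Af(\G|W)$ equals the set of elements of $\Af(\G)$ supported in $W$, which is how Nekrashevych sets things up. Granting this, both inclusions are established and the lemma follows.
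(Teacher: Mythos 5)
Your first inclusion $\Af(\G|Q^c)\subseteq \st^0_{\Af(\G)}(Q)$ is fine and matches the paper, which dismisses it in one line for the same reason (the generators of $\Af(\G|Q^c)$ have compact support contained in $Q^c$).

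The reverse inclusion is where the real work lies, and your argument has a genuine gap at the "absorption" step. You write $g=\F_1(\sigma_1)\cdots\F_n(\sigma_n)$ and claim that, after refining each $\F_k$ along the partition $X=V'\sqcup W$, "the alternating elements coming from pieces inside $V'$ can be taken trivial after possibly absorbing them, because on $V'$ the net effect is the identity." This does not work as stated, for two reasons. First, a sub-multisection one of whose domain components meets $V'$ is not supported in $V'$: its other domain components may lie in $W$, and the associated $3$-cycle moves points between $V'$ and $W$; worse, the intermediate partial products $s_i\cdots s_1$ can carry the neighbourhood of $Q$ all over $X$, so the relevant region to excise from $\F_i$ is not $V'$ itself but its image under the earlier factors. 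Second, even after a correct splitting one is left with a product $g'$ of all the "$Q$-interacting" pieces, which fixes a neighbourhood of $Q$ but is in general a nontrivial element of $\Af(\G)$ — showing that \emph{this} element lies in $\Af(\G|Q^c)$ is exactly the original problem, so the argument is circular unless something else is brought in. The paper resolves both issues: it defines inductively $W_0\supset Q$ and $W_{i+1}=W_i\cup s_{i+1}(W_i)\cup s_{i+1}^2(W_i)$ (tracking where the neighbourhood of $Q$ travels), splits each $\F_i$ according to whether $\gk(1,1)\in W_i$, proves the commutations needed to get $g=g'g''$ with $g''\in\Af(\G|Q^c)$, and then — crucially — handles $g'$ by a separate argument (its Case 2): since $g'$ is supported in the proper subset $W_n$, one uses \emph{minimality} of $\G$ to find $h\in\Af(\G)$ commuting with $g'$ that conjugates $Q$ into a clopen set avoided by all the domains, reducing to the trivial case. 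Your proposal contains no analogue of this conjugation step, and the appeal to "$\Af(\G|W)$ equals the set of elements of $\Af(\G)$ supported in $W$" cannot be cited as a black box from \cite{Nek-simple}: that identification of rigid stabilisers is itself a statement of the same nature as the lemma being proved.
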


In the proof we will consider the natural action of the  group $\Ff(\G)$  on the set of multisections by $(g\F)(i, j)=g\F(i, j)g^{-1}$. Similarly we let $\Ff(\G)$ act on the set of multigerms by $(g  \gk)(i, j)=[g]_{\gk(i, i)}\gk(i, j)[g]_{\gk(j,j)}^{-1}$.

\begin{proof}
The inclusion $\Af(\G|Q^c)\le \st^0_{\Af(\G)}(Q)$ is clear, since $\Af(\G|Q^c)$ is generated by elements whose support is compact and contained in $Q^c$. Let $g\in \st^0_{\Af(\G)}(Q)$ and let us show that $g\in \Af(\G|Q^c)$. Since $g\in \Af(\G)$, there exist degree 3 multisections  $\F_1,\ldots, \F_n$  of $\G$ and elements $s_i\in \Af(\F_i)$  such that $g=s_n\cdots s_1$. Note that we can assume that  $s_i$ is of the form $s_i=\F_i((123))$ for every $i=1,\ldots, n$. Let $V$ be a  neighbourhood of $Q$ such that $g$ fixes $V$ point-wise. There are three cases to consider.

\emph{Case 1.} Assume that none of the domains of the multisections $\F_i$ intersects $Q$. In this case we have $\Af(\F_i)\le \Af(\G|Q^c)$ for every $i=1,\ldots n$ and the conclusions follows immediately.

\emph{Case 2.}  Assume that the union of the domains of the multisections $\F_i$ does not cover the whole unit space $X$. Let $U\subset X$ be a clopen set which does not intersect the domain of any of the multisections $\F_i$, and note that $g$ fixes $U$ point-wise. Using Lemma \ref{l-multigerm} and minimality of $\G$,  we can find an element $h\in \Af(\G)$ such that $h(Q)\subset U$ and such that $h$ fixes point-wise the complement of $V\cup U$. In particular, $h$ commutes with $g$ and hence $g=h^{-1}gh=(h^{-1}s_nh)\cdots (h^{-1}s_1h)\in \langle \Af(h^{-1}\F_1), \ldots , \Af(h^{-1} \F_n)\rangle$. Moreover $Q\subset h^{-1}(U)$ and the set $h^{-1}U$ avoids the domain of all the multisections $h^{-1}\F_i, i=1,\ldots, n$. Hence we are reduced to Case 1.

\emph{Case 3.} Assume that the union of the domains of the multisections $\F_i$ covers $X$. Define inductively sets $Q_0, \ldots, Q_n$ by $Q_0=Q$ and $Q_{i+1}=Q_i\cup s_{i+1}(Q_i)\cup s_{i+1}^2 (Q_i)$.  Let $W_0$ be a clopen neighbourhood of $Q$ contained in $V$, and define inductively sets $W_i, i=1,\ldots n$ by $W_{i+1}=W_i\cup s_{i+1}(W_i)\cup s_{i+1}^2(W_i)$. If $W_0$ shrinks to $Q$ we have that $W_n$ shrinks to $Q_n$, and therefore we may assume that $W_n$ is a proper subset of $X$. For every $i=1,\ldots, n$  let $\F'_i\subset \F_i$ be the multisection consisting of all multigerms $\gk\in \F_i$ with $\gk(1, 1)\in W_i$, let $\F''_i\subset \F_i$ be its complement, and set $s'_i=\F'_i((123)), s''_i=\F''_i((123))$ (if $\F'_i$ or $\F''_i$ are empty, set $s'_i=1$ or $s''_i=1$ accordingly). Observe that $s'_i$ and $s''_i$ are commuting elements such that $s_i=s'_is''_i$.  Set $g'=s'_n\cdots s'_1$ and $g''=s''_n\cdots s''_1$. Since the domain of $\F''_i$ avoids $W_i\supset Q$ we have  $g''\in \Af(\G|Q^c)$. Moreover for $i<j$ we have that each $s'_i$ is supported in $W_i\subset W_j$ and $s''_j$ is supported in the complement of $W_j$, and therefore $s'_i$ and $s''_j$ commute. We deduce that $g=g'g''$. Since for every $i$ the domain of $\F'_i$ is contained in $W_i\subset W_n$ which is a  proper subset of $X$, we have $g'\in \Af(\G|Q^c)$ by Case 2. Therefore $g\in \Af(\G|Q^c)$. \qedhere

\end{proof}

\begin{lemma}\label{l-alt-elementary}
Let $d\geq 1$. Then the alternating group $\alt(3d)$ is generated by $\alt(\{d+1,\ldots 3d\})\cup \left(\bigcup_{i=1}^d\alt(\{i, d+i, 2d+i\})\right)$.
\end{lemma}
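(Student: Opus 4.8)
The plan is to let $G$ denote the subgroup of $\sym(3d)$ generated by $\alt(\{d+1,\ldots,3d\})$ together with the subgroups $\alt(\{i,d+i,2d+i\})$ for $i=1,\ldots,d$. Since every generator is an even permutation, $G\le\alt(3d)$, so the whole content is the reverse inclusion; and since $\alt(3d)$ is generated by $3$-cycles, it suffices to show that every $3$-cycle on $\{1,\ldots,3d\}$ lies in $G$. The case $d=1$ is immediate (there $\alt(\{2,3\})$ is trivial and $\alt(\{1,2,3\})=\alt(3)$), so I would assume $d\ge 2$ and set $\Delta:=\{d+1,\ldots,3d\}$, a set of $2d\ge 4$ points.

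The first step is to exploit that $\alt(\Delta)\le G$, so that $G$ already contains all $3$-cycles supported inside $\Delta$, and that $\alt(\Delta)$ acts $2$-transitively on $\Delta$ (since $|\Delta|\ge 4$). Conjugating the $3$-cycle $c_i:=(i\; d{+}i\; 2d{+}i)\in G$ by elements of $\alt(\Delta)$ then produces every $3$-cycle of the form $(i\;a\;b)$ with $a,b\in\Delta$ distinct. Hence $G$ contains every $3$-cycle meeting $\{1,\ldots,d\}$ in at most one point.

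It then remains to handle the $3$-cycles with two or three points in $\{1,\ldots,d\}$, and this is done by two short products of $3$-cycles already known to lie in $G$: for distinct $i,j\in\{1,\ldots,d\}$ and $c\in\Delta$, picking an auxiliary $a\in\Delta\setminus\{c\}$ one checks that a suitable product of $(i\;a\;c)$ and $(j\;a\;c)$ equals $(i\;j\;c)$; and for distinct $i,j,k\in\{1,\ldots,d\}$, picking any $a\in\Delta$, a product of $(i\;j\;a)$ and $(j\;k\;a)$ equals $(i\;j\;k)$. Together with the previous step this shows $G$ contains all $3$-cycles, so $G=\alt(3d)$.

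A more conceptual alternative would be to show directly that $G$ acts primitively on $\{1,\ldots,3d\}$ — transitivity is clear since $c_i$ carries $i$ into $\Delta$ while $\alt(\Delta)$ is transitive on $\Delta$, and primitivity follows by pushing a putative block system down to $\Delta$, where the primitive action of $\alt(\Delta)$ forces it to be trivial or to merge $\Delta$ into one block, after which the $c_i$'s rule out the small remaining possibilities — and then to invoke Jordan's classical theorem that a primitive permutation group containing a $3$-cycle contains the full alternating group. I do not expect a genuine obstacle in either route; the only points demanding care are the degenerate case $d=1$ (and keeping track that $d\ge 2$ is needed precisely to make $\alt(\Delta)$ genuinely $2$-transitive), and, in the second route, excluding blocks of size $2$ and $3$ in the block-system analysis.
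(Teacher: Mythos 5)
Your argument is correct and complete. Note that the paper itself gives no proof here (it states ``The proof is elementary and we omit it''), so there is no argument to compare against; your write-up simply supplies the omitted verification. The two products you leave to the reader do work out: with the convention of applying the right factor first, $(j\,a\,c)^{-1}(i\,a\,c)=(i\,j\,c)$ and $(i\,j\,a)(j\,k\,a)=(i\,j\,k)$, and your handling of the degenerate case $d=1$ and of the $2$-transitivity of $\alt(\Delta)$ for $|\Delta|=2d\ge 4$ is exactly the care the statement requires. The primitivity-plus-Jordan alternative you sketch would also work, but the direct $3$-cycle computation is the more elementary route and is all that is needed.
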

The proof is elementary and we omit it. 

\qedhere
\begin{proof}[Proof of Theorem \ref{t-Chabauty}] To see that \ref{i:alternating}$\Rightarrow$\ref{i:conjugacy}, it is enough to check that for every finite set $\Q\subset X$, the subgroup $\st^0_{\Af(\G)}(\Q)$ is confined. Let $r=|\Q|$ and choose $r+1$ degree 3 multisections   with disjoint domains $\F_1,\ldots , \F_{r+1}$ (for example using Lemma \ref{l-multigerm}). For every $g\in {\Ff(\G)}$, there exists $i$ such that the set $g(\Q)$ does not intersect the domains of $\F_i$, and therefore $A(\F_i)\le g\st^0_{\Af(\G)}(\Q)g^{-1}$. For every $i=1,\ldots, r+1$ choose a non-trivial element $g_i\in A(\F_i)$. Then the set  $P=\{g_1,\ldots , g_{r+1}\}$ is confining. 

The implication \ref{i:conjugacy}$\Rightarrow$\ref{i:AG-conjugacy} is obvious. 

It remains to be shown  that \ref{i:AG-conjugacy}$\Rightarrow$\ref{i:alternating}, which is the main part of the statement.
 To this end, let $H\le {\Ff(\G)}$ be confined by $\Af(\G)$ and  fix  a confining set $P\subset G\setminus \{1\}$. Write $r=|P|$. 
We divide the proof into a sequence of steps. 
 \begin{step}\label{s-highly-minimal}
 Let $D_1\ldots D_r\subset X$ be disjoint finite subsets, and let $V\subset X$ be an open set. Then there exists $h\in H$ and $j=1, \ldots r$ such that $h(D_j)\subset V$.
 \end{step}

 \begin{proof} Let $g_1,\ldots, g_r$ be the elements of $P$. Since $aPa^{-1}\cap H \neq \varnothing$ for every $a\in {\Af(\G)}$, it is enough to observe that there exists an element $a\in {\Af(\G)}$ such that $ag_ia^{-1}(D_i)\subset V$ for every $i=1,\ldots, r$. We now explain how to construct such an element. Let $U_1,\ldots, U_r\subset X$ be open sets such that $U_1,\ldots, U_r, g_1(U_1), \ldots, g_r(U_r)$ are pairwise disjoint (such sets exist for any finite family of non-trivial homeomorphisms of $X$,  see e.g. \cite[Lemma 3.1]{LB-MB-subdyn}). Set $D=D_1\cup \cdots \cup D_r$. Since $\mathcal{\G}$ is minimal, for every $k=1,\ldots, r$ and every $x\in {D}_k$ we can find an element ${\gamma}_x$ such that $x=\src(\gamma_x)\neq \rg(\gamma_x)\in U_k \setminus \left(D \cup g_1^{-1}(D)\cup \cdots \cup g_r^{-1}(D)\right)$. Moreover, we can find such elements in such a way that the points $\rg(\gamma_x), x\in D_k, k=1,\ldots r$ are pairwise distinct (for instance, this can be done by ordering all points of $D$ in a list and choosing each $\gamma_x$ at a time in this order; note that at each step only finitely many new points are forbidden for $\rg(\gamma_x)$). For every $k=1,\ldots, r$ let ${D}'_k=\{\rg(\gamma_x)\colon \in D_k\}\subset U_k$, and set $D'=D'_1\cup\cdots \cup D'_r$. Let also $D''_k=g_k({D}'_k)\subset g_k(U_k)$ and $D''=D''_1\cup\cdots \cup D''_r$. Note that, by the choices made, the sets $D, D', D''$ are pairwise disjoint.  Proceeding in a similar way, choose for every $x\in D''$ an element $\gamma_{x} \in \G$ such that $x=\src(\gamma_{x})\neq \rg(\gamma_{x})\in V\setminus \left(D\cup D'\cup D''\right)$, and the points $\rg(\gamma_{x})$ for $ x\in D''$ are pairwise distinct. For $k=1,\ldots, r$  let $D'''_k=\{  \rg(\gamma_{x}), x\in D''_k\}$ and $D'''=D'''_1\cup\cdots \cup D'''_k$. Finally using Lemma \ref{l-multigerm} find for every $x\in D\cup D''$ a degree 3 multigerm $\gk_x$ such that $\gk_x(1, 2)=\gamma_x$ for every $x\in D\cup D''$ and the units $\gk_x(3, 3)$ for $ x\in D\cup  D''$ are pairwise distinct and do not belong to $D\cup D'\cup D''\cup D'''$. Using Lemma \ref{l-multigerm} again, we can find for every $x\in D\cup D''$ sufficiently small multisections $\F_x\ni \gk_x$ that have pairwise disjoint domains. Let $\sigma=(123)\in \alt(3)$ and let $a=(\prod_{x\in D}\F_x(\sigma^{-1}))(\prod_{x\in D''} \F_x(\sigma))$. For every $k=1\ldots r$ we have $a^{-1}(D_k)=D'_k$ and $a(D''_k)=D_k'''$ and therefore $ag_ka^{-1}(D_k)=D'''_k\subset V$. This concludes the proof of the Step \ref{s-highly-minimal}. \qedhere
 \end{proof}

\begin{step} \label{s-setC}
Among  closed $H$-invariant proper subsets of $X$ there is  a unique maximal one with respect to inclusion (possibly empty), denoted $Q$. The set $Q$  is finite and verifies $|Q|\le r-1$.  This will be the set in the statement of the theorem. \end{step}

\begin{proof} Let $Z\subset X$ be a proper closed $H$-invariant subset. Assume by contradiction that $x_1,\ldots, x_r$ are distinct points in $Z$. By Step \ref{s-highly-minimal} applied to the sets $D_i=\{x_i\}, i=1,\ldots r$, and to the open set $V=X\setminus Z$, there exists $j$ and $h\in H$ such that $h(x_j)\notin Z$, contradicting the invariance of $Z$. It follows that every closed $H$-invariant proper subset $Z\subset X$ is finite and satisfies $|Z|\le r-1$. Let $(Z_n)_{n\geq 0}$ be a sequence of closed $H$-invariant proper subsets. Since for every $m\geq 0$ the union $\bigcup_{n=0}^m Z_n$ is also a proper closed invariant subset, it has cardinality at most $r-1$, and it follows that it has to stabilise for $m$ large enough and therefore the sequence $Z_n$ takes only finitely many values. Therefore there are only finitely many closed $H$-invariant proper subsets and their union $Q$ verifies the conclusion.\qedhere
\end{proof}
 \begin{step}  \label{s:LB-MB-subdyn} There exists a clopen subset $U\subset X$ such that $H$ contains ${\Af({\G|{U}})}$. \end{step}
 \begin{proof}
Since the action of ${\Af(\G)}$ on $X$ is minimal and proximal (Lemma \ref{l-proximal}), we can apply Theorem \ref{t-proximal}. We obtain an open set $U\subset X$  that $H$ contains $[\rist_{\Af(\G)}(U), \rist_{\Af(\G)}(U)]$.  Since ${\Af({\G|{U}})}<\rist_{{\Af(\G)}}(U)$ is infinite and simple, it is contained in $[\rist_{\Af(\G)}(U), \rist_{\Af(\G)}(U)]$. \qedhere \end{proof}

\begin{step} \label{s-small}
Let $d\geq 3$ and let $\gk$ be a degree $d$ multigerm of $\G|{Q^c}$. Then for every multisection $\Tk\ni \gk$  there exists a multisection $\F$ with $\gk\in \F\subset \Tk$ and  $A(\F)\le H$.  
\end{step}
\begin{proof}
First observe that if there exists an element $h\in H$ such that $h\gk\in \G|U$, then for every sufficiently small bisection $\F\ni \gk$ we have that $h\F$ is a multisection of $\G|U$ and $hA(\F)h^{-1}=A(h\F)\le H$ by Step \ref{s:LB-MB-subdyn}. 
In general, since for every $i=1,\ldots, d$ the unit $\gk(i, i)$ does not belong to $Q$, it has a dense $H$-orbit by the definition of $Q$. Therefore there exist elements $h_1,\ldots, h_d$ such that $h_i\gk(i, i)\in U$ for revery $i=1,\ldots, d$. For every $i=1,\ldots d$ let $V_i$ be a neighbourhood of $\gk(i, i)$ such that $h_i(V_i)\subset U$. By minimality of $\G$ we can choose disjoint sets $D_1,\ldots D_r$ that consist of points that lye in the same orbit of $\gk(1, 1),  \ldots, \gk(d, d)$ and are disjoint from these and from $Q$, and such that $|D_j\cap V_i|=2$ for every $j=1,\ldots r$ and every $i=1,\ldots, d$. In particular $|D_j|=2d$ for every $j=1,\ldots r$. By Step \ref{s-highly-minimal}, there exists $k\in H$ and $\ell=1,\ldots r$ such that $kD_\ell\subset U$. Fix such $\ell$ and $h$ and set $D_\ell=\{x_{d+1},\ldots, x_{3d}\}$ where the numbering is chosen in such a way that $x_{d+i}, x_{d+2i}\in V_i$ for every $i=1,\ldots d$. By Lemma \ref{l-multigerm} we can find a degree $3d$ multigerm $\gk'$ such that $\gk'|_{\{1,\ldots, d\}^2}=\gk$ and $\gk'(j,j)=x_j$ for $j= d+1,\ldots , 3d$. Observe that we have $k\gk'|_{\{d+1,\ldots 3d\}^2}\in \G|{U}$ and $h_i\gk'|_{\{i, d+i, 2d+i\}^2}\in \G|{U}$ for every $i=1,\ldots, d$. It follows that if $\F'$ is any sufficiently small multisection containing $\gk'$, we have $A(\F'|_{\{d+1,\ldots, 3d\}^2})\le H$ and $A(\F'|_{\{i, d+i, 2d+i\}})\le H$ for every $i=1,\ldots d$. By Lemma \ref{l-alt-elementary}, we deduce that $A(\F')\le H$. Setting $\F=\F'|_{\{1,\ldots d\}^2}$ we therefore have $A(\F)\le H$. By Lemma \ref{l-multigerm} we can choose $\F'$ sufficiently small so  that $\F\subset \Tk$ as desired. \qedhere 
\end{proof}
\begin{step}\label{s-contains}
$H$ contains $\st^0_{\Af(\G)}(Q)$.
\end{step}
\begin{proof}
Let $\Tk$ be a degree $d\geq 3$ a multisection of $\G|{Q^c}$. By Step \ref{s-small} and by compactness of $\Tk$ we can cover $\Tk$ with finitely many degree $d$ multisections $\F$ such that $\Af(\F)\le H$. By Proposition \ref{p-nekr-cover}, we have $\Af(\Tk)\le H$. It follows that ${\Af(\G|\Q^c)}\le H$ since $\Tk$ was arbitrary. The conclusion follows from Lemma \ref{l-st-finite}. \qedhere
\end{proof}
\begin{step}
The set $Q$ satisfies  \ref{i:alternating} in the statement of the theorem.
\end{step}
\begin{proof}
The inclusion $H\le \st_{\Ff(\G)}(Q)$ follows by the definition of $Q$ in Step \ref{s-setC}, and the inclusion $\st^0_{\Af(\G)}(Q)\le H$ was proven in Step \ref{s-contains}.  Only uniqueness is left to prove. Assume that $\Q_i, i=1,2$ satisfy the condition. In particular we have $\st^0_{\Af(\G)}(Q_1)\le H\le  \st_{{\Ff(\G)}}(\Q_2)$. Note that every $x\notin \Q_1$ has an infinite dense orbit under the action of $\st^0_{\Af(\G)}(\Q_1)$ and therefore we deduce that $\Q_2\subset \Q_1$. Exchanging the roles, $\Q_1=\Q_2$. \qedhere
\end{proof}
Finally note that the set $Q$ verifies $|Q|\le |P|-1$ by its definition in   Step \ref{s-setC} (recall that $r=|P|$). This concludes the proof of the theorem. \qedhere
\end{proof}
In practice, we will use the following variant of  Theorem \ref{t-Chabauty}.
\begin{cor} \label{c-Chabauty-G} \label{c-Chabauty}
Let $G$ be a group such that $\Af(\G)\le G\le \Ff(\G)$. A subgroup $H\in \sub(G)$ is confined if and only if there exists a unique finite subset $\Q\subset X$ such that $\st^0_{\Af(\G)}(Q)\le H\le \st_G(\Q)$. Moreover we have $|\Q|\le |P|-1$, where $P\subset G \setminus \{1\}$ is a confining set for $H$. 
\end{cor}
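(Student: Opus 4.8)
\textbf{Proof plan for Corollary \ref{c-Chabauty-G}.}

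The plan is to deduce this from Theorem \ref{t-confined} by a short argument that transfers the confinement condition between the groups $G$ and $\Ff(\G)$. First I would treat the implication from the existence of $Q$ to confinement: if $\st^0_{\Af(\G)}(Q)\le H$, then exactly as in the proof of the implication \ref{i:alternating}$\Rightarrow$\ref{i:conjugacy} in Theorem \ref{t-confined} (taking $r+1$ degree-$3$ multisections with disjoint domains, where $r=|Q|$, and one non-trivial element from each alternating subgroup) one obtains a finite set $P\subset \Af(\G)\setminus\{1\}\subset G\setminus\{1\}$ which is confining for $H$ as a subgroup of $G$; note that the conjugates are taken by elements of $G$, but this only helps since $\Af(\G)$-conjugates already meet $P$. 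Conversely, suppose $H\in\sub(G)$ is confined in $G$ with confining set $P\subset G\setminus\{1\}$. Then in particular the $\Af(\G)$-conjugacy class of $H$ avoids $\mathcal U_P$, so $H$ is confined by $\Af(\G)$ in the sense of Theorem \ref{t-confined}\ref{i:AG-conjugacy} — here I should be slightly careful, since $H$ need not be a subgroup of $\Ff(\G)$? No: $H\le G\le\Ff(\G)$, so $H\in\sub(\Ff(\G))$ and the hypothesis of Theorem \ref{t-confined} applies verbatim. Thus Theorem \ref{t-confined} yields a unique finite $Q\subset X$ with $\st^0_{\Af(\G)}(Q)\le H\le \st_{\Ff(\G)}(Q)$, and $|Q|\le |P|-1$.

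It remains only to upgrade the upper bound $H\le \st_{\Ff(\G)}(Q)$ to $H\le\st_G(Q)$. But $\st_G(Q)=\st_{\Ff(\G)}(Q)\cap G$, and since $H\le G$ and $H\le\st_{\Ff(\G)}(Q)$ we get $H\le\st_G(Q)$ immediately. Uniqueness of $Q$ is inherited from Theorem \ref{t-confined}: if $Q_1,Q_2$ both satisfy the conclusion for $H$, then in particular $\st^0_{\Af(\G)}(Q_i)\le H\le\st_{\Ff(\G)}(Q_i)$ for $i=1,2$ (using $\st_G(Q_i)\le\st_{\Ff(\G)}(Q_i)$), so by the uniqueness statement of Theorem \ref{t-confined} we conclude $Q_1=Q_2$. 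Finally the bound $|Q|\le|P|-1$ is exactly the one provided by Theorem \ref{t-confined} applied with the same confining set $P$ (viewed inside $\Ff(\G)$, which is legitimate since $P\subset G\setminus\{1\}\subset\Ff(\G)\setminus\{1\}$).

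I do not expect any genuine obstacle here: the corollary is essentially a restatement of Theorem \ref{t-confined} with $\Ff(\G)$ replaced by an intermediate group $G$, and the only points requiring care are (a) observing that confinement in $G$ implies confinement by $\Af(\G)$ (trivial, since $\Af(\G)\le G$ and fewer conjugates is a weaker condition — so one must argue in the right direction: $\{aHa^{-1}:a\in\Af(\G)\}\subseteq\{gHg^{-1}:g\in G\}$ has closure not containing $\{1\}$), and (b) the elementary identity $\st_G(Q)=\st_{\Ff(\G)}(Q)\cap G$. Both are immediate, so the proof is short.
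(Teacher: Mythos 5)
Your proposal is correct and follows essentially the same route as the paper, whose entire proof is the one-line observation that confinement in $G$ implies confinement by $\Af(\G)$, so that Theorem \ref{t-confined} applies; you have simply spelled out the routine details left implicit there (reusing the multisection construction from the implication \ref{i:alternating}$\Rightarrow$\ref{i:conjugacy} for the converse, the identity $\st_G(Q)=\st_{\Ff(\G)}(Q)\cap G$, and the inheritance of uniqueness and of the bound $|Q|\le|P|-1$). All of these details check out.
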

\begin{proof}
If $H\in \sub(G)$ is confined, then it is confined by $\Af(\G)$ and hence the theorem applies. \qedhere
\end{proof}
\subsection{Case of uniformly recurrent subgroup}

Theorem \ref{t-confined} provides the following classification of the uniformly recurrent subgroups of the group $\Af(\G)$ (note that it will not be used elsewhere in the paper).
We say that a uniformly recurrent subgroup of a group $G$ is \textbf{trivial} if it is $\{\{1\}\}$ or $\{G\}$.

 \begin{cor}[Classification of URS's of the group $\Af(\G)$.]
 \label{c-URS}\begin{enumerate}[label=(\roman*)]
\item If $X$ is a Cantor space, the only non-trivial URS's of the group $\Af(\G)$ is the stabiliser URS associated to the action $\Af(\G)\acts X$. 

\item If $X$ is a locally compact, non-compact Cantor space, then the group $\Af(\G)$ has no non-trivial URS's. 

\end{enumerate}
\end{cor}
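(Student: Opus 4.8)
\textbf{Proof strategy for Corollary \ref{c-URS}.}
The plan is to deduce both statements directly from the classification of confined subgroups in Theorem \ref{t-confined}, together with the observation that every non-trivial URS consists of confined subgroups. Recall that a URS is a closed minimal conjugation-invariant subset $Z\subseteq\sub(\Af(\G))$; if $Z$ is non-trivial, then $Z$ does not contain $\{1\}$ (by minimality, since $\{1\}$ is a fixed point, $Z=\{\{1\}\}$ would be forced otherwise), so every $H\in Z$ is confined by $\Af(\G)$. By Theorem \ref{t-confined} (applied with $G=\Af(\G)$, which is legitimate since $\Af(\G)$ is simple, so any proper confined subgroup is certainly confined), each such $H$ satisfies $\st^0_{\Af(\G)}(Q_H)\le H\le \st_{\Af(\G)}(Q_H)$ for a unique finite subset $Q_H\subseteq X$, and $Q_H=\varnothing$ forces $H=\Af(\G)$ (again by Lemma \ref{l-st-finite}, $\st^0_{\Af(\G)}(\varnothing)=\Af(\G)$), which is excluded since $Z$ is non-trivial. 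So every $H\in Z$ has an associated non-empty finite set $Q_H$.

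The next step is to show $r:=|Q_H|$ is constant on $Z$ and that the map $H\mapsto Q_H$ is a continuous, injective, $\Af(\G)$-equivariant map from $Z$ to $X^{[r]}$. Equivariance is immediate from uniqueness: $Q_{gHg^{-1}}=g(Q_H)$. For continuity and the constancy of $r$, I would argue as follows: if $H_\nu\to H$ in $\sub(\Af(\G))$ and, after passing to a subnet, $Q_{H_\nu}\to P$ in the Hausdorff-limit sense on finite subsets, then upper/lower semicontinuity of the stabiliser maps (Example \ref{e-stab-semicontinuous}) forces $\st^0_{\Af(\G)}(P')\le H$ for $P'$ any limit set and $H\le \st_{\Af(\G)}(P'')$ appropriately — more precisely, one uses that $\liminf Q_{H_\nu}\subseteq Q_H\subseteq \limsup Q_{H_\nu}$, and then that the bound $|Q_{H_\nu}|\le |P|-1$ from a fixed confining set $P$ (a URS is confined by a single finite set, by compactness of $Z$ and minimality) keeps the cardinalities bounded; combining with minimality of $Z$ one gets $|Q_H|$ constant, say equal to $r$, and the map $H\mapsto Q_H\in X^{[r]}$ continuous. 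Injectivity also follows: if $Q_{H_1}=Q_{H_2}=Q$ then both $H_1,H_2$ are squeezed between $\st^0_{\Af(\G)}(Q)$ and $\st_{\Af(\G)}(Q)$; this does not immediately give $H_1=H_2$, so here I would instead argue that the \emph{image} $\{Q_H : H\in Z\}$ is a closed $\Af(\G)$-invariant subset of $X^{[r]}$, hence by Lemma \ref{l-closed-power} equals $X^{[\ell]}$ for some $\ell\le r$, and then use minimality of $Z$ together with the fact that $X^{[\ell-1]}$ is a proper invariant subset to conclude $\ell=r$ and that the fibres of $H\mapsto Q_H$ are singletons — i.e. $Z$ is the stabiliser URS of $\Af(\G)\acts X^{[r]}$. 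Finally, one checks $r=1$: the stabiliser of $\{x_1,\dots,x_r\}\in X^{[r]}$ for $r\ge 2$ contains $\st^0_{\Af(\G)}(\{x_1,\dots,x_r\})$ as a finite-index-like subgroup together with transpositions permuting the $x_i$'s, and one shows such groups do not form a \emph{minimal} invariant set — a subgroup fixing all $x_i$ pointwise lies in the closure of the conjugacy class of one that permutes them (by approximating the ``swap'' germ by germs supported away from the points, using minimality), so the closure of the conjugacy class of a stabiliser of a set of size $r$ always contains stabilisers of sets of size $<r$, contradicting minimality unless $r=1$. Hence the only non-trivial URS is the stabiliser URS of $\Af(\G)\acts X$, proving (i).

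For (ii), when $X$ is locally compact non-compact, I would use Remark \ref{r:direct-limit}: $\Af(\G)=\varinjlim \Af(\G|U)$ over compact open $U\subseteq X$, so $\Af(\G)$ acts on $X$ by compactly supported homeomorphisms and this action is \emph{not} minimal in a way that gives a point-stabiliser URS; concretely, for the candidate stabiliser URS, the set $Q$ could be pushed off to ``infinity'' — given any $H$ confined with associated finite set $Q$, and any compact open $U\supseteq Q$, there are elements of $\Af(\G)$ moving $Q$ outside $U$, and one shows the conjugacy class of $H$ accumulates on subgroups whose associated set is empty, i.e. on $\Af(\G)$ itself or on $\{1\}$, so no confined subgroup lies in a \emph{minimal} closed invariant set other than $\{\Af(\G)\}$. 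More carefully: the argument of Step \ref{s-highly-minimal} in the proof of Theorem \ref{t-confined} shows that for any confined $H$ and any open $V$ there is $h\in H$ with $h(Q)$... — but the key point in the non-compact case is that there is no ``return'': the orbit of $Q$ under $\Af(\G)$ escapes every compact set, so the only accumulation points of $\{gHg^{-1}\}$ that could form a minimal set are the trivial ones. This shows that the closure of the conjugacy class of any proper confined $H$ contains $\{1\}$, contradicting non-triviality, hence there is no non-trivial URS.

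\textbf{Main obstacle.} The delicate step is establishing continuity and, especially, injectivity of $H\mapsto Q_H$ on a URS $Z$ — i.e. showing that within a minimal invariant set the squeeze $\st^0_{\Af(\G)}(Q)\le H\le\st_{\Af(\G)}(Q)$ actually pins down $H$, and that $r=1$. This is where the proximality of the $\Af(\G)$-action (Lemma \ref{l-proximal}) and the fine structure of multigerms (approximating permutation germs by germs supported away from a finite set) must be used carefully; the cardinality bound from Theorem \ref{t-confined} and Lemma \ref{l-closed-power} do most of the bookkeeping, but verifying that stabilisers of $r$-point sets with $r\ge2$ fail minimality requires the explicit germ-approximation argument sketched above.
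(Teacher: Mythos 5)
Your overall strategy is the same as the paper's: both proofs rest entirely on the classification of confined subgroups (Theorem \ref{t-confined}) together with the upper/lower semicontinuity of $Q\mapsto \st_{\Af(\G)}(Q)$ and $Q\mapsto\st^0_{\Af(\G)}(Q)$ in the Chabauty topology. But the paper's execution is far more direct, and two steps of yours as written are off. In the compact case the paper does not set up a continuous, constant-cardinality, injective map $H\mapsto Q_H$ on all of $Z$: it fixes a single $H\in Z$ with its set $Q$, uses proximality of $\Af(\G)\acts X$ (Lemma \ref{l-proximal}) to produce $g_n$ with $g_n(Q)\to\{x\}$ for an arbitrary point $x$ — i.e.\ it \emph{collapses} the $r$ points to one — and then semicontinuity squeezes any cluster point $K$ of $g_nHg_n^{-1}$ between $\st^0_{\Af(\G)}(x)$ and $\st_{\Af(\G)}(x)$; taking $x$ regular gives $K=\st_{\Af(\G)}(x)\in Z$, and minimality identifies $Z$ with the stabiliser URS. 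This collapsing is the correct mechanism for ruling out $r\ge 2$; your proposed alternative (``approximating the swap germ by germs supported away from the points'') does not produce stabilisers of smaller sets in the Chabauty closure and would not work as described. Note also that your two claims — $|Q_H|$ constant equal to $r$ on $Z$, and the image $\{Q_H\}$ closed and invariant, hence equal to $X^{[\ell]}$ by Lemma \ref{l-closed-power} — are already incompatible unless $r=1$, since $X^{[\ell]}$ contains sets of every cardinality $\le\ell$; had you noticed this, the entire ``finally one checks $r=1$'' discussion would be unnecessary. Both claims are in fact provable by the semicontinuity argument you sketch, so your route can be repaired, but it is a long detour around a short proof.

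In the non-compact case your concluding sentence is wrong: pushing $Q$ to infinity and using lower semicontinuity shows that any cluster point of $g_nHg_n^{-1}$ contains $\st^0_{\Af(\G)}(\varnothing)=\Af(\G)$, so the closure of the conjugacy class contains $\Af(\G)$, \emph{not} $\{1\}$ — containing $\{1\}$ would contradict the confinedness of $H$ that you just established. The conclusion survives because $\{\Af(\G)\}$ is also a trivial URS by the paper's convention, so minimality forces $Z=\{\Af(\G)\}$, a contradiction; but as stated your last step is self-contradictory and should be corrected to match the sentence preceding it.
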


\begin{proof}
Let $Y\subset \sub(\Af(\G))$ be a uniformly recurrent subgroup with $Y\neq \{\{1\}\}$, and let $H\in Y$. Since $H$ is confined, there exists $Q$ such that $\st^0_{\Af(\G)}(Q)\le H\le \st_G(\Q)$. Let $\mathcal{P}_r(X)$ be the compact space of finite subsets of $X$ of cardinality at most $r$. If $X$ is not compact,  we can find a sequence $(g_n)\in \Af(\G)$ such that $g_n(Q)\to \varnothing$ in $\mathcal{P}_r(X)$. Let $K\in Y$ be any cluster point of $g_n Hg_n^{-1}$. By the lower semicontinuity of $Q\mapsto \st^0_{\Af(\G)}(Q)$ (Lemma \ref{l-semicontinuous}), we obtain that $K$ contains $\st^0_{\Af(\G)}(\varnothing)=\Af(\G)$ and hence $Y=\{K\}=\{\Af(\G)\}$, a contradiction. Otherwise, by proximality and minimality we can  find for every $x\in X$, a sequence $(g_n)$ as above such that $g_n(Q)\to \{x\}$. Using again semi-continuity we get that every cluster point $K$ of $g_n H g_n^{-1}$ satisfies  $ \st^0_{\Af(\G)}(x)\le K \le  \st_{\Af(\G)}(x)$. Choosing $x$ to be a regular point, we get $K=\st_{\Af(\G)}(x)\in Y$, and therefore by minimality $Y$ must coincide with the stabiliser URS associated to $\Af(\G)\acts X$. \qedhere
\end{proof}

\subsection{Example: AF-groupoids and  confined subgroups of the  LDA groups} \label{s-AF}
To give an example of our classification of the confined subgroup, let us consider the case of   \emph{AF-groupoids} associated to a \emph{Bratteli diagram}. These are an important class of \'etale groupoids whose full groups form  a well-studied class of simple locally finite groups \cite{Lei-Pu-LDA, Lav-Nek-LDA, Thomas}.
We point out to the reader that the reading of this subsection is not necessary to read the rest of the paper.

\begin{defin} A \textbf{Bratteli diagram} is a graph $B=(V, E)$, possibly with multiple edges, together with a map $d\colon  V\to \N$, called the \textbf{labelling}, such that the following hold

\begin{enumerate}
\item there are partitions $V=\sqcup_{i\geq 1} V_i$ and $E=\sqcup_{i\geq 1} E_i$ of the set of vertices and of edges into non-empty finite subsets, and we have $|E_i|\geq 2$ for every $i$;
\item  for every $i\geq 0$ there are maps $o\colon E_i\to V_i$ and and $t\colon E_i\to V_{i+1}$, such that every edge $e\in E_i$ has endpoints  $o(e)\in V_i$ and $t(e)\in V_{i+1}$;  
\item for every $v\in V_i, i\geq 0$ there exists at least an edge $e\in E_i$  with $o(e)=v$;
\item \label{i:labeling} for every $v\in V$, we have $k(v):=d(v)- \sum_{e\colon t(e)=v}(d(o(e))\geq 0$;
\item we have $d(v)\geq 1$ if $v\in V_1$.
\end{enumerate}
 \end{defin}
 
 The Bratteli diagram $B$ is said to be \textbf{simple} if for every $i$ there exists $j>i$ such that every vertex of $V_i$ is connected to every vertex of $V_j$. It is said to be \textbf{unital} if $k(v)=0$ for all but finitely many  vertices $v\in V$.

 For every vertex $v\in V$, choose a finite alphabet $A^v=\{a_1^v, a_2^v\ldots, a_{k(v)}^v\}$ such that $A^v\cap A^w=\varnothing$ if $v\neq w$ (if $k(v)=0$, set $A^v=\varnothing$).
 Define the \textbf{path space} $X_B$ of the Bratteli diagram $B$ to be the set of infinite formal words  of the form $a_j^ve_ie_{i+1}e_{i+2}\ldots$ where   $v\in V$, $1\le j\le k(v)$, $o(e_i)=v$ and $t(e_\ell)=o(e_{\ell+1})$  for every $\ell\geq i$.  The space $X_B$ is endowed with the natural product topology, and it is compact if and only if $B$ is unital.
 A \textbf{finite path} is a finite word $\gamma$ which is a prefix of an element of $X_B$. If $\gamma$ is a finite  path of the form $\gamma=a_j^ve_ie_{i+1}\cdots e_k$, its \textbf{terminal vertex is} the vertex $t(\gamma)=t(e_k)\in V_k$, and if $\gamma$ is of the form $a_j^v$ we set $t(\gamma)=v$. Let $\Delta_v$ be the set of finite paths that have $v$ as terminal point. 
 Given a finite path $\gamma$, we denote by $U_\gamma\subset X_B$ the set of  paths that have $\gamma$ as a prefix. These sets form a basis of clopen sets for the topology on $X_B$. 
 
 Fix a vertex $v$. For every $\gamma_1, \gamma_2\in \Delta_v$, we have a natural  homeomorphism $\tau_{\gamma_1, \gamma_2}\colon U_{\gamma_1}\to U_{\gamma_2}$ given by $\gamma_1e_ke_{k+1}\cdots \mapsto \gamma_2e_k e_{k+1}\cdots$. These maps generate a pseudogroup of partial homeomorphisms of $X_B$. The corresponding groupoid of germs, with it natural \'etale topology, will be denoted $\mathcal{G}_B$. The groupoid $\G_B$ is minimal if and only if the diagram $B$ is simple. 
 \begin{defin}
 An \'etale groupoid is called an \textbf{AF-groupoid} if it is of the form $\G_B$ for some simple Bratteli diagram $B$. 
 \end{defin} 
 
If $\G_B$  is an AF-groupoid,  the group $\Af(\G_B)$ is a simple locally finite group, that can be expressed as the ascending union $\bigcup_{i\geq 1}\prod _{v\in V_i} \alt(\Delta_v)$ of finite alternating groups, where each alternating group $\alt(\Delta_v)$ acts on $U_v=\sqcup_{\gamma\in \Delta_v} U_\gamma$ by permuting initial finite prefixes, and acting trivially outside $U_v$. In fact, in this case the group $\Af(\G)$ coincides with the derived subgroup of $\Df(\G)$ of the topological full group. For more details on   topological full groups of AF-groupoids, see \cite[\S 3]{Mat-simple}.

By a result of Lavrenyuk--Nekrashevych \cite{Lav-Nek-LDA}, the class of simple locally finite groups arising in this way are precisely the \textbf{strongly diagonal limits} of products of alternating groups, or \textbf{LDA-groups} (with the single exception of the infinite finitary alternating group $\alt_f(\N)$, which is an LDA-group but does not arise in this way).

Confined subgroups of simple locally finite groups have been extensively studied \cite{Har-Zal, Lei-Pug-diag, Lei-Pug-finlin} . Confined subgroups of the LDA-groups were studied by  Leinen--Puglisi in  \cite{Lei-Pug-diag}, who provided a characterisation of the confined subgroups in the special case of diagonal limits of alternating groups (the case of $\alt_f(\N)$ had ben previously treated in \cite{Se-Za-alt}).  Theorem \ref{t-Chabauty} extends this characterisation to all the LDA-groups. 
  \begin{cor} \label{t-LDA} Let $G$ be an $LDA$-group, with $G\neq \alt_f(\N)$, and let $B$ be a simple Bratteli diagram such that $G\simeq \Af({\G_B})$. A subgroup $H<G$ is  confined if and only if $H$ contains the point-wise fixator of a finite set $\Q\subset X_B$ as a subgroup of finite index.  
\end{cor}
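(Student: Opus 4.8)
The plan is to derive Corollary~\ref{t-LDA} as a direct specialisation of Theorem~\ref{t-Chabauty} (equivalently Corollary~\ref{c-Chabauty-G}) to the case where $\G = \G_B$ is an AF-groupoid coming from a simple Bratteli diagram $B$, so that $X = X_B$ is a Cantor space and $\Af(\G_B) \cong G$. The only genuine work is to reconcile the two descriptions of a confined subgroup: Theorem~\ref{t-Chabauty} says $H$ is confined if and only if there is a finite $Q \subset X_B$ with $\st^0_{\Af(\G_B)}(Q) \le H \le \st_{\Ff(\G_B)}(Q)$, while the statement of Corollary~\ref{t-LDA} asserts that $H$ contains $\fix_G(Q)$ as a \emph{finite-index} subgroup. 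So the two points to check are: (a) for an AF-groupoid, $\st^0_{\Af(\G_B)}(Q) = \fix_{\Af(\G_B)}(Q)$, i.e. the germ-stabiliser and the pointwise stabiliser coincide; and (b) for every finite $Q$, the index of $\fix_{\Af(\G_B)}(Q)$ inside $\st_{\Af(\G_B)}(Q)$ is finite, which will let us pass from the ``sandwich'' condition to the ``finite-index overgroup'' condition since any $H$ with $\fix_G(Q) \le H \le \st_G(Q)$ automatically contains $\fix_G(Q)$ with finite index.

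First I would note that $\G_B$ is minimal (since $B$ is simple) and effective (it is a groupoid of germs by construction), so Corollary~\ref{c-Chabauty-G} applies with the group $G = \Af(\G_B)$ itself; thus $H \in \sub(G)$ is confined iff there is a unique finite $Q \subset X_B$ with $\st^0_{\Af(\G_B)}(Q) \le H \le \st_G(Q)$. For step (a), the key structural fact about AF-groupoids is that the isotropy of $\G_B$ is trivial and, more importantly, that $\G_B$ is an \emph{increasing union of compact open (in fact elementary) subgroupoids}: concretely $\Af(\G_B) = \bigcup_{i\ge 1}\prod_{v\in V_i}\alt(\Delta_v)$ as recalled in the excerpt, and each generator permutes a finite clopen partition $\{U_\gamma\}_{\gamma\in\Delta_v}$. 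Hence any element $g \in \Af(\G_B)$ fixing a point $x$ actually fixes the whole cylinder $U_\gamma$ containing $x$ at a level deep enough that $g$ lies in $\prod_{v\in V_i}\alt(\Delta_v)$ and $g$ maps $U_\gamma$ to itself (the only way $g\cdot x = x$ for such a $g$ is that $g$ fixes the finite prefix of $x$ of length $i$). Therefore $g$ fixes a clopen neighbourhood of $x$, which gives $\fix_{\Af(\G_B)}(x) \subseteq \st^0_{\Af(\G_B)}(x)$; the reverse inclusion is trivial, and intersecting over the finitely many points of $Q$ gives $\st^0_{\Af(\G_B)}(Q) = \fix_{\Af(\G_B)}(Q)$.

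For step (b), I would again use the local structure: write $Q = \{x_1,\dots,x_m\}$, choose $i$ large enough that the length-$i$ prefixes $\gamma_1,\dots,\gamma_m$ of the $x_j$ are distinct (possible since the $x_j$ are distinct points of $X_B$ and cylinders form a neighbourhood basis), and observe that $\st_{\Af(\G_B)}(Q)$ maps into the finite group $\prod_{v} \sym(\{\gamma_1,\dots,\gamma_m\}\cap\Delta_v)$ of permutations of these prefixes induced by the action, with kernel exactly $\fix_{\Af(\G_B)}(\{U_{\gamma_1},\dots,U_{\gamma_m}\}) \le \fix_{\Af(\G_B)}(Q)$; since the ambient group $\Af(\G_B)$ acts with dense orbits and contains enough alternating elements permuting cylinders, the image of $\st_{\Af(\G_B)}(Q)$ in this finite quotient is finite, so $[\st_{\Af(\G_B)}(Q) : \fix_{\Af(\G_B)}(Q)] < \infty$. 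Combining (a) and (b): if $H$ is confined, Corollary~\ref{c-Chabauty-G} gives $\fix_{G}(Q) = \st^0_G(Q) \le H \le \st_G(Q)$, and since $\fix_G(Q)$ has finite index in $\st_G(Q)$ it has finite index in $H$; conversely if $H$ contains $\fix_G(Q)$ with finite index then a fortiori $\st^0_G(Q) = \fix_G(Q) \le H$, and $H \le N_G(\fix_G(Q))$; one checks $N_G(\fix_G(Q)) \le \st_G(Q)$ (an element normalising the pointwise stabiliser of $Q$ must preserve the set $Q$, since $Q$ is characterised as the set of points with non-open $G$-orbit closure of a certain type, or more directly because $\fix_G(Q)$ determines $Q$ via its fixed-point set together with local structure), so $\st^0_G(Q) \le H \le \st_G(Q)$ and Corollary~\ref{c-Chabauty-G} shows $H$ is confined. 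The main obstacle is the careful bookkeeping in step~(b) and the verification that an element normalising $\fix_G(Q)$ stabilises $Q$ setwise; both are elementary given the explicit AF structure, but they are where the ``finite index'' reformulation genuinely uses that we are in the locally finite AF setting rather than the general case.
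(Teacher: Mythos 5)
Your overall architecture is exactly the paper's: specialise Corollary~\ref{c-Chabauty-G} to $\G_B$ and reconcile the sandwich condition $\st^0_{\Af(\G_B)}(Q)\le H\le \st_{\Af(\G_B)}(Q)$ with the ``finite-index overgroup of $\fix_G(Q)$'' formulation via the two facts (a) $\st^0_{\Af(\G_B)}(Q)=\fix_{\Af(\G_B)}(Q)$ and (b) $[\st_{\Af(\G_B)}(Q):\fix_{\Af(\G_B)}(Q)]<\infty$. The paper gets (a) in one line from the fact that an AF-groupoid is \emph{principal} (trivial isotropy, so every element of the full group fixing a point has trivial germ there); your argument via the exhaustion $\Af(\G_B)=\bigcup_i\prod_{v\in V_i}\alt(\Delta_v)$ proves the same thing and is fine.

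Two of your justifications are wrong, however, even though both conclusions are true and easily repaired. First, in step (b) you claim the kernel of the map from $\st_{\Af(\G_B)}(Q)$ to permutations of the prefixes $\gamma_1,\dots,\gamma_m$ is contained in $\fix_{\Af(\G_B)}(Q)$: it is not — an element preserving each cylinder $U_{\gamma_j}$ setwise need not fix the point $x_j$ inside it. The correct (and much shorter) argument is that restriction of the action to the finite set $Q$ gives a homomorphism $\st_G(Q)\to\sym(Q)$ with kernel exactly $\fix_G(Q)$, so the index is at most $|Q|!$; no prefixes are needed. Second, in the converse direction you assert $H\le N_G(\fix_G(Q))$ from the hypothesis that $\fix_G(Q)$ has finite index in $H$ — finite index does not give normality, so this step is unjustified, and the subsequent claim that normalising $\fix_G(Q)$ forces setwise stabilisation of $Q$ is only sketched. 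This entire detour is unnecessary: $\fix_G(Q)=\st^0_G(Q)$ is confined by the easy implication \ref{i:alternating}$\Rightarrow$\ref{i:conjugacy} of Theorem~\ref{t-confined}, and any overgroup of a confined subgroup is confined (if $P\cap gKg^{-1}\neq\varnothing$ for all $g$ and $K\le H$, then $P\cap gHg^{-1}\neq\varnothing$), which finishes the converse without invoking the finite-index hypothesis at all. With these two repairs your proof coincides with the paper's.
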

\begin{proof}
By Corollary \ref{c-Chabauty}, $H$ is confined if and only if there exists a unique set $Q$ such that $\st^0_{\Af(\G_B)}(Q)\le H\le \st_{\Af(\G_B)}(Q)$. But an AF-groupoid is principal, hence  $\st^0_{\Af(\G_B)}(Q)=\fix_{\Af(\G_B)}(Q)$ and the latter has finite index in $\st_{\Af(\G_B)}(Q)$. 
\end{proof}

Corollary \ref{c-URS} recovers a recent result of Thomas  \cite[Theorem 2.8]{Thomas}.
\begin{cor}
Let $B$ be a simple Bratteli diagram, and assume that $Y\subset \sub(\Af(\G_B))$ is a non-trivial uniformly recurrent subgroup. Then $B$ is unital, and $Y=\{\st_{\Af_{\G_B}}(x)\colon x\in X_B\}$. 
\end{cor}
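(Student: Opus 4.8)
The plan is to deduce this corollary directly from Corollary \ref{c-URS}, since the only real content left is to translate the two cases of that classification into the language of Bratteli diagrams. First I would recall that by the Lavrenyuk--Nekrashevych description, $\Af(\G_B)$ is the ascending union $\bigcup_i \prod_{v\in V_i}\alt(\Delta_v)$, and that $\G_B$ is principal (all isotropy groups are trivial), so that $\st^0_{\Af(\G_B)}(Q)=\fix_{\Af(\G_B)}(Q)$ for every finite $Q\subset X_B$. The key dichotomy is whether the path space $X_B$ is compact, which happens if and only if $B$ is unital.

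Suppose $Y\subset\sub(\Af(\G_B))$ is a non-trivial URS. If $B$ were not unital, then $X_B$ is a locally compact, non-compact Cantor space, and Corollary \ref{c-URS}(ii) asserts that $\Af(\G_B)$ has no non-trivial URS, a contradiction; hence $B$ is unital and $X_B$ is a Cantor space. Then Corollary \ref{c-URS}(i) applies: the only non-trivial URS of $\Af(\G_B)$ is the stabiliser URS of the action $\Af(\G_B)\acts X_B$, i.e.\ the unique minimal closed invariant subset of the closure of $\{\st_{\Af(\G_B)}(x):x\in X_B\}$. It remains to check that this stabiliser URS equals $\{\st_{\Af(\G_B)}(x):x\in X_B\}$ on the nose, with no passage to a proper subset or to a closure. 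For this I would invoke the argument already used in the proof of Corollary \ref{c-URS}: using minimality and proximality of $\Af(\G_B)\acts X_B$ (Lemma \ref{l-proximal}), for every $x$ there is a net $(g_n)$ in $\Af(\G_B)$ with $g_n(Q)\to\{x\}$ for a fixed regular point $Q=\{x_0\}$, and lower semicontinuity of $x\mapsto\st^0_{\Af(\G_B)}(x)$ forces every cluster point $K$ of $g_nHg_n^{-1}$ to satisfy $\st^0_{\Af(\G_B)}(x)\le K\le\st_{\Af(\G_B)}(x)$; choosing $x$ regular gives $K=\st_{\Af(\G_B)}(x)$, so $\st_{\Af(\G_B)}(x)\in Y$ for all $x$, and minimality of $Y$ forces equality $Y=\{\st_{\Af(\G_B)}(x):x\in X_B\}$.

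The only mildly delicate point is that the statement writes $\st_{\Af_{\G_B}}(x)$ rather than $\st^0$, i.e.\ it claims the setwise (equivalently, by principality, the pointwise) stabilisers already form a closed set, so no closure is needed; this is exactly what the regular-point argument above yields, using that regular points are dense and that $\st^0=\st$ at such points because $\G_B$ is principal. I expect no serious obstacle here — the corollary is essentially a restatement of Corollary \ref{c-URS} specialised to AF-groupoids — so the proof is short:

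\begin{proof}
Since $\G_B$ is an AF-groupoid it is minimal (as $B$ is simple), effective, and principal, so Corollary \ref{c-URS} applies. If $B$ is not unital, then $X_B$ is a locally compact, non-compact Cantor space, and by Corollary \ref{c-URS}(ii) the group $\Af(\G_B)$ has no non-trivial URS, contradicting the hypothesis on $Y$. Hence $B$ is unital and $X_B$ is a Cantor space, so by Corollary \ref{c-URS}(i) the URS $Y$ is the stabiliser URS of the action $\Af(\G_B)\acts X_B$. By the argument in the proof of Corollary \ref{c-URS}: picking $H\in Y$, there is a finite set $Q$ with $\st^0_{\Af(\G_B)}(Q)\le H\le \st_{\Af(\G_B)}(Q)$, and using minimality and proximality (Lemma \ref{l-proximal}) together with the lower semicontinuity of $x\mapsto \st^0_{\Af(\G_B)}(x)$ (Lemma \ref{l-semicontinuous}) one finds, for each regular point $x\in X_B$, a net $(g_n)$ in $\Af(\G_B)$ such that every cluster point $K$ of $g_nHg_n^{-1}$ satisfies $\st^0_{\Af(\G_B)}(x)\le K\le \st_{\Af(\G_B)}(x)$. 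Since $\G_B$ is principal, $\st^0_{\Af(\G_B)}(x)=\fix_{\Af(\G_B)}(x)$ has finite index in $\st_{\Af(\G_B)}(x)$; for $x$ regular these coincide, so $K=\st_{\Af(\G_B)}(x)\in Y$. As regular points are dense and $x\mapsto \st_{\Af(\G_B)}(x)$ is thus the generic value, minimality of $Y$ forces $Y=\{\st_{\Af(\G_B)}(x)\colon x\in X_B\}$.
\end{proof}
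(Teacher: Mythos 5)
Your proposal is correct and follows exactly the route the paper intends: the corollary is stated without proof as an immediate consequence of Corollary \ref{c-URS}, with unitality read off from compactness of $X_B$ and the identification $Y=\{\st_{\Af(\G_B)}(x)\colon x\in X_B\}$ coming from principality of $\G_B$ (so $\st^0=\st$ at \emph{every} point, making $x\mapsto\st_{\Af(\G_B)}(x)$ continuous and its image already a closed minimal invariant set). Your only cosmetic slip is invoking ``finite index'' and density of regular points where principality in fact gives equality $\st^0_{\Af(\G_B)}(x)=\st_{\Af(\G_B)}(x)$ at all $x$; the argument goes through regardless.
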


\section{The Extension Theorem}  \label{s-dichotomy}\label{s-homomorphisms}

\label{s-Extension Theorem}

In this section we state and prove the main Extension Theorem (Theorem \ref{t-main} below),  and discuss the first corollaries of it.

\subsection{Statement and first corollaries} 


 Let $\G$ be a minimal effective groupoid over the Cantor space, and $r\ge 1$ be an integer. We refer to \S \ref{s-reduced} for the definition of the symmetric power $\G^{[r]}$ and its basic properties. The natural action of $\Ff(\G)$ on $X^{[r]}$ has germs in $\G^[r]$, and thus provides an embedding $\Ff(\G)\hookrightarrow \Ff(\G^{[r]})$. More formally this embedding is given by:
  
\[g\mapsto g^{[r]}:= \left\{\{\gamma_1,\ldots, \gamma _r\}\in \G^{[r]} \quad \colon \quad \gamma_i\in  g, i=1,\ldots, r \right\}.\]
In what follows we implicitly consider this embedding as an identification by viewing $\Ff(\G)$ as a subgroup of $\Ff(\G^{[r]})$, thus also as a subset  the pseudogroup $\widetilde{\G^{[r]}}$.

Let now $\H$ be an \'etale groupoid over a compact space $Y$. Given an element $g\in \Ff(\H)$, we define the \textbf{support} of $g$ to be the set $\supp(g)=\{y\in Y\colon [g]_y\neq y\}$. (Recall from \S \ref{s-pseudogroups} that  the notation $[g]_y$ refers to the (abstract) germ at $y$ in the pseudogroup $\tH$, i.e. the unique element  $\gamma\in g\subset \H$ such that $\src(\gamma)=y$). The support of an element is automatically closed in $Y$ since it is the complement in $Y$ of of $g\cap Y$ (and the latter is open). If $\H$ is effective, the support of $g$ coincides with the usual notion of support of the associated homeomorphism, i.e.  $\supp(g)=\overline{\{y\in Y\colon g(y)\neq y\}}$. The support of  a subgroup $K \le \Ff(\H)$ is defined as $\supp(K)=\overline{\bigcup_{k\in K} \supp(k)}$.
 
 Finally if $Z\subset Y$ is a clopen subset, we identify the group  $\Ff(\H|Z)$ with the subgroup of $\Ff(\H)$ consisting of elements whose support is contained in $Z$.

After all these clarifications, we  can  state:

\begin{thm}[Extension Theorem] \label{t-main}
Let $\mathcal{G}$ be a minimal effective \'etale groupoid over a Cantor space $X$, and let $\Hcal$ be an \'etale groupoid over any compact space $Y$. Let $G$ be a subgroup of  $\Ff(\G)$   that contains $\Af(\G)$. Let $\rho\colon G\to \Ff(\H)$  be a homomorphism, and let $Z\subset Y$ be the support of $\rho(\Af(\G))$. Exactly one of the following possibilities holds. 
\begin{enumerate}[label=(\roman*)]

\item \label{i-not-free} The set $Z$ is empty (ant thus $\rho$ factors through the quotient  $G/\Af(\G)$).

\item \label{i-free}   There exists a point $y\in  Z$ such that the map $G\to \H_y,  g\mapsto [\rho(g)]_y$ is injective.


 \item  \label{i-factor} The set $Z$ is a clopen in  $Y$, and there exist $r\ge 1$ such that $\rho|_{\Af(\G)}$ uniquely extends to a  continuous  morphism of  pseudogroups  
 \[ \widetilde{\rho}\colon\widetilde{\G^{ [r]}}\to \widetilde{\H|Z}.\]  
 Moreover, $\widetilde{\rho}|_G$ and $\rho$ are related as follows: there exists a group homomorphism $\psi\colon G/\Af(\G)\to \Ff(\H)$ whose image centralises $\widetilde{\rho}(G)$, and such that for every $g\in G$ we have $\rho(g)=\widetilde{\rho}(g)\psi\left(g\Af(\G)\right)$. 
 
%
%

  \end{enumerate}

\end{thm}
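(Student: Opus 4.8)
The strategy is to exploit our classification of the confined subgroups of $\Af(\G)$ (Corollary \ref{c-Chabauty}) to pin down the pointwise germ structure of $\rho$. The key observation is that for each $y\in Z$ the kernel of the germ map $g\mapsto[\rho(g)]_y$ is a subgroup $N_y\le G$, and since every point of $Z$ is in the support of $\rho(\Af(\G))$, these kernels cannot be too large. First I would consider the family of maps $u\colon Y\to \sub(\Af(\G))$ defined by $u(y)=\ker\bigl(\Af(\G)\to\H_y,\ g\mapsto[\rho(g)]_y\bigr)$, restricted to $Z$. Using the cocycle relation for germs one checks that $u$ is equivariant with respect to the conjugation action of $\Af(\G)$ on $\sub(\Af(\G))$ and the action of $\Af(\G)$ on $Z$ (through $\rho$), and that $u$ is upper semicontinuous (the set $\{y : g\in u(y)\}$ is exactly where the germ of $\rho(g)$ is trivial, which is open, so in fact one should be careful about which semicontinuity holds — it is the complement that is open, giving upper semicontinuity; compare Example \ref{e-stab-semicontinuous} with the germ stabiliser). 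If some $u(y)=\{1\}$ for $y\in Z$ then we are in case \ref{i-free} after extending the germ map from $\Af(\G)$ to all of $G$; if $Z=\varnothing$ we are in case \ref{i-not-free}. So assume $Z\ne\varnothing$ and $u(y)\ne\{1\}$ for all $y\in Z$.

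The next step is to show that each $u(y)$ is then a \emph{confined} subgroup of $\Af(\G)$. This follows from Lemma \ref{l-semicontinuous-closure}: the relevant map is lower semicontinuous (I would set it up as the appropriate one-sided limit so that the hypothesis of that lemma applies — concretely, by passing to the map $y\mapsto$ the germ-kernel and using that germ-triviality is an open condition), it is equivariant and nowhere trivial, hence its image has closure avoiding $\{1\}$ and every $u(y)$ is confined. Now Corollary \ref{c-Chabauty} (applied with $G=\Af(\G)$) gives a \emph{finite} set $Q_y\subset X$ with $\st^0_{\Af(\G)}(Q_y)\le u(y)\le\st_{\Af(\G)}(Q_y)$, and moreover the uniqueness in that corollary makes $y\mapsto Q_y$ well-defined. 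One then shows $y\mapsto Q_y$ is a continuous map $Z\to X^{[r]}$ for $r=\sup|Q_y|$: continuity follows from the semicontinuity of $y\mapsto u(y)$ in both directions combined with the sandwiching by germ-stabiliser and stabiliser (this is the standard mechanism, as in the proof of Corollary \ref{c-URS}), and $r<\infty$ because a confining set $P$ bounds $|Q_y|\le|P|-1$ uniformly — here $P$ can be taken from a single confining set that works for all $u(y)$ simultaneously, which exists since the closure of the image of $u$ avoids $\{1\}$. This $q\colon Z\to X^{[r]}$, $q(y)=Q_y$, will be the spatial component of $\widetilde\rho$.

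To build $\widetilde\rho\colon\widetilde{\G^{[r]}}\to\widetilde{\H|Z}$ itself, I would use the groupoid-action picture of Theorem \ref{t-non-comm-stone}: a continuous morphism $\widetilde{\G^{[r]}}\to\widetilde{\H|Z}$ is the same as a translation action of $\G^{[r]}$ on $\H|Z$ over $q\circ\rg$. Given $\mathcal{Q}\in\G^{[r]}$ with $\src(\mathcal Q)=q(y)=Q_y$, one must produce a germ in $\H|Z$ based at $y$; the candidate is $[\rho(g)]_y$ where $g\in\Af(\G)$ is any element whose action on a neighbourhood of $Q_y$ realises $\mathcal Q$ (such $g$ exists by Lemma \ref{l-multigerm} and the fact that $\Af(\G)$ acts with all multigerms); well-definedness is exactly the statement $\st^0_{\Af(\G)}(Q_y)\le u(y)$, and the cocycle relation for germs makes this a translation action. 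Equivariance of $q$ for the $\Ff(\G)$-actions (Corollary \ref{c-continuous-morphism-full}) then forces this $\widetilde\rho$ to extend $\rho|_{\Af(\G)}$, and uniqueness follows since $\Af(\G)$ generates enough bisections of $\G^{[r]}$ (every compact open bisection of $\G^{[r]}$ is covered by ones coming from multisections, by the lemmas in \S \ref{s-prel-alt}). Finally, for the relation between $\widetilde\rho|_G$ and $\rho$ on all of $G$: for $g\in G$ the element $\psi(g):=\widetilde\rho(g)^{-1}\rho(g)$ has trivial germ at every point of $Z$ in the sense that it commutes with $\widetilde\rho(\Af(\G))$; since $\Af(\G)\unlhd G$ and $\widetilde\rho(\Af(\G))$ has trivial centraliser considerations force $\psi$ to factor through $G/\Af(\G)$ and to land in the centraliser of $\widetilde\rho(G)$ — here one invokes simplicity of $\Af(\G)$ (Theorem \ref{t-simple}) together with the normality of $\Af(\G)$ in $G$ to see $g\mapsto\psi(g)$ is a homomorphism killing $\Af(\G)$. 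The cases are mutually exclusive because in case \ref{i-factor} the germ map at any $y\in Z$ factors through the map $\mathcal Q\mapsto$ germ, which is far from injective once $\G$ is non-trivial (it kills $\st^0(Q_y)\ne\{1\}$), whereas case \ref{i-free} asserts injectivity.

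\textbf{Main obstacle.} The delicate point is the passage from ``each $u(y)$ is confined, with associated finite set $Q_y$'' to ``$y\mapsto Q_y$ is a genuine continuous map into a \emph{fixed} symmetric power $X^{[r]}$ with $r$ finite,'' and then the verification that the germ-based formula really defines a translation action (well-definedness across the two-sided sandwich $\st^0_{\Af(\G)}(Q_y)\le u(y)\le\st_{\Af(\G)}(Q_y)$, since $u(y)$ need not equal either end). The uniform bound on $|Q_y|$ must come from a single confining set valid on all of $Z$ at once, which is where nowhere-triviality of $u$ on the compact set $\overline Z=Z$ (clopen, hence compact) is used essentially; and the continuity of $q$ requires combining both one-sided semicontinuities carefully, exactly as in the URS argument (Corollary \ref{c-URS}) but now with a moving base point in $Z$ rather than in $X$. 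Everything else — functoriality, the splitting $\rho=\widetilde\rho\cdot\psi$, mutual exclusivity — is comparatively formal given the machinery of \S \ref{s-morphism-pseudogroups} and \S \ref{s-prel-alt}.
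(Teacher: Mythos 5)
Your proof follows the paper's architecture step for step: the germ-kernel map into the Chabauty space, Lemma \ref{l-semicontinuous-closure} to extract a single confining set on a compact invariant set, the classification of confined subgroups to produce the finite sets $Q_y$ with the uniform bound $|Q_y|\le|P|-1$, a two-sided semicontinuity argument for the continuity of $y\mapsto Q_y$, and Theorem \ref{t-non-comm-stone} to turn the germ formula into a translation action whose well-definedness is exactly the inclusion $\st^0_{\Af(\G)}(Q_y)\le u(y)$. (A slip in your first paragraph: $u$ is \emph{lower} semicontinuous, since $\{y:g\in u(y)\}=\rho(g)\cap Y$ is open and open level sets give lower semicontinuity by Lemma \ref{l-semicontinuous}; your second paragraph has it right, and lower semicontinuity is what Lemma \ref{l-semicontinuous-closure} needs.)

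The genuine gap is your choice to run everything with $u(y)=\st^{\rho}_{\Af(\G)}(y)$ rather than with $\st^{\rho}_G(y)=\{g\in G:[\rho(g)]_y=y\}$. Case \ref{i-free} asserts injectivity of the germ map on all of $G$, and when some $u(y)=\{1\}$ you propose to ``extend the germ map from $\Af(\G)$ to all of $G$''. But $u(y)=\{1\}$ only says $\st^{\rho}_G(y)\cap\Af(\G)=\{1\}$; since $\st^{\rho}_G(y)$ is normal in $\st_G(y)$ but not in $G$, and could a priori be a nontrivial subgroup projecting injectively to $G/\Af(\G)$, triviality of $\st^{\rho}_G(y)$ does not follow. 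The paper works with $H=\st^{\rho}_G(y)\le G$ throughout --- this is precisely why Corollary \ref{c-Chabauty-G} is stated for subgroups of $G$ confined by $\Af(\G)$, with sandwich $\st^0_{\Af(\G)}(Q)\le H\le\st_G(Q)$ --- and the rest of your construction survives this substitution unchanged. Two further points you gloss over: (a) clopenness of $Z$ --- you only use that $Z$ is closed, but openness requires defining the map on all of $Y$ with values in $\P_r(X)=X^{[r]}\sqcup\{\varnothing_X\}$ and observing that $Z$ is the preimage of the clopen set $X^{[r]}$; (b) in the continuity argument, passing to a limit only yields $\st^0_{\Af(\G)}(Q)\le\st_G(y)$, which merely \emph{normalises} the germ-kernel at $y$, and one needs Lemma \ref{l-dense-normalizes} to upgrade this to containment before uniqueness identifies $Q$. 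Finally, with $\psi(g):=\widetilde{\rho}(g)^{-1}\rho(g)$ the verification that $\psi$ is a homomorphism already presupposes that $\psi(g)$ centralises $\widetilde{\rho}(h)$ for all $h\in G$, while the direct computation only gives centralisation of $\widetilde{\rho}(\Af(\G))$; the paper's device of a single translation action of the product groupoid $\P_r(\G)\times(G/\Af(\G))$ produces $\widetilde{\rho}$ and $\psi$ simultaneously and avoids this circularity.
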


The proof is postponed to \S \ref{s-proof-ext}

\begin{remarks} \label{r-comments}
Let us make some comments on the statement.
\begin{enumerate}[label=(\roman*)]

\item The group $\Af(\G)$ being a normal subgroup of $\Ff(\G)$, is also normal in  $G$.

\item The most significant case of Theorem \ref{t-main} is when $G=\Af(\G)$, for which the unpleasant  last sentence in case \ref{i-factor} can be ignored (and case \ref{i-not-free} becomes irrelevant). 
For more general $G$, it may not be true that the obtained extension $\widetilde{\rho}$ coincides with ${\rho}$ on the whole group $G$. For instance in many cases  the group $G=\Ff(\G)$ surjects onto $\Z$ \cite{Mat-simple, Mat-SFT}. This can often be used to ``twist'' a given homomorphism $\rho\colon \Ff(\G)\to \Ff(\H)$ with a homomorphisms $\Z\to \Ff(\H)$ whose image centralises the image of $\rho$ (e.g. with disjoint support). 

\item \label{i-comments-G-equivariant} If case \ref{i-factor} holds, let  $q\colon Z\to X^{[r]}$ be the spatial component of $\widetilde{\rho}$. By Corollary \ref{c-continuous-morphism-full}, the map $q$ is equivariant for the $\rho$-action of the group $\Af(\G)$ on $Z$ with its natural  action on $X^{[r]}$. However, in fact one can check that  the set $Z$ is invariant under the $\rho$-action of the whole group $G$, and the map $q$ remains equivariant with respect to this action. 
This is true a-priori  only for the $\widetilde{\rho}$-action of $G$, but  it remains true for the $\rho$-action  using that the image of $\psi$ centralises $\widetilde{\rho}(G)$  (we omit details as this will also be apparent from the  the proof, see Proposition \ref{p-free-factor} below).

\item \label{i-comments-surjective}\label{i-comments-injective} Assume that $\rho$ falls in case \ref{i-factor}. Then by the general properties of the symmetric power $\G^{[r]}$,  upon  taking a smaller $r$ we can assume without loss of generality that the spatial the map  $q\colon U\to X^{[r]}$ is surjective (equivalently that $\widetilde{\rho}$ is injective, by Proposition \ref{p-inj-surj}), see Lemma \ref{l-closed-power}.   We will often use this observation.

\end{enumerate}

\end{remarks}

Before turning to the proof of Theorem \ref{t-main}, let us illustrate its statement by analysing it in some simple special cases. Perhaps the simplest is the following.

\begin{example}[Simplicity of the group $\Af(\G)$] \label{e-simple}
Let us consider Theorem \ref{t-main} in  the degenerate case when $\H=H$ is just a countable group, seen as an \'etale groupoid over the one-point space $Y=\{1_H\}$.(see Example \ref{e-groupoids} \ref{e-i-group}). Note that the full group $\Ff(H)$ is isomorphic to $H$ itself, and the pseudogroup $\widetilde{H}$ is just $H$ with the adjunction of a zero element $\varnothing$. 
Let $\rho\colon G\to \Ff(H)=H$ be a homomorphism. 
First of all note that Case \ref{i-factor} in the statement of Theorem \ref{t-main} cannot happen. If it did, by Remarks \ref{r-comments}~\ref{i-comments-surjective} we would obtain a surjective map $q\colon \{1_H\}\to X^{[r]}$ for some $r\ge 1$, which is absurd. We are left with cases \ref{i-not-free} and \ref{i-free}. Assume that Case \ref{i-free} happens. Then we have necessarily $y=1_H$, and the germ map $G\to H, g\mapsto [g]_{1_H}$ is simply $\rho$ itself. Thus case \ref{i-free} exactly says that $\rho$ is injective. On the other hand case \ref{i-not-free} exactly says that $\Af(\G)\subset \ker \rho$. Summing up, in this case the theorem says that for every  group $G\le \Ff(\G)$ containing $\Af(\G)$, every homomorphism $\rho\colon G\to H$ to a countable group $H$ verifies either $\ker{\rho}=\{1\}$ or $\Af(\G)\le\ker{\rho}$. For $G=\Af(\G)$ this is equivalent to the fact that $\Af(\G)$ is simple, and for $G=\Ff(\G)$ to the fact that $\Af(\G)$ is contained in every non-trivial normal subgroup of $\Ff(\G)$. Thus in this special case, Theorem \ref{t-main} reduces to  Theorem \ref{t-simple}. 
\end{example}


%

If one knows \emph{a-priori} that the image of $G$ acts minimally on  $Y$, the statement simplifies as follows: 

\begin{cor}[Case of image with minimal action] \label{c-action-minimal} 
Retain the  assumptions of Theorem \ref{t-main}, and assume further that $\rho(G)$ acts minimally on $Y$. Then exactly one of the following holds.

\begin{enumerate}[label=(\roman*)] 
\item The homomorphism $\rho$ factors through the quotient $G/\Af(\G)$.
\item For every $y\in Y$ the map $G\to \H_y, G\mapsto [\rho(g)]_y$ is injective. 
\item  The restriction $\rho|_{\Af(\G)}$ uniquely extends to a continuous morphism $\widetilde{\rho}\colon \tG\to \tH$. Moreover there exists a group homomorphism $\psi\colon G/\Af(\G)\to \Ff(\H)$ whose image centralises $\widetilde{\rho}(G)$ and such that for every $g\in G$ we have $\rho(g)=\widetilde{\rho}(g)\psi(g\Af(\G))$.
\end{enumerate}
\end{cor}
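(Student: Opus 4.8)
The plan is to deduce Corollary~\ref{c-action-minimal} directly from Theorem~\ref{t-main} by examining what each of the three cases \ref{i-not-free}, \ref{i-free}, \ref{i-factor} of that theorem becomes under the additional hypothesis that $\rho(G)$ acts minimally on $Y$. Case \ref{i-not-free} of Theorem~\ref{t-main} says $Z=\supp(\rho(\Af(\G)))=\varnothing$, i.e. $\rho(\Af(\G))$ acts trivially on $Y$, which is exactly the statement that $\rho$ factors through $G/\Af(\G)$; so this matches case (i) verbatim. Case \ref{i-free} of Theorem~\ref{t-main} produces a single point $y\in Z$ with injective germ map; here the point is to upgrade ``there exists $y$'' to ``for every $y$'', which is where minimality of the $\rho(G)$-action enters.

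First I would handle the upgrade in Case \ref{i-free}. Suppose $y_0\in Z$ is such that $g\mapsto[\rho(g)]_{y_0}$ is injective, and let $y\in Y$ be arbitrary. By minimality of the $\rho(G)$-action there is $h\in G$ with $\rho(h)(y)$ arbitrarily close to $y_0$; more precisely, pick $h$ so that $w:=\rho(h)(y)$ lies in a small enough clopen neighbourhood of $y_0$. The cocycle relation for germs in $\tH$ (recorded in \S\ref{s-groupoids}) gives, for every $g\in G$,
\[
[\rho(hgh^{-1})]_{w}=[\rho(h)]_{y}\,[\rho(g)]_{y}\,[\rho(h)]_{y}^{-1},
\]
so that $g\mapsto [\rho(g)]_y$ is injective if and only if $g\mapsto[\rho(hgh^{-1})]_{w}$ is; since conjugation by $h$ is an automorphism of $G$, the latter is injective iff $g\mapsto[\rho(g)]_w$ is. Thus the set $Z_{\mathrm{free}}=\{y\in Y: g\mapsto[\rho(g)]_y \text{ is injective}\}$ is $\rho(G)$-invariant and non-empty. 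It only remains to observe that $Z_{\mathrm{free}}$ is closed: this follows because injectivity of the germ map fails at $y$ precisely when some non-trivial $g\in G$ has $[\rho(g)]_y=y$, i.e. when $y\in\supp(\rho(g))^c$ for some $g\in G\setminus\{1\}$, and the support of each $\rho(g)$ is closed (as noted in \S\ref{s-homomorphisms}); hence $Z_{\mathrm{free}}^{c}=\bigcup_{g\neq 1}\supp(\rho(g))^{c}$ — wait, this is a union of open sets and so open only in the wrong direction. I would instead argue countably: $G$ is countable, and $Z_{\mathrm{free}}=\bigcap_{g\in G\setminus\{1\}}\bigl(Y\setminus\mathrm{Fix}(\rho(g))^{\circ}\bigr)$ where $\mathrm{Fix}(\rho(g))^{\circ}=(\supp(\rho(g)))^{c}$ is open, so each set in the intersection is closed and $Z_{\mathrm{free}}$ is closed. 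Being a non-empty closed $\rho(G)$-invariant subset of the minimal system $Y$, it equals $Y$, giving case (ii).

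Next I would treat Case \ref{i-factor} of Theorem~\ref{t-main}. There $Z$ is clopen and non-empty and $\rho|_{\Af(\G)}$ extends to a continuous morphism $\widetilde{\rho}\colon\widetilde{\G^{[r]}}\to\widetilde{\H|Z}$. A clopen $\rho(\Af(\G))$-invariant — indeed $\rho(G)$-invariant, by Remarks~\ref{r-comments}\ref{i-comments-G-equivariant} — non-empty subset of the minimal $\rho(G)$-system $Y$ must be all of $Y$, so $Z=Y$ and $\widetilde{\H|Z}=\tH$. It then remains to see that one may take $r=1$. For this I invoke Lemma~\ref{l-closed-power}: after replacing $r$ by a smaller value we may assume the spatial component $q\colon Y\to X^{[\ell]}$ is surjective with $\widetilde{\rho}$ factoring as $\widetilde{\G^{[r]}}\to\widetilde{\G^{[\ell]}}\to\tH$. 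But $q$ is $\rho(\Af(\G))$-equivariant (Corollary~\ref{c-continuous-morphism-full}) and $\Af(\G)$ acts on $Y$ minimally here? — not quite; what I actually know is that $\rho(G)$ acts minimally, and $q$ intertwines this with the $\G^{[\ell]}$-action on $X^{[\ell]}$. The point is that $X^{[\ell]}$ is \emph{not} minimal under $\G^{[\ell]}$ for $\ell\ge 2$ (it contains the proper closed invariant subsets $X^{[j]}$, $j<\ell$, by Lemma~\ref{l-closed-power}), whereas the image $q(Y)$, being a continuous equivariant image of a minimal system, must be a minimal closed invariant subset of $X^{[\ell]}$; combined with surjectivity of $q$ onto $X^{[\ell]}$ this forces $\ell=1$. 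Hence $\widetilde{\rho}\colon\tG\to\tH$, as claimed. The statement about $\psi$ carries over unchanged from Theorem~\ref{t-main}\ref{i-factor}, since we did not modify $\psi$ in any of the reductions. Finally, the three cases remain mutually exclusive because they are restrictions of the mutually exclusive cases of Theorem~\ref{t-main} (case (i) corresponds to $Z=\varnothing$, cases (ii),(iii) to $Z\neq\varnothing$ with, respectively, the free-germ alternative and the extension alternative, which are incompatible by the same argument as in Theorem~\ref{t-main}).

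The main obstacle I anticipate is the bookkeeping in the upgrade step of Case \ref{i-free}: making sure that the conjugation-transport argument for the germ map is stated correctly (the germ at $y$ of $\rho(hgh^{-1})$ versus the germ at $\rho(h)(y)$), and that $Z_{\mathrm{free}}$ is genuinely closed — the naive ``complement of a union of closed supports'' reasoning goes the wrong way, and one must use countability of $G$ to write $Z_{\mathrm{free}}$ as a countable intersection of closed sets. The reduction in Case \ref{i-factor} to $r=1$ via the non-minimality of the higher symmetric powers is conceptually clean but I would want to double-check that a continuous equivariant image of a minimal groupoid action is again minimal (it is, by the standard argument: the preimage of the closure of any orbit is closed, invariant, and non-empty, hence everything), and that Lemma~\ref{l-closed-power} is being applied with the correct surjectivity normalisation.
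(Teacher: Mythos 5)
Your proposal is correct and follows essentially the same route as the paper: match each case of Theorem \ref{t-main} to its counterpart, use that the set of points with injective germ map is closed and $\rho(G)$-invariant (hence all of $Y$ by minimality), and use minimality again to force $Z=Y$ and $r=1$ via the non-minimality of $X^{[r]}$ for $r\ge 2$. Two harmless slips: countability of $G$ is not needed since arbitrary intersections of closed sets are closed, and in the conjugation-transport identity the left factor should be $[\rho(h)]_{\rho(g)(y)}$ rather than $[\rho(h)]_{y}$ (which does not affect the equivalence of injectivity).
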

\begin{proof}
 Let us show that each case in Theorem \ref{t-main} corresponds its homologous here. For case \ref{i-not-free} this is obvious. For case \ref{i-free}, it is enough to observe that the set of points $y\in Y$ satisfying the conclusion must is closed and $\rho(G)$-invariant, thus by minimality it is either empty or equal to $Y$. In case \ref{i-factor}, observe first of all that since $\Af(\G)$ is normal in $G$, the support $Z$ of $\rho(\Af(\G))$ is invariant under $\rho(G)$, and thus by minimality we have $Z=Y$. Thus we obtain an integer $r\ge 1$ and a unique continuous extension $\widetilde{\rho}\colon \widetilde{\G^{[r]}}\to \tH$. By remark \ref{r-comments}~\ref{i-comments-surjective}, we can choose $r$ such that the associated spatial component $q\colon Y\to X^{[r]}$ is surjective. By minimality  this implies that $r=1$, otherwise $X=X^{[1]}$ would sit in $X^{[r]}$ as a proper closed $G$-invariant subset, and its preimage $q^{-1}(X)\subset Y$ would be a closed proper ${\rho(G)}$-invariant subset  (using Remark \ref{r-comments}~\ref{i-comments-G-equivariant}).  \qedhere
\end{proof}

The following is Rubin--Matui's  Isomorphism Theorem restated in the language of this paper.
\begin{cor} \label{c-isom} \label{c-isomorphisms}
For $i=1,2$ let $\G_i$ be a minimal effective \'etale groupoid over a Cantor space $X_i$.  Let $G_i$ be groups such that $\Af(\G_i)\le  G_i\le \Ff(\G_i)$. And assume that $G_1$ and $G_2$ are isomorphic. 

Then every group isomorphism $\rho\colon G_1\to G_2$  extends to a continuous isomorphism of pseudogroups $\widetilde{\rho}\colon \tG_1 \to \tG_2$.
In particular the pseudogroup $\tG_1$ and $\tG_2$ (equivalently, the \'etale groupoids $\G_1$ and $\G_2$) are isomorphic.
\end{cor}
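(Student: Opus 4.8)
The idea is simply to run the Extension Theorem (Theorem \ref{t-main}) on the isomorphism $\rho\colon G_1\to G_2$ and on its inverse $\rho^{-1}$, and to rule out the two ``bad'' cases by symmetry. First I would apply Theorem \ref{t-main} with $\G=\G_1$, $\H=\G_2$, $G=G_1$ and the homomorphism $\rho$, taking $\H$ to be the full \'etale groupoid $\G_2$ itself (so that $\Ff(\H)=\Ff(\G_2)\supseteq G_2=\rho(G_1)$). Since $\rho$ is an isomorphism it is in particular injective and non-trivial, so case \ref{i-not-free} cannot occur: if $\Af(\G_1)\subset\ker\rho$ then $\rho$ would kill the nontrivial simple subgroup $\Af(\G_1)$. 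Thus we are left with cases \ref{i-free} and \ref{i-factor}, and the heart of the argument is to exclude \ref{i-free}.

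To exclude case \ref{i-free} I would argue symmetrically. Suppose case \ref{i-free} holds for $\rho$: there is $y\in Z_1\subseteq X_2$ such that $g\mapsto[\rho(g)]_y$ is injective on $G_1$, hence (restricting) on $\Af(\G_1)$. Now apply Theorem \ref{t-main} also to $\rho^{-1}\colon G_2\to G_1\le\Ff(\G_1)$. If $\rho^{-1}$ also fell in case \ref{i-free}, then both $\rho$ and $\rho^{-1}$ would have injective germ maps at suitable points; but I claim that a homomorphism whose germ map at some point is injective on $\Af(\G)$ is incompatible with its inverse also having this property, because the germ of an element of $\Af(\G)$ at a point $y$ only sees a ``local'' piece of the group. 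More concretely: the stabiliser $\st^0_{\Af(\G_1)}(x)$ of the germ of a point $x\in X_1$ is an infinite subgroup (as $\G_1$ is minimal over a Cantor space, so $X_1$ has no isolated points and rigid stabilisers are infinite by the arguments in \S\ref{s-prel-alt}), yet every element of $\st^0_{\Af(\G_1)}(x)$ has trivial germ at $x$; transporting this through $\rho$ and the injective germ map at $y$ one gets a contradiction with injectivity of $g\mapsto[\rho(g)]_y$ on an infinite subgroup of the image. Thus at least one of $\rho$, $\rho^{-1}$ must fall in case \ref{i-factor}; say $\rho$ does, giving $r\ge1$ and a continuous morphism $\widetilde\rho\colon\widetilde{\G_1^{[r]}}\to\widetilde{\G_2|Z_1}$, and (by Remark \ref{r-comments}\ref{i-comments-surjective}) we may take $r$ minimal so that the spatial component $q\colon Z_1\to X_1^{[r]}$ is surjective, i.e. $\widetilde\rho$ injective.

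Next I would pin down $Z_1$ and $r$. Applying the same reasoning to $\rho^{-1}$ in place of $\rho$: if $\rho^{-1}$ fell in case \ref{i-free} we derive a contradiction as above using the surjectivity of $q$ (the germ map of $\rho^{-1}$ at a point would have to be injective on $\Af(\G_2)$, contradicting that $\widetilde\rho$ being injective forces every point of $X_2$ to have, after pulling back, a point of $X_1$ over it whose germ-stabiliser maps into germ-trivial elements). Hence $\rho^{-1}$ also falls in case \ref{i-factor}, yielding $s\ge1$, a clopen $Z_2\subseteq X_1$, and a continuous injective morphism $\widetilde{\rho^{-1}}\colon\widetilde{\G_2^{[s]}}\to\widetilde{\G_1|Z_2}$ with surjective spatial component. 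Composing $\widetilde\rho$ and $\widetilde{\rho^{-1}}$ on the full groups recovers the identity on $\Af(\G_1)$ and on $\Af(\G_2)$; by the uniqueness clauses in Theorem \ref{t-main} (the extension of $\rho|_{\Af(\G_1)}$ is unique) the composite continuous morphism $\widetilde{\G_1^{[rs]}}\to\widetilde{\G_1|?}$ must agree with the restriction morphism coming from the diagonal embedding, forcing $r=s=1$ (since for $r\ge2$ the diagonal $\Af(\G)\hookrightarrow\Ff(\G^{[r]})$ does not extend to a continuous morphism $\widetilde\G\to\widetilde{\G^{[r]}}$, as recalled in the introduction) and $Z_1=X_2$, $Z_2=X_1$. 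So we obtain mutually inverse continuous morphisms $\widetilde\rho\colon\widetilde{\G_1}\to\widetilde{\G_2}$ and $\widetilde{\rho^{-1}}\colon\widetilde{\G_2}\to\widetilde{\G_1}$; by uniqueness their composites are the identity morphisms, so $\widetilde\rho$ is a continuous isomorphism of pseudogroups.

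Finally I would check that $\widetilde\rho$ genuinely extends $\rho$ on all of $G_1$, not just on $\Af(\G_1)$. Theorem \ref{t-main} in case \ref{i-factor} gives $\rho(g)=\widetilde\rho(g)\,\psi(g\Af(\G_1))$ with $\psi$ having image centralising $\widetilde\rho(G_1)\supseteq\widetilde\rho(\Af(\G_1))=\Af(\G_2)$ (using $r=1$). But by Lemma \ref{l-trivial-centraliser} the centraliser of $\Af(\G_2)$ in $\Ff(\G_2)$ is trivial, so $\psi$ is trivial and $\rho=\widetilde\rho$ on $G_1$. Hence $\rho$ itself extends to the continuous isomorphism of pseudogroups $\widetilde\rho\colon\widetilde{\G_1}\to\widetilde{\G_2}$, and the \'etale groupoids $\G_1,\G_2$ are isomorphic by the equivalence of categories (Theorem \ref{t-non-comm-stone}), completing the proof.

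\medskip

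\emph{Main obstacle.} The delicate point is the exclusion of case \ref{i-free}: one must show that an isomorphism cannot have injective germ maps ``on both sides''. The clean way to do this is the observation that the germ map at $y$ restricted to $\Af(\G_1)$ cannot be injective because $\Af(\G_1)$ contains an infinite subgroup (a germ-stabiliser, or better a rigid stabiliser of a small clopen set contained in the relevant chart) all of whose elements have trivial germ at the relevant point — but wait, the germ at $y\in X_2$ of $\rho(g)$ need not be the image of a germ of $g$. The correct resolution: in case \ref{i-free} the germ map $\Af(\G_1)\to(\G_2)_y$ is injective, and $(\G_2)_y$ is a \emph{discrete} countable set with a \emph{group} structure only on the isotropy $(\G_2)_y^y$; injectivity of a group homomorphism into such a set is impossible for an infinite simple group unless the image lands in the isotropy group $(\G_2)_y^y$, and then one uses that $\Af(\G_1)$ has no finite-index subgroups / is not linear / the isotropy groups of effective groupoids over the Cantor set are locally finite or otherwise too small — more robustly, one invokes the argument already internal to the proof of Theorem \ref{t-main} showing that case \ref{i-free} is genuinely exclusive of case \ref{i-factor}, so that one of the two applications must land in \ref{i-factor}. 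I expect the honest argument here to be the one spelled out in \S\ref{s-proof-ext}; the Corollary just needs to invoke that exclusivity plus the symmetry between $\rho$ and $\rho^{-1}$.
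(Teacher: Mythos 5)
There is a genuine gap at the step you yourself flag as the ``main obstacle'': the exclusion of case \ref{i-free}. Your first attempt (an infinite subgroup of $\Af(\G_1)$ with trivial germ at $y$) fails for the reason you notice — the germ of $\rho(g)$ at $y\in X_2$ is not the image of a germ of $g$ — and your fallback arguments (isotropy groups being ``too small'', a symmetry claim that $\rho$ and $\rho^{-1}$ cannot both have injective germ maps) are not substantiated; in the end you defer to ``the honest argument spelled out in the proof of Theorem \ref{t-main}'', but the mutual exclusivity of cases \ref{i-free} and \ref{i-factor} proved there does not by itself tell you which case occurs, so nothing you wrote actually rules out case \ref{i-free} for $\rho$. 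The argument you are missing is short and uses only \emph{surjectivity} of $\rho$: for any $x\in X_2$, Lemma \ref{l-multigerm} (together with minimality of $\G_2$) produces a non-trivial $g\in\Af(\G_2)\le G_2$ with $[g]_x=x$ (e.g.\ a $3$-cycle supported in a clopen set avoiding $x$); setting $h=\rho^{-1}(g)\neq 1$ gives $[\rho(h)]_x=x$, so the germ map $G_1\to(\H)_x$ is not injective at \emph{any} point of $X_2$. No appeal to $\rho^{-1}$ falling in a particular case, and no ``both sides'' symmetry, is needed for this step.

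Two smaller issues. First, your route to $r=1$ (composing symmetric powers and invoking Proposition \ref{p-diagonal-does-not-extend}) is more convoluted than necessary and not fully argued; the paper instead observes that $\rho(G_1)=G_2\supseteq\Af(\G_2)$ acts minimally on $X_2$, so one can apply Corollary \ref{c-action-minimal}, where minimality forces $r=1$ and $Z=X_2$ directly (a surjective equivariant $q\colon X_2\to X_1^{[r]}$ with $r\ge 2$ would pull back the proper closed invariant subset $X_1^{[1]}$ to a proper closed invariant subset of $X_2$). Second, you assert $\rho(\Af(\G_1))=\Af(\G_2)$ without justification; this needs the characterisation of $\Af(\G_i)$ as the intersection of all non-trivial normal subgroups of $G_i$ (Example \ref{e-simple}). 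The remainder of your argument — killing $\psi$ via triviality of the centraliser of $\Af(\G_2)$, and using the uniqueness clause of Theorem \ref{t-main} to show that $\widetilde{\rho^{-1}}\circ\widetilde{\rho}$ is the identity, hence $\widetilde{\rho}$ is an isomorphism — matches the paper and is correct.
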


\begin{proof}
Let $\rho\colon G_1\to G_2$ be an isomorphism, and apply Corollary \ref{c-action-minimal}. Since $\rho$ is injective, it cannot fall in case \ref{i-not-free}. Using Lemma \ref{l-multigerm}, we see that for every $x\in X_2$ there exists $g\in \Af(\G_2)\le G_2, g\neq 1$ such that $[g]_x=x$. Setting $h=\rho^{-1}(g)$, we have $[\rho(h)]_x=x$, and therefore $\rho$ cannot fall in case \ref{i-free}  either. Therefore it falls in case \ref{i-factor}. Let $\widetilde{\rho}\colon \tG_1\to \tG_2$ and $\psi\colon G_1/\Af(\G_1)\to \Ff(\G_2)$ be provided by the statement. Since $\Af(\G_i)$ is  characterised as the intersection of all non-trivial normal subgroups of $G_i$ (Example \ref{e-simple}), we have $\rho(\Af(\G_1))=\Af(\G_2)$. Thus $\widetilde{\rho}(G_1)$ contains $\Af(\G_2)$ and therefore has trivial centraliser (see Lemma \ref{l-trivial-centraliser}). We deduce that $\psi$ is trivial, and thus that $\rho$ extends to a continuous morphism $\widetilde{\rho}\colon \tG_1\to \tG_2$. Applying now the same reasoning to $\nu=\rho^{-1}\colon G_2\to G_1$ we obtain an extension $\widetilde{\nu}\colon \tG_2\to \tG_1$. Now note that $\widetilde{\nu}\circ \widetilde{\rho}\colon \tG_1\to \tG_1$ extends the identity isomorphism $G_1\to G_1$, and by the uniqueness of the extension in Theorem \ref{t-main}, we obtain that $\widetilde{\nu}\circ\widetilde{\rho}$ is the identity of $\tG_1$. By symmetry, we conclude that $\widetilde{\rho}$ is an isomorphism. 
\qedhere
\end{proof}

\subsection{Automatic Extension for targets with finite dimension} \label{s-asdim}

Beyond the case of isomorphisms, the Extension Theorem implies that for $\G, \H$ in a broad class of groupoids,  {all} homomorphisms between full groups must extend to  continuous pseudogroups morphisms on a symmetric power $\widetilde{\G^{[r]}}$. For simplicity we focus here on homomorphisms defined $G=\Af(\G)$. It is enough to find conditions on $\G, \H$ that ensure \emph{a-priori} that the germ map $\Af(\G)\to \H_y, g\mapsto [\rho(g)]_y$ can never be injective. This can be done by comparing the coarse geometry of the leaves $\H_y$ with the coarse geometry of the group $\Af(\G)$, and showing that there is ``no room'' for this to happen.  For many interesting examples of groupoids, the leaves have a quite simple structure and can be described explicitly, while the group $\Af(\G)$ usually has a quite complicated geometry (even for very ``small'' $\G$). Thus, this is often an easy task in practice.

In particular, we have the following (preliminaries on coarse geometry and on asymptotic dimension were given  in \S \ref{s-coarse})
\begin{thm}[Automatic Extension for targets with finite asymptotic dimension] \label{t-asdim}
Let $\G$ be a compactly generated effective minimal \'etale groupoid over a Cantor space $X$. Let $\H$ be an \'etale groupoid over a compact space $Y$, and assume that for every $y\in Y$ the leaf $\H_y$ has finite asymptotic dimension. Let  $\rho\colon \Af(\G)\to \Ff(\H)$ be a non-trivial homomorphism.

Then the support of $\rho(\Af(\G))$ is a  clopen subset $Z\subset Y$ and there exists $r \ge 1$ such that $\rho$  uniquely extends to a continuous morphism $\widetilde{\rho}\colon \widetilde{\G^{[r]}}\to \widetilde{\H|Z}$.

If moreover $d:=\sup_{y\in Y}\asdim(\H_y)<\infty$,  then the above extension exists for some $r\le d$. 
\end{thm}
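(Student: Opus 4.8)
The strategy is to invoke the Extension Theorem (Theorem \ref{t-main}) with $G = \Af(\G)$ and to rule out the ``free germ'' alternative \ref{i-free} using the finiteness of the asymptotic dimension of the leaves of $\H$. Since $\rho$ is non-trivial and $\Af(\G)$ is simple (Theorem \ref{t-simple}), $\rho$ is injective, so case \ref{i-not-free} is excluded. It therefore suffices to show that case \ref{i-free} is impossible, i.e.\ that for no $y \in Z$ can the germ map $\Af(\G) \to \H_y$, $g \mapsto [\rho(g)]_y$ be injective; the Extension Theorem then forces case \ref{i-factor}, giving the clopen set $Z$ and the extension $\widetilde{\rho}\colon \widetilde{\G^{[r]}} \to \widetilde{\H|Z}$ for some $r \ge 1$.

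\textbf{Ruling out case \ref{i-free}.} Suppose for contradiction that the germ map $\iota_y\colon \Af(\G) \to \H_y$ is injective for some $y \in Z$. The leaf $\H_y$ carries its natural coarse structure (as in \S \ref{s-leaves}), and $\H$, though not assumed compactly generated in the original statement, still makes $\H_y$ a coarse space in which the subsets $K \cdot y$ for $K \subset \H$ compact are the bounded sets. The idea is that $\iota_y$ should be a coarse map — or at least that its image sits inside $\H_y$ in a controlled way — so that injectivity of $\iota_y$ would force $\asdim(\Af(\G)) \le \asdim(\H_y) < \infty$ via Proposition \ref{p-asdim} (monotonicity of asymptotic dimension under injective coarse maps between uniformly discrete coarse spaces). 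But by Proposition \ref{p-asdim-alt}, the group $\Af(\G)$ has \emph{infinite} asymptotic dimension. This contradiction rules out \ref{i-free}. The point requiring care here is that $\iota_y$ need not literally be bornologous with respect to a word metric on $\Af(\G)$ (which has no canonical one), so the correct formulation is: pick any finitely generated subgroup $K \le \Af(\G)$ with arbitrarily large $\asdim(K)$ (these exist by the proof of Proposition \ref{p-asdim-alt}, e.g.\ products $\Z^n$ coarsely embed), restrict $\iota_y$ to $K$, and observe that a group homomorphism $K \to \Ff(\H)$ landing with injective germ map at $y$ yields an injective bornologous map $K \to \H_y$ (bornologous because $K$ is finitely generated and each generator acts by an element of $\tH$, hence moves germs by a bounded amount in the coarse structure of $\H_y$); then apply Proposition \ref{p-asdim} to get $n \le \asdim(\H_y)$ for all $n$, absurd. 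This also immediately gives the refined bound: if $d := \sup_{y}\asdim(\H_y) < \infty$, then the same argument shows $\asdim(K) \le d$ for every finitely generated $K \le \Af(\G)$ embedding into a leaf, which forces the $r$ produced by the Extension Theorem to satisfy $r \le d$ — one uses that the extension $\widetilde{\rho}\colon \widetilde{\G^{[r]}} \to \widetilde{\H|Z}$, composed with the coarse embedding $\G^{[r]}_x \hookrightarrow \H_y$ of Proposition \ref{p-morphism-leaves}, shows $\asdim(\G^{[r]}_x) \le d$, and $\asdim(\G^{[r]}_x) \ge r - 1$ (or a comparable lower bound) because the $r$th symmetric power of a minimal groupoid has leaves whose asymptotic dimension grows with $r$; choosing $r$ minimal as permitted by Lemma \ref{l-closed-power} then yields $r \le d$.

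\textbf{Main obstacle.} The delicate point is the last step: extracting the sharp bound $r \le d$ requires a lower bound on $\asdim(\G^{[r]}_x)$ in terms of $r$. Morally, $\G^{[r]}_x$ contains a coarsely embedded copy of a product of $r$ leaves of $\G$ (or of $\G^{[\ell]}$ for $\ell \le r$), each of which is infinite since $\G$ is minimal over an infinite space, so $\asdim(\G^{[r]}_x) \ge r$ by the usual product bound for asymptotic dimension, combined with the fact that the minimal such $r$ corresponds to the spatial component $q\colon Z \to X^{[r]}$ being surjective (Remark \ref{r-comments}\ref{i-comments-surjective}, Lemma \ref{l-closed-power}); surjectivity onto $X^{[r]}$ but not onto any $X^{[\ell]}$ with $\ell < r$ should be what prevents a smaller symmetric power from absorbing the image, and this is where one must be careful to match the combinatorics of symmetric powers with the geometry. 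I expect verifying this coarse lower bound on $\asdim(\G^{[r]}_x)$, and checking that the minimal admissible $r$ is controlled by it, to be the main technical content beyond the soft application of the Extension Theorem.
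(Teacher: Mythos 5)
Your proposal is correct and follows essentially the same route as the paper: rule out the free-germ case by showing that the germ map is an injective coarse map into a leaf of finite asymptotic dimension while $\Af(\G)$ has infinite asymptotic dimension (the paper does this directly, without your finitely-generated-subgroup workaround, by using the coarse structure on the whole discrete group $\Af(\G)$ from \S \ref{s-coarse}, for which the germ map is bornologous), and then obtain $r\le d$ by coarsely and injectively mapping a leaf of $\G^{[r]}$ — which contains a product of $r$ infinite leaves of $\G$ and hence has asymptotic dimension at least $r$ — into a leaf of $\H$ via Proposition \ref{p-morphism-leaves}. The one detail to add is that Proposition \ref{p-morphism-leaves} requires the base point $Q=\{x_1,\ldots,x_r\}$ to have trivial isotropy in $\G^{[r]}$, which the paper arranges by choosing the $x_i$ in distinct orbits and with trivial isotropy groups, possible because compact generation makes $\G$ second countable and hence essentially principal.
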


The assumptions of this criterion are verified by a vast class of groupoids  (see  Examples \ref{e-intro-asdim} in the introduction).
\begin{proof}
Let $\rho \colon \Af(\G)\to \Ff(\H)$ be a non-trivial homomorphism, and apply Theorem \ref{t-main}. Case \ref{i-not-free} cannot happen here, because we assume that $\rho$ is non-trivial. Assume by contradiction that case \ref{i-free} holds. We obtain an injective  map  $\iota\colon \Af(\G)\mapsto \H_y, g\mapsto [\rho(g)]_y$. Let us check that this map is bornologous with respect to the natural coarse structures on $\Af(\G)$ and on $\H_y$.  Let $E\subset \Af(\G)\times \Af(\G)$ be a controlled set, so that  $F=\{gh^{-1} \colon (g, h)\in E\}$ is finite. The set $K=\bigcup_{f\in F} \rho(f)$ is a compact subset of $\H$, and for $(g, h)\in E$ we have $[\rho(g)]_y[\rho(h)]_y^{-1}=[\rho(gh^{-1})]_{\rho(h)(y)}\in K$, showing that $\iota_*(E)=\{([\rho(g)]_y, [\rho(h)]_y) \colon (g,h)\in E\}$ is bornologous.  We deduce that $\iota$ is an injective coarse map. But by Proposition \ref{p-asdim-alt} we have $\asdim(\Af(\G))=\infty$, which together with Proposition \ref{p-sisto} implies that $\asdim(\H_y)=\infty$, contradicting our assumption. Thus, we must be in case \ref{i-factor}, showing the first part of the theorem. To prove the las sentence, let us first  prove the following lemma.
\begin{lemma} \label{l-asdim-reduced-power}
Let $Q=\{x_1,\cdots, x_s\}\in X^{[r]}$, where $1\le s\le r$ and $x_1,\cdots, x_s$ are distinct.  Then the leaf of $\G^{[r]}$ based at $Q$ satisfies $\asdim(\G^{[r]}_Q)\ge \asdim(\G_{x_1}\times \cdots \times \G_{x_s})$. 
\end{lemma}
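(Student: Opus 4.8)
The plan is to produce an explicit coarse embedding of the product leaf $\G_{x_1}\times\cdots\times\G_{x_s}$ into the leaf $\G^{[r]}_Q$, after which the monotonicity of asymptotic dimension under injective coarse maps between uniformly discrete coarse spaces (Proposition \ref{p-asdim}) will give the stated inequality. First I would recall that a point of $\G^{[r]}_Q$ is a finite subset $\Qc\subset\G$ of cardinality at most $r$ whose elements have pairwise distinct sources, with $\src(\Qc)=Q=\{x_1,\dots,x_s\}$; since the $x_i$ are distinct, such a $\Qc$ of cardinality exactly $s$ is the same datum as a choice of one germ $\gamma_i\in\G_{x_i}$ for each $i$, i.e.\ an element of $\G_{x_1}\times\cdots\times\G_{x_s}$ (the range tuple $(\rg(\gamma_1),\dots,\rg(\gamma_s))$ may collapse, but that only happens on a controlled-size set, see below). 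So I would define
\[
\iota\colon \G_{x_1}\times\cdots\times\G_{x_s}\longrightarrow \G^{[r]}_Q,\qquad (\gamma_1,\dots,\gamma_s)\longmapsto \{\gamma_1,\dots,\gamma_s\},
\]
defined on the (cofinite-in-every-controlled-set) subset where the $\gamma_i$ have pairwise distinct ranges, and check it is a coarse map.

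The key steps are: (1) injectivity — two distinct tuples with pairwise-distinct ranges give distinct subsets of $\G$, since the source map recovers the index; (2) bornologousness — a controlled set in $\G_{x_1}\times\cdots\times\G_{x_s}$ is by definition contained in $(E_1\times\cdots\times E_s)$-type data with each $\{\gamma_i\delta_i^{-1}\colon(\gamma_i,\delta_i)\in E_i\}$ contained in a compact $K_i\subset\G$, and for the corresponding pair of subsets $\Qc=\{\gamma_i\}$, $\Qc'=\{\delta_i\}$ in $\G^{[r]}$ the product $\Qc(\Qc')^{-1}$ consists of the elements $\gamma_i\delta_i^{-1}$, all lying in $K:=K_1\cup\cdots\cup K_s$, which is compact in $\G$ — so the image pair lies in the controlled set determined by $K$; (3) properness, which for uniformly discrete spaces is automatic once we observe (using the argument in \S\ref{s-leaves} that leaves are uniformly discrete) that $\iota$ is injective and bornologous, hence by the last sentence of the proof of Proposition \ref{p-morphism-leaves} a coarse map. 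The mild technical point — that $\iota$ is only partially defined because some range-tuples collapse — I would handle exactly as in \S\ref{s-reduced}: I would instead work on the subset $Z_0\subset\G_{x_1}\times\cdots\times\G_{x_s}$ where all ranges are distinct, note that $Z_0$ is ``coarsely dense enough'' — more precisely, that within any controlled set $E$, only finitely many range-collisions are possible so the restriction of any controlled set meets the diagonal of collisions in a controlled way — or, more cleanly, restrict a priori to the subproduct and use that removing a coarsely sparse set does not change asymptotic dimension; alternatively one can enlarge $r$ harmlessly. Since we only need a \emph{lower} bound on $\asdim(\G^{[r]}_Q)$, it suffices to exhibit \emph{one} injective coarse map from a space of the right dimension, and the restriction of $\iota$ to $Z_0$, which contains a subset coarsely equivalent to the whole product (e.g.\ fix $\gamma_2,\dots,\gamma_s$ generically and let $\gamma_1$ range, then iterate), already does the job.

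The main obstacle I expect is precisely bookkeeping the partiality of $\iota$: one must argue that $Z_0$ is large enough coarsely that $\asdim(Z_0)=\asdim(\G_{x_1}\times\cdots\times\G_{x_s})$, or route around it. The cleanest route is probably to observe that for fixed generic ``base'' germs the map $\gamma\mapsto\{\gamma,\gamma_2,\dots,\gamma_s\}$ already embeds $\G_{x_1}$, but we need the full product; so instead I would argue that the complement of $Z_0$ inside the product is, within every controlled set, a union of ``diagonal'' pieces each of which is the image of a lower-complexity product, and invoke a standard finite-union / monotonicity argument for asymptotic dimension (or simply note that one may always pass to a copy of $\Z^{\asdim}$ sitting inside each factor and avoid collisions by an explicit shift, which is the only consequence actually used downstream in Theorem \ref{t-asdim}). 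Once the coarse embedding $Z_0\hookrightarrow\G^{[r]}_Q$ is in hand, Proposition \ref{p-asdim} closes the argument:
\[
\asdim\big(\G^{[r]}_Q\big)\ \ge\ \asdim(Z_0)\ =\ \asdim\big(\G_{x_1}\times\cdots\times\G_{x_s}\big),
\]
as claimed.
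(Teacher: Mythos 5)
Your map is the right one and Proposition \ref{p-asdim} is the right closing step, but the partiality issue you flag is a genuine gap that your proposal never actually closes. The map $(\gamma_1,\dots,\gamma_s)\mapsto\{\gamma_1,\dots,\gamma_s\}$ is only defined on the set $Z_0$ of tuples with pairwise distinct ranges, and to conclude you need $\asdim(Z_0)\ge\asdim(\G_{x_1}\times\cdots\times\G_{x_s})$; none of the routes you sketch establishes this. The complement of $Z_0$ need not be coarsely sparse: if $x_i$ and $x_j$ lie in the same $\G$-orbit, pick $\delta\in\G_{x_j}$ with $\rg(\delta)=x_i$; then for every $\gamma\in\G_{x_i}$ the element $\gamma\delta\in\G_{x_j}$ has the same range as $\gamma$, so the collision locus surjects onto the factor $\G_{x_i}$ and contains an unbounded ``diagonal'' spread throughout the product. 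Subspace monotonicity only gives $\asdim(Z_0)\le\asdim$ of the product, which is the wrong direction; the assertion that $Z_0$ ``contains a subset coarsely equivalent to the whole product'' is exactly what needs proof and is not supplied; and your fallback of embedding only a copy of $\Z^{s}$ while dodging collisions would prove a weaker statement than the lemma as written (sufficient for Theorem \ref{t-asdim}, but not the stated inequality).

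The paper removes the problem at the source rather than working around it: choose disjoint clopen neighbourhoods $U_1\ni x_1,\dots,U_s\ni x_s$ and replace each factor $\G_{x_i}$ by the restricted leaf $(\G|U_i)_{x_i}$, whose elements have range in $U_i$. Since the $U_i$ are disjoint, every tuple drawn from these restricted leaves automatically has pairwise distinct ranges, so $(\gamma_1,\dots,\gamma_s)\mapsto\{\gamma_1,\dots,\gamma_s\}$ is everywhere defined, injective and coarse on the full product $(\G|U_1)_{x_1}\times\cdots\times(\G|U_s)_{x_s}$; and Proposition \ref{p-graph-qi} (using minimality) guarantees $\asdim\bigl((\G|U_i)_{x_i}\bigr)=\asdim(\G_{x_i})$, so nothing is lost. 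If you insert this restriction step, the rest of your argument (injectivity via the source map, bornologousness via $\Qc(\Qc')^{-1}\subset K_1\cup\cdots\cup K_s$, and Proposition \ref{p-asdim}) goes through essentially verbatim.
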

\begin{proof}
Let $U_1\ni x_1,\cdots, U_s\ni x_s$ be disjoint clopen subsets. By Proposition \ref{p-graph-qi}, we have $\asdim\left( (\G|U_i)_{x_i}\right)=\asdim(\G_{x_i})$. We have a map $(\G|U_1)_{x_1} \times \cdots \times (\G|U_s)_{x_s} \to \G^{[r]}_Q$ given by $(\gamma_1,\ldots, \gamma_s)\mapsto \{\gamma_1,\ldots, \gamma_s\}$, which is injective and coarse. Thus the conclusion follows from Proposition \ref{p-sisto}. 
\end{proof}
To conclude the proof of the theorem, let $q\colon Z\to X^{[r]}$ be the spatial component of $\widetilde{\rho}$. By Remark \ref{r-comments}~ \ref{i-comments-surjective} we can assume that $q$ is surjective. Since  we assume that $\G$ is compactly generated, it can be covered by countably many bisections homeomorphic to a subset of $X$, and  therefore $\G$ is second countable. A Baire argument shows that a second countable  effective \'etale groupoid is always essentially principal (see \cite[Prop. 2.5]{Nek-simple}). Thus we can choose distinct points $x_1,\cdots, x_r\in X$ lying in distinct $\G$-orbits and with trivial isotropy group, which implies that $Q=\{x_1,\cdots, x_r\}$ has trivial isotropy group in $\G^{[r]}$. Using Proposition \ref{p-morphism-leaves}, we get that there is a coarse injective map $\G_Q^{[r]}\to \H_y$ for every $y\in q^{-1}(Q)\subset Y$, and by Proposition \ref{p-asdim} and Lemma \ref{l-asdim-reduced-power} this implies that $r\le \asdim(\G_{x_1}\times \cdots \times \G_{x_r})\le \asdim(\H_y) \le \sup_{y\in Y} \asdim(\H_y)$ \qedhere
\end{proof}
\begin{remark}
The proof of Lemma \ref{l-asdim-reduced-power} shows more generally that  the space $\asdim(\G^{[r]}_Q)$ receives an injective coarse map from a space coarsely equivalent to  $\asdim(\G_{x_1}\times \cdots \times \G_{x_s})$. Actually, one can also show that $\G^{[r]}_Q$ admits an injective coarse map into $\G_{x_1}\times \cdots \times \G_{x_s}$, and thus we have $\asdim(\G^{[r]}_Q)=\asdim(\G_{x_1}\times \cdots \times \G_{x_s})$.  However, we point out that $\G^{[r]}$ and $\G_{x_1}\times \cdots \times \G_{x_s}$ need \emph{not} be coarsely equivalent. In fact, $\G^{[r]}$ may not even be compactly generated, even if $\G$ is. 
\end{remark}

In section \ref{s-concrete}-\ref{s-SFT} we will provide various  applications of Theorem \ref{t-asdim} to the study of homomorphisms between concrete examples of groups. 
Let us also record the following  consequence of the argument in its proof. 
\begin{cor} \label{c-asdim-obstruction}
Under the same assumptions of Theorem \ref{t-asdim}, assume that 
\[\sup_{y\in Y} \asdim(\H_y)< \inf_{x\in X} \asdim(\G_x).\] Then there is no non-trivial homomorphism $\Af(\G)\to \Ff(\H)$. 
\end{cor}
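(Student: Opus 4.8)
\textbf{Proof proposal for Corollary \ref{c-asdim-obstruction}.}

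The plan is to argue by contradiction, running the same dichotomy from Theorem \ref{t-main} that underlies the proof of Theorem \ref{t-asdim}, and showing that each of the three cases is impossible under the hypothesis $\sup_{y\in Y}\asdim(\H_y)<\inf_{x\in X}\asdim(\G_x)$. Suppose $\rho\colon\Af(\G)\to\Ff(\H)$ is a non-trivial homomorphism. Case \ref{i-not-free} of Theorem \ref{t-main} is excluded because $\Af(\G)$ is simple (Theorem \ref{t-simple}), so a non-trivial homomorphism is injective and cannot factor through the trivial quotient $\Af(\G)/\Af(\G)$. Case \ref{i-free} is excluded exactly as in the proof of Theorem \ref{t-asdim}: if the germ map $g\mapsto[\rho(g)]_y$ were injective for some $y$, it would be an injective bornologous --- hence coarse, by uniform discreteness --- map $\Af(\G)\hookrightarrow\H_y$, and then Proposition \ref{p-asdim} together with Proposition \ref{p-asdim-alt} would force $\asdim(\H_y)=\asdim(\Af(\G))=\infty$, contradicting finiteness of $\sup_{y}\asdim(\H_y)$ (note that the hypothesis in particular forces this supremum to be finite).

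The new point is ruling out case \ref{i-factor}. Here we obtain $r\ge 1$ and a continuous morphism $\widetilde{\rho}\colon\widetilde{\G^{[r]}}\to\widetilde{\H|Z}$ with spatial component $q\colon Z\to X^{[r]}$, which by Remark \ref{r-comments}\ref{i-comments-surjective} we may take to be surjective. Exactly as in the last part of the proof of Theorem \ref{t-asdim}, since $\G$ is compactly generated it is second countable, hence essentially principal, so we may pick a single point $x_0\in X$ lying in a $\G$-orbit with trivial isotropy; then $Q=\{x_0\}\in X^{[1]}\subset X^{[r]}$ has trivial isotropy in $\G^{[r]}$, and $\G^{[r]}_Q=\G_{x_0}$. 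Choosing any $y\in q^{-1}(Q)$, Proposition \ref{p-morphism-leaves} yields an injective coarse map $\G_{x_0}=\G^{[r]}_Q\hookrightarrow\H_y$, so by Proposition \ref{p-asdim} we get $\asdim(\G_{x_0})\le\asdim(\H_y)\le\sup_{y\in Y}\asdim(\H_y)<\inf_{x\in X}\asdim(\G_x)\le\asdim(\G_{x_0})$, a contradiction. (Alternatively one can feed in the full tuple $Q=\{x_1,\dots,x_r\}$ of points in distinct orbits with trivial isotropy and invoke Lemma \ref{l-asdim-reduced-power}; but the $r=1$ slice already suffices, since any single leaf $\G_{x_0}$ embeds coarsely and injectively into $\G^{[r]}_{Q}$ via $\gamma\mapsto\{\gamma\}$ restricted to a clopen neighbourhood of $x_0$ disjoint from the other points, using Proposition \ref{p-graph-qi}.)

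Since all three cases of Theorem \ref{t-main} are impossible, no such non-trivial $\rho$ exists, proving the corollary. The only mildly delicate point is making sure the coarse map produced in case \ref{i-factor} lands in a single leaf $\H_y$ with $y\in q^{-1}(Q)$ and has source a leaf of $\G$ of maximal asymptotic dimension; this is handled by the essential-principality argument and Proposition \ref{p-morphism-leaves} precisely as in Theorem \ref{t-asdim}, so there is no real additional obstacle beyond bookkeeping. \qedhere
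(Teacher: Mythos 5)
Your proof is correct and follows essentially the same route as the paper, which simply combines Theorem \ref{t-asdim} with Proposition \ref{p-morphism-leaves} and Lemma \ref{l-asdim-reduced-power}; your only deviation is to bypass Lemma \ref{l-asdim-reduced-power} by taking $Q=\{x_0\}$ a singleton in $X^{[1]}\subset X^{[r]}$, which works since then $\G^{[r]}_Q$ identifies coarsely with $\G_{x_0}$ and the isotropy remains trivial. (The parenthetical aside about $\gamma\mapsto\{\gamma\}$ for an $r$-tuple $Q$ is slightly off --- the map would have to be $\gamma\mapsto\{\gamma,x_2,\dots,x_r\}$ --- but this plays no role in your main argument.)
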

\begin{proof}
Combine Theorem \ref{t-asdim} with Lemma \ref{l-asdim-reduced-power} and with Proposition \ref{p-morphism-leaves}. \qedhere

\end{proof}

\subsection{Proof of the Extension Theorem} \label{s-proof-ext} We now turn to the proof of  Theorem  \ref{t-main}.

\medskip

\emph{Throughout this subsection, we  let  $\G, \H, G$ and $\rho\colon G\to \Ff(\H)$ be as in the statement of Theorem \ref{t-main}, and let $Z\subset Y$ be the support of $\rho(\Af(\G))$.}

\medskip

  For every $y\in Y$, we denote $\st_G(y)$ the stabilisers with respect to the action $G\acts Y$ induced by $\rho$. We further set
\[\st_G^{\rho}(y)=\{g\in G\colon [\rho(g)]_y=y\},\]
and note that it is a subgroup of $G$, and that $\operatorname{ker}(\rho)=\bigcap_{y\in Y} \st_G^{\rho}(y)$. In fact, for every $y\in Y$ the subgroup $\st_G^{\rho}(y)$ is a normal subgroup of the stabiliser $\st_G(y)$. Note that when $G$ is effective, the subgroup $\st^{\rho}_G(y)$ is simply the germ stabiliser $\st^0_G(y)$ for the action $G\acts Y$ induced by $\rho$. 

The proof proceeds by studying the family of subgroups $\st^{\rho}_G(y), y\in Y$ in the Chabauty space $\sub(G)$.

Let us rephrase  Case \ref{i-free} in Theorem \ref{t-main}.
\begin{lemma}\label{l-free}
For a point $y\in Y$, the following are equivalent.
\begin{enumerate}
\item The map $G\to \H_y, g\mapsto [\rho(g)]_y$ is injective.
\item We have $\st^\rho_G(y)=\{1\}$. 
\end{enumerate}

\end{lemma}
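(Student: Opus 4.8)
The statement to prove is Lemma~\ref{l-free}, characterizing Case~\ref{i-free} of the Extension Theorem in terms of the germ stabilizer $\st^\rho_G(y)$. This is a direct definitional unwinding, so the proof is short.

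\medskip

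\textbf{Proof plan.} The plan is to show both implications are essentially immediate from the definition of the germ map and of $\st^\rho_G(y)$. First I would observe that the map $\Phi_y\colon G\to \H_y$, $g\mapsto [\rho(g)]_y$, is not quite a group homomorphism in the naive sense, but satisfies the cocycle relation coming from the groupoid structure: for $g, h\in G$,
\[
[\rho(gh)]_y = [\rho(g)\rho(h)]_y = [\rho(g)]_{\rho(h)(y)}\,[\rho(h)]_y,
\]
using the cocycle relation $[FT]_x = [T]_{\tau(F)(x)}[F]_x$ recorded in \S\ref{s-pseudogroups} (adapted to the left-action convention). The key point is that $\Phi_y(g)$ is a unit of $\H$, i.e.\ $\Phi_y(g) = y$, precisely when $[\rho(g)]_y = y$, which by definition means $g\in \st^\rho_G(y)$.

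\medskip

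For the implication (2)$\Rightarrow$(1): suppose $\st^\rho_G(y) = \{1\}$ and let $g, h\in G$ with $\Phi_y(g) = \Phi_y(h)$. Then $[\rho(g)]_y = [\rho(h)]_y$, hence $[\rho(h)]_y^{-1}[\rho(g)]_y = y$, and using the cocycle relation this equals $[\rho(h^{-1}g)]_y$ up to translating the base point: more precisely, since $[\rho(g)]_y$ and $[\rho(h)]_y$ have the same source $y$ and (being equal) the same range $y' := \rho(g)(y) = \rho(h)(y)$, we get $[\rho(h)]_y^{-1}[\rho(g)]_y \in \H_y^y$ and it equals $[\rho(h^{-1})]_{y'}[\rho(g)]_y = [\rho(h^{-1})\rho(g)]_y = [\rho(h^{-1}g)]_y$. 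Thus $[\rho(h^{-1}g)]_y = y$, so $h^{-1}g\in \st^\rho_G(y) = \{1\}$, giving $g = h$ and injectivity of $\Phi_y$. For the converse (1)$\Rightarrow$(2): if $g\in \st^\rho_G(y)$ then $[\rho(g)]_y = y = [\rho(1)]_y = \Phi_y(1)$, so $\Phi_y(g) = \Phi_y(1)$, and injectivity forces $g = 1$; hence $\st^\rho_G(y) = \{1\}$.

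\medskip

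There is no real obstacle here — the only minor care needed is in handling the groupoid cocycle bookkeeping (base points of germs, the distinction between the abstract germ $[\rho(g)]_y\in\H$ as an element of the leaf $\H_y$ and the homeomorphism it induces), and in making sure the left/right action conventions from \S\ref{s-actions} are applied consistently. Since the paper has already set up all of this machinery (in particular the cocycle relation for germs of bisections and the notation $\st^\rho_G(y)$), the proof reduces to the two lines above. I would keep the write-up to a couple of sentences, simply citing the definition of $\st^\rho_G(y)$ and the cocycle identity.

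\medskip

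\begin{proof}[Proof of Lemma~\ref{l-free}]
Denote $\Phi_y\colon G\to \H_y$, $g\mapsto [\rho(g)]_y$. By the cocycle relation for germs, for all $g,h\in G$ we have $[\rho(gh)]_y = [\rho(g)]_{\rho(h)(y)}\,[\rho(h)]_y$. If $\st^\rho_G(y) = \{1\}$ and $\Phi_y(g) = \Phi_y(h)$, then $[\rho(g)]_y$ and $[\rho(h)]_y$ have the same source $y$ and the same range $y'$, so $[\rho(h^{-1}g)]_y = [\rho(h^{-1})]_{y'}[\rho(g)]_y = [\rho(h)]_y^{-1}[\rho(g)]_y = y$, whence $h^{-1}g\in\st^\rho_G(y)=\{1\}$ and $g=h$; thus $\Phi_y$ is injective. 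Conversely, if $\Phi_y$ is injective and $g\in\st^\rho_G(y)$, then $\Phi_y(g) = [\rho(g)]_y = y = [\rho(1)]_y = \Phi_y(1)$, so $g = 1$, i.e.\ $\st^\rho_G(y) = \{1\}$.
\end{proof}
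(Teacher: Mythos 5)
Your proof is correct and follows essentially the same route as the paper's: both directions reduce to the cocycle identity $[\rho(h^{-1}g)]_y=[\rho(h^{-1})]_{\rg([\rho(h)]_y)}[\rho(g)]_y$, which shows that $\Phi_y(g)=\Phi_y(h)$ forces $h^{-1}g\in\st^\rho_G(y)$, and conversely that a nontrivial element of $\st^\rho_G(y)$ collides with the identity under $\Phi_y$. The paper states the argument in contrapositive form but the content is identical.
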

\begin{proof}
If $\st^\rho_G(y)\neq \{1\}$ every non-trivial element $g\in \st^\rho_G(y)$ satisfies $[\rho(g)]_y=y=[\rho(1)]_y$, thus the map $g\mapsto [\rho(g)]_y$ is not injective. Conversely assume that $g\neq h$ are such that $[\rho(g)]_y=[\rho(h)]_y=\gamma$. Then $[\rho(g^{-1}h)]_y=[\rho(g^{-1})]_{\rg(\gamma)}[\rho(h)]_y=\gamma^{-1}\gamma=\src(\gamma)=y$, thus $g^{-1}h$ is a non-trivial element of $\st^\rho_G(y)$.  \qedhere \end{proof}

We begin by explaining why the three cases in Theorem \ref{t-main} are mutually exclusive. 
\begin{prop} \label{p-exclusive}
The three cases in Theorem \ref{t-main} are mutually exclusive.
\end{prop}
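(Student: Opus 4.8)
The statement to prove is Proposition \ref{p-exclusive}: the three cases \ref{i-not-free}, \ref{i-free}, \ref{i-factor} of Theorem \ref{t-main} are mutually exclusive. The plan is to dispense with each of the three pairwise comparisons in turn, using the characterisation of the germ stabilisers $\st^\rho_G(y)$ and the elementary Lemma \ref{l-free}.

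First, cases \ref{i-not-free} and \ref{i-free} are incompatible. In case \ref{i-not-free} the set $Z=\supp(\rho(\Af(\G)))$ is empty, which means $\rho(\Af(\G))$ acts trivially on $Y$, so $\Af(\G)\le \st^\rho_G(y)$ for every $y\in Y$; since $\Af(\G)$ is non-trivial (it is infinite and simple by Theorem \ref{t-simple}), this forces $\st^\rho_G(y)\neq\{1\}$ for every $y$, and by Lemma \ref{l-free} the map $g\mapsto[\rho(g)]_y$ is never injective, so case \ref{i-free} cannot hold (for any point of $Y$, hence in particular for any point of $Z=\varnothing$, which is vacuous anyway).

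Second, cases \ref{i-not-free} and \ref{i-factor} are incompatible: in case \ref{i-not-free} we have $Z=\varnothing$, whereas in case \ref{i-factor} the extension $\widetilde\rho\colon\widetilde{\G^{[r]}}\to\widetilde{\H|Z}$ exists and extends $\rho|_{\Af(\G)}$; applying the spatial component, or more simply observing that $\widetilde\rho$ restricted to $\Ff(\G^{[r]})$ takes values in $\Ff(\H|Z)$ and that $\rho|_{\Af(\G)}$ is non-trivial (as $\Af(\G)$ is simple and $\rho$ is assumed non-trivial here — or directly: in case \ref{i-factor} the statement produces $r\ge 1$ and a surjection of spaces would be impossible onto $\varnothing$), we get that $\rho(\Af(\G))$ has support contained in $Z$ and is non-trivial, so $Z\neq\varnothing$, a contradiction. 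The cleanest way to phrase this: $Z$ is by definition the support of $\rho(\Af(\G))$, and if $Z=\varnothing$ then $\rho$ kills $\Af(\G)$; but then $\rho|_{\Af(\G)}$ is trivial, and its unique continuous extension in case \ref{i-factor} would have image a trivial pseudogroup morphism, which cannot have a surjective spatial component $q\colon Z\to X^{[r]}$ onto the non-empty $X^{[r]}$ — unless $Z=\varnothing$, which contradicts case \ref{i-factor} requiring $Z$ clopen and (implicitly, via the existence of $q$) non-empty. Actually the simplest argument is just: in case \ref{i-not-free}, $\rho(\Af(\G))=\{1\}$ so $Z=\varnothing$; in case \ref{i-factor}, $Z$ clopen and $\widetilde\rho$ has surjective spatial component onto $X^{[r]}\neq\varnothing$, forcing $Z\neq\varnothing$.

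Third — and this is the main point — cases \ref{i-free} and \ref{i-factor} are incompatible. Suppose both held. Case \ref{i-free} gives a point $y\in Z$ with $\st^\rho_{G}(y)=\{1\}$ (by Lemma \ref{l-free}), in particular $\st^0_{\Af(\G)}$-type germ stabilisers are trivial at $y$; but case \ref{i-factor} provides a continuous morphism $\widetilde\rho\colon\widetilde{\G^{[r]}}\to\widetilde{\H|Z}$ extending $\rho|_{\Af(\G)}$, with spatial component $q\colon Z\to X^{[r]}$. I would argue that $\rho(\Af(\G))$ then has a ``large'' germ stabiliser at $y$: setting $Q=q(y)\in X^{[r]}$, write $Q=\{x_1,\dots,x_s\}$; any element $g\in\Af(\G)$ whose germ at each $x_i$ is trivial (i.e. $g\in\st^0_{\Af(\G)}(Q)=\Af(\G|Q^c)$, cf. Lemma \ref{l-st-finite}) satisfies $[g^{[r]}]_Q=Q$ in $\G^{[r]}$, hence $[\widetilde\rho(g^{[r]})]_y=[\rho(g)]_y=y$ by the equivariance of germs under a continuous morphism (Proposition \ref{p-spatial-equivariance} together with the germ cocycle relation). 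Thus $\Af(\G|Q^c)\le\st^\rho_{\Af(\G)}(y)\le\st^\rho_G(y)$. Since $\G$ is minimal and effective over a Cantor space, $\Af(\G|Q^c)$ is non-trivial (the complement of a finite set still contains orbits of arbitrarily large finite cardinality, so one can build degree-$3$ multisections supported off $Q$ — use Lemma \ref{l-multigerm}). Hence $\st^\rho_G(y)\neq\{1\}$, contradicting case \ref{i-free}.

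\textbf{Expected main obstacle.} The delicate step is the third one: carefully matching up the germ of $\widetilde\rho(g^{[r]})$ at $y$ with the germ $[\rho(g)]_y$, i.e. checking that $\widetilde\rho$ restricted to $\Ff(\G^{[r]})\supseteq\Af(\G)$ really does recover $\rho$ on $\Af(\G)$ at the level of germs (not merely as abstract homomorphisms), and that the spatial component $q$ intertwines the $\Af(\G)$-action on $Z$ with the natural action on $X^{[r]}$, so that germ-triviality of $g^{[r]}$ at $Q=q(y)$ transfers to germ-triviality of $\rho(g)$ at $y$. This is exactly the content of Corollary \ref{c-continuous-morphism-full} and Proposition \ref{p-spatial-equivariance}, so modulo invoking those it is routine; one must however be a little careful that the embedding $\Af(\G)\hookrightarrow\Ff(\G^{[r]})$ used to form $g^{[r]}$ is the same one implicit in the statement of case \ref{i-factor}, and that $\st^0_{\Af(\G)}(Q)$ is genuinely non-trivial for every finite $Q\subset X$, which follows from minimality plus the fact that every $\G$-orbit is infinite (a consequence of minimality over an infinite space).
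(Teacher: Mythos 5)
Your proof is correct and follows essentially the same route as the paper: the only non-trivial point is that case (iii) forces $\st^0_{\Af(\G)}(q(y))\le\st^\rho_G(y)$ for $y\in Z$ (the paper isolates exactly this as Lemma \ref{l-exclusive}, proved by applying $\widetilde{\rho}$ to the idempotent identity $gV=V$), after which non-triviality of $\st^0_{\Af(\G)}(Q)$ and Lemma \ref{l-free} rule out coexistence with case (ii), while the comparisons involving case (i) are immediate. Your third paragraph is precisely this argument, so the two proofs coincide in substance.
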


\begin{proof}
Let us first prove the following Lemma.  
\begin{lemma}\label{l-exclusive}
Assume that $Z$ is clopen and that  $\rho|_{\Af(\G)} \colon \Af(\G)\to \Ff(\H)$ extends to a continuous morphism $\widetilde{\G^{[r]}}\to \widetilde{\H|Z}$ for some $r\ge 1$. Let $q\colon Z\to X$ be the spatial component of $\widetilde{\rho}$. Then for every $y\in Y$ we have $\st^0_{\Af(\G)}(q(y))\le \st^\rho_{G}(y)$. \end{lemma}
\begin{proof}
For $y\in Z$, let $g\in \st^0_{\Af(\G)}{(q(y))}$. This means that we can find  $V$ be a neighbourhood of $q(y)$ in $X^{[r]}$ which is pointwise fixed by $g$. Seeing $V$ as an idempotent  of the pseudogroup $\widetilde{\G^{[r]}}$,   we have the equality $gV=V$. Applying $\widetilde{\rho}$ we obtain $\rho(g)q^{-1}(V)=q^{-1}(V)$, that is $q^{-1}(V)\subset \widetilde{\rho}(g)$. Since $y\in q^{-1}(V)$, this implies that $\widetilde{\rho}(g)\in \st^\rho_G(y)$. Since $g\in \st^0_{\Af(\G)}{(q(y))}$ was arbitrary, we have $\st^0_{\Af(\G)}{(q(y))}\le \st^{\rho}_G(y)$.  \qedhere \end{proof}

Going back  to the proof of the proposition, it is clear that case \ref{i-not-free} in Theorem \ref{t-main} is exclusive with \ref{i-free} and \ref{i-factor}. If case \ref{i-factor} happens,  Lemma \ref{l-exclusive} implies that we have $\st^\rho_G(y)\neq \{1\}$ for every $y\in Z$ (since it is easy to see that $\st^0_{\Af(\G)}(Q)\neq \{1\}$  for every finite subset $Q\subset X$). Moreover if $y\notin Z$, then by definition of the support we have $\st^{\rho}_G(y)\supset \Af(\G)$, thus also $\st^{\rho}_{\Af(\G)}(y)\neq \{1\}$. Combining with Lemma \ref{l-free} we deduce that case \ref{i-free} and \ref{i-factor} cannot happen simultaneously. \qedhere
\end{proof}

The following Proposition is not used in the proof of the Extension Theorem, but shows that passing to a symmetric power $\G^{[r]}$ is indeed necessary for its statement to be true.
\begin{prop} \label{p-diagonal-does-not-extend}
For $r\ge 2$, the natural embedding $\Af(\G)\hookrightarrow \Ff(\G^{[r]})$ does not extend to a continuous morphism $\widetilde{\G^{[\ell]}}\to \widetilde{\G^{[r]}}$ for any $\ell \le 1$. 
\end{prop}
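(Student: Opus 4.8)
The plan is to argue by contradiction. Suppose that for some $\ell \le r$ (note that the statement says $\ell \le 1$, but since $X^{[1]}=X$ sits inside $X^{[\ell]}$ as a proper closed invariant subset for $\ell\ge 2$, the content is really the case $\ell=1$, i.e.\ the diagonal embedding $\widetilde{\G}\to\widetilde{\G^{[r]}}$; I will treat the general statement) there is a continuous morphism $\varphi\colon \widetilde{\G^{[\ell]}}\to \widetilde{\G^{[r]}}$ whose restriction to $\Af(\G)$ is the natural embedding $g\mapsto g^{[r]}$. By Proposition \ref{p-restriction-morphism} we may replace $\G^{[\ell]}$ by its restriction to $q(X^{[r]})$, and then by Lemma \ref{l-closed-power} we may assume that the spatial component $q\colon X^{[r]}\to X^{[\ell]}$ of $\varphi$ is surjective. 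First I would observe that $q$ must be $\Af(\G)$-equivariant: this is Corollary \ref{c-continuous-morphism-full} applied to $\varphi$, using that on $\Af(\G)$ the morphism $\varphi$ agrees with the natural embedding, so the induced action of $\Af(\G)$ on $X^{[r]}$ via $\varphi$ is just the natural one.

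The key step is then to show that no such $\Af(\G)$-equivariant continuous surjection $q\colon X^{[r]}\to X^{[\ell]}$ exists when $r\ge 2$ and $\ell<r$ (and more precisely to rule out $\ell = r$ as well, which would force $q$ to be a homeomorphism but then the embedding would have to be an isomorphism onto $\Ff(\G^{[r]})$, which it is not). For $\ell\ge 2$ the argument is: $q$ maps the closed invariant subset $X^{[1]}=X\subset X^{[\ell]}$'s preimage to something, but more usefully, by Lemma \ref{l-closed-power} applied to $\G^{[\ell]}$, every closed $\G^{[\ell]}$-invariant subset of $X^{[\ell]}$ is one of the $X^{[j]}$; since $q$ is equivariant and surjective and $X^{[r]}$ is $\G^{[r]}$-minimal-in-the-appropriate-sense... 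Actually the cleanest route is via proximality: the action of $\Af(\G)$ on $X$ is proximal (Lemma \ref{l-proximal}), hence it admits no continuous equivariant map to any nontrivial compact $\Af(\G)$-space of the relevant kind. Concretely I would use Lemma \ref{l-trivial-centraliser}-type reasoning: consider the two continuous maps $X^{[r]}\to X^{[\ell]}$ given, on the one hand, by $q$, and on the other hand — since $\ell \ge 1$ — by $\{x_1,\dots,x_r\}\mapsto$ (some canonical $\le \ell$-element subset), but there is \emph{no} canonical such map, which is precisely the point; so instead I would derive a contradiction by evaluating $q$ on specific configurations. Fix a point $x_0\in X$ and consider the tuple $Q=\{x_0\}\in X\subset X^{[r]}$. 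Its $\Af(\G)$-stabiliser contains $\st^0_{\Af(\G)}(x_0)$, which acts on $X\setminus\{x_0\}$ with all orbits dense. By equivariance $q(Q)\in X^{[\ell]}$ must be fixed by $\st^0_{\Af(\G)}(x_0)$, and since this group has no finite orbit on $X\setminus\{x_0\}$ of size $\ge 1$... more carefully: a finite set $F = q(Q)$ fixed setwise by $\st^0_{\Af(\G)}(x_0)$ must satisfy $F\subset \{x_0\}$, because any point of $F\setminus\{x_0\}$ would have to be a fixed point of $\st^0_{\Af(\G)}(x_0)$, impossible as that group acts with dense orbits off $x_0$ (one can move any such point by an element of a suitable $3$-cycle multisection supported away from $x_0$). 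Hence $q(Q)=\{x_0\}$, so $q$ restricted to $X=X^{[1]}\subset X^{[r]}$ is the identity.

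Now I would exploit the existence of $r$-element configurations. Take $P=\{x_1,\dots,x_r\}\in X^{[r]}$ with $x_1,\dots,x_r$ distinct. By minimality and Lemma \ref{l-multigerm} there is a sequence $(g_n)\subset \Af(\G)$ with $g_n P\to \{x_0\}$ in $X^{[r]}$ (collapse the configuration to a single point, which is possible exactly because $X^{[r]}$ identifies tuples with the same underlying set). By continuity of $q$ and the previous paragraph, $q(g_nP)=g_n q(P)\to q(\{x_0\})=\{x_0\}$. On the other hand, also take $h_n$ with $h_n P\to P'$ for an \emph{arbitrary} $P'\in X^{[r]}$; using proximality of $\Af(\G)\acts X$ one can arrange $g_n$ so that simultaneously $g_n P$ converges to a singleton while $q(P)$, being an $\le\ell$-element set with $\ell < r$, is "smaller" and its $g_n$-translates are controlled. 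The contradiction I am aiming for: the map $q$ would have to be injective on the dense set of $r$-element configurations (by a cardinality/equivariance argument, since the $\Af(\G)$-orbit of a generic $r$-set is "as large as" $X^{[r]}$) yet land in $X^{[\ell]}$ with $\ell<r$, and $X^{[\ell]}$ cannot receive a continuous injection from $X^{[r]}\setminus X^{[r-1]}$ compatibly — the honest way to see this is that the preimage $q^{-1}(X^{[r-1]})$ would be a proper closed $\G^{[r]}$-invariant subset, hence by Lemma \ref{l-closed-power} equal to some $X^{[j]}$, $j<r$, forcing $q$ to map $X^{[r]}\setminus X^{[r-1]}$ into $X^{[\ell]}\setminus X^{[\ell-1]}$; but then restricting to a single $\G^{[r]}$-orbit of a generic $r$-configuration, $q$ gives an equivariant map from that orbit (which is $\G_{x_1}\times\cdots\times\G_{x_r}$-like) onto a $\le\ell$-element-configuration orbit, and the stabiliser of the source is a proper subgroup of the stabiliser of the target, contradicting that $q(Q)=Q$ forced the restriction to $X^{[1]}$ to be the identity together with equivariance determining $q$ uniquely on each orbit. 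The main obstacle I expect is making this last equivariance-plus-stabiliser bookkeeping fully rigorous — i.e.\ showing cleanly that an $\Af(\G)$-equivariant continuous map fixing $X^{[1]}$ pointwise is forced to be the identity on all of $X^{[r]}$, which then is absurd since its target is $X^{[\ell]}\subsetneq X^{[r]}$; I would handle this by a germ-stabiliser argument as in Step \ref{s-setC}-\ref{s-contains} of the proof of Theorem \ref{t-confined}, comparing $\st^0_{\Af(\G)}(Q)$ for $Q\in X^{[r]}$ with the stabiliser of $q(Q)\in X^{[\ell]}$ and using that the former is strictly smaller unless $|Q|=|q(Q)|$.
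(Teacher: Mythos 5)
Your core idea --- comparing the stabiliser of a point $P\in X^{[r]}$ with that of its image $q(P)\in X^{[\ell]}$ under the spatial component --- is the right one, and is essentially the paper's: the paper's entire proof is the observation that, by Lemma \ref{l-exclusive}, an extension would force the containment of germ stabilisers $\st^0_{\Af(\G)}(q(P))\le \st^{0}_{\Af(\G)}(P)$ for every $P\in X^{[r]}$, which is impossible once $|P|=r>|q(P)|$, since $\st^0_{\Af(\G)}(q(P))=\Af(\G|q(P)^c)$ (Lemma \ref{l-st-finite}) has infinite orbits on $X\setminus q(P)$ and therefore moves the points of $P\setminus q(P)$. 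Your preliminary reductions and your treatment of singletons ($q(\{x_0\})=\{x_0\}$) are correct, though not needed for that route.

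The genuine gap is the final step, which you flag yourself: the passage from ``$q$ restricts to the identity on $X^{[1]}$'' to a contradiction on $r$-element configurations is never actually carried out, and the sketches offered there do not work as stated. The set you should be considering is $q^{-1}(X^{[\ell-1]})$, not $q^{-1}(X^{[r-1]})$, and even then Lemma \ref{l-closed-power} only identifies it as some $X^{[j]}$ without yielding a contradiction; the claim that $q$ must be injective on $r$-element configurations is unjustified and unnecessary; proximality plays no role. The gap is small and closes in either of two ways. (a) The paper's way: use that $\varphi$ is a morphism of pseudogroups (not merely that $q$ is equivariant) to get the \emph{reverse} containment of germ stabilisers via Lemma \ref{l-exclusive}, and contradict it as above. (b) Your way, completed: equivariance gives only the easier containment $\st_{\Af(\G)}(P)\le \st_{\Af(\G)}(q(P))$ of setwise stabilisers; applying it to the subgroup $\Af(\G|P^c)$, whose orbits on $X\setminus P$ are infinite, forces $q(P)\subset P$; then choose $P=\{x_1,\ldots,x_r\}$ inside a single $\G$-orbit, so that by Lemma \ref{l-multigerm} the group $\Af(\G)$ contains elements fixing $P$ setwise and inducing any even permutation of it (for $r=2$, a double transposition using two auxiliary points of the same orbit); the nonempty set $q(P)\subset P$ is then invariant under a transitive permutation group of $P$, hence equals $P$, contradicting $|q(P)|\le \ell<r$. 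Without one of these finishes the proposal does not prove the statement.
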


\begin{proof}
This follows from Lemma \ref{l-exclusive}, by observing that if $Q\in X^{[\ell]}$ and $P\in X^{[r]}$ with $|P|=r$ we cannot have $\st^0_{\Af(\G)}(Q)\le \st^{0}_{\Af(\G)}(P)$. \qedhere
\end{proof}

We now show that if Case \ref{i-factor}  in Theorem \ref{t-main} happens, then the extension is unique. 
\begin{prop}
Assume that $Z$ is clopen and that for some given $r\ge 1$, there exists continuous morphism $\widetilde{\rho}\colon \widetilde{\G^{[r]}}\to \widetilde{\H|Z}$ and $\widetilde{\rho}'\colon \widetilde{\G^{[r]}}\to \widetilde{\H|Z}$ that extend $\rho|_{\Af(\G)}$. Then $\widetilde{\rho}=\widetilde{\rho}'$.   
\end{prop}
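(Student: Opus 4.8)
The idea is a standard uniqueness-of-continuous-extension argument: two continuous morphisms of pseudogroups that agree on the subgroup $\Af(\G)$ must agree on all of $\widetilde{\G^{[r]}}$, because $\Af(\G)$ already ``sees'' every germ of $\G^{[r]}$ through its action, and a continuous morphism is determined by its spatial component together with its values on a generating family of bisections. Concretely, I would first show that $\widetilde\rho$ and $\widetilde\rho'$ have the same spatial component $q=q'\colon Z\to X^{[r]}$, and then show that they agree on an arbitrary compact open bisection $F\in\widetilde{\G^{[r]}}$.

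\emph{Step 1: equality of spatial components.} Recall (Proposition \ref{p-spatial-equivariant}, Corollary \ref{c-continuous-morphism-full}) that the spatial component $q$ of $\widetilde\rho$ is the unique continuous map $Z\to X^{[r]}$ intertwining the $\rho|_{\Af(\G)}$-action on $Z$ with the natural action of $\Af(\G)$ on $X^{[r]}$; likewise for $q'$. So it suffices to prove that such an intertwining map is unique. Since $\Af(\G)$ acts minimally on $X$ (indeed on each stratum of $X^{[r]}$, by minimality of $\G$ and Lemma \ref{l-closed-power}), and proximally on $X$ (Lemma \ref{l-proximal}), one argues as in Lemma \ref{l-trivial-centraliser}: if $q,q'\colon Z\to X^{[r]}$ are both equivariant, then for $z\in Z$ with $q(z)=\{x_1,\dots,x_s\}$ and $q'(z)=\{x_1',\dots,x_{s'}'\}$, pick $g\in\Af(\G)$ fixing (germ-wise) a neighbourhood of $q(z)$; then $g$ fixes $z$ in a neighbourhood (by Lemma \ref{l-exclusive}-type reasoning $\st^0_{\Af(\G)}(q(z))\le\st^\rho_G(z)$), hence fixes $q'(z)$, so $q'(z)$ lies in the fixed-point set of $\st^0_{\Af(\G)}(q(z))$. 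Using minimality and proximality of the $\Af(\G)$-action (together with Lemma \ref{l-multigerm} to produce enough local elements) one concludes $q'(z)=q(z)$. Thus $q=q'$, and in particular $\widetilde\rho$ and $\widetilde\rho'$ agree on all idempotents of $\widetilde{\G^{[r]}}$.

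\emph{Step 2: equality on bisections.} Let $F\in\widetilde{\G^{[r]}}$ be a compact open bisection. I would show $\widetilde\rho(F)$ and $\widetilde\rho'(F)$ coincide by checking they have the same germ at every point of their (common, by Step 1) source $q^{-1}(\src F)\subset Z$. Fix $z$ in this set, with $q(z)=Q=\{x_1,\dots,x_s\}\in X^{[r]}$. The germ $[F]_Q\in\G^{[r]}$ is a finite set of germs of $\G$; using Lemma \ref{l-multigerm} I can find a multisection $\vec T$ of $\G$ (of degree $\ge 3$, padding the orbit if necessary via Lemma \ref{l-multigerm}\ref{i:multigerm1}) and an element $s\in\Af(\vec T)$ whose induced element $s^{[r]}\in\Ff(\G^{[r]})$ has the \emph{same germ at $Q$} as $F$, i.e. $[s^{[r]}]_Q=[F]_Q$. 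Since $s\in\Af(\G)$, by hypothesis $\widetilde\rho(s^{[r]})=\rho(s)=\widetilde\rho'(s^{[r]})$. Now pick a small enough compact open bisection $S\ni[s^{[r]}]_Q$ inside $F\cap s^{[r]}$ (possible since bisections form a basis and $[F]_Q=[s^{[r]}]_Q$): then $F$ and $s^{[r]}$ both restrict to $S$, i.e. $F|_V=s^{[r]}|_V$ as elements of $\widetilde{\G^{[r]}}$ for $V=\src(S)$ a neighbourhood of $Q$ in $X^{[r]}$. Applying the morphism property, $\widetilde\rho(F)|_{q^{-1}(V)}=\widetilde\rho(S)=\widetilde\rho(s^{[r]})|_{q^{-1}(V)}=\rho(s)|_{q^{-1}(V)}$ and similarly for $\widetilde\rho'$. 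Since $z\in q^{-1}(V)$, this gives $[\widetilde\rho(F)]_z=[\rho(s)]_z=[\widetilde\rho'(F)]_z$. As $z$ was arbitrary and both bisections have the same source, $\widetilde\rho(F)=\widetilde\rho'(F)$.

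\emph{Main obstacle.} The only genuinely delicate point is Step 1, the uniqueness of the intertwining spatial map: one must be careful that on the symmetric power $X^{[r]}$ the $\Af(\G)$-action, though not minimal globally, is minimal on each stratum $X^{[\ell]}\setminus X^{[\ell-1]}$ and still ``proximal enough'' within strata that the argument of Lemma \ref{l-trivial-centraliser} goes through stratum by stratum — and that the value $q(z)$ cannot jump between strata, which follows because $\st^0_{\Af(\G)}(Q)$ determines $|Q|$ (its fixed-point set in $X^{[r]}$ meets the stratum of $Q$ in a set whose closure is exactly $X^{[|Q|]}$, cf. the proof of Proposition \ref{p-diagonal-does-not-extend}). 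Once one knows $q$ is forced, Step 2 is the routine ``a continuous morphism is determined by spatial component and values on a generating set, and $\Af(\G)$ generates enough germs'' computation, with Lemma \ref{l-multigerm} and Proposition \ref{p-nekr-cover} supplying the germs.
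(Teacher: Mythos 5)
Your proof is correct and follows essentially the same route as the paper: the paper also first forces $q=q'$ by producing an element of $\Af(\G)$ that lies in $\st^0_{\Af(\G)}$ of one candidate image while moving the other (using the containment $\st^0_{\Af(\G)}(q(y))\le \st^\rho_G(y)$ of Lemma \ref{l-exclusive}), and then concludes equality of the morphisms from the fact that every germ of $\G^{[r]}$ is realised by an element of $\Af(\G)$ (Lemma \ref{l-power-germ}). The only cosmetic difference is that the paper packages your Step 2 through the translation-action formalism of Theorem \ref{t-non-comm-stone} rather than a germ-by-germ computation on bisections.
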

\begin{proof}
Let us first prove the following lemma.
\begin{lemma} \label{l-power-germ}
Let $r\geq 1$. Then for every $\Qc\in \widetilde{\G^{[r]}}$ there exists $g\in \Af(\G)$ such that $\Qc\subset g$. 

\end{lemma}
\begin{proof}
 Assume at first that $\src(\Qc)$ and $\rg(\Qc)$ are disjoint. Let $\gamma_1,\ldots, \gamma_s$ for $s\le r$ be the distinct elements of $\Qc$. Using Lemma \ref{l-multigerm} and the fact that every $\G$-orbit is infinite and dense, we can find degree 3 multigerms $\gk_1,\ldots, \gk_d$ such that $\gk_i(1,2)=\gamma_i$ for $i=1,\ldots, s$ and such that the points $\gk_i(3, 3), i=1,\ldots, s$ are all disjoint and disjoint from $\src(\Qc)$ and $\rg(\Qc)$, and degree 3 multisections $\F_1\ni \gk_1,\ldots \F_s\ni \gk_s$ with disjoint support. Then the element $g=\F_1((123))\cdots \F_\src((123))$ verifies the desired conclusion. If $\src(\Qc)$ and $\rg(\Qc)$ intersect non-trivially, using again minimality we can write $\Qc=\Qc_1\Qc_2$ where $\Qc_1, \Qc_2\in \G^{[r]}$ fall in the previous case. Hence we can choose $g_1, g_2\in \Af(\G)$ such that $\Qc_i\subset {g_i}$ for $i=1,2$, and $g=g_1g_2$ verifies the conclusion. \qedhere
\end{proof}

Going back to the proof of the Proposition, let $q\colon Z\to X^{[r]}$ and $q'\colon Z\to X^{[r]}$ be the spatial components. Let us first show that $q=q'$. Assume by contradiction that there exists $y\in Z$ such that $\{x_1,\ldots, x_r\}=:q(y)\neq q'(y):=\{x_1',\ldots, x_r'\}$. We can assume without loss of generality that $x_1\notin \{x_1', \ldots, x_r'\}$. Choose $\gamma_1\in \G$ such that $\src(\gamma_1)=x_1$ and $\rg(\gamma_1)\notin q(y)\cup q'(y)$. By Lemma \ref{l-power-germ} we can find $g\in \Af(\G)$ such that $\{\gamma_1, x_1',\ldots, x_r'\}\subset g$. This $g$ belongs to $\st^0_{\Af(\G)}(q'(y))$, which by Lemma \ref{l-exclusive} is contained in $\st^{\rho}_G(y)\le \st_G(y)$, and thus $\rho(g)$ fixes $y$. Since the map $q$ is equivariant, we deduce that it also fixes $q(y)$, which is in contradiction with the fact that $\gamma_1\in g$. We conclude that $q=q'$. 

By Theorem \ref{t-non-comm-stone}, $\widetilde{\rho}$ and $\widetilde{\rho}'$ give rise to translation actions $\alpha\colon \G^{[r]} \acts \H|Z$ and $\alpha ' \colon \G^{[r]} \acts \H|Z$ via the formula \eqref{e-action-morphism}. But by Lemma \ref{l-power-germ}, the representative bisection $F\in \widetilde{\G^{[r]}}$ which appears in \eqref{e-action-morphism} can always be chosen to be in $\Af(\G)$. Since $\widetilde{\rho}|_{\Af((\G)}=\widetilde{\rho}'|_{\Af(\G)}$, we deduce that $\alpha=\alpha'$. Thus, $\widetilde{\rho}=\widetilde{\rho}'$, by Theorem \ref{t-non-comm-stone}. \qedhere
\end{proof}

We now turn to the core of the proof of the Theorem, which is to show that at least one of the three cases must  hold. An important role in the proof will be played by the upper and lower semicontinuity of various maps taking values in the Chabauty space $\sub(G)$. For the definition and  all other preliminaries on the Chabauty topology, we refer back to   \S \ref{s-preliminaries}.
\begin{lemma} \label{l-ker-semicontinuous}
The map $Y\to \sub(G), y\mapsto \st_G^{\rho}(y)$ is lower semicontinuous. \end{lemma}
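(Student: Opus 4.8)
The statement to prove is that the map $Y\to \sub(G)$, $y\mapsto \st_G^{\rho}(y)$ is lower semicontinuous. By Lemma \ref{l-semicontinuous}(ii), lower semicontinuity is equivalent to the assertion that for every fixed $g\in G$, the set $W_g:=\{y\in Y\colon g\in \st^{\rho}_G(y)\}=\{y\in Y\colon [\rho(g)]_y=y\}$ is open in $Y$. So the whole task reduces to checking that $W_g$ is open for each $g$.

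\emph{First} I would unwind the definition of the germ $[\rho(g)]_y$. Recall from \S\ref{s-pseudogroups} that $[\rho(g)]_y$ denotes the unique element $\gamma\in \rho(g)\subset \H$ with $\src(\gamma)=y$, and that $y\in W_g$ precisely when this $\gamma$ is the unit $y$ itself, i.e.\ when $\gamma\in Y$ (the unit space, viewed inside $\H$). Equivalently: $y\in W_g$ iff $y\in \rho(g)\cap Y$. Since $\rho(g)$ is a bisection of $\H$, it is an open subset of $\H$; and $Y\subset \H$ carries the subspace topology, which is its original (Hausdorff) topology. Hence $\rho(g)\cap Y$ is open \emph{in $Y$}. \emph{Then} I observe that $W_g$ is exactly (the source, or equivalently the image under the identification $Y\subset\H$, of) this set $\rho(g)\cap Y$: indeed $y\in\rho(g)\cap Y$ means $y$ is a unit lying in the bisection $\rho(g)$, and since $\src|_{\rho(g)}$ is injective this unit is the unique element of $\rho(g)$ with source $y$, i.e.\ $[\rho(g)]_y=y$. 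Thus $W_g$ is open in $Y$, which is what we needed.

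\emph{Alternatively}, and perhaps more transparently, one can phrase it via the support: $W_g$ is precisely the complement in $Y$ of $\supp(\rho(g))=\{y\colon [\rho(g)]_y\neq y\}$, and as recalled in \S\ref{s-Extension Theorem} the support of any element of $\Ff(\H)$ is closed in $Y$ (being the complement of the open set $\rho(g)\cap Y$). Either way the conclusion follows immediately, so I would simply cite Lemma \ref{l-semicontinuous} and this openness observation.

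I do not expect any real obstacle here: the content is entirely the translation between ``germ stabiliser'', ``bisection as open subset of $\H$'', and ``support is closed'', all of which are set up in the preceding sections. The only point to be careful about is that the relevant topology on $W_g$ is the topology of $Y$ (not of $\H$), which is fine because $Y\subset\H$ is an embedding of topological spaces and $\rho(g)$ is open in $\H$. In short, the proof is one line after Lemma \ref{l-semicontinuous}(ii): $W_g = (\rho(g)\cap Y)$ is open in $Y$, hence $y\mapsto\st^\rho_G(y)$ is lower semicontinuous.
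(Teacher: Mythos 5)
Your proof is correct and is essentially identical to the paper's: both reduce via Lemma \ref{l-semicontinuous}(ii) to checking that $\{y\in Y\colon g\in\st_G^\rho(y)\}=\rho(g)\cap Y$ is open in $Y$, which holds because $\rho(g)$ is an open bisection of $\H$ and $Y$ carries the subspace topology. Your version just spells out the identification of germs with units in more detail than the paper's one-line argument.
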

\begin{proof}
We apply Lemma \ref{l-semicontinuous}. For every $g\in G$ we have $\{y\in Y\colon g\in \st_G^{\rho}(y)\}={\rho(g)}\cap Y$ which is open in $Y$. \qedhere
\end{proof}

\begin{lemma}\label{l-dense-normalizes}
Let $U\subset X$ be a dense open set. Then every non-trivial subgroup of ${\Ff(\G)}$ normalised by ${\Af(\G|{U})}$ contains ${\Af(\G|{U})}$.
\end{lemma}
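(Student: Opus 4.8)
The plan is to follow the strategy of Theorem \ref{t-simple} (simplicity of $\Af(\G)$) but applied to the restriction $\G|U$ together with the group $\Af(\G)$ acting by conjugation. First I would recall Lemma \ref{l-double-comm}: if $R\le \Ff(\G)$ is a non-trivial subgroup normalised by $A=\Af(\G|U)$, then there exists a non-empty open subset $W\subset X$ such that $R$ contains $[\rist_A(W),\rist_A(W)]$. The point is to upgrade this to the conclusion $\Af(\G|U)\le R$. Since $U$ is dense and open, for any non-empty open $W$ the set $W\cap U$ is non-empty and open, so $\rist_A(W)=\rist_{\Af(\G|U)}(W)$ contains $\Af(\G|(W\cap U))$, which is non-trivial (every $\G$-orbit is infinite, so there are degree-$3$ multisections supported in $W\cap U$).

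Next I would argue that $[\rist_A(W),\rist_A(W)]$ already contains $\Af(\G|(W\cap U))$: indeed $\Af(\G|(W\cap U))$ is a subgroup of $\rist_A(W)$ which is simple (by Theorem \ref{t-simple} applied to the minimal effective groupoid $\G|(W\cap U)$ — minimality of $\G$ passes to restrictions to open sets, cf. the discussion around Proposition \ref{p-compact-generation-stable}) and non-abelian, hence it is contained in the derived subgroup of any subgroup of $\Ff(\G)$ in which it sits as a normal — or merely subnormal — factor; more simply, a non-abelian simple group equals its own derived subgroup, and $\Af(\G|(W\cap U))\le \rist_A(W)$ implies its derived subgroup $\Af(\G|(W\cap U))=[\Af(\G|(W\cap U)),\Af(\G|(W\cap U))]\le[\rist_A(W),\rist_A(W)]\le R$. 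Thus $R$ contains a non-trivial subgroup of the form $\Af(\G|V)$ for some non-empty open $V=W\cap U\subset U$.

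Finally I would bootstrap from a non-trivial $\Af(\G|V)\le R$ to all of $\Af(\G|U)$, using that $R$ is normalised by $\Af(\G|U)$. The key is minimality and the multigerm machinery of \S\ref{s-prel-alt}: given any degree-$3$ multigerm $\gk$ supported in $U$, minimality of $\G$ lets me conjugate (by an element of $\Af(\G|U)$, constructed via Lemma \ref{l-multigerm} exactly as in the proof of Theorem \ref{t-Chabauty}, Step \ref{s-small}) a sufficiently small multisection $\F\ni\gk$ into one supported in $V$; then $\Af(\F')\le \Af(\G|V)\le R$ for the conjugated multisection $\F'$, so conjugating back gives $\Af(\F)\le R$. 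By Proposition \ref{p-nekr-cover} and compactness this shows $\Af(\F)\le R$ for every multisection $\F$ supported in $U$, hence $\Af(\G|U)\le R$, which is the desired conclusion.

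The main obstacle I anticipate is the very last bootstrapping step: making sure the conjugating elements can be chosen inside $\Af(\G|U)$ (not just $\Af(\G)$) so that the normality hypothesis applies, and handling the case where the support of $\F$ is not disjoint from its image — this is precisely the three-case analysis (disjoint domains / non-covering / covering) carried out in Step \ref{s-small} and in Lemma \ref{l-st-finite}, which I would adapt essentially verbatim, with all sets shrunk inside $U$ using its density.
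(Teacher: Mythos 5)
Your proof is correct and follows essentially the same route as the paper: Lemma \ref{l-double-comm} combined with density of $U$ gives a non-empty open $V\subset U$ with the perfect group $\Af(\G|V)$ contained in the derived subgroup of the rigid stabiliser, hence in the given subgroup. The only divergence is your final bootstrapping step, where the paper concludes in one line — the subgroup contains the normal closure of $\Af(\G|V)$ in $\Af(\G|U)$, which is all of $\Af(\G|U)$ by simplicity (Theorem \ref{t:simple}) — so your explicit conjugation of multisections, while valid (it relies on the minimality of $\G|U$, which does hold since $U$ is dense and open), essentially re-proves the needed consequence of simplicity and can be replaced by that one-line argument.
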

\begin{proof}
Let $N\le {\Ff(\G)}$ be a non-trivial subgroup normalised by ${\Af({\G|{U}})}$. By Lemma \ref{l-double-comm}, there exists a non-empty open subset $V\subset X$ such that $N$ contains $[\rist_{\Af(\G|{U})}(V), \rist_{\Af(\G|{U})}(V)]$. Since $U$ is dense, the intersection $U\cap V$ is non-empty, and $\rist_{\Af(\G|{U})}(V)$ contains $\Af(\G|{U\cap V})$. The latter is a perfect group, and therefore $N$ contains $\Af(\G|{U\cap V})$, and thus it contains the normal closure of $\Af(\G|{U\cap V})$ in ${\Af(\G|{U})}$. Since the group ${\Af(\G|{U})}$ is simple by Theorem \ref{t:simple}, we get the conclusion. \qedhere

\end{proof}

The next proposition is the core of the argument in the proof of the Extension Theorem, and  makes use of the classification of the confined subgroups given in Theorem~\ref{t-confined}.  We denote by $\mathcal{P}_r(X)=X^{[r]}\sqcup\{\varnothing_X\}$ the space of {all} finite finite subsets of $X$ with cardinality at most $r$,  where   $\varnothing_X$ is the empty subset (the index $_X$ is put here to stress its role as a point in $\mathcal{P}_r(X)$). Note that  $\varnothing_X$ is the unique isolated point of $\P_r(X)$.
\begin{prop}\label{p-map} \label{p-free-factor}
Assume that for every $y\in Y$ we have $\st^\rho_G(y)\neq \{1\}$. Then there exists $r\geq 0 $ and a continuous  $G$-equivariant map $p\colon Y\to \P_r(X)$,   which is uniquely determined by the condition that $\st^0_{\Af(\G)}(p(y))\le \st^\rho_G(y)\le \st_G(p(y))$ for every $y\in Y$. 
\end{prop}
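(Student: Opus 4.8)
\textbf{Proof plan for Proposition \ref{p-free-factor}.}

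The plan is to extract, for each $y \in Y$, the finite subset $Q_y \subset X$ attached to the confined subgroup $\st^\rho_G(y)$ by the classification of confined subgroups (Corollary \ref{c-Chabauty}), set $p(y) = Q_y$, and then prove that $p$ is continuous, equivariant, uniformly bounded in cardinality, and uniquely characterised by the sandwiching condition. The first point to establish is that $\st^\rho_G(y)$ is indeed confined in $G$, so that Corollary \ref{c-Chabauty} applies. For this I would use the lower semicontinuity of the map $y \mapsto \st^\rho_G(y)$ (Lemma \ref{l-ker-semicontinuous}), the fact that this map is $G$-equivariant (since $\st^\rho_G(gy) = g\st^\rho_G(y)g^{-1}$), and the hypothesis that $\st^\rho_G(y) \neq \{1\}$ for all $y$; then Lemma \ref{l-semicontinuous-closure} shows the closure of the image does not contain $\{1\}$, i.e.\ each $\st^\rho_G(y)$ is confined. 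Corollary \ref{c-Chabauty} then gives a unique finite $Q_y \subset X$ with $\st^0_{\Af(\G)}(Q_y) \le \st^\rho_G(y) \le \st_G(Q_y)$. The uniform bound $|Q_y| \le r$ for some fixed $r$: by a compactness argument, the closure of $\{g\st^\rho_G(y)g^{-1} : g \in G,\ y\in Y\}$ in $\sub(G)$ avoids $\{1\}$ (again by Lemma \ref{l-semicontinuous-closure} applied to the map $Y \to \sub(G)$), hence there is a single finite confining set $P \subset G \setminus\{1\}$ working for all $\st^\rho_G(y)$ simultaneously; by the last sentence of Corollary \ref{c-Chabauty}, $|Q_y| \le |P| - 1 =: r$ for all $y$. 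So $p$ takes values in $\P_r(X)$.

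Equivariance of $p$ is immediate from uniqueness of $Q_y$: applying $g$ to the sandwich $\st^0_{\Af(\G)}(Q_y) \le \st^\rho_G(y) \le \st_G(Q_y)$ and conjugating gives $\st^0_{\Af(\G)}(gQ_y) \le \st^\rho_G(gy) \le \st_G(gQ_y)$ (here one uses that $\Af(\G)$ is normal in $G$, so conjugation by $g$ sends germ stabilisers of $\Af(\G)$ to germ stabilisers of $\Af(\G)$, and that $g \st^0_{\Af(\G)}(Q) g^{-1} = \st^0_{\Af(\G)}(gQ)$). By the uniqueness clause this forces $p(gy) = gQ_y = g\,p(y)$. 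The uniqueness statement in the proposition is then just a restatement of the uniqueness in Corollary \ref{c-Chabauty} applied pointwise.

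The main obstacle is continuity of $p \colon Y \to \P_r(X)$. Here I would argue by a combination of the two semicontinuity properties. Suppose $y_\nu \to y$ in $Y$. Passing to a subnet, $p(y_\nu) \to R$ in $\P_r(X)$; I must show $R = p(y)$. On one hand, lower semicontinuity of $y \mapsto \st^\rho_G(y)$ gives that any Chabauty-limit $K$ of $\st^\rho_G(y_\nu)$ satisfies $\st^\rho_G(y) \le K$; combining with $\st^\rho_G(y_\nu) \le \st_G(p(y_\nu))$ and the upper semicontinuity of $y \mapsto \st_G(y)$ for the $G$-action (Example \ref{e-stab-semicontinuous}, applied on $\P_r(X)$ to the setwise stabiliser), one gets $\st^\rho_G(y) \le K \le \st_G(R)$. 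On the other hand, lower semicontinuity of $p(y) \mapsto \st^0_{\Af(\G)}(p(y))$ (again Example \ref{e-stab-semicontinuous}, the germ-stabiliser being lower semicontinuous) gives $\st^0_{\Af(\G)}(R) \le \liminf \st^0_{\Af(\G)}(p(y_\nu)) \le K$, hence $\st^0_{\Af(\G)}(R) \le K$, but one needs rather $\st^0_{\Af(\G)}(R) \le \st^\rho_G(y)$. The cleanest route: from $\st^0_{\Af(\G)}(R) \le \st_G(R)$-type control and the sandwich inequalities one shows $\st^0_{\Af(\G)}(R) \le \st^\rho_G(y) \le \st_G(R)$, and then by the uniqueness of the set attached to the confined subgroup $\st^\rho_G(y)$ — which by the first paragraph is $p(y)$ — one concludes $R = p(y)$. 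The delicate verification is that $\st^0_{\Af(\G)}(R)$ really sits inside $\st^\rho_G(y)$: this uses that each $g \in \st^0_{\Af(\G)}(R)$ fixes pointwise a neighbourhood of $R$ in $\P_r(X)$, hence fixes pointwise a neighbourhood of $p(y_\nu)$ for $\nu$ large, so $g \in \st^0_{\Af(\G)}(p(y_\nu)) \le \st^\rho_G(y_\nu)$, i.e.\ $[\rho(g)]_{y_\nu} = y_\nu$; since $\rho(g)$ is a fixed homeomorphism of $Y$ and its fixed-point set $\{y : [\rho(g)]_y = y\}$ is closed, passing to the limit gives $[\rho(g)]_y = y$, i.e.\ $g \in \st^\rho_G(y)$. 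This closes the argument.
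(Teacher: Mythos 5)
Your overall strategy matches the paper's: use Lemmas \ref{l-ker-semicontinuous} and \ref{l-semicontinuous-closure} to get a single confining set $P$ for all the $\st^\rho_G(y)$, apply Corollary \ref{c-Chabauty} pointwise to define $p$ with $r=|P|-1$, read off equivariance and uniqueness from the uniqueness clause of the classification, and prove continuity by squeezing a cluster point $R$ of $p(y_\nu)$ between the two semicontinuous stabiliser maps. All of that is sound and is essentially what the paper does.

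The gap is in the final, and most delicate, step. You claim that the set $\{y : [\rho(g)]_y = y\}$ is closed and use this to pass the relation $g\in\st^\rho_G(y_\nu)$ to the limit $y$. This is exactly backwards: $\{y : [\rho(g)]_y = y\}$ equals $\rho(g)\cap Y$ as a subset of the groupoid $\H$, which is \emph{open} --- this is precisely the content of Lemma \ref{l-ker-semicontinuous} (lower semicontinuity of $y\mapsto\st^\rho_G(y)$), and it is why membership in the \emph{germ} stabiliser does not survive limits. (What is closed is the fixed-point set $\{y:\tau(\rho(g))(y)=y\}$; passing to the limit only yields $g\in\st_G(y)$, the point stabiliser, which is strictly weaker than what you need.) The paper closes exactly this gap by a different mechanism: from the upper semicontinuity of $y\mapsto\st_G(y)$ one gets $\st^0_{\Af(\G)}(R)\le\st_G(y)$; since $\st^\rho_G(y)$ is a \emph{normal} subgroup of $\st_G(y)$, the group $\st^0_{\Af(\G)}(R)=\Af(\G|R^c)$ (Lemma \ref{l-st-finite}) normalises $\st^\rho_G(y)$; and then Lemma \ref{l-dense-normalizes} (every non-trivial subgroup of $\Ff(\G)$ normalised by $\Af(\G|U)$, $U$ dense open, contains $\Af(\G|U)$), applied using the standing hypothesis $\st^\rho_G(y)\neq\{1\}$, forces $\Af(\G|R^c)\le\st^\rho_G(y)$. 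Without this normaliser-plus-simplicity argument (or a substitute for it), your proof does not close; note in particular that this is where the hypothesis $\st^\rho_G(y)\neq\{1\}$ is used a second time, whereas in your write-up it is only used once, to establish confinement.
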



\begin{proof} Since $\st_G^{\rho}(y)\neq \{1\}$ for every $y\in Y$, we deduce from  Lemma \ref{l-ker-semicontinuous} and Lemma \ref{l-semicontinuous-closure} that the subgroup $\{1\}$ does not belong to the closure of $\{\st_G^{\rho}(y), y\in Y\}$ in $\sub(G)$. Hence, the image of the map $\st_G^{\rho}$ avoids a neighbourhood of $\{1\}$ of the form $\mathcal{U}_P=\{H\in \sub(G) \colon H \cap P=\varnothing\}$ for some non-empty finite set $P\subset G\setminus \{1\}$. We deduce that for every $y\in Y$, the subgroup $\st_G^{\rho}(y)$ is confined in $G$, with confining set $P$.  Set $r=|P|-1$.
 
By Corollary \ref{c-Chabauty-G}, for every $y\in Y$ there exists a unique finite set $p(y)\in \mathcal{P}_r(X)$ such that $\st^0_{\Af(\G)}(p(y))\le \st_G^{\rho}(y)\le \st_G(p(y))$. We therefore obtain a map $p\colon Y \to \mathcal{P}_r(X)$.  This map is clearly equivariant with respect to the action $G\acts Y$ induced by $\rho$ and to the action $G\acts \mathcal{P}_r(X)$. The non-trivial thing left to show is that it  is continuous.

The equivariance of $p$ automatically implies that we have the inclusion $\st_G(y)\le \st_G(p(y))$ for every $y \in Y$. We therefore have for every $y\in Y$ the chain of subgroups:
\[\st^0_{\Af(\G)}(p(y))\le \st_G^{\rho}(y)\le \st_G(y)\le \st_G(p(y)).\]
We will use the facts that the maps $\mathcal{P}_r(X)\to \sub(G), \Q\mapsto \st^0_{\Af(\G)}(\Q)$ and $Y\to \sub(G), y\mapsto \st_G^{\rho}(y)$ are lower semicontinuous (Example \ref{e-stab-semicontinuous} and Lemma \ref{l-ker-semicontinuous}), while the maps $y\mapsto \st_G(y)$ and $\Q\mapsto \st_G(\Q)$ are upper semicontinuous (see Example \ref{e-stab-semicontinuous}).
Let $(y_i)\subset Y$ be a net converging to a limit $y$. Let $\Q$ be a cluster point of $(p(y_i))$ in $\P_r(X)$, and let us show that $\Q=p(y)$. Up to taking a subnet, we can assume that $(p(y_i))$ converges to $\Q$ and that the four nets $\left(\st^0_{\Af(\G)} (p(y_i) )\right), \left(\st_G^{\rho}(y_i)\right), \left(\st_G(y_i)\right), \left(\st_G(p(y_i))\right)$ all converge in $\sub(G)$ to limits denoted $H_1,\ldots , H_4$ respectively. First, passing to the limit the inclusions $\st_G^{\rho}({y_i})\le \st_G({p(y_i)})$ and using  semicontinuity we obtain $\st_G^{\rho}(y)\le H_2\le H_4 \le \st_G(\Q)$, and therefore $\st_G^{\rho}(y)\le  \st_G(\Q)$. Next, passing to the limit the inclusion $\st^0_{\Af(\G)}(p(y_i))\le \st_G({y_i})$ and using semicontinuity in the same way we obtain $\st^0_{\Af(\G)}(\Q)\le  \st_{G}(y)$. This implies that  $\st^0_{\Af(\G)}(\Q)$ normalises $\st_G^{\rho}(y)$, since the latter is a normal subgroup of $\st_G(y)$. Since $\st^0_{\Af(\G)}(Q)=\Af(\G|\Q^c)$ (Lemma \ref{l-st-finite}) and we are assuming that $\st_G^{\rho}(y)$ is non-trivial,  Lemma \ref{l-dense-normalizes} implies that we actually have  $\st^0_{\Af(\G)}(\Qc)\le \st_G^{\rho}(y)$. We have proven that the set $\Q$ satisfies $\st^0_{\Af(\G)}(\Q)\le \st_G^{\rho}(y)\le \st_G(\Q)$ and therefore $p(y)=\Q$ by the uniqueness of $\Q$ in Theorem \ref{t-Chabauty}. Since $Q$ was an arbitrary cluster point of the net $(p(y_i))$, we deduce that $p(y_i)$ is actually converging to $p(y)=Q$. This completes the proof of the continuity of $p$, and thus the proof of the proposition. \qedhere

\qedhere \end{proof}

We now explain how to deduce the Extension Theorem from Proposition \ref{p-map}. This is based on formal manipulations with groupoids, and on the equivalence of categories established in Theorem \ref{t-non-comm-stone}.

\begin{proof}[Proof of Theorem \ref{t-main}] We assume that  case \ref{i-free} does not hold, i.e. $\st^\rho_{G}(y)\neq \{1\}$ for every $y\in Y$. and let $p\colon Y\to \P_r(X)$ be the map given by Proposition \ref{p-map}. 

We first observe that   the support $Z$ of $\rho(\Af(\G))$ is precisely $p^{-1}(X^{[r]})$. Namely since the map $p$ is equivariant and $\Af(\G)$ has no global fixed points in $X^{[r]}$, it follows that $\rho(\Af(\G))$ moves every point of $p^{-1}(X^{[r]})$ and thus $p^{-1}(X^{[r]})\subset Z$. Conversely, if $y\notin p^{-1}(X^{[r]})$, we have $p(y)=\varnothing_X$, and thus by Proposition \ref{p-map} we have  $\Af(\G)=\st^{0}_{\Af(\G)}(\varnothing_X)\le \st^{\rho}_G(y)$, showing that $y\notin U$. Thus $Z\subset p^{-1}(X^{[r]})$.  If  $Z$ is empty, we are in  case \ref{i-not-free}. Thus, we will assume that $p^{-1}(X^{[r]})\neq \varnothing $ and show that case \ref{i-factor} holds. Note that since $X^{[r]}$ is clopen in $\P_r(X)$ and the map $p$ is continuous, the set $Z$ is clopen in $Y$. 


Denote by $\P_r(\G)$  the groupoid over $\P_r(X)$ consisting of \emph{all} finite subsets of $\G$ of cardinality at most $r$ (including the empty one). In other words  $\P_r(\G)=\G^{[r]} \sqcup \{\varnothing_X\}$, where $\{\varnothing_X\}$ is seen as a trivial groupoid, and $\G^{[r]}=\P_r(\G)|X^{[r]}$. Note that $\Ff(\P_r(\G))$ is supported in $X^{[r]}$ and is isomorphic to $\Ff(\G^{[r]})$ as a group. 
We consider the quotient group $G/\Af(\G)$ as a groupoid with a one point unit space (see Example \ref{e-groupoids}~\ref{e-i-group}). Let $\K=\P_r(\G) \times \left(G/\Af(\G)\right)$ be the product groupoid over the space $\P_r(X)\times \{1_{G/\Af(\G)}\}$, that we identify with with $\P_r(X)$. Its  full group $\Ff(\K)$ contains a natural  copy of the direct product $\Ff(\P_r(\G))\times \Ff(\G/\Af(\G))=\Ff(\G^{[r]})\times \left(G/\Af(\G)\right)$.

We embed the group $G$ into $\Ff(\K)$ via a diagonal embedding $\delta \colon G \hookrightarrow \Ff(\G^{[r]})\times \left(G/\Af(\G)\right) \le \Ff(\K)$, defined by the usual inclusion $G\hookrightarrow \Ff(\G^{[r]})$ and by the projection to the quotient $G\to G/\Af(\G)$. 
\begin{claim}
There exists a continuous morphism of pseudogroup ${\eta} \colon \tK\to \tH$ such  that ${\eta}\circ \delta =\rho$, i.e. such that following diagram commutes.

\[\begin{tikzcd}
G\arrow[r, hook, "\delta"] \arrow[dr, "\rho" '] & \Ff(\K) \arrow[r, hook] & \tK \arrow[d, dotted, "{\eta}"] \\
 & \Ff(\H) \arrow[r, hook] & \tH
\end{tikzcd}\]

\end{claim}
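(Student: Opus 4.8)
The claim asks for a continuous morphism $\eta\colon\tK\to\tH$ with $\eta\circ\delta=\rho$, where $\delta$ is the diagonal embedding of $G$ into $\Ff(\K)$ with $\K=\P_r(\G)\times(G/\Af(\G))$. The strategy is to build $\eta$ directly on bisections, exploiting the product structure of $\K$ together with the equivariant map $p\colon Y\to\P_r(X)$ from Proposition \ref{p-map} and the semigroup structure of $\tH$. The point of passing to $\P_r(\G)$ rather than $\G^{[r]}$ is that $p$ takes values in $\P_r(X)$ (it may send some $y$ to $\varnothing_X$ if $y\notin Z$), so we can work over the whole of $Y$; at the end we will restrict to $Z=p^{-1}(X^{[r]})$ to recover the statement of the Extension Theorem.

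\textbf{Key steps.} First I would use $p$ and Stone duality (Lemma \ref{l-stone}) to fix the spatial component: the continuous map $p\colon Y\to\P_r(X)$ forces $\eta$ to send an idempotent $W\subset\P_r(X)$ (a clopen subset of $\P_r(X)$, seen as an idempotent of $\tK$ together with the trivial element of $G/\Af(\G)$) to $p^{-1}(W)\subset Y$. Next, for a general bisection of $\K$ one writes it (locally, then patches by the completeness of the pseudogroup) as a product of (a) a bisection of the form $\Qc^{[r]}$-type piece coming from an element $g\in\Af(\G)\le\Ff(\G^{[r]})$ — here Lemma \ref{l-power-germ} is crucial, since it says every element of $\widetilde{\G^{[r]}}$ is locally the restriction of some $g\in\Af(\G)$ — and (b) a factor coming from $G/\Af(\G)$. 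On the $\Af(\G)$-part I would define $\eta$ by $\eta(g^{[r]})=\rho(g)$ restricted to the appropriate clopen domain; one must check this is well defined, i.e. independent of the choice of $g\in\Af(\G)$ realising a given germ — this follows because if $g,g'\in\Af(\G)$ agree near $Q\in X^{[r]}$ then $g^{-1}g'\in\st^0_{\Af(\G)}(Q)\le\st^\rho_G(y)$ for every $y$ with $p(y)=Q$ (Proposition \ref{p-map}), so $\rho(g)$ and $\rho(g')$ have the same germ at such $y$. On the $G/\Af(\G)$-part one uses a set-theoretic section $s\colon G/\Af(\G)\to G$ and sets $\psi(gA)=\rho(s(gA))$; this is not quite a homomorphism, but the failure lies in $\rho(\Af(\G))$, which is absorbed into the first factor, so the diagonal combination is consistent. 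Finally I would verify that $\eta$ is a semigroup homomorphism preserving joins of compatible families, zero and unit — these are routine from the definitions once well-definedness is in place — and that $\eta\circ\delta=\rho$ by construction.

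\textbf{Main obstacle.} The delicate point is well-definedness and the verification that $\eta$ respects the multiplication \emph{globally}, not just germ-by-germ. A germ of a bisection of $\K$ at a point $Q\in\P_r(X)$ determines $\eta$ only up to the ambiguity killed by $\st^\rho_G(y)$ for the \emph{various} $y\in p^{-1}(Q)$, and one must check that the local definitions glue to an honest element of $\tH$ over $p^{-1}(W)$ — this is where completeness of the target pseudogroup and the equivariance of $p$ do the work, but it requires care because different points of a single fibre $p^{-1}(Q)$ could a priori see different behaviours of $\rho(g)$ off the germ level. The cleanest way to handle this is probably to phrase the construction in the groupoid language via Theorem \ref{t-non-comm-stone}: define a translation action $\alpha\colon\K\acts(\H, p\circ\rg)$ by declaring, for $\kappa=(\Qc,gA)\in\K$ with $\src(\Qc)=p(\rg(\delta))$ and $\delta\in\H$, the element $\kappa\cdot\delta=[\rho(g\cdot s(gA))]_{\rg(\delta)}\,\delta$ where $g\in\Af(\G)$ realises $\Qc$ at $p(\rg(\delta))$; checking that this is a well-defined continuous translation action is exactly the content above, and Theorem \ref{t-non-comm-stone} then hands back the desired continuous morphism $\eta$, with $\eta\circ\delta=\rho$ following from $\delta(g)=(g^{[r]},g\Af(\G))$ and the defining formula \eqref{e-action-morphism}.
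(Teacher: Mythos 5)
Your final construction is essentially the paper's proof: the paper likewise defines a translation action $\K\acts(\H,p\circ\rg)$ by $(\Qc,h\Af(\G))\cdot\gamma=[\rho(g)]_{\rg(\gamma)}\gamma$ for $g\in G$ with $[\delta(g)]_{\src(\Qc)}=(\Qc,h\Af(\G))$ (existence via Lemma \ref{l-power-germ}, well-definedness via Proposition \ref{p-map} exactly as you argue), and then invokes Theorem \ref{t-non-comm-stone}. The only slip is in your formula $[\rho(g\cdot s(gA))]$ with ``$g\in\Af(\G)$ realising $\Qc$'': one must instead pick $k\in\Af(\G)$ containing $\Qc'=[h^{-1}]_{\rg(\Qc)}\Qc$ and use $g=hk$ so that the composed germ is actually $\Qc$; this is a routine correction and does not affect the argument.
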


 Before proving the claim, let us explain why it implies the desired conclusion. The restriction of $\eta$ to $\G^{[r]}=\widetilde{\P_r(\G)|X^{[r]}}$ provides us with a  continuous morphism $\widetilde{\rho}\colon \tG^r\to \tH|p^{-1}(X^{[r]})=\widetilde{\H|Z}$, while the restriction of $\eta$ to $G/\Af(\G)$ provides the desired homomorphism $\psi\colon G/\Af(\G)\to \Ff(\H)$.  Since $\Ff(\G^{[r]})$ and $G/\Af(\G^{[r]})$  centralise each other in $\Ff(\K)$, so do their images $\eta((\Ff(\G^{[r]}))=\widetilde{\rho}(\Ff(\G^{[r]}))\supset \widetilde{\rho}(G)$ and $\eta(G/\Af(\G))=\psi(G/\Af(\G))$. Finally, we have $\rho(G)=\eta(\delta(g))=\widetilde{\rho}(g)\psi(g\Af(\G))$, by construction.

Let us now prove the claim.  For $g\in G$ and $P\in \P_r(X)$, we denote by $[g]_P=\{[g]_x\colon x\in P\}$, i.e. the germ of $g$ seen as an element of $\Ff(\P_r(\G))$. On the other hand the notation $[\delta(g)]_P$ refers to the germ of $\delta(g)$ seen as an element of $\Ff(\K)$, which  is given by $[\delta(g)]_P=([g]_P, g\Af(\G))$. We need a lemma.
\begin{lemma}
For every $\left(\Qc, h\Af(\G)\right) \in\K$ there exists $g\in G$ such that $[\delta(g)]_{\src(\Qc)}=\left(\Qc, h\Af(\G)\right)$.
\end{lemma}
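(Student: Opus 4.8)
The plan is to prove this lemma by reducing to an element of $\Af(\G)$ realising the ``groupoid part'' $\Qc$ and an element of $G$ realising the ``quotient part'' $h\Af(\G)$, then combining them. Recall that $\delta(g)=([g]^{[r]}, g\Af(\G))$ in the notation introduced, so that for $P=\src(\Qc)$ we have $[\delta(g)]_P=([g]_P, g\Af(\G))\in \K$. Thus we must find $g\in G$ with two simultaneous properties: first, $[g]_P=\Qc$, i.e. the germs of $g$ at the points of $\src(\Qc)$ are prescribed by $\Qc$; second, $g\Af(\G)=h\Af(\G)$, i.e. $g$ lies in the prescribed coset modulo $\Af(\G)$.

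First I would handle the germ condition. By Lemma \ref{l-power-germ}, since $\Qc\in \G^{[r]}\subset \widetilde{\G^{[r]}}$, there exists $g_0\in \Af(\G)$ such that $\Qc\subset g_0$; in particular $[g_0]_P=\Qc$. This gives an element of $\Af(\G)$ with the correct germs. Next, set $g=h g_0' $ for a suitable $g_0'\in \Af(\G)$ to be determined, so that automatically $g\Af(\G)=h\Af(\G)$ (as $\Af(\G)$ is normal in $G$), and we only need to arrange $[g]_P=\Qc$. We have $[h g_0']_P = [g_0']_P\cdot$ (germs of $h$), more precisely using the cocycle relation the germ of $hg_0'$ at $x\in P$ equals $[h]_{\tau(g_0')(x)}[g_0']_x$. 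So the requirement becomes: choose $g_0'\in \Af(\G)$ whose germs at $P$ ``correct'' the germs of $h$ so that the product equals $\Qc$. Concretely, for each $x\in P$ we want $[g_0']_x$ to be the unique element $\gamma_x'\in \G$ with $\src(\gamma_x')=x$ and $[h]_{\rg(\gamma_x')}\gamma_x'=\gamma_x$, where $\gamma_x\in \Qc$ is the element with source $x$; this determines $\rg(\gamma_x')$ and hence $\gamma_x'$ uniquely. The set $\Qc'=\{\gamma_x' : x\in P\}$ is a well-defined element of $\G^{[r]}$: the sources are the distinct points of $P$, and the ranges $\rg(\gamma_x') = \tau(h^{-1})(\rg(\gamma_x))$ are distinct since $h^{-1}$ is a bijection and the $\rg(\gamma_x)$ are distinct (as $\Qc\in \G^{[r]}$). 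Applying Lemma \ref{l-power-germ} again to $\Qc'$ yields $g_0'\in \Af(\G)$ with $\Qc'\subset g_0'$, hence $[g_0']_x=\gamma_x'$ for all $x\in P$. Then $g=hg_0'\in G$ satisfies $[g]_x=[h]_{\rg(\gamma_x')}\gamma_x'=\gamma_x$ for all $x\in P$, i.e. $[g]_P=\Qc$, and $g\Af(\G)=h\Af(\G)$, so $[\delta(g)]_{\src(\Qc)}=(\Qc, h\Af(\G))$ as required.

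The main point to be careful about is the bookkeeping of which germ belongs to which pseudogroup: the germ $[g]_P$ is taken in $\widetilde{\P_r(\G)}$ (equivalently $\widetilde{\G^{[r]}}$ when $P\in X^{[r]}$), while $[\delta(g)]_P$ is taken in $\widetilde{\K}$, and the cocycle relation $[FT]_x=[T]_{\tau(F)(x)}[F]_x$ recorded in \S\ref{s-morphism-pseudogroups} must be applied with the correct orientation when computing the germ of the product $hg_0'$. There is no serious obstacle here — the statement is essentially a surjectivity-of-germ-maps fact, and Lemma \ref{l-power-germ} does all the real work; the only content beyond it is the observation that post-composing with a fixed element of $G$ permutes the possible germs bijectively, which lets us hit an arbitrary coset. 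One should also note the degenerate case $P=\varnothing_X$ (i.e. $\Qc=\varnothing_X$), where the germ condition is vacuous and one simply takes $g=h$; this is consistent with the convention that $\varnothing_X$ is a trivial groupoid element.
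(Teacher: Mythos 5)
Your argument is correct and is essentially the paper's proof: your $\Qc'=\{\gamma_x'\}$ is exactly the element $[h^{-1}]_{\rg(\Qc)}\Qc$ the paper forms, and your $g=hg_0'$ is the paper's $g=hk$ with $k\in\Af(\G)$ supplied by Lemma \ref{l-power-germ}. The extra checks you include (that $\Qc'$ has distinct sources and ranges, hence lies in $\G^{[r]}$, and the degenerate case $\Qc=\varnothing_X$) are details the paper leaves implicit but are consistent with it.
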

\begin{proof}
Set $\Qc'=[h^{-1}]_{\rg(\Qc)}\Qc$. By Lemma \ref{l-power-germ}, there exists $k\in \Af(\G)$ such that $\Qc'\subset k$. Then the element $g=hk$ satisfies $[g]_{\src(\Qc)}=\Qc$ and $g\Af(\G)=h\Af(\G)$, thus  $[\delta(g)]_{\src(\Qc)}=\left(\Qc, h\Af(\G)\right)$.  
\end{proof}

Let us define a translation action $\K\acts (\H, p\circ \rg)$ as follows. For $\left(\Qc, h\Af(\G)\right)\in \K$ and $\gamma\in \H$ such that $p(\rg(\gamma))=\src(\Qc)$, we choose an element $g\in G$ such that $[\delta(g)]_{\src(\Qc)}=\left(\Qc, h\Af((\G)\right)$, and let $\left(\Qc, h\Af(\G)\right)\cdot \gamma=[\rho(g)]_{\rg(\gamma)}\gamma$. Let us check that it is well-defined. Assume that $g_1, g_2$ satisfy $[\delta(g_1)]_{\src(\Qc)}=[\delta(g_2)]_{\src(\Qc)}=(\Qc, h\Af(\G))$. Then $[\delta(g_1^{-1}g_2)]_{\src(\Qc)}=(\src(\Qc), 1\Af(\G))$, which tell us that $g_1^{-1}g_2\in \Af(\G)$ and $[g_1^{-1}g_2]_{\src(\Qc)}=\src(\Qc)$, i.e. $g_1^{-1}g_2\in \st^0_{\Af(\G)}(\src(\Qc))$. Since $p(\rg(\gamma))=\src(\Qc)$, Proposition \ref{p-map} tells us that $\st^0_{\Af(\G)}(\src(\Qc)) \le \st^{\rho}_G(\rg(\gamma))$, and therefore $g_1^{-1}g_2\in \st^{\rho}_G(\rg(\gamma))$. We deduce that $[\rho(g_1)^{-1}\rho(g_2)]_{\rg(\gamma)}=\rg(\gamma)$, and thus $[\rho(g_1)]_{\rg(\gamma)}=[\rho(g_2)]_{\rg(\gamma)}$. The fact that it is indeed an action boils down to the equivariance of the map $p$, and it is obvious from its definition that it commutes with the natural right action $(\K, \src)\curvearrowleft \K$. 
By Theorem \ref{t-non-comm-stone}, this translation action arises from a continuous morphism $\eta\colon \tK\to \tH$. The fact that $\eta\circ \delta =\rho$ is apparent from the formula \eqref{e-action-morphism} below Theorem \ref{t-non-comm-stone}. This concludes the proof of the claim and therefore the proof of the theorem. \qedhere

\end{proof}


\section{Rigidity of actions with slowly growing orbits} \label{s-fg}

In this section we study property $\fg_{f(n)}$ and establish it for the group $\Af(\G)$. For the definition and a general discussion on property $\fg_{f(n)}$ we refer to \S \ref{s-intro-fg}. 

 \subsection{Some equivalent characterisations of property $\fg_{f(n)}$.}

 The following elementary proposition clarifies some equivalent characterisations of property $\fg_{f(n)}$ for finitely generated groups.
 Fix a finitely generated group $G$ with a finite symmetric generating set $S$, which will always  be implicit in what follows.  All the relevant terminology  on Schreier graphs and graphs of actions can be found in  \S \ref{s-notations}. 
  \begin{prop} \label{p-fg}
Let $G$ be a finitely generated group, and $f\colon \N\to \N$ be a function. The following are equivalent.
\begin{enumerate}[label=(\roman*)]
\item \label{i-fg-I} The group $G$ has property $\fg_{f(n)}$.
\item \label{i-fg-Ibis} For every  discrete metric space  $\Gamma$ such that $\overline{b}_\Gamma \nsucceq f$ and every homomorphism $\rho\colon G\to W(\Gamma)$, the action of $\rho(G)$ on $\Gamma$ has a finite orbit.  

\item \label{i-fg-II} For every action $G\acts \Omega$  on a set (not necessarily transitive, not necessarily with infinite orbits), either the graph of the action $\Gamma:=\Gamma_{G\acts \Omega}$ satisfies  $\overline{b}_{\Gamma} \succeq f$, or the actions factors through a finite quotient of $G$.
\item \label{i-fg-III} For every  subgroup $H\le G$ of infinite index, the Schreier graph $\Gamma_H$ satisfies $\overline{b}_{\Gamma_H}\succeq f$. 
\item \label{i-fg-IV} For every subgroup $H\le G$ of infinite index which belongs to a URS of $G$, the Schreier graph $\Gamma_H$ satisfies $\overline{b}_{\Gamma_H}\succeq f$.  
\end{enumerate}

\end{prop}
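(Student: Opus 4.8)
The plan is to prove the equivalences by setting up a cycle of implications, keeping in mind that the objects involved are: arbitrary homomorphisms into wobbling groups, arbitrary actions on sets, actions on coset spaces, and actions on coset spaces coming from a URS. First I would observe the key reduction: a homomorphism $\rho\colon G\to W(\Gamma)$ gives an action of $G$ on the \emph{vertex set} $\Omega$ of $\Gamma$, and the hypothesis $\sup_v d_\Gamma(v,\sigma(v))<\infty$ for every $\sigma$ in the image means precisely that on each $G$-orbit $\Omega_0\subset \Omega$, the inclusion of the Schreier graph $\Gamma_{G\acts \Omega_0}$ (with edges from the generating set $S$) into $\Gamma|_{\Omega_0}$ is a bornologous map which is the identity on vertices; conversely the generators of $\rho(G)$ move each vertex a bounded distance in $\Gamma$. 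Hence $\overline b_{\Gamma_{G\acts\Omega_0}}\preceq \overline b_\Gamma$ for each orbit, and also $\Gamma$ restricted to a single orbit dominates the Schreier graph. This dictionary is what makes all the formulations interchangeable.

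The implications \ref{i-fg-III}$\Rightarrow$\ref{i-fg-II} and \ref{i-fg-II}$\Rightarrow$\ref{i-fg-I} are then straightforward: for \ref{i-fg-III}$\Rightarrow$\ref{i-fg-II}, given an action $G\acts\Omega$ not factoring through a finite quotient, some orbit $\Omega_0=G/H$ has $H$ of infinite index, so $\overline b_{\Gamma_H}\succeq f$, and since $\Gamma_H$ is a subgraph (on a subset of vertices) of the graph of the full action, $\overline b_{\Gamma_{G\acts\Omega}}\succeq \overline b_{\Gamma_H}\succeq f$; one must note here that the graph of a sub-action embeds coarsely (indeed isometrically on each component) into the graph of the whole action, so uniform growth can only increase. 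For \ref{i-fg-II}$\Rightarrow$\ref{i-fg-I}, take $\rho\colon G\to W(\Gamma)$ with $\overline b_\Gamma\nsucceq f$; the induced action $G\acts\Omega$ (vertex set of $\Gamma$) has its graph-of-action with $\overline b\preceq \overline b_\Gamma\nsucceq f$ by the dictionary above (each generator acts as a bounded-displacement permutation, so edges of the action graph are uniformly bounded-length paths in $\Gamma$, giving $\overline b_{\Gamma_{G\acts\Omega}}\preceq \overline b_\Gamma$), hence by \ref{i-fg-II} the action factors through a finite quotient and $\rho(G)$ is finite. The equivalence \ref{i-fg-I}$\Leftrightarrow$\ref{i-fg-Ibis} is the observation that a group with finite image has a finite orbit (trivially), while conversely if $\rho(G)$ has a finite orbit $\Omega_0$ then $\rho(G)$ maps onto the finite group $\mathrm{Sym}(\Omega_0)$ with kernel $N$; but then... here I need to be a little careful — having one finite orbit does not immediately make the image finite, so the correct reading is that \ref{i-fg-Ibis} with ``finite orbit'' is genuinely the statement to prove equivalent, and the subtlety is that property $\fg_{f(n)}$ as \emph{defined} says finite image. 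I would handle this by noting that if \emph{every} homomorphism to such a $W(\Gamma)$ has a finite orbit, one can quotient by the (normal) pointwise stabiliser of that orbit to reduce the dimension/size and iterate — or more cleanly, pass to the restriction to the complement and argue that $W(\Gamma)$ restricted to an invariant subset is again a wobbling group of a graph with no larger uniform growth, so an inductive/maximality argument on orbit structure forces finite image. This transfinite-induction-free argument (using that $\overline b_\Gamma$ is finite so $\Gamma$ has bounded geometry, hence finite orbits are of bounded size) is the one genuinely fiddly point.

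Finally the implications \ref{i-fg-I}$\Rightarrow$\ref{i-fg-III} and \ref{i-fg-III}$\Rightarrow$\ref{i-fg-IV} and \ref{i-fg-IV}$\Rightarrow$\ref{i-fg-III} close the loop. For \ref{i-fg-I}$\Rightarrow$\ref{i-fg-III}: if $H$ has infinite index and $\overline b_{\Gamma_H}\nsucceq f$, then $G$ acts faithfully... no — the action on $G/H$ need not be faithful; but it descends to a faithful action of $G/\mathrm{Core}(H)$, and the point is that $G/H$ is infinite so the image in $W(\Gamma_H)$ is infinite (an infinite transitive permutation group), contradicting $\fg_{f(n)}$ applied to the homomorphism $G\to W(\Gamma_H)$. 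For \ref{i-fg-III}$\Rightarrow$\ref{i-fg-IV} is trivial (restricting to a subclass of subgroups). For \ref{i-fg-IV}$\Rightarrow$\ref{i-fg-III}, the content is: given \emph{any} infinite-index $H$, one wants to find an infinite-index subgroup $H'$ in a URS with $\overline b_{\Gamma_{H'}}\preceq \overline b_{\Gamma_H}$. The natural candidate is to take a minimal closed invariant subset of the closure of the conjugacy class of $H$ in $\mathrm{Sub}(G)$ (which exists by Zorn), i.e. a URS $\mathcal{U}\subset \overline{\{gHg^{-1}\}}$; every $H'\in\mathcal U$ is a limit of conjugates of $H$, so by the marked-graph description of the Chabauty topology (recalled in \S\ref{s-prel-Chabauty}), the Schreier graph $\Gamma_{H'}$ is a Benjamini–Schramm/local limit of the $\Gamma_{gHg^{-1}}\cong\Gamma_H$, whence every ball in $\Gamma_{H'}$ of radius $R$ is isomorphic to a ball in $\Gamma_H$, giving $b_{\Gamma_{H'}}(R)\le \overline b_{\Gamma_H}(R)$ for all $R$, i.e. $\overline b_{\Gamma_{H'}}\le\overline b_{\Gamma_H}\nsucceq f$; and $H'$ has infinite index since it is a limit of infinite-index subgroups (the index being lower semicontinuous, or directly: a finite-index subgroup is isolated among its conjugates in a suitable sense — more safely, if $H'$ had finite index $k$ then so would all nearby conjugates of $H$, contradicting $[G:H]=\infty$). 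This passage to a URS via local limits of Schreier graphs is, I expect, the main obstacle — not because it is deep but because one must phrase the semicontinuity of growth and of finiteness-of-index carefully in the Chabauty topology; everything else is bookkeeping with the bornologous-map dictionary. The whole proof is elementary and I would present it compactly as the cycle \ref{i-fg-I}$\Rightarrow$\ref{i-fg-III}$\Rightarrow$\ref{i-fg-IV}$\Rightarrow$\ref{i-fg-III}$\Rightarrow$\ref{i-fg-II}$\Rightarrow$\ref{i-fg-I}, with \ref{i-fg-Ibis} slotted in as a short separate argument.
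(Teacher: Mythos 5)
Your overall architecture (the cycle through \ref{i-fg-III}, \ref{i-fg-IV}, \ref{i-fg-II}, \ref{i-fg-I}, built on the dictionary between homomorphisms to $W(\Gamma)$ and actions whose graph Lipschitz-embeds into $\Gamma$) matches the paper's, and your treatment of \ref{i-fg-IV}$\Rightarrow$\ref{i-fg-III} via a URS in the Chabauty orbit closure is exactly the paper's argument; you are in fact more careful than the paper in checking that the limit subgroup still has infinite index (finite-index subgroups form a Chabauty-open set, since such a subgroup is finitely generated and any subgroup containing its generators contains it). However, there is a genuine gap in your proof of \ref{i-fg-III}$\Rightarrow$\ref{i-fg-II}: you assert that an action $G\acts\Omega$ which does not factor through a finite quotient must have an orbit $G/H$ with $H$ of infinite index. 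This is false --- take an infinite residually finite group acting on the disjoint union of all of its finite quotients: every orbit is finite and every point stabiliser has finite index, yet the action is faithful and so does not factor through any finite quotient. In that situation your argument produces nothing. The paper handles exactly this case by a compactness argument: choose points $\omega_n$ whose orbit sizes tend to infinity, extract a Chabauty-convergent subsequence $\st_G(\omega_n)\to H$, observe that $H$ has infinite index (again because finite index is an open condition), and note that every ball of $\Gamma_H$ appears in some $\Gamma_{\st_G(\omega_n)}$, hence in $\Gamma_{G\acts\Omega}$; then \ref{i-fg-III} applied to $H$ gives $\overline{b}_{\Gamma_{G\acts\Omega}}\succeq\overline{b}_{\Gamma_H}\succeq f$. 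Without this step your cycle is broken.

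The same oversight infects your discussion of \ref{i-fg-Ibis}. Your proposed repair --- that all orbits are finite and that ``finite orbits are of bounded size'' because $\Gamma$ has bounded geometry --- is incorrect on both counts: bounded geometry does not bound the size of finite orbits (consider disjoint unions of ever larger cycles), and even knowing that every orbit is finite does not force $\rho(G)$ to be finite, by the same residually finite example. The clean route, which the paper takes, is not to prove \ref{i-fg-Ibis}$\Rightarrow$\ref{i-fg-I} directly but to prove \ref{i-fg-Ibis}$\Rightarrow$\ref{i-fg-III}: if $H$ has infinite index and $\overline{b}_{\Gamma_H}\nsucceq f$, the transitive action of $G$ on $G/H$ gives a homomorphism $G\to W(\Gamma_H)$ with no finite orbit, contradicting \ref{i-fg-Ibis}; together with the trivial implication \ref{i-fg-I}$\Rightarrow$\ref{i-fg-Ibis}, this slots \ref{i-fg-Ibis} into the cycle with no further work.
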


In particular, condition \ref{i-fg-II} can be taken as an alternative and simpler definition of property $\fg_{f(n)}$ for finitely generated groups (the formulation in terms of the group $W(\Gamma)$ is however needed to deal with non-finitely generated groups). Part \ref{i-fg-Ibis} justifies why this property can be thought of as a ``fixed point property''. 

\begin{proof}
Clearly \ref{i-fg-I}$\Rightarrow$\ref{i-fg-Ibis}. To see that \ref{i-fg-I}$\Leftrightarrow $\ref{i-fg-II}, observe that  every action $G\acts \Omega$ defines a homomorphism $G\to W(\Gamma_{G\acts \Omega})$, and conversely for every uniformly discrete metric space $\Delta$ of bounded geometry, and every homomorphism $\rho\colon G\to W(\Delta)$ the graph $\Gamma_{G\acts \Delta}$ of the corresponding action is Lipschitz-embedded into $\Delta$, and therefore satisfies $\overline{b}_{\Gamma_{G\acts \Delta}}\preceq \overline{b}_\Delta$.  It is obvious that \ref{i-fg-II}$\Rightarrow$\ref{i-fg-III}$\Rightarrow$\ref{i-fg-IV}, as well as that \ref{i-fg-Ibis}$\Rightarrow$\ref{i-fg-III}. Let us explain why \ref{i-fg-IV}$\Rightarrow$ \ref{i-fg-III}. Let $H\le G$ be a subgroup of infinite index. The closure of its orbit in $\sub(G)$ must contain a uniformly recurrent subgroup $Y$ and thus we can find a sequence $g_n\in G$ and an element $K\in Y$ such that $g_nHg_n^{-1}\to K$. We have by assumption $\overline{b}_{\Gamma_K}\succeq f$. But since $g_nHg_n^{-1}\to K$, every ball of $\Gamma_K$ appears in $\Gamma_H$,  thus $\overline{b}_{\Gamma_H}\ge  \overline{b}_{\Gamma_K}\succeq f$. Let us now explain why \ref{i-fg-III}$\Rightarrow$ \ref{i-fg-II}. Let $G\acts \Omega$ be an action which does not factor through a finite quotient of $G$. If there exists $\omega \in \Omega$ with an infinite orbit,  its stabiliser $H=\st_G(\omega)$ has infinite index and we have $\overline{b}_{G\acts \Omega} \succeq \overline{b}_{\Gamma_H}\succeq f$.  If not, then we can choose a sequence $\omega_n\in \Omega$ such that the size of the orbit of $\omega_n$ tends to $\infty$. Upon extracting a subsequence we can assume that $\st(\omega_n)$ tends to some infinite index subgroup $H\in \sub(G)$. Then every ball in $\Gamma_H$ appears as a ball $\Gamma_{G\acts \Omega}$, and thus $\overline{b}_{G\acts \Omega}\succeq \overline{b}_{\gamma_H}\succeq f$. The proof is complete. \qedhere

\end{proof}

 \subsection{Growth of groupoids and property $\fg_{f(n)}$ for full groups}
 
 Let $\G$ be a compactly generated \'etale groupoid, and let $\T$ be a finite symmetric generating set of $\tG$. Given a graph $\Gamma$ we denote by $B_\Gamma(n , v)$ its ball of radius $n$ around a vertex $v\in \Gamma$.

\begin{defin} The \textbf{orbital growth} of $\G$ with respect to the generating set $\T$ is the function of $n\in \N$ given by
\begin{equation} \label{e:growth} {\beta}_\G(n, \T)=\max_{x\in X}|B_{\orb(\G, \mathcal{T})}(n, x)|=\max_{x\in X}\overline{b}_{\orb_x(\G, \mathcal{T})}(n).  \end{equation}

\end{defin}
\begin{remark}
The \textbf{growth}  $\widetilde{\beta}_\G(n, \T)$ is defined similarly by replacing the the orbital graphs with the Cayley graphs \cite{Nek-complexity} (we will only consider the orbital growth in this paper). 

\end{remark}
Since changing generating set results in a bi-Lipschitz equivalence of the orbital  and Cayley graphs, the growth type of this functions according to $\sim$ does not depend on the choice of $\T$ and will be denoted simply  $\beta_\G(n)$ and $ \widetilde{\beta}_\G(n)$.

 \begin{lemma} \label{l-growth-graph-groupoid} \label{l-growth-graph}
If $\G$ is minimal, then the orbital graph of every point $x\in X$ satisfies $\overline{b}_{\orb_x(\G, \T)} \sim \beta_\G $.\end{lemma}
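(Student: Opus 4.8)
The statement asserts that for a minimal compactly generated \'etale groupoid $\G$ over a Cantor space, with finite symmetric generating set $\T\subset\tG$, the orbital graph $\orb_x(\G,\T)$ based at \emph{any} point $x\in X$ has uniform growth function $\overline{b}_{\orb_x(\G,\T)}(n)$ equivalent (under $\sim$) to the global orbital growth $\beta_\G(n)=\max_{y\in X}\overline{b}_{\orb_y(\G,\T)}(n)$. One inequality is trivial: by definition $\overline{b}_{\orb_x(\G,\T)}(n)\le\beta_\G(n)$ for every $x$, so $\overline{b}_{\orb_x(\G,\T)}\preceq\beta_\G$ with no constant needed. The content is the reverse inequality $\beta_\G\preceq\overline{b}_{\orb_x(\G,\T)}$, i.e. that the growth of one orbital graph controls the growth of all of them, uniformly.

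The key mechanism is minimality together with the fact that $\T$ consists of compact open bisections (recall from \S\ref{s-leaves} that over a totally disconnected space we always take generating sets of compact open bisections). First I would fix the base point $x$ and an arbitrary $y\in X$, and a radius $n$; I want to bound $|B_{\orb_y(\G,\T)}(n,y)|$ in terms of $|B_{\orb_x(\G,\T)}(Cn, z)|$ for a suitable vertex $z$ of $\orb_x$ and a universal constant $C$. The ball $B_{\orb_y(\G,\T)}(n,y)$ is a finite subset of the orbit of $y$; by compactness of the bisections in $\T$ and the fact that the branching pattern of the orbital graph at a point $y'$ is determined locally (the out-edge labelled $T_i$ exists at $y'$ iff $y'\in\src(T_i)$, which is clopen), there is a clopen neighbourhood $V$ of $y$ such that for every $y'\in V$ the combinatorial ball $B_{\orb_{y'}(\G,\T)}(n,y')$ is isomorphic, as a labelled rooted graph, to $B_{\orb_y(\G,\T)}(n,y)$. (This is exactly the continuity of the map $y'\mapsto$ pointed ball, a standard consequence of $\T$ being finite and consisting of compact open bisections over a Cantor space.) Then by minimality the $\G$-orbit of $x$ meets $V$: there is $\gamma\in\G_x$ with $\rg(\gamma)=:y'\in V$. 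Writing $\gamma$ as a product of at most $m$ elements of $\T\cup\T^{-1}$ for some $m$ depending only on $V$ (hence we fix $V$ once and for all for the chosen $n$... but $m$ may grow with $n$, which is the subtlety — see below), we conclude that $y'$ lies in the ball of radius $m$ around $x$ in $\orb_x(\G,\T)$, and the ball of radius $n$ around $y'$ in $\orb_x(\G,\T)$ is isometric to the ball of radius $n$ around $y$ in $\orb_y(\G,\T)$. Hence $|B_{\orb_y(\G,\T)}(n,y)|=|B_{\orb_x(\G,\T)}(n,y')|\le|B_{\orb_x(\G,\T)}(n+m,x)|\le\overline{b}_{\orb_x(\G,\T)}(n+m)$.

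The main obstacle is making the bound on $m$ uniform in $n$, i.e. showing $m\le Cn$ (or at least $m\preceq n$) so that $n+m\preceq n$ and the estimate $\beta_\G(n)\le\overline{b}_{\orb_x(\G,\T)}(Cn)$ follows. The clean way around this is to choose the neighbourhood $V$ \emph{independently of} $n$: first pick a finite clopen partition $X=V_1\sqcup\cdots\sqcup V_k$ fine enough that for each $j$ and each $y',y''\in V_j$ the labelled graphs $\orb_{y'}(\G,\T)$ and $\orb_{y''}(\G,\T)$ agree on radius-$1$ balls (possible since the relevant data is the finite tuple of clopen sets $\src(T_i)$); this forces the two orbital graphs to agree on balls of \emph{all} radii whenever $y',y''$ lie in the same piece \emph{and are connected through the orbit by a path staying in one piece} — more robustly, one argues that the pointed orbital graph $\orb_{y'}(\G,\T)$ depends only on the "itinerary" of $y'$, and standard arguments (cf.\ \cite{Nek-hyp}, \cite[Cor.\ 2.3.4]{Nek-hyp}) give that the map $y'\mapsto\orb_{y'}(\G,\T)$ into the space of pointed labelled graphs is continuous, hence locally constant is too much to ask but uniformly continuous suffices: there is a single clopen neighbourhood $V\ni y$ with the radius-$n_0$-ball constant on $V$ for as large $n_0$ as we like, but the honest fix is that $V$ can be chosen so that the radius-$n$ ball of $\orb_{y'}$ at $y'$ only depends on which piece of a \emph{fixed} finite partition the point $\rg$ of each of the $\le$ (number of vertices) $\le\overline{b}(n)$ relevant germs lands in. I would instead invoke directly the already-proven Lemma~\ref{l-qi-graph} and Proposition~\ref{p-compact-generation-stable}: by minimality, for a clopen $U\ni y$ the restriction $\G|U$ is compactly generated and minimal, and by Proposition~\ref{p-compact-generation-stable} the orbital graphs of $\G|U$ are coarsely equivalent to those of $\G$ with \emph{uniform} constants; combining this with the observation that inside a single clopen piece all points have the "same" local orbital graph structure, one gets that the orbit of $x$ passes within bounded distance (bounded \emph{independently of $n$}) of a point whose radius-$n$ ball matches $y$'s. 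This yields $\beta_\G(n)\le\overline{b}_{\orb_x(\G,\T)}(n+D)$ for a constant $D=D(\G,\T)$, and since trivially $\overline{b}_{\orb_x(\G,\T)}(n+D)\le\overline{b}_{\orb_x(\G,\T)}((D+1)n)$ for $n\ge1$, we obtain $\beta_\G\preceq\overline{b}_{\orb_x(\G,\T)}$, completing the proof that $\overline{b}_{\orb_x(\G,\T)}\sim\beta_\G$.
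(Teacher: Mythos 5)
Your first direction and the local analysis of balls are fine, but the second half of the argument has a genuine gap, and it is located exactly where you sense trouble: the bound on $m$. You reduce the problem to showing that the orbit of $x$ meets the neighbourhood $V$ of $y$ (on which the radius-$n$ ball is constant) at a point $y'$ within distance $m\preceq n$ of $x$ in $\orb_x(\G,\T)$. But $V$ shrinks as $n$ grows, and minimality gives no control whatsoever on how deep into the orbital graph of $x$ one must travel before entering $V$; the claim that "the orbit of $x$ passes within bounded distance (bounded independently of $n$) of a point whose radius-$n$ ball matches $y$'s" is not justified by Lemma~\ref{l-qi-graph} or Proposition~\ref{p-compact-generation-stable} (which concern coarse equivalence of leaves under restriction, not return times to small clopen sets), and it is false in general. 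So as written the estimate $\beta_\G(n)\le\overline{b}_{\orb_x(\G,\T)}(n+D)$ with $D$ independent of $n$ does not follow.

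The difficulty is entirely self-inflicted: you insist on comparing with the ball \emph{centred at $x$}, writing $|B_{\orb_x(\G,\T)}(n,y')|\le|B_{\orb_x(\G,\T)}(n+m,x)|$. But $\overline{b}_{\orb_x(\G,\T)}(n)$ is by definition a supremum over \emph{all} vertices of the orbital graph, so the distance from $x$ to $y'$ is irrelevant: once you have produced a single vertex $y'$ of $\orb_x(\G,\T)$ with $|B_{\orb_x(\G,\T)}(n,y')|\ge\beta_\G(n)$, you are done, with constant $1$. This is exactly how the paper argues: take $z$ realising the maximum in $\beta_\G(n)$, write the $r=\beta_\G(n)$ distinct points of its $n$-ball as $\tau(T_i)(z)$ with $T_i\in\bigcup_{j\le n}\T^j$, shrink to a neighbourhood $U$ of $z$ on which the $\tau(T_i)(U)$ are pairwise disjoint, and use minimality to pick $w\in U$ in the orbit of $x$; then $|B_{\orb_x(\G,\T)}(n,w)|\ge r$, hence $\overline{b}_{\orb_x(\G,\T)}(n)\ge\beta_\G(n)$. (Note also that this only needs distinct points to stay distinct on $U$, not your stronger claim that the rooted labelled ball is constant on $V$ — the "if" direction of that claim, namely that coincidences $\tau(T)(y)=\tau(T')(y)$ persist for nearby $y'$, can fail when germs differ, so your asserted equality $|B_{\orb_y}(n,y)|=|B_{\orb_x}(n,y')|$ should be an inequality; fortunately only the inequality is needed.)
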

\begin{proof}
The inequality $\overline{b}_{\orb_x}(n)\preceq \beta_\G(n)$ is obvious. Let us show the converse. Fix $n$,  let $x\in X$ realise the maximum in the definition of $\beta_\G(n)$, and let $z_1=z, \cdots, z_r$ be the distinct vertices in in the ball of radius $r$ around $z$, where $r=\beta_\G(n)$. This implies that there exists bisections $T_1,\ldots, T_r\in \cup_{i=0}^n \T^n$ such that $z_i=\tau(T_i)(x)$ for $i=1,\ldots, r$.  It follows that if $U$ is a sufficiently small neighbourhood of $z$ then $\tau(T_1)(U),\ldots ,\tau(T_r)(U)$ are pairwise disjoint. Hence for every $w\in U$ the points $\tau(T_1)(w),\ldots, \tau(T_r)(w)$ are pairwise distinct. Since $\G$ is minimal, we can choose $w\in U$ which is in the same orbit of $x$, and since the ball of radius $n$ in $\orb_x$ centred at $w$ has at least $r$ different points this shows that $\overline{b}_{\orb_x}(n)\geq r$. \qedhere
\end{proof}

The natural action of $\Ff(\G)$ on each $\G$-orbit is by permutation of bounded displacement with respect to the orbital graph structure, thus defines a homomorphism $\Ff(\G)\to W(\orb_x(\G, \mathcal{T}))$. If $\G$ is minimal and effective, the action on every orbit is faithful, and thus $\Ff(\G)$ is actually a subgroup of the wobbling group $W(\orb_x(\G, \mathcal{T}))$.
Thus the group $\Ff(\G)$ embeds in $W(\Gamma)$ for a graph $\Gamma$ such that $\overline{b}_\Gamma\sim \beta_\G$. This is  sharp:

\begin{thm} \label{t-wobbling}
Let $\G$ be a compactly generated minimal effective groupoid  over a Cantor space, and let $\beta_\G(n)$ be its orbital growth function. The group $\Af(\G)$ has property $\fg_{\beta_\G(n)}$.
\end{thm}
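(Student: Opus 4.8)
The plan is to deduce Theorem \ref{t-wobbling} from the classification of confined subgroups (Theorem \ref{t-confined}, via Corollary \ref{c-Chabauty}) together with the structural control on the orbital graphs given by Lemma \ref{l-growth-graph} and Lemma \ref{l-qi-graph}. Fix a finite symmetric generating set $\T\subset \tG$ of compact open bisections; by Lemma \ref{l-qi-graph} we may pick a finitely generated subgroup $G_0\le \Af(\G)$ such that for every $x\in X$ the Schreier graph $\sch_x(G_0, S)$ is bi-Lipschitz equivalent to $\orb_x(\G,\T)$, with constants independent of $x$. Since $\Af(\G)$ is the ascending union (or at least the directed union of its finitely generated subgroups) and property $\fg_{f(n)}$ for a group passes to any group containing a finitely generated subgroup with the property only in the finitely generated case, I will actually prove the statement directly for $\Af(\G)$ using the reformulation: it suffices to show (by the wobbling--group description in \S\ref{s-intro-fg} and Proposition \ref{p-fg} when $\Af(\G)$ is finitely generated, and by the direct definition via $W(\Gamma)$ in general) that for every discrete metric space $\Gamma$ with $\overline{b}_\Gamma(n)\nsucceq \beta_\G(n)$, every homomorphism $\rho\colon \Af(\G)\to W(\Gamma)$ has finite image.

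So let $\rho\colon \Af(\G)\to W(\Gamma)$ be a homomorphism with $\rho(\Af(\G))$ infinite, and aim for $\overline{b}_\Gamma(n)\succeq \beta_\G(n)$. Since $\Af(\G)$ is simple (Theorem \ref{t-simple}), $\rho$ is injective. The group $W(\Gamma)$ acts on the vertex set $\Gamma$, so $\rho$ gives an action $\Af(\G)\acts \Gamma$ by bounded--displacement permutations; this action cannot factor through a finite quotient (there is none), so by simplicity it is faithful. Now I consider an orbit $\Omega\subset\Gamma$ of this action on which $\Af(\G)$ acts nontrivially, hence faithfully (again simplicity), and the point stabiliser $H=\st_{\Af(\G)}(v)$ for some $v\in\Omega$. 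The key point is that $H$ is \emph{confined}: since $\rho$ embeds $\Af(\G)$ into $W(\Gamma)$ and the image is a bounded--displacement group, there is a finite set $P\subset\Af(\G)\setminus\{1\}$ (coming, via Lemma \ref{l-qi-graph}, from the finitely many ``short'' alternating generators that witness bounded displacement in $\Gamma$) that meets every conjugate of $H$ — concretely, for any $w$ in the orbit there is an element of $P$ whose germ-type at $w$ is nontrivial but which still fixes $v$ after conjugation, using minimality and the multisection calculus of \S\ref{s-prel-alt}. More robustly: if $H$ were not confined, $\{1\}$ would be in the closure of its conjugacy class in $\sub(\Af(\G))$, giving a sequence of vertices $w_n$ whose stabilisers converge to $\{1\}$, i.e. balls of arbitrarily large radius around $w_n$ in the Schreier graph $\sch_{w_n}(\Af(\G), S)$ inject into $\Gamma$; but one checks $\Gamma$ has bounded geometry, so in fact one wants the opposite inequality — I will instead argue that if $H$ is confined then the orbital graph of $\Af(\G)$ at $v$ is quasi-isometrically embedded in $\Gamma$ and has growth $\succeq\beta_\G$, and if $H$ is \emph{not} confined one derives a contradiction with faithfulness/infiniteness combined with bounded geometry of $\Gamma$. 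By Corollary \ref{c-Chabauty}, confinement of $H$ forces the existence of a finite set $Q\subset X$ with $\st^0_{\Af(\G)}(Q)\le H\le\st_{\Af(\G)}(Q)$.

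The final step is the growth comparison. From $\st^0_{\Af(\G)}(Q)\le\st_{\Af(\G)}(v)$, the Schreier graph $\sch_v(\Af(\G),S)$ is a quotient of the Schreier graph $\sch_{\st^0_{\Af(\G)}(Q)}(\Af(\G),S)$; by Lemma \ref{l-st-finite} the latter is the Schreier graph of $\Af(\G|Q^c)$ acting on a coset space, whose ball growth dominates that of the orbital graph $\orb_x(\G|Q^c,\T)$ for $x$ outside $Q$, and by Proposition \ref{p-compact-generation-stable} and Lemma \ref{l-growth-graph} this is $\sim\beta_\G(n)$. Meanwhile, since $\rho$ sends $\Af(\G)$ into $W(\Gamma)$ with bounded displacement, the orbital graph of the $\Af(\G)$--action on $\Omega\subset\Gamma$ — which equals $\sch_v(\Af(\G),S)$ — is Lipschitz-embedded in $\Gamma$, so $\overline{b}_\Gamma(n)\succeq\overline{b}_{\sch_v(\Af(\G),S)}(n)$. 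Chaining these, $\overline{b}_\Gamma(n)\succeq\beta_\G(n)$, contradicting the hypothesis. Hence $\rho(\Af(\G))$ is finite, proving property $\fg_{\beta_\G(n)}$. Sharpness (that $\beta_\G$ cannot be replaced by a larger function) is immediate since $\Af(\G)\le W(\orb_x(\G,\T))$ with $\overline{b}_{\orb_x}\sim\beta_\G$ by Lemma \ref{l-growth-graph}.

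The main obstacle I anticipate is the step establishing that the point stabiliser $H=\st_{\Af(\G)}(v)$ is genuinely confined — i.e. producing the finite confining set $P$ directly from the bounded-displacement structure of $\rho$ rather than merely from abstract nonsense. The cleanest route is: bounded displacement means every generator $s\in S$ of the chosen finitely generated $G_0\le\Af(\G)$ moves every vertex of $\Gamma$ by at most a fixed distance $D$; passing to $\Af(\G)$, for any $v$ there is a short word $w$ in $S$ with $\rho(w)$ nontrivially moving points near $v$ but conjugating into $\st(v)$, and the finitely many such words form $P$. Getting this to interact correctly with the germ/stabiliser dictionary of \S\ref{s-prel-alt} (so that Corollary \ref{c-Chabauty} applies with the right $Q$) is where the care is needed; everything downstream is a routine growth-comparison of Schreier graphs using Lemma \ref{l-st-finite}, Proposition \ref{p-compact-generation-stable}, and Lemma \ref{l-growth-graph}.
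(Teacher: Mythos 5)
Your overall strategy---classify the point stabilisers of the induced action via Theorem \ref{t-confined}/Corollary \ref{c-Chabauty} and then compare Schreier-graph growths---is the same as the paper's, but two steps as written do not go through.

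First, the dichotomy on confinement. Your initial claim that $H=\st_{\Af(\G)}(v)$ is ``genuinely confined'' because the image lies in $W(\Gamma)$ with bounded displacement is false in general: nothing prevents a sequence of vertices whose stabilisers converge to $\{1\}$ in $\sub(\Af(\G))$. You half-recognise this, but your proposed resolution of the non-confined case---``derive a contradiction with faithfulness/infiniteness combined with bounded geometry of $\Gamma$''---is not an argument, and there is no contradiction to be found there. In that case the inequality $\overline{b}_\Gamma\succeq\beta_\G$ holds directly for a different reason: the Schreier graphs $\sch_{v_n}(K,S)$ of the finitely generated subgroup $K$ supplied by Lemma \ref{l-qi-graph} converge to the Cayley graph of $K$ in the space of marked graphs, each is Lipschitz-embedded in $\Gamma$ with uniform constants, and the Cayley graph of $K$ covers every orbital graph $\Gamma_x(K,S)$, whence $\overline{b}_\Gamma\succeq b_K\succeq \overline{b}_{\Gamma_x(K,S)}\sim\beta_\G$. (Relatedly, you must carry out the whole argument with $K$ rather than with $\Af(\G)$, which need not be finitely generated; you introduce $G_0$ at the outset and then silently revert to writing $\sch_v(\Af(\G),S)$.)

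Second, and more seriously, your growth comparison in the confined case uses the wrong half of the sandwich $\st^0_{\Af(\G)}(Q)\le H\le \st_{\Af(\G)}(Q)$. From $\st^0_{\Af(\G)}(Q)\le \st_{\Af(\G)}(v)$ you correctly note that $\sch_v$ is a quotient (i.e.\ the covered graph) of the Schreier graph of $\st^0_{\Af(\G)}(Q)$, but a covered graph has \emph{smaller} uniform ball growth than its cover, so this gives $\overline{b}_{\sch_v}\preceq \overline{b}_{\sch_{\st^0(Q)}}$---an upper bound exactly where you need a lower bound. Your chain $\overline{b}_\Gamma\succeq\overline{b}_{\sch_v}$ together with $\overline{b}_{\sch_{\st^0(Q)}}\succeq\beta_\G$ therefore does not close. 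The correct route (the paper's) uses the upper inclusion: $\st_K(v)\le\st_K(Q)$ makes $\sch_v(K,S)$ cover $\sch_{\st_K(Q)}(K,S)$, so $\overline{b}_{\sch_v(K)}\succeq\overline{b}_{\sch_{\st_K(Q)}(K)}$; then $\fix_K(Q)$ has finite index in $\st_K(Q)$ (quasi-isometric Schreier graphs), and $\fix_K(Q)\le\st_K(x)$ for $x\in Q$ makes $\sch_{\fix_K(Q)}(K,S)$ cover $\Gamma_x(K,S)$, giving $\overline{b}_{\sch_v(K)}\succeq\overline{b}_{\Gamma_x(K,S)}\sim\beta_\G$ by Lemma \ref{l-growth-graph}. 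One must also dispose of the case $Q=\varnothing$ for every vertex (then the homomorphism is trivial). Your concluding sharpness remark is fine as stated.
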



\begin{proof} 
When $\G$ is expansive (so that $\Af(\G)$ is finitely generated),  this theorem can be  easily deduced directly from our classification of the uniformly recurrent subgroups of $\Af(\G)$ (Corollary \ref{c-URS}) and from Proposition \ref{p-fg}~\ref{i-fg-IV}. The proof given below is a modification of this argument,  which applies also when $\Af(\G)$ is not finitely generated.

 Let $\Delta$ be a graph and consider a homomorphism ${\Af(\G)}\to W(\Delta)$. We assume that $\rho(\Af(\G))$ is non-trivial and show that $\overline{b}_{\Delta}\succeq \beta_\G$. 
 
Using Lemma \ref{l-qi-graph}, we can find a finitely generated subgroup $K\le {\Af(\G)}$ endowed with a finite generating set $S$ such that for every $x\in X$, the orbital graph $\Gamma_x(K, S)$ for the action $K\acts X$ is bi-Lipschitz equivalent to $\orb_x(\G, \T)$, where $\T$ is a finite symmetric generating set of $\tG$. We fix such $(K, S)$, and omit the generating sets $S$ and $\T$ from the notations in what follows. 
 Let be a finitely generated subgroup as in Lemma \ref{l-qi-graph} and let $S$ be a finite generating set of $K$.  For every vertex $v\in \Delta$ the orbital  graph $\sch_v(K)$ is Lipschitz-embedded into $\Delta$, with Lipschitz constant independent from the choice of $v$.  In particular we have $\overline{b}_{\Gamma_v(K)}(n)\preceq \overline{b}_\Delta(n)$, with constant independent of $v$. There are two cases two consider. First, assume that there exists a sequence of vertices $(v_n)$ such that $\st_{{\Af(\G)}}(v_n)$ tends to $\{1\}$ in $\sub({\Af(\G)})$. It follows that $\st_K(v_n)=\st_{{\Af(\G)}}(v_n)\cap K$ tends to $\{1\}$ in $\sub(K)$ and therefore the corresponding Schreier graphs tend to the Cayley graph of $K$ in the space of marked graphs. In this case, letting $b_K$ be the growth function of $K$, we deduce that ${b}_K\preceq \overline{b}_\Delta$. Since the Cayley graph of $K$ covers $\Gamma_x(K)$ for every $x\in X$, we obtain $\beta_\G\sim \overline{b}_{\Gamma_x(K)}\preceq b_K\preceq \overline{b}_\Delta$, whence the conclusion. 
  Second, assume that there is no such sequence of vertices. In particular, for every vertex $v\in \Delta$ the group $\st_{{\Af(\G)}}(v)$ is confined, and therefore we can apply Theorem \ref{t-Chabauty} and obtain the existence of a finite set $\Q\subset X$ such that $\st^0_{\Af({\G})}(Q)\le \st_{{\Af(\G)}}(v)\le \st_{{\Af(\G)}}(\Q)$. If   $\Q=\varnothing$ the vertex $v$ is fixed by ${\Af(\G)}=\st^0_{\Af(\G)}(\varnothing)$. If this happens for every vertex the homomorphism ${\Af(\G)}\to W(\Delta)$  is trivial. Otherwise choose $v$ such that $\Q\neq\varnothing$. Then, taking the intersection with $K$, we see that $\st_K(v)=K\cap \st_{\Af(\G)}(v)\cap K\le \st_{\Af(\G)}(Q)\cap K=\st_K(Q)$. We deduce that the orbital  graph $\sch_v(K, S)$ covers the Schreier graph $\sch_{\st_K(Q)}(K)$. Since $Q$ is finite, its point-wise fixator $\fix_K(Q)$ has finite index in $\st_K(Q)$ and therefore the Schreier graphs $\sch_{\st_K(Q)}(K)$ and $\sch_{\fix_K(Q)}(K)$ are quasi-isometric. Finally, for every $x\in Q$ we have $\fix_K(Q)\le \st_K(x)$, and therefore the graph $\sch_{\fix_K(Q)}(K)$ covers the Schreier graph $\orb_{\st_K(x)}(K)$. Using these facts, we obtain 
\[\overline{b}_\Delta(n)\succeq \overline{b}_{\sch_v(K)}(n)\succeq \overline{b}_{\sch_{\st_K(Q)}(K)}(n)\sim \overline{b}_{\sch_{\fix_K(Q)}(K)}(n)\succeq \overline{b}_{\orb_{\st_K(x)}(K)}\sim \beta_\G(n).\] \qedhere
\end{proof}
\begin{remark} \label{r-wobbling-full} We would like to point out that Theorem \ref{t-intro-fg}  can also be seen as a  corollary of the Extension Theorem \ref{t-intro-main}. 
In fact for every graph $\Gamma$ the group $W(\Gamma)$  can be interpreted as the full group of a  groupoid $\G_\Gamma$ over the Stone-\v{C}ech compactification $\beta\Gamma$, namely the \emph{translation groupoid} of  Skandalis, Tu, and Yu \cite{Ska-Tu-Yu}. Applying the Extension Theorem to a homomorphism $\Af(\G)\to \Ff(\G_\Gamma)=W(\Gamma)$, Theorem \ref{t-intro-fg} can be deduced. 
\end{remark}

 Theorem \ref{t-wobbling} has the following consequences.
 \begin{cor} \label{c-fg-fm}
 There exists a finitely generated group which has property $\fg$ but  admits a transitive faithful action on an infinite set with an invariant mean (in particular which does not have property $\fm$). 
 \end{cor}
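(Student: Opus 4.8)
The plan is to exhibit a single compactly generated minimal effective groupoid $\G$ over the Cantor set whose orbital growth $\beta_\G(n)$ is exponential, but for which the group $\Af(\G)$ admits a transitive faithful action on an infinite set with an invariant mean. By Theorem~\ref{t-wobbling}, the condition $\beta_\G(n)\sim\exp(n)$ gives that $\Af(\G)$ has property $\fg$. So the real content is to build such a $\G$ together with the extra action. The natural source is a subshift: take a minimal subshift $X\subset A^\Z$ of positive topological entropy (so that the orbital graphs, which for the $\Z$-action groupoid are lines, are not directly exponential — hence one should instead use a subshift over $\Z^d$ for $d\ge 2$, or more simply a minimal $\Z$-action and enlarge the groupoid). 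The cleanest route: let $\G$ be the groupoid of germs of a minimal $\Z$-action on the Cantor set that is \emph{not} the boundary action and has a subgroup action producing exponential orbital growth — concretely, one takes the groupoid associated to a minimal subshift of positive entropy over a free group or over $\Z^2$, where the orbital (Schreier) graphs contain balls of size $\exp(n)$ because $\Af(\G)$ acts on each orbit with displacement bounded but the orbit itself carries an exponentially growing graph structure coming from a non-amenable acting group. First I would pin down this groupoid: pick a minimal, topologically free, expansive action $F_2\acts X$ on the Cantor set (these exist, e.g. subshifts over $F_2$), so $\G$ is the groupoid of germs, $\Af(\G)$ is finitely generated by Theorem~\ref{t-fg}, and the orbital graphs are quasi-isometric to $F_2$, hence $\beta_\G(n)\sim\exp(n)$; by Theorem~\ref{t-wobbling}, $\Af(\G)$ has property $\fg$.

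Next I would produce the transitive faithful action with an invariant mean. The action of $\Af(\G)$ on a single $\G$-orbit $\orb\subset X$ is faithful (since $\G$ is minimal and effective, the action on every orbit is faithful, as recalled before Theorem~\ref{t-wobbling}) and transitive. The key claim is that this action preserves an invariant mean. The orbit $\orb$, as a metric space via the orbital graph, is quasi-isometric to $F_2$, which is non-amenable, so there is no invariant mean coming from the metric structure — so this naive choice fails, and this is the main obstacle. The fix is to choose $\G$ more carefully so that $\Af(\G)$ still has exponential orbital growth \emph{but} admits a transitive amenable action on some auxiliary infinite set. Here I would instead take $\G$ to be the groupoid of germs of a minimal subshift $\Z^2\acts X$ with positive entropy: then the orbital graphs are quasi-isometric to $\Z^2$ as a \emph{graph}, but the exponential growth needed for property $\fg$ is supplied by the complexity, not by the orbit geometry — wait, this is backwards since $\beta_\G$ for a $\Z^2$ action is polynomial. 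So the correct construction must decouple the two: one wants $\beta_\G(n)$ exponential (forcing $\fg$) yet $\Af(\G)$ having a transitive faithful amenable action.

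The resolution, which I would carry out in detail, is to take $\G$ to be the groupoid of germs of a suitable minimal action of a \emph{non-amenable} finitely generated group $G_0$ on the Cantor set that is topologically free and expansive, so $\Af(\G)$ is finitely generated with $\beta_\G(n)\sim\exp(n)$, hence has property $\fg$; and then to observe that $\Af(\G)$, being a simple group (Theorem~\ref{t-simple}) which is \emph{not} finitely presented / is LEF / is locally embeddable in finite groups in the relevant cases, or more directly which surjects onto nothing — the point is that $\Af(\G)$ has a faithful transitive action with an invariant mean precisely when it has a faithful chain of subgroups $\{H_n\}$ of infinite index whose Schreier graphs are amenable ``on average''. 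Concretely, I would use that $\Af(\G)$ for expansive $\G$ is \emph{residually finite} is false — instead I would invoke: $\Af(\G)$ is simple and contains copies of all finite alternating groups, and one constructs the amenable transitive action by a direct limit / co-induction argument as in \cite{Gla-Mo, Cor-FM}, realizing $\Af(\G)$ inside a suitable inverse limit where an explicit invariant mean is visible. The main obstacle, and the step I expect to require the most care, is exactly this construction of the invariant-mean action: one must exhibit, by hand, an infinite set $\Omega$ with a transitive faithful $\Af(\G)$-action and a finitely additive invariant probability measure, which I would do by taking $\Omega$ to be an $\Af(\G)$-orbit in a space $\beta\Gamma$-type compactification or, better, in the space of subgroups / cosets of a carefully chosen locally finite-by-(something amenable) subgroup, and verifying invariance of the mean using the simplicity of $\Af(\G)$ to rule out the kernel being nontrivial. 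Once both properties are in hand, the corollary follows since property $\fm$ would force every such action to have a finite orbit, contradicting transitivity on an infinite set.
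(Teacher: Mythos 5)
There is a genuine gap at the decisive step. You correctly identify the tension --- you need $\beta_\G(n)$ exponential (for property $\fg$ via Theorem \ref{t-wobbling}) while also having a faithful transitive action preserving an invariant mean --- and you correctly rule out the two naive choices (a free-group subshift gives non-amenable orbits; a $\Z^2$ subshift gives polynomial orbital growth). But your proposed ``resolution'' does not resolve it: you take a \emph{non-amenable} acting group $G_0$, which makes the orbital graphs non-amenable again, and then you gesture at constructing an invariant-mean action ``by hand'' via co-induction, subgroup spaces, or a $\beta\Gamma$-type compactification. None of that is carried out, and there is no reason to expect it to work; in particular nothing in the paper's toolkit produces an amenable faithful transitive action of $\Af(\G)$ when the orbital graphs themselves are non-amenable.

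The missing idea is that growth type and amenability are independent properties of a finitely generated group: one should take the acting group $H$ to be \emph{amenable of exponential growth}, e.g.\ $H=\Z/2\Z\wr\Z$. By \cite{Gao-Jack-Sew} there is a minimal free expansive action $H\acts X$; let $\G$ be its groupoid of germs. Then $\Af(\G)$ is finitely generated (Theorem \ref{t-fg}), the orbital graphs are bi-Lipschitz equivalent to the Cayley graph of $H$ (Lemma \ref{l-qi-graph}), so $\beta_\G(n)$ is exponential and $\Af(\G)$ has property $\fg$ by Theorem \ref{t-wobbling}; and simultaneously the action of $\Af(\G)$ on any single orbit is faithful, transitive, of bounded displacement on an amenable graph, hence preserves the invariant mean coming from a F\o lner sequence of $H$. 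This one change collapses the entire second half of your argument into a one-line observation.
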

 \begin{proof}
Let $H$ be any finitely generated amenable group of exponential growth (e.g. take $H=\Z/2 \Z \wr \Z$). Let $H\acts X$ be a minimal, free expansive action on the Cantor set, which exists by \cite{Gao-Jack-Sew}. Let $\G$ be the groupoid of germs of the action. Its orbital graphs are bi-Lipschitz equivalent to $H$ and thus $\beta_\G$ is exponential, and it follows that the group $\Af(\G)$ has property $\fg$. Then $G$ is finitely generated by Theorem \ref{t-fg}. The orbital graphs of its action on $X$ are quasi-isometric to $H$ by \ref{l-qi-graph}, and hence $\beta_\G(n)$ is exponential.  Therefore $\Af(\G)$ has property $\fg$ by Theorem \ref{t-wobbling}. However the action of $\Af(\G)$ on every orbit in $X$ has an amenable orbital graph, and therefore preserves an invariant mean.
\end{proof}
\begin{cor}
For every function $f$ which is the orbital growth of a minimal expansive groupoid over the Cantor set,  there exists a finitely generated group $G$ which has property $\fg_{f(n)}$, and the function $f$ is sharp.

\end{cor}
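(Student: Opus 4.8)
The plan is to read off the statement from the two black-box results already in hand: Nekrashevych's finite generation theorem (Theorem~\ref{t-fg}) and our Theorem~\ref{t-wobbling}. Let $\G$ be a minimal expansive groupoid over a Cantor space $X$ whose orbital growth function satisfies $\beta_\G\sim f$; we may take $\G$ to be effective (e.g.\ the groupoid of germs of a subshift), and we record that since $X$ has no isolated points, every dense $\G$-orbit is infinite, so in particular every orbit contains at least $5$ points. Hence Theorem~\ref{t-fg} applies and the group $G:=\Af(\G)$ is finitely generated, and Theorem~\ref{t-wobbling} gives that $G$ has property $\fg_{\beta_\G(n)}$, which by Lemma~\ref{l-growth-graph} is property $\fg_{f(n)}$. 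This settles the first assertion.

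For sharpness I would argue as follows. Fix $x\in X$ and a finite symmetric generating set $\T$ of $\tG$, and set $\Gamma=\orb_x(\G,\T)$; this is a connected graph of degree at most $2|\T|$, hence a discrete metric space of bounded geometry, and by Lemma~\ref{l-growth-graph} we have $\overline{b}_\Gamma\sim\beta_\G\sim f$. The natural action of $G=\Af(\G)$ on the $\G$-orbit of $x$ is by permutations of bounded displacement with respect to $\Gamma$, hence defines a homomorphism $G\to W(\Gamma)$, and by minimality and effectiveness of $\G$ this action is faithful, so $G$ embeds into $W(\Gamma)$. Moreover $G$ is infinite, being a non-trivial simple group by Theorem~\ref{t-simple}. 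Consequently, for any function $g\colon\N\to\N$ with $g\npreceq f$ (in particular for any $g$ growing strictly faster than $f$), the graph $\Gamma$ satisfies $\overline{b}_\Gamma\sim f\nsucceq g$, while the homomorphism $G\hookrightarrow W(\Gamma)$ has infinite image; therefore $G$ does \emph{not} have property $\fg_{g(n)}$. This is exactly the statement that $f$ is sharp.

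The argument is short and essentially a bookkeeping exercise; the only point requiring care is verifying that the hypotheses of the two invoked theorems are genuinely met, namely that $\G$ can be taken effective with all orbits infinite, and that the homomorphism $G\to W(\orb_x(\G,\T))$ is both faithful (so that the embedding into a wobbling group of a graph with $\overline{b}_\Gamma\sim f$ is honest) and has infinite image (so that the failure of $\fg_{g(n)}$ is genuine). Each of these is immediate from the results already established in \S\ref{s-confined}--\S\ref{s-fg} and from elementary properties of minimal groupoids over the Cantor set, so I do not expect any real obstacle beyond assembling these pieces.
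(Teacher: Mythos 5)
Your proposal is correct and follows essentially the same route the paper takes (the paper leaves this corollary's proof implicit): finite generation of $\Af(\G)$ from Theorem~\ref{t-fg}, property $\fg_{\beta_\G(n)}$ from Theorem~\ref{t-wobbling} together with Lemma~\ref{l-growth-graph}, and sharpness from the faithful bounded-displacement embedding $\Af(\G)\hookrightarrow W(\orb_x(\G,\T))$ noted just before Theorem~\ref{t-wobbling}. Your explicit verification of the hypotheses (effectiveness, infinite orbits, infinitude of the image) is exactly the bookkeeping the paper omits, and the quantifier in the sharpness step ($g\npreceq f$, equivalently $\overline{b}_\Gamma\nsucceq g$) is handled correctly.
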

This phenomenon appears to be new for any function $f$ which is neither linear nor exponential.

\begin{remark} The problem of determining exactly which functions can be realised as orbital growth function of a minimal expansive groupoid is natural and does not seem to have been studied yet.  It seems likely that every subexponentially growing function (satisfying some mild condition) can be realised. 
 We do not study this problem here, and just   observe the following.

\begin{enumerate}[label=(\roman*)]
\item Every function which can be realised as growth function of a finitely generated \emph{group} can also be realised as the orbital growth function of a minimal expansive groupoid, as follows from the fact that every finitely generated group admits a minimal free subshift  \cite{Gao-Jack-Sew}. By results of Bartholdi and Erschler \cite{Ba-Er-givengrowth}, this includes every function which satisfies a mild regularity condition and grows faster than $\exp({n^\alpha})$ for an explicit constant $\alpha\asymp 0.7674\ldots$.

\item  In \cite{MB-full}, the author studied a class of minimal groupoids associated to bounded automata groups acting on rooted trees. For the so called \emph{mother groups}, these groupoids are expansive (see \cite{MB-full}). Only the case of trees of constant valency is considered in \cite{MB-full}. By considering the same construction but on the mother groups over trees of varying valencies (see \cite{Brieu-entropy, AV-speed}), it is possible to realise a vast class of functions with oscillating behaviour in the polynomial growth range. 
\end{enumerate}

\end{remark} 

Finally, we record the following consequence of Theorem \ref{t-wobbling}
\begin{cor}\label{c-growth-obstruction}
Let $\G$ be a compactly generated minimal effective \'etale groupoid over a Cantor space. Let $\H$ be a compactly generated effective groupoid over a compact space. If the orbital growth functions satisfy $\beta_\H \nsucceq \beta_\G$, then there is no non-trivial homomorphisms $\Af(\G)\to \Ff(\H)$ 

\end{cor}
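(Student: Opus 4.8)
The plan is to deduce this directly from Theorem~\ref{t-wobbling} (property $\fg_{\beta_\G(n)}$ for $\Af(\G)$) together with the simplicity of $\Af(\G)$ (Theorem~\ref{t-simple}).

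First I would fix a finite symmetric generating set $\T\subset\tH$ of compact open bisections and observe that for every $y\in Y$ the full group $\Ff(\H)$ acts on the orbit $\rg(\H_y)$ by permutations of bounded displacement with respect to the orbital graph $\orb_y(\H,\T)$. Indeed, given $g\in\Ff(\H)$, the bisection $g$ is a compact subset of $\H$ (its source map is a homeomorphism onto the compact space $Y$), hence is covered by finitely many of the open sets $(\T\cup\T^{-1})^n$, so $g\subset\bigcup_{n\le N}(\T\cup\T^{-1})^n$ for some $N$; thus every point of the orbit is moved by $g$ to a point at distance at most $N$ in $\orb_y(\H,\T)$. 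This gives a homomorphism $\Ff(\H)\to W(\orb_y(\H,\T))$, and $\orb_y(\H,\T)$ is a discrete metric space of bounded geometry (degree at most $2|\T|$) which, by the very definition of the orbital growth $\beta_\H$, satisfies $\overline{b}_{\orb_y(\H,\T)}\preceq\beta_\H$.

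Now suppose, for contradiction, that $\rho\colon\Af(\G)\to\Ff(\H)$ is a non-trivial homomorphism. Since $\Af(\G)$ is simple (Theorem~\ref{t-simple}) and infinite (it contains $\alt(d)$ for every $d$, as every $\G$-orbit is infinite), $\rho$ is injective and $\rho(\Af(\G))$ has non-empty support. Choose $y\in Y$ with $[\rho(g)]_y\neq y$ for some $g\in\Af(\G)$; composing $\rho$ with the homomorphism above yields $\phi\colon\Af(\G)\to W(\orb_y(\H,\T))$ which is non-trivial, hence injective, hence has infinite image. On the other hand, from $\beta_\H\nsucceq\beta_\G$ and $\overline{b}_{\orb_y(\H,\T)}\preceq\beta_\H$ we get $\overline{b}_{\orb_y(\H,\T)}\nsucceq\beta_\G$ (otherwise $\beta_\H\succeq\overline{b}_{\orb_y(\H,\T)}\succeq\beta_\G$); since moreover $n\preceq\beta_\G(n)\preceq\exp(n)$, property $\fg_{\beta_\G(n)}$ from Theorem~\ref{t-wobbling} forces $\phi$ to have finite image, a contradiction.

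The argument is essentially routine; the only point deserving care is the claim in the second paragraph, that an arbitrary element of the possibly very large group $\Ff(\H)$ acts on each $\H$-orbit with bounded displacement (the bound depending on the element), which is exactly what membership in the wobbling group $W(\orb_y(\H,\T))$ requires and which follows from the compact generation of $\H$ by the compactness argument indicated. I would also note that this statement refines the intro assertion for $\H=\G_2$, and that an alternative derivation is possible via Remark~\ref{r-wobbling-full}, realising $W(\orb_y(\H,\T))$ as the full group of a translation groupoid and invoking the Extension Theorem; the direct route above is shorter.
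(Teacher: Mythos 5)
Your proposal is correct and follows essentially the same route as the paper: use simplicity of $\Af(\G)$ to get a faithful (hence infinite) image in the wobbling group $W(\orb_y(\H,\T))$ of some orbital graph, note $\overline{b}_{\orb_y(\H,\T)}\preceq\beta_\H\nsucceq\beta_\G$, and conclude by property $\fg_{\beta_\G(n)}$ from Theorem~\ref{t-wobbling}. The paper's proof is just a terser version of this; your extra verification that elements of $\Ff(\H)$ act with bounded displacement (via compactness of bisections and compact generation) is exactly the justification the paper gives in the paragraph preceding Theorem~\ref{t-wobbling}.
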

\begin{proof}
Assume by contradiction that $\rho\colon \Af(\G)\to \Ff(\H)$ is a  non-trivial homomorphism. Since $\Af(\G)$ is simple, its image must act either trivially or faithfully on every $H$ orbit. Thus we can see $\rho$ as taking values in $W(\orb_y)$ for some orbital graph $\Gamma_y$ of $\H$. The conclusion follows from Theorem \ref{t-wobbling}. \qedhere

\end{proof}

\section{Complexity of subshifts as an obstruction to group embeddings}
\label{s-sec-complexity}
By Corollary \ref{c-growth-obstruction}, the  orbital growth of a \'etale groupoids produces an obstruction to the existence of embeddings between full groups.  This invariant captures only  a  limited part of the nature of the groupoid, namely the geometry of its leaves. For instance, for the groupoids of germs of  actions of $\Z$ this invariant always grows linearly, but this family of groupoids is rich.
For this reason we consider in this section an invariant of dynamical nature of  of \'etale groupoids,  namely the \emph{complexity function}, and show that it also produces an obstruction to the existence of embeddings.

\subsection{Complexity of groupoids and of subshifts}
\label{s-complexity}

 Let  $\G$ be an \'etale groupoid over a Cantor space $X$, that we assume to be \emph{expansive} (Definition \ref{d-expansive}). Fix $\mathcal{T}=\{T_1,\ldots, T_n\}\subset \tG$  a symmetric expansive generating set of compact open bisections. For $n\ge 1$ we denote by $\mathcal{Q}_\mathcal{T}(n)$ the finite partition of $X$  generated by the sets $\src(F)$ for $ F\in \bigcup_{j=1}^n \T j$, i.e. two points $x,y$ belong to the same element of $\mathcal{Q}_\T(n)$ if and only if for every $F\in \bigcup_{j=1}^n \T^j$ we have $x\in \src(F) \Leftrightarrow y \in \src(F)$.
 
 \begin{defin} The function 
\[\pi_\G(n, \mathcal{T})=|\mathcal{Q}_\mathcal{T}(n+1)|\]
is called the \textbf{complexity} of $\G$ (with respect to the expansive generating set $\T$). 
\end{defin}

 Observe  that  $x, y$ belong to the same element of $\Q_\T(n)$ if and only  for every path of length at most $n$ in the Cayley graph $\cay_x(\G, \T)$ starting at $x$, there is a path in  $\cay_y(\G, \T)$ starting at $y$ whose edges have the same labels, and vice versa.   This happens if and only if the balls of radius $n$ in the universal covers of  the Cayley graphs $\cay_x(\G, \T)$ and $\cay_y(\G, \T)$ centred at any preimages of the points $x, y$ are isomorphic as rooted, labelled graphs. In the sequel we will switch between these two equivalent definitions of $\Q_\T(n)$,  and we will refer to them as the \emph{dynamical} and the \emph{combinatorial} definitions of complexity, respectively.

\begin{lemma}
If $\T_1$ and $\T_2$ are expansive generating sets of $\G$, we have $\pi_\G(n,  \mathcal{T}_2)\sim \pi_\G(n,  \mathcal{T}_1)$
\end{lemma}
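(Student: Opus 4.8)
The statement asserts that the complexity function $\pi_\G(n,\T)$ is well-defined up to the equivalence $\sim$, independently of the choice of expansive generating set. The plan is to exploit the combinatorial definition of $\pi_\G$: $\Q_\T(n)$ separates $x,y$ precisely when the rooted labelled balls of radius $n$ around lifts of $x,y$ in the universal cover of the Cayley graphs agree. So the task is to compare the Cayley graphs (equivalently the partitions they induce) for two expansive generating sets $\T_1$ and $\T_2$.

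First I would fix the key relation between $\T_1$ and $\T_2$: since both are expansive generating sets of $\tG$, there is a constant $C$ such that every bisection in $\T_1$ is a finite union of bisections each of which belongs to $\bigcup_{j\le C}(\T_2\cup\T_2^{-1})^j$, and symmetrically with the roles reversed. (This is the standard consequence of two finite generating sets of a compactly generated groupoid being mutually coarsely comparable, combined with the fact that each $T_i$ is compact open, so it is covered by finitely many small bisections realised as words in the other generators; one uses that $\bigcup_r(\T_2\cup\T_2^{-1})^r$ is a basis of the topology, which is exactly expansiveness.) I would state and prove this comparison as a preliminary observation, as it is the only place where expansiveness of both sets is used.

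Next, the comparison of partitions: I claim $\Q_{\T_1}(n)$ is coarser than $\Q_{\T_2}(Cn)$ for the constant $C$ above, and symmetrically. Indeed, if $x,y$ lie in the same atom of $\Q_{\T_2}(Cn)$, then for every $F$ that is a word of length $\le n$ in $\T_1\cup\T_1^{-1}$, after writing $F$ as a union of words of length $\le Cn$ in $\T_2\cup\T_2^{-1}$ we get $x\in\src(F)\iff y\in\src(F)$; hence $x,y$ lie in the same atom of $\Q_{\T_1}(n)$. This gives $\pi_\G(n,\T_1)=|\Q_{\T_1}(n+1)|\le|\Q_{\T_2}(C(n+1))|\le|\Q_{\T_2}(2Cn)|$ for $n\ge 1$, which is $\pi_\G(n,\T_1)\le\pi_\G(2Cn-1,\T_2)\le \pi_\G(2Cn,\T_2)$ up to harmless shifts; by symmetry the reverse inequality holds with a possibly larger constant, and taking the maximum of the two constants yields $\pi_\G(n,\T_1)\sim\pi_\G(n,\T_2)$.

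The main (only real) obstacle is the preliminary comparison step: making precise, with compactness, that a compact open bisection $T_i\in\T_1$ decomposes into finitely many bisections each expressible as a word of bounded length in $\T_2\cup\T_2^{-1}$, uniformly in $i$. Everything after that is a routine unwinding of the combinatorial definition of $\Q_\T(n)$ and bookkeeping of multiplicative and additive constants, which the $\sim$ relation is designed to absorb. I would present the argument compactly, citing the already-established fact (Definition \ref{d-expansive} and Proposition \ref{p-compact-generation-stable}) that the coarse structure on leaves is independent of the generating set, and noting that the same covering argument that proves that independence also yields the uniform bounded-word decomposition needed here.
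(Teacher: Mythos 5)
Your proposal is correct and follows essentially the same route as the paper's proof: use expansiveness (words in one generating set form a basis of the topology) together with compactness to write each bisection of one generating set as a finite union of bisections that are words of uniformly bounded length in the other, deduce that $\Q_{\T_2}(Cn)$ refines $\Q_{\T_1}(n)$, and conclude by symmetry, with the $\sim$ relation absorbing the constants. The only cosmetic differences are that the paper decomposes the $\T_2$-generators in terms of $\T_1$ rather than the reverse, and states the argument directly in the dynamical form $x\in\src(F)\Leftrightarrow y\in\src(F)$, which is what your argument reduces to anyway.
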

Thus we will simply write $\pi_\G(n)$ when we are interested only on the growth type of the complexity function, which is a well-defined invariant of $\G$. 
\begin{proof}
We use the dynamical definition of $\mathcal{Q}_\T(n)$. By the definition of expansivity, and by compactness, there exists $M>0$ be such that every element of $\mathcal{T}_2$ is a union of elements of $\cup_{r=0}^M\mathcal{T}_1^r$. Let $x ,y \in X$ be $\mathcal{Q}_{\mathcal{T}_1}(Mn)$-equivalent. Let $T_2\in \bigcup_{r\leq n}\mathcal{T}_2^r$ be such that $x\in \src(T_2)$. Then there exists $T_1\in \bigcup_{r\geq n} \mathcal{T}_1^{Mr}$ such that $T_1\subset T_2$ and $x\in \src(T_1)$. It follows that also $y\in \src(T_1)\subset \src(T_2)$. Since $T_2$ was arbitrary, this shows that $x$ and $y$ are $\mathcal{Q}_{\mathcal{T}_2}(n)$-equivalent, i.e. the partition $\mathcal{Q}_{\mathcal{T}_1}(Mn)$ is finer than the partition  $\mathcal{Q}_{\mathcal{T}_2}(n)$. This shows that $\pi_\G(n, \T_2)\preceq \pi_\G(n , \T_1)$. The converse inequality also holds by exchanging the roles of $\T_1$ and $\T_2$.   \qedhere \end{proof}

Let us clarify the link of this notion with the classical notion of complexity in symbolic dynamics. Let $G$ be a finitely generated group, endowed with at finite symmetric generating set $S$. For every $n\ge 1$ denote $B(n)$ the ball of radius $n$ in the Cayley graph of $G$.  Let $G\acts X\subset A^G$ be a subshift over a finite alphabet $A$. Its \textbf{complexity} (with respect to the generating set $S$) is the function $p_{G\acts X, S} (n)$ which measures the number of configurations $f\colon B(n)\to A$ which appear as restrictions of elements of $X$. We have the following.
\begin{prop}\label{p-complexity-action} \label{p-complexity-shift}
Let $G\acts X$ be a subshift over a finitely generated group, with finite generating set $S$. Let $\G$ be either the action groupoid, or the groupoid of germs of $G\acts X$. Then its complexity function satisfies 
\[\pi_\G(n)\sim p_{G\acts X, S}(n).\] 
\end{prop}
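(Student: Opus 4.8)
The plan is to compare the two complexity functions by directly matching the partitions $\mathcal{Q}_\mathcal{T}(n)$ used to define $\pi_\G(n)$ with the ``cylinder partitions'' used to define the subshift complexity $p_{G\acts X, S}(n)$. Fix the symmetric generating set $S$ of $G$, and let $\T$ be the natural associated generating set of $\tG$: for the action groupoid, $\T$ consists of the bisections $\{(s, x) \colon x\in X\}$ for $s\in S$ together with the identity; for the groupoid of germs, $\T$ consists of the sets of germs $\{[s]_x \colon x\in X\}$. In both cases, since the action is by a subshift, $\T$ is an expansive generating set (this is \cite[Prop.~5.5]{Nek-simple}), so $\pi_\G(n, \T)$ is defined and, by the previous lemma, its growth type is independent of the choice of expansive generating set. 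Thus it suffices to establish $\pi_\G(n,\T) \sim p_{G\acts X, S}(n)$ for this particular $\T$.

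First I would unwind the definition of $\mathcal{Q}_\T(n)$ for this $\T$. The sets $\src(F)$ for $F\in \bigcup_{j=1}^n \T^j$ are exactly the sets of the form $\{x\in X \colon g\cdot x \in C\}$ where $g$ ranges over words in $S$ of length $\le n$ (equivalently over $B(n)$) and $C$ ranges over a fixed finite clopen partition of $X$ generating the topology — namely, one can take $C$ to range over the letter-cylinders $\{x \colon x(1_G) = a\}$, $a\in A$, since for a subshift these generate the topology and every $\src(T_i)$ is a union of translates of them. Here I would need to be slightly careful: $\src(F)$ for $F$ a product of $j$ generators is literally $\{x \colon \text{the word is defined at } x\}$, which for an action groupoid is all of $X$; the point is rather that I should enlarge $\T$ (within its expansivity class, so harmless by the lemma) to also include the bisections $\operatorname{Id}_{C}$ for $C$ a letter-cylinder, or equivalently observe that the partition generated by $\src(T_i)$, $i$, together with their ``images'' already refines the cylinder partition because the generating set is expansive. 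Once this bookkeeping is done, two points $x,y$ are $\mathcal{Q}_\T(n+1)$-equivalent if and only if $g\cdot x$ and $g\cdot y$ lie in the same letter-cylinder for every $g\in B(n)$, which says precisely that the restrictions $x|_{B(n)}$ and $y|_{B(n)}$ (viewed as elements of $A^{B(n)}$ via $g\mapsto (g\cdot x)(1_G)= x(g^{-1})$) coincide. Hence $|\mathcal{Q}_\T(n+1)|$ equals the number of distinct patterns on $B(n)$, which is $p_{G\acts X, S}(n)$ up to the harmless shift $n\mapsto n+1$ and up to the constant distortion coming from the enlargement of $\T$; this yields $\pi_\G(n,\T)\sim p_{G\acts X, S}(n)$.

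The only genuine subtlety, and the step I expect to require the most care, is that the groupoid of germs is generally non-Hausdorff and need not be effective-free at every point, so the combinatorial description of $\mathcal{Q}_\T(n)$ via germs must be related to the dynamical one via orbits; but the excerpt already records (in the discussion of the ``dynamical'' versus ``combinatorial'' definitions of complexity) that $x,y$ are $\mathcal{Q}_\T(n)$-equivalent iff the balls of radius $n$ in the universal covers of the Cayley graphs agree as rooted labelled graphs, and for a group action on a subshift this rooted-ball data is exactly the pattern $x|_{B(n)}$ up to the correspondence above. So both the action groupoid and the groupoid of germs give the same partition $\mathcal{Q}_\T(n)$ (their Cayley graphs differ only by the presence of isotropy, which does not affect the labelled-ball data used to define $\mathcal{Q}_\T$), and the computation of the previous paragraph applies verbatim to both. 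I would close by noting that the $\sim$-equivalence absorbs the index shift and all multiplicative constants, giving the claimed $\pi_\G(n)\sim p_{G\acts X, S}(n)$.
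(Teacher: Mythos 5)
Your proposal is correct and follows essentially the same route as the paper: the paper simply takes the expansive generating set $\T=\{T_{s,a}=\{(s,x)\colon x\in C_a\}\colon s\in S,\ a\in A\}$ (the translation bisections cut along the letter-cylinders), for which $\pi_\G(n,\T)=p_{G\acts X,S}(n)$ holds exactly, whereas your adjunction of the idempotents $\operatorname{Id}_{C_a}$ to the uncut generators — your fix for the correctly-identified problem that the uncut generators have source all of $X$ — yields the same products up to a factor of $2$ in word length, which $\sim$ absorbs. The groupoid of germs is handled in the paper exactly as you suggest, by noting that $\Q_\T(n)$ depends only on the image of the representation in $\tI(X)$, which coincides for the action groupoid and the groupoid of germs.
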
 
Note that it follows that the growth type of the function $p_{G\acts X, S}$ is independent of $S$ and is an invariant of topological conjugacy of the subshift (this is well-known and can also be easily checked directly).  
\begin{proof}
We use the combinatorial definition of $\Q_\T(n)$. Assume first that $\G=G\times X$ is the action groupoid. For $a\in A$ let $C_a=\{x\in X\colon x(1_G)=a\}\subset X$ be the corresponding cylinder set. For $s\in S$ and $a\in A$ we consider the bisection $T_{s, a}=\{(s, x)\colon x\in C_a\}$.  The set $\mathcal{T}=\{T_{s, a}\colon s\in S, a\in A\}$ is an expansive generating set for $\G$ (see the proof of\cite[Prop. 5.7]{Nek-simple}). The Cayley graphs $\cay_x(\G, \T)$ coincide with the Cayley graph of the group $G$ (with respect to the generating set $S$) with edges labelled by $S\times A$, where each edge $(g, gs)$ is labelled  by $s\in S$ and by $a=x(g)\in A$. Thus its universal cover  coincides with the Cayley graph of the free group $F_S$, with every edge $(w, ws)$ labelled by $s$ and by $x(\bar{w})\in A$, where $\bar{w}\in G$ is the projection of $w\in F_S$ to  $G$. With this description it is obvious that the ball of radius $n$ in the universal cover of the graph $\cay_x(\G, \T)$ is uniquely determined by the ball of radius $n$ in $\cay_x(\G, \T)$ and vice versa. This shows that $\pi_\G(n, \T)=p_{G\acts X, S}(n)$ for every $n \ge 1$.

Let now $\G'$ be the groupoid of germs of the action. Note that the image of the natural representations $\G\to \tI(X)$ and $\tG'\to \tI(X)$ are the same, and the complexity function only depend on this image (this is clear from the dynamical description of   $\Q_T(n)$). Thus $\pi_{\G'}\sim \pi_\G$, concluding the proof. \qedhere
\end{proof}

\begin{prop}
Let $\G$ be an \'etale groupoid over a Cantor space $X$, and $U\subset X$ be a clopen set intersecting every $\G$-orbit. Then the groupoid $\G|U$ is expansive if and only if $\G$ is expansive. If this holds, their complexity functions satisfy $\pi_\G\sim \pi_{\G|U}$. 
\end{prop}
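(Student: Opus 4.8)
The plan is to reduce the statement about $\G$ to the corresponding statement about $\G|U$ via the fact (Proposition \ref{p-compact-generation-stable}) that $\G|U$ and $\G$ have coarsely equivalent (indeed, essentially the same) local structure, and then to transfer expansive generating sets and complexity estimates back and forth. First I would set up the framework: since $U$ intersects every orbit and $\G$ is minimal on its orbits in the relevant sense, one can choose a finite generating set $\T$ of $\tG$ consisting of compact open bisections, and observe that elements of $\tG$ restrict to compact open bisections of $\G|U$ with source and range in $U$. The key preliminary observation is the purely combinatorial description of complexity (the ``combinatorial definition'' from \S \ref{s-complexity}): $x,y$ are $\Q_\T(n)$-equivalent if and only if the balls of radius $n$ in the universal covers of the Cayley graphs $\cay_x(\G,\T)$ and $\cay_y(\G,\T)$, rooted at lifts of $x$ and $y$, are isomorphic as rooted labelled graphs.

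The core of the argument is then to compare the Cayley graphs of $\G$ and of $\G|U$. By Proposition \ref{p-compact-generation-stable}, for $x\in U$ the fibres $(\G|U)_x$ and $\G_x$ are coarsely equivalent, and the proof of \cite[Cor. 2.3.4]{Nek-hyp} in fact produces a bi-Lipschitz equivalence of the Cayley graphs with constants uniform in $x$. I would use this to show: (i) if $\G$ is expansive, then a finite generating set $\T'$ of $\widetilde{\G|U}$ obtained by ``pushing $\T$ into $U$'' (i.e.\ composing generators of $\tG$ with finitely many first-return bisections into $U$, as in the proof of Proposition \ref{p-compact-generation-stable}) is an expansive generating set of $\G|U$, because its products still form a basis of the topology of $\G|U$ (which is the subspace topology from $\G$); and (ii) conversely, if $\G|U$ is expansive, then pulling an expansive generating set of $\widetilde{\G|U}$ together with the ``jump'' bisections between $U$ and the rest of $X$ gives an expansive generating set of $\G$, again because the resulting products generate a basis of the topology of $\G$. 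The point in both directions is that expansivity is a statement about the topology generated by products of bisections, and passing between $X$ and $U$ only rescales word-lengths by a uniformly bounded amount (by minimality / the bounded return time, which holds on a compact open set intersecting every orbit).

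For the complexity comparison, once both groupoids are expansive I would argue as in the proof of the preceding lemma (invariance of $\pi_\G$ under change of expansive generating set): since there is a uniform constant $M$ such that every element of $\T'$ is a union of elements of $\bigcup_{r\le M}\T^r$ and vice versa, the partitions $\Q_{\T'}(n)$ and $\Q_\T(Mn)$ on $U$ are mutually refining, so $\pi_{\G|U}(n,\T') \preceq \pi_\G(Mn,\T)$, and conversely. The only extra point is that $\Q_\T(n)$ is a partition of $X$ while $\Q_{\T'}(n)$ is a partition of $U$; but the number of $\Q_\T(n)$-classes meeting $U$ is comparable (up to a bounded factor depending only on how many $\Q_T(n_0)$-classes are needed to separate $U^c$ from $U$, for some fixed $n_0$, times the number of classes inside $U$) to $|\Q_\T(n)|$, using again minimality to see that every $\Q_\T(n)$-class for $n$ large enough is ``detected'' by its intersection with $U$ after translating by a bounded-length bisection. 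This gives $\pi_\G\sim\pi_{\G|U}$.

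The main obstacle I anticipate is the careful bookkeeping in the direction ``$\G|U$ expansive $\Rightarrow$ $\G$ expansive'': one must produce a \emph{finite} expansive generating set of $\tG$ from data on $U$, and the natural candidate involves the first-return map to $U$, whose ``return time'' is bounded on $U$ by a compactness/minimality argument but requires a clean statement. A secondary subtlety is that the complexity function is \emph{a priori} sensitive to whether one counts $\Q_\T$-classes in $X$ or in $U$, so I need the observation that, for minimal $\G$, every sufficiently fine $\Q_\T(n)$-class has a representative that can be moved into $U$ by a uniformly bounded bisection, which lets me recover $|\Q_\T(n)|$ from the restricted data up to a multiplicative constant. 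Modulo these two points, the argument is a routine adaptation of the proof of the change-of-generating-set lemma together with Proposition \ref{p-compact-generation-stable}.
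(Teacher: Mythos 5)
Your overall strategy is the same as the paper's: handle expansivity by conjugating a generating set $\Sc$ of $\tG$ by a finite family of ``return'' bisections $\T$ with sources in $U$ (the paper simply cites Nekrashevych's Prop.~5.6 for the expansivity equivalence rather than reproving it), then compare the partitions $\Q_\Sc(n)$ and $\Q_{\T^{-1}\Sc\T}(n)$. Two cautions, one of which is a real gap. First, Proposition \ref{p-compact-generation-stable} (coarse equivalence of the fibres) cannot carry any weight here: $\pi_\G$ depends on the labelled local structure of the universal covers of the Cayley graphs, not on their quasi-isometry type, so it is motivation only. Second, your ``mutually refining up to $M$-rescaling'' claim is literally true in only one direction: each element of the restricted generating set is a bounded product of elements of $\Sc\cup\T$, which gives $\pi_{\G|U}\preceq\pi_\G$; but an element of $\Sc$ whose source leaves $U$ is not a union of products of bisections of $\G|U$, so the ``vice versa'' does not make sense as stated, and the hard inequality $\pi_\G\preceq\pi_{\G|U}$ does not follow from refinement alone. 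The missing ingredient — which you gesture at with ``every class is detected by its intersection with $U$ after translating by a bounded bisection'' but do not prove — is the following: for each $T\in\T$, the homeomorphism $\tau(T)$ descends to a (necessarily surjective) map from atoms of $\Q_{\T^{-1}\Sc\T}(n)$ contained in $\src(T)$ to atoms of $\Q_\Sc(n)$ contained in $\rg(T)$. Proving this requires rewriting an arbitrary word $F_n\cdots F_1$ in $\Sc$ based at $\tau(T)(x)$ as a word of the same length in $\T^{-1}\Sc\T$ based at $x$, by inserting point-dependent return bisections $T_j$ at each step; this is the one non-routine computation in the paper's proof, and it is what guarantees that distinct ambient classes pull back to distinct restricted classes, yielding $|\Q_\Sc(n)|\le|\T|\cdot|\Q_{\T^{-1}\Sc\T}(n)|$. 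With that lemma supplied, the rest of your plan goes through.
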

\begin{proof}
The first assertion is exactly \cite[Prop. 5.6]{Nek-simple}. We show that the complexity functions are equivalent. It is obvious that $\pi_{\G|U}\preceq \pi_\G$. Let us show the converse inequality.

Let $\Sc$ be an expansive generating set for $\G$. Let $\T$ be a finite set of compact open bisections of $\G$ such that $\src(T)\subset U $ for every $T\in \T$ and the sets $\rg(T), T\in \T$ cover $X$. Without loss of generality, we assume that $U\in \T$, that $\T\cup\T^{-1}\subset \Sc$ and that $X\in \Sc$. By the argument in the proof of  \cite[Prop. 5.6]{Nek-simple}, the set $\T^{-1} \Sc \T$ of all products $T_0^{-1}FT_1$ for $T_i\in \T, F\in \Sc$ is an expansive generating set of $\G|U$. Note that $\src(T_0^{-1}FT)\subset \src(T)$. Thus for every $n\ge 1$  the partition $\Q_{\T\Sc\T^{-1}}(n)$ of $U$ is finer than the partition generated by the sets $\src(T), T\in \T$, that is, for every atom $P$ of $\Q_{\T\Sc\T^{-1}}(n)$ and every $T\in \T$ we have either $P\subset \src(T)$ or $P\cap \src(T)=\varnothing$. For every $T\in \T$, let us denote by  $\Q^T_{\T^{-1}\Sc\T}(n)$ the set of atoms $P$ such that $P\subset \src(T)$. A similar reasoning, using that $\T^{-1}\subset \Sc$, shows that the partition $\Q_\Sc(n)$ is finer than the partition generated by $\rg(T)\colon T\in \T$. For $T\in \T$ let us denote $\Q_\Sc^T(n)$ the set of atoms $P\in \Q_\Sc^T(n)$ such that $Q\subset \rg(T)$. 
For $T\in \T$ consider the associated homeomorphism $\tau(T)\colon \src(T)\to \rg(T)$. We claim that for every $n\ge 1$, it descends a map $\tau(T)\colon \Q^T_{\T^{-1}\Sc\T}(n)\to \Q^T_\Sc(n)$, that is if $x, y\in \src(T)$ belong to the same atom of $ \Q^T_{\T^{-1}\Sc\T}(n)$, then $\tau(T)(x)$ and $\tau(T)(y)$ belong to the same atom of  $\Q^T_\Sc(n)$. To see this, assume by contradiction that this is not the case for $x, y\in \src(T)$. Hence (modulo exchanging $x$ and $y$) there exists $F_1,\cdots, F_n\in \Sc$ such that $\tau(T)(x)\in \src(F_n\cdots F_1)$ and $\tau(T)(y)\notin \src(F_n\cdots F_1)$. For $j=1,\cdots , n$ let $T_j\in \T$ be such that $\tau(F_j\cdots F_1T)(x)\in \rg(T_j)$. Then we have $\tau(T)(x)\in \src(T_n^{-1}F_nT_{n-1}T_n^{-1}F_{n-1}T_{n-2}\cdots T_1^{-1}F_1)\subset \src(F_n\cdots F_1)$. Thus we deduce that $x\in s\left((T_n^{-1}F_nT_{n})\cdots T_1^{-1}F_1T)\right)$ and $y\notin s\left((T_n^{-1}F_nT_{n})\cdots T_1^{-1}F_1T)\right)$. This contradicts the fact that $x$ and $y$ were in the same atom of $\Q_{\T^{-1}\Sc\T}(n)$. 

Thus we obtain a map $\Q^{T}_{\T^{-1}\Sc\T}(n)\to \Q^T_\Sc(n)$, which is obviously surjective (since the homeomorphism $\tau(T)$ is). This implies that for every $T\in \T$ and every $n\ge 1$ we have $|\Q^T_\Sc(n)| \le |\Q^T_{\T^{-1}\Sc\T}(n)|$. Thus, using that $\Q_\Sc(n)=\bigcup_{T\in \T} \Q^T_\Sc(n)$, we obtain
\begin{multline*} \pi_\G(n, \Sc)=\\ |\Q_\Sc(n)| \leq \sum_{T\in \T} |\Q^T_\Sc(n)| \leq   \sum_{T\in \T} |\Q^T_{\T^{-1}\Sc\T}(n)| \leq |\T| \cdot |\Q_{\T^{-1}\Sc\T}(n)| =\\ |\T|\cdot \pi_{\G|U}(n, \T^{-1}\Sc\T),\end{multline*}
showing that $\pi_\G\preceq \pi_{\G|U}$. \qedhere

\end{proof}

\begin{prop}\label{p-complexity-morphism}
Let $\G, \H$ be expansive groupoids over Cantor spaces $X, Y$. Assume that there exists a continuous injective morphism $\varphi\colon \tG\to \tH$. Then the complexity functions of $\G$ and $\H$ satisfy $\pi_\G\preceq \pi_\H$. 
\end{prop}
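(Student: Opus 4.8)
\textbf{Proof plan for Proposition \ref{p-complexity-morphism}.}

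The plan is to use the combinatorial description of complexity in terms of balls in the universal covers of Cayley graphs, together with the coarse-geometric consequences of a continuous injective morphism established in Proposition \ref{p-morphism-leaves}. Fix expansive generating sets $\T\subset \tG$ and $\Sc\subset \tH$ of compact open bisections, and let $q\colon Y\to X$ be the spatial component of $\varphi$; by Proposition \ref{p-inj-surj}, $q$ is surjective. The key point is that for a fixed $y\in Y$ with $x=q(y)$, two points in the same atom of $\Q_\Sc(n)$ ``see'' at least as much local information as the corresponding points under $q$: I want to compare the partition $\Q_\T(n)$ of $X$ with a suitable partition of $Y$ refined by $\Q_\Sc(Cn)$ for a constant $C$ depending only on $\T, \Sc, \varphi$.

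The heart of the argument is the following uniform comparison. Since $\varphi$ is a continuous morphism, each $\varphi(T_i)$ for $T_i\in \T$ is a compact open bisection of $\H$, hence by compactness is a finite union of elements of $\bigcup_{j\le C}\Sc^j$ for some fixed $C$ (using that $\Sc$ is a generating set and that products of elements of $\Sc$ form a basis of $\H$; this is where expansivity of $\H$ is used). Consequently, if $y_1, y_2\in Y$ are $\Q_\Sc(Cn)$-equivalent, I claim that $q(y_1)$ and $q(y_2)$ are $\Q_\T(n)$-equivalent: indeed, by Proposition \ref{p-spatial-equivariant} the source of $\varphi(F)$ is $q^{-1}(\src(F))$ for every $F\in \tG$, so membership of $q(y_i)$ in $\src(F)$ for $F\in\bigcup_{j\le n}\T^j$ is detected by membership of $y_i$ in $\src(\varphi(F))=q^{-1}(\src(F))$, and $\varphi(F)$ lies in $\bigcup_{j\le Cn}\Sc^j$ by the previous sentence. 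Hence the map $q$ descends to a well-defined surjection $\overline{q}\colon \Q_\Sc(Cn)\to \Q_\T(n)$ (surjectivity because $q$ is onto $X$), which immediately yields $\pi_\G(n,\T)=|\Q_\T(n)|\le |\Q_\Sc(Cn)|=\pi_\H(Cn,\Sc)$, i.e. $\pi_\G\preceq \pi_\H$.

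The main obstacle is making the claim ``$\Q_\Sc(Cn)$-equivalence of $y_1,y_2$ implies $\Q_\T(n)$-equivalence of $q(y_1),q(y_2)$'' airtight, since a priori only \emph{source} sets of the form $\src(\varphi(F))$ are controlled by $q$, whereas the partition $\Q_\T(n)$ is generated by $\src(F)$ for \emph{all} $F\in\bigcup_{j\le n}\T^j$, and $\varphi$ need not send $\T^j$ into $\Sc^j$ on the nose but only into a bounded power $\Sc^{Cj}$. One must check that the constant $C$ can be chosen uniformly over all $T_i\in\T$ (finitely many, so fine) and that it does not degrade with $n$ beyond a linear factor — which holds because $\varphi(T_{i_1}\cdots T_{i_n})=\varphi(T_{i_1})\cdots\varphi(T_{i_n})$ is a product of $n$ bounded pieces, hence lies in $\bigcup_{j\le Cn}\Sc^j$. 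An alternative, perhaps cleaner route avoiding bookkeeping is to invoke Proposition \ref{p-morphism-leaves} directly: an injective coarse map $\G_x\hookrightarrow \H_y$ between leaves, combined with the fact (from the combinatorial definition) that $\pi_\G(n)$ counts isomorphism types of $n$-balls in universal covers of Cayley graphs, lets one transport local patterns; but turning a coarse embedding of leaves into an inequality of complexity functions still requires the uniform control above, so I expect the bisection-level argument to be the most direct to write down.
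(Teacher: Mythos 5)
Your argument is correct and is essentially the paper's proof: both rest on the surjectivity of the spatial component $q$ (Proposition \ref{p-inj-surj}), the identity $\src(\varphi(F))=q^{-1}(\src(F))$ (Proposition \ref{p-spatial-equivariance}), and a comparison showing that a suitable partition $\Q_\Sc(\cdot)$ of $Y$ refines the pullback of $\Q_\T(n)$. The only difference is bookkeeping: the paper replaces $\Sc$ by $\Sc\cup\varphi(\T)$ (still an expansive generating set, since $\varphi(\T)$ consists of compact open bisections) so that the refinement holds with $C=1$ and the constant is absorbed by the lemma on independence of the expansive generating set, whereas you re-derive that comparison inline with an explicit linear constant $C$ obtained by decomposing each $\varphi(T_i)$ into pieces of $\bigcup_{j\le C}\Sc^j$ — both are valid and yield $\pi_\G\preceq\pi_\H$.
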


\begin{proof}
Let $\varphi\colon \tG\to \tH$ be a continuous injective morphism. By Proposition \ref{p-inj-surj}, the spatial component $q\colon Y\to X$ is surjective. 
Let $\T$ be an expansive generating set of $\G$. Let $\Sc$ be an expansive generating set of $\H$. Upon replacing $\Sc$ with $\Sc\cup \varphi(\T)$ we can assume that $\Sc$ contains $\varphi(\T)$. 
With this choice, the partition $\Q_\Sc(n)$ is finer than the partition $\Q_{\varphi(T)}(n)$ generated by the sets $\src(F)$ for $\colon F\in \bigcup_{r=1}^n \varphi(\T)^r$. By Proposition \ref{p-spatial-equivariant}, the latter is exactly the pullback of the partition $\Q_\T(n)$ under the map $q$. Thus, since $q$ is surjective, we have $|\Q_\T(n)|=|\Q_{\varphi(\T)}(n)|$. Therefore $\pi_\G(n, \T)=|\Q_\T(n)|=|\Q_{\varphi(\T)}(n)|\le |\Q_\Sc(n)|=\pi_\H(n, \Sc)$.  \qedhere
\end{proof}

\begin{remark}

Nekrashevych considers a different definition of complexity of \'etale groupoids in  \cite{Nek-complexity}, defined as the number  ${\pi}'_\G(n, \T)$ of elements of the partition of $X$ induced by isomorphism classes of the  balls of radius $n$ in the Cayley graphs $\cay_x(\G, \T)$  (without passing to the universal cover).  We changed this definition because the function $\pi_\G(n, \T)$ is better behaved for the purposes of this paper. Moreover,  expansivity of groupoids is more related to the universal covers of Cayley graphs then to the Cayley graphs themselves, as shown in  \cite{Nek-simple}. It is not difficult to see that  ${\pi}_\G(n, \T)\le {\pi}'_\G(n, \T)$, and the two functions coincide if $\G$ is the action groupoid of an expansive action of a finitely generated group (by Proposition \ref{p-complexity-action}).  It would be interesting to study  further the relation between these two notions of complexity.
\end{remark}

\subsection{Complexity as an obstruction to group embeddings}

The Extension Theorem implies the following. 
\begin{thm} \label{t-complexity-dichotomy}
Let $\G, \H$ be expansive \'etale groupoids over Cantor spaces $X, Y$, with $\G$ minimal and effective. Assume that  the complexity functions of $\G$ and $\H$ satisfy $\pi_\H\nsucceq \pi_\G$. Then for every non-trivial homomorphism $\rho\colon \Af(\G)\to \Ff(\H)$, there exists a point $y\in Y$ such that the germ map $\A(\G)\to \H_y, g\mapsto [\rho(g)]_y$ is injective. 
\end{thm}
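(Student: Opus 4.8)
The plan is to feed the hypothesis $\pi_{\H} \nsucceq \pi_{\G}$ into the trichotomy of the Extension Theorem (Theorem~\ref{t-main}) applied to $G = \Af(\G)$, and show that the only case compatible with this hypothesis is case~\ref{i-free}, which is exactly the conclusion we want. Since $\rho$ is assumed non-trivial, case~\ref{i-not-free} (i.e.\ $Z = \varnothing$) is immediately excluded, because $Z$ being empty would force $\rho$ to factor through $\Af(\G)/\Af(\G) = \{1\}$. So the whole content is to rule out case~\ref{i-factor}.

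\textbf{Ruling out case~\ref{i-factor}.} Suppose for contradiction that case~\ref{i-factor} holds. Then there is $r \ge 1$ and a continuous morphism of pseudogroups $\widetilde{\rho}\colon \widetilde{\G^{[r]}} \to \widetilde{\H|Z}$ extending $\rho$, and by Remark~\ref{r-comments}\,\ref{i-comments-surjective} we may take $r$ so that the spatial component $q\colon Z \to X^{[r]}$ is surjective, equivalently (Proposition~\ref{p-inj-surj}) so that $\widetilde{\rho}$ is injective. Composing with the restriction morphism $\widetilde{\H} \to \widetilde{\H|Z}$ one may view $\widetilde{\rho}$ as a continuous morphism into $\widetilde{\H}$ after first passing to the restriction; more cleanly, $\widetilde{\rho}$ is an injective continuous morphism $\widetilde{\G^{[r]}} \to \widetilde{\H|Z}$. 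Now I want to compare complexities. The key points to assemble are: (1) the symmetric power $\G^{[r]}$ is expansive whenever $\G$ is (this needs a short verification at the level of expansive generating sets, constructing for $\G^{[r]}$ the generating set $\{\mathcal{U}_{T_1,\dots,T_s} : T_i \in \mathcal{T}, 1 \le s \le r, \ T_i \text{ pairwise compatible}\}$ from an expansive generating set $\mathcal{T}$ of $\G$, and checking that products of these separate points of $\G^{[r]}$); (2) its complexity satisfies $\pi_{\G^{[r]}} \succeq \pi_{\G}$, indeed $\pi_{\G^{[r]}} \succeq \pi_{\G^{[1]}} = \pi_\G$ since the restriction $\G^{[r]}|X^{[1]} = \G$ has a coarser point-separating partition — more directly, two points of $X = X^{[1]} \subset X^{[r]}$ distinguished by paths in $\cay(\G,\mathcal{T})$ are distinguished by paths in $\cay(\G^{[r]}, \cdot)$, so $\Q^{\G^{[r]}}(n)$ refines $\Q^{\G}(n)$ on $X^{[1]}$; (3) $\H|Z$ is expansive iff $\H$ is, with $\pi_{\H|Z} \sim \pi_\H$, which is the content of the restriction-invariance proposition for complexity proved just above (applied to the clopen $Z$, which meets every $\H$-orbit it touches; if $Z$ is a proper union of orbit-closures one works inside the subgroupoid $\H|_{\,\overline{\H\cdot Z}}$, but since $Z$ is clopen and $\H$-invariant under the $\rho$-action one can reduce to $Z$ itself). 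Then Proposition~\ref{p-complexity-morphism} applied to the injective continuous morphism $\widetilde{\rho}\colon \widetilde{\G^{[r]}} \to \widetilde{\H|Z}$ gives $\pi_{\G^{[r]}} \preceq \pi_{\H|Z}$. Chaining:
\[
\pi_\G \preceq \pi_{\G^{[r]}} \preceq \pi_{\H|Z} \sim \pi_\H,
\]
so $\pi_\G \preceq \pi_\H$, i.e.\ $\pi_\H \succeq \pi_\G$, contradicting the hypothesis $\pi_\H \nsucceq \pi_\G$. Hence case~\ref{i-factor} is impossible, and we are in case~\ref{i-free}: there is $y \in Z \subseteq Y$ with $g \mapsto [\rho(g)]_y$ injective on $\Af(\G)$, which is the assertion of the theorem.

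\textbf{Expected main obstacle.} The routine parts are the diagram-chasing with the trichotomy and the chaining of $\preceq$'s. The one place requiring genuine care is point~(1)–(2): checking that $\G^{[r]}$ is expansive and that its complexity dominates that of $\G$. Expansivity of $\G^{[r]}$ should follow from the explicit basis of bisections $\mathcal{U}_{T_1,\dots,T_s}$ described in \S\ref{s-reduced} together with expansivity of $\G$, but one must be slightly careful because $\G^{[r]}$ is not minimal (it has the invariant stratification $X^{[1]} \subset \dots \subset X^{[r]}$) and because $\G^{[r]}$ need not be compactly generated in general — however, expansivity as in Definition~\ref{d-expansive} only requires \emph{some} compact generating set producing a basis, which the construction above supplies. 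Alternatively, and perhaps more safely, one can avoid asserting expansivity of all of $\G^{[r]}$ by noting that $Z$ clopen in $Y$ with $\widetilde{\rho}$ injective forces $q$ surjective onto $X^{[r]}$, then restrict attention to the sub-stratum: if $q(Z) \supseteq X^{[1]} = X$ (which it does, since it is all of $X^{[r]}$), the composition $\widetilde{\G} = \widetilde{\G^{[r]}|X^{[1]}} \hookrightarrow \widetilde{\G^{[r]}} \xrightarrow{\widetilde\rho} \widetilde{\H|Z}$ is not injective, so instead one pulls back: over $q^{-1}(X^{[1]}) \subseteq Z$, which is clopen and $\H$-invariant for the $\rho$-action, $\widetilde\rho$ restricts to an injective continuous morphism $\widetilde\G \to \widetilde{\H|q^{-1}(X^{[1]})}$, and then $\pi_\G \preceq \pi_{\H|q^{-1}(X^{[1]})} \sim \pi_\H$ directly, bypassing any complexity estimate for symmetric powers. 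This second route is cleaner and I would adopt it: it uses only expansivity of $\G$ and $\H$, the restriction-invariance of $\pi$, and Propositions~\ref{p-inj-surj} and~\ref{p-complexity-morphism}.
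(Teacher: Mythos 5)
Your overall strategy --- feed the hypothesis into the trichotomy of Theorem \ref{t-main} with $G=\Af(\G)$, dispose of case \ref{i-not-free} by non-triviality, and rule out case \ref{i-factor} by a complexity comparison --- is exactly the paper's. But both of your implementations of the complexity comparison have genuine gaps. In the first route you need $\G^{[r]}$ to be expansive in order for $\pi_{\G^{[r]}}$ to be defined and for Proposition \ref{p-complexity-morphism} to apply; but $\G^{[r]}$ need not even be compactly generated when $\G$ is (this is stated explicitly in the remark following Lemma \ref{l-asdim-reduced-power}), and your proposed generating family $\{\mathcal{U}_{T_1,\dots,T_s}\}$ does not obviously generate $\G^{[r]}$: to connect a germ of $\G^{[r]}$ to a unit one must move up to $r$ points simultaneously while keeping sources and ranges pairwise distinct at every intermediate step, and nothing guarantees this is possible within a bounded set. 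In the second route, the set $q^{-1}(X^{[1]})$ is closed but \emph{not} clopen when $r\ge 2$: a singleton $\{x\}$ is the limit in $X^{[r]}$ of two-element sets $\{x_n,y_n\}$ with $x_n,y_n\to x$, so $X^{[1]}$ is not open in $X^{[r]}$; moreover $q^{-1}(X^{[1]})$ is only invariant under $\widetilde{\rho}(\widetilde{\G^{[r]}})$, not under $\H$. Hence the restriction-invariance proposition for complexity (which requires a clopen set meeting every orbit) does not yield $\pi_{\H|q^{-1}(X^{[1]})}\sim\pi_\H$. A milder version of the same objection applies to your use of $\pi_{\H|Z}\sim\pi_\H$ in the first route: $Z$ is clopen but need not meet every $\H$-orbit.

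The paper's proof sidesteps all of this with a more elementary device which is the real content of the argument: take an expansive generating set $\T$ of $\tG$ and form $\T'=\{T^{[r]}\colon T\in\T\}\subset\widetilde{\G^{[r]}}$. Even though $\T'$ is \emph{not} an expansive (nor even a generating) set of $\G^{[r]}$, the partition sequence $\Q_n(\T')$ of $X^{[r]}$ is still defined from the dynamical point of view; its trace on $X^{[1]}\subset X^{[r]}$ is exactly $\Q_n(\T)$, whence $|\Q_n(\T')|\ge\pi_\G(n,\T)$. Since $Z$ is clopen, $\widetilde{\rho}(\T')$ consists of compact open bisections of $\H$ itself, so one may enlarge an expansive generating set $\Sc$ of $\tH$ to contain them; then $\Q_n(\Sc)$ refines $q^{-1}(\Q_n(\T'))$, and surjectivity of $q\colon Z\to X^{[r]}$ gives $\pi_\H(n,\Sc)\ge|\Q_n(\T')|\ge\pi_\G(n,\T)$, the desired contradiction. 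If you wish to keep your framework, this partition-level estimate is what you must prove in place of expansivity of $\G^{[r]}$ or restriction to the non-clopen stratum.
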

\begin{proof}
Assume by contradiction that the conclusion does not hold. By Theorem \ref{t-main}, we obtain that the the support $Z$ of $\rho(\Af(\G))$ is clopen and there exists an extension $\widetilde{\rho}\colon \widetilde{\G^{[r]}}\to \widetilde{\H|Z}$ for some $r\ge 1$. As usual, by Remark \ref{r-comments}~\ref{i-comments-injective} we assume that the spatial component $q\colon Z\to X^{[r]}$ is surjective. Let $\T$ be a symmetric expansive generating set of $\tG$, and let $\T'=\{T^{[r]}\colon T\in \T\}\subset \widetilde{\G^{[r]}}$.  Note that $\T'$ is not necessarily an expansive generating set of $\widetilde{\G^{[r]}}$, but we still have a natural a sequence of partitions $\Q_n(\T')$ of $X^{[r]}$ associated to $\T'$ (we use the dynamical point of view). Note that in restriction to $X\subset X^{[r]}$, each partition  $\Q_n(\T')$ induces exactly $\Q_n(\T)$, and thus $|\Q_n(\T')|\ge |\Q_n(\T)|$. Since $Z$ is clopen, we can view $\widetilde{\rho}(\T')$ as a set of compact open bisections in $\tH$. Choose an expansive generating set $\Sc$ of $\tH$ which contains $\widetilde{\rho}(\T')$ (after replacing it if necessary with $\Sc\cup \widetilde{\rho}(\T')$). By Proposition \ref{p-spatial-equivariant}, for every $n$ the partition $\Q_n(\Sc)$ contains $q^{-1}(\Q_n(\T'))=\{q^{-1}(V)\colon V\in \Q_n(\T'))$, and thus we obtain $\pi_\H(n, \Sc)\geq |\Q_n(\T')| \ge \pi_\G(n, \T)$, contradicting the assumption. 
\end{proof}
When the groupoid $\G$ has a rapidly growing complexity function $\pi_\G$, this theorem can be interpreted as a rigidity property for actions of the group $\Af(\G)$ in the form of a dichotomy. Namely whenever $\H$ is an expansive groupoid such that $\Af(\G)$ embeds in $\Ff(\H)$, then  $\H$ must be either ``geometrically complicated'', meaning that it has a leaf $\H_y$ which contains a coarsely injected copy of the group $\Af(\G)$, or $\H$ must be ``dynamically complicated'', i.e. its complexity function cannot grow slower than $\pi_\G$. This phenomenon appears to be new in the realm of finitely generated groups. 
It is not known whether non-abelian free groups satisfy a similar dichotomy (see \cite[Question 1.6]{MB-Liouville} and \cite[p. 377]{Nek-complexity} for formulations of this question). 

Note also that for many classes of groupoids, the complexity function is an obstruction to embeddings between the corresponding full groups, therefore confirming the natural intuition that a slow growth of the complexity ``constraints'' the subgroup structure. For example, we have the following. 
\begin{cor} \label{c-complexity}
Let $\G, \H$ be expansive groupoids over Cantor spaces, with $\G$, minimal and effective, and assume that every leaf of $\H$ has finite asymptotic dimension. If the complexity functions satisfy $\pi_\H\nsucceq \pi_\G$, then there is no non-trivial homomorphism $\Af(\G)\to \Ff(\H)$.  
\end{cor}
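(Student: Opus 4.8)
The plan is to combine the two main consequences of the Extension Theorem already established in this section, namely Theorem~\ref{t-asdim} (Automatic Extension for targets with finite asymptotic dimension) and Theorem~\ref{t-complexity-dichotomy} (the complexity dichotomy). The statement to prove is that if $\G$ is expansive, minimal and effective, $\H$ is expansive with every leaf of finite asymptotic dimension, and the complexity functions satisfy $\pi_\H\nsucceq\pi_\G$, then there is no non-trivial homomorphism $\rho\colon\Af(\G)\to\Ff(\H)$.

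First I would argue by contradiction: suppose $\rho\colon\Af(\G)\to\Ff(\H)$ is a non-trivial homomorphism. Since $\Af(\G)$ is simple (Theorem~\ref{t-simple}, or Example~\ref{e-simple}), $\rho$ is injective. Now I apply Theorem~\ref{t-main} to $\rho$. Case~\ref{i-not-free} is excluded since $\rho$ is non-trivial. Case~\ref{i-free} would give a point $y\in Y$ such that the germ map $\Af(\G)\to\H_y$, $g\mapsto[\rho(g)]_y$, is injective; but exactly as in the proof of Theorem~\ref{t-asdim}, this germ map is an injective coarse map from $\Af(\G)$ to the leaf $\H_y$, and by Proposition~\ref{p-asdim-alt} we have $\asdim(\Af(\G))=\infty$, so Proposition~\ref{p-sisto} forces $\asdim(\H_y)=\infty$, contradicting our hypothesis on the leaves of $\H$. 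Hence $\rho$ must fall in Case~\ref{i-factor}: the support $Z$ of $\rho(\Af(\G))$ is clopen and $\rho|_{\Af(\G)}=\rho$ extends to a continuous morphism $\widetilde\rho\colon\widetilde{\G^{[r]}}\to\widetilde{\H|Z}$ for some $r\ge 1$, which we may take with surjective spatial component $q\colon Z\to X^{[r]}$.

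At this point I would reproduce the counting argument from the proof of Theorem~\ref{t-complexity-dichotomy} verbatim. Fix a symmetric expansive generating set $\T$ of $\tG$ and set $\T'=\{T^{[r]}\colon T\in\T\}\subset\widetilde{\G^{[r]}}$; the associated partitions $\Q_n(\T')$ of $X^{[r]}$ restrict on $X\subset X^{[r]}$ to $\Q_n(\T)$, hence $|\Q_n(\T')|\ge|\Q_n(\T)|$. Since $Z$ is clopen, $\widetilde\rho(\T')$ consists of compact open bisections of $\tH$; choose an expansive generating set $\Sc$ of $\tH$ containing $\widetilde\rho(\T')$. By the equivariance of the spatial component (Proposition~\ref{p-spatial-equivariant}) and surjectivity of $q$, the partition $\Q_n(\Sc)$ refines $q^{-1}(\Q_n(\T'))$, so $\pi_\H(n,\Sc)\ge|\Q_n(\T')|\ge|\Q_n(\T)|=\pi_\G(n,\T)$ for all $n$, i.e.\ $\pi_\G\preceq\pi_\H$, contradicting $\pi_\H\nsucceq\pi_\G$. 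This contradiction completes the proof. In fact this is exactly the assertion ``Combine Theorem~\ref{t-asdim} with Theorem~\ref{t-complexity-dichotomy}'': Theorem~\ref{t-asdim} rules out Case~\ref{i-free}, and Theorem~\ref{t-complexity-dichotomy} (whose conclusion is precisely the existence of an injective germ map) then has no admissible conclusion left, so no such $\rho$ exists.

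The only point requiring a little care — and the main (modest) obstacle — is checking that the hypotheses of Theorem~\ref{t-complexity-dichotomy} are literally met: that theorem is stated for expansive $\G,\H$ with $\G$ minimal and effective, which is exactly our setup, so no extra work is needed there; and the hypotheses of Theorem~\ref{t-asdim} require $\G$ compactly generated, which is automatic since expansive groupoids are in particular compactly generated (Definition~\ref{d-expansive}). Thus the proof is genuinely just the juxtaposition of the two theorems, and the shortest correct write-up is simply: by Theorem~\ref{t-asdim}, Case~\ref{i-free} of Theorem~\ref{t-main} cannot occur for a non-trivial $\rho\colon\Af(\G)\to\Ff(\H)$; but by Theorem~\ref{t-complexity-dichotomy}, if such a $\rho$ existed then Case~\ref{i-free} would occur; hence no non-trivial $\rho$ exists.
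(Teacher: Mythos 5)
Your proof is correct and takes essentially the same route as the paper, whose entire proof reads ``Apply Theorem \ref{t-complexity-dichotomy} and repeat the beginning of the proof of Theorem \ref{t-asdim}'': Theorem \ref{t-complexity-dichotomy} forces an injective germ map into some leaf $\H_y$, and the asymptotic-dimension argument from Theorem \ref{t-asdim} (valid since expansive implies compactly generated) rules that out. Your longer unpacking via Theorem \ref{t-main} and your closing two-theorem juxtaposition are both faithful to the paper's argument.
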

\begin{proof}
Apply Theorem \ref{t-complexity-dichotomy} and repeat the beginning of the proof of Theorem \ref{t-asdim}. \qedhere
\end{proof}
Note that this applies in particular to the groupoid of germs of topologically free subshifts $G\acts X$ and $H\acts Y$ over finitely generated groups, provided $H$ has finite asymptotic dimension (in this case the complexity is simply the complexity of the corresponding subshifts, see Proposition \ref{p-complexity-shift}). In particular, this implies Theorem \ref{c-intro-complexity} in the introduction.


\section{Application to actions of abelian groups} \label{s-concrete1}
\label{s-concrete} \label{s-concrete-examples} \label{s-concrete example}

In this section we illustrate the Extension Theorem in practice by analysing  the structure of all possible homomorphisms between  topological full groups of Cantor minimal systems arising from $\Z$ actions, and to a family of groups of interval exchanges that arise as full groups of actions of abelian groups by ``Cantor rotations'' on the circle.

\subsection{Full groups of Cantor minimal systems} \label{s-Z}

 A \textbf{Cantor minimal system} is a dynamical system $(X, u)$ where $X$ is a Cantor space, and $u$ is a minimal homeomorphism of $X$. We denote $\G_u$ the groupoid of germs of the corresponding $\Z$-action.  We denote by $\Ff(X, u)$ the topological full group of $\G_u$, and by $\Df(X, u)$ its derived subgroup. Giordano, Putnam and Skau  \cite{G-P-S-full} proved that the isomorphism type of the group $\Ff(X, u)$ is a complete invariant of  \emph{flip-conjugacy} of the system $(X, u)$ (i.e. determines the pair $\{u, u^{-1}\}$ up to conjugacy).   Matui proved that $\Df(X, u)$ is simple \cite{Mat-simple}  (in particular in this case $\Df(X, u)=\Af(\G_u)$). Moreover it is finitely generated if and only if $(X, u)$ is conjugate to a subshift over a finite alphabet. By a result of Juschenko and Monod, the group $\Ff(X, u)$ is amenable \cite{Ju-Mo}. 

Although the isomorphism class of the  group  $\Ff(X, u)$ fully remembers the system $(X, u)$, the dependence on $(X, u)$ is not well-understood ``qualitatively''. In particular, there was no known result providing explicit obstructions on the possible subgroups of $\Ff(X, u)$ which depend non-trivially on $(X, u)$.  As an application of the Extension Theorem, we characterise all the possible embeddings between the groups $\Ff(X, u)$ when $(X, u)$ varies. This answers a question  asked by Cornulier in \cite[Question 2f]{Cor-Bou}.

The leaves of the groupoid $\G_u$ are quasi-isometric to $\Z$, and thus have asymptotic dimension equal to 1. Therefore, we have the following  immediate corollary of Theorem \ref{t-asdim}

\begin{thm} \label{t-Z-extends}
Let $(X_1, u_1)$ and $(X_2, u_2)$ be Cantor minimal systems. For every non-trivial homomorphism $\rho\colon \Df(X_1, u_1)$, the support $Z$ of $\rho(\Df(X_1, u_1))$ is a clopen subset of $X_2$ and $\rho$ extends to a continuous morphism of pseudogroups $\widetilde{\rho}\colon \tG_{u_1}\to \widetilde{\G_{u_2}|Z}$.\end{thm}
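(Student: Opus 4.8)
The statement is a direct application of Theorem~\ref{t-asdim} to the special case $\G = \G_{u_1}$ and $\H = \G_{u_2}$, combined with the computation of the asymptotic dimension of the leaves of $\G_{u_2}$. The plan is to verify the hypotheses of Theorem~\ref{t-asdim} one by one, and to argue that the integer $r$ produced by that theorem can be taken equal to $1$. First I would recall that for $i=1,2$ the groupoid $\G_{u_i}$ is the groupoid of germs of a minimal $\Z$-action on a Cantor space; it is therefore minimal, effective (the action of $\Z$ is topologically free since $u_i$ is minimal and infinite order), and compactly generated, a compact generating set being the union of germs of $u_i$ and $u_i^{-1}$ (see the Example in \S\ref{s-cayley}). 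Next, for every $x \in X_2$ the leaf $(\G_{u_2})_x$ is, with respect to its Cayley graph structure from this generating set, quasi-isometric to the orbit of $x$ with the induced graph structure, which is in turn bi-Lipschitz to a line (since the action is free, the orbit map $\Z \to \G$-orbit is a graph isomorphism onto the standard Cayley graph of $\Z$). Hence $\asdim((\G_{u_2})_x) = \asdim(\Z) = 1$ for every $x$, so $d := \sup_{x \in X_2} \asdim((\G_{u_2})_x) = 1 < \infty$.

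With these hypotheses in hand, Theorem~\ref{t-asdim} applies to any non-trivial homomorphism $\rho \colon \Df(X_1,u_1) = \Af(\G_{u_1}) \to \Ff(\G_{u_2})$ (recall that $\Df(X_1,u_1) = \Af(\G_{u_1})$ by Matui's simplicity result, as noted just above the statement). It yields that the support $Z$ of $\rho(\Af(\G_{u_1}))$ is clopen in $X_2$, and that $\rho$ uniquely extends to a continuous morphism $\widetilde\rho \colon \widetilde{\G_{u_1}^{[r]}} \to \widetilde{\G_{u_2}|Z}$ for some $r \le d = 1$. Since $r \le 1$ forces $r = 1$ and $\G_{u_1}^{[1]} = \G_{u_1}$, we obtain directly the desired extension $\widetilde\rho \colon \widetilde{\G_{u_1}} \to \widetilde{\G_{u_2}|Z}$, with no need to pass to a genuine symmetric power. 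This is exactly item~\ref{i-Z} of Examples~\ref{e-intro-asdim}.

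The only point requiring a small amount of care — and the one I would expect to be the ``main obstacle'', though it is really routine — is the justification that the orbital/Cayley graphs of $\G_{u_i}$ are honestly bi-Lipschitz to $\Z$ rather than merely coarsely equivalent to it, so that the asymptotic dimension bound is clean; this follows from topological freeness of the $\Z$-action (a consequence of minimality of an infinite Cantor system), which guarantees that the orbit of every point is infinite and that the labelled graph structure on each leaf is literally the Cayley graph of $\Z$. Everything else is a verbatim invocation of Theorem~\ref{t-asdim}. Finally, the dichotomy in the statement — ``$Z$ empty (i.e. $\rho$ trivial) or the extension exists'' — is already built into the hypothesis ``$\rho$ non-trivial'' together with the conclusion of Theorem~\ref{t-asdim}, whose case~\ref{i-not-free} is excluded precisely by non-triviality of $\rho$. $\qed$
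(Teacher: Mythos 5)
Your proposal is correct and follows exactly the paper's route: the paper derives Theorem~\ref{t-Z-extends} as an immediate corollary of Theorem~\ref{t-asdim}, observing that the leaves of $\G_{u_2}$ are quasi-isometric to $\Z$ and hence have asymptotic dimension $1$, which forces $r\le 1$ and so $r=1$. Your additional checks (minimality of an infinite Cantor $\Z$-system gives freeness, hence effectiveness of the groupoid of germs, and $\Df(X_1,u_1)=\Af(\G_{u_1})$ by Matui's simplicity result) are exactly the background facts the paper takes for granted.
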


%
Let us state a more concrete interpretation of Theorem \ref{t-Z-extends}, which does not use of the language of pseudogroups.   The group $\Ff(X, u)$ is the group of all homeomorphisms of $X$ that  coincide locally with a power of $u$. Thus, every $g\in \Ff(X, u)$ is uniquely determined by a continuous function $k_g\colon X\to \Z$, called the \emph{orbit cocycle}, defined by the equality
\[g(x)=g^{k_g(x)}(x) \quad \forall x\in X.\]
One  ``obvious'' dynamical reason why the group $\Ff(X_1, u_1)$ should embed into the group $\Ff(X_2, u_2)$ is the existence of a \emph{factor map} from $(X_2, u_2)$ to $(X_1, u_1)$. Namely every factor map  $q\colon X_2\to X_1$  gives rise to an embedding of the group $\Ff(X_1, u_1)$ into $\Ff(X_2, u_2)$ by simply precomposing the orbit cocycles with the map $q$, i.e.
\[\rho(g)(x)=u_2^{k_g(q(x))}(x).\]
It is easy to check that this defines an embedding $\Ff(X_1, u_1)\to \Ff(X_2, u_2)$. Another straightforward construction of embeddings is as follows: for every $v\in \Ff(X_2, u_2)$, letting $\supp(v)\subset X_2$ denote its support (which is always clopen), the group $\Ff(\supp(v), v)$ is naturally a subgroup of $\Ff(X_2, u_2)$.

Theorem  \ref{t-Z-extends} says that all embeddings arise from these two examples.

\begin{thm} \label{t-Z-concrete}
Let $(X_1, u_1)$ and $(X_2, u_2)$ be Cantor minimal systems.  The following are equivalent:
\begin{enumerate}[label=(\roman*)]
\item \label{i-Z-elementary-2} There exists a group embedding $\Df(X_1, u_1)\hookrightarrow \Ff(X_2, u_2)$.
\item \label{i-Z-elementary-1} There exists a group embedding  $\Ff(X_1, u_1)\hookrightarrow \Ff(X_2, u_2)$. 
\item \label{i-Z-elementary-3} There exists an element $v\in \Ff(X_2, u_2)$ such that the system $(\supp(v), v)$ factors onto $(X_1, u_1)$. 
\end{enumerate}
More precisely, non-trivial group homomorphism $\rho\colon \Df(X_1, u_1)\to \Ff(X_2, u_2)$ are in one-to-one correspondence with pairs $(v, q)$ consisting of  an element $v\in \Ff(X_2, u_2)$ as in \ref{i-Z-elementary-3} and a factor map $q\colon \supp(v)\to X_2$. Such a pair $(v, q)$ gives rise to a homomorphism $\rho\colon \Df(X_1, u_1)\to \Ff(X_2, u_2)$ given by 
\[\rho(g)(x)=\left\{\begin{array}{rl} v^{k_g\circ q(x)} (x) & \text{ if }x\in \supp(v) \\x & \text{ otherwise }\end{array}\right.\]
and conversely every homomorphism $\Df(X_1, u_1)\to \Ff(X_2, u_2)$ arises in this way for a unique pair $(v, q)$. 

\end{thm}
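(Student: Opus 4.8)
\textbf{Proof strategy for Theorem \ref{t-Z-concrete}.}

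The plan is to deduce everything from the Extension Theorem (Theorem \ref{t-main}) applied with $\G = \G_{u_1}$ and $\H = \G_{u_2}$, together with the finite asymptotic dimension of $\Z$-actions. First I would observe that the two elementary constructions described before the statement do produce homomorphisms: given $v \in \Ff(X_2,u_2)$ and a factor map $q\colon \supp(v)\to X_1$, the formula for $\rho$ in the statement defines a group homomorphism $\Df(X_1,u_1)\to \Ff(X_2,u_2)$. To see this, note that $(\supp(v), v)$ is itself (conjugate to, and here literally realised as) the topological full group setting of the minimal homeomorphism $v$ of the Cantor space $\supp(v)$, so $\Ff(\supp(v), v)$ embeds in $\Ff(X_2, u_2)$ via elements supported on $\supp(v)$; precomposing orbit cocycles with $q$ gives the standard embedding $\Df(X_1, u_1)\hookrightarrow \Ff(\supp(v), v)$ coming from the factor map (one checks $k_{gh} = k_g\circ (\text{action of }h) + k_h$ is respected because $q$ intertwines the actions). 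This yields \ref{i-Z-elementary-3}$\Rightarrow$\ref{i-Z-elementary-2}, and \ref{i-Z-elementary-2}$\Rightarrow$\ref{i-Z-elementary-1} will follow once we know all embeddings arise this way, since then we can also build an embedding $\Ff(X_1, u_1)\hookrightarrow \Ff(X_2, u_2)$ (extend by the orbit cocycle of $u_1$, which commutes with the image of $\Df(X_1,u_1)$; alternatively note $\Ff(X_1,u_1) = \Df(X_1,u_1)\rtimes \Z$ in many cases, but the cleanest route is to run the classification directly). The genuinely substantive direction is the reverse: starting from an arbitrary non-trivial homomorphism $\rho\colon \Df(X_1, u_1)\to \Ff(X_2,u_2)$, produce the pair $(v,q)$.

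So the core of the argument: note $\Df(X_1, u_1) = \Af(\G_{u_1})$ by Matui's simplicity theorem (cited in \S \ref{s-Z}), so Theorem \ref{t-Z-extends} applies. If $\rho$ is non-trivial, its image is non-trivial (as $\Df(X_1,u_1)$ is simple), so we are not in the trivial case, and since the leaves of $\G_{u_2}$ are quasi-isometric to $\Z$, the last sentence of Corollary \ref{c-intro-asdim} / Theorem \ref{t-asdim} forces $r = 1$: the support $Z$ of $\rho(\Df(X_1,u_1))$ is clopen in $X_2$ and $\rho$ extends uniquely to a continuous morphism of pseudogroups $\widetilde{\rho}\colon \widetilde{\G_{u_1}}\to \widetilde{\G_{u_2}|Z}$, with spatial component $q\colon Z\to X_1$. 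I would then unwind what such a continuous morphism $\widetilde{\rho}$ is concretely. The pseudogroup $\widetilde{\G_{u_1}}$ is generated (as a complete inverse monoid) by the global section corresponding to $u_1$ itself; let $w := \widetilde{\rho}(u_1|_{\src}) \in \widetilde{\G_{u_2}|Z}$ be the image of (the full bisection of) $u_1$. Then $w$ is a bisection of $\G_{u_2}$ with $\src(w) = \rg(w) = Z$ (because $\widetilde\rho$ preserves idempotents and the idempotent $X_1$ maps to $Z = q^{-1}(X_1)$), i.e.\ $w$ is an element of $\Ff(\G_{u_2}|Z) = \Ff(\supp\text{-in-}Z$, restricted$)$; declaring $v\in \Ff(X_2,u_2)$ to be $w$ on $Z$ and the identity off $Z$, we get $\supp(v)\subseteq Z$, and I would argue $\supp(v) = Z$ using that the action of $\Df(X_1,u_1)$ (equivalently of $u_1$, which is in the pseudogroup $\widetilde{\G_{u_1}}$) has no global fixed points in $X_1$, hence $\widetilde\rho$ of it moves every point of $Z$; more precisely $Z = \supp(\rho(\Df(X_1,u_1)))$ and one checks this equals the support of the subgroup $\widetilde\rho$ of the cyclic pseudogroup generated by $v$, which forces $v$ to be minimal on $Z$ and $\supp(v)=Z$. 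Now equivariance of the spatial component (Proposition \ref{p-spatial-equivariance}) says precisely that $q\colon Z = \supp(v)\to X_1$ intertwines $v$ with $u_1$, i.e.\ $u_1\circ q = q\circ v$, so $q$ is a factor map $(\supp(v), v)\to (X_1, u_1)$; this gives \ref{i-Z-elementary-2}$\Rightarrow$\ref{i-Z-elementary-3}, and since $\widetilde\rho$ is determined by $v$ and $q$ while $\rho = \widetilde\rho|_{\Df(X_1,u_1)}$, tracing the cocycle relation $[\widetilde\rho(g)]_y = [\rho(g)]_y$ through the formula \eqref{e-action-morphism} recovers exactly the displayed formula $\rho(g)(x) = v^{k_g\circ q(x)}(x)$ on $\supp(v)$ and $\rho(g)(x) = x$ elsewhere.

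Finally, for the one-to-one correspondence I would check both directions of uniqueness. Given $\rho$, the pair $(v,q)$ is unique: $v$ is $\widetilde\rho(u_1)$ which is forced by the uniqueness of the extension in Theorem \ref{t-main}, and $q$ is its spatial component, again unique by Lemma \ref{l-stone}. Conversely, two distinct pairs $(v,q)\ne (v',q')$ give distinct $\rho$'s, because one recovers $v$ from $\rho$ as (the non-trivial part of) $\rho$ of any element $g\in\Df(X_1,u_1)$ whose orbit cocycle equals $1$ on a suitable clopen set — or more cleanly, $\supp(v) = \supp(\rho(\Df))$ is determined by $\rho$, and on it $v$ and $q$ are recovered from the germ maps $g\mapsto [\rho(g)]_y$ as above. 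The main obstacle I expect is the bookkeeping in identifying $v$ and $q$ from $\widetilde\rho$ and verifying that the abstract formula \eqref{e-action-morphism} for the translation action attached to $\widetilde\rho$ specialises to the concrete orbit-cocycle formula in the statement; the dynamical content (that $r=1$, that $Z$ is clopen, that $q$ is a factor map) is immediate from Theorem \ref{t-asdim} and Proposition \ref{p-spatial-equivariance}, so the work is entirely in translating between the pseudogroup language and the orbit-cocycle language — routine but needing care, especially to confirm $\supp(v)$ is exactly $Z$ and that $v$ is automatically an element of $\Ff(X_2,u_2)$ (i.e.\ a homeomorphism of all of $X_2$) rather than merely a partial one.
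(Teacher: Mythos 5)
Your proposal is correct and follows essentially the same route as the paper: invoke Theorem \ref{t-Z-extends} (Theorem \ref{t-asdim} with $r=1$ forced by the leaves having asymptotic dimension $1$), set $v=\widetilde{\rho}(u_1)$ extended by the identity off $Z$, read the factor map off the equivariance of the spatial component (Proposition \ref{p-spatial-equivariance}), and unwind the explicit formula — the paper does this last step by writing $g=\bigsqcup_n u_1^n A_n$ over the level sets of the orbit cocycle and applying $\widetilde{\rho}$, which is just a cleaner packaging of your germ-tracing. One incidental slip: $v$ need not be minimal on $Z$ (a system factoring onto a minimal one need not itself be minimal), but this is not needed — $\supp(v)=Z$ follows simply because $q$ intertwines $v$ with the fixed-point-free $u_1$, so $v$ moves every point of $Z$.
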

\begin{proof}
\ref{i-Z-elementary-1}$\Rightarrow$\ref{i-Z-elementary-2} is obvious. To see that  \ref{i-Z-elementary-3}$\Rightarrow$\ref{i-Z-elementary-1}, it is enough to observe that for every $v\in \Ff(X_2, u_2)$ as in \ref{i-Z-elementary-3} and every factor map $q\colon \supp(v)\to X_2$, we obtain a group embedding $\Ff(X_1, u_1)\hookrightarrow \Ff(X_2, u_2)$. To see that \ref{i-Z-elementary-2}$\Rightarrow$\ref{i-Z-elementary-3}, let $\rho\colon \Df(X_1, u_1)\to \Ff(X_2, u_2)$ be a non-trivial homomorphism. By Theorem \ref{t-Z-extends}, it can be extended to a continuous morphism $\widetilde{\rho}\colon \tG_{u_1}\to \tG_{u_2}|Z$, where $Z$ is the (clopen) support of $\rho(\Df(X_1, u_1))$. Let $q\colon Z\to X_1$ be the spatial component of $\widetilde{\rho}$.  We view $u_1$ as a bisection of $\G_{u_1}$ (by identifying it with the set of its germs). Let $v=\widetilde{\rho}(u_1)$ (as usual, we identify $v$ with an element of $\Ff(X_2, u_2)$ by extending it to the identity outside $U$). Then $(Z,v)$ factors onto $(X_1, u_1)$ via the map $q$, and thus the element $v$ verifies \ref{i-Z-elementary-3}. It remains to show the ``More precisely...'' part of the statement. Let $g\in \Df(\G)$ be arbitrary and let $k_g\colon X_1\to X_2$ be its orbit cocycle. For every $n\in \Z$ let $A_n\subset X_1$ be the level set $\{x\in X_1\colon k_g(x)=n\}$ (note that $A_n$ is clopen,  empty for all but finitely many $n$'s, and that $X_1=\bigsqcup_{n\in \Z}A_n)$. The restriction of $g$ to $A_n$ is equal to $u_1^n$. This means that in  $\tG_1$, the element $g$ can be written as the disjoint union:
\[g=\bigsqcup_{n\in \Z} u_1^n A_n.\]
Applying $\widetilde{\rho}$, and using that it preserves disjoint unions, we obtain
\[\rho(g)=\bigsqcup \widetilde{\rho}(u_1)^n \widetilde{\rho}(A_n)=\bigsqcup_{n\in \Z} \widetilde{\rho}(u_1)^n q^{-1}(A_n)\]
note that $q^{-1}(A_n)=\{x\in Z \colon k_h(q(x))=n\}$. This concludes the proof \qedhere
\end{proof}

\subsection{Groups of interval exchanges} \label{s-iet}
Recall that an \textbf{interval exchange transformation} is a piecewise continuous permutation of $\R/\Z$ with finitely many points of discontinuities, which coincides with a translation in restriction of each of its intervals of continuity. The group of all interval exchange transformations is denoted $\iet$. The subgroup structure of $\iet $ is quite mysterious. For instance, it is still not known whether the group $\iet$ contains non-abelian free-subgroups (a question attributed to Katok in \cite{D-F-G}), and whether it contains non-amenable subgroups. See \cite{D-F-G, ExtAmen, D-F-G-solv}.

A natural family of subgroups of $\iet$ arises as full groups of actions of finitely generated abelian groups by ``Cantor rotations". Let $\Lambda$ be a finitely generated dense additive subgroup of $\R/\Z$, and consider the natural translation action $\Lambda \acts \R/\Z$. Let $\Lambda\acts X_\Lambda$  be  the Cantor space obtained from $\R/\Z$ by  ``doubling'' the orbit of $0 \in \R/\Z$, i.e. by replacing every point $\lambda\in \Lambda$ by two copies $\lambda_-, \lambda_+$  and endowing $X_\Lambda$ with the topology induced by the natural circular order on it.  We have a natural continuous surjection $\pi_\Lambda \colon X_\Lambda\to \R/\Z$, obtained by gluing back every pair $\lambda_-, \lambda_+$ to $\lambda$. The translation action of $\Lambda$ on $\R/\Z$ lifts to  a minimal action of $\Lambda$ on $X_\Lambda$, which factors onto the original action on $\R/\Z$ through the map $\pi_\Lambda$.  In the important special case when $\Lambda=\langle \lambda \rangle$ is generated by a single irrational rotation, the system $(X_\Lambda, \lambda)$ is called the \textbf{Sturmian subshift} of angle $\lambda$.  We will denote by $\G_\Lambda$ the groupoid of germs of the action of $\Lambda\acts X_\Lambda$, and by  $\iet(\Lambda)$  its topological full group. A result of Matui implies that the  derived subgroup $\iet(\Lambda)'$ is simple  \cite{Mat-SFT}  (thus it coincides with the group $\Af(\G_\Lambda)$), moreover it is finitely generated  by \cite{Nek-simple}

The group $\iet(\Lambda)$ is easily seen to be isomorphic to the subgroup of $\iet$  of all interval exchanges whose points of discontinuity belong to $\Lambda$, and that in restriction to every interval of continuity are given by translation by an element of $\Lambda$ (see \cite{Cor-Bou, ExtAmen} for a more detailed explanation). We will freely switch between these two point of views on $\iet(\Lambda)$. 

Every {finitely generated} subgroup of $\iet$ is a subgroup of $\iet(\Lambda)$ for the group $\Lambda$ generated by its points of discontinuities.  Thus, it is natural to study to what extent the possible subgroups of the group $\iet(\Lambda)$ depend on the group $\Lambda$. 

%

The next theorem classifies the pairs $(\Lambda, \Delta)$ for which the group $\iet(\Lambda)$ admits an embedding into the group $\iet(\Delta)$.  
Given a subgroup $\Lambda<\R/\Z$ we denote $\widehat{\Lambda}
\le \R$ its preimage under the quotient map $\R\to \R/\Z$.

\begin{thm} \label{t-sturmian-embedding}
Let $\Lambda, \Delta$ be finitely generated dense subgroups of $\R/\Z$. The following are equivalent. 

\begin{enumerate}[label=(\roman*)]
\item \label{i-sturmian-2} there exists a group embedding   $\iet(\Lambda)'\hookrightarrow \iet(\Delta)$
\item \label{i-sturmian-1} there exists a group  embedding $\iet(\Lambda) \hookrightarrow \iet(\Delta)$.
\item  \label{i-sturmian-3} there  exists $0<c\le 1$ such that the linear map $\R\to \R, x\mapsto cx$  sends  $\widehat{\Lambda}$  into $\widehat{\Delta}$. 
 \end{enumerate}
 Moreover the image of every embedding $\rho\colon \iet(
 \Lambda)'\hookrightarrow  \iet(\Delta)$   fixes pointwise a non-empty interval unless $\Lambda\le \Delta$. \end{thm}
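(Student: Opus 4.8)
The equivalences $\ref{i-sturmian-1}\Rightarrow\ref{i-sturmian-2}$ is trivial, and $\ref{i-sturmian-3}\Rightarrow\ref{i-sturmian-1}$ should be a direct construction: given a homothety $x\mapsto cx$ with $c\widehat\Lambda\subset\widehat\Delta$ and $c\le 1$, one checks that conjugating an interval exchange supported on $[0,c)\subset\R/\Z$ by the affine rescaling $[0,c)\to\R/\Z$ realises $\iet(\Lambda)$ as the subgroup of $\iet(\Delta)$ of elements supported on the clopen set $\pi_\Delta^{-1}([0,c))$ (using $c\le 1$ so that $[0,c)$ embeds in the circle). The substantive direction is $\ref{i-sturmian-2}\Rightarrow\ref{i-sturmian-3}$, together with the final sentence about fixed intervals. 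The strategy is to feed a non-trivial homomorphism $\rho\colon\iet(\Lambda)'=\Af(\G_\Lambda)\to\iet(\Delta)=\Ff(\G_\Delta)$ into the Extension Theorem. Since the leaves of $\G_\Delta$ are the orbits of a $\Z$-action (up to the orbit-doubling), they are quasi-isometric to $\Z$ and have asymptotic dimension $1$; by Corollary \ref{c-intro-asdim} (Theorem \ref{t-asdim}) with $d=1$, the support $Z$ of $\rho(\Af(\G_\Lambda))$ is clopen and $\rho$ extends to a continuous morphism $\widetilde\rho\colon\widetilde{\G_\Lambda}\to\widetilde{\G_\Delta|Z}$ (no symmetric power is needed since $r\le d=1$). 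Let $q\colon Z\to X_\Lambda$ be its spatial component; by Remark \ref{r-comments}\ref{i-comments-surjective} we may assume $q$ is surjective, equivalently $\widetilde\rho$ is injective.

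\textbf{Extracting the arithmetic constraint.} The key point is to translate the existence of a surjective continuous $q\colon Z\to X_\Lambda$ intertwining the two full-group actions into the homothety condition \ref{i-sturmian-3}. First I would argue that $q$ descends to a map $\bar q$ on the circle quotients: composing $q$ with $\pi_\Lambda\colon X_\Lambda\to\R/\Z$ and using that $\pi_\Lambda$ collapses exactly orbit-doubled pairs, one gets a continuous surjection $Z\to\R/\Z$; extending $Z\subset X_\Delta$ and pushing forward through $\pi_\Delta$, one should obtain, after identifying the circle factors, a semiconjugacy between (a return system of) the $\Delta$-rotation and the $\Lambda$-rotation. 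Concretely, the image $v=\widetilde\rho(u)$ of a generator (or of the full list of generators $\lambda\in\Lambda$ viewed as bisections of $\G_\Lambda$) is an element of $\iet(\Delta)$ supported on $Z$, and $q$ semiconjugates $v$ to the $\Lambda$-action. A minimal interval exchange semiconjugate to a rotation is (by the classical structure theory of minimal IETs, or directly because the $\Lambda$-action is uniquely ergodic with the Lebesgue measure as unique invariant measure pushed forward) itself conjugate to a rotation by some angle $\alpha$, and the semiconjugacy is affine on each continuity interval with slopes in the value group; measuring lengths with the invariant measures forces a single common slope $c\in(0,1]$ with $c\widehat\Lambda\subset\widehat\Delta$. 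The inequality $c\le 1$ comes from $Z$ being a proper-or-full clopen subset of total $\pi_\Delta$-measure $\le 1$ while $X_\Lambda$ has measure $1$. This also yields the final clause: if $c<1$ strictly, then $q$ is not defined on all of $X_\Delta$, i.e. $Z\ne X_\Delta$, so $\rho(\iet(\Lambda)')$ is supported on a proper clopen subset and hence fixes pointwise a non-empty clopen set, which contains a non-empty interval; conversely $c=1$ forces $\widehat\Lambda\subset\widehat\Delta$, i.e. $\Lambda\le\Delta$.

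\textbf{The main obstacle.} I expect the delicate step to be the passage from ``$\widetilde\rho$ a continuous morphism of pseudogroups with surjective spatial component'' to ``the spatial component is, up to measure-preserving identifications, an affine rescaling with a single well-defined slope $c$''. The subtlety is that a priori $q$ could be a wild continuous surjection $Z\to X_\Lambda$; what rescues the argument is that $q$ intertwines two \emph{minimal} systems with a one-dimensional (rotation-type) transverse structure, so one must invoke: (i) unique ergodicity of Cantor rotations to pin down the transported measure class, (ii) the fact that the germ-level cocycle structure of $\widetilde\rho$ forces the slopes of $q$ on continuity pieces to lie in the relevant $\Z$-module, and (iii) a connectedness/minimality argument showing a single slope. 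A clean way to organise (iii) is to note that the Radon–Nikodym cocycle of $q$ with respect to the invariant measures is a continuous, $\Af(\G_\Lambda)$-invariant function on $X_\Lambda$, hence constant by minimality. Once the constant slope $c$ is identified, checking $c\widehat\Lambda\subset\widehat\Delta$ is a matter of reading off where the discontinuity points of $v=\widetilde\rho(u_\lambda)$ must lie (they belong to $\widehat\Delta/\Z$, and $q$ being affine with slope $c$ pushes the discontinuities of the $\Lambda$-rotation, which sit in $\widehat\Lambda/\Z$, to $c\widehat\Lambda/\Z$), plus a symmetric argument using that $\widetilde\rho$ is a morphism on all of $\widetilde{\G_\Lambda}$, not just on one generator. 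I would cite Theorem \ref{t-Z-concrete} as a structural template for how the extension recovers the combinatorial data, and close by noting that the ``more precisely'' uniqueness is not needed here — only the existence of $(v,q)$ and the slope computation.
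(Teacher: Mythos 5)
Your overall architecture (trivial implication, rescaling construction for \ref{i-sturmian-3}$\Rightarrow$\ref{i-sturmian-1}, Extension Theorem for the substantive direction) matches the paper, but there is a genuine gap at the very first step of the substantive direction. You assert that the leaves of $\G_\Delta$ are quasi-isometric to $\Z$ and have asymptotic dimension $1$, so that Theorem \ref{t-asdim} gives $r\le 1$ and no symmetric power is needed. This is false in general: $\Delta$ is an arbitrary finitely generated dense subgroup of $\R/\Z$, so it can have rank $d>1$ (e.g.\ generated by two rationally independent irrationals), and the leaves of $\G_\Delta$ are then quasi-isometric to $\Z^d$, of asymptotic dimension $d$. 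Theorem \ref{t-asdim} therefore only yields an extension $\widetilde{\G_\Lambda^{[r]}}\to\widetilde{\G_\Delta|Z}$ with $r\le d$, and ruling out $r\ge 2$ requires a separate argument. The paper does this (Theorem \ref{t-iet-extends}) by observing that for an irrational $\lambda\in\Lambda$ the system $\bigl((\R/\Z)^{[r]},\lambda\bigr)$ decomposes into uncountably many disjoint nonempty closed invariant subsets (the level sets of the diameter function, which rotations preserve), whereas by Lemma \ref{l-iet-minimal} the interval exchange $\widetilde{\rho}(\lambda)$ has finitely many points whose orbits are dense in its support; this contradiction forces $r=1$. Your proposal contains no substitute for this step, and without it the rest of the argument (which presupposes a spatial component $q\colon Z\to X_\Lambda$ rather than $q\colon Z\to X_\Lambda^{[r]}$) does not get off the ground. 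Relatedly, your phrase ``the image $v=\widetilde\rho(u)$ of a generator'' implicitly treats $\Lambda$ as cyclic, which it need not be.

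For the extraction of the constant $c$ from the extension, your route (semiconjugacy to a rotation, affineness of $q$ on continuity pieces, a Radon--Nikodym cocycle argument) diverges from the paper and is shakier than needed: nothing forces $q$ to be affine on pieces, and you would have to justify that claim. The paper avoids it entirely by using functoriality of $H_0$ (Proposition \ref{p-morphism-homology}) together with unique ergodicity: $q_*(\mu_\Delta|_Z)$ is an invariant measure for $\Lambda\acts X_\Lambda$ of total mass $\mu_\Delta(Z)$, hence equals $\mu_\Delta(Z)\,\mu_\Lambda$, and since $\iota_{\mu_\Lambda}$ and $\iota_{\mu_\Delta}$ identify $H_0(\G_\Lambda)$ and $H_0(\G_\Delta)$ with $\widehat\Lambda$ and $\widehat\Delta$, the induced map $\widetilde\rho_*$ forces $c=\mu_\Delta(Z)$ to satisfy $c\widehat\Lambda\subset\widehat\Delta$, with $c=1$ exactly when $Z=X_\Delta$. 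Your measure-theoretic instincts (unique ergodicity pinning down the transported measure, $c$ being the mass of $Z$, the final clause about fixed intervals) are correct and match the paper's conclusions, but as written the argument leans on unproven structural claims about $q$ that the homological formulation makes unnecessary.
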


Note that the last sentence implies that $\iet(\Lambda)'$ and $\iet(\Delta)'$ are isomorphic if and only if $\Lambda=\Delta$. This can also be proven using the Isomorphism Theorem.

We will deduce Theorem \ref{t-sturmian-embedding}  from the following  theorem. 
\begin{thm}\label{t-iet-extends}
Let $\Lambda, \Delta$ be finitely generated dense subgroups of $\R/\Z$. For  every non-trivial homomorphism $\rho\colon \iet(\Lambda)'\to \iet(\Delta)$, the support of $\rho(\iet(\Lambda)')$ is a clopen subset $Z$ of $X_\Delta$, and $\rho$ extends to a continuous morphism  of pseudogroups $\widetilde{\rho}\colon \widetilde{\G_\Lambda}\to \widetilde{\G_\Delta|Z}$. 
\end{thm}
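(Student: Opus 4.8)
The strategy is to apply the Extension Theorem (Theorem~\ref{t-main}) with $G=\iet(\Lambda)'=\Af(\G_\Lambda)$ and $\H=\G_\Delta$, and to rule out the two ``bad'' cases. Case~\ref{i-not-free} is excluded because $\rho$ is assumed non-trivial and $\Af(\G_\Lambda)$ is simple. The only real work is to exclude case~\ref{i-free}: we must show that there is \emph{no} point $y\in X_\Delta$ for which the germ map $\Af(\G_\Lambda)\to (\G_\Delta)_y$, $g\mapsto [\rho(g)]_y$, is injective. Once case~\ref{i-free} is excluded, Theorem~\ref{t-main} puts us in case~\ref{i-factor}: the support $Z$ of $\rho(\Af(\G_\Lambda))$ is clopen and $\rho$ extends to a continuous morphism $\widetilde{\rho}\colon \widetilde{\G_\Lambda^{[r]}}\to \widetilde{\G_\Delta|Z}$ for some $r\ge 1$; since $G=\Af(\G_\Lambda)$ the ``twisting'' homomorphism $\psi$ is irrelevant. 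It then remains to see that one can take $r=1$. For this I would invoke the same mechanism as in the proof of Corollary~\ref{c-action-minimal}: the leaves of $\G_\Delta$ are quasi-isometric to $\Z$, hence one-dimensional, so Theorem~\ref{t-asdim} (with $d=1$) already gives directly that the extension exists with $r\le 1$, i.e. $r=1$. In fact the cleanest route is to deduce Theorem~\ref{t-iet-extends} as an immediate consequence of Theorem~\ref{t-asdim} applied to $\G=\G_\Lambda$ and $\H=\G_\Delta$, exactly as Theorem~\ref{t-Z-extends} was deduced for Cantor minimal systems.

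Thus the proof reduces to one point: \emph{why do the leaves of $\G_\Delta$ have asymptotic dimension $1$?} The action $\Delta\acts X_\Delta$ is a minimal action of a finitely generated abelian group, obtained by blowing up an orbit of the rotation action $\Delta\acts\R/\Z$. It is topologically free (the blow-up construction makes the boundary points of the doubled orbit have trivial germ-stabiliser for the whole group, and on the complement of a countable set the rotation action is free), so by Example~\ref{e-intro-asdim}~(2) the leaves of $\G_\Delta$ are quasi-isometric to the acting group $\Delta$. Since $\Delta$ is a finitely generated abelian group it has finite asymptotic dimension equal to its torsion-free rank, which is at least $1$ (as $\Delta$ is infinite); but more is true: the rotation number cocycle $\Delta\acts\R/\Z$ shows the relevant leaf is a Schreier graph of $\Delta$ acting on (a blow-up of) a circle, and these orbital graphs embed Lipschitz-equivariantly as a ``line'' — more precisely, ordering the orbit of a point by the circular order gives a bi-Lipschitz identification of each orbital graph of $\Delta\acts X_\Delta$ with a line graph, so $\asdim=1$. (Alternatively: the leaves of $\G_\Delta$ are quasi-isometric to $\Z$ because $\iet(\Delta)$ is, like $\Ff(X,u)$ for a Cantor minimal system, built from a $\Z$-like dynamics — this is the observation already used implicitly in~\S\ref{s-iet}.) I would spell out the first of these arguments, as it is self-contained and uses only the circular order on $X_\Delta$.

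Concretely, the steps in order would be: (1) recall that $\G_\Delta$ is minimal and effective (cited from~\S\ref{s-iet}, using Matui's simplicity result and Nekrashevych), and that it is compactly generated (the action of $\Delta$ is by finitely many generators, each a partial homeomorphism of $X_\Delta$ with clopen domain); (2) identify each orbital graph $\orb_y(\G_\Delta,\T)$ with the Schreier graph of $\Delta\acts X_\Delta$ and observe, using the $\Delta$-invariant circular order on $X_\Delta$, that each such graph is quasi-isometric to $\Z$, hence has asymptotic dimension $1$ and so does each leaf $(\G_\Delta)_y$ by the general relation between Cayley graphs of the groupoid and orbital graphs (the leaf is, up to quasi-isometry, the orbit equipped with the same coarse structure since the action is topologically free); (3) apply Theorem~\ref{t-asdim} to the non-trivial homomorphism $\rho\colon\Af(\G_\Lambda)\to\Ff(\G_\Delta)$ with $d=1$, concluding that $Z:=\supp(\rho(\Af(\G_\Lambda)))$ is clopen and $\rho$ extends to a continuous morphism $\widetilde{\rho}\colon\widetilde{\G_\Lambda^{[r]}}\to\widetilde{\G_\Delta|Z}$ for some $r\le 1$, i.e. $r=1$; (4) note that since $G=\Af(\G_\Lambda)=\iet(\Lambda)'$ there is no twisting term, so $\widetilde{\rho}$ genuinely extends $\rho$.

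\textbf{Expected main obstacle.} The only non-formal ingredient is step~(2): verifying that the leaves of $\G_\Delta$ are one-dimensional. This is ``morally obvious'' because the whole point of the blow-up construction is that $X_\Delta$ carries a $\Delta$-invariant circular order, so orbits look like bi-infinite sequences, but one has to be slightly careful about (a) the effect of blowing up on the coarse geometry — doubling a single orbit does not change quasi-isometry type of the leaves — and (b) the difference between the orbital graph (a Schreier graph) and the leaf of the groupoid (a Cayley graph of the groupoid); for topologically free actions these have the same coarse type along a residual set of base points, and that suffices here since asymptotic dimension of a leaf is a coarse invariant and, by Proposition~\ref{p-graph-qi}, stable under restriction to clopen subsets on which one can arrange base points with trivial isotropy. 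Modulo these standard remarks — all of which are already used elsewhere in the paper for $\Z$-actions — the argument is a direct quotation of Theorem~\ref{t-asdim}.
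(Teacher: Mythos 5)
There is a genuine gap, and it sits exactly where you located the ``only non-formal ingredient'': your claim that the leaves of $\G_\Delta$ have asymptotic dimension $1$ is false in general. The action $\Delta\acts X_\Delta$ is essentially free, so a generic orbital graph is the Cayley graph of $\Delta$ itself, and $\Delta$ is a finitely generated dense subgroup of $\R/\Z$ whose torsion-free rank can be any $d\ge 1$ (e.g.\ $\Delta=\langle\alpha,\beta\rangle$ with $\alpha,\beta$ rationally independent irrationals gives leaves quasi-isometric to $\Z^2$, of asymptotic dimension $2$). Your argument that the circular order identifies the orbital graph bi-Lipschitzly with a line graph does not work: the circular order gives a set-theoretic bijection of the orbit with $\Z$, but consecutive points in that order are at unbounded distance in the Schreier graph metric (the orbit is dense in the circle while the generators move points by fixed rotations), so this bijection is nowhere near a quasi-isometry. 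The ``alternative'' remark that $\iet(\Delta)$ is built from $\Z$-like dynamics is likewise incorrect for rank $\ge 2$. Consequently Theorem~\ref{t-asdim} with $d=1$ is not available, and your route to $r=1$ collapses.

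What Theorem~\ref{t-asdim} does give (using only that a finitely generated abelian group has \emph{finite} asymptotic dimension) is that $Z$ is clopen and $\rho$ extends to $\widetilde{\rho}\colon\widetilde{\G_\Lambda^{[r]}}\to\widetilde{\G_\Delta|Z}$ for some $r\le\operatorname{rank}(\Delta)$, with surjective spatial component after shrinking $r$. The missing step — forcing $r=1$ — requires a separate dynamical argument, which is where the paper's proof does its real work: pick an irrational $\lambda\in\Lambda$ and consider $\widetilde{\rho}(\lambda)\in\Ff(\G_\Delta|Z)$, an interval exchange without periodic points whose system $(Z,\widetilde{\rho}(\lambda))$ factors onto $((\R/\Z)^{[r]},\lambda)$ via $\pi_\Lambda\circ q$. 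Lemma~\ref{l-iet-minimal} says such an interval exchange admits a \emph{finite} set of points whose orbits are jointly dense in its support. But for $r\ge 2$ the rotation $\lambda$ preserves the diameter function on $(\R/\Z)^{[r]}$ (rotations are isometries), so $(\R/\Z)^{[r]}$ — and hence $Z$ — decomposes into uncountably many disjoint nonempty closed invariant subsets, contradicting the finite dense-orbit set. This is the argument you need to supply; without it, or some substitute that exploits the rotational (rather than merely coarse-geometric) structure of $\G_\Delta$, the statement with $r=1$ is not established.
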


In order to deduce this from the Extension Theorem, we will need the following elementary property of (the Cantor version of ) interval exchange transformations. We include a proof for the convenience of the reader.

\begin{lemma} \label{l-iet-minimal}
Fix a dense finitely generated subgroup $\Lambda<\R/\Z$. Let $f\in \iet(\Lambda)$ and let $W\subset X_\Lambda$ be its support. Assume that $f$ does not have any periodic point in $W$. Then there exists a finite subset $\Sigma$ of $W$  such that the union of the $f$-orbits of points in $\Sigma$ is dense in $W$. \end{lemma}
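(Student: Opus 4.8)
The statement is a finiteness result about the orbit structure of an element $f\in\iet(\Lambda)$ whose support $W$ contains no periodic point: the orbits of finitely many seed points should be dense in $W$. I would approach this by analysing $f$ as a genuine interval exchange on the circle, i.e. via the surjection $\pi_\Lambda\colon X_\Lambda\to\R/\Z$, and using the classical structure theory of interval exchange transformations (decomposition into minimal components and periodic components) applied to the piecewise translation $\bar f$ of $\R/\Z$ induced by $f$.

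The key steps, in order, would be as follows. First, recall that $f$ has finitely many discontinuity points and finitely many intervals of continuity, all with endpoints in $\Lambda$; passing through $\pi_\Lambda$ we get a finite-type interval exchange $\bar f$ on $\R/\Z$. Second, invoke the standard fact (Keane-type decomposition) that the circle decomposes into finitely many $\bar f$-invariant subsets, each a finite union of intervals, on each of which $\bar f$ is either periodic (every point periodic with a common period) or minimal. Third, translate this back up to $X_\Lambda$: the set of periodic points of $f$ in $X_\Lambda$ is $\pi_\Lambda^{-1}$ of the periodic part, up to the countably many doubled points over $\Lambda$, and since by hypothesis $W$ contains \emph{no} periodic point of $f$, the intersection of $W$ with the closure of the periodic part must be negligible — more precisely, $W$ is contained in the closure of the union of the minimal components (one has to be a little careful here, because the boundary points between components can have subtle behaviour in the Cantor model, but these are only finitely many $\Lambda$-orbits and can be absorbed into the seed set $\Sigma$). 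Fourth, on each minimal component $C_i$, which is a finite union of intervals on which $\bar f$ acts minimally, pick one point $x_i\in W\cap\pi_\Lambda^{-1}(C_i)$ that is not a doubled point; minimality of $\bar f$ on $C_i$ forces the $f$-orbit of $x_i$ to be dense in $\pi_\Lambda^{-1}(C_i)\cap X_\Lambda$. Finally, set $\Sigma=\{x_1,\dots,x_k\}$ together with finitely many boundary points needed to cover $W$; since there are finitely many components this $\Sigma$ is finite, and the union of the $f$-orbits of its points is dense in $W$.

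The main obstacle, and the only genuinely delicate point, is the passage between the circle model $\R/\Z$ and the Cantor model $X_\Lambda$ near the doubled orbit and near the component boundaries. On the circle, ``no periodic point'' is not quite equivalent to ``the periodic part is empty'': $\bar f$ could a priori have a finite invariant set of points in $\Lambda$ whose lifts in $X_\Lambda$ happen to be moved by $f$ (because $f$ can act nontrivially on the $\pm$ copies). I would handle this by noting that such exceptional orbits form a \emph{finite} subset of $W$ — each such circle-periodic point contributes at most two points of $X_\Lambda$ and there are finitely many of them — so they can simply be thrown into $\Sigma$; their own $f$-orbits, being finite, contribute nothing to density but also cost nothing. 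The remaining verification, that minimality of $\bar f|_{C_i}$ lifts to density of a single non-doubled $f$-orbit in the corresponding clopen piece of $X_\Lambda$, is routine once one observes that $\pi_\Lambda$ is at most two-to-one, is a bijection off the countable doubled set, and is continuous; density of the $\bar f$-orbit of $\pi_\Lambda(x_i)$ in $C_i$ then pulls back to density of the $f$-orbit of $x_i$ in $\pi_\Lambda^{-1}(C_i)\cap X_\Lambda$, using that $x_i$ is not a doubled point so its forward $f$-orbit maps bijectively onto the forward $\bar f$-orbit of $\pi_\Lambda(x_i)$.
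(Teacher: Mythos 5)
Your route is genuinely different from the paper's. The paper does not invoke the Keane-type minimal/periodic decomposition at all: it takes $\Sigma$ to be the (finite) set of points of $W$ lying over the discontinuity points of $f$, lets $C$ be the closure of the union of their orbits, and argues that if $C\neq W$ then the open set $\pi_\Lambda(W)\setminus\pi_\Lambda(C)$ is invariant and contains no discontinuity point, so $f$ permutes its connected components isometrically; each component then has a finite orbit (components in a single orbit have equal length and disjoint interiors), so some power of $f$ is an orientation-preserving isometry of an interval onto itself, i.e.\ the identity there, producing periodic points in $W$ --- a contradiction. This is essentially your decomposition argument done by hand, and it buys two things: a canonical choice of $\Sigma$ with no case analysis, and no need to discuss the periodic part of $\bar f$ or the circle-versus-Cantor dictionary beyond the single observation that $\pi_\Lambda(W)$ is a finite union of intervals.

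There is one step of your argument whose justification, as written, is wrong, even though the conclusion is salvageable. In your ``main obstacle'' paragraph you dispose of the $\bar f$-periodic points meeting $W$ by asserting that they ``form a finite subset of $W$ --- each such circle-periodic point contributes at most two points of $X_\Lambda$ and there are finitely many of them.'' For an interval exchange transformation the periodic part, when nonempty, is a finite union of \emph{intervals} of periodic points, so in general there are uncountably many $\bar f$-periodic points and your finiteness claim fails as stated. The correct argument is of a different nature: if $W$ met the preimage of the \emph{interior} of a periodic component then, $W$ being clopen, that intersection would be a nonempty open subset of $X_\Lambda$, hence uncountable, hence would contain a non-doubled point; the $f$-orbit of such a point projects bijectively onto a finite $\bar f$-orbit and is therefore finite, contradicting the hypothesis that $W$ contains no $f$-periodic point. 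Hence $W$ meets the periodic part only over the finitely many endpoints of its constituent intervals, and only then may these points be absorbed into $\Sigma$. With this repair (and the lifting-of-density argument for non-doubled seed points in the minimal components, which you state correctly), your proof goes through.
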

\begin{proof}
We  view $f$ both as a homeomorphism of $X_\Lambda$ and as an interval exchange transformation acting on $\R/\Z$. The set $W$ is  clopen. Its projection $\pi_\Lambda(W)\subset \R/\Z$ is a disjoint union of finitely many closed intervals, and  is invariant under $f$. Let $\Sigma\subset W$ be the (finite) set of points that project onto some discontinuity point of $f$. We first claim that the union of orbits of points in $\Sigma$ is dense in $W$. Towards a contradiction, let $C\subset W$ be its closure and assume that $C\neq W$.  Then $\pi_\Lambda(C)$ is a compact invariant subset of $\pi_\Lambda(W)$, and $f$ has no discontinuity point in its complement, thus it permutes isometrically the connected components of the complement. Since moreover it preserves measure of every connected component, each one of them has a finite $f$-orbit, and thus we deduce that $f$ must have periodic points. This contradicts our assumption, thereby showing the claim. \end{proof}

\begin{proof}[Proof of Theorem \ref{t-iet-extends}]
The Cayley graphs of the groupoid $\G_\Delta$ are quasi-isometric to the Cayley graph of the group $\Delta$, which is a finitely generated abelian group, and thus has finite asymptotic dimension.  
Using Theorem \ref{t-asdim} we obtain that the support $Z\subset X_\Delta$ of ${\rho}(\iet(\Lambda)')$ is clopen and that $\rho$ extends to a continuous morphism $\widetilde{\rho}\colon \tG_\Lambda^{[r]}\to \tG_\Delta|Z$ for some $r \ge 1$ such that the map $q\colon Z\to X_\Lambda^{[r]}$ is surjective (see Remark \ref{r-comments}~\ref{i-comments-surjective}. We have to show that $r=1$. 

Fix $\lambda\in  \Lambda$ irrational. Since $\Lambda$ is naturally a subgroup of $\Ff({\G_\Lambda^{[r]}})$ for every $r\ge 1$ we can view $\lambda$ it as an element of $\tG_\Lambda^{[r]}$. Consider its image $\widetilde{\rho}(\lambda)\in \Ff(\G_\Delta|Z)$. 

The system $(U, \widetilde{\rho}(\lambda))$ factors onto  the system $(X_\Lambda^{[r]}, \lambda)$ via the map $q$. The latter factors onto $((\R/\Z)^{[r]}, \lambda)$, via the natural surjection $\pi_\Lambda\colon X_\Lambda^{[r]}\to (\R/\Z)^{[r]}$.   Thus, we have a factor map $\pi_\Lambda\circ q \colon Z\to (\R/\Z)^{[r]}$ from  the system $(Z, \widetilde{\rho}(\lambda))$ to $((\R/\Z)^{[r]}, \lambda)$. Since we chose $\lambda$ irrational, this system does not have periodic orbits, and thus neither does $(Z, \widetilde{\rho}(\lambda))$. Applying Lemma \ref{l-iet-minimal} to $f=\widetilde{\rho}(\lambda)$ and $W=Z$, there exists a finite subset $\Sigma$ such that the union of orbits of points in $\Sigma$ is dense in $Z$. 
However,  if $r\ge 2$, the space $(\R/\Z)^{[r]}$ can be written as the disjoint union of uncountably many non-empty closed  subsets
\[(\R/\Z)^{[r]}=\bigsqcup_{c\in [0, 1/2]} \Big\{\{x_1,\ldots, x_r\}\subset \R/\Z\colon \operatorname{diam}(\{x_1,\ldots, x_r\})=c\Big\}\]
where the diameter refers to the arc-distance on  $\R/\Z$. Since rotations are isometries, each of these sets is invariant under the system $\left((\R/\Z)^{[r]}, \lambda\right)$. Thus the preimage of these sets provide a partition of $Z$ into uncountably many $\widetilde{\rho}(\lambda)$-invariant closed non-empty subsets. This contradicts the existence of the finite set $\Sigma$ above unless $r=1$.   \qedhere


\end{proof}

To deduce Theorem \ref{t-sturmian-embedding} from Theorem \ref{t-iet-extends}, we will look at the homology groups of $\G_\Lambda$ and $\G_\Delta$ (the definition of homology of \'etale groupoid is recalled in \S \ref{s-homology}). Let us recall how to compute of the homology group $H_0(\G_\Lambda)$. It is well known that the action $\Lambda \acts X_\Lambda$ is uniquely ergodic,  i.e. admits a unique $\Lambda$-invariant probability measure  $\mu_\Lambda$. The measure $\mu_\Lambda$ is  characterised as the unique measure on $X_\Lambda$ which  projects to the Lebesgue measure on $\R/\Z$ (note that the set $\{\lambda_\pm  \colon  \lambda\in \Lambda\}$ is countable and null). The measure $\mu_\Lambda$ induces a homomorphism from the homology group $H_0(\G_\Lambda)$ to $\R$,  given by
\begin{align*} \iota_{\mu_\Lambda}\colon H_0(\G_\Lambda) & \longrightarrow  \R \\
  {[}\sum n_iU_i{]} & \mapsto  \sum n_i \mu_\Lambda(U_i) .\end{align*}
It  is well-known and not difficult to see that this map is injective and its image is equal to  $\widehat{\Lambda}$. Thus, it establishes an isomorphism between $H_0(\G_\Lambda)$ and $\widehat{\Lambda}$.

We will not use the fact that the map above is injective, but only the simpler observation that its image is equal to $\widehat{\Lambda}$. Let us briefly recall why this is true.  For every clopen set $U\subset X_\Lambda$, the image $\pi_\Lambda(U)<\R/\Z$ is a disjoint union of closed intervals $[a_i, b_i]$ with  $a_i, b_i\in \Lambda$. The measure of such an interval is clearly an element of  $ \widehat{\Lambda}$, and thus $\iota_{\mu_\Lambda}(H_0(\G_\Lambda))\subset \widetilde{\Lambda}$.  Let $\lambda_1,\ldots, \lambda_d$ be generators of $\Lambda$, and $\widehat{\lambda_1}, \cdots, \widehat{\lambda_d}\in \widehat{\Lambda}$ be their unique preimages in $(0, 1)$. These are the measures of the arcs $[0, \lambda_i]$ and thus belong to the image of $\iota_{\mu_\Lambda}$. Together with $1=\mu_\Lambda(X_\Lambda)$ they generate $\widehat{\Lambda}$, which is thus equal to the image of $\iota_{\mu_\Lambda}$.

We are now ready to prove Theorem \ref{t-sturmian-embedding}.

%
%

\begin{proof}[Proof of Theorem \ref{t-sturmian-embedding}]
It is obvious that \ref{i-sturmian-1}$\Rightarrow$\ref{i-sturmian-2}. To check that \ref{i-sturmian-3}$\Rightarrow$\ref{i-sturmian-1}, we view $\iet(\Lambda)$ and $\iet(\Delta)$ as groups of interval exchanges.   Assume that $c\in (0, 1]$ is such that $c\widehat{\Lambda}\le \widehat{\Delta}$. Since $1\in \widehat{\Lambda}$, we have $c\in \widehat{\Delta}$. Let $\bar{c}$ be the projection of $c$ to $\R/\Z$. If we rescale the length by $c$ to identify the arc $[0, \bar{c})$ with a unit circle we see that $\iet(\Lambda)$ is isomorphic to the subgroup of $\iet(\Delta)$ consisting of elements that act non trivially only on the arc $[0, \bar{c})$, and whose discontinuity points are projection of points in $c\widehat{\Lambda}\cap [0, c)$. 

 Let us prove that \ref{i-sturmian-2}$\Rightarrow$\ref{i-sturmian-3}, which is the main content of the theorem. Let $\rho\colon \iet(\Lambda)'\hookrightarrow \iet(\Delta)$ be  a group embedding. By Theorem \ref{t-iet-extends}, $\rho$ extends to a continuous pseudogroup morphism $\widetilde{\rho}\colon \widetilde{\G_\Lambda}\to \widetilde{\G_\Delta|Z}$ for some clopen subset $Z\subset X_\Delta$ equal to the support of $\rho(\iet(\Lambda)')$. We consider the morphism induced in homology $\widetilde{\rho}_\ast \colon H_0(\G_\Lambda)\to H_0(\G_\Delta|Z)$ (see Proposition \ref{p-morphism-homology}). Note that in degree zero it is simply given by   $\widetilde{\rho}_*([\sum n_i U_i])=[\sum n_i q^{-1}(U_i)]$, where $q\colon Z\to X_\Delta$ is the spatial component of $\widetilde{\rho}$.  Recall also that homology is invariant under taking restrictions  onto clopen subsets intersecting every orbits \cite{Mat-hom}, and thus we can see  $\widetilde{\rho}_\ast$ as taking values in $H_0(\G_\Delta)$.
 
We claim that the following square is commutative:
\[\begin{tikzcd}
H_0(\G_\Lambda) \arrow[r, "\widetilde{\rho}_\ast"] \arrow[d, "\iota_{\mu_\Lambda}"]
& H_0(\G_\Delta) \arrow[d, "\iota_{\mu_\Delta}" ] \\
\R \arrow[r,  "x\mapsto \mu_\Delta(Z) \cdot x"']
& \R \end{tikzcd} 
 \]
 To see that the diagram is commutative, observe that the pushforward $q_\ast (\mu_\Delta|_Z)$ is a positive invariant measure for the action $\Lambda \acts X_\Lambda$ with total mass $\mu_\Delta(Z)$.  By unique ergodicity, we must have $q_*(\mu_\Delta|_Z)=\mu_\Delta(Z)\mu_\lambda$. Thus for every class $[\sum n_i U_i]\in H_0(\G_\Lambda)$ we have
\begin{multline*}\iota_{\mu_\Delta}\circ \widetilde{\rho}_*([\sum n_i U_i])=\sum n_i \mu_\Delta(q^{-1}(U_i))=\sum n_i q_*\mu_\Delta(U_i)=\\ \mu_\Delta(Z)\sum n_i\mu_\Lambda(U_i)=\mu_\Delta(Z)\cdot \iota_{\mu_{\Delta}}([\sum n_i U_i]).\end{multline*}

Since the above diagram is commutative, it follows that the linear map $x\mapsto \mu_\Delta(Z) \cdot x$ sends $\iota_{\mu_\Lambda}(H_0(\G_\Lambda)=\widehat{\Lambda}$ inside $\iota_{\mu_\Delta}(H_0(\G_\Delta))=\widehat{\Delta}$; moreover if $Z=X_\Delta$ then $\mu_\Delta(Z)=1$ and thus $\widehat{\Lambda}\subset \widehat{\Delta}$. This concludes the proof. \qedhere

 \end{proof}

\subsection{An application based on the complexity function} To conclude this section, recall the well-known fact that the actions $\Lambda \acts X_\Lambda$ by Cantor rotations  are conjugate to subshifts whose complexity function is bounded above by a polynomial (see e.g. \cite[Lemma 5.13]{ExtAmen}). This fact makes it more difficult to construct embeddings of a given group in $\iet$, but this has never so far been used to prove obstructions to embeddings. Corollary \ref{c-complexity} implies the following.

\begin{cor}
Let $\G$ be a minimal effective groupoid over a Cantor space. Assume that $\G$ is expansive and that its orbital growth function $\beta_\G(n)$ and its complexity function $\pi_\G(n)$ do not both grow polynomially. Then there is no non-trivial homomorphism $\Af(\G)\to \iet$.  
\end{cor}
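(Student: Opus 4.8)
The idea is to combine Theorem \ref{t-asdim} (automatic extension for targets with finite asymptotic dimension) with Theorem \ref{t-complexity-dichotomy} (complexity as an obstruction in the "free" case), exactly as in the proof of Corollary \ref{c-complexity}. Suppose for contradiction that $\rho\colon \Af(\G)\to \iet$ is a non-trivial homomorphism. Since $\Af(\G)$ is simple (Example \ref{e-simple}), the kernel of $\rho$ is trivial, so $\rho$ is an embedding; and every finitely generated subgroup of $\iet$ lands in $\iet(\Delta)$ for a suitable finitely generated dense $\Delta<\R/\Z$, namely the group generated by the (finitely many) discontinuity points and translation lengths of a finite generating set of $\rho(\Af(\G))$. (The group $\Af(\G)$ is finitely generated by Theorem \ref{t-fg} since $\G$ is expansive.) Thus we may assume $\rho$ takes values in $\Ff(\G_\Delta)=\iet(\Delta)$, where $\G_\Delta$ is the expansive, minimal, effective groupoid of germs of the Cantor rotation action $\Delta\acts X_\Delta$ from \S\ref{s-iet}.

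Next I would record the two structural facts about $\G_\Delta$. First, its leaves (Cayley graphs) are quasi-isometric to the finitely generated abelian group $\Delta$, hence have finite asymptotic dimension. Second, its complexity function $\pi_{\G_\Delta}(n)$ is bounded above by a polynomial: the action $\Delta\acts X_\Delta$ is conjugate to a subshift of polynomial complexity (see the remark cited from \cite[Lemma 5.13]{ExtAmen}), so by Proposition \ref{p-complexity-shift} the groupoid complexity is $\preceq n^k$ for some $k$. Likewise the orbital growth $\beta_{\G_\Delta}(n)$ is linear, since the orbital graphs are quasi-isometric to $\Delta$. Now apply Theorem \ref{t-asdim}: since every leaf of $\G_\Delta$ has finite asymptotic dimension, the support $Z$ of $\rho(\Af(\G))$ is clopen and $\rho$ extends to a continuous morphism $\widetilde{\rho}\colon \widetilde{\G^{[r]}}\to \widetilde{\G_\Delta|Z}$ for some $r\ge 1$; in particular we are \emph{not} in case \ref{i-free} of the Extension Theorem. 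On the other hand, Theorem \ref{t-complexity-dichotomy} (applied with $\H=\G_\Delta$) says that if $\pi_{\G_\Delta}\nsucceq \pi_\G$ then every non-trivial $\rho$ must fall into case \ref{i-free}. So the only way to avoid a contradiction is to have $\pi_{\G_\Delta}\succeq \pi_\G$; but $\pi_{\G_\Delta}$ is polynomial, forcing $\pi_\G$ to be polynomial as well. Symmetrically, Corollary \ref{c-growth-obstruction} together with the linear orbital growth of $\G_\Delta$ forces $\beta_\G$ to be linear — in particular polynomial. Hence $\beta_\G$ and $\pi_\G$ both grow polynomially, contradicting the hypothesis. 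Therefore no non-trivial homomorphism $\Af(\G)\to \iet$ exists.

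There is essentially no hard step here once the machinery is in place; the proof is a direct citation exercise. The only point requiring minor care is the reduction to $\iet(\Delta)$: one must note that the image of a finitely generated group under $\rho$ is finitely generated, so that only finitely many discontinuity points and translation lengths occur, legitimising the passage to a single finitely generated $\Delta$ and hence to the expansive groupoid $\G_\Delta$ (this expansivity is what licenses both Theorem \ref{t-asdim} and Theorem \ref{t-complexity-dichotomy}, and is recorded in \S\ref{s-iet}). I would also state explicitly that "both grow polynomially" is being negated, so that ruling out case \ref{i-free} via the complexity bound and ruling out the extension case via either the complexity bound or the orbital-growth bound together cover all possibilities. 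The cleanest writeup simply invokes Corollary \ref{c-complexity} for the complexity half and Corollary \ref{c-growth-obstruction} for the orbital-growth half, observing that at least one of the two hypotheses $\pi_{\G_\Delta}\nsucceq\pi_\G$ or $\beta_{\G_\Delta}\nsucceq\beta_\G$ holds under the assumption that $\beta_\G,\pi_\G$ are not both polynomial.
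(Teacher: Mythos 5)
Your proposal is correct and follows the same overall route as the paper: reduce to a homomorphism into $\iet(\Delta)$ using finite generation of $\Af(\G)$ (Theorem \ref{t-fg}), identify $\iet(\Delta)$ as the full group of the expansive, minimal, effective groupoid $\G_\Delta$ whose leaves have finite asymptotic dimension and whose complexity is polynomial, and then invoke the complexity obstruction. Where you go beyond the paper's one-line proof is in explicitly splitting the hypothesis ``$\beta_\G$ and $\pi_\G$ are not both polynomial'' into its two cases: the paper's proof cites only Corollary \ref{c-complexity}, which disposes of the case where $\pi_\G$ is superpolynomial, but says nothing about the case where $\pi_\G$ is polynomial while $\beta_\G$ is not (and no inequality of the form $\beta_\G\preceq\pi_\G$ is established anywhere in the paper that would make that case redundant). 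Your use of Corollary \ref{c-growth-obstruction} for the orbital-growth half genuinely completes the argument, so your write-up is if anything more faithful to the stated hypothesis than the paper's. The detour through Theorem \ref{t-asdim} and Theorem \ref{t-complexity-dichotomy} is harmless but unnecessary, since Corollary \ref{c-complexity} already packages that combination. One small correction: the orbital graphs of $\G_\Delta$ are quasi-isometric to $\Delta\cong\Z^k$ where $k$ is the rank of $\Delta$, so $\beta_{\G_\Delta}(n)\sim n^k$ is polynomial but not in general linear; this does not affect your argument, which only needs polynomiality, but the word ``linear'' should be replaced. You should also note (as the paper implicitly does) that $\Delta$ can be taken dense, e.g.\ by adjoining an irrational if necessary, since a non-trivial image of the infinite simple group $\Af(\G)$ cannot lie in a group of purely rational interval exchanges.
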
 
\begin{proof}
Since $\G$ is expansive, the group $\Af(\G)$ is finitely generated \cite{Nek-simple}, and thus, every non-trivial homomorphism $\rho\colon \Af(\G)\to \iet$ takes values in $\iet(\Lambda)$ for some $\Lambda$. Therefore the conclusion follows from  Corollary \ref{c-complexity} and from the  well-known fact that the actions $\Lambda \acts X_\Lambda$ have polynomial complexity  \cite[Lemma 5.13]{ExtAmen}. \qedhere
\end{proof}

\section{One-sided SFT's and Higman-Thompson's groups} \label{s-SFT} 

In this subsection we use the Extension Theorem  to provide a concrete description of  all homomorphisms between the full groups of groupoids associated to \emph{one sided shifts of finite type}. Their full groups were studied in detail by Matui in \cite{Mat-SFT}, and  include the family  Higman-Thompson's groups $V_{n, r}$. 

\subsection{Basic definitions}
Let $\Sigma=(\mathcal{V}, \mathcal{E})$ be a directed graph, with finite sets of vertices $\mathcal{V}$ and of edges $\mathcal{E}$. We will always assume that $\Sigma$ is \textbf{irreducible}, i.e. for any vertices $x, y\in \mathcal{V}$ there is a directed path going from $x$ to $y$, and that $\Sigma$ is not a  cycle (hence there are at least two such paths for any pair of vertices). The starting and ending vertex of an edge $e\in \mathcal{E}$ are denoted $\mathsf{o}(e)$ and $\mathsf{t}(e)$. 

 We let $X_\Sigma$ be the set of all one sided infinite directed paths in $\Sigma$. Note that by a \emph{path} we mean its set of edges. The set $X_\Sigma$ is endowed with the topology induced from the product topology on $\mathcal{E}^\N$, which makes it homeomorphic to the Cantor set. The map 
\[S \colon X_\Sigma \to X_\Sigma,\quad e_0e_1e_2\cdots\mapsto e_1e_2\cdots,\]
is called the (one sided) \textbf{shift of finite type} associated to $\Sigma$. 

The map $S$ is not globally invertible, but it is invertible locally. More precisely, for every edge $e\in \mathcal{E}$ let $C_e\subset X_\Sigma$ be the clopen subset consisting of paths that start with $e$, and for every vertex $x\in \mathcal{V}$ let $D_x$ be the clopen set consisting of paths that start at $x$. Let 
\[S_e\colon C_e\to D_{\mathsf{t}(e)}\] 
be the restriction of $S_e$ to $C_e$, and note that it is a homeomorphism. We let $\tG_\Sigma$ be the pseudogroup generated by the maps $S_e, e\in \mathcal{E}$ (that is, the smallest pseudogroup over $X_\Sigma$ which contains the maps $S_e$), and $\G_\Sigma$ be the corresponding groupoid of germs. In what follows, we identify the maps $S_e$ with the sets of their germs, thus think of them both as homeomorphisms and as bisections of $\G_\Sigma$.

See \cite{Mat-SFT} for an extensive study of the groups $\Ff(\G_\Sigma)$.

\begin{example}[Higman-Thompson groups]\label{e-V} For $d\ge 2$, denote by $\Omega_d$ the a graph consisting of one vertex and $d$  loops, that we label by $\{1,\ldots , d\}$. The corresponding map $S\colon X_\Sigma\to X_\Sigma$  the full one-sided shift on $\{1, d\}^\N$. The group $\Ff(\G_{\Omega_d})$ is the group of all homeomorphism of $\{1,d\}^\N$ which are locally given by a prefix change. This is a well studied group, namely the  \emph{Higman-Thompson group} $V_{d, 1}$. The group $V_{2,1}$  is more simply known as \emph{Thompson's group}  $V$. In a similar way, the extended family of Higman-Thompson's groups $V_{d, r}$ arise as topological full groups of one-sided shifts of finite type, see \cite[Sec. 6.7.1]{Mat-SFT} (we do not the definition of the other groups $V_{d, r}$, as we will only discuss this example in the case $r=1$).  Each of these groups contain as subgroups the  Higman Thompson's groups $F_{d, r}$ and $T_{d, r}$ acting on the circle and the Cantor set respectively. 
\end{example}

\subsection{Combinatorial representation of elements}
In this subsection we recall a more combinatorial way to think of elements of the pseudogroup $\tG_\Sigma$ from \cite{Mat-Mats-Higman-Thompson} (close in sprit to the classical description of elements of Thompson's groups  as pairs of trees).

Fix an irreducible connected graph $\Sigma$. Given a finite directed path $w=e_1\cdots e_k$ in $\Sigma$, we denote $C_{w}\subset X_\Sigma$ the clopen subset of paths that start with $w$. We  set $S_w=S_{e_k}\cdots S_{e_1}$.   Let $v$ be another path with the same endpoint as $w$ (but not necessarily with the same starting point, nor with the same length). 
 The element $S_{v}^{-1}S_w$ is verifies  $\src(S_{v}^{-1}S_w)=C_w, \rg(S_{v}^{-1}S_w)=C_w$, and acts as a ``prefix replacement'':
\[S_{v}^{-1}S_w\colon C_w\to C_{v}, \quad w e_{k+1}e_{k+2}\cdots \mapsto v e_{k+1}e_{k+2}\cdots.\]

Two finite paths $v_1, v_2$ in $\Sigma$ are said to be \textbf{unrelated} if $C_{v_1}\cap C_{v_2}=\varnothing$, i.e. if neither of them is a prefix of the other.

\begin{defin}
A \textbf{table} is a (possibly infinite) collection of pairs of finite paths $\{(v_i, w_i)\}_{i\in I}$ with the following properties
\begin{enumerate}[label=(\roman*)]
\item  For every $i$ the path $w_i$ and $v_i$ end in the same vertex of $\Sigma$. 
\item For every $i\neq j$ the paths $v_i$ and $v_j$ are unrelated, and the paths $w_i$ and $w_j$ are unrelated.
\end{enumerate}  \end{defin}
The terminology is borrowed from \cite{Nek-Cuntz, Mat-Mats-Higman-Thompson}.

Every table $\{(v_i, w_i)\}_{i\in I}$ defines an element $F\in \tG_\Sigma$  with  $S(F)=\bigsqcup_{i} C_{w_i}$ and $\rg(F)=\bigsqcup_i C_{v_i}$, given by:

\begin{equation} \label{e-table} F=\bigsqcup_{i\in I} S_{v_i}^{-1} S_{w_i}\end{equation}
Conversely, every element of $\tG_\Sigma$ has this form for some table $\{(v_i, w_i)\}_{i\in I}$, and we say that $F$ is represented by the table.   By definition, the empty element of $\tG_{\Sigma}$ is represented by the empty table.

Note that $F\in \tG_\Sigma$ is represented by a finite table if and only if its domain and range are clopen  (i.e. if $F$ is a compact open bisection of $\G_\Sigma$).  An element $F\in \tG_\Sigma$ belongs to the full group $\Ff(\G_{\Sigma})$ if and only if it can be represented by a finite table $\{(w_i, v_i)\}_{i\in I}$ such that $\bigsqcup_{_i}C_{v_i}=\bigsqcup_{i} C_{w_i}=X_\Sigma$.

Two different tables can define the same element of $\tG_{\Sigma}$.   A \textbf{simple expansion} of a table  $\{(w_i, v_i)\}_{i\in I}$ is a table obtained by replacing  $(w_i, v_i)$ by the collection $\{(w_ie, v_ie)\}_{e\in \mathsf{t}(w_i)}$ for every $i$ in some subset $J\subset I$.  Two tables are said to be \textbf{equivalent} if and only if there exists a third table which is obtained from both through a series of simple expansions.
  The following is proven by a simple modification of the proof of \cite[Lemma 5.3]{Mat-Mats-Higman-Thompson}.
  
  \begin{lemma}\label{l-table-equivalent}
  Two tables define the same element of $\tG_{\Sigma}$ if and only if they are equivalent. 
  \end{lemma}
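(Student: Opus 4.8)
The plan is to prove the two implications separately; the direction ``equivalent $\Rightarrow$ same element'' is essentially formal, and the converse carries the combinatorial content. For the forward direction I would first record that a single simple expansion does not change the represented element: if $F=\bigsqcup_{i\in I}S_{v_i}^{-1}S_{w_i}$ and one pair $(w_i,v_i)$ is replaced by $\{(w_ie,v_ie)\}_{e\in\mathsf t(w_i)}$, then $C_{w_i}=\bigsqcup_{e}C_{w_ie}$ (every infinite path out of $\mathsf t(w_i)$ begins with an outgoing edge, and every vertex of $\Sigma$ has one since $\Sigma$ is irreducible), and $S_{v_ie}^{-1}S_{w_ie}$ is precisely the restriction of $S_{v_i}^{-1}S_{w_i}$ to $C_{w_ie}$, so the join in \eqref{e-table} is unchanged. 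Iterating, any table obtained from $\mathcal T$ by a series of simple expansions represents the same element, hence so do two equivalent tables.

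For the converse, let $\mathcal T_1=\{(w_i,v_i)\}_{i\in I}$ and $\mathcal T_2=\{(w'_j,v'_j)\}_{j\in J}$ both represent $F\in\tG_\Sigma$. Comparing sources, and using that the unrelatedness condition makes the cylinders of a table pairwise disjoint, one gets $\bigsqcup_iC_{w_i}=\src(F)=\bigsqcup_jC_{w'_j}$, and likewise for ranges. Fixing $i$, the set $C_{w_i}$ is the disjoint union of the nonempty intersections $C_{w_i}\cap C_{w'_j}$; as $w_i$ and $w'_j$ are then comparable, each such intersection is either $C_{w'_j}$ (when $w_i$ is a prefix of $w'_j$) or all of $C_{w_i}$ (when $w'_j$ is a prefix of $w_i$, which forces $C_{w_i}$ to be a single $\mathcal T_2$-piece). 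In the first case the refining paths $w'_j=w_it_j$ give a complete antichain $A_i=\{t_j\}$ in the tree of finite directed paths issued from the terminal vertex of $w_i$, and $A_i$ is \emph{finite} because $C_{w_i}$ is clopen, hence compact, while the $C_{w'_j}$ form a disjoint open cover of it. The matching of second coordinates ($v'_j=v_it_j$, resp.\ $v_i=v'_js$ when $w_i=w'_js$) is forced, because both expressions describe the same prefix-replacement partial homeomorphism $F|_{C_{w_i}}$, evaluated on any infinite path from the relevant terminal vertex (such a path exists since $\Sigma$ is irreducible).

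The one nonformal ingredient is the following elementary statement.
\begin{claim}
In the tree of finite directed paths from a vertex of $\Sigma$, from a single table entry $(w,v)$ one can reach, by a series of simple expansions, the table $\{(wt,vt):t\in A\}$ for any \emph{finite} complete antichain $A$ below $w$.
\end{claim}
\noindent I would prove this by induction on the depth of $A$: apply one simple expansion to $(w,v)$ to pass to its children, restrict $A$ to the subtree below each child (this is again a finite complete antichain of strictly smaller depth, by the antichain condition), apply the inductive hypothesis inside each child subtree — these are disjoint simple expansions of the current table — and reassemble. Granting the Claim, I would form the common refinement $\mathcal P$ of the two cylinder partitions of $\src(F)$: its pieces are the minimal cylinders $C_u$ that occur, each carrying the pair $(u,\cdot)$ read off from $F$ (consistently from either side, by the previous paragraph). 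Applying the Claim entry by entry exhibits $\mathcal P$ as obtained from $\mathcal T_1$ by a series of simple expansions, and symmetrically from $\mathcal T_2$; hence $\mathcal T_1$ and $\mathcal T_2$ are equivalent.

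The step I expect to be the main nuisance is the bookkeeping when $I$ or $J$ is infinite: the local expansions toward $\mathcal P$ are each finite but of unbounded depth, so the series of simple expansions linking $\mathcal T_1$ to $\mathcal P$ must be organised as a countable sequence in which every entry stabilises after finitely many steps (at step $m$, simultaneously expand all current entries that are not yet among the pieces of $\mathcal P$). When $F\in\Ff(\G_\Sigma)$ — the case actually used later — every table representing $F$ is finite by compactness of $X_\Sigma$, and only finitely many simple expansions are needed, so this difficulty disappears.
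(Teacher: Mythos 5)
Your argument is essentially correct, and it is worth pointing out that the paper itself contains no proof of this lemma: it only asserts that it follows ``by a simple modification of the proof of'' the corresponding statement in Matsumoto--Matui, which concerns finite tables (elements of the full group). So what you have written is a genuine self-contained proof rather than a variant of one in the text, and its structure --- a single simple expansion preserves the join because $C_{w_i}=\bigsqcup_e C_{w_ie}$ and $S_{v_ie}^{-1}S_{w_ie}$ is the restriction of $S_{v_i}^{-1}S_{w_i}$; conversely, pass to the common refinement of the two source partitions, note that each piece of it sits below a given entry along a \emph{finite} complete antichain by compactness of the clopen cylinders, and realise that antichain by simple expansions via your Claim --- is exactly what the citation is gesturing at.

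Two steps deserve more care than you give them. First, in the ``matching of second coordinates'' step you justify $v'_j=v_it_j$ by evaluating on an infinite path ``which exists since $\Sigma$ is irreducible''. Mere existence is not enough: when a vertex has a single outgoing edge, cylinders do \emph{not} determine paths ($C_u=C_{ue}$), so one could a priori have $v'_j\neq v_it_j$ with $v'_j\eta=v_it_j\eta$ for every $\eta$; ruling this out requires that $D_y$ contain more than one point (equivalently a non-eventually-periodic path) for every vertex $y$, which uses \emph{both} irreducibility and the standing hypothesis that $\Sigma$ is not a cycle --- the same argument as in Lemma \ref{l-tree}. Second, the ``nuisance'' you flag at the end is a real one and not just bookkeeping: if ``series of simple expansions'' is read as a finite sequence of moves, the lemma is actually \emph{false} for infinite tables. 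In the full $2$-shift the tables $\{(0^n1,0^n1)\}_{n\ge0}$ and $\{(0^n1u,0^n1u)\}_{n\ge0,\,u\in\{0,1\}^n}$ represent the same bisection, but any common refinement must have depth at least $n$ below the entry $0^n1$, whereas $m$ expansion moves produce antichains of depth at most $m$; hence no finite series links them. The definition of equivalence must therefore be read as you organise it --- an entry-wise stabilising sequence, equivalently ``each entry is replaced by the finite complete antichain of its descendants appearing in the refined table'' --- and with that reading your proof is complete. Since the lemma is invoked in Proposition \ref{p-morphism-cuntz} for arbitrary elements of $\tG_\Sigma$, not only for full group elements, this caveat genuinely matters and is worth making explicit.
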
 

The product of two elements $F, T\in \tG_{\Sigma}$ can be easily computed in terms of tables. 
First, let us define a partial binary operation on the set of pairs of finite paths. If $(v, w)$ and $(v', w')$ are pairs of path with $\mathsf{t}(v)=\mathsf{t}(w)$ and $\mathsf{t}(v')=\mathsf{t}(w'),$ we set

\[(v', w')\cdot (v, w)=\left\{\begin{array}{lr} (v'u, w) & \text{ if } w'=vu \\
(v', wu) & \text{ if } v=w'u  \\
\text{undefined} & \text{ otherwise }
\end{array}\right.\]

Note that $(v', w')\cdot (v, w)$ is defined if and only if the cylinders $C_{w'}$ and $C_v$ are not disjoint (and therefore one of them is contained into the other). 
The reason for this definition is that it describes products of elements of the form $S^{-1}_{v}S_w$. Namely  we have $(S^{-1}_{v'}S_{w'})(S^{-1}_{v}S_w)=S^{-1}_{v''}S_{w''}$ if $(v', w')\cdot (v, w) =(v'', w'')$, and $(S^{-1}_{v'}S_{w'})(S^{-1}_{v}S_w)=\varnothing$ if $(v', w')\cdot (v, w)$ is not defined. 
Since product in a pseudogroup behaves distributively with respect to disjoint unions, we immediately deduce: 

\begin{lemma} \label{l-product} \label{l-product-table}
Let $F, F'\in  \tG_\Sigma$ be represented  by tables $\{(v_i, w_i)\}_{i\in I}$ and $\{(v'_j, w'_j)\}_{j\in J}$. Then $F F'$ is represented by the table  
\[\{(v_i, w_i)\cdot (v'_j, w'_j)\}_{i, j},\]
 where $(i,j)\in I\times J$ varies over pairs of indices such that $(v_i, w_i)\cdot (v'_j, w'_j)$ is defined. 
 \end{lemma}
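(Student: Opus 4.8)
\textbf{Proof plan for Lemma \ref{l-product-table}.}

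The plan is to reduce the statement to the two elementary facts that have just been recorded before the lemma, namely that for single pairs of paths one has
\[
(S^{-1}_{v'}S_{w'})(S^{-1}_{v}S_{w})=
\begin{cases}
S^{-1}_{v''}S_{w''} & \text{if }(v',w')\cdot(v,w)=(v'',w''),\\
\varnothing & \text{if }(v',w')\cdot(v,w)\text{ is undefined,}
\end{cases}
\]
together with distributivity of the product in the pseudogroup $\tG_\Sigma$ over disjoint unions (which holds in any pseudogroup, since $\tG_\Sigma$ is complete and the relevant families are compatible with disjoint domains/ranges). First I would write $F=\bigsqcup_{i\in I}S^{-1}_{v_i}S_{w_i}$ and $F'=\bigsqcup_{j\in J}S^{-1}_{v'_j}S_{w'_j}$ using the representation \eqref{e-table}, where the unions are genuine disjoint unions of bisections of $\G_\Sigma$ because the $C_{w_i}$ (resp.\ $C_{v_i}$) are pairwise disjoint by the table condition. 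Then distributivity gives
\[
FF'=\Bigl(\bigsqcup_{i\in I}S^{-1}_{v_i}S_{w_i}\Bigr)\Bigl(\bigsqcup_{j\in J}S^{-1}_{v'_j}S_{w'_j}\Bigr)=\bigsqcup_{i\in I,\,j\in J}\bigl(S^{-1}_{v_i}S_{w_i}\bigr)\bigl(S^{-1}_{v'_j}S_{w'_j}\bigr),
\]
and applying the single-pair computation term by term, the summands with $(v_i,w_i)\cdot(v'_j,w'_j)$ undefined contribute $\varnothing$ and may be discarded, while the remaining summands contribute exactly $S^{-1}_{v''_{ij}}S_{w''_{ij}}$ where $(v''_{ij},w''_{ij})=(v_i,w_i)\cdot(v'_j,w'_j)$. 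This is precisely the element represented by the table $\{(v_i,w_i)\cdot(v'_j,w'_j)\}$ indexed over the pairs for which the operation is defined.

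The one genuine verification needed is that the collection $\{(v_i,w_i)\cdot(v'_j,w'_j)\}$ is again a table, i.e.\ that the first coordinates are pairwise unrelated and the second coordinates are pairwise unrelated, and that each pair has matching terminal vertices. The terminal-vertex condition is immediate from the definition of the partial operation (when $w'_j=v_i u$ the new pair is $(v'_j u, w_i)$, and $\mathsf t(v'_j u)=\mathsf t(u)=\mathsf t(v_i u)=\mathsf t(w_i)$ since $\mathsf t(v_i)=\mathsf t(w_i)$ and $\mathsf t(v'_j)=\mathsf t(w'_j)$; the other case is symmetric). For the unrelatedness, the cleanest route is to observe that the cylinder sets identify this: $C_{(v_i,w_i)\cdot(v'_j,w'_j),\,\mathrm{dom}}=(S^{-1}_{v'_j}S_{w'_j})^{-1}(C_{v'_j}\cap C_{w_i})$ has range $C_{w_i}\cap\bigl(\text{image of }S^{-1}_{v'_j}S_{w'_j}\bigr)$, and since the $C_{w_i}$ are pairwise disjoint and, for fixed $i$, the preimages under $S^{-1}_{v_i}S_{w_i}$ of the pairwise disjoint $C_{v'_j}$ are pairwise disjoint, one gets disjointness of all the relevant domain cylinders; hence the corresponding second coordinates are unrelated. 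The same argument on ranges handles the first coordinates.

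I expect the main (very mild) obstacle to be purely bookkeeping: keeping the two cases of the partial operation $(v',w')\cdot(v,w)$ straight when checking that distinct index pairs $(i,j)\neq(i',j')$ really do give unrelated paths, since one must consider separately whether $i=i'$ or $j=j'$. In the case $i=i'$, $j\neq j'$ the unrelatedness follows from unrelatedness of $v'_j,v'_{j'}$ pulled back through the bijection $S^{-1}_{v_i}S_{w_i}$; in the case $i\neq i'$ it follows from unrelatedness of $w_i,w_{i'}$ directly, because the domain of the $(i,j)$-term is contained in $C_{w_i}$. Once these cases are laid out the verification is routine, and the lemma follows. This completes the plan. \qedhere
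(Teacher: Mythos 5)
Your proof takes exactly the route the paper does: the paper derives the lemma in one line from the single-pair identity $(S_{v'}^{-1}S_{w'})(S_v^{-1}S_w)=S_{v''}^{-1}S_{w''}$ (or $\varnothing$) together with distributivity of the pseudogroup product over disjoint unions, and your first two paragraphs are a faithful expansion of that. The verification that the resulting collection is again a table, which the paper omits entirely, is a worthwhile addition, and your cylinder-set argument for unrelatedness of the two coordinates is correct and has the advantage of not depending on the explicit case formula for the partial operation.

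One local point does not parse as written: in the terminal-vertex check you take the condition ``$w'_j=v_iu$'' with output ``$(v'_ju,w_i)$'', which is the product taken in the order $F'F$ rather than $FF'$, and the chain $\mathsf{t}(v_iu)=\mathsf{t}(w_i)$ is false in general. The underlying cause is that the paper's displayed definition of $(v',w')\cdot(v,w)$ has its two output lines attached to the wrong conditions: composing the prefix replacements (apply $S_v^{-1}S_w$ first, then $S_{v'}^{-1}S_{w'}$) gives $(v',wu)$ when $w'=vu$ and $(v'u,w)$ when $v=w'u$, as is confirmed by the explicit computation in the proof of Proposition \ref{p-morphism-Cuntz}. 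With the corrected formula the terminal-vertex condition is immediate in both cases, so this is a typo to repair rather than a gap in your argument.
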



%

Note that it follows from this discussion that  the bisections of the form $S_{\ell'}^{-1}S_\ell$ form a basis of the topology of $\G_\Sigma$. Since all bisections of this form are products of $S_e, e\in \mathcal{E}\}$ and their inverses, it follows that the set $\Sc=\{S_e\colon e\in \mathcal{E}\}$ is an expansive generating set of  $\tG_\Sigma$.

\subsection{Description of homomorphisms} The structure of group isomorphisms and group embeddings between this family of groups has been studied by many authors. The case of isomorphisms is well-understood.  For the family of Higman-Thompson groups $V_{d, r}$ this can be traced back to Higman, who showed in particular that $V_{d, 1}$ and $V_{d', 1}$ are isomorphic if and only if $d=d'$, and that if $V_{d, r}$ and $V_{d', r'}$ are isomorphic, then $d=d'$ and $\mathsf{gcd}(d, r)=\mathsf{gcd}(d', r')$. Pardo then showed the converse to this statement  in \cite{Pardo}. Matui and Matsumoto gave a complete explicit description of the irreducible graphs $\Sigma_1, \Sigma_2$ that give rise to isomorphic groupoids $\G_{\Sigma_1}, \G_{\Sigma_2}$ \cite{Mat-Mats}, and Matui \cite{Mat-SFT} used his isomorphism theorem to deduce from this the classification of the groups $\Ff(\G_{\Sigma})$ up to isomorphism.  Bleak, Cameron, Maissel, Navas and Olukoya recently used Rubin's theorem to give a description of the  automorphism group of $V$ in terms of bi-synchronising automata \cite{aut-V}. 

Considerably less is understood about endomorphisms and embeddings within this family of groups. It is  well-known and easy to see that all the groups in the family  $V_{d, r}$ can be embedded into each other,  and that they all admit  injective, non-surjective endomorphisms. Several authors have pointed out different construction of embeddings and endomorphisms \cite{Bu-Cl-St-dist, Gen-V, Don-endV}, arising from the several points of view that have been developed on these groups. In \cite[Prop. 5.14]{Mat-prod}, Matui provides a way to construct homomorphisms between groups $\Ff(\G_\Sigma)$, and uses it  to show that they can all be embedded in Thompson's group $V=V_{2,1}$ \cite[Cor. 5.16]{Mat-prod}. In fact, a modification of his idea shows that all the groups $\Ff(\G_\Sigma)$ embed into each other (see Corollary \ref{c-SFT-biembed} below).
However no classification result of the possible homomorphisms is known. 

 In the reminder of this subsection, we will revisit Matui's construction in the language of this paper. We will then apply the Extension Theorem to show that, conversely, all embeddings between groups of the form $\Ff(\G_\Sigma)$ arise from this construction (modulo taking the restriction to the derived subgroup). 

\medskip

Fix  a graph $\Sigma=(\mathcal{E}, \mathcal{V})$, as above.  Observe that the set of bisections $\{S_e\}_{e\in \mathcal{E}}$ verify the following:
\[\bigsqcup_{e\in \mathcal{E}} \src(F_e)= X_\Sigma, \quad  \rg(S_e)=\bigsqcup_{\mathsf{o}(e)=\mathsf{t}(f)} \src(S_f).\]

 Our first goal is to show that in order to define a continuous morphisms ${\tG_\Sigma}$ to another pseudogroup $\tH$, it is necessary and sufficient to find a family of elements of $\tH$ satisfying the same combinatorial relations.   To this end, we give the following terminology:
%
\begin{defin}
Let  $\tH$ be a pseudogroup over a  space $Y$.  A $\Sigma$-\textbf{system} in $\tH$  is a family $\{T_e\}_{e\in \mathcal{E}}$ of non-zero elements of $\tH$ with the following properties:  
\begin{enumerate}[label=(\roman*)]
\item the sets $\src(T_e), e\in \mathcal{E}$ are clopen and  pairwise disjoint;
\item for every $e\in \mathcal{E}$ we have $\rg(T_e)=\bigsqcup_{\mathsf{o}(e)=\mathsf{t}(f)} \src(T_f)$.
\end{enumerate}
The set $Z=\bigsqcup_{e\in \mathcal{E}} \src(S_e)$ is called the \textbf{domain} of  the $\Sigma$-system.

\end{defin}

Let $\{T_e\}_{e\in \mathcal{E}}$ be a $\Sigma$-system with domain $Z$. For every $y\in Z$, we let $e_y\in \mathcal{E}$ be the unique edge such that $y\in \src(T_e)$.
 Define a continuous map $T\colon Z\to Z$ by $T(y)=T_{e_y}(y)$.
Further define a  map $q\colon Z\to X_{\Sigma}$ by coding  the dynamical system $(T, Z)$, namely 
\[q(y)=e_ye_{T(y)} e_{T^2(y)}\cdots .\]  
It is readily checked that this map is continuous, and verifies $q\circ T=S\circ q$, where $S\colon X_{\Sigma}\to X_{\Sigma}$ is the shift. The map $q$ will be called the \textbf{coding map} associated to the $\Sigma$-system.

\begin{prop} \label{p-morphism-cuntz} \label{p-morphism-Cuntz} \label{p-Cuntz}
Fix a graph $\Sigma$ as above. Let $\tH$ be a pseudogroup over a space $Y$, and let $Z\subset Y$ be a clopen subset of $Y$. For every continuous morphism $\tG_\Sigma\to \widetilde{\H|Z}$, the collection $\{\varphi(S_e)\}_{e\in \mathcal{E}}$ is a $\Sigma$-system in $\tH$ with domain $Z$.

 Conversely let  $\{T_e\}_{e\in \mathcal{E}}$ be a  $\Sigma$-system in $\tH$, with domain $Z$.  Then the association $S_e\mapsto T_e, e\in \mathcal{E}$ extends to a unique continuous morphism of pseudogroups $\varphi \colon \tG_{\Sigma}\to \widetilde{\H|Z}$, where $Z$ is the domain of the $\Sigma$-system. Moreover, the spatial component $q\colon Z\to X_{\Sigma}$ of $\varphi$ coincides with the coding map of $\{T_e\}_{e\in \mathcal{E}}$.

\end{prop}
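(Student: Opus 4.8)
\textbf{Proof proposal for Proposition \ref{p-morphism-cuntz}.}

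The plan is to prove the two directions by using the functorial equivalence between pseudogroups and \'etale groupoids established in Theorem \ref{t-non-comm-stone}, which reduces the construction of a continuous morphism to the construction of a translation action. First I would handle the easy direction: given a continuous morphism $\varphi\colon \tG_\Sigma\to \widetilde{\H|Z}$, set $T_e=\varphi(S_e)$. Since $\varphi$ sends idempotents to idempotents, Proposition \ref{p-spatial-equivariance} identifies $\src(T_e)=q^{-1}(\src(S_e))$ and $\rg(T_e)=q^{-1}(\rg(S_e))$ for the spatial component $q$; because $\varphi$ preserves disjoint unions and the relations $\bigsqcup_e \src(S_e)=X_\Sigma$, $\rg(S_e)=\bigsqcup_{\mathsf{o}(e)=\mathsf{t}(f)}\src(S_f)$ hold in $\tG_\Sigma$, applying $\varphi$ (and using that $\varphi(X_\Sigma)$ is the unit of $\widetilde{\H|Z}$, i.e.\ $Z$) yields exactly the defining relations of a $\Sigma$-system with domain $Z$. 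The elements $T_e$ are non-zero since $S_e$ are non-zero and $\varphi$ preserves zero (one needs $Z\neq\varnothing$, which is implicit).

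For the converse, given a $\Sigma$-system $\{T_e\}$ with domain $Z$, the key point is that every compact open bisection of $\G_\Sigma$ is a finite disjoint union of elements $S_v^{-1}S_w$ (the table representation, Lemma \ref{l-table-equivalent} and Lemma \ref{l-product-table}), and these are built from the generators $S_e$ by composition, inversion, and disjoint union. So I would define $\varphi$ on a table element $F=\bigsqcup_i S_{v_i}^{-1}S_{w_i}$ by $\varphi(F)=\bigsqcup_i T_{v_i}^{-1}T_{w_i}$, where $T_w:=T_{e_k}\cdots T_{e_1}$ for a path $w=e_1\cdots e_k$. The first task is \emph{well-definedness}: I must check that equivalent tables (related by simple expansions) give the same element of $\widetilde{\H|Z}$. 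This follows because a simple expansion replaces $(v_i,w_i)$ by $\{(v_ie,w_ie)\}_{e\in\mathsf{t}(w_i)}$, and the relation $\rg(T_{w_i})=\bigsqcup_{\mathsf{o}(e)=\mathsf{t}(w_i)}\src(T_e)$ (iterated from condition (ii) of a $\Sigma$-system) gives $T_{w_i}^{-1}T_{w_i}=\src(T_{w_i})=\bigsqcup_e \src(T_{w_ie})$, hence $T_{v_i}^{-1}T_{w_i}=\bigsqcup_e T_{v_ie}^{-1}T_{w_ie}$. Then I would check $\varphi$ is a semigroup homomorphism using Lemma \ref{l-product-table}: the partial product $(v',w')\cdot(v,w)$ on pairs of paths is matched by $(T_{v'}^{-1}T_{w'})(T_v^{-1}T_w)$, using that $T_{w'}^{-1}T_{w'}$ and $T_vT_v^{-1}$ are idempotents (clopen subsets) and that $T_{wu}=T_uT_w$ when $\mathsf{o}$ of the first edge of $u$ equals $\mathsf{t}(w)$, which is exactly condition (ii) again guaranteeing the composition is defined. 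Preservation of inverses, of disjoint unions of compatible families, and of zero and the units ($\varphi(X_\Sigma)=\bigsqcup_e\src(T_e)=Z$) are then routine from the table formalism. Uniqueness is immediate since the $S_e$ generate $\tG_\Sigma$ as a pseudogroup and a continuous morphism is determined by its values on a generating set (being compatible with compositions, inverses, and unions).

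Finally, to identify the spatial component $q$ of $\varphi$ with the coding map, I would use the characterisation $\varphi(U)=q^{-1}(U)$ for clopen $U\subset X_\Sigma$. Taking $U=C_w$ the cylinder of a finite path $w$, one has $C_w=S_w^{-1}S_w$ as an idempotent, so $q^{-1}(C_w)=\varphi(S_w^{-1}S_w)=T_w^{-1}T_w=\src(T_w)$; unwinding, $\src(T_{e_1\cdots e_k})=\{y\in Z: y\in\src(T_{e_1}),\ T(y)\in\src(T_{e_2}),\dots\}$, which is precisely $\{y: e_y=e_1, e_{T(y)}=e_2,\dots\}$, i.e.\ the set of $y$ whose coding begins with $w$. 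Since cylinders generate the topology of $X_\Sigma$ and $X_\Sigma$ is Hausdorff, the uniqueness clause in Lemma \ref{l-stone} forces $q$ to equal the coding map. I expect the main obstacle to be the bookkeeping in the well-definedness check under table equivalence and the verification that $\varphi$ respects products in all the cases of the partial operation $(v',w')\cdot(v,w)$; these are where condition (ii) of a $\Sigma$-system gets used in an essential way, and one must be careful that the ``undefined'' case corresponds exactly to $\src(T_{w'})\cap\rg(T_v)=\varnothing$ giving the zero element.
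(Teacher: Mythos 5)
Your proposal is correct and follows essentially the same route as the paper: define $\varphi$ on table representatives by $\bigsqcup_i S_{v_i}^{-1}S_{w_i}\mapsto\bigsqcup_i T_{v_i}^{-1}T_{w_i}$, check well-definedness via simple expansions using condition (ii) of a $\Sigma$-system, verify the semigroup property through the partial product on pairs of paths, and identify the spatial component with the coding map via $q^{-1}(C_w)=\src(T_w)$. (Your opening announcement that you would route the argument through translation actions and Theorem \ref{t-non-comm-stone} is not what you actually do, nor what the paper does, but the direct table argument you then carry out is the right one.)
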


\begin{remark}
The reader familiar with Cuntz-Krieger $C^*$-algebras may  notice a tight connection. Namely a family $\{T_e\}_{e\in\mathcal{E}}$ is a $\Sigma$-system if and only if the indicator functions $1_{T_e}$ satisfy the Cuntz-Krieger relations in $C^*_{red}(\H)$. Hence a $\Sigma$-system gives rise to a unital embedding of the Cuntz-Krieger $C^*$-algebra $C^*_{red}(\G_{\Sigma})$ into $C^{*}_{red}(\H|Z)$, which is turn gives rise  to an embedding of $\Ff(\G_{\Sigma})$ into $\Ff(\H|Z)\le \Ff(\H)$. This argument appears implicitly in Matui's proof of \cite[Prop. 5.14]{Mat-prod}.  Below we give instead a direct argument (not using $C^*$-algebras). 
 \end{remark}

\begin{proof}
The first part of the statement is a straightforward consequence of Proposition \ref{p-spatial-equivariant}. Let us prove the converse.

 For every path $w=e_1\cdots e_k$ in $\Sigma$, set $T_w=T_{e_k}\cdots T_{e_1}$. It is clear that if $\varphi$ exists, it must be unique. Namely for every $F\in \tG_\Sigma$,  applying $\varphi$ to  \eqref{e-table} we obtain
 \begin{equation} \varphi(F)=\bigsqcup_{i\in I}T_{v_i}^{-1}T_{w_i}, \label{e-phi-unique} \end{equation}
where $\{(v_i, w_i)\}_{i\in I}$ is a table representing $F$. Thus, we want to show that the above formula defines indeed a continuous morphism.

  Let $q\colon U\to X_{\Sigma}$ be the coding map of the $\Sigma$-system. The following lemma is obvious from the definition of the map $q$. 
  \begin{lemma} \label{l-preimage}
  For every path $w$ we have $\src(T_w)=q^{-1}(C_w)$, and $\rg(T_w)=q^{-1}(D_{\mathsf{t}(w)})$. 

  \end{lemma}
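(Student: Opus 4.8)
The plan is to prove both identities simultaneously by induction on the length $k$ of the path $w=e_1\cdots e_k$, taking as the two key inputs the disjointness of the sets $\src(T_e)$ and the intertwining relation $q\circ T=S\circ q$ between the coding dynamics $T\colon Z\to Z$ and the shift $S\colon X_\Sigma\to X_\Sigma$. The heart of the matter is the base case $k=1$ for the source identity, which is essentially a tautology obtained by unwinding the definition of $q$.

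First I would treat the source identity $\src(T_w)=q^{-1}(C_w)$. For a single edge $e$, since the sets $\src(T_f)$, $f\in\mathcal{E}$, are pairwise disjoint and cover $Z$, a point $y\in Z$ lies in $\src(T_e)$ if and only if $e_y=e$, which by the very definition $q(y)=e_ye_{T(y)}e_{T^2(y)}\cdots$ means exactly that the first edge of $q(y)$ is $e$, i.e. $q(y)\in C_e$; this gives $\src(T_e)=q^{-1}(C_e)$. For the inductive step I unwind the composition $T_w=T_{e_k}\cdots T_{e_1}$: a point $y$ lies in $\src(T_w)$ precisely when $y\in\src(T_{e_1})$, then $T_{e_1}(y)\in\src(T_{e_2})$, and so on up to $T_{e_{k-1}}\cdots T_{e_1}(y)\in\src(T_{e_k})$. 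Using disjointness again, $y\in\src(T_{e_1})$ forces $e_y=e_1$ and hence $T_{e_1}(y)=T(y)$, so by iterating the base case these conditions translate into $e_y=e_1,\ e_{T(y)}=e_2,\ \dots,\ e_{T^{k-1}(y)}=e_k$, which is exactly the statement that the first $k$ edges of $q(y)$ spell out $w$, i.e. $q(y)\in C_w$. This yields $\src(T_w)=q^{-1}(C_w)$.

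For the range identity $\rg(T_w)=q^{-1}(D_{\mathsf{t}(w)})$ I would first observe that the range of the composition $T_w=T_{e_k}\cdots T_{e_1}$ equals the full range $\rg(T_{e_k})$ of its last factor: the intermediate map $T_{e_{k-1}}\cdots T_{e_1}$ carries $\src(T_w)$ onto $\src(T_{e_k})$, which mirrors the computation $S_w(C_w)=S_{e_k}(C_{e_k})=D_{\mathsf{t}(w)}$ valid in $\G_\Sigma$ itself. Combining this with the defining relation of a $\Sigma$-system, which by construction decomposes $\rg(T_{e_k})$ as a disjoint union of sets $\src(T_f)$ following the same combinatorial pattern as the first-edge decomposition of $\rg(S_{e_k})=D_{\mathsf{t}(e_k)}$, and with the source identity already proven, I get $\rg(T_w)=\bigsqcup_f\src(T_f)=\bigsqcup_f q^{-1}(C_f)=q^{-1}\bigl(\bigsqcup_f C_f\bigr)$. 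The final step is the purely combinatorial identity $D_{\mathsf{t}(w)}=\bigsqcup_f C_f$, expressing the clopen set of infinite paths issuing from the terminal vertex $\mathsf{t}(e_k)=\mathsf{t}(w)$ as the disjoint union of the cylinders prescribed by their first edge; this gives $\rg(T_w)=q^{-1}(D_{\mathsf{t}(w)})$.

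The only genuine subtlety, rather than a real obstacle, lies in the range identity: one must check that no domain restriction shrinks $\rg(T_w)$ below $\rg(T_{e_k})$, i.e. that $T_{e_{k-1}}\cdots T_{e_1}$ maps $\src(T_w)$ \emph{onto} $\src(T_{e_k})$. This surjectivity is guaranteed by the source identity together with the intertwining relation $q\circ T=S\circ q$, which ensures the orbit coding stays consistent at every step, so the verification is bookkeeping rather than an essential difficulty. Everything else reduces to carefully unwinding the definition of $q$ and invoking the $\Sigma$-system axioms, confirming that the statement is indeed immediate from the construction of the coding map.
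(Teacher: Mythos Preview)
Your proof is correct and is precisely the direct verification from the definition of $q$ that the paper has in mind; the paper does not give a proof of this lemma and simply declares it ``obvious from the definition of the map $q$''. Your inductive unpacking, including the surjectivity check for the range identity, is exactly the routine bookkeeping one does to make this explicit.
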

  
 The lemma implies that whenever $v$ and $w$ are paths with $\mathsf{t}(v)=\mathsf{t}(w)$, we have $\src(T_v^{-1}T_w)=q^{-1}(C_w)$ and $\rg(T_v^{-1}T_w)=q^{-1}(C_v)$. Thus, if $\{(v_i, w_i)\}_{i\in I}$ is a table, then for every $i\neq j$ we have $\src(T_{v_i^{-1}}T_{w_i})\cap \src(T_{v_j^{-1}}T_{w_j})=q^{-1}(C_{w_i}\cap C_{w_j})=\varnothing$ and similarly $\rg(T_{v_i^{-1}}T_{w_i})\cap \rg(T_{v_j^{-1}}T_{w_j})=\varnothing$. It follows that the right hand side of \eqref{e-phi-unique} is indeed an element of $\tH$. 
 Let us check that it is well-defined, i.e. that it does not depend on the choice of a table representing $F$. Note that  for any path $w=w_1w_2$ we have $T_w=T_{w_2} T_{w_1}$. Thus for any paths $w, v$ with $\mathsf{t}(w)=\mathsf{t}(v)$ we have
 \begin{multline*}\bigsqcup_{e\in \mathsf{t}(v)} T^{-1}_{ve}T_{we}=\bigsqcup_{e\in \mathsf{t}(v)} T_v^{-1} T_e^{-1}T_e T_w=T_{v^{-1}}\left( \bigsqcup_{e\in \mathsf{t}(v)} \src(T_e)\right) T_w=\\T_{v^{-1}}\left( \bigsqcup_{e\in \mathsf{t}(v)} q^{-1}(C_e)\right) T_w=T_{v^{-1}}q^{-1}(D_{\src(e)}) T_w=T_{v^{-1}} \rg(T_w)T_w=T_{v}^{-1}T_w.\end{multline*}
 
 Therefore if a table $\{(v'_i, w'_i)\}_{i\in I}$ is obtained from $\{(v_i, w_i)\}_{i\in I}$ by a simple expansion, then $\bigsqcup_{i\in I}T_{v_i}^{-1}T_{w_i}=\bigsqcup_{i\in I}T_{v'_i}^{-1}T_{w'_i}$. It follows from Lemma \ref{l-table-equivalent} that $\varphi(F)$ does not depend on the choice of a table representing $F$. 
 
 Let us check that $\varphi$ is a semigroup homomorphism. We use the description of the product in $\tG_\Sigma$ in terms of tables given by Lemma \ref{l-product-table}. First let $(v, w)$ and $(v', w')$ be pairs of paths such that $\mathsf{t}(v)=\mathsf{t}(w)$ and  $\mathsf{t}(v')=\mathsf{t}(w')$. Assume that $(v', w')\cdot (v, w)$ is defined and equals $(v'', w'')$, and let us show that $(T_{v'}^{-1}T_{w'})(T_{v}^{-1}T_{w})=(T_{v''}^{-1}T_{w''})$. Assume, say, that $w'=vu$, so that $(v'', w'')=(v, wu)$ (the other case is done similarly). Then, using  Lemma \ref{l-preimage},  we have 
 \begin{multline*}(T_{v'}^{-1}T_{w'})(T_{v}^{-1}T_{w})=T_{v'}^{-1}T_{vu}T_{v}^{-1}T_{w}=T_{v'}^{-1}T_u(T_{v}T_{v}^{-1})T_{w}=
 T_{v'}^{-1}T_u \rg(T_v)T_{w}=\\ T_{v'}^{-1}T_u q^{-1}(D_{\mathsf{t}(v)})T_{w}= T_{v'}^{-1}T_u \rg(T_w)T_{w}=T_{v'}^{-1}T_u T_{w}=T_{v'}^{-1}T_{wu}=T_{v''}^{-1}T_{w''}.
 \end{multline*}
 If $(v',w')\cdot (v, w)$ is not defined then the cylinders $C_{w'}$ and $C_{v}$ are disjoint, and therefore so are $\src(T_{v'}^{-1}T_{w'})=q^{-1}(C_{w'})$ and $\rg(T_v^{-1}T_w)=q^{-1}(C_v)$. Thus, in this case we have $(T_{v'}^{-1}T_{w'})(T_{v}^{-1}T_{w})=\varnothing$. We deduce that $\varphi$ is a semi-group homomorphism, by Lemma \ref{l-product-table}. 
 
It is clear that $\varphi$ preserves unions of compatible families, and that it sends $\varnothing$ to $\varnothing$. Moreover, it sends $X_\Sigma$ to $U$, since $X_\Sigma =\bigsqcup_{e\in \mathcal{E}} S_e^{-1} S_e\mapsto \bigsqcup_{e\in \mathcal{E}} T_e^{-1}T_e=\bigsqcup_{e\in \mathcal{E}} \src(T_e)= U$. Thus $\varphi$ is a continuous morphisms of pseudogroups. 

The fact that the map $q$ coincides with the spatial component of $\varphi$ is clear from Lemma \ref{l-preimage}. \qedhere

\end{proof}
\begin{cor}
If $\tH$ contains a $\Sigma$-system, then there is a group embedding $\Ff(\G_\Sigma)\to \Ff(\H)$. 

\end{cor}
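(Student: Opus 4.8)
The plan is to combine Proposition \ref{p-morphism-Cuntz} with Corollary \ref{c-continuous-morphism-full}. Suppose $\tH$ is a pseudogroup over a space $Y$ and $\{T_e\}_{e\in \mathcal{E}}$ is a $\Sigma$-system in $\tH$, with domain $Z=\bigsqcup_{e\in \mathcal{E}} \src(T_e)$. By hypothesis the sets $\src(T_e)$ are clopen and pairwise disjoint, and their union $Z$ is therefore a clopen subset of $Y$; hence the restriction $\H|Z$ is well-defined and $\widetilde{\H|Z}$ naturally sits inside $\tH$ (an element supported in $Z$ being identified with the corresponding element of $\tH$). By Proposition \ref{p-morphism-Cuntz}, the assignment $S_e\mapsto T_e$ extends to a continuous morphism of pseudogroups $\varphi\colon \tG_\Sigma \to \widetilde{\H|Z}$. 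Its spatial component is the coding map $q\colon Z\to X_\Sigma$.

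First I would invoke Corollary \ref{c-continuous-morphism-full}: the restriction of $\varphi$ to the full group $\Ff(\G_\Sigma)\subset \tG_\Sigma$ is a group homomorphism taking values in $\Ff(\H|Z)\le \Ff(\H)$. So it only remains to check that this homomorphism is injective. This is where the bulk of the (small) argument lies. One clean way is to note that $\G_\Sigma$ is minimal and effective (the graph $\Sigma$ being irreducible and not a cycle, every orbit is dense, and $\tG_\Sigma$ acts faithfully), so by Theorem \ref{t-simple} the alternating full group $\Af(\G_\Sigma)$ is simple and is contained in every non-trivial normal subgroup of $\Ff(\G_\Sigma)$. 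Thus it suffices to show that $\varphi|_{\Ff(\G_\Sigma)}$ is non-trivial, equivalently that $\varphi|_{\Af(\G_\Sigma)}$ is non-trivial, to conclude that its kernel, being a proper normal subgroup of $\Ff(\G_\Sigma)$ not containing $\Af(\G_\Sigma)$, must be trivial — but wait, this only gives that the kernel does not contain $\Af(\G_\Sigma)$; since $\Af(\G_\Sigma)$ lies in every non-trivial normal subgroup, a non-trivial kernel would contain it, a contradiction, so the kernel is trivial.

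Hence the one genuine point to verify is that $\varphi$ does not kill all of $\Ff(\G_\Sigma)$. For this I would exhibit an explicit non-identity element of $\Ff(\G_\Sigma)$ whose image is non-identity: take two distinct edges $e,f\in \mathcal{E}$ with $\mathsf{t}(e)=\mathsf{t}(f)$ (these exist since $\Sigma$ is irreducible and not a cycle, so some vertex has at least two incoming edges, or one can pass to longer paths $w,v$ with $C_w, C_v$ disjoint and $\mathsf{t}(w)=\mathsf{t}(v)$), together with a third disjoint cylinder, and form the $3$-cycle $g$ permuting the corresponding clopen sets; its image under $\varphi$ permutes the pairwise disjoint non-empty clopen sets $\src(T_w),\src(T_v),\ldots$ of $Z$ according to the same cycle, hence is non-trivial since $\src(T_w)\neq \varnothing$. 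Alternatively, and more simply, one observes that the coding map $q\colon Z\to X_\Sigma$ is surjective: this follows from Lemma \ref{l-preimage}, since for every finite path $w$ the set $q^{-1}(C_w)=\src(T_w)$ is non-empty (as each $T_e$ is non-zero and the $\Sigma$-system relations force $\src(T_w)\neq\varnothing$ for all $w$), so $q$ has dense image in the compact space $X_\Sigma$ and, being a continuous map from a compact space, has closed image, hence $q$ is onto; then by Proposition \ref{p-inj-surj} (applied with $\tG_\Sigma$ effective) $\varphi$ is injective as a morphism of pseudogroups, and in particular $\varphi|_{\Ff(\G_\Sigma)}$ is an injective group homomorphism. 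I expect the surjectivity of $q$ to be the main (and only mildly delicate) obstacle; everything else is a direct application of the results already established in \S\ref{s-SFT} and \S\ref{s-morphisms}.
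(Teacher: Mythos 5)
Your argument is correct and is exactly the derivation the paper intends: Proposition \ref{p-morphism-Cuntz} produces the continuous morphism, Corollary \ref{c-continuous-morphism-full} restricts it to a group homomorphism into $\Ff(\H|Z)\le\Ff(\H)$, and injectivity follows either from Theorem \ref{t-simple} together with the non-triviality of the image of a $3$-cycle (using that the $\Sigma$-system relations force $\src(T_w)\neq\varnothing$ for every finite path $w$), or from surjectivity of the coding map combined with Proposition \ref{p-inj-surj}. The only point to watch is that your second route upgrades dense image of $q$ to surjectivity via compactness of $Z$, which holds in all of the paper's applications but is not needed for your first, simplicity-based route.
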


We now want to apply our Extension Theorem to show that all embeddings between groups in this family arise as described. 

To do so, we use the following fact that we have  learned  from V. Nekrashevych.

\begin{lemma}\label{l-tree}
For every $x\in X_\Sigma$ the Cayley graph $\cay_x(\G_\Sigma, \{ S_e\colon e\in \mathcal{E}\})$ is a tree. \end{lemma}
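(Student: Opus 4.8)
The statement to prove is Lemma \ref{l-tree}: for every $x\in X_\Sigma$, the Cayley graph $\cay_x(\G_\Sigma, \{S_e : e\in\mathcal E\})$ is a tree. Recall that the vertex set of this graph is the leaf $(\G_\Sigma)_x$, and there is an oriented edge labelled $S_e$ from $\gamma_1$ to $\gamma_2$ precisely when $\gamma_2 = [S_e]_{\rg(\gamma_1)}\gamma_1$; in particular an $S_e$-edge exists out of $\gamma_1$ iff $\rg(\gamma_1)\in\src(S_e)=C_e$, and then $\rg(\gamma_2)=\mathsf t(e)\in\mathcal V$ is the vertex through which the new path passes. So the plan is to show this graph has no cycles and is connected.

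First I would set up a convenient description of the leaf. Every element $\gamma\in(\G_\Sigma)_x$ is a germ of some $S_v^{-1}S_w$ at a point of $X_\Sigma$; since $\src(\gamma)=x$, such a germ is of the form $[S_v^{-1}S_w]_x$ where $w$ is a prefix of $x$ (say $x = w\,x'$) and $v$ is a finite path with $\mathsf t(v)=\mathsf t(w)$, so that $\rg(\gamma) = v\,x'$. Two such data $(v,w)$ and $(v',w')$ give the same germ iff they agree after a common simple expansion (this is the germ-level version of Lemma \ref{l-table-equivalent}), which amounts to: $w,w'$ are both prefixes of $x$, one is a prefix of the other — say $w' = w u$ — and $v' = v u$. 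Thus a germ is canonically recorded by a pair $(v,w)$ with $w$ a prefix of $x$, modulo the tail-cancellation $(vu,wu)\sim(v,w)$; equivalently, writing $x=wx'$, the germ is determined by the infinite path $\rg(\gamma)=vx'$ together with the information of how far back along $\rg(\gamma)$ the "replacement part" $v$ extends, which is exactly $\mathrm{length}(v)-\mathrm{length}(w)\in\Z$ (a well-defined integer on the equivalence class). Following $S_e$ from $\gamma$ increases $\rg(\gamma)$ to $S_e^{-1}(\rg(\gamma)) = e\cdot\rg(\gamma)$... wait, more precisely it prepends $e$, so it replaces the infinite path $y=\rg(\gamma)$ by $ey$ and increases the integer $\mathrm{length}(v)-\mathrm{length}(w)$ by $1$; so $S_e$-edges only go "up" in this integer, and the label $e$ of an outgoing edge is forced to be the first edge of $ey$, i.e. there is at most one outgoing $S_e$-edge for each $e$ and the set of outgoing edges is in bijection with $\{e : \mathsf t(e) = \mathsf o(\text{first edge of }y)\}$... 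I would state this cleanly: each vertex has exactly $|\{e\in\mathcal E : \mathsf t(e)=\mathsf o(\text{first edge of }\rg(\gamma))\}|$ outgoing edges (all with distinct labels) and exactly one incoming edge (obtained by the inverse operation of dropping the first edge of $\rg(\gamma)$ and decreasing the integer), except: a vertex $\gamma$ with $\mathrm{length}(v)=0$, i.e. $\rg(\gamma)=x'$ is a tail of $x$ with no replacement prefix — hmm, actually there is no distinguished root, so let me instead argue directly.

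The cleaner route: define on vertices the function $N(\gamma) = \mathrm{length}(v)-\mathrm{length}(w)$ (well-defined on the equivalence class, as noted). An $S_e$-edge increases $N$ by exactly $1$, and the incoming-edge operation decreases $N$ by exactly $1$; moreover, given $\gamma$, there is exactly one predecessor $\gamma_0$ with $N(\gamma_0)=N(\gamma)-1$ and $[S_e]_{\rg(\gamma_0)}\gamma_0=\gamma$ for the appropriate $e$ — namely drop the first edge $f$ of $\rg(\gamma)$, which must equal the label of that unique incoming edge, and decrement $N$ — unless $\gamma$ is a "root" in the sense that $v$ is empty, in which case hmm, $v$ empty forces $N(\gamma) = -\mathrm{length}(w)$ which can be any non-positive integer, and then there's no predecessor. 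Rather than chase cases, I will prove acyclicity and connectedness as follows. \textbf{Acyclicity:} in any closed edge-path in the underlying undirected graph, consider a vertex $\gamma$ where $N$ attains its maximum along the path; both edges of the path at $\gamma$ must be incoming (oriented toward $\gamma$), because any outgoing edge would reach an $N$-larger vertex; but each vertex has at most one incoming edge, so the two edges coincide, and a reduced closed path cannot traverse the same edge twice in succession — this forces the closed path to be trivial, so there are no reduced cycles, i.e. the graph is a forest. \textbf{Connectedness:} start from any $\gamma$ with $\rg(\gamma)=vx'$, $x=wx'$; repeatedly apply the inverse-of-$S_e$ operation (drop the first edge of the current range path, decrement $N$) — this is always possible as long as $v$ is nonempty — to reach in $\mathrm{length}(v)$ steps the vertex with $v$ empty, namely the germ $[S_w^{-1}S_w]_x = $ wait that's just... the germ represented by $(\emptyset$-... let me recompute: after dropping all of $v$ we reach $(v'',w'')$ with $v''$ empty and $w'' = $ the original $w$ with $\mathrm{length}(v)$ edges of $x'$ appended? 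No — dropping the first edge of $\rg(\gamma)=vx'$ when $|v|=k$ and $k\ge 1$ gives $(v', w')$ with $\rg = v'x'$ where $v'$ is $v$ with first edge removed and... this requires also that $w$ gets... hmm. I will handle the bookkeeping carefully in the write-up: the key invariant is that the pair $(v,w)$ with $w\preceq x$ evolves under "drop first range edge" to another such pair with $|v|$ decreased by $1$ when $|v|\ge 1$, and when $|v|=0$ the range path is a tail of $x$ and all such germs $[S_x^{(n)}]$ — the germs of identity-type maps restricted near $x$ — are connected to each other by the $S_e$-edges along $x$ itself. So any two vertices connect down to this "spine" $\{[\,\mathrm{id}_{C_{x_{\le n}}}]_x : n\ge 0\}\cup\ldots$ indexed by how much of $x$ you've consumed, which is itself a bi-infinite (or one-ended) path; hence the graph is connected. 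Combining, it is a connected forest, i.e. a tree.

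The main obstacle I anticipate is purely bookkeeping: pinning down the correct canonical representative $(v,w)$ of a germ in the leaf $(\G_\Sigma)_x$ and verifying that $N(\gamma)=|v|-|w|$ is well-defined on equivalence classes and changes by exactly $\pm1$ along edges — once that is nailed, the "maximum of $N$ along a cycle" argument for acyclicity is immediate, and connectedness follows by descending along the unique incoming edges. There is one degenerate case to dispatch, namely that $\Sigma$ is assumed irreducible and not a single cycle, which guarantees each vertex has the expected outgoing edges and the leaf is genuinely an infinite tree rather than collapsing; I'll invoke that hypothesis where needed (it ensures $\src(S_e)=C_e$ is nonempty for every $e$ and that $\rg(S_e)$ is a nonempty union of cylinders, so the $\Sigma$-system structure is nondegenerate). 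I would also remark that this gives, via Proposition \ref{p-compact-generation-stable}, that all leaves of $\G_\Sigma$ are quasi-isometric to trees and in particular have asymptotic dimension $1$, which is the form in which the lemma gets used (together with Theorem \ref{t-asdim}) in the sequel.
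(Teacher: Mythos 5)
Your route is genuinely different from the paper's. The paper argues directly with germs: a homotopically non-trivial cycle is labelled by a reduced word $S_{e_n}^{\varepsilon_n}\cdots S_{e_1}^{\varepsilon_1}$ with trivial germ at $x$, hence equal to the identity on a neighbourhood $U$; picking $y\in U$ not eventually periodic and tracking $y_i=S_{e_i}^{\varepsilon_i}\cdots S_{e_1}^{\varepsilon_1}(y)$, the absence of backtracking forces the word to be of the form $S_v^{-1}S_w$ (all removals before all prepends), so $y_n\neq y$ unless $y$ is eventually periodic --- contradiction. Your plan instead builds an explicit model of the leaf $(\G_\Sigma)_x$ as pairs $(v,w)$ with $w$ a prefix of $x$, modulo $(vu,wu)\sim(v,w)$, and runs a height-function argument with $N(\gamma)=|v|-|w|$. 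That model and the well-definedness of $N$ are correct, the extremum-of-$N$ argument is the right skeleton, and your closing remark about connectedness is fine (it is in any case automatic, since $\{S_e\}$ generates the groupoid; the content of the lemma is acyclicity). Each approach has merit: the paper's is shorter and needs no description of the leaf; yours gives a complete combinatorial picture of the Cayley graph (one direction of edges has degree exactly one, so the graph is a union of rays merging into a spine along $x$), which is more information than the lemma asks for.

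There is, however, a concrete error in your bookkeeping that you half-noticed and then resolved the wrong way. The generator $S_e$ is the shift restricted to $C_e$, so the edge $\gamma_1\to\gamma_2=[S_e]_{\rg(\gamma_1)}\gamma_1$ \emph{deletes} the leading edge of $\rg(\gamma_1)$ (it does not prepend $e$), and hence \emph{decreases} $N$ by $1$. Consequently every vertex has exactly \emph{one outgoing} edge (labelled by the first edge of $\rg(\gamma)$, since the sets $C_e$ partition $X_\Sigma$), while the number of \emph{incoming} edges equals the number of $e\in\mathcal E$ with $\mathsf t(e)=\mathsf o\bigl(\rg(\gamma)\bigr)$, which is typically greater than one because $\Sigma$ is irreducible and not a cycle. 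So your stated key fact ``each vertex has at most one incoming edge'' is false, and the max-of-$N$ argument as you pair it (maximum plus uniqueness of the incoming edge) does not close. The fix is just to swap the roles: at a vertex where $N$ is \emph{maximal} along a reduced closed path, both incident path-edges must lead to vertices with $N-1$, hence both are the unique \emph{outgoing} edge, hence they coincide and the path backtracks. With that correction (and the corresponding adjustment in the connectedness step, where one descends by following the unique outgoing edge onto the spine $\{[S_u]_x: u \text{ a prefix of } x\}$), your proof is complete.
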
 
\begin{proof}
Set $\Sc=\{ S_e\colon e\in \mathcal{E}\}$. Let $x\in X_\Sigma$ and assume that $x$ belongs to a cycle in $\cay_x(\G_\Sigma, \Sc)$ which is not homotopically trivial. Let $S_{e_1}^{\varepsilon_1},\ldots, S_{e_n}^{\varepsilon_n}$ be the word  read on the labels of edges of the cycle, where $\epsilon_i\in \{-1, 1\}$. We can assume that the cycle contains no backtracking, that is $e_{i}=e_{i+1}\Rightarrow \varepsilon_i=\varepsilon_{i+1}$. Then fact that these elements label a cycle in  $\cay_x(\G_\Sigma, \Sc)$ means that the partial homeomorphism $S_{e_n}^{\varepsilon_n}\cdots S_{e_1}^{\varepsilon_1}$ has trivial germ at $x$. Thus, it coincides with the identity on some  neighbourhood $U$ of $x$ contained in  $\src(S_{e_n}^{\varepsilon_n}\cdots S_{e_1}^{\varepsilon_1})$. Let  $y\in U$ be a point which is not an eventually periodic sequence (note that such sequences are dense). Let $y_0=y$, and $y_{i}=S_{e_i}^{\varepsilon_i}\cdots S_{e_1}^{\varepsilon_1}(y)$. Note that $y_{i+1}$ is obtained from $y_i$ by either appending or removing $e_{i+1}$, depending on the value of $\varepsilon_i$. Since there is no cancellation in this process and $y$ is not eventually periodic, we deduce that $y_n\neq y$. This contradicts the fact that $S_{e_n}^{\varepsilon_n}\cdots S_{e_1}^{\varepsilon_1}$ is the identity on $U$. \qedhere\end{proof}

In particular, the Cayley graphs of $\G_\Sigma$ have finite asymptotic dimension, equal to one. Therefore Theorem \ref{t-asdim} can be applied, an yields the following. 

\begin{thm} \label{t-SFT}
Let $\G_1$ be any minimal effective \'etale groupoid over a Cantor space $X_1$, and let $\G_{\Sigma_2}$ be a groupoid associated to irreducible one sided shifts of finite type. Then for every non-trivial group homomorphism $\rho \colon \Af(\G_1)\to \Ff(\G_{\Sigma_2})$, the support of $\rho(\Af(\G_1))$ is a clopen subset $U\subset X_{\Sigma_2}$ and $\rho$ extends to a continuous morphism of pseudogroups $\widetilde{\rho}\colon \tG_1\to \tG_{\Sigma_2}|U$.

\end{thm}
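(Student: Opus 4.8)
The statement is essentially the specialization of Theorem~\ref{t-asdim} to the case $\G=\G_1$ and $\H=\G_{\Sigma_2}$, so the plan is to verify that the hypotheses of that theorem are met and then show that the resulting exponent $r$ in the conclusion can be taken to be $1$ (so that no symmetric power is needed). First I would invoke Lemma~\ref{l-tree}: for every $x\in X_{\Sigma_2}$ the Cayley graph $\cay_x(\G_{\Sigma_2}, \Sc)$ with $\Sc=\{S_e\colon e\in\mathcal{E}\}$ is a tree, and an infinite tree of bounded degree has asymptotic dimension $1$. Since $\G_{\Sigma_2}$ is compactly generated (the finite set $\Sc$ is an expansive generating set, as noted after Lemma~\ref{l-product-table}), every leaf $(\G_{\Sigma_2})_y$ is quasi-isometric to such a Cayley graph and hence has $\asdim\le 1$. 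Thus $d:=\sup_{y}\asdim((\G_{\Sigma_2})_y)\le 1$, and Theorem~\ref{t-asdim} applies: the support $U$ of $\rho(\Af(\G_1))$ is clopen in $X_{\Sigma_2}$, and $\rho$ extends uniquely to a continuous morphism $\widetilde{\rho}\colon \widetilde{\G_1^{[r]}}\to \widetilde{\G_{\Sigma_2}|U}$ for some $r\le d\le 1$, i.e.\ for $r=1$.

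Once $r=1$ is established the proof is complete: the extension is a continuous morphism $\widetilde{\rho}\colon \tG_1\to \widetilde{\G_{\Sigma_2}|U}=\tG_{\Sigma_2}|U$, which is exactly the asserted conclusion. The only point requiring a small amount of care is the claim $\asdim(\text{bounded-degree infinite tree})=1$; I would cite this as standard (it follows for instance from the fact that a tree admits, for every $R$, a cover by two families of $R$-separated uniformly bounded subsets obtained from the combinatorial structure — alternatively one can note a tree coarsely embeds in $\R$-trees and invoke $\asdim(\text{tree})\le 1$ from \cite{Roe-coarse}). No genuine obstacle arises here; the content has already been absorbed into the general machinery of Theorem~\ref{t-asdim}, whose proof in turn rests on the Extension Theorem~\ref{t-main} together with Proposition~\ref{p-asdim-alt} (the group $\Af(\G_1)$ has infinite asymptotic dimension, ruling out case~\ref{i-free}) and Proposition~\ref{p-sisto}.

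\begin{proof}
By Lemma~\ref{l-tree}, for the expansive generating set $\Sc=\{S_e\colon e\in \mathcal{E}\}$ of $\tG_{\Sigma_2}$ and every $x\in X_{\Sigma_2}$, the Cayley graph $\cay_x(\G_{\Sigma_2}, \Sc)$ is a tree of bounded degree, and therefore has asymptotic dimension at most $1$ (see \cite[\S 9.1]{Roe-coarse}). Since $\G_{\Sigma_2}$ is compactly generated, every leaf $(\G_{\Sigma_2})_y$ is quasi-isometric to $\cay_y(\G_{\Sigma_2},\Sc)$ (see \S\ref{s-leaves}), hence $\asdim((\G_{\Sigma_2})_y)\le 1$ for every $y\in X_{\Sigma_2}$. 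Thus $d:=\sup_{y\in X_{\Sigma_2}}\asdim((\G_{\Sigma_2})_y)\le 1<\infty$. Since $\G_1$ is a compactly generated effective minimal \'etale groupoid over a Cantor space, Theorem~\ref{t-asdim} applies to the non-trivial homomorphism $\rho\colon \Af(\G_1)\to \Ff(\G_{\Sigma_2})$: the support $U$ of $\rho(\Af(\G_1))$ is a clopen subset of $X_{\Sigma_2}$, and there exists $r\le d\le 1$, hence $r=1$, such that $\rho$ uniquely extends to a continuous morphism $\widetilde{\rho}\colon \widetilde{\G_1^{[1]}}\to \widetilde{\G_{\Sigma_2}|U}$. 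Since $\G_1^{[1]}=\G_1$, this is the desired extension $\widetilde{\rho}\colon \tG_1\to \tG_{\Sigma_2}|U$.
\end{proof}
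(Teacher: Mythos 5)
Your proof is correct and coincides with the paper's own derivation: the paper likewise observes, via Lemma \ref{l-tree}, that the Cayley graphs of $\G_{\Sigma_2}$ are trees and hence of asymptotic dimension one, and then applies Theorem \ref{t-asdim} with $d=1$ to force $r=1$ so that no symmetric power is needed. The only caveat (shared with the paper's own one-line derivation) is that Theorem \ref{t-asdim} assumes $\G_1$ compactly generated, whereas the statement of Theorem \ref{t-SFT} allows an arbitrary minimal effective groupoid; your phrase ``Since $\G_1$ is a compactly generated effective minimal \'etale groupoid'' silently adds this hypothesis, which is needed in particular for Proposition \ref{p-asdim-alt} to rule out the injective-germ case.
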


Thus, when $\G_1=\G_{\Sigma_1}$ is also associated to a one-sided SFT,  we finally arrive at the following explicit classification for homomorphisms.
\begin{thm} \label{t-SFT-explicit}
Let $\tG_{\Sigma_1}$ and $\tG_{\Sigma_2}$ be groupoids associated to irreducible one-sided-shifts of finite type. 
Then non-trivial group homomorphisms $\rho\colon \Df(\G_{\Sigma_1})\to \Ff(\G_{\Sigma_2})$ are in natural one-to-one correspondence with  $\Sigma_1$-systems in $\tG_{\Sigma_2}$. 

This correspondence is explicitly described as follows: a $\Sigma_1$-system $\{T_e\}_{e\in \mathcal{E}_1}\subset \widetilde{\G_{\Sigma_2}}$ with domain $U$ gives rise to a homomorphism $\rho\colon\Df(\G_{\Sigma_1})\to  \Ff(\G_{\Sigma_2}|Z)$  given by
\[\rho(g)=\bigsqcup_{i=1}^n T_{v_i}^{-1} T_{w_i}\]
where $\{(v_i, w_i)\}_{i=1}^n$ is a table which defines $g$, and for every finite path $w=e_1\cdots e_k$ we set $T_w=T_{e_k}\cdots T_{e_1}$.  
In particular,  the $\rho$-action of $\Df(\G_{\Sigma_1})$ on $Z$ factors onto $X_{\Sigma_1}$ via the coding map $q\colon Z\to X_{\Sigma_1}$ associated to  $\{T_e\}_{e\in \mathcal{E}}$.
\end{thm}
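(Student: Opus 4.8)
The proof of Theorem~\ref{t-SFT-explicit} is a synthesis of the previous results in this section together with the Extension Theorem, so the plan is to assemble the pieces rather than to prove anything substantially new. First I would check that each $\Sigma_1$-system in $\widetilde{\G_{\Sigma_2}}$ really does produce a non-trivial homomorphism $\Df(\G_{\Sigma_1})\to \Ff(\G_{\Sigma_2})$. By Proposition~\ref{p-morphism-Cuntz}, a $\Sigma_1$-system $\{T_e\}_{e\in \mathcal{E}_1}$ with domain $Z$ extends uniquely to a continuous morphism of pseudogroups $\varphi\colon \widetilde{\G_{\Sigma_1}}\to \widetilde{\G_{\Sigma_2}|Z}$, and by Corollary~\ref{c-continuous-morphism-full} its restriction to $\Ff(\G_{\Sigma_1})$ is a group homomorphism into $\Ff(\G_{\Sigma_2}|Z)\le \Ff(\G_{\Sigma_2})$. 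Restricting further to the derived subgroup $\Df(\G_{\Sigma_1})=\Af(\G_{\Sigma_1})$ (the equality holds since $\G_{\Sigma_1}$ is expansive, hence essentially principal, and the alternating group is the derived subgroup — cite \S\ref{s-AF}/\cite{Mat-SFT}) gives the homomorphism $\rho$, whose explicit formula $\rho(g)=\bigsqcup_i T_{v_i}^{-1}T_{w_i}$ is exactly \eqref{e-phi-unique} applied to a table representing $g$. Non-triviality is clear since $\varphi$ is injective on $\Af(\G_{\Sigma_1})$ whenever the $T_e$ are non-zero: the spatial component $q\colon Z\to X_{\Sigma_1}$ is surjective (Proposition~\ref{p-inj-surj}, using that a $\Sigma_1$-system's coding map is onto, which follows from irreducibility of $\Sigma_1$ together with Lemma~\ref{l-preimage}), and then one argues as in the proof of Corollary~\ref{c-isom} that a morphism with surjective spatial component is injective on the simple group $\Af(\G_{\Sigma_1})$.

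Next I would prove that \emph{every} non-trivial homomorphism $\rho\colon \Df(\G_{\Sigma_1})\to \Ff(\G_{\Sigma_2})$ arises this way. This is where Theorem~\ref{t-SFT} does the work: since the Cayley graphs of $\G_{\Sigma_2}$ are trees (Lemma~\ref{l-tree}), hence have asymptotic dimension $1$, Theorem~\ref{t-asdim} (equivalently the already-stated Theorem~\ref{t-SFT}) gives that the support $Z$ of $\rho(\Df(\G_{\Sigma_1}))$ is clopen and $\rho$ extends to a continuous morphism $\widetilde{\rho}\colon \widetilde{\G_{\Sigma_1}^{[r]}}\to \widetilde{\G_{\Sigma_2}|Z}$ for some $r$ — but the last sentence of Theorem~\ref{t-asdim} forces $r\le d=1$, so in fact $\widetilde{\rho}\colon \widetilde{\G_{\Sigma_1}}\to \widetilde{\G_{\Sigma_2}|Z}$. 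By the first part of Proposition~\ref{p-morphism-Cuntz}, the family $\{\widetilde{\rho}(S_e)\}_{e\in \mathcal{E}_1}$ is then a $\Sigma_1$-system in $\widetilde{\G_{\Sigma_2}}$ with domain $Z$, and by the uniqueness clause in that proposition the morphism it induces is exactly $\widetilde{\rho}$; restricting to $\Df(\G_{\Sigma_1})$ recovers $\rho$ (here I should note that since $\widetilde{\rho}(\Df(\G_{\Sigma_1}))$ contains $\Af$ of the restriction and hence has trivial centralizer by Lemma~\ref{l-trivial-centraliser}, there is no ``twisting'' term $\psi$ to worry about, exactly as in Corollary~\ref{c-isom}).

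Finally I would verify that the two assignments are mutually inverse, which gives the claimed one-to-one correspondence: starting from a $\Sigma_1$-system, building $\rho$, then extracting $\{\widetilde{\rho}(S_e)\}$ returns the original system by the uniqueness in Proposition~\ref{p-morphism-Cuntz}; starting from $\rho$, extracting the system and re-building the morphism returns $\widetilde{\rho}$ by the same uniqueness, and $\widetilde{\rho}$ restricts to $\rho$ by the argument just given. The statement about the coding map $q$ being the spatial component of $\widetilde{\rho}$ (and hence realizing the factor map $Z\to X_{\Sigma_1}$ intertwining the $\rho$-action with the shift) is the last assertion of Proposition~\ref{p-morphism-Cuntz}, combined with Corollary~\ref{c-continuous-morphism-full}. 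The main obstacle — though it is really a bookkeeping obstacle rather than a mathematical one — is making sure that the normalization conventions line up: that $\Df(\G_{\Sigma_1})$ genuinely equals $\Af(\G_{\Sigma_1})$ so the Extension Theorem applies in the clean form without the $G/\Af(\G)$-factor, and that the explicit table formula for $\rho(g)$ is literally the image under $\widetilde{\rho}$ of the disjoint-union decomposition \eqref{e-table} of $g$, so that the two descriptions of $\rho$ coincide on the nose rather than merely up to the ambiguity of choosing a table (which is handled by Lemmas~\ref{l-table-equivalent} and~\ref{l-product-table} exactly as in the proof of Proposition~\ref{p-morphism-Cuntz}).
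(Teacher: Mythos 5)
Your proposal is correct and follows exactly the paper's route: the paper's entire proof is the single line ``Combine Theorem \ref{t-SFT} with Proposition \ref{p-Cuntz}'', and your write-up is precisely that combination with the details (identification $\Df(\G_{\Sigma_1})=\Af(\G_{\Sigma_1})$, asymptotic dimension $1$ forcing $r=1$, injectivity via surjectivity of the coding map, mutual inverseness via the uniqueness clause of Proposition \ref{p-morphism-Cuntz}) spelled out. No gaps.
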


%
\begin{proof}
Combine Theorem \ref{t-SFT} with Proposition \ref{p-Cuntz}. 

\end{proof}

\begin{example}[Endomorphisms of Thompson's $V$] It is well-known that Thompson's group $V$ admits many embeddings into itself. Theorem \ref{t-SFT-explicit} provides a way to parametrise them all explicitly in terms of families of binary words satisfying certain conditions, as follows. 

We see $V=\Ff(\G_{\Omega_2})$ as in Example \ref{e-V} (since $V$ is simple, we do not need to pass to the derived subgroup). Let  $T_0, T_1\in \tG_{\Omega_2}$ be compact open bisections  represented by finite tables $\{(v_i^{(0)}, w_i^{(0)})\}_{i=1}^n$ and $\{(v_i^{(1)}, w_i^{(1)})\}_{i=1}^m$. The pair $T_0, T_1$ is an is an $\Omega_2$-system if and only if $\src(T_0)\cap \src(T_1)=\varnothing$ and $\src(T_0)\sqcup \src(T_1)=\rg(T_0)=\rg(T_1)$. In terms of tables is equivalent to
\[C_{w_1^{(0)}}\sqcup \cdots C_{w_n^{(0)}}\sqcup C_{w_1^{(1)}}\sqcup\cdots \sqcup C_{w_m^{(1)}}=C_{v_1^{(0)}}\sqcup \cdots C_{v_n^{(0)}}= C_{v_1^{(1)}}\sqcup\cdots \sqcup C_{v_m^{(1)}}.\]
These translate to elementary conditions on the binary words $v_i^{(j)}, w_i^{(j)}$. 
Thus, we obtain an explicit parametrisation of all  non-trivial  endomorphisms $V\to V$ by pairs of finite tables $\{(v_i^{(0)}, w_i^{(0)})\}_{i=1}^n$ and $\{(v_i^{(1)}, w_i^{(1)})\}_{i=1}^m$ which satisfy the above conditions (considered modulo equivalence of tables).

 This correspondence is explicit, in the sense that if $\rho\colon V\to V$ is the homomorphism associated to a pair  $\{(v_i^{(0)}, w_i^{(0)})\}_{i=1}^n, \{(v_i^{(1)}, w_i^{(1)})\}_{i=1}^m$, then for every $g\in V$, a table representing $\rho(g)$ can be computed explicitly from a table representing $g$ and the tables  $\{(v_i^{(0)}, w_i^{(0)})\}_{i=1}^n, \{(v_i^{(1)}, w_i^{(1)})\}_{i=1}^m$, by expanding the formula in the statement of the theorem via elementary (but perhaps long!) table computations. 
 
 Note also that the coding map $q\colon Z\to X_{\Omega_2}$ can be computed explicitly in terms of the tables, and provides us with a factor map of the the action of $V$ on $Z$ induced by the endomorphism with its natural action on $X_{\Omega_2}$. 
 
Clearly there is nothing special about Thompson's group $V$ in  this discussion, and  similar considerations hold  for general homomorphisms $\Df(\G_{\Sigma_1})\to \Ff(\G_{\Sigma_2})$.

\end{example}

\subsection{Homological obstructions to embeddings}

To give a concrete consequence of Theorem \ref{t-SFT-explicit}, let us show how the homology of \'etale groupoids can be used to provide constraints on the possible embeddings between full groups of one-sided SFT's.

Let us first recall the description of the homology group $H_0(\G_\Sigma)$. Let $M_\Sigma$ be the adjacency matrix of the graph $\Sigma$,  that is the $|\mathcal{V}\times \mathcal{V}|$ matrix $M_\Sigma=(m_{x,y})_{x, y\in \mathcal{V}}$ where $m_{xy}=|\{e\in \mathcal{E}\colon \mathsf{o}(e)=x, \mathsf{t}(e)=y\}|$. 
The homology group $H_0(\G_\Sigma)$ is given by 
\[H_0(\G_\Sigma)\simeq \operatorname{Coker}_\Z(\operatorname{Id}-M_\Sigma^t)= \mathbb{Z}^{\mathcal{V}}/(\operatorname{Id}-M_\Sigma^t).\]

This isomorphism is described as follows. Define a homomorphisms $\Z^{\mathcal{V}}\to H_0(\G_\Sigma, \Z)$, mapping each basis vector $e_x, x \in \mathcal{V}$  to the  class  $[D_x]$. This map is surjective, and its kernel is precisely the image of the matrix $\operatorname{Id}-M_\Sigma^t$, see  \cite[\S 4.2]{Mat-hom}.

  Recall that an \'etale groupoid $\H$ over a space $Y$ is said to be \emph{purely infinite} if for every open set $U\subset Y$ there exist $F_1, F_2\in \tH$ such that $\src(F_1)=S(F_2)=U$,  $\rg(F_1)\subset, \rg(F_2)\subset U$, and $\rg(F_1)\cap \rg(F_2)=\varnothing$. When $\H$ is purely infinite, Matui has established a homological  criterion to ensure the existence of a $\Sigma$-system.
\begin{prop}[Matui] \label{p-Matui-embedding} Fix an irreducible graph $\Sigma$. Let $\H$ be a minimal purely infinite \'etale groupoid over a Cantor space $Y$. For every clopen subset $U\subset Y$, the following are equivalent. 
\begin{enumerate}[label=(\roman*)]
\item \label{p-homology-i} There exists a $\Sigma$-system in $\tH$ with domain $U$.  
\item \label{p-homology-ii} There exists a homomorphism $H_0(\G_{\Sigma})\to H_0(\H)$ which maps $[{X_{\Sigma}}]$ to $[U]$.
\end{enumerate}

\end{prop}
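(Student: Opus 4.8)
The plan is to prove the equivalence by combining Matui's functoriality of homology (Proposition \ref{p-morphism-homology}) with the explicit parametrisation of $\Sigma$-systems from Proposition \ref{p-Cuntz}, and then filling the gap in the direction \ref{p-homology-ii}$\Rightarrow$\ref{p-homology-i} by a direct construction exploiting pure infiniteness. The implication \ref{p-homology-i}$\Rightarrow$\ref{p-homology-ii} is the easy half, so I would dispatch it first. The reverse implication is the substantive one, and I expect it to rest on a ``realisation'' lemma: given prescribed classes in $H_0(\H)$, one builds compact open bisections with the required source/range clopen sets, using the self-similar nature of purely infinite minimal groupoids.

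\textbf{Step 1: \ref{p-homology-i}$\Rightarrow$\ref{p-homology-ii}.} Suppose $\{T_e\}_{e\in\mathcal{E}}$ is a $\Sigma$-system in $\tH$ with domain $U$. By Proposition \ref{p-Cuntz} it extends to a continuous morphism $\varphi\colon\tG_\Sigma\to\widetilde{\H|U}$, and by Proposition \ref{p-morphism-homology} this induces $\varphi_\ast\colon H_0(\G_\Sigma)\to H_0(\H|U)$. Since $U$ is clopen and meets every $\H$-orbit (minimality), the restriction morphism induces an isomorphism $H_0(\H|U)\simeq H_0(\H)$ (this is Matui's invariance of homology under restriction to full clopen subsets, already invoked in the proof of Theorem \ref{t-sturmian-embedding}). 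Composing, we get $H_0(\G_\Sigma)\to H_0(\H)$, and in degree zero the map sends $[V]\mapsto[q^{-1}(V)]$ where $q\colon U\to X_\Sigma$ is the coding map; in particular $[X_\Sigma]\mapsto[q^{-1}(X_\Sigma)]=[U]$. This is exactly \ref{p-homology-ii}.

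\textbf{Step 2: \ref{p-homology-ii}$\Rightarrow$\ref{p-homology-i}.} Using the presentation $H_0(\G_\Sigma)\simeq\mathbb{Z}^{\mathcal{V}}/(\operatorname{Id}-M_\Sigma^t)$ with $e_x\mapsto[D_x]$, a homomorphism $\theta\colon H_0(\G_\Sigma)\to H_0(\H)$ is specified by choosing classes $c_x:=\theta([D_x])\in H_0(\H)$ subject to the relations $c_x=\sum_{y}m_{xy}\,c_y$ (equivalently $c=M_\Sigma^t\cdot c$ read in $H_0(\H)$), and the condition $\theta([X_\Sigma])=[U]$ becomes $\sum_{x\in\mathcal{V}}c_x=[U]$. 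The goal is to realise these classes by \emph{actual disjoint clopen sets}: I want clopen sets $U_x\subset U$, $x\in\mathcal{V}$, with $\bigsqcup_x U_x=U$ and $[U_x]=c_x$, and then for each edge $e$ with $\mathsf{o}(e)=x$, $\mathsf{t}(e)=y$, a bisection $T_e\in\tH$ with $\rg(T_e)=U_y$ and the sources $\src(T_e)$ forming a clopen partition of $U_x$ as $e$ ranges over edges out of $x$; the relation $[U_x]=\sum_y m_{xy}[U_y]$ in $H_0(\H)$ is precisely what is needed for such a partition of $U_x$ into pieces homologous to the $U_y$'s to exist. The existence of the partition and of the bisections realising homology between homologous clopen sets is where pure infiniteness enters: in a purely infinite minimal groupoid over a Cantor set, two clopen sets are bisection-equivalent (i.e.\ there is $F\in\tH$ with $\src(F)=A$, $\rg(F)=B$) if and only if $[A]=[B]$ in $H_0(\H)$ — this is a structural fact from Matui's work on purely infinite groupoids, and I would cite it rather than reprove it. Granting it, one first uses it to split $U$ into clopen $U_x$ with $[U_x]=c_x$ (possible because $\sum c_x=[U]$, together with the fact that any class of the form $[A]$ with $0<[A]$ in the appropriate sense is realised inside $U$ by pure infiniteness — one subdivides iteratively), and then realises each edge-bisection $T_e$ by a bisection from a clopen subset of $U_x$ to $U_y$. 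Checking that $\{T_e\}$ is a $\Sigma$-system then amounts to: sources pairwise disjoint (by construction within the partition of each $U_x$), and $\rg(T_e)=U_{\mathsf{t}(e)}=\bigsqcup_{\mathsf{o}(f)=\mathsf{t}(e)}\src(T_f)$ (by construction of the partition of $U_{\mathsf{t}(e)}$).

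\textbf{Main obstacle.} The delicate point is Step 2: translating the algebraic relation $c=M_\Sigma^t c$ in $H_0(\H)$ into a coherent system of disjoint clopen sets and bisections, i.e.\ making sure the pieces can be chosen \emph{simultaneously consistently} rather than just one relation at a time. This is exactly the content that requires the classification of bisection-equivalence by $H_0$ for purely infinite minimal groupoids; the bookkeeping of subdividing $U$ and then each $U_x$ so that all the required homology identities hold at once is the part that needs care, though it is standard in the Cuntz--Krieger / purely infinite setting and I would present it as an application of Matui's results (and note that for $\H=\G_{\Sigma_2}$ the hypotheses of Proposition \ref{p-Matui-embedding} are met, since groupoids of irreducible one-sided SFT's that are not cycles are minimal and purely infinite).
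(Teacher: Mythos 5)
Your proposal is correct and follows essentially the same route as the paper: the implication \ref{p-homology-i}$\Rightarrow$\ref{p-homology-ii} via Proposition \ref{p-Cuntz} and functoriality of $H_0$ (plus invariance under restriction to a full clopen set), and the converse via Matui's structural results for minimal purely infinite groupoids. The paper simply cites the beginning of the proof of \cite[Prop. 5.14]{Mat-prod} for \ref{p-homology-ii}$\Rightarrow$\ref{p-homology-i}; your Step 2 is a faithful sketch of that cited argument (realisation of homology classes by clopen sets and bisection-equivalence detected by $H_0$), so there is no substantive difference.
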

\begin{proof}
The implication \ref{p-homology-i}$\Rightarrow$\ref{p-homology-ii} holds for every $\H$, as already explained above. The implication \ref{p-homology-ii}$\Rightarrow$\ref{p-homology-i} is proven at the beginning of the proof of \cite[Prop. 5.14]{Mat-prod}.    \end{proof}

\begin{cor}
If $\H$ is purely infinite, then $\tH$ always admits a $\Sigma$-system.
In particular, the group $\Ff(\G_\Sigma)$ can be embedded in $\Ff(\H)$. 
\end{cor}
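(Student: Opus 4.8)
The plan is to deduce this corollary from Proposition \ref{p-Matui-embedding} together with the description of $H_0(\G_\Sigma)$ recalled just above. The point is simply that one can always realise the class $[U]$ as the image of $[X_\Sigma]$ under a homomorphism $H_0(\G_\Sigma)\to H_0(\H)$, because $[X_\Sigma]$ is not just \emph{any} class but a very special one: it is a generator that can be ``divided'' arbitrarily in the homology of any minimal purely infinite groupoid. More precisely, I would first observe that for a minimal purely infinite $\H$, the class $[U]$ of any nonempty clopen set is \emph{infinitely divisible} and in fact all nonzero clopen classes are mutually ``comparable'' in the right sense; this is a standard consequence of pure infiniteness (see the manipulations in \cite[\S 4]{Mat-SFT} or \cite{Mat-prod}), and it is exactly what is used implicitly in Matui's proof of \cite[Prop. 5.14]{Mat-prod}.

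The key step is then the following: since $\Sigma$ is irreducible and is not a single cycle, there is at least one vertex $x_0$ with at least two outgoing edges, and the relation defining $H_0(\G_\Sigma)=\Z^{\mathcal V}/(\operatorname{Id}-M_\Sigma^t)$ forces, for every vertex $x$, an identity of the form $[D_x]=\sum_{\mathsf o(e)=x}[D_{\mathsf t(e)}]$ in $H_0(\G_\Sigma)$. In particular $[X_\Sigma]=\sum_{x\in\mathcal V}[D_x]$ and, iterating the relations, $[X_\Sigma]$ can be written as a sum of $k$ copies of itself plus a (possibly empty) remainder for suitable $k\ge 2$ arising from a circuit through $x_0$; this is the homological shadow of the fact that $\G_\Sigma$ is itself purely infinite. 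Hence $H_0(\G_\Sigma)$ is generated by $[X_\Sigma]$ subject to relations all of which are automatically satisfied in $H_0(\H)$ once we send $[X_\Sigma]\mapsto [U]$, using the infinite divisibility and comparability of clopen classes in the purely infinite minimal groupoid $\H$. This produces the required homomorphism $H_0(\G_\Sigma)\to H_0(\H)$ with $[X_\Sigma]\mapsto [U]$, and in particular (taking $U=Y$) one with $[X_\Sigma]\mapsto[Y]$.

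Applying Proposition \ref{p-Matui-embedding}\ref{p-homology-i} with this homomorphism (for $U=Y$), we obtain a $\Sigma$-system in $\tH$ with domain $Y$. Finally, by Proposition \ref{p-Cuntz} (or directly by the corollary stated immediately after it), any $\Sigma$-system in $\tH$ with domain $Y$ extends to a continuous morphism of pseudogroups $\varphi\colon \tG_\Sigma\to \tH$ whose spatial component $q\colon Y\to X_\Sigma$ is the coding map; restricting $\varphi$ to the full group yields a group homomorphism $\Ff(\G_\Sigma)\to \Ff(\H)$. To see it is injective, note that $\varphi$ is injective as a pseudogroup morphism: its spatial component $q$ is surjective (the coding map of a $\Sigma$-system with domain $Y$ is surjective by minimality, since its image is a nonempty closed $S$-invariant subset of $X_\Sigma$), and $\tG_\Sigma$ is effective, so Proposition \ref{p-inj-surj} applies; hence the restriction to $\Ff(\G_\Sigma)$ is an embedding.

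The main obstacle I anticipate is purely bookkeeping: making the passage ``$[X_\Sigma]$ maps to $[U]$ is always possible'' fully rigorous requires the comparison theory for clopen classes in minimal purely infinite groupoids, i.e. knowing that $H_0$ of such a groupoid behaves like the $K_0$ of a purely infinite simple $C^*$-algebra with respect to subdividing the unit. Since we are allowed to invoke Proposition \ref{p-Matui-embedding} as stated, the cleanest route is to take $U=Y$ and exhibit a homomorphism $H_0(\G_\Sigma)\to H_0(\H)$ sending $[X_\Sigma]\mapsto[Y]$ directly from the presentation $\Z^{\mathcal V}/(\operatorname{Id}-M_\Sigma^t)$: send each $[D_x]$ to $[Y]$ and check that $(\operatorname{Id}-M_\Sigma^t)$ lies in the kernel, which reduces to verifying $\sum_{\mathsf o(e)=x}[Y]=[Y]\cdot(\text{out-degree of }x)$ is consistent with relations in $H_0(\H)$ — and here is exactly where one needs $[Y]$ to be infinitely divisible, i.e. where pure infiniteness of $\H$ enters. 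I expect this to be short once the right lemma from \cite{Mat-SFT, Mat-prod} is cited.
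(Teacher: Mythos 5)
There is a genuine gap, and it sits exactly at the step you yourself flag as the ``key step''. You claim that one can always produce a homomorphism $H_0(\G_\Sigma)\to H_0(\H)$ sending $[X_\Sigma]\mapsto[Y]$, justified by an asserted ``infinite divisibility'' of $[Y]$ in the homology of a minimal purely infinite groupoid. This is false. Take $\G_\Sigma=\G_{\Omega_m}$ and $\H=\G_{\Omega_n}$: then $H_0(\G_{\Omega_m})\cong\Z/(m-1)\Z$ and $H_0(\G_{\Omega_n})\cong\Z/(n-1)\Z$, with $[X_{\Omega_m}]$ and $[Y]=[X_{\Omega_n}]$ the respective generators, and a homomorphism sending generator to generator exists if and only if $(n-1)\mid(m-1)$. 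In particular $[Y]$ is not divisible in any useful sense, and your reduction ``check that $(\operatorname{Id}-M_\Sigma^t)$ lies in the kernel'' genuinely fails: it requires $(m-1)[Y]=0$ in $\Z/(n-1)\Z$. This is not a bookkeeping issue but the precise content of Corollary \ref{c-V-support} in the paper: when $(n-1)\nmid(m-1)$, \emph{no} embedding $V_{m,1}'\to V_{n,1}$ can have full support, so insisting on a $\Sigma$-system with domain $U=Y$ is exactly the wrong move.

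The correct route (and the paper's) is the opposite one: instead of hitting the class $[Y]$, hit the class $0$. Pure infiniteness gives, for any clopen $V$, an element $F\in\tH$ with $\src(F)=V$ and $\rg(F)\subsetneq V$; since $[\src(F)]=[\rg(F)]$ in $H_0(\H)$, the nonempty clopen set $U=V\setminus\rg(F)$ satisfies $[U]=0$. The \emph{zero} homomorphism $H_0(\G_\Sigma)\to H_0(\H)$ then sends $[X_\Sigma]$ to $[U]$, Proposition \ref{p-Matui-embedding} yields a $\Sigma$-system with domain $U$, and Proposition \ref{p-Cuntz} gives the continuous morphism and the embedding. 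Your final injectivity argument is fine and does not need $U=Y$: for any domain, the image of the coding map is a nonempty closed $\G_\Sigma$-invariant subset of $X_\Sigma$, hence all of $X_\Sigma$ by minimality, so Proposition \ref{p-inj-surj} applies.
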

\begin{proof}
It is enough to show that there exists a non-empty clopen set $U\subset Y$ with $[U]=0$ in $H_0(\H)$, since then the zero map $H_0(\G_\Sigma)\to \H_0(\H)$ will map $[{X_{\Sigma}}]$ to $[U]$. Let  $V\subset Y$ be an arbitrary clopen set. Using that  $\H$ is purely infinite, let $F\in \tH$ be such that $\src(F)=V$ and $\rg(F)$ is strictly contained in $V$. Then we have $[{V\setminus \rg(F)}]=[1_V]-[{ \rg(F)}]=0$ and hence the clopen set $U=V\setminus \rg(F)$ satisfies the desired conclusion.
The fact that $\Ff(\G_\Sigma)$ embeds into $\Ff(\H)$ therefore follows from Proposition \ref{p-morphism-Cuntz}
\end{proof}
Since groupoid associated to irreducible shifts of finite type are themselves purely infinite, this has the following consequence. 
\begin{cor} \label{c-SFT-biembed}
For any irreducible connected graphs $\Sigma_1$ and $\Sigma_2$, the groups $\Ff(\Sigma_1)$ and $\Ff(\Sigma_2)$ can be embedded into each other. 
\end{cor}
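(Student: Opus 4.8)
The goal is to prove Corollary \ref{c-SFT-biembed}: for any irreducible connected graphs $\Sigma_1, \Sigma_2$ (not cycles), the groups $\Ff(\G_{\Sigma_1})$ and $\Ff(\G_{\Sigma_2})$ embed into each other.

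Here is my plan. By symmetry it suffices to produce one embedding $\Ff(\G_{\Sigma_1})\hookrightarrow \Ff(\G_{\Sigma_2})$; the reverse embedding is obtained by swapping the roles of $\Sigma_1$ and $\Sigma_2$. First I would invoke the criterion established just above the statement: by Proposition \ref{p-morphism-Cuntz} (equivalently, the corollary following it), it is enough to exhibit a $\Sigma_1$-system inside the pseudogroup $\widetilde{\G_{\Sigma_2}}$, since such a system extends to a continuous morphism $\widetilde{\G_{\Sigma_1}}\to \widetilde{\G_{\Sigma_2}}$ whose restriction to the full group is a group embedding $\Ff(\G_{\Sigma_1})\to \Ff(\G_{\Sigma_2})$ (the embedding is injective on $\Ff(\G_{\Sigma_1})$ because the morphism preserves disjoint unions and the spatial component is surjective onto $X_{\Sigma_1}$, so different full group elements induce different homeomorphisms). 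So the entire task reduces to producing a $\Sigma_1$-system in $\widetilde{\G_{\Sigma_2}}$ with clopen domain.

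Next I would recall that the groupoid $\G_{\Sigma_2}$ associated to an irreducible one-sided shift of finite type (which is not a cycle, hence has at least two paths between any pair of vertices) is a \emph{minimal purely infinite} \'etale groupoid over the Cantor space $X_{\Sigma_2}$; minimality and pure infiniteness are standard facts about such groupoids (see \cite{Mat-SFT}), and the excerpt already uses pure infiniteness in the displayed Corollary immediately following Proposition \ref{p-Matui-embedding}. Then I would apply Proposition \ref{p-Matui-embedding} (Matui's homological criterion): to get a $\Sigma_1$-system in $\widetilde{\G_{\Sigma_2}}$ it suffices to find a clopen set $U\subset X_{\Sigma_2}$ and a homomorphism $H_0(\G_{\Sigma_1})\to H_0(\G_{\Sigma_2})$ sending the class $[X_{\Sigma_1}]$ to $[U]$. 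But in a purely infinite minimal groupoid one can \emph{make the class of a clopen set vanish}: starting from any clopen $V\subset X_{\Sigma_2}$, pure infiniteness provides $F\in \widetilde{\G_{\Sigma_2}}$ with $\src(F)=V$ and $\rg(F)$ strictly contained in $V$, so that $U:=V\setminus \rg(F)$ is a non-empty clopen set with $[U]=[V]-[\rg(F)]=0$ in $H_0(\G_{\Sigma_2})$. For this particular $U$, the \emph{zero} homomorphism $H_0(\G_{\Sigma_1})\to H_0(\G_{\Sigma_2})$ maps $[X_{\Sigma_1}]$ to $0=[U]$, so hypothesis \ref{p-homology-ii} of Proposition \ref{p-Matui-embedding} is satisfied. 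This is exactly the argument spelled out in the Corollary before the statement, which already shows $\Ff(\G_{\Sigma_1})$ embeds in any purely infinite $\Ff(\H)$; applying it with $\H=\G_{\Sigma_2}$ (and then with the roles reversed) finishes the proof.

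I do not expect any genuine obstacle: once the two inputs — Proposition \ref{p-morphism-Cuntz} turning $\Sigma$-systems into embeddings, and Proposition \ref{p-Matui-embedding} plus pure infiniteness producing $\Sigma$-systems — are in place, the argument is a two-line combination, and Corollary \ref{c-SFT-biembed} is literally an instance of the preceding Corollary applied symmetrically. The only point worth stating carefully is that $\G_{\Sigma_2}$ is minimal and purely infinite for $\Sigma_2$ irreducible and not a cycle, which I would cite from \cite{Mat-SFT} rather than reprove. Concretely, the write-up will be: (1) recall $\G_{\Sigma_i}$ is minimal purely infinite; (2) apply the Corollary before the statement with $\H=\G_{\Sigma_2}$ to get $\Ff(\G_{\Sigma_1})\hookrightarrow\Ff(\G_{\Sigma_2})$; (3) swap $\Sigma_1\leftrightarrow\Sigma_2$ to get the other embedding.

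\begin{proof}
Recall that for an irreducible connected graph $\Sigma$ which is not a cycle, the associated groupoid $\G_\Sigma$ is a minimal, purely infinite \'etale groupoid over the Cantor space $X_\Sigma$ (see \cite{Mat-SFT}). Applying the Corollary preceding the statement with $\H=\G_{\Sigma_2}$, we obtain an embedding $\Ff(\G_{\Sigma_1})\hookrightarrow \Ff(\G_{\Sigma_2})$: indeed, choosing any clopen $V\subset X_{\Sigma_2}$ and, by pure infiniteness, an element $F\in \widetilde{\G_{\Sigma_2}}$ with $\src(F)=V$ and $\rg(F)\subsetneq V$, the non-empty clopen set $U=V\setminus \rg(F)$ satisfies $[U]=[V]-[\rg(F)]=0$ in $H_0(\G_{\Sigma_2})$, so the zero homomorphism $H_0(\G_{\Sigma_1})\to H_0(\G_{\Sigma_2})$ sends $[X_{\Sigma_1}]$ to $[U]$; by Proposition \ref{p-Matui-embedding} this produces a $\Sigma_1$-system in $\widetilde{\G_{\Sigma_2}}$ with domain $U$, and by Proposition \ref{p-morphism-Cuntz} this $\Sigma_1$-system extends to a continuous morphism $\widetilde{\G_{\Sigma_1}}\to \widetilde{\G_{\Sigma_2}}$, whose restriction to the full group is an embedding $\Ff(\G_{\Sigma_1})\hookrightarrow \Ff(\G_{\Sigma_2})$. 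Exchanging the roles of $\Sigma_1$ and $\Sigma_2$ yields an embedding $\Ff(\G_{\Sigma_2})\hookrightarrow \Ff(\G_{\Sigma_1})$, which completes the proof.
\end{proof}
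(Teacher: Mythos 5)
Your proof is correct and is exactly the paper's argument: the paper derives this corollary in one line from the preceding corollary (``since groupoids associated to irreducible shifts of finite type are themselves purely infinite''), whose proof you reproduce faithfully — kill the class of a clopen set using pure infiniteness, feed the zero map into Proposition \ref{p-Matui-embedding} to get a $\Sigma_1$-system, convert it to an embedding via Proposition \ref{p-morphism-Cuntz}, and symmetrize. No issues.
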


The combination of our results with Matui's Proposition \ref{p-Matui-embedding} implies that although all the groups $\Ff(\G_{\Sigma})$ admit many embeddings into each other,   the possible support of the image is completely characterised as follows.  
\begin{thm}\label{t-SFT-support}
Let $\tG_{\Sigma_1}, \tG_{\Sigma_2}$ be groupoids associated to one sided shifts of finite type, and let $Z\subset X_{\Sigma_2}$ be a clopen subset.
Then the following are equivalent.
\begin{enumerate}[label=(\roman*)]
\item \label{i-SFT-support-embed} There exists a group embedding $\rho\colon \Df(\G_{\Sigma_1})\to \Ff(\G_{\Sigma_2})$ such that $\rho(\Df(\G_{\Sigma_1}))$ has support equal to $Z$.

\item \label{i-SFT-support-homology} There exists a homomorphism $\varphi \colon \mathsf{H}_0(\G_{\Sigma_1})\to \mathsf{H}_0(\G_{\Sigma_2})$ such that $\varphi([{X_{\Sigma_1}}])=[Z]$.

\end{enumerate}

\end{thm}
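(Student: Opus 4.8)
The plan is to combine the Extension Theorem (via Theorem \ref{t-SFT} / Theorem \ref{t-SFT-explicit}) with Matui's homological criterion (Proposition \ref{p-Matui-embedding}), using the functoriality of groupoid homology (Proposition \ref{p-morphism-homology}) as the bridge between the two directions.

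\textbf{Direction \ref{i-SFT-support-homology}$\Rightarrow$\ref{i-SFT-support-embed}.} This is the easy direction, and it is essentially immediate from results already in hand. Given a homomorphism $\varphi\colon H_0(\G_{\Sigma_1})\to H_0(\G_{\Sigma_2})$ with $\varphi([X_{\Sigma_1}])=[Z]$, I first observe that the groupoid $\G_{\Sigma_2}$ associated to an irreducible one-sided SFT (which is not a single cycle) is minimal and purely infinite — this is standard and already invoked in the excerpt just before Corollary \ref{c-SFT-biembed}. Hence Proposition \ref{p-Matui-embedding} applies with $\H=\G_{\Sigma_2}$ and $U=Z$: condition \ref{p-homology-ii} there is exactly our hypothesis, so we obtain a $\Sigma_1$-system in $\widetilde{\G_{\Sigma_2}}$ with domain $Z$. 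By Proposition \ref{p-morphism-Cuntz} this $\Sigma_1$-system extends to a continuous morphism $\varphi\colon \widetilde{\G_{\Sigma_1}}\to \widetilde{\G_{\Sigma_2}|Z}$, whose restriction to $\Df(\G_{\Sigma_1})$ (which equals $\Af(\G_{\Sigma_1})$ and is simple) gives a homomorphism $\rho\colon \Df(\G_{\Sigma_1})\to \Ff(\G_{\Sigma_2})$. Since $\Df(\G_{\Sigma_1})$ is simple and $\rho$ is nontrivial (the $\Sigma_1$-system is nondegenerate, so the spatial component $q\colon Z\to X_{\Sigma_1}$ is onto and $\rho$ is faithful on the simple group), $\rho$ is an embedding. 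One should check that the support of $\rho(\Df(\G_{\Sigma_1}))$ is exactly $Z$: it is contained in $Z$ by construction, and equals $Z$ because $q$ is surjective and $\Df(\G_{\Sigma_1})$ has no global fixed point in $X_{\Sigma_1}$, so it moves every point of $q^{-1}(X_{\Sigma_1})=Z$ (the same argument used in the proof of Theorem \ref{t-main}).

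\textbf{Direction \ref{i-SFT-support-embed}$\Rightarrow$\ref{i-SFT-support-homology}.} Given an embedding $\rho\colon \Df(\G_{\Sigma_1})\to \Ff(\G_{\Sigma_2})$ whose image has support the clopen set $Z$, apply Theorem \ref{t-SFT} (the special case of the Extension Theorem for SFT targets, where all Cayley graphs are trees by Lemma \ref{l-tree} and hence have asymptotic dimension $1$): $Z$ is clopen and $\rho$ extends to a continuous morphism $\widetilde{\rho}\colon \widetilde{\G_{\Sigma_1}}\to \widetilde{\G_{\Sigma_2}|Z}$ (here one uses that the automatic-extension conclusion of Theorem \ref{t-asdim}/\ref{t-SFT} gives $r=1$ because asdim of the leaves is $1$; more precisely, since $\rho$ is injective on the simple group and the target leaves are one-dimensional, case \ref{i-free} of Theorem \ref{t-main} cannot occur — a coarsely injective map from $\Af(\G_{\Sigma_1})$, which has infinite asymptotic dimension by Proposition \ref{p-asdim-alt}, into a tree is impossible — and then the symmetric-power bound forces $r\le 1$). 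Now apply the functoriality of homology (Proposition \ref{p-morphism-homology}) to $\widetilde{\rho}$ to get $\widetilde{\rho}_\ast\colon H_0(\G_{\Sigma_1})\to H_0(\G_{\Sigma_2}|Z)$. Since $Z$ is a clopen subset meeting every $\G_{\Sigma_2}$-orbit (it meets all of them because $\G_{\Sigma_2}$ is minimal and $Z$ is nonempty clopen), the restriction map $H_0(\G_{\Sigma_2}|Z)\to H_0(\G_{\Sigma_2})$ is an isomorphism by \cite{Mat-hom}; composing, we obtain $\varphi\colon H_0(\G_{\Sigma_1})\to H_0(\G_{\Sigma_2})$. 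Finally $\varphi([X_{\Sigma_1}])=[Z]$: in degree zero $\widetilde{\rho}_\ast([\sum n_iU_i])=[\sum n_i q^{-1}(U_i)]$ where $q$ is the spatial component of $\widetilde{\rho}$, so $\widetilde{\rho}_\ast([X_{\Sigma_1}])=[q^{-1}(X_{\Sigma_1})]=[Z]$ inside $H_0(\G_{\Sigma_2}|Z)$, which maps to $[Z]$ in $H_0(\G_{\Sigma_2})$.

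\textbf{Main obstacle.} The substantive content is entirely imported: Theorem \ref{t-SFT} (and through it the Extension Theorem, whose proof rests on the classification of confined subgroups, Theorem \ref{t-confined}) handles \ref{i-SFT-support-embed}$\Rightarrow$\ref{i-SFT-support-homology}, and Matui's Proposition \ref{p-Matui-embedding} handles the converse. The only genuine verification left is the bookkeeping that makes the two match up: that the clopen support $Z$ appearing in Theorem \ref{t-SFT} is the same object as the domain of the $\Sigma_1$-system in Proposition \ref{p-Matui-embedding}, and that $\widetilde{\rho}_\ast$ (resp. the homology map produced by a $\Sigma_1$-system) carries $[X_{\Sigma_1}]$ to $[Z]$. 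I expect the mildly delicate point to be confirming that one may take $r=1$ in the extension — i.e. ruling out the symmetric-power phenomenon — which follows because the target leaves are trees (Lemma \ref{l-tree}) and $\Af(\G_{\Sigma_1})$ has infinite asymptotic dimension, exactly as in the last part of the proof of Theorem \ref{t-asdim}; once that is granted the rest is routine diagram-chasing.
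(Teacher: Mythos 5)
Your proposal is correct and follows exactly the route of the paper's (very terse) proof: the implication from the homological condition to the embedding is Matui's Proposition \ref{p-Matui-embedding} combined with Proposition \ref{p-morphism-Cuntz}, and the converse is Theorem \ref{t-SFT} (with $r=1$ forced by Lemma \ref{l-tree} and Proposition \ref{p-asdim-alt}) together with the functoriality of $H_0$ and its invariance under restriction to a clopen set meeting every orbit. The only detail worth recording is the degenerate case $Z=\varnothing$ (where \ref{i-SFT-support-homology} can hold via the zero map while \ref{i-SFT-support-embed} cannot), which the paper's statement also implicitly excludes.
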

\begin{proof}
The implication \ref{i-SFT-support-homology}$\Rightarrow$ \ref{i-SFT-support-embed} is due to Matui, see Proposition \ref{p-Matui-embedding}. The converse follows from Theorem \ref{t-SFT} and from the functoriality of homology with respect to continuous morphisms of pseudogroups (see \S \ref{s-homology}). \qedhere

\end{proof}

This shows that for many pairs $(\Sigma_1, \Sigma_2)$, all embeddings $\Df(\G_{\Sigma_1})\to \Ff(\G_{\Sigma_2})$ must give rise to an action with global fixed points. For example, we have the following for the  Higman-Thompson groups.

\begin{cor} \label{c-V-support}
The following are equivalent:
\begin{enumerate}[label=(\roman*)]
\item there exist an embedding  $\rho\colon V_{n, 1}' \to V_{m, 1}$ such that the action of $\rho(V_{n, 1}')$ has no global fixed points; 
\item $m-1$ divides $n-1$. 
\end{enumerate}
\end{cor}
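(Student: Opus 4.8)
\textbf{Proof plan for Corollary \ref{c-V-support}.}

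The plan is to apply Theorem \ref{t-SFT-support} to the groupoids $\G_{\Sigma_1}$ and $\G_{\Sigma_2}$ associated to the one-loop graphs $\Omega_n$ and $\Omega_m$ (so that $\Df(\G_{\Omega_n})=V_{n,1}'$ and $\Ff(\G_{\Omega_m})=V_{m,1}$, using simplicity of $V_{m,1}$ where needed; recall from Example \ref{e-V} that these groupoids realise the Higman--Thompson groups). First I would translate condition (i): an embedding $\rho\colon V_{n,1}'\to V_{m,1}$ whose image has no global fixed points is exactly an embedding whose support is all of $X_{\Omega_m}$, i.e. the clopen set $Z$ in Theorem \ref{t-SFT-support} equals $X_{\Omega_m}$. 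So by that theorem, (i) holds if and only if there exists a group homomorphism $\varphi\colon H_0(\G_{\Omega_n})\to H_0(\G_{\Omega_m})$ sending the class $[X_{\Omega_n}]$ to $[X_{\Omega_m}]$.

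Next I would compute the two homology groups. The graph $\Omega_n$ has a single vertex and adjacency matrix the $1\times 1$ matrix $(n)$, so by the formula recalled before Proposition \ref{p-Matui-embedding} we get $H_0(\G_{\Omega_n})\simeq \Z/(1-n)\Z \simeq \Z/(n-1)\Z$, and under this isomorphism the fundamental class $[X_{\Omega_n}]=[D_x]$ corresponds to the image of $1\in\Z$, i.e. the canonical generator $\bar 1$ of $\Z/(n-1)\Z$. Likewise $H_0(\G_{\Omega_m})\simeq \Z/(m-1)\Z$ with $[X_{\Omega_m}]\leftrightarrow \bar 1$. Thus condition (ii) of Theorem \ref{t-SFT-support} becomes: there is a group homomorphism $\varphi\colon \Z/(n-1)\Z\to \Z/(m-1)\Z$ with $\varphi(\bar 1)=\bar 1$. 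Such a homomorphism exists if and only if the order of $\bar 1$ in the target divides $n-1$, i.e. $(m-1)\mid (n-1)$ — this is the elementary fact that a homomorphism $\Z/a\Z\to \Z/b\Z$ sending $\bar1$ to an element $g$ exists precisely when $a\cdot g=0$, applied to $g=\bar1$. This gives the equivalence with statement (ii) of the corollary.

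I do not anticipate a serious obstacle: the argument is a direct combination of Theorem \ref{t-SFT-support} (which packages the Extension Theorem together with Matui's Proposition \ref{p-Matui-embedding} and the functoriality of groupoid homology) with the explicit computation of $H_0$ of a one-loop graph. The only points requiring a line of care are (a) verifying that ``no global fixed points'' is equivalent to ``support equal to the whole space'' — this is immediate since the support is by definition the closure of the union of supports of elements, hence its complement is the clopen set of global fixed points, so the support is everything iff there are no global fixed clopen sets iff (by minimality and the fact that $V_{m,1}'$ acts with no global fixed points on any nonempty clopen invariant set) there are no global fixed points at all — and (b) checking that the identification of $[X_{\Omega_n}]$ with the canonical generator is the correct one under the isomorphism $H_0(\G_{\Omega_n})\cong \mathrm{Coker}(\mathrm{Id}-M_{\Omega_n}^t)$, which follows directly from the description recalled in the text that $e_x\mapsto [D_x]$ and here there is a single vertex with $D_x=X_{\Omega_n}$.
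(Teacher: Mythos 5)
Your proposal is correct and follows essentially the same route as the paper: the paper's proof simply notes that $H_0(\G_{\Omega_n})\simeq \Z/(n-1)\Z$ with generator $[X_{\Omega_n}]$ and invokes Theorem \ref{t-SFT-support}, which is exactly your argument with the adjacency-matrix computation and the elementary fact about homomorphisms $\Z/(n-1)\Z\to\Z/(m-1)\Z$ spelled out. Your extra care on point (a) (identifying ``no global fixed points'' with ``support equal to $X_{\Omega_m}$'', using that the support is clopen and that the spatial component intertwines with the minimal action on $X_{\Omega_n}$) is a detail the paper leaves implicit, and you handle it correctly.
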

\begin{proof}
The homology $H_0(\G_{\Omega_n})$ is isomorphic to $\Z/(n-1)\Z$, with generator given by $[{X_{\Omega_n}}]$. Thus, the conclusion follows from Theorem \ref{t-SFT-support}.
\end{proof}

In particular, for every $n>2$, every embedding $V\to V_{n, 1}$ must give rise to an action with global fixed points. On the other hand, all the groups $V_{n, 1}'$ admit an embedding into $V$ without any global fixed point. 

\subsection{Products of SFT's and Brin's groups $nV$} \label{s-product-SFT}
Matui studies in \cite{Mat-prod} \'etale groupoids arising from products of such groupoids, i.e. \'etale groupoids of the form $\G=\G_{\Sigma_1}\times \cdots \times \G_{\Sigma_n}$, where each $\G_{\Sigma_n}$ is the groupoid associated to an irreducible one-sided shift of finite type.  As an illustration of Corollary \ref{c-asdim-obstruction}, we have the following. 
\begin{cor} \label{c-SFT-prod}
Let $\G, \H$  be respectively the products of $n$ and $m$ \'etale groupoids arising from irreducible shifts of finite type. If $n>m$, then every homomorphism $\Ff(\G)\to \Ff(\H)$ has abelian image. 
\end{cor}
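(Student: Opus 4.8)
\textbf{Proof plan for Corollary \ref{c-SFT-prod}.}

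The plan is to reduce the statement to the automatic extension criterion of Theorem \ref{t-asdim} (equivalently Corollary \ref{c-asdim-obstruction}), applied not to $\Ff(\G)$ itself but to its ``alternating'' core. First I would invoke the structure of products of SFT-groupoids from \cite{Mat-prod}: each factor $\G_{\Sigma_i}$ is minimal, effective, expansive and purely infinite, hence so is the product $\G$ (products of minimal effective groupoids are minimal and effective, and expansiveness is preserved under finite products). Consequently $\G$ is a minimal effective compactly generated \'etale groupoid over a Cantor space, so the simple subgroup $\Af(\G) \unlhd \Ff(\G)$ is defined and, by Theorem \ref{t-simple}, is contained in every non-trivial normal subgroup of $\Ff(\G)$; in particular it is the unique minimal non-trivial normal subgroup. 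Given a homomorphism $\rho\colon \Ff(\G)\to \Ff(\H)$, either $\rho|_{\Af(\G)}$ is trivial, in which case $\rho$ factors through the abelian-by-(something) quotient — I will need to check $\Ff(\G)/\Af(\G)$ has the right structure, but actually it suffices that $\rho(\Af(\G))=\{1\}$ forces $\Af(\G)\le \ker\rho$, and then $\rho$ factors through $\Ff(\G)/\Af(\G)$; combined with the fact that $[\Ff(\G),\Ff(\G)]=\Af(\G)$ (which holds for these groupoids, cf. \cite{Mat-prod,Mat-SFT}) this gives abelian image directly — or $\rho|_{\Af(\G)}$ is non-trivial, which is the case I must rule out.

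So suppose $\rho|_{\Af(\G)}\colon \Af(\G)\to \Ff(\H)$ is non-trivial. I want to apply Corollary \ref{c-asdim-obstruction} with the roles $\G \rightsquigarrow \G$, $\H\rightsquigarrow \H$. The key computation is the asymptotic dimension of the leaves. By Lemma \ref{l-tree}, for each irreducible SFT-groupoid $\G_{\Sigma_i}$ the Cayley graph of each leaf is a tree, so $\asdim((\G_{\Sigma_i})_x)=1$ for every $x$. A leaf of a product groupoid is (coarsely) the product of the leaves of the factors — this is immediate from the description of the coarse structure on $\G_x=\prod (\G_{\Sigma_i})_{x_i}$ via compact subsets of $\G=\prod \G_{\Sigma_i}$. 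By the standard product inequality for asymptotic dimension (\cite[\S 9.2]{Roe-coarse}, or the finite-dimensional product theorem), $\asdim(\G_x)\le n$, and in fact $=n$ since each factor contains a bi-infinite geodesic so $\Z^n$ injects coarsely into $\G_x$ and $\asdim(\Z^n)=n$. Likewise $\sup_{y}\asdim(\H_y)=m$. Thus when $n>m$ we have $\sup_{y\in Y}\asdim(\H_y)=m<n=\inf_{x\in X}\asdim(\G_x)$, and Corollary \ref{c-asdim-obstruction} yields that there is no non-trivial homomorphism $\Af(\G)\to \Ff(\H)$ — contradicting our assumption. Hence $\rho|_{\Af(\G)}$ is trivial after all.

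The remaining bookkeeping is to conclude that $\rho$ has abelian image from $\rho(\Af(\G))=\{1\}$. Since $\Af(\G)\le \ker\rho$ and $\Af(\G)\supseteq [\Ff(\G),\Ff(\G)]$ (the derived subgroup of $\Ff(\G)$ is contained in the simple group $\Af(\G)$ by Theorem \ref{t-simple}: indeed $[\Ff(\G),\Ff(\G)]$ is normal and non-trivial unless $\Ff(\G)$ is abelian, so it contains $\Af(\G)$; conversely $\Af(\G)$ is perfect and generated by commutators, giving equality), the image $\rho(\Ff(\G))\cong \Ff(\G)/\ker\rho$ is a quotient of $\Ff(\G)/[\Ff(\G),\Ff(\G)]$, which is abelian. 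I expect the main obstacle to be the clean verification of the coarse identification of a leaf of the product groupoid with the product of leaves of the factors (so that the asymptotic dimension bounds transfer), together with confirming the precise relationship $[\Ff(\G),\Ff(\G)]=\Af(\G)$ in the product setting; both are essentially known from \cite{Mat-prod,Mat-SFT} but need to be cited or spelled out. Everything else is a direct application of Theorem \ref{t-asdim}/Corollary \ref{c-asdim-obstruction} and Lemma \ref{l-tree}.
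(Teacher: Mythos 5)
Your overall route is the same as the paper's: reduce to the alternating subgroup, compute that the leaves of $\G$ are products of $n$ trees (Lemma \ref{l-tree}) and hence have asymptotic dimension $n$ while those of $\H$ have asymptotic dimension $m<n$, and invoke Corollary \ref{c-asdim-obstruction} to kill $\rho|_{\Af(\G)}$. That part is correct and matches the paper.

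The one genuine problem is your justification of $[\Ff(\G),\Ff(\G)]=\Af(\G)$, which is what you need to pass from $\Af(\G)\le\ker\rho$ to abelian image. Theorem \ref{t-simple} says that every non-trivial normal subgroup of $\Ff(\G)$ \emph{contains} $\Af(\G)$; applied to the derived subgroup this gives $\Af(\G)\le\Df(\G)$, and your ``conversely, $\Af(\G)$ is perfect'' remark gives the \emph{same} inclusion again. Neither yields the inclusion you actually need, namely $\Df(\G)\le\Af(\G)$, so as written this step is circular. The missing ingredient (which is exactly what the paper cites) is Matui's theorem that $\Df(\G)$ is simple because $\G$ is purely infinite (\cite[Th.~4.2]{Mat-SFT}, with pure infiniteness of the product from \cite[\S 5.2]{Mat-prod}): since $\Af(\G)$ is a non-trivial normal subgroup of the simple group $\Df(\G)$, equality $\Df(\G)=\Af(\G)$ follows. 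You do flag that this identity ``needs to be cited or spelled out,'' so the gap is repairable, but the in-line argument you offer in its place does not establish it.
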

\begin{proof}
The derived subgroup $\Df(\G)$ is simple by \cite[Th. 4.2]{Mat-SFT} since $\G$ is purely infinite (see \cite[Sec. 5.2]{Mat-prod}), and therefore we have $\Df(\G)=\Af(\G)$. Moreover it follows from Lemma \ref{l-tree} that every Cayley graph of $\G$ is a product of $n$ trees and thus it has asymptotic dimension $n$, while every Cayley graph of $\H$ has asymptotic dimension $m$.  The conclusion follows from Corollary \ref{c-asdim-obstruction}. \qedhere
\end{proof}

A special case of topological full groups of products of one sided shifts of finite type is the family of higher dimensional Higman-Thompson's groups $nV$, introduced by Brin \cite{Brin-nV}. The group $nV$ coincides with the topological full group of the product $\G=\G_\Sigma\times \cdots \times \G_\Sigma$, where $\G_\Sigma$  is as in Example \ref{e-V}. 

It was shown in \cite{Brin-nV} that the group $2V$ is  not isomorphic to the group $V$ as a consequence of Rubin's theorem \cite{Rubin}. Later this was extended in \cite{Bl-La-nV} to show that $nV$ and $mV$ are isomorphic if and only if $n=m$, also using Rubin's theorem. Corollary \ref{c-SFT-prod} implies the following. 
\begin{cor}\label{c-nV}
The group $nV$ can be embedded in $mV$ if and only if $n\le m$. 
\end{cor}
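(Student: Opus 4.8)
\textbf{Proof plan for Corollary \ref{c-nV}.}

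The plan is to deduce both directions from results already established. For the ``if'' direction, suppose $n\le m$. The group $nV$ is the topological full group $\Ff(\G)$ where $\G=\G_{\Omega_2}^{n}$ is the $n$-fold product of the groupoid of the full one-sided $2$-shift, and similarly $mV=\Ff(\H)$ with $\H=\G_{\Omega_2}^{m}$. Since $\G_{\Omega_2}$ is purely infinite (it comes from an irreducible one-sided SFT), so is any product of copies of it, and hence $\H$ is purely infinite. One therefore has, by the corollary to Proposition \ref{p-Matui-embedding} (the statement that a purely infinite $\tH$ always admits a $\Sigma$-system, hence $\Ff(\G_\Sigma)$ embeds into $\Ff(\H)$), the existence of embeddings $\Ff(\G_{\Omega_2})\hookrightarrow \Ff(\H|Z)$ for suitable clopen $Z$. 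To upgrade this to an embedding of the $n$-fold product, I would note that a $\Sigma$-system in a product groupoid can be built ``coordinate by coordinate'': one writes $\H=\G_{\Omega_2}^{m}=\G_{\Omega_2}^{\,n}\times \G_{\Omega_2}^{\,m-n}$, and it suffices to embed $nV=\Ff(\G_{\Omega_2}^{\,n})$ into the first factor $\Ff(\G_{\Omega_2}^{\,n}\times\{*\})\le \Ff(\H)$, where the $\G_{\Omega_2}^{\,m-n}$-coordinates are left untouched. Concretely, the natural inclusion $\Ff(\G_{\Omega_2}^{\,n})\hookrightarrow \Ff(\G_{\Omega_2}^{\,n}\times \G_{\Omega_2}^{\,m-n})$, acting diagonally as the identity on the last $m-n$ coordinates, is a group embedding, giving $nV\hookrightarrow mV$.

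For the ``only if'' direction, suppose there is an embedding $nV\hookrightarrow mV$, i.e. a non-trivial homomorphism $\rho\colon \Ff(\G)\to \Ff(\H)$ with trivial kernel. Its restriction to the derived subgroup $\Df(\G)$ is again injective. Now $\Df(\G)=\Af(\G)$: indeed $\G$ is minimal and purely infinite, so $\Df(\G)$ is simple by \cite[Th. 4.2]{Mat-SFT}, which forces $\Df(\G)=\Af(\G)$ by Theorem \ref{t-simple}. So $\rho|_{\Af(\G)}\colon \Af(\G)\to \Ff(\H)$ is a non-trivial homomorphism, and I would apply Corollary \ref{c-asdim-obstruction} (Automatic Extension combined with the asymptotic-dimension obstruction). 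Its hypothesis requires $\sup_{y}\asdim(\H_y)<\inf_{x}\asdim(\G_x)$; if this held, there would be no non-trivial homomorphism at all, contradicting injectivity of $\rho$. Hence we must have $\inf_x\asdim(\G_x)\le \sup_y\asdim(\H_y)$. Here I use Lemma \ref{l-tree}: the Cayley graph $\cay_x(\G_{\Omega_2},\{S_1,S_2\})$ is a tree, so it has asymptotic dimension $1$, and by the product formula for asymptotic dimension a Cayley graph of $\G=\G_{\Omega_2}^{\,n}$ is a product of $n$ trees and has asymptotic dimension exactly $n$; likewise every Cayley graph of $\H$ has asymptotic dimension $m$. (For this one wants the graphs of $\G$ and $\H$ to be Cayley graphs, which holds because $\G,\H$ are compactly generated with the evident finite generating sets, and minimal.) Therefore $n\le m$, as desired.

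The only genuine subtlety — and the step I would be most careful about — is the direction just described: to invoke Corollary \ref{c-asdim-obstruction} one must know that $\asdim$ of a Cayley graph of $\G$ equals $n$ (not merely $\le n$), so that strict inequality $m<n$ would indeed force the supremum-of-target-dimensions to be below the infimum-of-source-dimensions. The lower bound $\asdim\ge n$ for a product of $n$ unbounded trees is standard (each tree contains a bi-infinite geodesic, hence an injective coarse copy of $\Z^n$ sits inside, and $\asdim(\Z^n)=n$; alternatively use that $\asdim$ of a product of $n$ infinite trees is $n$), and minimality ensures all leaves have the same asymptotic dimension so the infimum and supremum are attained and equal the expected values; I would spell this out briefly. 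Everything else is a direct citation: Theorem \ref{t-simple} for $\Df(\G)=\Af(\G)$, Corollary \ref{c-asdim-obstruction} for the dimension obstruction, and Proposition \ref{p-Matui-embedding} (and the purely-infinite remark) for the embeddings in the easy direction. Thus the final statement follows.
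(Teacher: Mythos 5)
Your proposal is correct and follows essentially the same route as the paper: the ``only if'' direction is exactly the argument of Corollary \ref{c-SFT-prod} (identify $\Df(\G)=\Af(\G)$ via simplicity, compute that the leaves of $\G_{\Omega_2}^{\,n}$ are products of $n$ trees of asymptotic dimension exactly $n$, and invoke Corollary \ref{c-asdim-obstruction}), while the ``if'' direction is the standard coordinate-wise inclusion $nV\hookrightarrow mV$ that the paper leaves implicit. The opening detour through $\Sigma$-systems and Proposition \ref{p-Matui-embedding} is unnecessary for the product case, but you correctly abandon it in favour of the natural diagonal inclusion, so nothing is lost.
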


\bibliographystyle{alpha}
\bibliography{FullGroups.bib}

\end{document}